\newcommand{\DR}{{\mathrm{DR}}}
\newcommand{\forlim}{{\text{``${\displaystyle{\lim}}$''}}}
\newcommand{\forlimit}[1]{\underset{\substack{\longrightarrow\\ {#1}}}{\text{``${\displaystyle{\lim}}$''}}}
\newcommand{\bRz}{\bR_{\geq 0}}
\renewcommand{\Mod}{\mathrm{mod}}
\newcommand{\Modic}{\Mod_{ic}}
\newcommand{\im}{\mathrm{im}}
\newcommand{\coim}{\mathrm{coim}}
\newcommand{\Lambdaaff}{\Mod^{\frakI}(\Lambda)}
\newcommand{\Open}{\mathrm{Open}}
\newcommand{\Ierv}{\mathrm{Ierv}}
\newcommand{\Perv}{\mathrm{Perv}}
\renewcommand{\Mod}{\mathrm{Mod}}
\newcommand{\pD}{{^p}D}
\newcommand{\hol}{\mathrm{hol}}
\newcommand{\alg}{\mathrm{alg}}
\newcommand{\Sol}{\mathrm{Sol}}
\newcommand{\ModIp}{\Mod^\frakI_{pre}}
\newcommand{\ModIps}{{\Mod^\frakI_{ps}}}
\newcommand{\ModI}{\Mod^\frakI}
\newcommand{\tcV}{\tilde{\cV}}
\newcommand{\tcW}{\tilde{\cW}}
\newcommand{\tcX}{\tilde{\cX}}
\newcommand{\tcP}{\tilde{\cP}}
\newcommand{\tcI}{\tilde{\cI}}
\newcommand{\dotimes}{\otimes^\bL}
\newcommand{\tilh}{\tilde{h}}
\newcommand{\tf}{\tilde{f}}
\newcommand{\tg}{\tilde{g}}
\newcommand{\oR}{\overline{\bR}}
\newcommand{\oX}{{\overline{X}}}
\newcommand{\oY}{{\overline{Y}}}
\newcommand{\oU}{{\overline{U}}}
\newcommand{\oS}{\overline{S}}
\newcommand{\mI}{\mathrm{I}}
\newcommand{\dX}{{(\oX, D_X)}}
\newcommand{\dY}{{(\oY, D_Y)}}
\newcommand{\dS}{{(\oS, D_S)}}
\newcommand{\RcHom}{{\bR\cH om}}
\newcommand{\of}{\overline{f}}
\newcommand{\IC}{\mathrm{I}\bC}
\newcommand{\Ik}{\mathrm{I}\bk}
\newcommand{\tM}{\tilde{M}}
\newcommand{\Ind}{\mathrm{Ind}}
\newcommand{\an}{{\mathrm{an}}}
\newcommand{\potimes}{\overset{+}{\otimes}}
\newcommand{\pboxtimes}{\overset{+}{\boxtimes}}
\newcommand{\cIhom}{\cI hom}
\newcommand{\hO}{\widehat{\cO}}
\newcommand{\hi}{\mathchar`-}
\newcommand{\bk}{\mathbb{k}}
\title{Irregular perverse sheaves}
\author{Tatsuki Kuwagaki}
\date{}
\begin{document}

\maketitle

\begin{abstract}
We introduce irregular constructible sheaves, which are $\bC$-constructible with coefficients in a finite version of Novikov ring $\Lambda$ and special gradings. We show that the bounded derived category of cohomologically irregular constructible complexes is equivalent to the bounded derived category of holonomic $\cD$-modules by a modification of D'Agnolo--Kashiwara's irregular Riemann--Hilbert correspondence. The bounded derived category of cohomologically irregular constructible complexes is equipped with the irregular perverse t-structure, which is a straightforward generalization of usual perverse t-structure and we see its heart is equivalent to the abelian category of holonomic $\cD$-modules. We also develop the algebraic version of the theory. Furthermore, we discuss the reason of the appearance of Novikov ring by using a conjectural reformulation of Riemann--Hilbert correspondence in terms of certain Fukaya category.
\end{abstract}

\section{Introduction}
The regular Riemann--Hilbert correspondence (formulated and proved by Kashiwara \cite{KashiwaraRH} and another proof by Mebkhout \cite{MebkhoutRH}) states that the derived category of regular holonomic $\cD$-modules is equivalent to the derived category of $\bC$-constructible sheaves. Under this equivalence, the abelian category of regular holonomic $\cD$-modules is mapped to the abelian category of perverse sheaves introduced by Beilinson--Bernstein--Deligne--Gabber~\cite{Kashiwaramaster, BBD, GM}.

After many efforts including understanding of formal and asymptotic structures \cite{Majima, SabbahHLT, Mochizukiformal, Kedlaya}, Stokes phenomena and Riemann--Hilbert correspondence for meromorphic connections~\cite{Malgrange, Sibuya, Deligne, Mochizukiformal,SabbahStokes}, sophistication of the regular Riemann--Hilbert correspondence \cite{MR1827714}, and developments of ind-sheaves and the discovery of its relation to asymptotic behavior \cite{MR1827714, MR2003419}, in a seminal paper~\cite{D'Agnolo--Kashiwara}, D'Agnolo--Kashiwara formulated and proved the irregular Riemann--Hilbert correspondence for holonomic $\cD$-modules:
\begin{theorem}[{\cite[D'Agnolo--Kashiwara]{D'Agnolo--Kashiwara}}]
For a complex manifold $X$, there exists a fully faithful embedding
\begin{equation}\label{intDK}
D^b_{\hol}(\cD_X)\hookrightarrow E^b_{\bR\hi c}(\IC_X).
\end{equation}
where the left hand side is the derived category of cohomologically holonomic $\cD$-modules and the right hand side is the category of $\bR$-constructible $\bC$-valued enhanced ind-sheaves.
\end{theorem}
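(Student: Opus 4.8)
The plan is to reconstruct the argument of \cite{D'Agnolo--Kashiwara}, which is the template that the rest of this paper adapts. \textbf{Setting up the functor.} First I would build the target category $E^b_{\bR\hi c}(\IC_X)$ of $\bR$-constructible enhanced ind-sheaves --- ind-sheaves on the bordered space $X\times\bR_\infty$, localized away from the objects killed by convolution with $\bk_{\{t\geq 0\}}\oplus\bk_{\{t\leq 0\}}$ --- together with its six operations and the additive convolution $\potimes$. Inside it sits the enhanced structure sheaf $\cO^E_X$, manufactured from the Kashiwara--Schapira ind-sheaf $\cO^t_X$ of tempered holomorphic functions on real blow-ups of $X$ and an extra real parameter $t$ that records exponential growth of type $e^{t\,\mathrm{Re}(\cdot)}$. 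Then I would define $\Sol^E_X(\cM):=\RcHom_{\pi^{-1}\cD_X}(\pi^{-1}\cM,\cO^E_X)$ and prove it is $\bR$-constructible: on curves this comes from the Hukuhara--Levelt--Turrittin decomposition, and in general it rests on the structure theorem of Mochizuki and Kedlaya (good formal structure for meromorphic flat connections after a projective modification and a ramified cover), which makes the Stokes hypersurfaces subanalytic. The embedding \eqref{intDK} is then a duality-twisted incarnation of the resulting functor $D^b_{\hol}(\cD_X)^{\mathrm{op}}\to E^b_{\bR\hi c}(\IC_X)$.

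\textbf{Full faithfulness.} The substance is the isomorphism $\RcHom_{\cD_X}(\cN,\cM)\xrightarrow{\ \sim\ }\RcHom_{E^b(\IC_X)}(\Sol^E_X(\cM),\Sol^E_X(\cN))$. Using the adjunction between $\Sol^E_X$ and the enhanced de Rham functor $\DR^E_X$ together with the pairing $\DR^E_X(\cM)\potimes\Sol^E_X(\cN)$, I would recast this as a computation of $\bR\Gamma$ of a convolution of enhanced ind-sheaves, and then dévisser: by additivity, good filtrations, Kashiwara's characteristic-variety estimate, and proper pushforward along modifications, it suffices to treat $\cM,\cN$ locally at a point with smooth Lagrangian characteristic variety.

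\textbf{Reduction to base cases.} Near a normal crossing divisor $D$, the Mochizuki--Kedlaya theorem presents the relevant modules as iterated extensions of $\pi_{D+}(\cR\overset{\mathrm D}{\otimes}\cE^\varphi_{U|X})$, with $\cR$ regular holonomic, $\varphi$ meromorphic with poles along $D$, and $\pi_D$ a ramified cover. The compatibility of $\Sol^E_X$ with proper pushforward, inverse image, and exponential twist --- crucially $\Sol^E_X(\cM\overset{\mathrm D}{\otimes}\cE^\varphi_{U|X})\simeq\Sol^E_X(\cM)\potimes E^{\mathrm{Re}\,\varphi}_{U|X}$ --- then reduces everything to two cases. (i) For $\cM,\cN$ regular holonomic one invokes the Kashiwara--Mebkhout regular Riemann--Hilbert correspondence, the fully faithful embedding $\iota\colon D^b_{\bR\hi c}(\IC_X)\hookrightarrow E^b_{\bR\hi c}(\IC_X)$, and the identification $\Sol^E_X|_{\mathrm{reg}}\simeq\iota\circ\Sol_X$. (ii) For exponential modules $\cE^\varphi$ the entire content is the explicit real-analytic computation showing that $\RcHom$ of $E^{\mathrm{Re}\,\varphi}$ with $E^{\mathrm{Re}\,\psi}$ is concentrated exactly where $\mathrm{Re}(\varphi-\psi)$ stays bounded above in every Stokes sector, so as to match $\RcHom_{\cD_X}(\cE^\psi,\cE^\varphi)$; this is the Borel--Ritt / Hukuhara--Turrittin asymptotic input.

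\textbf{Main obstacle.} I expect the hard part to be twofold: the $\bR$-constructibility of $\Sol^E_X(\cM)$, and case (ii) above --- the exact matching of growth thresholds with the morphism complexes of exponential $\cD$-modules. Both rely on the deep structure theory of meromorphic connections and on delicate estimates for tempered holomorphic functions near the divisor; by contrast the categorical scaffolding (six operations, adjunctions, dévissage) is essentially formal once the enhanced ind-sheaf formalism is set up.
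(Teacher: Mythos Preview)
This theorem is not proved in the present paper: it is the main result of D'Agnolo--Kashiwara~\cite{D'Agnolo--Kashiwara}, quoted in the introduction and again in Section~8.2 as an input, and the paper never reproduces its proof. There is therefore no paper-proof to compare your proposal against; the paper uses the statement as a black box on which its own Riemann--Hilbert correspondence (Theorem~\ref{RHmain}) is built via the functor $M$.

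Your sketch is a reasonable outline of the original argument in \cite{D'Agnolo--Kashiwara}, with one caveat: the full faithfulness there is not obtained by a direct d\'evissage of the hom-complexes as you describe, but rather by exhibiting an explicit left quasi-inverse (the ``reconstruction'' functor $\cM\mapsto \cO^E_X\potimes(-)$ composed with a suitable $\bR\cH om$) and proving that the unit of the adjunction is an isomorphism. The reduction to exponential $\cD$-modules and the Mochizuki--Kedlaya structure theorem do enter, but at the level of proving $\bR$-constructibility of $\Sol^E$ and identifying its value on the basic pieces, not in a pairwise hom-computation. So your plan would work in spirit but reorganizes the logic relative to the source.
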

In the sequel~\cite{D'Agnolo--Kashiwara2}, they also introduced the notion of enhanced perverse t-structure on the right hand side of the embedding and proved that the embedding is t-exact in a slightly generalized sense. Moreover, Mochizuki~\cite{Mochizukicurvetest} proved that the image of the equivalence can be characterized by curve test.

In this paper, we modify the right hand side of the equivalence and make it more closer to the form of the regular Riemann--Hilbert correspondence.

As mentioned in their paper, D'Agnolo--Kashiwara's clever definition and use of enhanced sheaves are inspired from the construction of Tamarkin~\cite{Tam}. Tamarkin's idea of adding one extra variable originally aimed to realize Novikov ring action in sheaf theory as in Fukaya category~\cite{FOOO}. In this paper, we take a way which is more closer to this original idea instead of the use of enhanced sheaves. The replacement for the right hand side of (\ref{intDK}) is expressed as graded modules (sheaves) over the ``finite Novikov ring'' $\Lambda:=\bk[\bR_{\geq 0}]$ where $\bk\subset \bC$ is a field. An element of $\Lambda$ is expressed as a finite sum $\sum_{a\in \bR_{\geq 0}}c_aT^a$ where $T$ is the indeterminate. A priori, the hom-spaces $\Hom(\cV, \cW)$ of $\Lambda$-modules are defined over $\Lambda$. By taking the tensor product $\Hom(\cV, \cW)\otimes_{\Lambda}\bk$ where $\Lambda\rightarrow \bk$ is defined by $T^a\mapsto 1$, we obtain a new category $\ModIp(\Lambda_X)$. We will further modify this category to obtain $\ModI(\Lambda_X)$. Anyway, we can consider $\ModIp(\Lambda_X)$ as an approximate description of $\ModI(\Lambda_X)$.

The category $\ModI(\Lambda_X)$ is abelian and has enough injective and flat objects. We define an abelian subcategory of $\ModI(\Lambda_X)$: the category of irregular constructible sheaves $\Mod_{ic}(\Lambda_X)$. Then we set $D^b_{ic}(\Lambda_X)$ as the full subcategory of the bounded derived category $D^b(\ModI(\Lambda_X))$ consisting of cohomologically irregular constructible sheaves. The meaning of irregular constructibility is as follows: As usual there exists a $\bC$-Whitney stratification and we have a sheaf which is locally constant as $\Lambda$-module over each stratum, but moreover with particular gradings coming from Sabbah--Mochizuki--Kedlaya's Hukuhara--Levelt--Turrittin theorem~\cite{SabbahHLT, Mochizukiformal, Kedlaya}. Then we have the following:

\begin{theorem}
\begin{enumerate}
\item The category $D^b_{ic}(\Lambda_X)$ has functors $\cHom, \otimes, f^{-1}, f^!$ for any morphism $f$ and $f_!$ for proper $f$.
\item If $\bk=\bC$, there exists an equivalence 
\begin{equation}\label{RHintro}
D^b_{\hol}(\cD_X)\xrightarrow{\simeq} D^b_{ic}(\Lambda_X).
\end{equation}
\end{enumerate}
\end{theorem}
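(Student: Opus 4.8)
The plan is to reduce the statement to D'Agnolo--Kashiwara's theorem by establishing a dictionary between their enhanced ind-sheaf formalism and the $\Lambda$-module formalism, under which the auxiliary Tamarkin variable is traded for the Novikov grading of $\Lambda=\bk[\bR_{\geq 0}]$. I would first construct a functor from (a suitable localization of) the bounded derived category $D^b(\ModI(\Lambda_X))$ to $E^b_{\bR\hi c}(\IC_X)$: it should send a graded $\Lambda$-module, with the $T^{a}$-action recorded as structure morphisms between its pieces, to the enhanced ind-sheaf obtained by gluing these pieces along the shifted closed half-lines $\{t\geq a\}$ in the extra variable --- a Tamarkin-style convolution --- and in the opposite direction one reads off the ``graded pieces in the $t$-direction''. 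The preliminary task is to check that this correspondence restricts to an equivalence between $D^b_{ic}(\Lambda_X)$ and the essential image of the embedding (\ref{intDK}), and that it intertwines the enhanced tensor, hom and pullback/pushforward operations with $\otimes$, $\cHom$, $f^{-1}$ and $f_!$ on the $\Lambda$-side, as well as the enhanced perverse and the irregular perverse t-structures.

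For part (1) I would first install $\cHom$, $\otimes$, $f^{-1}$, $f^{!}$ and proper $f_{!}$ on the ambient category $D^b(\ModI(\Lambda_X))$; this is essentially formal once $\ModI(\Lambda_X)$ is known to be abelian with enough injective and flat objects, as asserted in the Introduction. The real content is that the subcategory of cohomologically irregular constructible complexes is stable under these operations. Stability under $\otimes$ and $f^{-1}$ reduces to a stratification argument --- pass to a common refinement of $\bC$-Whitney stratifications, respectively pull strata back --- the delicate point being that the gradings prescribed by the Sabbah--Mochizuki--Kedlaya Hukuhara--Levelt--Turrittin theorem behave correctly under the induced operation on exponential factors. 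Stability under proper $f_{!}$, and hence by duality under $f^{!}$, is the main obstacle of this part: using resolution of singularities one reduces to a projection from a space on which the relevant irregularity divisor has normal crossings, and there one must carry out the classical proof of constructibility of $Rf_{!}$ for subanalytic sheaves while keeping track of the Stokes/Novikov grading, which requires a real blow-up analysis.

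For part (2), where the hypothesis $\bk=\bC$ enters --- D'Agnolo--Kashiwara's correspondence is manufactured from solution complexes of $\cD_X$-modules, which carry $\bC$-coefficients --- I would define the functor in (\ref{RHintro}) as the composite of the embedding (\ref{intDK}) with the de-enhancement functor above, and verify that it takes values in $D^b_{ic}(\Lambda_X)$: locally on $X$, after a ramified cover and a blow-up, a holonomic $\cD$-module is a successive extension of exponential twists $\mathcal{E}^{\varphi}$ of regular holonomic modules by the Mochizuki--Kedlaya structure theorem \cite{Mochizukiformal, Kedlaya}, and the enhanced solutions of such twists are precisely $\Lambda$-local systems graded by the orders of the $\varphi$'s, i.e.\ irregular constructible by definition. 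Full faithfulness is then inherited directly from (\ref{intDK}). The outstanding point is essential surjectivity --- that every object of $D^b_{ic}(\Lambda_X)$ lies in the image --- for which the natural tool is Mochizuki's curve-test characterization \cite{Mochizukicurvetest} of the image of the enhanced Riemann--Hilbert functor: I would show that the defining conditions of irregular constructibility force the curve test, by restricting to curves and invoking the one-dimensional classification of Stokes-filtered local systems. I expect the reconciliation of these two descriptions of the image --- together with the bookkeeping required to pass from the approximate category $\ModIp(\Lambda_X)$ to the corrected category $\ModI(\Lambda_X)$ --- to be the subtlest part of the whole argument.
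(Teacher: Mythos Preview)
Your overall strategy is sound, but it diverges from the paper in two structural ways that are worth noting.

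First, the bridging functor in the paper runs in the \emph{opposite} direction from the one you propose: the paper defines $M\colon E^b(\Ik_X)\to D^b(\Ind(\ModI(\Lambda_X)))$ concretely by $\cE\mapsto\bigl[\bigoplus_{a}\bR p_*\RGamma_{[-a,\infty)}\cE\bigr]$, reading off the Novikov grading from the extra $t$-variable, rather than building an enhanced sheaf out of a $\Lambda$-module.  This direction is cleaner because $M$ is defined on the whole of $E^b(\Ik_X)$ without any localization, and the computation $M(\bk_{t\geq\Re\phi}\potimes\bk^E_X)\cong\Lambda^\phi$ is immediate.

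Second --- and this is the main point --- the paper reverses the logical order of (1) and (2) for the hardest operation.  You plan to establish stability of $D^b_{ic}(\Lambda_X)$ under proper $f_!$ directly, by a real blow-up/normal crossing analysis.  The paper does \emph{not} do this: it proves the Riemann--Hilbert equivalence (Theorem~\ref{RHmain}) first, using only $\otimes$, $\cHom$, $f^{-1}$, $f^!$ and recollement (none of which require proper pushforward to stay in $D^b_{ic}$), and then \emph{deduces} stability under proper $f_!$ (Proposition~\ref{pushic}) by transporting to holonomic $\cD$-modules, pushing forward there via Malgrange's good-lattice theorem, and coming back.  Your direct approach is possible in principle but is substantially harder than what the paper actually needs.

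For essential surjectivity, your curve-test route is viable and indeed mentioned as an alternative in the paper, but the paper's own argument is more direct: given an irregular local system, pass by modification to a good one, read off the associated Stokes local system, apply the classical Stokes-structure Riemann--Hilbert correspondence of Sabbah to produce a meromorphic connection, and push forward.  This avoids the curve-test machinery entirely.
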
 

In our formulation, the data of exponential factors of solutions of irregular differential equations are encoded in the grading of $\Lambda$-modules. We would like to apply the following trivial fact to our setting: For a graded ring $R$, the grading-forgetful functor from the abelian category of graded $R$-modules to the abelian category of $R$-modules is exact. Nevertheless our category $\Mod_{ic}(\Lambda_X)$ has a bit exotic modification of hom-spaces, we still have the following:
\begin{theorem}
There exists an exact functor $\frakF$ from $\Mod_{ic}(\Lambda_X)$ to the abelian category of $\bC$-constructible sheaves $\Mod_{c}(\bk_X)$.
\end{theorem}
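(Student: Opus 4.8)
The plan is to take for $\frakF$ the functor that forgets the grading and sets the Novikov variable to $1$, namely $\frakF(\cV):=\bk_X\otimes_{\Lambda_X}\cV$, where $\bk_X$ is regarded as a sheaf of $\Lambda_X$-modules through the augmentation $\Lambda\to\bk$, $T^a\mapsto1$, already used in the definition of $\ModIp(\Lambda_X)$. The first thing to check is that this is well defined on morphisms, and this is precisely where the exotic Hom-spaces must be handled: for sheaves of $\Lambda_X$-modules $\cV,\cW$ the canonical $\bk$-linear map $\Hom_{\Lambda_X}(\cV,\cW)\to\Hom_{\bk_X}(\frakF\cV,\frakF\cW)$ sends $T^af$ to the same morphism as $f$, because $T^a$ acts by the identity on $\frakF\cV$ and on $\frakF\cW$; hence it factors through $\Hom_{\Lambda_X}(\cV,\cW)\otimes_\Lambda\bk$, i.e.\ through the Hom-spaces of $\ModIp(\Lambda_X)$, and one then verifies that it passes further through the identifications defining $\ModI(\Lambda_X)$. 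This produces an additive functor $\frakF$ on $\ModI(\Lambda_X)$, which we restrict to $\Mod_{ic}(\Lambda_X)$.

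Next I would check that $\frakF$ takes values in $\Mod_{c}(\bk_X)$. Fix $\cV\in\Mod_{ic}(\Lambda_X)$ together with a $\bC$-Whitney stratification $X=\bigsqcup_\alpha S_\alpha$ adapted to it. By the definition of irregular constructibility, on each $S_\alpha$ the sheaf $\cV$ is locally constant as a sheaf of $\Lambda$-modules, and its local model is the one imposed by the Hukuhara--Levelt--Turrittin theorem: up to a twist by a $\bk$-local system, an iterated extension of shifted copies of $\Lambda$. Applying $\bk\otimes_\Lambda(-)$ to such a model yields a $\bk$-local system of finite rank on $S_\alpha$, so $\frakF(\cV)|_{S_\alpha}$ is locally constant of finite rank and $\frakF(\cV)$ is $\bC$-constructible for the same stratification.

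It remains to prove exactness, which is the core of the statement. Let $0\to\cV'\to\cV\to\cV''\to0$ be a short exact sequence in $\Mod_{ic}(\Lambda_X)$. The first step is to see that it is represented by a short exact sequence of sheaves of $\Lambda_X$-modules: one unwinds the passage from sheaves of $\Lambda_X$-modules to $\ModIp(\Lambda_X)$ and then to $\ModI(\Lambda_X)$ and checks that these modifications, which touch only the Hom-spaces, do not change the notion of short exactness on objects that come from honest $\Lambda_X$-modules; combined with the elementary fact recalled before the statement (the grading-forgetful functor is exact), this reduces matters to an honest short exact sequence of ungraded $\Lambda_X$-modules. Since $\bk\otimes_\Lambda(-)$ is right exact, all that is left is injectivity of $\frakF\cV'\to\frakF\cV$, equivalently the vanishing of $\mathrm{Tor}_1^\Lambda(\bk,\cV'')$. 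This is local on the strata $S_\alpha$, where $\cV''$ is, up to a $\bk$-local system twist, an iterated extension of shifted copies of $\Lambda$; since shifted copies of $\Lambda$ are free, hence flat, and an iterated extension of flat modules is flat, $\cV''$ is locally $\Lambda$-flat. Thus the $\mathrm{Tor}$ vanishes and $\frakF$ is exact.

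I expect the main obstacle to be the bookkeeping around the exotic category $\ModI(\Lambda_X)$: one must unwind its construction from $\ModIp(\Lambda_X)$ carefully enough both to see that $\frakF$ descends to it and that short exact sequences in it are represented by honest short exact sequences of $\Lambda_X$-modules, so that the $\mathrm{Tor}$-computation is meaningful. A secondary but real point is that the flatness used above should be extracted from the explicit local models furnished by the Hukuhara--Levelt--Turrittin decomposition rather than argued abstractly, since $\Lambda=\bk[\bR_{\geq0}]$ is non-Noetherian and the usual reductions from finitely generated flat to locally free modules are unavailable.
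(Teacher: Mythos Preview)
Your proposal is essentially correct and defines the same functor as the paper, namely $\frakF(\cV)=\cV\otimes_{\Lambda_X}\bk_X$ glued from local lifts. The construction of $\frakF$ as a functor and the argument that it lands in $\Mod_c(\bk_X)$ match the paper's Lemma~\ref{exactnessofF}, Lemma~\ref{Finv}, Lemma~\ref{iltol}, and Proposition~\ref{ictoc} closely.

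The genuine difference is in the exactness argument. The paper proves that $\frakF$ is exact on \emph{all} of $\ModI(\Lambda_\dX)$, not just on $\Mod_{ic}(\Lambda_X)$, by a direct argument that uses no structure on the modules: given an injection $f\colon\cV\hookrightarrow\cW$ in $\ModI$, if some $s\otimes 1$ were a nonzero local section of $\ker\frakF(f)$, then $T^a s\neq 0$ for all $a$ while $T^a\tf(s)=0$ for some $a$, so $\Lambda\cdot s$ gives a nonzero subobject of $\cV$ killed by $f$ in $\ModI$, contradicting injectivity. Your route instead exploits irregular constructibility: the stalks of the local models $\Lambda^\phi$ are torsion-free over the valuation ring $\Lambda$, hence flat, so $\mathrm{Tor}_1^\Lambda(\bk,\cV'')$ vanishes stalkwise. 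This is a legitimate and conceptually clean argument for the theorem as stated; the paper's version buys the stronger fact that $\frakF$ is exact on $\ModI(\Lambda_\dX)$ itself, which is invoked elsewhere (for instance in the proof of Lemma~\ref{globalsection2}). One small imprecision: your local description ``iterated extension of shifted copies of $\Lambda$'' is not quite the local model---on sectors the object is a \emph{direct sum} of sheaves $\Lambda^{\phi_j}$, and it is only their \emph{stalks} that are shifted copies of $\Lambda$ (Corollary~\ref{irrconststalk}); this is what makes the flatness argument go through, and no extension argument is needed.
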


By using $\frakF$, we can define the support of an irregular constructible sheaf $\cV$ by $\supp\cV:=\supp \frakF(\cV)$. By using this definition, we can define the irregular perverse t-structure by the same formula as in usual perverse sheaves: Let $\pD_{ic}^{\leq 0}(\Lambda_X)$ (resp. $\pD_{ic}^{\geq 0}(\Lambda_X)$) be the full subcategory of $D^b_{ic}(\Lambda_X)$ spanned by objects satisfying 
\begin{equation}
\dim\lc\supp H^j(\cV)\rc\leq -j \hspace{3mm} (\text{resp.}\dim\lc\supp H^j(\bD\cV)\rc\leq -j ) \text{ for any }j\in \bZ.
\end{equation}

\begin{theorem}
\begin{enumerate}
\item The pair $(\pD_{ic}^{\leq 0}(\Lambda_X), \pD_{ic}^{\geq 0}(\Lambda_X))$ defines a t-structure of $D^b_{ic}(\Lambda_X)$, which we call irregular perverse t-structure.
\item The heart of irregular perverse t-structure $\Ierv(\bC_X)$ over $\bC$ is equivalent to the abelian category of holonomic $\cD$-modules under the equivalence $(\ref{RHintro})$.
\end{enumerate}
\end{theorem}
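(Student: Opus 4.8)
The plan is to prove the two assertions in turn: part (1) by tilting the standard t-structure on $D^b_{ic}(\Lambda_X)$ through the gluing formalism of Beilinson--Bernstein--Deligne \cite{BBD}, and part (2) by showing that the equivalence $(\ref{RHintro})$ is exact for the standard t-structure on the left and the irregular perverse one on the right, so that it restricts to an equivalence of hearts. Throughout I would use that the exact functor $\frakF$ of the third theorem — like D'Agnolo--Kashiwara's enhanced--to--ordinary forgetful functor — commutes with $f^{-1}$ and $f^!$ and extends to an exact functor $D^b_{ic}(\Lambda_X)\to D^b_c(\bk_X)$ commuting with the $H^j$; this is the device that reduces support bookkeeping to the $\bC$-constructible case.

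\emph{The t-structure.} First, $D^b_{ic}(\Lambda_X)$ inherits the standard t-structure from $D^b(\ModI(\Lambda_X))$, with heart $\Mod_{ic}(\Lambda_X)$, since $\Mod_{ic}(\Lambda_X)$ is an abelian subcategory of $\ModI(\Lambda_X)$ stable under the standard truncations. Next, for each decomposition $X=U\sqcup Z$ into an open $U$ and its closed complement $Z$, I would assemble from the operations $f^{-1},f^!,f_!,\cHom,\otimes$ of the first theorem — supplemented by extension by zero $j_!$ along the open embedding $j\colon U\hookrightarrow X$ (elementary), by $j_*:=\bD j_!\bD$, and by the duality functor $\bD:=\cHom(-,\omega_X)$ (for $\omega_X$ the dualizing complex of $X$, which I would check is involutive on $D^b_{ic}(\Lambda_X)$) — a recollement of $D^b_{ic}(\Lambda_X)$ by $D^b_{ic}(\Lambda_U)$ and $D^b_{ic}(\Lambda_Z)$. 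Given such a recollement, fix a $\bC$-Whitney stratification $X=\bigsqcup_\alpha S_\alpha$: on each stratum $S_\alpha$, smooth of dimension $d_\alpha$, an object of $D^b_{ic}(\Lambda_{S_\alpha})$ has locally constant cohomology, so the standard t-structure there shifted by $d_\alpha$ is a t-structure, corresponding to the middle perversity $p(S_\alpha)=-d_\alpha$; iterating the BBD gluing theorem along the closed strata, and then passing to the union over all stratifications (with compatibility under refinement checked as in \cite{BBD}), produces a t-structure on $D^b_{ic}(\Lambda_X)$ with $\cV\in\pD^{\leq0}$ iff $i_\alpha^{-1}\cV$ lies in degrees $\leq-d_\alpha$ for all $\alpha$, and $\cV\in\pD^{\geq0}$ iff $i_\alpha^!\cV$ lies in degrees $\geq-d_\alpha$ for all $\alpha$. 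Finally, exactly as in \cite[\S4]{BBD}, the first family of conditions is equivalent to $\dim\supp H^j(\cV)\leq-j$ for all $j$ — using that $\supp$ is computed through $\frakF$ and hence behaves as for ordinary $\bC$-constructible sheaves — and the second to $\dim\supp H^j(\bD\cV)\leq-j$, using that $\bD$ exchanges $i_\alpha^{-1}$ and $i_\alpha^!$ and that $\bD\bD\simeq\id$. This identifies the glued t-structure with $(\pD_{ic}^{\leq0}(\Lambda_X),\pD_{ic}^{\geq0}(\Lambda_X))$, proving (1).

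\emph{Exactness of the Riemann--Hilbert equivalence.} Now take $\bk=\bC$ and let $\Phi$ denote the triangulated equivalence $(\ref{RHintro})$. To prove $\Phi$ is t-exact for the standard t-structure on $D^b_{\hol}(\cD_X)$ (heart $\Mod_{\hol}(\cD_X)$) and the irregular perverse one on $D^b_{ic}(\Lambda_X)$ — whence it restricts to an equivalence $\Mod_{\hol}(\cD_X)\xrightarrow{\ \simeq\ }\Ierv(\bC_X)$ — it suffices to check $\Phi(D^{\leq0}_{\hol})\subseteq\pD_{ic}^{\leq0}$ and $\Phi(D^{\geq0}_{\hol})\subseteq\pD_{ic}^{\geq0}$. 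For the first: $\pD_{ic}^{\leq0}$ is closed under extensions and nonnegative shifts, while $D^{\leq0}_{\hol}$ is generated under those operations by $\Mod_{\hol}(\cD_X)$ placed in degree $0$ (Postnikov tower), so it is enough to show $\Phi(\cM)\in\pD_{ic}^{\leq0}$, i.e.\ $\dim\supp H^j(\Phi(\cM))\leq-j$ for all $j$, for a single holonomic $\cM$. Here I would use the compatibility $\frakF\circ\Phi\simeq\DR_X$ — forgetting the $\Lambda$-grading turns the irregular Riemann--Hilbert functor into the ordinary de Rham functor, the analogue of D'Agnolo--Kashiwara's comparison of enhanced and ordinary solution functors — together with Kashiwara's theorem that $\DR_X(\cM)$ is a perverse sheaf for every holonomic $\cM$ (no regularity needed); then $\supp H^j(\Phi(\cM))=\supp H^j(\DR_X(\cM))$ and perversity gives the bound. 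For the second inclusion I would invoke the compatibility of $\Phi$ with duality, $\Phi\circ\bD_{\cD}\simeq\bD\circ\Phi$ where $\bD_{\cD}$ is holonomic duality: if $\cM\in D^{\geq0}_{\hol}$ then $\bD_{\cD}\cM\in D^{\leq0}_{\hol}$, so $\bD\Phi(\cM)\simeq\Phi(\bD_{\cD}\cM)\in\pD_{ic}^{\leq0}$, i.e.\ $\dim\supp H^j(\bD\Phi(\cM))\leq-j$ for all $j$, which is precisely $\Phi(\cM)\in\pD_{ic}^{\geq0}$. This gives (2).

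\emph{Main obstacle.} Two steps carry the content. In (1) it is that the recollement — in particular $j_!$, $j_*$ and $i^!$ — preserves cohomological irregular constructibility, so that the gluing genuinely takes place inside $D^b_{ic}$; this is the analogue of the stability of $\bC$-constructibility under the six operations, and once it and biduality are available the gluing and the passage to support conditions follow \cite{BBD} essentially verbatim. In (2) the ``$\leq0$'' half is cheap given $\frakF\circ\Phi\simeq\DR_X$ and the perversity of $\DR_X$ on holonomic modules, but the ``$\geq0$'' half genuinely rests on the compatibility of the irregular Riemann--Hilbert equivalence with duality; as in the enhanced setting of \cite{D'Agnolo--Kashiwara2} this is a substantial statement, and I expect it to be the main work, to be established before the t-exactness argument above.
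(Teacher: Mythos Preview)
Your approach to (1) via BBD gluing is a valid alternative to the paper's route. The paper instead verifies the t-structure axioms directly: it proves the $\Hom$-vanishing between $\pD_{ic}^{\leq 0}$ and $\pD_{ic}^{\geq 1}$ by reducing, through $\frakF$ and its commutation with $i_S^{-1}$ and $i_S^!$, to the standard vanishing for ordinary perverse sheaves, and then appeals to the usual inductive construction of the truncation triangle. Your recollement argument needs the six operations (and biduality) to preserve $D^b_{ic}$; the paper does establish these, so this part goes through.

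Part (2), however, has a genuine gap: the compatibility $\frakF\circ\Phi\simeq\DR_X$ you invoke is \emph{false} for irregular modules. Take $X=\bC$ and $\cM=\cE^{1/z}$. Then $\Sol^\Lambda(\cM)\simeq j_!\Lambda^{1/z}[1]$ for $j\colon\bC^*\hookrightarrow\bC$, and $\frakF$ sends this to $j_!\bk_{\bC^*}[1]$, whose only nonzero cohomology is in degree $-1$. By contrast $\Sol(\cM)[1]$ has nonzero $H^0$ at the origin: the operator $z^2\partial+1$ is not surjective on $\cO_0$, since the formal solution of $z^2f'+f=z$ has coefficients $|a_n|=(n-1)!$. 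This extra stalk is exactly the irregularity, and it is precisely what $\frakF$ throws away and what the ordinary $\Sol$ (or $\DR$) retains. The analogy with the de-enhancement in D'Agnolo--Kashiwara is misleading: de-enhancement there is an $\cIhom$ against $\bk^E_X$, not the operation $\otimes_\Lambda\bk$ that $\frakF$ performs. So you cannot import Kashiwara's perversity of $\DR_X$ through $\frakF$.

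The paper establishes the $\leq 0$ estimate by a direct dimensional induction instead. For holonomic $\cM$ with singular locus contained in a divisor $D$, one first reduces via a resolution to $D$ simple normal crossing, then uses the triangle $C\to\cM\to\cM(*D)\to{}$: for the meromorphic piece, the explicit computation $\Sol^\Lambda(\cE^\phi)\simeq j_!\Lambda^\phi[\dim X]$ shows $\Sol^\Lambda(\cM(*D))$ is concentrated in degree $-\dim X$; the cone $C$ is supported on $D$, and one peels off the strata of $D$ one codimension at a time using $\int_\iota\dashv\iota^\dagger$ and the induction hypothesis. The $\geq 0$ half then follows from the commutation of $\Sol^\Lambda$ with duality, exactly as you propose. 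So your duality reduction for the second half is fine; it is the first half that needs the paper's hands-on argument rather than the shortcut through $\frakF$.
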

We also prove the corresponding results in algebraic setting: Mostly, the statements are corollaries of analytic cases, although we also have $f_*$ and $f_!$ for any morphism and can prove more stronger commutativity results for the Riemann--Hilbert functor (as in the case of algebraic regular Riemann--Hilbert correspondence).

As perverse sheaves have vast applications to mathematics including Hodge theory, topology, geometric representation theory, and etc, one can expect irregular perverse sheaves have such applications too, which are possible future works.

We also discuss a conjectural explanation of the appearance of Novikov ring using Fukaya category, which makes D'Agnolo--Kashiwara's approach closer to Tamarkin's one. Our main conjecture is the following (a slightly more precise form is presented in Section 11):
\begin{conjecture}
\begin{enumerate}
\item There exists a version of Fukaya category $\Fuk_{icnov}(T^*X)$ defined over finite Novikov ring $\Lambda$.
\item After taking derived category and reducing coefficients $\Lambda$ to $\bk$, we denote the resulting category by $D\Fuk_{ic}(T^*X)$. Then we have an equivalence $D\Fuk_{ic}(T^*X)\simeq D^b_{ic}(\Lambda_X)$. In particular, over $\bk=\bC$, we have the Fukaya categorical Riemann--Hilbert correspondence
\begin{equation}\label{equiv1.3}
D^b_{\hol}(\cD_X)\simeq D\Fuk_{ic}(T^*X).
\end{equation}
\end{enumerate}
\end{conjecture}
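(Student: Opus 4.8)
The plan is to build a microlocalization (sheaf quantization) functor directly from the Fukaya side to $D^b_{ic}(\Lambda_X)$, and then to deduce the $\cD$-module statement $(\ref{equiv1.3})$ by composing with the Riemann--Hilbert equivalence $(\ref{RHintro})$; so the real content is, first, to pin down the definition of $\Fuk_{icnov}(T^*X)$, and second, to prove that this functor becomes an equivalence once the morphism complexes are base-changed from $\Lambda$ to $\bk$. I would set up the correspondence so that part (1) of the conjecture (existence of $\Fuk_{icnov}$) is essentially a construction, and part (2) is a comparison theorem with the sheaf-theoretic side developed earlier in the paper.

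For the construction of $\Fuk_{icnov}(T^*X)$ I would follow Tamarkin's philosophy as realized by D'Agnolo--Kashiwara, but on the symplectic side: objects should be graded, spin, unobstructed Lagrangian branes in $T^*X$ whose behaviour at infinity is governed by a conic Lagrangian together with a finite set of ``exponential'' directions, modelled on the graphs of $d\,\mathrm{Re}(\varphi)$ for meromorphic functions $\varphi$ along a normal crossing divisor --- these are precisely the semiclassical shadows of the factors $\mathcal E^{\varphi}$ in the Hukuhara--Levelt--Turrittin decomposition, and over the divisor such a Lagrangian escapes to infinity, which is what forces the Novikov bookkeeping. Each brane carries a local system of $\Lambda$-modules, and the $A_\infty$-operations count holomorphic polygons weighted by $T^{\mathrm{area}}$, so that the order of growth of a Stokes datum is recorded as the $T$-divisibility of a continuation map. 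After forming the derived ($A_\infty$/dg) category one applies $-\otimes_{\Lambda}\bk$ to all morphism complexes, mirroring the passage from $\mathrm{hom}$-$\Lambda$-modules to $\ModIp(\Lambda_X)$, and (after the same exotic modification used to pass from $\ModIp$ to $\ModI$) one obtains $D\Fuk_{ic}(T^*X)$.

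For the equivalence I would argue in three stages. On the purely conic locus the microlocalization functor should restrict to a $\Lambda$-linear enhancement of the Nadler--Zaslow / Guillermou--Kashiwara--Schapira equivalence between the (wrapped or infinitesimal) Fukaya category of $T^*X$ and $D^b_c(\bk_X)$, matched with the part of $D^b_{ic}(\Lambda_X)$ carrying trivial grading; here I would invoke the existing microlocal sheaf technology of Guillermou, Jin(-Treumann), Kuo and Nadler. Next, for a single exponential brane $L_\varphi$ I would compute the Floer complex $\mathrm{hom}(T^*_xX, L_\varphi)$ with Novikov coefficients in an explicit local model near a point of the divisor and check that it reproduces the stalk at $x$ of the irregular constructible sheaf attached to $\mathcal E^{\varphi}$, with the Stokes jumps appearing exactly as $T$-power discrepancies between continuation maps across Stokes directions --- this is the step that genuinely produces the Novikov ring and is the heart of the matter. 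Finally, a generation-and-compatibility argument closes the proof: exponential and conic branes generate $\Fuk_{icnov}(T^*X)$, the functor is exact and intertwines the geometric operations on Lagrangians with the operations $\cHom,\otimes,f^{-1},f^{!},f_{!}$ on $D^b_{ic}(\Lambda_X)$ provided by part (1) of the earlier theorem, so full faithfulness on generators together with essential surjectivity onto a generating set of $D^b_{ic}(\Lambda_X)$ (the $\Lambda$-twisted standard and costandard objects along strata) upgrades to an equivalence, and $(\ref{equiv1.3})$ follows at once from $(\ref{RHintro})$.

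The main obstacle is foundational and lies in the construction of $\Fuk_{icnov}$: there is no off-the-shelf Fukaya category of $T^*X$ that simultaneously allows non-compact, non-exact, ``irregular'' Lagrangian branes, is defined over the \emph{finite} (non-complete) Novikov ring $\Lambda=\bk[\bR_{\ge 0}]$ rather than the usual complete Novikov field, and comes with enough control of the wrapping at infinity in the presence of exponential growth to keep Floer complexes finitely generated over $\Lambda$ and to identify the Floer action filtration with the Stokes/HLT filtration. Carrying this out --- presumably via a partially wrapped, Tamarkin-type model on $T^*(X\times\bR)$ with a stop at infinity encoding both the divisor and the exponents, followed by the coefficient reduction --- is the bulk of the work. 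A secondary difficulty is purely structural: one must check that $-\otimes_{\Lambda}\bk$ on the Fukaya side and the modification turning $\ModIp(\Lambda_X)$ into $\ModI(\Lambda_X)$ are compatible under the functor, so that the quasi-equivalence of enhanced categories descends to the stated equivalence of the reduced triangulated categories.
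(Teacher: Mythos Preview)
The statement you are addressing is explicitly a \emph{conjecture}, and the paper does not prove it. Section~11 is openly speculative: it says ``We hope more details or proofs will be discussed in future papers,'' gives a motivating example explaining why the Nadler--Zaslow category is insufficient, and then breaks the conjecture into three sub-conjectures (existence of a modified Fukaya category $\Fuk_m(T^*X)$, well-definedness of the Lagrangian $L_{\cM}$, and the final equivalence) without proving any of them. So there is no ``paper's own proof'' to compare your proposal against.

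That said, your outline is a reasonable and well-informed research program, and it is broadly consonant with the paper's discussion: the idea of encoding Stokes/HLT data in the Novikov filtration via $T^{\mathrm{area}}$, the use of graphs of $d\,\Re\varphi$ as the basic irregular branes, and the reduction $-\otimes_\Lambda\bk$ matching the passage from $\Mod^\bR$ to $\ModIp$ are all exactly in the spirit of Section~11. You also correctly flag the genuine obstacle --- the absence of a Fukaya category over the \emph{finite} Novikov ring with the required control at infinity --- which the paper leaves entirely open. Where you go further than the paper is in sketching the generation argument and the compatibility with six functors; these are plausible strategies but are not in the paper, and each would require substantial new work (in particular, there is currently no construction of $\Fuk_{icnov}$ to which one could apply such arguments). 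In short: your proposal is not wrong, but it is a program for proving an open conjecture rather than a proof, and the paper offers nothing stronger.
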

If the conjecture is true, one can imagine $K$-theory classes of objects of $D\Fuk_{ic}(T^*X)$ as an irregular version of characteristic cycle. In the same vein, their supports can be considered as an irregular version of microsupports, which are no longer conic. Hence one can also imagine a generalization of microlocal analysis. Note that a version (real blowed-up version) of the equivalence (\ref{equiv1.3}) is already appeared if one fixes a formal type~\cite{STWZ} (see also Remark~\ref{final}). Also, another connection between Riemann--Hilbert correspondence and holomorphic Fukaya category is conjectured by Kontsevich~\cite{KonIMJ-PRG}, whose relation to our conjecture is also of interest.

The organization of this paper is as follows: In section 2, we define and discuss the preliminary version of the category of sheaves with coefficients in $\Lambda$. In section 3 and 4, we define the (derived) category of sheaves with coefficients in $\Lambda$ over topological spaces with boundary and consider various (derived) functorial operations as in usual sheaf theory. In section 5, we define our main objects irregular constructible sheaves and again see various functorial operations. We also note that irregular constructible sheaves are actually sheaves. In section 6, we construct the functor $\frakF$ which relates irregular to usual sheaves. In section 7, we see the relationship between enhanced sheaves and our $\Lambda$-modules, which enables us to establish our version of Riemann--Hilbert correspondence using D'Agnolo--Kashiwara's theorem in section 8. We also prove some commutativity results for Riemann--Hilbert functor in section 8. In section 9, we define irregular perverse sheaves by using $\frakF$ and import results in the theory of perverse sheaves to irregular perverse sheaves. In section 10, we discuss algebraic version of the above story. In section 11, we give some discussions around Fukaya category and Riemann--Hilbert correspondence.

\section*{Acknowledgment}
The author would like to thank Andrea D'Agnolo whose lectures three times (at Kashiwa, Berkeley, and Padova) gave him many insights about irregular Riemann--Hilbert correspondence. He also kindly pointed out some mistakes in an early draft. The author also would like to thank Takahiro Saito for having many discussions on many aspects of Riemann--Hilbert correspondence (at least once a week), and Takuro Mochizuki for kindly answering some questions. This work was supported by World Premier International Research Center Initiative (WPI), MEXT, Japan and JSPS KAKENHI Grant Number JP18K13405.

\section{$\Lambda_X$-modules}
In this section, we introduce the ``finite Novikov ring'' $\Lambda$ and its modules. We fix a field $\bk\subset \bC$ once and for all.
\subsection{The ring $\Lambda$}
Let us see the set of non-negative real numbers $\bRz$ as a semigroup by the addition. We denote the associated polynomial ring by $\Lambda:=\Lambda_\bk:=\bk[\bRz]$. For $a\in \bRz$, let us denote the corresponding indeterminate by $T^a$. We set $\Gr^a\Lambda:=\bk\cdot T^a\subset \Lambda$ for $a\geq 0$, which gives an $\bR$-grading on $\Lambda$.

Let $\Mod^0(\Lambda)$ be the abelian category of $\bR$-graded $\Lambda$-modules with degree $0$ morphisms. For an $\bR$-graded $\Lambda$-module $V$, let $V\la a\ra$ be the grading shift of $M$ i.e., $\Gr^bV\la a\ra:=\Gr^{a+b} V$. We set
\begin{equation}\label{hom-space}
\Hom_{\Mod^\bR(\Lambda)}(V,W):=\bigoplus_{a\in \bR}\Hom_{\Mod^0(\Lambda)}(V, W\la a\ra)
\end{equation}
for $\bR$-graded $\Lambda$-modules. The category $\Mod^\bR(\Lambda)$ is consisting of $\bR$-graded modules with the hom-spaces defined by (\ref{hom-space}). We set
\begin{equation}
\Hom_{\Mod^\bR(\Lambda)}^a(V,W):=\Hom_{\Mod^0(\Lambda)}(V, W\la a\ra).
\end{equation}

\subsection{$\Lambda_X$-modules}
Let $X$ be a complex manifold. Let $\Lambda_X$ be the constant sheaf valued in $\Lambda$.
\begin{definition}
A sheaf of $\bR$-graded $\Lambda_X$-module is a sheaf valued in $\Mod^0(\Lambda)$. 
\end{definition}
Let $\tcV$ be a sheaf of $\bR$-graded $\Lambda_X$-modules. For an open subset $U\subset X$, we have an $\bR$-graded $\Lambda$-module $\tcV(U)$. For an inclusion $U\hookrightarrow V$, we have a map $\tcV(V)\rightarrow \tcV(U)$ which respects the grading $\Gr^a\tcV(V)\rightarrow \Gr^a\tcV(U)$. Hence we have a sheaf of $\bk$-vector spaces $\Gr^a\tcV$ and an isomorphism $\tcV\cong \bigoplus_{a}\Gr^a\tcV$ as sheaves valued in $\bk$-vector spaces.

We denote the category of $\bR$-graded $\Lambda_X$-modules by $\Mod^0(\Lambda_X)$.

\begin{proposition}
The category $\Mod^0(\Lambda_X)$ is abelian.
\end{proposition}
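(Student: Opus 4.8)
The plan is to reduce the statement to the standard fact that the category of sheaves on $X$ valued in a fixed abelian category is again abelian, once one knows that the target of the values---here $\Mod^0(\Lambda)$---is abelian and has the categorical properties (exactness of filtered colimits, existence of enough structure to sheafify) needed for the sheaf category to inherit abelianness. So the argument splits into two parts: first verify that $\Mod^0(\Lambda)$ is a (nice) abelian category, then transport this to $\Mod^0(\Lambda_X)$.

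First I would establish that $\Mod^0(\Lambda)$ is abelian. The point is that $\Mod^0(\Lambda)$ is exactly the category of $\bR$-graded modules over the $\bR$-graded ring $\Lambda$ with \emph{degree-zero} morphisms only; this is a Grothendieck abelian category. Indeed, kernels and cokernels are computed gradewise, i.e. for $f\colon V\to W$ one sets $\Gr^a(\ker f):=\ker(\Gr^a f)$ and $\Gr^a(\coker f):=\coker(\Gr^a f)$, and these are automatically $\Lambda$-submodules/quotient modules because the $\Lambda$-action is homogeneous (an element $T^c$ sends $\Gr^a$ to $\Gr^{a+c}$, hence preserves gradewise kernels and cokernels). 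Since a degree-zero morphism is mono (resp. epi) iff it is so in each grade, and since in the abelian category of $\bk$-vector spaces $\coim=\im$, the same holds gradewise and therefore in $\Mod^0(\Lambda)$; biproducts are the evident graded direct sums. This makes $\Mod^0(\Lambda)$ abelian, and it is moreover cocomplete with exact filtered colimits and has a generator (the collection $\{\Lambda\la a\ra\}_{a\in\bR}$), so it is Grothendieck.

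Next I would invoke the general principle that for any topological space $X$ and any Grothendieck abelian category $\cA$, the category $\Mod(\cA_X)$ of sheaves on $X$ with values in $\cA$ is again abelian (indeed Grothendieck): presheaves valued in $\cA$ form an abelian category with limits and colimits computed sectionwise, the sheafification functor exists because $\cA$ has the requisite (co)limits and filtered colimits are exact, sheafification is exact, and the sheaf category is the essential image of a reflective exact localization of the presheaf category, hence abelian with kernels computed sectionwise and cokernels obtained by sheafifying the sectionwise cokernel. Applying this with $\cA=\Mod^0(\Lambda)$ gives that $\Mod^0(\Lambda_X)$ is abelian. Concretely: for $f\colon\tcV\to\tcW$ in $\Mod^0(\Lambda_X)$ the kernel is the sectionwise kernel (already a sheaf), and the cokernel is the sheafification of $U\mapsto\coker(\tcV(U)\to\tcW(U))$; the identification $\tcV\cong\bigoplus_a\Gr^a\tcV$ from the paragraph preceding the proposition shows all of this is compatible with taking $\Gr^a$, so $\coim=\im$ reduces to the corresponding fact for sheaves of $\bk$-vector spaces.

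The only mildly delicate point---and the one I would spell out rather than the formal bookkeeping above---is that the \emph{grading is genuinely part of the data but imposes no obstruction}: one must check that sheafification of an $\bR$-graded presheaf can be performed gradewise, i.e. that $\Gr^a$ commutes with sheafification, which holds because $\Gr^a$ is exact and commutes with the filtered colimits over covers that define the $+$ construction, and because a degree-zero morphism of $\Lambda_X$-modules whose underlying map of $\bk_X$-modules is an isomorphism in each grade is an isomorphism of $\Lambda_X$-modules. Once this is noted, abelianness of $\Mod^0(\Lambda_X)$ follows formally from abelianness of $\Mod^0(\Lambda)$, which itself follows formally from abelianness of $\bk$-vector spaces together with homogeneity of the $\Lambda$-action. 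I do not expect any real obstacle; the content is entirely in recording that gradewise constructions are $\Lambda$-equivariant.
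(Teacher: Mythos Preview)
Your proof is correct and follows essentially the same approach as the paper: the paper's proof is the single sentence ``This is because $\Mod^0(\Lambda)$ is abelian,'' implicitly invoking the standard fact that sheaves valued in a (Grothendieck) abelian category form an abelian category, which is exactly what you spell out in detail. Your elaboration of why $\Mod^0(\Lambda)$ is abelian and why sheafification behaves well gradewise is all correct but goes well beyond what the paper records.
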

\begin{proof}
This is because $\Mod^0(\Lambda)$ is abelian.
\end{proof}

\begin{notation*}
$\tcV\la a\ra$ for $a\in \bR$ is $a$-shift of $\tcV$ as in the previous subsection. For $f\colon \tcV\rightarrow \tcW$, $f\la a\ra$ means the shifted morphism $\tcV\la a\ra\rightarrow \tcW\la a\ra$ 
\end{notation*}

%Let $\cF\colon \Open_X\rightarrow \Mod^0(\Lambda)$ be an $\bR$-graded $\Lambda_X$-module where $\Open_X$ be the site of open subsets of $X$. By composing the forgetful functor $[\cdot]\colon \Mod^0(\Lambda)\rightarrow \Mod^\bA(\Lambda)$, we get a sheaf of $\bA$-graded $\Lambda$-module $[\cF]:=[\cdot]\circ \cF$.

%\begin{definition}
%An $\bA$-graded $\Lambda$-module $\cV$ is {\em liftable} if there exists an $\bR$-graded $\Lambda_X$-module %$\tcV$ such that $\cV= [\tcV]$
%\end{definition}

%It is clear from the definition:
%\begin{lemma}
%For liftable $\Lambda_X$-modules $\cV=[\tcV], \cW=[\tcW]$, we have an isomorphism
%\begin{equation}
%\Hom_{\Mod^\bA(\Lambda_X)}(\cV, \cW)\cong \bigoplus_{c\in \bR}\Hom_{\Mod^0(\Lambda_X)}(\tcV, \tcW\la c\ra).
%\end{equation}
%where $\tcW\la c\ra$ is the grading shift by $c$.
%\end{lemma}

We set
\begin{equation}
\Hom_{\Mod^\bR(\Lambda_X)}(\tcV,\tcW):=\bigoplus_{a\in \bR}\Hom_{\Mod^0(\Lambda_X)}(\tcV, \tcW\la a\ra)
\end{equation}
and
\begin{equation}
\Hom^a_{\Mod^\bR(\Lambda_X)}(\tcV,\tcW):=\Hom_{\Mod^0(\Lambda_X)}(\tcV, \tcW\la a\ra).
\end{equation}
Note that $\Hom_{\Mod^\bR(\Lambda_X)}(\tcV, \tcW)$ is a $\Lambda$-module. We see $\bk$ as a $\Lambda$-module by setting $f\cdot c:=f|_{T=1}c$ for $f\in \Lambda$ and $c\in \bk$. We set
\begin{equation}
\Hom_{\ModIp(\Lambda_X)}([\tcV], [\tcW]):=\Hom_{\Mod^\bR(\Lambda_X)}(\tcV, \tcW)\otimes_\Lambda \bk.
\end{equation}

\begin{definition}
The category $\Mod_{pre}^\frakI(\Lambda_X)$ is defined by the following data: the set of objects is the set of $\bR$-graded $\Lambda_X$-modules. For an $\bR$-graded $\Lambda_X$-module $\tcV$, the corresponding object in $\ModIp(\Lambda_X)$ is denoted by $[\tcV]$.

The hom-space between $[\tcV]$ and $[\tcW]$ is $\Hom_{\ModIp(\Lambda_X)}([\tcV], [\tcW])$ defined in the above.

There is a canonical functor $[\cdot]\colon \Mod^0(\Lambda_X)\rightarrow \ModIp(\Lambda_X)$ which is the identity on objects and takes a morphism $f$ to $f\otimes 1$.
\end{definition}

\begin{definition}
For an object $\cV$ in $\ModIp(\Lambda_X)$, a lift is a pair of an object $\tcV\in \Mod^0(\Lambda_X)$ and an isomorphism $[\tcV]\xrightarrow{\cong}\cV$. In the following, we usually do not write this isomorphism explicitly for simplicity.
\end{definition}

\begin{proposition}\label{1.9}
The category $\ModIp(\Lambda_X)$ is an abelian category.
\end{proposition}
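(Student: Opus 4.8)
The plan is to deduce abelianness of $\ModIp(\Lambda_X)$ from abelianness of $\Mod^0(\Lambda_X)$ by a careful analysis of what the operation $\otimes_\Lambda \bk$ does to hom-spaces. The point is that $\ModIp(\Lambda_X)$ has the same objects as $\Mod^\bR(\Lambda_X)$ but the hom-spaces are reduced modulo the $\Lambda$-action specializing $T=1$. First I would observe that $\ModIp(\Lambda_X)$ is additive: the zero object and finite direct sums are inherited verbatim from $\Mod^0(\Lambda_X)$ since $\otimes_\Lambda\bk$ is additive and the biproduct $[\tcV]\oplus[\tcW] := [\tcV\oplus\tcW]$ still satisfies the universal property after the base change on homs (the structure maps and their defining identities survive applying $[\cdot]$). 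So the real content is the existence of kernels and cokernels and the condition that $\coim \to \im$ is an isomorphism.

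Second, given $\bar f\colon [\tcV]\to[\tcW]$, I would lift it: by the very definition of the hom-space as $\Hom_{\Mod^\bR(\Lambda_X)}(\tcV,\tcW)\otimes_\Lambda\bk$, every morphism in $\ModIp$ is represented by some $\bR$-graded (possibly nonzero-degree) morphism $f = \sum_{a} f_a$ with $f_a \in \Hom^a_{\Mod^\bR(\Lambda_X)}(\tcV,\tcW)$, i.e. a genuine morphism $f\colon \tcV \to \bigoplus_a \tcW\langle a\rangle$ (a finite sum). I would then define $\ker(\bar f) := [\ker(f)]$ and $\coker(\bar f) := [\coker(f)]$ using kernels and cokernels computed in the abelian category $\Mod^0(\Lambda_X)$ (after absorbing the grading shifts, which is harmless). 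The work is to check these have the right universal properties \emph{after} tensoring homs down to $\bk$, and crucially that they are independent of the chosen lift $f$ of $\bar f$: two lifts differ by an element of $T^a\cdot(\text{something}) - (\text{something})$-type relations, i.e. by $(T^a - 1)g$ for various $a, g$; multiplication by $T^a$ on $\tcV$ is an isomorphism of the underlying graded $\Lambda_X$-module (it shifts degree and is invertible since $T^{-a}$ acts too on the $\Lambda$-module — wait, $T^{-a}\notin\Lambda$), so more carefully: $T^a$ acts injectively but its effect on kernels/cokernels is absorbed by a grading shift, and after $\otimes_\Lambda\bk$ the two resulting subobjects/quotients become canonically identified. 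This independence-of-lift verification is the main obstacle, and it is where the ``exotic'' nature of the hom-spaces bites.

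Third, with kernels and cokernels in hand I would verify the abelian axiom: for $\bar f\colon [\tcV]\to[\tcW]$, the canonical map $\coim(\bar f) = \coker(\ker\bar f \to [\tcV]) \to \ker(\coker(\bar f)\to[\tcW]) = \im(\bar f)$ is an isomorphism. Since all of these are images under $[\cdot]$ of the corresponding constructions for a lift $f$ in $\Mod^0(\Lambda_X)$ (up to grading shifts), and $[\cdot]$ sends isomorphisms to isomorphisms, this reduces to the already-known fact that $\Mod^0(\Lambda_X)$ (equivalently $\Mod^\bR(\Lambda_X)$, which differs only in hom-spaces) is abelian. One subtlety to address: $[\cdot]$ is not faithful, so ``$[\cdot]$ of an iso is an iso'' is the correct direction, but I must make sure the comparison morphism in $\ModIp$ is genuinely $[\cdot]$ applied to the comparison morphism in $\Mod^0$ — this follows because the comparison morphism is built functorially out of the universal properties, which I have shown transfer along $[\cdot]$.

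Alternatively — and this may be the cleaner route to actually write up — I would exhibit $\ModIp(\Lambda_X)$ as a quotient category of $\Mod^\bR(\Lambda_X)$ (same objects, homs tensored down by the exact-on-the-right, and here actually flat-enough, base change $-\otimes_\Lambda\bk$), and invoke a general principle: if $\mathcal{A}$ is abelian and one has an additive ``congruence'' on morphisms compatible with composition such that the quotient still has kernels and cokernels represented by those of $\mathcal{A}$, the quotient is abelian. Concretely, $\Mod^\bR(\Lambda_X)$ is a $\Lambda$-linear category that is abelian, and $\ModIp(\Lambda_X) = \Mod^\bR(\Lambda_X)\otimes_\Lambda \bk$ is the base change of the linear structure along $\Lambda\to\bk$; since $\bk$ is a field and the functor $[\cdot]$ is essentially surjective and full, one shows that images and coimages are preserved. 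The main obstacle remains the same: confirming that the subobject lattice is not disturbed by the hom-space reduction — i.e. that a morphism $\bar f$ which becomes zero after $\otimes_\Lambda\bk$ has $[\tcV]$ itself as kernel — which one checks directly from $f$ being annihilated by some $(T^{a_1}-1)\cdots$, hence factoring in a way that the induced graded kernel is all of $\tcV$ after the identification.
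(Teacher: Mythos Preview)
Your overall strategy --- lift to $\Mod^0(\Lambda_X)$, compute kernels/cokernels there, show independence of lift, and deduce $\coim\cong\im$ --- matches the paper's. But there is a genuine error in your lifting step. You lift $\bar f$ to $f=\sum_a f_a$ viewed as a morphism $\tcV\to\bigoplus_a\tcW\langle a\rangle$ in $\Mod^0(\Lambda_X)$ and set $\ker(\bar f):=[\ker f]$. The kernel of that map is $\bigcap_a\ker f_a$, which is not the right object. Concretely, take $\tcV=\tcW=\Lambda$ (over a point) and $f_0=\id\in\Hom^0$, $f_1=T^1\in\Hom^1$; then $\bar f=[\id]-[T^1]=0$ in $\ModIp$, so its kernel must be $[\Lambda]$, yet $\ker f_0\cap\ker f_1=0$. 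Your remark that the grading shifts are ``harmless'' is exactly where the argument breaks.

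The paper's repair is Lemma~\ref{1.11}: any $\bar f$ admits a \emph{homogeneous} lift $f'=\sum_a T^{\,b-a}f_a\in\Hom^b$ for $b$ large, a single morphism $\tcV\to\tcW\langle b\rangle$ in $\Mod^0(\Lambda_X)$; in the example above $f'=T^1\cdot\id-T^1=0$, giving the correct kernel. Independence of the homogeneous lift is then the content of Lemmas~\ref{1.12} and~\ref{1.13}: two homogeneous lifts satisfy $T^{b_1}f_1=T^{b_2}f_2$, and one checks directly that $[\ker f']\cong[\ker T^af']$ by observing that for any $g\colon\tcP\to\ker T^af'$ the morphism $T^ag$ factors through $\ker f'$ and represents the same class. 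Your description of two lifts differing by ``$(T^a-1)g$'' is correct for arbitrary lifts, but it is the reduction to homogeneous lifts (where the simpler relation $T^{b_1}f_1=T^{b_2}f_2$ holds) that makes the comparison of kernels tractable. Your alternative approach via base-changing a $\Lambda$-linear abelian category does not work as stated either, since $\Mod^\bR(\Lambda_X)$ with the graded hom is not itself abelian for the same reason.
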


To prove this proposition, we prepare some lemmas. 

\begin{lemma}\label{1.105}
Let $V$ be an $\bR$-graded $\Lambda$-module. Let $s$ be a homogeneous element of $V$. If $T^a\cdot s \neq 0$ for any $a\in \bR\geq 0$, then $s\otimes 1$ is nonzero in $V\otimes_\Lambda \bk$.
\end{lemma}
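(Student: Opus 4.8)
The plan is to prove the contrapositive in a slightly reorganized form: we must show that if $s$ is a homogeneous element of $V$ with $s\otimes 1 = 0$ in $V\otimes_\Lambda\bk$, then $T^a\cdot s = 0$ for some $a\geq 0$. First I would unwind the tensor product $V\otimes_\Lambda\bk$ using the presentation $\bk = \Lambda/\frakm$, where $\frakm$ is the kernel of the augmentation $\Lambda\to\bk$, $T^a\mapsto 1$. Concretely, $\frakm$ is generated as a $\Lambda$-module (indeed as a $\bk$-vector space, after a change of basis) by the elements $T^a - 1$ for $a > 0$, so $V\otimes_\Lambda\bk = V/\frakm V$ and $\frakm V = \sum_{a>0}(T^a-1)V$. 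Thus $s\otimes 1 = 0$ means precisely that $s \in \sum_{a>0}(T^a-1)V$.

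Next I would translate this membership into a finite expression: there exist finitely many elements $v_1,\dots,v_n\in V$ and positive reals $a_1,\dots,a_n$ with $s = \sum_{i=1}^n (T^{a_i}-1)v_i$. Since $V$ is $\bR$-graded and $s$ is homogeneous, I can replace each $v_i$ by its homogeneous components and regroup, so without loss of generality I may assume the whole identity takes place inside a finitely generated $\bR$-graded $\Lambda$-submodule $V'\subseteq V$ containing $s$; in fact I can take $V'$ generated by finitely many homogeneous elements, namely the homogeneous components of the $v_i$. The key structural point is then local: in $V'$, the element $s$ lies in $\frakm V'$, and I want to find $a\geq 0$ with $T^a s = 0$ in $V'$ (hence in $V$).

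The cleanest route for the last step is a grading/valuation argument. Pick a representative equation $s = \sum_i (T^{a_i}-1)v_i$ and let $A = \max_i a_i$ (or better, let $N$ be large enough that all $a_i \le N$). Multiply both sides by $T^{N}$ for suitable large $N$; using $T^{N} = T^{N-a_i}T^{a_i}$ one rewrites each $T^{N}(T^{a_i}-1)v_i = (T^{N} - T^{N-a_i})v_i$, and iterating/telescoping one expresses $T^{M}s$ for large $M$ purely in terms of applications of $T$-multiplications that shift gradings strictly upward. Since each $v_i$ has a top nonzero graded degree, comparing the top graded pieces forces, after multiplying $s$ by a sufficiently high power $T^a$, that $T^a s$ has no room left and must vanish. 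Put differently: in a finitely generated $\bR$-graded $\Lambda$-module, the gradings of nonzero homogeneous elements obtained from a fixed finite generating set lie in a set bounded above along each $\Lambda[T]$-orbit's relevant window, so the relation $s\in\frakm V'$ cannot persist after enough upward shifts unless $T^a s = 0$.

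The main obstacle I anticipate is making this last comparison fully rigorous: $\Lambda$ is not Noetherian and $V$ need not be finitely generated globally, so I must be careful that the reduction to a finitely generated $\bR$-graded submodule $V'$ really does let me control degrees, and that "top nonzero graded piece" makes sense — this requires knowing that in a finitely generated $\bR$-graded $\Lambda$-module the support of the grading of any element is bounded above (which holds because $T^a$ has degree $a > 0$, so a finitely generated module has graded pieces supported in $\bigcup_i (d_i + \bRz)$ for generator degrees $d_i$, and the relation $s=\sum(T^{a_i}-1)v_i$ forces an impossible descent if $T^a s\ne 0$ for all $a$). Once the degree bookkeeping is set up correctly, the rest is routine telescoping with the geometric-series-type identity $T^{Na}-1 = (T^a-1)(T^{(N-1)a}+\cdots+T^a+1)$, which lets me replace an arbitrary positive exponent by a common one and then push everything to high degree.
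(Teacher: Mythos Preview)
Your route differs markedly from the paper's. The paper argues directly: since $s$ is homogeneous and $T^as\neq0$ for all $a\geq0$, one has $\ell s\neq0$ for every nonzero $\ell\in\Lambda$ (the summands $c_aT^as$ lie in distinct degrees and cannot cancel), so $\Lambda\cdot s\hookrightarrow V$ is an inclusion of a free rank-one module; the paper then asserts this inclusion survives $(-)\otimes_\Lambda\bk$, giving $s\otimes1\neq0$. Your approach is the contrapositive, trying to extract $T^As=0$ from a finite expression $s=\sum_i(T^{a_i}-1)v_i$.

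Your key step has a genuine gap. After reducing to homogeneous $v_i$, the telescoping/degree argument you sketch does not force any vanishing: multiplying the relation by $T^N$ simply shifts every degree up by $N$, so there is no ``top'' to overflow, and the identity $T^N(T^{a_i}-1)=T^{N+a_i}-T^N$ yields nothing new. More tellingly, your sketch never uses the homogeneity of $s$ in an essential way, yet this is crucial: for inhomogeneous $s$ the conclusion is false (take $V=\Lambda$ and $s=T-1\in\mathfrak{m}V$; then $T^as\neq0$ for all $a$). What does work, and is close in spirit to your setup, is this: form the filtered colimit $V_\infty:=\varinjlim_a V_a$ along the maps $T^{b-a}\colon V_a\to V_b$. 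Then $\Lambda$ acts on $V_\infty$ through $\bk$, so the canonical $\Lambda$-linear map $V\to V_\infty$ (sending each homogeneous piece to its class) factors through $V\otimes_\Lambda\bk$; since the class of $s$ in $V_\infty$ vanishes exactly when some $T^As=0$, the contrapositive follows in one line.
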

\begin{proof}
Note that $l\cdot s$ is nonzero for any $l\in \Lambda\bs\{0\}$. We have an inclusion $\Lambda\cdot s\hookrightarrow V$.  Since the LHS is a free $\Lambda$-module, the tensoring $(-)\otimes_\Lambda \bk$ preserves the inclusion. Hence $s\otimes 1$ is nonzero in $V\otimes_\Lambda \bk$.
\end{proof}

\begin{lemma}\label{1.10}
Suppose that $\cV=[\tilde{\cV}], \cW=[\tilde{\cW}]$. For $f\in \Hom^c_{\Mod^\bR(\Lambda_X)}(\tcV,\tcW)$, if $T^af\neq 0$ for any $a\in\bRz$, then $f$ is nonzero as an element in $\Hom_{\ModIp(\Lambda_X)}(\cV,\cW)$.
\end{lemma}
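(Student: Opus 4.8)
The plan is to obtain this as a direct application of Lemma~\ref{1.105}, applied not to a module of sections but to the hom-space itself, viewed as an $\bR$-graded $\Lambda$-module. First I would make the grading and $\Lambda$-action on
\[
M:=\Hom_{\Mod^\bR(\Lambda_X)}(\tcV,\tcW)=\bigoplus_{a\in\bR}\Hom^a_{\Mod^\bR(\Lambda_X)}(\tcV,\tcW)
\]
explicit: the summand $\Hom^a_{\Mod^\bR(\Lambda_X)}(\tcV,\tcW)=\Hom_{\Mod^0(\Lambda_X)}(\tcV,\tcW\la a\ra)$ is the degree-$a$ piece $\Gr^aM$, and multiplication by $T^b\in\Lambda$ carries $g\in\Gr^aM$ to the composite $\tcV\xrightarrow{\,g\,}\tcW\la a\ra\xrightarrow{\,T^b\la a\ra\,}\tcW\la a+b\ra$, which lies in $\Gr^{a+b}M$. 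Hence $M$ is an $\bR$-graded $\Lambda$-module in the sense of the previous subsection, and this is exactly the $\Lambda$-module structure entering the definition $\Hom_{\ModIp(\Lambda_X)}(\cV,\cW)=M\otimes_\Lambda\bk$; under the canonical map $M\to M\otimes_\Lambda\bk$ the morphism $f$ is sent to $f\otimes 1$, which is by definition the class of $f$ in $\Hom_{\ModIp(\Lambda_X)}(\cV,\cW)$.

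With this identification the conclusion is immediate: $f$ is a homogeneous element of the $\bR$-graded $\Lambda$-module $M$, of degree $c$, and the hypothesis says precisely that $T^a\cdot f\neq 0$ in $M$ for every $a\in\bRz$. Applying Lemma~\ref{1.105} with $V=M$ and $s=f$ then gives that $f\otimes 1$ is nonzero in $M\otimes_\Lambda\bk=\Hom_{\ModIp(\Lambda_X)}(\cV,\cW)$, i.e.\ $f$ is nonzero in $\Hom_{\ModIp(\Lambda_X)}(\cV,\cW)$, as asserted.

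There is no genuine obstacle; the only step requiring a little care is the bookkeeping in the first paragraph, namely verifying that the grading of $M$ and the $T$-action on $M$ coincide with those implicit in the construction of $\ModIp(\Lambda_X)$, so that the hypothesis ``$T^af\neq 0$'' is literally the input consumed by Lemma~\ref{1.105}. If one prefers not to quote Lemma~\ref{1.105}, one can instead rerun its proof in this setting: homogeneity of $f$ together with the hypothesis forces $l\cdot f\neq 0$ for all $l\in\Lambda\bs\{0\}$, so $\Lambda\cdot f$ is a free rank-one $\Lambda$-submodule of $M$, and one concludes by tracking the image of $f\otimes 1$ along $(\Lambda\cdot f)\otimes_\Lambda\bk\to M\otimes_\Lambda\bk$.
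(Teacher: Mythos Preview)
Your proof is correct and is exactly the paper's approach: apply Lemma~\ref{1.105} with $V=\Hom_{\Mod^\bR(\Lambda_X)}(\tcV,\tcW)$ and $s=f$. Your additional paragraph making the $\bR$-grading and $\Lambda$-action on this hom-module explicit is sound bookkeeping that the paper leaves implicit.
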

\begin{proof}
This is a case of Lemma \ref{1.105} by setting $V=\Hom_{\Mod^\bR(\Lambda_X)}(\tcV, \tcW)$ and $s:=f$ 
\end{proof}

\begin{lemma}\label{1.11}Suppose that $\cV=[\tilde{\cV}], \cW=[\tilde{\cW}]$.
For $f\in \Hom_{\ModIp(\Lambda_X)}(\cV,\cW)$, there exists $b\in \bR$ such that there exists 
\begin{equation}
f'\in\Hom^b_{\Mod^\bR(\Lambda_X)}(\tcV,\tcW)
\end{equation}
which is a lift of $f$.
\end{lemma}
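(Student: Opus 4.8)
The plan is to reduce the assertion to a transparent statement about the reduction modulo $\frakI$ of the single graded $\Lambda$-module $V:=\Hom_{\Mod^\bR(\Lambda_X)}(\tcV,\tcW)$, where $\frakI:=\ker(\Lambda\to\bk)$ and $\Lambda\to\bk$ is the surjective ring map $T^a\mapsto 1$. First I would record the purely algebraic fact that $\frakI$ is spanned over $\bk$ by the elements $T^c-1$ with $c\in\bRz$: a finite sum $\sum_c d_cT^c$ lies in $\frakI$ exactly when $\sum_c d_c=0$, and then $\sum_c d_cT^c=\sum_c d_c(T^c-1)$. Since $\bk\cong\Lambda/\frakI$ as a $\Lambda$-module, one has $V\otimes_\Lambda\bk=V/\frakI V$, so that $w\equiv T^cw$ modulo $\frakI V$ for every $w\in V$ and every $c\in\bRz$. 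By definition $\Hom_{\ModIp(\Lambda_X)}(\cV,\cW)=V/\frakI V$.

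The key steps are then the following. Given $f\in\Hom_{\ModIp(\Lambda_X)}(\cV,\cW)$, choose any representative $v\in V$ of $f$. Using the decomposition $V=\bigoplus_{a\in\bR}\Hom^a_{\Mod^\bR(\Lambda_X)}(\tcV,\tcW)$, write $v=\sum_{a\in S}v_a$ with $S\subset\bR$ finite and each $v_a\in\Hom^a_{\Mod^\bR(\Lambda_X)}(\tcV,\tcW)$. Recalling that multiplication by $T^c$ (with $c\in\bRz$) carries $\Hom^a_{\Mod^\bR(\Lambda_X)}(\tcV,\tcW)$ into $\Hom^{a+c}_{\Mod^\bR(\Lambda_X)}(\tcV,\tcW)$, I would fix any $b\in\bR$ with $b\geq\max S$ (and take $b=0$, $f'=0$ if $v=0$), and set $f':=\sum_{a\in S}T^{\,b-a}v_a$. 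Then $f'$ is homogeneous of degree $b$, and applying the congruence $v_a\equiv T^{\,b-a}v_a$ modulo $\frakI V$ termwise yields $f'\equiv v$ modulo $\frakI V$. Therefore the image of $f'$ in $V/\frakI V=\Hom_{\ModIp(\Lambda_X)}(\cV,\cW)$ equals $f$; that is, $f'\in\Hom^b_{\Mod^\bR(\Lambda_X)}(\tcV,\tcW)$ is a lift of $f$, as required.

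I do not expect a genuine obstacle here: the argument is entirely formal. The only points deserving a moment's care are the identification of the $\Lambda$-action on $V$ with its grading (so that multiplying by $T^c$ raises degree by exactly $c$) and the finiteness of the decomposition $v=\sum_a v_a$; both are immediate from the definitions of $\Mod^\bR(\Lambda_X)$ and of the $\Lambda$-module structure on hom-spaces. (The lift is of course far from unique — one may always replace $f'$ by $T^cf'\in\Hom^{b+c}_{\Mod^\bR(\Lambda_X)}(\tcV,\tcW)$ — so there is no canonical choice of $b$; this non-uniqueness is harmless for the intended use of the lemma.)
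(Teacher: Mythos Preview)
Your proof is correct and is essentially the same as the paper's: both take a representative, decompose it into finitely many homogeneous components $v_a$, choose $b\geq\max\{a\}$, and set $f'=\sum_a T^{b-a}v_a$, using that $T^{b-a}$ acts as the identity after $\otimes_\Lambda\bk$. Your write-up is slightly more explicit about the identification $V\otimes_\Lambda\bk=V/\frakI V$ and why $T^c\equiv 1$ there, but the argument is the same.
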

\begin{proof}
Take a representative $f=\bigoplus_{c}f_c\in  \bigoplus_{c}\Hom^c_{\Mod^\bR(\Lambda_X)}(\tcV,\tcW)$. Since $f_c$ is zero except for finite $c$, we can take $b$ to be a real number which is greater than or equal to the maximum of $c$ for which $f_c$ is nonzero. Then we set 
\begin{equation}
f':=\bigoplus_{c}T^{b-c}f_c\in \Hom^b_{\Mod^\bR(\Lambda_X)}(\tcV,\tcW).
\end{equation}
Since $T^{b-c}=1$ on $\Hom_{\ModIp(\Lambda_X)}(\cV,\cW)$, the element $f'$ represents $f$.
\end{proof}

\begin{lemma}\label{1.12}Suppose that $\cV=[\tilde{\cV}], \cW=[\tilde{\cW}]$.
Let $f_i\in \Hom^{b_i}_{\Mod^\bR(\Lambda_X)}(\tcV,\tcW)$ $(i=1,2)$ be lifts of $f\in \Hom_{\ModI_{pre}(\Lambda_X)}(\cV, \cW)$. Then there exists $b_i\in \bRz$ such that $T^{b_1}f_1=T^{b_2}f_2$.
\end{lemma}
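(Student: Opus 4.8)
The plan is to unwind the meaning of ``lift'' and then reduce to Lemma~\ref{1.105}. Write $V:=\Hom_{\Mod^\bR(\Lambda_X)}(\tcV,\tcW)$, which, as noted just before Lemma~\ref{1.10}, is an $\bR$-graded $\Lambda$-module with $\Gr^aV=\Hom^a_{\Mod^\bR(\Lambda_X)}(\tcV,\tcW)$ and with $T^c$ acting by composition with the action of $T^c$ on $\tcW$. By definition $\Hom_{\ModIp(\Lambda_X)}(\cV,\cW)=V\otimes_\Lambda\bk$, and the hypothesis that $f_i$ is a lift of $f$ means precisely that the homogeneous element $f_i\in\Gr^{b_i}V$ maps to $f$ under the quotient $V\to V\otimes_\Lambda\bk$ (this is the same notion of ``represents'' used in the proof of Lemma~\ref{1.11}).

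\textbf{Equalizing the degrees.} The two lifts are homogeneous of a priori different degrees, so they cannot be subtracted directly; I would first fix this. Assume without loss of generality, swapping $f_1$ and $f_2$ if necessary, that $b_1\le b_2$, and replace $f_1$ by $T^{b_2-b_1}f_1\in\Gr^{b_2}V$. Since $T^{b_2-b_1}$ acts as the identity on $\bk$, the defining relation of the tensor product gives $(T^{b_2-b_1}f_1)\otimes 1=f_1\otimes(T^{b_2-b_1}\cdot 1)=f_1\otimes 1=f$, so $T^{b_2-b_1}f_1$ is again a lift of $f$. Hence $g:=T^{b_2-b_1}f_1-f_2$ is a \emph{homogeneous} element of $\Gr^{b_2}V$ with $g\otimes 1=f-f=0$ in $V\otimes_\Lambda\bk$.

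\textbf{Applying Lemma~\ref{1.105}.} Now apply the contrapositive of Lemma~\ref{1.105} to $s=g$: since $g\otimes 1=0$, it is impossible that $T^ag\ne 0$ for every $a\in\bRz$, so there exists $a_0\in\bRz$ with $T^{a_0}g=0$, i.e.\ $T^{a_0+b_2-b_1}f_1=T^{a_0}f_2$. Setting $c_1:=a_0+(b_2-b_1)\ge 0$ and $c_2:=a_0\ge 0$ (these are the $b_1,b_2\in\bRz$ asserted in the statement) yields $T^{c_1}f_1=T^{c_2}f_2$, which is what we want.

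\textbf{Main point.} There is no genuine obstacle; the whole content is the $T$-shift in the second step, which replaces $f_1$ by a degree-$b_2$ lift so that the difference with $f_2$ becomes homogeneous and Lemma~\ref{1.105} becomes applicable. The only thing to be careful about is the orientation of the $\Lambda$-action on $V$ (composition with $T^c\colon\tcW\to\tcW\la c\ra$), so that ``$T^{a_0}$ annihilates $g$'' translates literally into the claimed equality of morphisms of $\bR$-graded $\Lambda_X$-modules.
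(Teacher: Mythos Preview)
Your proof is correct and follows essentially the same route as the paper's: equalize the degrees by multiplying one lift by a suitable power of $T$, then observe that the difference represents $0$ and so is annihilated by some $T^{a_0}$. The only cosmetic difference is that the paper cites Lemma~\ref{1.10} (the Hom-space specialization of Lemma~\ref{1.105}) whereas you invoke Lemma~\ref{1.105} directly on $V=\Hom_{\Mod^\bR(\Lambda_X)}(\tcV,\tcW)$.
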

\begin{proof}
By multiplying some $T^a$'s we can assume that $a_1=a_2$. Since $f_1-f_2$ represents $0$ in $\Hom_{\Mod_{pre}^{\frakI}(\Lambda_X)}(\cV,\cW)$, there exists $b\in \bRz$ such that $T^b(f_1-f_2)=0$ by Lemma~\ref{1.10}.
\end{proof}

\begin{lemma}\label{1.13}
Suppose that $\cV=[\tilde{\cV}], \cW=[\tilde{\cW}]$. For $f\in \Hom_{\ModIp(\Lambda_X)}(\cV,\cW)$, let $f'\in \Hom^{b}_{\Mod^\bR(\Lambda_X)}(\tcV,\tcW)$ be a lift. We view $f'$ as a degree $0$ morphism between $\tcV$ and $\tcW\la b\ra$ in $\Mod^0(\Lambda_X)$. The objects $[\ker(f')]$, $[\im(f')]$, $[\coker(f')]$, and $[\coim(f')]$ in $\ModIp(\Lambda_X)$ only depend on $f$.
\end{lemma}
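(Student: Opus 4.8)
The plan is to show that each of the four constructions is independent of the choice of lift $f'$, in two stages: first fixing the lifts $\tcV, \tcW$ and varying only the morphism $f'$ among its lifts, and then (in principle) varying the lifts of $\cV$ and $\cW$ themselves. For the first stage, suppose $f_1'\in\Hom^{b_1}_{\Mod^\bR(\Lambda_X)}(\tcV,\tcW)$ and $f_2'\in\Hom^{b_2}_{\Mod^\bR(\Lambda_X)}(\tcV,\tcW)$ are two lifts of the same $f$. By Lemma~\ref{1.12} there are $c_1,c_2\in\bRz$ with $T^{c_1}f_1'=T^{c_2}f_2'$ as morphisms $\tcV\to\tcW\la b_1+c_1\ra=\tcW\la b_2+c_2\ra$ (after matching degrees, so $b_1+c_1=b_2+c_2=:b$). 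So it suffices to compare the kernel (etc.) of $f_i'$ with that of $T^{c_i}f_i'$, i.e. to compare $\ker(g)$ and $\ker(T^c g)$ for a degree-$0$ morphism $g\colon\tcV\to\tcW\la b'\ra$ and $c\geq 0$.

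The key observation is that multiplication by $T^c$ is an injective endomorphism of any $\bR$-graded $\Lambda_X$-module (it is injective on each stalk because $\Lambda$ is an integral domain and $T^c$ is a nonzerodivisor), hence $T^c\colon\tcW\la b'\ra\to\tcW\la b'+c\ra$ is a monomorphism in $\Mod^0(\Lambda_X)$. Therefore $\ker(T^c g)=\ker(g)$ on the nose, and $\coim(T^c g)=\coim(g)$. For the image and cokernel, $\im(T^c g)=T^c\cdot\im(g)$, and the monomorphism $T^c\colon\im(g)\la 0\ra\to\im(g)\la c\ra$ becomes an isomorphism after applying $[\,\cdot\,]$: indeed $T^c$ is itself a lift of $T^c=1$, so $[\im(g)]\cong[\im(T^c g)]$ in $\ModIp(\Lambda_X)$, and similarly $[\coker(g)]\cong[\coker(T^c g)]$ since cokernel is a right-exact functor and $[\,\cdot\,]$ is compatible with the relevant constructions. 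Thus the four objects $[\ker f'], [\im f'], [\coker f'], [\coim f']$ are unchanged under replacing $f'$ by $T^c f'$, and hence depend only on $f$ once $\tcV,\tcW$ are fixed.

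For full independence one also needs invariance under changing the lifts $\tcV\rightsquigarrow\tcV'$, $\tcW\rightsquigarrow\tcW'$ (connected by isomorphisms in $\ModIp(\Lambda_X)$); this is exactly the content of the commented-out Lemma~\ref{depend}, whose argument transports $f'$ along the shift-isomorphisms $f_\cV, g_\cV, f_\cW, g_\cW$ and uses the same $T^{c}$-cancellation trick to see that the resulting kernel/image/etc. agree after applying $[\,\cdot\,]$. Concretely: given the lift $f'\colon\tcV\to\tcW\la b\ra$, form $f_\cW\la\cdots\ra\circ f'\la\cdots\ra\circ g_\cV\colon\tcV'\to\tcW'\la\cdots\ra$; since pre- and post-composing with the monomorphisms/isomorphisms $g_\cV, f_\cW$ alters $\ker,\im,\coker,\coim$ only up to grading shift and up to $T^\bullet$, and these differences die under $[\,\cdot\,]$ by the first two paragraphs, the four objects are identified with those built from $\tcV',\tcW'$.

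The main obstacle is bookkeeping rather than conceptual: one must carefully track grading shifts through all compositions and confirm that every auxiliary morphism introduced (the $T^c$'s, the shift-equivalences $f_\cV,g_\cV,\dots$) is either a genuine monomorphism in $\Mod^0(\Lambda_X)$ or becomes invertible after $[\,\cdot\,]$, so that the induced maps on $\ker,\im,\coker,\coim$ are isomorphisms in $\ModIp(\Lambda_X)$. The cleanest route is to isolate once and for all the statement ``$[\,\cdot\,]$ sends $T^c\colon\tcM\to\tcM\la c\ra$ to an isomorphism'' and the statement ``for a monomorphism $\iota$ in $\Mod^0(\Lambda_X)$, $\ker(g\circ\iota)\cong\ker(g)$ etc.'', and then assemble the proof by repeated application; I expect no difficulty beyond this, since $\Lambda$ being a graded integral domain makes all the relevant $\Lambda_X$-module computations transparent at the level of stalks.
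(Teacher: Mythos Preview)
Your reduction via Lemma~\ref{1.12} to comparing $g$ with $T^c g$ is correct and matches the paper's first step. However, your ``key observation'' is false: multiplication by $T^c$ is \emph{not} injective on an arbitrary $\bR$-graded $\Lambda$-module. The ring $\Lambda$ being an integral domain only guarantees that $T^c$ is a nonzerodivisor \emph{in $\Lambda$}; it says nothing about torsion in modules. For a concrete counterexample, take the module $\Lambda/T^a\Lambda$ for any $a>0$: then $T^c$ kills the class of $T^b$ whenever $b<a\leq b+c$. Consequently $\ker(T^c g)$ can be strictly larger than $\ker(g)$ in $\Mod^0(\Lambda_X)$, and your claim that they coincide ``on the nose'' fails. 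The same torsion issue undermines your argument for $\im$ and $\coker$: the map $T^c\colon \im(g)\to T^c\cdot\im(g)$ need not be injective.

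The paper does not attempt to identify $\ker f'$ and $\ker T^a f'$ in $\Mod^0(\Lambda_X)$; it only shows they become isomorphic \emph{after} applying $[\,\cdot\,]$. Concretely, the inclusion $\ker f'\hookrightarrow\ker T^a f'$ induces a comparison map on $\Hom_{\ModIp(\Lambda_X)}([\tcP],-)$, and one checks surjectivity (any $g\colon\tcP\to\ker T^a f'$ has $T^a g$ factoring through $\ker f'$, and $T^a g$ represents the same morphism as $g$ in $\ModIp$) and injectivity (via Lemma~\ref{1.10}). The point is precisely that the discrepancy between $\ker f'$ and $\ker T^a f'$ is $T^a$-torsion, hence invisible in $\ModIp$; your argument tried to make this discrepancy vanish already in $\Mod^0$, which it does not.

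As a minor remark, your second stage (varying the lifts $\tcV,\tcW$ themselves) is not required by the lemma as stated: the hypothesis fixes $\tcV,\tcW$ and only $f'$ varies.
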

\begin{proof}
By Lemma \ref{1.12}, it suffices to prove the objects defined for $f'$ and $T^af'$ are isomorphic. We have morphisms $f'\colon \tcV\rightarrow \tcW\la a\ra$ and $T^af'\colon \tcV\rightarrow \tcW\la a+b\ra$ in $\Mod^0(\Lambda_X)$. Note that $\ker f'\hookrightarrow \ker T^af'$. Hence for any $\tcP\in \Mod^0(\Lambda_X)$, we have
\begin{equation}
\tilde{c}\colon \bigoplus_{b\in \bR}\Hom^b_{\Mod^\bR(\Lambda_X)}(\tcP, \ker f')\hookrightarrow\bigoplus_{b\in \bR}\Hom^b_{\Mod^\bR(\Lambda_X)}(\tcP, \ker T^af'),
\end{equation}
which induces a comparison morphism $c\colon \Hom_{\ModIp(\Lambda_X)}([\tcP], [\ker f'])\rightarrow \Hom_{\ModIp(\Lambda_X)}([\tcP], [\ker T^af'])$. It suffices to show that $c$ is an isomorphism.

For any $g\in \Hom^b_{\Mod^\bR(\Lambda_X)}(\tcP, \ker T^af')$, consider $T^ag\in \Hom^{a+b}_{\Mod^\bR(\Lambda_X)}(\tcP, \ker T^af')$. Since $f'\la a+b\ra\circ T^a g=T^af'\la b\ra \circ g=0$, $T^ag$ factors through $\ker f'\la a+b\ra$ i.e. $T^ag\in \Hom^{a+b}_{\Mod^\bR(\Lambda_X)}(\tcP, \ker f')$. Hence $T^ag$ is in the image of $\tilde{c}$. Since $g$ and $T^ag$ represents the same morphism in $\ModIp(\Lambda_X)$, we have the surjectivity of $c$.

On the other hand, let $g\in \Hom_{\ModIp(\Lambda_X)}([\tcP],[\ker f'])$ be zero in $\Hom_{\ModIp(\Lambda_X)}([\tcP],[\ker T^af'])$. For a representative $g'$ of $g$, we have $T^bg'=0$ for some $b\in\bRz$ by Lemma \ref{1.10}. Hence $g=0\in \Hom_{\ModIp(\Lambda_X)}([P], [\ker f'])$. This gives the injectivity of $c$. 

Similar arguments prove the claims for $\im(f')$, $\coker(f')$, and $\coim(f')$.
\end{proof}

\begin{lemma}\label{1.15}
The objects defined in Lemma \ref{1.13} actually give kernel, image, cokernel, and coimage in $\ModIp(\Lambda_X)$.
\end{lemma}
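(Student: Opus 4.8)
The plan is to check the four universal properties directly, feeding everything back to the kernel, and using only that $\Mod^0(\Lambda_X)$ is abelian together with Lemmas~\ref{1.10} and~\ref{1.11}. Fix $f\in\Hom_{\ModIp(\Lambda_X)}(\cV,\cW)$ with a lift $f'\colon\tcV\to\tcW\la b\ra$, a degree $0$ morphism in $\Mod^0(\Lambda_X)$ as in Lemma~\ref{1.13}, and let $\iota\colon\ker f'\hookrightarrow\tcV$ be the inclusion there. First I would record the reductions: the cokernel statement for $[\coker f']$ is the exact dual of the kernel statement for $[\ker f']$; granting both, $[\im f']=[\ker(\tcW\la b\ra\to\coker f')]$ is a kernel of the canonical projection $\cW\to[\coker f']$ by the kernel statement, and $[\coim f']=[\coker\iota]$ is a cokernel of the canonical map $[\iota]\colon[\ker f']\to\cV$ by the cokernel statement. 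So it suffices to prove that $([\ker f'],[\iota])$ is a kernel of $f$ in $\ModIp(\Lambda_X)$.

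For that I would first note $f\circ[\iota]=0$, since $f'\circ\iota=0$ in $\Mod^0(\Lambda_X)$ and $[\cdot]$ is a functor. For the factorization property, take $\cP=[\tcP]$ and $u\colon\cP\to\cV$ with $f\circ u=0$, and use Lemma~\ref{1.11} to choose a degree $0$ lift $u'\colon\tcP\to\tcV\la c\ra$ of $u$. Then the composite $f'\la c\ra\circ u'\colon\tcP\to\tcW\la b+c\ra$ represents $f\circ u=0$ in $\ModIp(\Lambda_X)$, so by Lemma~\ref{1.10} there is $a\in\bRz$ with $f'\la a+c\ra\circ(T^au')=T^a\bigl(f'\la c\ra\circ u'\bigr)=0$ in $\Mod^0(\Lambda_X)$. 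Hence $T^au'\colon\tcP\to\tcV\la a+c\ra$ factors through the honest kernel $(\ker f')\la a+c\ra$ as $\iota\la a+c\ra\circ v'$ for a unique degree $0$ morphism $v'$; putting $v:=[v']\colon\cP\to[\ker f']$ gives $[\iota]\circ v=[T^au']=[u']=u$. For uniqueness it suffices that $[\iota]$ be a monomorphism in $\ModIp(\Lambda_X)$: if $w\colon\cP\to[\ker f']$ has $[\iota]\circ w=0$, pick a degree $0$ lift $w'\colon\tcP\to(\ker f')\la d\ra$, observe that $\iota\la d\ra\circ w'$ represents $0$, apply Lemma~\ref{1.10} to get $a\in\bRz$ with $\iota\la d\ra\circ(T^aw')=0$ in $\Mod^0(\Lambda_X)$, and conclude $T^aw'=0$ because $\iota$ is monic there; thus $w=[w']=[T^aw']=0$.

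Dualizing this argument — with cokernels of $\Mod^0(\Lambda_X)$ and the defining epimorphism of $\coker f'$ replacing kernels and $\iota$ — handles the cokernel statement, and the image and coimage statements then follow as above. I do not expect a genuine obstacle: the entire proof runs on the single mechanism ``lift the test morphism, observe the offending composite represents $0$ and hence is annihilated by some $T^a$, clear it inside the abelian category $\Mod^0(\Lambda_X)$, and descend back to $\ModIp(\Lambda_X)$'', and the only point requiring care is the consistent bookkeeping of the grading shifts $\la\,\cdot\,\ra$ under composition and under multiplication by $T^a$.
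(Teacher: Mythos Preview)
Your proof is correct and follows essentially the same approach as the paper: lift the test morphism, use Lemma~\ref{1.10} to annihilate the offending composite by some $T^a$, factor in $\Mod^0(\Lambda_X)$, and descend. The only organizational difference is that you handle uniqueness by proving $[\iota]$ is a monomorphism, whereas the paper compares two candidate factorizations directly via the universal property of $\ker\tf$; both arguments rest on the same mechanism.
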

\begin{proof}
Again, we will only prove for kernel and the others can be proved by similar arguments.

Let $\cP\xrightarrow{g} \cV\xrightarrow{f} \cW\in \ModIp(\Lambda_X)$ satisfy $f\circ g=0$. We have representatives
\begin{equation}
\tcP\xrightarrow{\tg}\tcV\xrightarrow{\tf}\tcW.
\end{equation}
in $\Mod^0(\Lambda_X)$. By replacing $\tcW$ with $\tcW\la c\ra$ with sufficiently large $c$ and $\tf$ with $T^c\tf$, we can take so that $\tf\circ \tg=0$ by Lemma \ref{1.10}. Then there exists a morhism $\tcP\rightarrow \ker \tf$ by the universality of the kernel. The commutative diagram
\begin{equation}
\xymatrix{
\ker\tf \ar[r]^{\tilde{\iota}} & \tcV \ar[r]^\tf&\tcW\\
& \tcP \ar[u]_{\tg} \ar[lu]_{\exists^!\tilde{h}} & 
}
\end{equation}
descends to the commutative diagram
\begin{equation}
\xymatrix{
[\ker\tf] \ar[r]^\iota & \cV \ar[r]^f&\cW\\
& \cP \ar[u]^g \ar[lu]_h & 
}
\end{equation}
 in $\ModIp(\Lambda_X)$, hence we only have to check the uniquness of the morphism $h$.
 
Let $h'\colon \cP\rightarrow [\ker f']$ be another morphism in $\ModIp(\Lambda_X)$ which fits into the diagram
\begin{equation}
\xymatrix{
[\ker\tf] \ar[r]^\iota & \cV \ar[r]^f&\cW\\
& \cP \ar[u]^g \ar[lu]_{h'} & 
}
\end{equation}
We can lift $h'$ to $\tilde{h}'\colon \tcP\rightarrow \ker\tilde{f}\la a\ra$ for some $a\in\bRz$. Take $b\in \bR_{\geq 0}$ so that $T^b\tilde{\iota}\la a\ra \circ \tilde{h}'=T^{a+b}\tg$ is satisfied.  Then we again get a commutative diagram.
\begin{equation}
\xymatrix{
\ker\tf\la a+b\ra \ar[r]^{\tilde{\iota}\la a+b\ra} & \tcV\la a+b\ra \ar[r]^{\tf\la a+b\ra} &\tcW\la a+b\ra\\
& \tcP \ar[u]_{T^{a+b}\tg} \ar[lu]_{T^{b}\tilde{h}'} & 
}
\end{equation}
On the other hand, we have the commutative diagram
\begin{equation}
\xymatrix{
\ker\tf\la a+b\ra \ar[r]^{\tilde{\iota}\la a+b\ra} & \tcV\la a+b\ra \ar[r]^{\tf\la a+b\ra}&\tcW\la a+b\ra\\
& \tcP \ar[u]_{T^{a+b}\tg} \ar[lu]_{T^{a+b}\tilde{h}} & 
}
\end{equation}
By the universality of $\ker\tf\la b\ra$, we have $T^{a+b}\tilde{h}=T^b\tilde{h}'$. Hence $h=h'$. This completes the proof.
\end{proof}

\begin{proof}[Proof of Proposition \ref{1.9}]
It remains to show that the isomorphism between $\im$ and $\coim$. Let $f$ be a morphism in $\ModIp(\Lambda_X)$ and $\tf$ be a lift of $f$. As shown in Lemma \ref{1.15}, $\im f$ is given by $[\im \tf]$ and $\coim f$ is given by $[\coim \tf]$. Since $\Mod^0(\Lambda_X)$ is abelian, there exists a canonical isomorphism $\im\tf\cong \coim\tf$. This also induces an isomorphism between $\im f$ and $\coim f$. This completes the proof.
\end{proof}

\begin{corollary}\label{kakkoexact}
The functor $[\cdot]\colon \Mod^0(\Lambda_X)\rightarrow \ModIp(\Lambda_X)$ is exact.
\end{corollary}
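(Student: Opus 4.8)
The plan is to deduce exactness of $[\cdot]$ directly from the explicit descriptions of kernels and cokernels established in Lemma \ref{1.13} and Lemma \ref{1.15}, together with the fact that a functor between abelian categories is exact precisely when it preserves kernels and cokernels. So the first step is to recall that, by Proposition \ref{1.9}, both $\Mod^0(\Lambda_X)$ and $\ModIp(\Lambda_X)$ are abelian, and that $[\cdot]$ is additive (it is $\bk$-linear on hom-spaces by construction, since $f\mapsto f\otimes 1$ and $\Hom_{\Mod^0(\Lambda_X)}(\tcV,\tcW)\hookrightarrow \Hom_{\Mod^\bR(\Lambda_X)}(\tcV,\tcW)$ as the degree-$0$ summand, followed by $\otimes_\Lambda\bk$). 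Hence it suffices to check that for any morphism $\tf\colon \tcV\rightarrow \tcW$ in $\Mod^0(\Lambda_X)$ the canonical maps $[\ker\tf]\rightarrow \ker[\tf]$ and $\coker[\tf]\rightarrow [\coker\tf]$ are isomorphisms in $\ModIp(\Lambda_X)$.

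Next I would observe that this is exactly what Lemma \ref{1.15} gives: for the morphism $f:=[\tf]$ in $\ModIp(\Lambda_X)$, $\tf$ is a lift of $f$ in the sense of Lemma \ref{1.13} (it is already a degree-$0$ morphism, so one may take $b=0$), and Lemma \ref{1.15} asserts that $[\ker\tf]$, $[\im\tf]$, $[\coker\tf]$, $[\coim\tf]$ are genuinely the kernel, image, cokernel, coimage of $f$ in $\ModIp(\Lambda_X)$. In particular the comparison morphism $[\ker\tf]\xrightarrow{\ \sim\ }\ker[\tf]$ induced by the universal property is an isomorphism, and dually for the cokernel. Since $[\cdot]$ sends the exact sequence $0\to\ker\tf\to\tcV\to\tcW\to\coker\tf\to 0$ to a sequence which, term by term, realizes the kernel and cokernel of $[\tf]$, the functor $[\cdot]$ carries short exact sequences to short exact sequences; equivalently, it is both left and right exact, hence exact.

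I do not expect a serious obstacle here: all the real work has already been done in Lemmas \ref{1.105}--\ref{1.15}, where the delicate point — that the constructions $[\ker\tf]$ etc.\ are independent of the chosen lift and satisfy the correct universal properties in the localized-type category $\ModIp(\Lambda_X)$ — was dispatched using the $T^a$-multiplication trick (Lemma \ref{1.10}, Lemma \ref{1.12}). The only thing to be careful about is to state precisely the general fact being invoked: an additive functor $F$ between abelian categories is exact iff for every morphism $g$ the natural maps $F(\ker g)\to\ker F(g)$ and $\coker F(g)\to F(\coker g)$ are isomorphisms. Given that, the corollary is immediate, and one could even phrase the proof in a single sentence: \emph{By Lemma \ref{1.15}, $[\cdot]$ preserves kernels and cokernels, hence is exact.} The slightly longer write-up above just makes the reduction to Lemma \ref{1.15} explicit.
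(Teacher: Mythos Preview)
Your proposal is correct and follows exactly the same approach as the paper: the paper's proof is the single sentence ``This is obvious from Lemma~\ref{1.15},'' and your longer write-up just unpacks that reference by noting that $[\cdot]$ preserves kernels and cokernels (and hence is exact).
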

\begin{proof}
This is obvious from Lemma~\ref{1.15}.
\end{proof}

It is useful to state a kind of the converse of the above corollary.
\begin{lemma}\label{liftlemma}
Let
\begin{equation}
0\rightarrow \cV\xrightarrow{f} \cW\xrightarrow{g} \cX\rightarrow 0
\end{equation}
be an exact sequence of $\ModI_{pre}(\Lambda_X)$. Then there exists an exact sequence
\begin{equation}
0\rightarrow \tcV\xrightarrow{\tf} \tcW\xrightarrow{\tg} \tcX\rightarrow 0
\end{equation}
in $\Mod^0(\Lambda_X)$ which is a lift of the above sequence.
\end{lemma}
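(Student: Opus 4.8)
The plan is to avoid lifting $f$ altogether. Instead I would lift only the epimorphism $g$, then repair surjectivity of its lift by replacing the target with an image, and finally \emph{define} the lift of $\cV$, and hence of $f$, to be the kernel of that surjection; exactness at $\cW$ in $\ModIp(\Lambda_X)$ is exactly what forces this kernel to be a lift of $\cV$. Throughout I would use that grading shifts are invertible in $\ModIp(\Lambda_X)$, a fact the surrounding proofs use tacitly: if an $\bR$-graded $\Lambda_X$-module $N$ is annihilated by $T^{c}$ then $T^{c}\cdot\id_N=0$ in $\Hom_{\Mod^\bR(\Lambda_X)}(N,N)$, so $\id_{[N]}=0$ after $\otimes_\Lambda\bk$ and $[N]=0$ in $\ModIp(\Lambda_X)$; since the kernel and cokernel of the degree $0$ morphism $T^{c}\colon M\to M\la c\ra$ are each annihilated by $T^{c}$, Corollary~\ref{kakkoexact} and Proposition~\ref{1.9} then give that $[T^{c}]$ is an isomorphism for every $M$ and every $c\ge 0$. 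Consequently any chosen lift of an object of $\ModIp(\Lambda_X)$ may be replaced by a grading shift of it.

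Concretely: fix lifts $\tcW$ of $\cW$ and $\tcX_0$ of $\cX$, and, by Lemma~\ref{1.11}, a real number $b\ge 0$ and a lift of $g$ realized by an element of $\Hom^b_{\Mod^\bR(\Lambda_X)}(\tcW,\tcX_0)=\Hom_{\Mod^0(\Lambda_X)}(\tcW,\tcX_0\la b\ra)$, which I regard as a degree $0$ morphism $\tg\colon\tcW\to\tcX_0\la b\ra$ of $\Mod^0(\Lambda_X)$; by the previous paragraph $\tcX_0\la b\ra$ is still a lift of $\cX$, so $\tg$ is a lift of $g$. Now write $\tg=\iota\circ\tg'$ with $\iota\colon\im\tg\hookrightarrow\tcX_0\la b\ra$ the canonical monomorphism and $\tg'\colon\tcW\twoheadrightarrow\im\tg$ the corestriction. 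By Lemmas~\ref{1.13} and~\ref{1.15}, $[\im\tg]$ with $[\iota]$ is the image of $g$ equipped with its canonical monomorphism into $\cX$; since $g$ is epi (exactness at $\cX$) this monomorphism is an isomorphism, so $\tcX:=\im\tg$ is a lift of $\cX$ and $\tg'$ a surjective lift of $g$. Finally set $\tcV:=\ker\tg'\subset\tcW$ with $\tf\colon\tcV\hookrightarrow\tcW$ the inclusion. Applying Lemma~\ref{1.13} to the lift $\tg'$ of $g$, $[\tcV]$ with $[\tf]$ is the kernel of $g$ with its canonical monomorphism into $\cW$; by exactness at $\cW$ this subobject of $\cW$ coincides with $\im f$, and since $f$ is mono (exactness at $\cV$) the factorization $\cV\xrightarrow{\sim}\im f\hookrightarrow\cW$ of $f$ identifies $\cV$ with $\im f=\ker g=[\tcV]$ compatibly with $f$. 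Hence $\tcV$ is a lift of $\cV$ and $\tf$ a lift of $f$, the sequence $0\to\tcV\xrightarrow{\tf}\tcW\xrightarrow{\tg'}\tcX\to 0$ is exact in $\Mod^0(\Lambda_X)$ — $\tf$ mono, $\tg'$ epi, and $\im\tf=\ker\tg'$ by construction — and it lifts the given sequence by the three identifications just made.

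The argument is essentially all bookkeeping; the real content is only to keep the identifications $[\tcV]\cong\cV$, $[\tcW]\cong\cW$, $[\tcX]\cong\cX$ coherent with $f$ and $g$, and that is precisely what Lemmas~\ref{1.13} and~\ref{1.15} make automatic. The one point I would expect to justify most carefully is the step in the first paragraph — the vanishing of $T$-torsion objects in $\ModIp(\Lambda_X)$ and the resulting invertibility of grading shifts there — since it underlies the free replacement of lifts by their shifts used here (and already in the proof of Lemma~\ref{1.15}); everything else is routine diagram-chasing inside the abelian categories $\Mod^0(\Lambda_X)$ and $\ModIp(\Lambda_X)$.
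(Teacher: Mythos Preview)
Your proof is correct and takes essentially the same approach as the paper: both arguments set $\tcX:=\im\tg$ and $\tcV:=\ker\tg$ for a chosen lift $\tg$ of $g$, then use exactness of the original sequence together with Lemma~\ref{1.13}/\ref{1.15} to identify $[\tcV]\cong\cV$ and $[\tcX]\cong\cX$. The only cosmetic difference is that the paper first lifts both $f$ and $g$ to $\tf',\tg'$ with $\tg'\circ\tf'=0$ and uses the induced map $\tcV'\to\ker\tg'$ to produce the comparison $\cV\to[\tcV]$, whereas you bypass lifting $f$ entirely and obtain that identification directly from $\cV\cong\im f=\ker g=[\ker\tg']$; your extra care in the first paragraph about invertibility of grading shifts is a point the paper leaves implicit.
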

\begin{proof}
Take a lift $\tcV'\xrightarrow{\tf'}\tcW\xrightarrow{\tg'}\tcX'$ such that $\tg'\circ \tf'=0$. Set $\tcV:=\ker \tg'$ and $\tcX:=\image \tg'$. Then we have an exact sequence
\begin{equation}
0\rightarrow \tcV\xrightarrow{\tf} \tcW\xrightarrow{\tg} \tcX\rightarrow 0.
\end{equation}
Here $\tf$ and $\tg$ are caonical morphisms. We have an associated morphism $\tcV'\rightarrow \tcV$. In $\ModIp(\Lambda_X)$, this associates a morphism $\cV\rightarrow [\tcV]=[\ker \tg']=\ker g$. By the exactness of the given sequence, we have $\cV\xrightarrow{\cong} [\tcV]$. Hence $\tcV$ is a lift of $\cV$. In a similar way, one can see that $\tcX$ is a lift of $\cX$. This completes the proof.
\end{proof}

\section{The category $\ModI(\Lambda_{(\oX, D)})$}
In this section, we glue up $\ModIp(\Lambda_X)$ to obtain a modified category, especially for noncompact manifolds.
\subsection{Topological space with boundary}
In this paper, a {\em topological space with boundary} is a pair $(\overline{X},D_X)$ of a good topological space $\overline{X}$ with a closed subset $D_X$ of $\overline{X}$. We say $D_X$ is the boundary of $(\oX,D_X)$ and $\oX\bs D_X$ is the interior of $(\oX,D_X)$. A morphism between $(\overline{X}, D_X)$ and $(\overline{Y}, D_Y)$ is a continuous map $f$ between $\overline{X}$ and $\overline{Y}$ preserving the interiors. We denote the interiors by $X:=\oX\bs D_X$ and $Y:=\oY\bs D_Y$. We also denote the induced map between interiors by $f\colon X\rightarrow Y$ by the abuse of notation.

\begin{example}
\begin{enumerate}
\item Our primary examples of topological spaces with boundaries are of the following class: For a topological space $Z$, consider a locally closed subset $S$. Let $\overline{S}$ be the closure of $S$. Then $(\overline{S}, \overline{S}\bs S)$ is a topological space with boundary. We have a canonical map $(\oS, \oS\bs S)\rightarrow (Z,\varnothing)$ induced by the inclusion $\oS\hookrightarrow Z$.
\item By the definition of morphisms of topological spaces with boundary, we have canonical maps $(X,\varnothing)\rightarrow (\oX, D_X)$ and $\dX\rightarrow (\oX, \varnothing)$ induced by the identity $\id\colon \oX\rightarrow \oX$. On the other hand, such a canonical map does not exist from $\dX$ to $(X, \varnothing)$.
\end{enumerate}
\end{example}

Let $(\oX, D_X)$ be a topological space with boundary. The site $\Open_{(\oX, D_X)}$ is defined by the following data: the underlying category is the category of open subsets of $\oX\bs D_X$, a collection of open subsets $\{U_i\}_{i\in I}$ in $\oX\bs D_X$ is said to define a cover of $U$ if there exists a subset $J$ of $I$ such that the subcollection $\{U_i\}_{i\in J}$ still defines an open covering of $U$ and is locally finite over $\oX$. The following is clear:
\begin{lemma}
This cover gives a Grothendieck topology on $\Open_{(\oX, D_X)}$.
\end{lemma}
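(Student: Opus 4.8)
The plan is to verify the three axioms of a Grothendieck topology (in the sieve-free, pretopology formulation used here) for the collection of covers just described: that isomorphisms cover, that covers are stable under pullback (restriction to a smaller open set), and that a cover of covers is a cover. The underlying category is $\Open_{(\oX,D_X)}$, whose objects are open subsets of the interior $X=\oX\bs D_X$ and whose morphisms are inclusions; so ``pullback'' of an open subset $U_i\subset U$ along an inclusion $W\hookrightarrow U$ is simply $U_i\cap W$, and the fibre products needed in the axioms all exist and are given by intersections. The only nonstandard feature is the extra condition that the chosen subcover be \emph{locally finite over $\oX$} (not merely over $X$), so each step must check that this local finiteness is preserved.

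First I would record that for any open $U\subset X$ the singleton $\{U\}$ is a cover: it is trivially an open covering of $U$, and the one-element family $\{U\}$ is locally finite over $\oX$ (every point of $\oX$ has a neighbourhood meeting at most one member). In particular identities/isomorphisms are covers. Second, for stability under restriction: suppose $\{U_i\}_{i\in I}$ covers $U$, witnessed by a locally finite (over $\oX$) subcollection $\{U_i\}_{i\in J}$ that still covers $U$, and let $W\subset U$ be open. Then $\{U_i\cap W\}_{i\in I}$ is the family to test; the subcollection indexed by $J$ covers $W$, and it remains locally finite over $\oX$ because for each $x\in\oX$ a neighbourhood meeting only finitely many $U_i$ ($i\in J$) a fortiori meets only finitely many $U_i\cap W$. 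Third, the transitivity axiom: given a cover $\{U_i\}_{i\in I}$ of $U$ with locally finite (over $\oX$) subcover $J\subset I$, and for each $i\in I$ a cover $\{U_{ij}\}_{j\in J_i}$ of $U_i$ with locally finite (over $\oX$) subcover $J_i'\subset J_i$, one forms the family $\{U_{ij}\}_{i\in I, j\in J_i}$ and takes the subcollection indexed by $\{(i,j): i\in J,\ j\in J_i'\}$. This subcollection covers $U$ since the $U_i$ for $i\in J$ cover $U$ and the $U_{ij}$ for $j\in J_i'$ cover $U_i$.

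The main thing to check carefully — and the only place any real argument is needed — is that this last subcollection is locally finite over $\oX$. Fix $x\in\oX$. Choose an open neighbourhood $N$ of $x$ in $\oX$ meeting $U_i$ for only finitely many $i\in J$, say $i\in F$. For each $i\in F$ the point $x$ has a neighbourhood $N_i$ in $\oX$ meeting $U_{ij}$ for only finitely many $j\in J_i'$ (using local finiteness of the inner cover, and noting $U_{ij}\subset U_i$); then $N\cap\bigcap_{i\in F}N_i$ is a neighbourhood of $x$ in $\oX$ meeting $U_{ij}$ (with $i\in J$, $j\in J_i'$) for only finitely many pairs $(i,j)$, since such a $U_{ij}$ meeting it forces $i\in F$ and then $j$ in a finite set. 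This uses that $F$ is finite so the intersection $\bigcap_{i\in F}N_i$ is still open. I anticipate no genuine obstacle here; the argument is the standard proof that local finiteness is preserved under refinement, the only subtlety being to keep track that all local finiteness conditions are taken over the compactifying space $\oX$ rather than over the interior $X$, which is exactly what the above neighbourhoods respect. Hence the three axioms hold and the stated collection of covers defines a Grothendieck topology on $\Open_{(\oX,D_X)}$.
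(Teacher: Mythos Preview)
Your verification is correct; the paper simply declares the lemma ``clear'' and gives no proof, so what you have written is exactly the routine check the paper leaves implicit. The only substantive point is the transitivity step (local finiteness over $\oX$ of a refinement), and your finite-intersection argument handles it cleanly.
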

\begin{remark}
If $\oX$ is compact and $D_X=\varnothing$ then $\Open_{(\oX,D_X)}$ coincides with the usual site of $\oX$. 
\end{remark}

\begin{lemma}
Let $f\colon (\oX, D_X)\rightarrow (\oY, D_Y)$ is a morphism between topological spaces with boundary. Then there exists an induced morphism $\Open_{(\oY, D_Y)}\rightarrow \Open_{(\oX, D_X)}$.
\end{lemma}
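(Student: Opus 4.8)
The plan is to construct the induced morphism of sites by directly transporting the functor $f^{-1}$ on open subsets and checking it is continuous for the Grothendieck topologies defined above. First I would note that, by definition of a morphism of topological spaces with boundary, the continuous map $f\colon \oX\to \oY$ restricts to a map of interiors $f\colon X\to Y$ where $X = \oX\setminus D_X$ and $Y = \oY\setminus D_Y$. Given an open subset $U\subset Y$, the preimage $f^{-1}(U)$ is an open subset of $X$; this gives a functor $\Open_{(\oY,D_Y)}\to \Open_{(\oX,D_X)}$ (on underlying categories) sending $U\mapsto f^{-1}(U)$, which is visibly functorial and preserves finite intersections. A morphism of sites is by definition (say, in the sense of SGA4 / Kashiwara--Schapira) a functor in the opposite direction that is left exact and continuous, i.e.\ sends covering families to covering families; so the content is to verify the covering condition.

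The key step is therefore the following: let $\{U_i\}_{i\in I}$ be a cover of $U$ in $\Open_{(\oY,D_Y)}$, meaning there is $J\subset I$ such that $\{U_i\}_{i\in J}$ still covers $U$ and is locally finite over $\oY$. I claim $\{f^{-1}(U_i)\}_{i\in I}$ is a cover of $f^{-1}(U)$ in $\Open_{(\oX,D_X)}$, witnessed by the same index subset $J$. That $\{f^{-1}(U_i)\}_{i\in J}$ covers $f^{-1}(U)$ is immediate from surjectivity of $f$ onto its image at the level of preimages. The substantive point is local finiteness over $\oX$: given $x\in \oX$, I would pick a neighborhood $W$ of $f(x)$ in $\oY$ meeting only finitely many $U_i$ with $i\in J$; then $f^{-1}(W)$ is a neighborhood of $x$ in $\oX$ meeting only finitely many $f^{-1}(U_i)$, since $f^{-1}(W)\cap f^{-1}(U_i) = f^{-1}(W\cap U_i)$ is empty whenever $W\cap U_i$ is. This uses that $f$ is continuous as a map of the ambient spaces $\oX\to\oY$, not merely of the interiors — which is exactly the extra data carried by a morphism of topological spaces with boundary, and is the reason the statement is true in this generality.

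The main (and essentially only) obstacle is a bookkeeping subtlety rather than a real difficulty: one must be careful that local finiteness is required over the full space $\oX$ (including the boundary $D_X$), so the neighborhood $W$ in the argument above must be taken in $\oY$ and its preimage in $\oX$, not in the interiors; this is where continuity of $f$ at boundary points is used. Once this is in place, left-exactness of $U\mapsto f^{-1}(U)$ is formal (preimages commute with finite intersections and send $Y$ to $X$), and there is nothing further to check. I would close by remarking that this is entirely parallel to the construction of the pullback of sheaves along a continuous map in ordinary topology, with the single modification that the covering condition is the locally-finite-over-$\oX$ one; so no new ideas beyond the definitions are needed.
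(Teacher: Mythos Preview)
Your proof is correct and follows essentially the same approach as the paper: you verify that $U\mapsto f^{-1}(U)$ sends covers to covers by using the same witnessing subset $J$ and checking local finiteness over $\oX$ via a neighborhood of $f(x)$ in $\oY$ pulled back by the continuous map $f\colon\oX\to\oY$. The paper's argument is more terse but identical in content; your additional remarks on left-exactness and the role of boundary continuity are accurate elaborations rather than a different method.
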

\begin{proof}
Let $\{U_i\}_{i\in I}$ be a cover of $U$ in $\Open_{(\oY, D_Y)}$.  Let $J\subset I$ be as in the definition of the cover.  Then $\{f^{-1}(U_i)\}$ is an open covering of $f^{-1}(U)$ in $\oX$. Let $x\in X$ Take $x\in \oX$, then there exists a small neighborhood $V$ of $f(y)$ such that $V$ only intersects with a finite subset of $\{U_j\}_{j\in J}$. Then $f^{-1}(V)$ also only intersects with a finite subset of $\{f^{-1}(U_j)\}_{j\in J}$. Hence $\{f^{-1}(U_i)\}_{i\in I}$ is a cover of $f^{-1}(U)$ in $\Open_{(\oX, D_X)}$.
\end{proof}

\subsection{The category $\ModI(\Lambda_{(\oX, D_X)})$}
Let $(\oX, D_X)$ be a topological space with boundary. We set $X:=\oX\bs D_X$. Let $U\supset V$ be open subsets of $X$. Then we have a restriction functor
\begin{equation}
\Mod_{pre}^\frakI(\Lambda_U)\rightarrow \Mod_{pre}^\frakI(\Lambda_V).
\end{equation}

\begin{lemma}\label{restrictionexact}
This restriction functor is exact.
\end{lemma}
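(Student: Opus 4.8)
The plan is to reduce the exactness of the restriction functor $\Mod_{pre}^\frakI(\Lambda_U)\to\Mod_{pre}^\frakI(\Lambda_V)$ to two facts already established in the excerpt: first, that kernels, cokernels, images and coimages in $\ModIp(\Lambda_X)$ are computed by lifting a morphism to a homogeneous morphism $\tf$ of $\bR$-graded $\Lambda_X$-modules and taking the corresponding object $[\ker\tf]$, $[\coker\tf]$, etc.\ (Lemma~\ref{1.13}, Lemma~\ref{1.15}); and second, that the ordinary restriction functor $\Mod^0(\Lambda_U)\to\Mod^0(\Lambda_V)$ on $\bR$-graded $\Lambda$-modules is exact, since restriction is exact degreewise (it is exact on sheaves of $\bk$-vector spaces in each grading, and kernels/cokernels in $\Mod^0(\Lambda)$ are formed degreewise). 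The key point is that the restriction functor on $\ModIp$ is compatible with the functor $[\cdot]$: restricting $[\tcV]$ gives $[\tcV|_V]$, and restricting the class of a morphism $f$ represented by $\tf$ gives the class of $\tf|_V$.

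Concretely, first I would record that $\Mod_{pre}^\frakI(\Lambda_U)\to\Mod_{pre}^\frakI(\Lambda_V)$ fits into a commutative square with the restriction functor $\Mod^0(\Lambda_U)\to\Mod^0(\Lambda_V)$ and the functors $[\cdot]$ on both sides; this is immediate from the construction of hom-spaces, since $\Hom_{\Mod^\bR(\Lambda_U)}(\tcV,\tcW)\to\Hom_{\Mod^\bR(\Lambda_V)}(\tcV|_V,\tcW|_V)$ is $\Lambda$-linear and hence descends after $\otimes_\Lambda\bk$. Next, given an exact sequence $0\to\cV\xrightarrow{f}\cW\xrightarrow{g}\cX\to0$ in $\ModIp(\Lambda_U)$, I would invoke Lemma~\ref{liftlemma} to choose a lift $0\to\tcV\xrightarrow{\tf}\tcW\xrightarrow{\tg}\tcX\to0$ that is an exact sequence in $\Mod^0(\Lambda_U)$. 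Restricting this to $V$ yields an exact sequence $0\to\tcV|_V\xrightarrow{\tf|_V}\tcW|_V\xrightarrow{\tg|_V}\tcX|_V\to0$ in $\Mod^0(\Lambda_V)$ by exactness of ordinary restriction. Applying $[\cdot]$, which is exact by Corollary~\ref{kakkoexact}, gives an exact sequence $0\to[\tcV|_V]\to[\tcW|_V]\to[\tcX|_V]\to0$ in $\ModIp(\Lambda_V)$; and by the compatibility of the previous paragraph this sequence is canonically identified with the restriction of $0\to\cV\to\cW\to\cX\to0$. Since exactness of an additive functor between abelian categories is equivalent to preservation of short exact sequences, this proves the lemma.

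I expect the only genuinely delicate point to be the verification that the restriction functor on $\ModIp$ is well-defined and commutes with $[\cdot]$ on morphisms in the strong sense needed — i.e.\ that restricting the class $f=\tf\otimes1$ produces exactly the class $\tf|_V\otimes1$, independently of the chosen lift $\tf$. This is where one uses Lemma~\ref{1.12}: two lifts $\tf_1,\tf_2$ of $f$ satisfy $T^{a_1}\tf_1=T^{a_2}\tf_2$, a relation preserved by restriction, so their restrictions represent the same class in $\ModIp(\Lambda_V)$. Once this coherence is in place the argument is formal. An alternative, essentially equivalent route would bypass Lemma~\ref{liftlemma}: take any morphism in $\ModIp(\Lambda_U)$, lift it to a homogeneous $\tf$, note that $\ker$, $\coker$ of its restriction are computed by $[\ker(\tf|_V)]$, $[\coker(\tf|_V)]$ via Lemma~\ref{1.15}, and that $\ker(\tf|_V)=(\ker\tf)|_V$ and similarly for cokernels since ordinary restriction is exact on $\Mod^0$; hence restriction commutes with the formation of kernels and cokernels in $\ModIp$, which is exactly exactness. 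I would present the short-exact-sequence version as the main proof and perhaps remark on the kernel/cokernel version.
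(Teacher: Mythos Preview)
Your proof is correct and follows essentially the same route as the paper: lift the short exact sequence to $\Mod^0(\Lambda_U)$ (the paper cites Corollary~\ref{kakkoexact} here, though Lemma~\ref{liftlemma} is the precise reference, as you use), restrict in $\Mod^0$, and then apply the exactness of $[\cdot]$. Your additional care in verifying that restriction on $\ModIp$ is compatible with $[\cdot]$ on morphisms, and your remark on the alternative kernel/cokernel argument, are welcome elaborations but not departures from the paper's strategy.
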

\begin{proof}
A short exact sequence in $\ModIp(\Lambda_U)$ can be lifted to an short exact sequence in $\Mod^0(\Lambda_U)$ by Corollary~\ref{kakkoexact}. Then we can restrict it to an exact sequence in $\Mod^0(\Lambda_V)$. By Lemma~\ref{1.15}, this also gives an exact sequence in $\ModIp(\Lambda_V)$.
\end{proof}

These maps form a presheaf of categories over the site $\Open_{(\oX,D_X)}$. This is not always a stack (even a prestack) because the tensor product $\otimes \bk$ on the hom-space breaks the sheaf property.

Take the stackfication (resp. prestackification) of this stack with respect to $\Open_{(\oX,D_X)}$. We denote it by $\ModI_{(\oX,D_X)}$ (resp. $\ModIps_{\dX}$). See Appendix for a short exposition of stackification.

\begin{definition}
The global section category of $\ModI_{(\oX,D_X)}$ is denoted by $\Mod^\frakI(\Lambda_\dX)$. For a manifold $X$, we set $\ModI(\Lambda_X):=\ModI(\Lambda_{(X, \varnothing)})$.
%We call an object of $\Mod^\frakI(\Lambda_X)$ $\bA$-graded locally liftable $\Lambda_X$-module.
\end{definition}

\begin{proposition}\label{ModIabel}
The category $\ModI_{(\oX, D_X)}(U)$ is an abelian category for any $U\in \Open_{\dX}$.
\end{proposition}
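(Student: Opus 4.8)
The plan is to reduce everything to facts already established inside the prestack $\ModIp$. Since each $\ModIp(\Lambda_V)$ is abelian by Proposition~\ref{1.9} and the restriction functors are exact by Lemma~\ref{restrictionexact}, we are stackifying a prestack of abelian categories with exact transition functors, and abelianness of the result is a local property: after passing to a suitable open cover every relevant diagram can be tested inside $\ModIp$. I will use the description (recalled in the Appendix) of $\ModI_\dX(U)$ as the category of descent data on covers of $U$, together with the fact that the $\Hom$-spaces of $\ModI_\dX$ are sections of the sheafification of the presheaf $V\mapsto \Hom_{\ModIp(\Lambda_V)}(-,-)$; in particular any morphism of $\ModI_\dX(U)$ becomes, after refining the cover, a family of honest morphisms of the categories $\ModIp(\Lambda_{U_i})$ agreeing on overlaps.

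First I would check that $\ModI_\dX(U)$ is additive. Given $\cV,\cW$, pick a cover $\{U_i\}$ on which both lift to objects of $\ModIp(\Lambda_{U_i})$ and form the biproducts there; because restriction is additive these biproducts are compatible on double overlaps with cocycle-compatible isomorphisms, hence glue to an object of $\ModI_\dX(U)$, and the defining universal property holds because $\Hom$ is a sheaf. The zero object glues the same way, so $\ModI_\dX(U)$ is additive.

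Next, kernels and cokernels. Let $f\colon\cV\to\cW$ be a morphism; after refining the cover we may assume $\cV,\cW$ lift to $\tcV_i,\tcW_i$ over $U_i$ and $f|_{U_i}$ is represented by a morphism $f_i$ of $\ModIp(\Lambda_{U_i})$. On $U_{ij}$ the morphisms $f_i|_{U_{ij}}$ and $f_j|_{U_{ij}}$, compared through the gluing isomorphisms of $\cV$ and $\cW$, represent one and the same morphism, so by Lemmas~\ref{1.13} and~\ref{1.15} their kernels in $\ModIp(\Lambda_{U_{ij}})$ are canonically isomorphic; the cocycle condition for these isomorphisms follows from the uniqueness clause in the universal property of the kernel. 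Thus $(\ker f_i)_i$ is a descent datum and defines $\ker f\in\ModI_\dX(U)$, whose universal property one verifies by testing the relevant bijection of $\Hom$-sets locally and invoking the sheaf property of $\Hom$. Cokernels, images and coimages are treated identically using the corresponding parts of Lemmas~\ref{1.13} and~\ref{1.15}. Finally the canonical morphism $\coim f\to\im f$ exists over each $U_i$ (where $\ModIp$ is abelian), glues since $\Hom$ is a sheaf, and is an isomorphism because being an isomorphism is a local condition and its local inverses glue. This gives the $\coim\cong\im$ axiom and completes the argument.

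The main obstacle is purely bookkeeping around the stackification: ensuring that a morphism of the stack really is represented, after enough refinement, by honest $\ModIp$-morphisms, and that the locally formed kernels, cokernels, images and coimages assemble into coherent descent data. This is precisely what Lemma~\ref{1.13} (independence of $\ker,\coker,\im,\coim$ of the chosen lift) together with exactness of restriction (Lemma~\ref{restrictionexact}) is built to supply; once these are in hand the remainder is the routine descent verification for abelian-category operations, and I expect no genuine difficulty beyond keeping the indexing straight.
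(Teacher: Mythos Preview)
Your proposal is correct and follows essentially the same route as the paper: both arguments refine the cover until the morphism is represented by honest morphisms $f_i$ in $\ModIp(\Lambda_{U_i})$, form kernels (and the other abelian-category constructions) there, and glue using exactness of restriction (Lemma~\ref{restrictionexact}) together with the sheaf property of $\Hom$. The paper makes one step more explicit than you do: on an overlap $U_{ij}$ the gluing isomorphisms for $\cV,\cW$ and the equality $f_i|_{U_{ij}}=f_j|_{U_{ij}}$ live only in $\ModIps_\dX(U_{ij})$, so one must pass to a further cover $\{U_{ijk}\}$ before these become identities in $\ModIp$ and the local kernels can be compared; you acknowledge this as ``bookkeeping'' but do not spell it out. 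Conversely, you include the additivity check (biproducts and zero object), which the paper leaves implicit.
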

\begin{proof}
We will only consider about kernels. The similar argument holds for cokernels, images and coimages.

Let $f\colon \cV\rightarrow \cW$ be a morphism in $\ModI_\dX(U)$. Then there exists a covering $\{U_i\}$ of $U$ such that we have a descent data $f_i\colon \cV_i\rightarrow \cW_i$ in $\ModIps_\dX(U_i)$. If it is necessary, we can replace the covering with a finer covering so that each $f|_{U_i}$ is represented by a morphism $f_{i}\colon \cV_i\rightarrow \cW_i$ in $\ModIp(U_{i})$. On each intersection $U_{i}\cap U_{j}$, we have a further covering $\{U_{ijk}\}_k$ such that $(f_i-f_j)|_{U_{ijk}}=0$.

Then we have $\ker (f_{i})$ since $\ModIp(\Lambda_{U_{i}})$ is an abelian category. Since the restriction functors are exact (Lemma~\ref{restrictionexact}), we have $\ker (f_i|_{U_{ij}})|_{U_{ijk}}=\ker(f_i|_{U_{ijk}})=\ker(f_j|_{U_{ijk}})=\ker(f_j|_{U_{ij}})|_{U_{ijk}}$ in $\ModIp(\Lambda_{U_{ijk}})$. Hence we have $\ker(f_i)|_{U_{ij}}=\ker(f_i|_{U_{ij}})=\ker(f_j|_{U_{ij}})=\ker(f_j)|_{U_{ij}}$ in $\ModI_{\dX}(U_{ij})$. This further gives a descend data and glues up to an object $\cK\in \ModI_\dX(U)$.

For a morphism $g\colon \cX\rightarrow \cV$ with $f\circ g=0$, by taking a sufficiently fine cover $\{U_i\}$, we can represent them in $\ModIp(\Lambda_{U_i})$. Then one get a unique factorizing morphism $\cX|_{U_i}\rightarrow \ker f_i$. Again by taking a finer covering as in the previous part of the proof and the universality, the set of these factorizing morphisms gives a descent data and can be glued up into the unique factorizing $\cX\rightarrow \cK$. This shows $\cK$ is $\ker f$.
\end{proof}

Let $U$ be an open subset of $X$. Let $\alpha_U\colon \Mod^\frakI_{pre}(\Lambda_U)\rightarrow \Mod^\frakI_{(\oX,D)}(U)$ be the canonical functor.
\begin{lemma}\label{alphaexact}
The functor $\alpha_U$ is an exact functor.
\end{lemma}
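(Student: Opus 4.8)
The plan is to reduce the statement to the fact that $\ModIp(\Lambda_U)$ is abelian (Proposition~\ref{1.9}) together with the observation that the kernels and cokernels of $\ModI_\dX(U)$, as constructed in the proof of Proposition~\ref{ModIabel}, can be computed using the trivial one-element cover of $U$ whenever the morphism in question comes from the presheaf level. Since $\alpha_U$ is an additive functor between the abelian categories $\ModIp(\Lambda_U)$ and $\ModI_\dX(U)$, it is enough to show that it preserves kernels and cokernels; I would record at the end that this is the standard criterion for exactness of an additive functor between abelian categories.

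First I would fix a morphism $f_0\colon \cV_0\rightarrow\cW_0$ in $\ModIp(\Lambda_U)$ and set $f:=\alpha_U(f_0)$. Using the universal property of (pre)stackification, I would argue that $f$ is represented, over the cover $\{U\}$ of $U$ itself, by the single morphism $f_0$ with the trivial gluing on $U\cap U=U$. Then I would run the construction in the proof of Proposition~\ref{ModIabel} verbatim for this trivial cover: the locally defined kernel is just $\ker f_0$, the kernel taken in the abelian category $\ModIp(\Lambda_U)$, the gluing data are trivial, and the resulting glued object $\cK$ is precisely $\alpha_U(\ker f_0)$; the universality argument given there then shows $\cK=\ker f$ in $\ModI_\dX(U)$. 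This yields a natural isomorphism $\alpha_U(\ker f_0)\xrightarrow{\cong}\ker(\alpha_U f_0)$, and the same reasoning with cokernels (resp.\ images and coimages) in place of kernels gives the analogous identifications.

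From preservation of kernels and cokernels I would then deduce exactness in the usual way: for a short exact sequence $0\rightarrow\cV_0\xrightarrow{f_0}\cW_0\xrightarrow{g_0}\cX_0\rightarrow0$ in $\ModIp(\Lambda_U)$ one has $\ker(\alpha_U g_0)\cong\alpha_U(\ker g_0)=\alpha_U(\im f_0)\cong\im(\alpha_U f_0)$ and $\coker(\alpha_U f_0)\cong\alpha_U\cX_0$, so the image sequence is short exact in $\ModI_\dX(U)$.

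I expect the only real obstacle to be the first point: extracting from the construction of the (pre)stack (recalled in the appendix) the statement that applying $\alpha_U$ to presheaf-level data produces an object, resp.\ morphism, of the stack presented by the \emph{trivial} descent datum on the one-element cover. Once that is in hand, the kernel/cokernel computation of Proposition~\ref{ModIabel} collapses to the corresponding computation in $\ModIp(\Lambda_U)$ and everything else is routine bookkeeping.
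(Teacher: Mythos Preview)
Your proposal is correct and is essentially a detailed unpacking of the paper's one-line proof. The paper simply says that since kernels, cokernels, images, and coimages in $\ModI_\dX(U)$ are defined locally (via the construction in Proposition~\ref{ModIabel}), the assertion is obvious; your plan makes this precise by noting that for a morphism coming from $\ModIp(\Lambda_U)$ one can run that construction on the trivial one-element cover $\{U\}$, where it collapses to the kernel/cokernel already computed in $\ModIp(\Lambda_U)$.
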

\begin{proof}
Since kernels, cokernels, images, and coimages are defined locally, the assertion is obvious.
\end{proof}

\begin{lemma}\label{realizationofmorphism}If $\overline{U}$ is compact, the functor $\alpha_U$ is fully faithful.
\end{lemma}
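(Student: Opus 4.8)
Fix $\cV,\cW\in\ModIp(\Lambda_U)$, say $\cV=[\tcV]$ and $\cW=[\tcW]$ with $\tcV,\tcW\in\Mod^0(\Lambda_U)$, and consider the presheaf on $\Open_\dX$
\[
\mathcal H\colon\ V\longmapsto \Hom_{\ModIp(\Lambda_V)}(\cV|_V,\cW|_V).
\]
The functor $\alpha_U$ factors as $\ModIp(\Lambda_U)\to\ModIps_\dX(U)\to\ModI_\dX(U)$; prestackification only replaces hom\hi presheaves by their sheafifications, and stackification of a prestack induces isomorphisms on hom\hi presheaves (see the Appendix). Hence the map induced by $\alpha_U$ on morphisms is canonically identified with the sheafification unit $\mathcal H(U)\to\Gamma(U,\mathcal H^{sh})$. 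Thus it is enough to prove: \emph{for every $V\in\Open_\dX$ with $\overline V$ compact, $\mathcal H(V)\to\Gamma(V,\mathcal H^{sh})$ is bijective}; applying this to $V=U$ finishes the proof. Note that every member of a covering of such a $V$ again has compact closure, so this whole class of opens is available below.

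The geometric point is that \emph{if $\overline V$ is compact then every covering of $V$ in $\Open_\dX$ is refined by a finite one}: pass to the subfamily $\{V_i\}_{i\in J}$ that covers $V$ and is locally finite over $\oX$, cover the compact set $\overline V$ by finitely many open subsets of $\oX$ each meeting only finitely many $V_i$; then only finitely many $V_i$ meet $\overline V$, and those still cover $V$. Consequently, in checking separatedness and gluing of $\mathcal H$ at such a $V$ we may restrict to finite coverings, and it is precisely on finite coverings that Lemmas~\ref{1.10}, \ref{1.11} and~\ref{1.12} become effective.

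For separatedness at such a $V$: if $f\in\Hom_{\ModIp(\Lambda_V)}(\cV|_V,\cW|_V)$ restricts to $0$ on a finite covering $\{V_i\}$, take a homogeneous lift $f'\in\Hom^b_{\Mod^\bR(\Lambda_V)}(\tcV|_V,\tcW|_V)$ (Lemma~\ref{1.11}); by Lemma~\ref{1.10} there is $a_i\in\bRz$ with $T^{a_i}(f'|_{V_i})=0$, and with $a:=\max_i a_i$ the morphism $T^af'$ is an honest degree\hi $0$ morphism $\tcV|_V\to\tcW\la b+a\ra|_V$ in $\Mod^0(\Lambda_V)$ vanishing on a covering, hence $0$ since the internal hom in $\Mod^0(\Lambda)$ is a genuine sheaf; so $f=[f']=[T^af']=0$. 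For gluing at such a $V$: given $g_i\in\Hom_{\ModIp(\Lambda_{V_i})}(\cV|_{V_i},\cW|_{V_i})$ that agree on overlaps over a finite covering $\{V_i\}$ — by the separatedness just proved applied to the $V_i\cap V_j$, which again have compact closure, ``agree'' may be read literally as $g_i|_{V_i\cap V_j}=g_j|_{V_i\cap V_j}$ — choose homogeneous lifts $g_i'$, multiply by powers of $T$ to give them a common degree $b$ (Lemma~\ref{1.11}), and then multiply by $T^c$ with $c:=\max_{i,j}c_{ij}$, where $T^{c_{ij}}$ kills $g_i'|_{V_i\cap V_j}-g_j'|_{V_i\cap V_j}$ (Lemma~\ref{1.10}); now $\{T^cg_i'\}$ is a genuine matching family of sections of the internal hom sheaf $\tcV|_V\to\tcW\la b+c\ra|_V$ of $\Mod^0(\Lambda_V)$, hence glues to $g'\in\Hom^{b+c}_{\Mod^\bR(\Lambda_V)}(\tcV|_V,\tcW|_V)$, and $g:=[g']$ restricts to $[T^cg_i']=g_i$ on each $V_i$. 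This proves fullness and completes the argument.

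The obstacle is structural rather than computational: the operation $(-)\otimes_\Lambda\bk$ on hom\hi spaces destroys the naive sheaf property of $\mathcal H$, and the entire proof hinges on using compactness of $\overline U$ to reduce all coverings to finite ones, on which Lemmas~\ref{1.10} and~\ref{1.12} let one clear every local discrepancy simultaneously by one common power of $T$ and thereby fall back on the honest sheaf property of the internal hom in $\Mod^0(\Lambda)$. The one bookkeeping point to be careful about is that the opens occurring in these coverings again have compact closure, so that the separatedness statement can legitimately be fed into the gluing argument; without the compactness hypothesis the argument genuinely fails, since an infinite covering can spread the relevant grading shifts over an unbounded set and no single $T^c$ suffices.
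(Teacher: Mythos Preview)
Your proof is correct and follows essentially the same route as the paper's own argument: reduce to finite coverings via compactness of $\overline U$, then use Lemmas~\ref{1.10}--\ref{1.12} to clear all local discrepancies by a single common power of $T$ and fall back on the honest sheaf property of the internal hom in $\Mod^0(\Lambda)$. You are somewhat more explicit than the paper about why it suffices to check that the hom-presheaf is already a sheaf (via the factorisation through prestackification and stackification), and you note carefully that the overlaps $V_i\cap V_j$ again have compact closure so separatedness feeds into the gluing step, but the substance is the same.
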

\begin{proof}
We set $D_U:=D_X\cap \overline{U}$.
To show the claim, it is enough to prove $\Hom_{\Mod^\frakI_{pre}(\Lambda_U)}(\cV, \cW)$ is a sheaf over the site $\Open_{(\overline{U},D_U)}$. Since $\overline{U}$ is compact, any cover in $\Open_{(\overline{U}, D_U)}$ has a finite subcover.

We first assume that there exists a finite cover $\{U_i\}$ of $U$ such that the restriction of $f\in \Hom_{\Mod^\frakI_{pre}(\Lambda_X)}(\cV, \cW)$ to each open subset is zero. Let $\tf\in \Hom_{\Mod^0(\Lambda_X)}(\tcV, \tcW)$ be a representative. Then the restriction of $f$ to each open subset $U_i$ is represented by $\tf|_{U_i}$. Since $f|_{U_i}=0$, there exists big $T^a$ such that $T^a\tf|_{U_i}=0$ by Lemma~\ref{1.10}. Let $A$ be the maximum of those $a$'s. Then $T^A\tf=0$. Hence $f=0$.

Let $\{f_{i}\}\in \prod\Hom_{\Mod^\frakI_{pre}(\Lambda_{U_i})}(\cV|_{U_i}, \cW|_{U_i})$ satisfies the descent condition. Depending on $i$, we have a set of lifts $\tf_i\colon \tcV|_{U_i}\rightarrow \tcW|_{U_i}\la a_i\ra$ in $\Mod^0(\Lambda_{U_i})$. In our situation, we can take $a_i=a_j$ for any $i,j$, since the indexes are finite. Then we reset $\tcW$ with $\tcW\la a_i\ra$ by Lemma~\ref{1.15}. 
On $U_i\cap U_j$, $\tf_i$ and $\tf_j$ may not coincide, but $f_i=[\tf_i]$ coincides with $f_j=[\tf_j]$. Hence there exists $T^{a_{ij}}$ such that $T^{a_{ij}}\tf_i=T^{a_{ij}}\tf_j$. By taking the maximum among $a_{ij}$, we reset $\tf_j$ by $T^a \tf_j$ then the set $\{T^a \tf_i\}$ satisfies the descent condition in $\Mod^0(\Lambda_U)$. Hence we get a glued morphism.
\end{proof}

\begin{corollary}\label{globalobject}
If $X$ is compact, $\ModIp(\Lambda_X)$ is an abelian subcategory of $\ModI(\Lambda_{X})$.
\end{corollary}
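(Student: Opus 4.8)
The plan is to read off the statement from Lemma~\ref{alphaexact} and Lemma~\ref{realizationofmorphism} in the special case $(\oX, D_X)=(X,\varnothing)$ and $U=X$. Since $X$ is compact, $\overline{U}=X$ is compact, so Lemma~\ref{realizationofmorphism} says that the canonical functor $\alpha_X\colon \ModIp(\Lambda_X)\rightarrow \ModI(\Lambda_X)$ is fully faithful, and Lemma~\ref{alphaexact} says it is exact.

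First I would invoke the general fact that a fully faithful exact functor $F$ between abelian categories realizes its source as a full abelian subcategory of its target. Concretely, for a morphism $f$ in $\ModIp(\Lambda_X)$, exactness of $\alpha_X$ gives canonical isomorphisms $\ker(\alpha_X f)\cong \alpha_X(\ker f)$ and $\coker(\alpha_X f)\cong \alpha_X(\coker f)$ computed in $\ModI(\Lambda_X)$, and $\alpha_X$ also preserves finite biproducts. Hence the essential image of $\alpha_X$ is closed under kernels, cokernels and finite biproducts taken in $\ModI(\Lambda_X)$; being full as well, it is a full abelian subcategory of $\ModI(\Lambda_X)$, and $\alpha_X$ is an equivalence onto it. This is exactly the assertion that $\ModIp(\Lambda_X)$ is an abelian subcategory of $\ModI(\Lambda_X)$.

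There is essentially no obstacle: the only point worth spelling out is the compatibility of kernels and cokernels in the two categories, which is precisely what exactness of $\alpha_X$ encodes, together with full faithfulness, which ensures that passing to $\ModI(\Lambda_X)$ creates no new morphisms and hence no subquotients beyond those already present in $\ModIp(\Lambda_X)$. I would therefore keep the proof to a couple of lines, citing Lemmas~\ref{alphaexact} and~\ref{realizationofmorphism} and the above remark on fully faithful exact functors.
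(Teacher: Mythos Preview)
Your proposal is correct and follows essentially the same approach as the paper, which simply cites Lemma~\ref{realizationofmorphism} for the embedding. Your version is slightly more explicit in also invoking Lemma~\ref{alphaexact} for exactness and spelling out why a fully faithful exact functor realizes its source as an abelian subcategory, which the paper leaves implicit.
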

\begin{proof}
The embedding is given by the proof of Lemma~\ref{realizationofmorphism}. 
\end{proof}

Let us denote the derived category by $D^{\bullet}(\ModI(\Lambda_{(\oX, D_X)}))$ ($\bullet=b, \pm$).

\subsection{Forgetting shifts}
Recall that we have the canonical functor $[\cdot]\colon\Mod^0(\Lambda_X)\rightarrow \ModIp(\Lambda_X)$.  There also exists a canonical functor $\alpha_X\circ [\cdot]\colon \Mod^0(\Lambda_X)\rightarrow \ModI(\Lambda_\dX)$.

\begin{lemma}\label{cdotexact}
The functor $\alpha_X\circ [\cdot]\colon \Mod^0(\Lambda)\rightarrow \ModI(\Lambda_\dX)$ is an exact functor.
\end{lemma}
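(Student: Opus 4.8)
The plan is to factor the functor and invoke exactness of the two pieces separately. By the definition given just above the statement, $\alpha_X\circ[\cdot]$ is the composite
\begin{equation*}
\Mod^0(\Lambda_X)\xrightarrow{\ [\cdot]\ }\ModIp(\Lambda_X)\xrightarrow{\ \alpha_X\ }\ModI(\Lambda_\dX),
\end{equation*}
where $X=\oX\bs D_X$ denotes the interior. All three categories are abelian — $\Mod^0(\Lambda_X)$ because $\Mod^0(\Lambda)$ is, $\ModIp(\Lambda_X)$ by Proposition~\ref{1.9}, and $\ModI(\Lambda_\dX)$ by Proposition~\ref{ModIabel} — so it is meaningful to speak of exactness. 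By Corollary~\ref{kakkoexact} the functor $[\cdot]$ is exact, and by Lemma~\ref{alphaexact} the functor $\alpha_X$ is exact. Since a composite of exact functors between abelian categories is exact, the lemma follows.

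There is essentially no obstacle here; the only things to confirm are that the functor named in the statement really is this composite (it is, by construction) and that passing to the interior $X$ or recording the boundary pair $\dX$ causes no trouble — it does not, since $\alpha_X$ is defined on open subsets $U\subset X$ and $X$ itself is such a subset. If one prefers an unwound argument: given a short exact sequence $0\to \tcV'\to\tcV\to\tcV''\to 0$ in $\Mod^0(\Lambda_X)$, Corollary~\ref{kakkoexact} (equivalently Lemma~\ref{1.15}) shows its image under $[\cdot]$ is short exact in $\ModIp(\Lambda_X)$, and since kernels, cokernels, images and coimages in $\ModI(\Lambda_\dX)$ are computed locally and there agree with those in $\ModIp$, applying $\alpha_X$ preserves exactness; hence $\alpha_X\circ[\cdot]$ sends short exact sequences to short exact sequences, which is exactness.
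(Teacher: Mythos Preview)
Your proof is correct and follows exactly the paper's approach: the paper's proof is the single sentence ``This is a consequence of Lemma~\ref{alphaexact} and Lemma~\ref{kakkoexact},'' which is precisely your factorization-and-compose argument. Your additional remarks (abelianness of the intermediate categories, the unwound version via locally-computed kernels/cokernels) are sound elaborations of the same idea.
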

\begin{proof}
This is a consequence of Lemma \ref{alphaexact} and Lemma~\ref{kakkoexact}.
\end{proof}
For simplicity, we will denote $\alpha_X\circ [\cdot]$ by $[\cdot]$. We denote the exact functor $D^\bullet(\Mod^0(\Lambda_X))\rightarrow D^\bullet(\ModI(\Lambda_\dX))$ induced by $[\cdot]$ by the same notation.

\subsection{Finite limits and finite colimits}
Since $\ModI(\Lambda_\dX)$ is an abelian category, it admits finite limits and finite colimits.
\begin{lemma}\label{limits}
Let $F\colon \frakU\rightarrow \Mod^0(\Lambda_X)$ be a finite diagram without loops in $\Mod^0(\Lambda_X)$. We have
\begin{equation}
\begin{split}
[\lim_{\substack{\longleftarrow \\ \frakU}}F]&\cong \lim_{\substack{\longleftarrow \\ \frakU}}[F]\\
[\lim_{\substack{\longrightarrow \\ \frakU}}F]&\cong \lim_{\substack{\longrightarrow \\ \frakU}}[F]
\end{split}
\end{equation}
\end{lemma}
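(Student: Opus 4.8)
The plan is to reduce everything to the already-proved exactness statements. Recall that $\alpha_X \circ [\cdot]$ is exact (Lemma~\ref{cdotexact}), and an exact functor between abelian categories preserves all finite limits and finite colimits (a finite limit can be built from finite products and kernels, a finite colimit from finite coproducts and cokernels, and an exact functor commutes with each of these; the hypothesis ``without loops'' is there to guarantee the relevant (co)limits are genuinely \emph{finite} diagrams with no issues). So the only real content is to check that the diagram $F$, being in $\Mod^0(\Lambda_X)$ where grading is honestly respected, has its limit/colimit computed the usual way, and that applying $[\cdot]$ does not change the answer.

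First I would spell out the reduction. Since $\frakU$ is finite and without loops, $\lim_{\substack{\longleftarrow\\\frakU}} F$ is a finite limit; write it as an equalizer of two maps between finite products of objects $F(u)$ (the standard ``limit = kernel of $\prod_u F(u) \rightrightarrows \prod_{u\to v} F(v)$'' presentation). Both the finite product and the equalizer/kernel are preserved by the exact functor $[\cdot]\colon \Mod^0(\Lambda_X)\to \ModI(\Lambda_\dX)$: finite products in an abelian category agree with finite coproducts and these are preserved by any additive functor, while kernels are preserved because $[\cdot]$ is exact. Dually, $\lim_{\substack{\longrightarrow\\\frakU}} F$ is a coequalizer of maps between finite coproducts, and both are again preserved. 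Hence $[\lim F] \cong \lim [F]$ and $[\operatorname*{colim} F] \cong \operatorname*{colim} [F]$, which is exactly the assertion (with $[\cdot]$ denoting $\alpha_X\circ[\cdot]$ as agreed in the previous subsection).

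A cleaner way to package the same argument, which I would probably use in the write-up: both $\ModI(\Lambda_\dX)$ and $\Mod^0(\Lambda_X)$ are abelian, so finite limits and colimits exist on both sides; a functor between abelian categories is left exact iff it preserves finite limits and right exact iff it preserves finite colimits; an exact functor is both, hence preserves all finite limits and all finite colimits. Applying this to $[\cdot]$, which is exact by Lemma~\ref{cdotexact}, gives the two displayed isomorphisms immediately once one notes that for a diagram landing in $\Mod^0(\Lambda_X)$ the (co)limit is computed in $\Mod^0(\Lambda_X)$ and then transported. The role of ``finite without loops'' is just to ensure we are only invoking \emph{finite} (co)limits, which is all an exact functor is guaranteed to commute with.

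I do not expect a serious obstacle here; the statement is essentially a formal consequence of Lemma~\ref{cdotexact} plus the elementary fact that exact functors between abelian categories commute with finite limits and colimits. The only mild point to be careful about is the bookkeeping that the ambient category on the right is $\ModI(\Lambda_\dX)$ rather than $\ModIp(\Lambda_X)$, so one should cite the exactness of $\alpha_X$ (Lemma~\ref{alphaexact}) together with that of $[\cdot]\colon \Mod^0(\Lambda_X)\to\ModIp(\Lambda_X)$ (Corollary~\ref{kakkoexact}) — but this is precisely what Lemma~\ref{cdotexact} already records, so the proof can be stated in two lines.
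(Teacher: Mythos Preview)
Your argument is correct, and it takes a genuinely different route from the paper's. The paper verifies the universal property of $[\lim F]$ directly: given a cone $\cV\to[F]$ in $\ModI(\Lambda_\dX)$, it passes to local lifts $\tcV\to F$ in $\Mod^0(\Lambda_X)$, uses the universal property there to get $\tcV\to\lim F$, then argues uniqueness (in the style of Lemma~\ref{1.15}) to glue the resulting local morphisms $\cV\to[\lim F]$. Your approach bypasses all of this by invoking the general fact that an exact functor between abelian categories preserves finite limits and colimits, which reduces the lemma to Lemma~\ref{cdotexact}.

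What each buys: your argument is shorter and cleaner, and in fact does not use the ``without loops'' hypothesis at all---it works for any finite index category. The paper's hands-on argument, by contrast, genuinely seems to need that hypothesis: lifting a cone $\cV\to[F]$ to a cone $\tcV\to F$ in $\Mod^0(\Lambda_X)$ requires choosing degree shifts consistently across the diagram, and a loop in $\frakU$ could obstruct this. So your approach is not only more economical but slightly stronger. The trade-off is that the paper's proof is more in keeping with the concrete, lift-and-glue style used throughout Section~3, and makes visible exactly where the local structure of $\ModI(\Lambda_\dX)$ enters.
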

\begin{proof}
We will only prove the first one. The second one can be proved in a similar manner.

It is enough to show that the left hand side satisfies the universality of the right hand side. Let $\cV$ be an object which is over $[F]$. Locally, we have a lift $\tcV\rightarrow F$. By the universality, we get a morphism $\tcV\rightarrow \displaystyle{\lim_{\substack{\longleftarrow \\ \frakU}}}F$, which induces a morphism $\cV\rightarrow [\displaystyle{\lim_{\substack{\longleftarrow \\ \frakU}}}F]$ locally. The uniqueness of this morphism can be shown by a method similar to the proof of Lemma \ref{1.15}. The uniqueness glue up these local morphisms to obtain the desired result.
\end{proof}

\begin{remark}
Contrary to the finite case, infinite (co)limits do not commute with $[\cdot]$ in general. We give one example in the following. Let us set $\oX=[0,\infty)$ and $D_X=\{0\}$. Consider $\cV_1:=\bigoplus_{a\in \bR}\RGamma_{[-a,\infty)}\bk_{\{t\geq 1/x\}}$ and $\cV_2:=\bigoplus_{a\in \bR}\RGamma_{[-a,\infty)}\bk_{\{t\geq 1/x^2\}}$. As we can see in the discussion of Section 5 below, we have $\Hom_{\ModI(\Lambda_\dX)}([\cV_2], 
[\cV_1])=0$. Let $i_b\colon (b,\infty)\hookrightarrow [0, \infty)$ be the open embedding for $b\in \bR_{>0}$.

If $[\cdot]$ and colimits commute, we have $[\cV_2]\cong \displaystyle{\lim_{\substack{\longrightarrow \\ b\rightarrow 0}}}[i_{b!}i_b^{-1}\cV_2]$. Again, from the discussion of Section 5, we can conclude
\begin{equation}
\Hom_{\ModI(\Lambda_\dX)}([\cV_2], [\cV_1])\cong \lim_{\substack{\longleftarrow \\ b\rightarrow 0}}\Hom_{\ModI(\Lambda_\dX)}(\cV_2|_{(b.\infty)}, \cV_1|_{(b,\infty)})\cong \bk.
\end{equation}
This is a contradiction.
\end{remark}

\subsection{Operations}
In this section, we will develop the six functors. As above, $(\oX, D_X), (\oY, D_Y)$ are topological spaces with boundaries. We set $X:=\oX\bs D_X$ and $Y:=\oY\bs D_Y$

\subsection*{Internal hom} 
Let $\tcV, \tcW$ be objects in $\Mod^0(\Lambda_X)$, the internal hom sheaf is defined by the assignment
\begin{equation}
\cHom(\tcV, \tcW)\colon U\mapsto \bigoplus_{a}\Hom_{\Mod^0(\Lambda_U)}(\cV|_U, \cW|_U\la a\ra).
\end{equation}
for an open subset $U\subset X$. This is canonically an $\bR$-graded $\Lambda_X$-module.

Let $\cV, \cW$ be objects of $\ModI(\Lambda_{(\oX,D_X)})$. Then there exists an open covering $\{U_i\}_{i\in I}$ of $X$ in the site $\Open_{(\oX,D_X)}$ such that there exists $\bR$-graded $\Lambda_{U_i}$-modules $\tcV_i, \tcW_i$ over each $U_i$ representing $\cV$ and $\cW$. Then one has an $\bR$-graded $\Lambda_{U_i}$-module $\cHom(\tcV_i, \tcW_i)$ over each $U_i$. 
\begin{lemma}\label{internalhom}
The set $\{[\cHom(\tcV_i, \tcW_i)]\}$ satisfies the descent and gives an object of $\ModI(\Lambda_{(\oX, D_X})$, which we will denote by $\cHom(\cV, \cW)$. This is independent of the choice of local lifts.
\end{lemma}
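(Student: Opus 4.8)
The plan is to construct $\cHom(\cV,\cW)$ as an explicit descent datum, after first upgrading the internal hom to a functor at the level of $\ModIp$. So I would begin with the following chartwise statement: for $\tcV,\tcV',\tcW\in\Mod^0(\Lambda_X)$ and a morphism $f\in\Hom_{\ModIp(\Lambda_X)}([\tcV],[\tcV'])$, one gets a well-defined morphism $\cHom(f,\tcW)\colon[\cHom(\tcV',\tcW)]\to[\cHom(\tcV,\tcW)]$ in $\ModIp(\Lambda_X)$. Concretely, lift $f$ to a degree-$b$ morphism $\phi\colon\tcV\to\tcV'\la b\ra$ of $\Mod^0(\Lambda_X)$ (Lemma~\ref{1.11}), apply the internal hom of $\Mod^0(\Lambda_X)$ to obtain $\cHom(\phi,\tcW)\colon\cHom(\tcV',\tcW)\la -b\ra\to\cHom(\tcV,\tcW)$, and compose with the canonical identification $[\cHom(\tcV',\tcW)\la -b\ra]\cong[\cHom(\tcV',\tcW)]$ (grading shift by a fixed amount is a natural isomorphism after applying $[\cdot]$). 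Independence of the chosen lift $\phi$ is exactly Lemma~\ref{1.12}: two lifts become equal after multiplying by some $T^a$, and $T$ acts by $1$ on $\ModIp$-hom-spaces, so the induced morphisms coincide. The same recipe in the second variable, together with functoriality of the internal hom on $\Mod^0(\Lambda_X)$, makes $([\tcV],[\tcW])\mapsto[\cHom(\tcV,\tcW)]$ a bifunctor on $\ModIp(\Lambda_X)$, contravariant in the first slot and covariant in the second; it evidently commutes with restriction to open subsets.

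Next I would transport this to $\ModI$. Over an open $U$ carrying lifts of both arguments, a morphism in $\ModI(\Lambda_U)$ between objects with global lifts over $U$ is, by construction of the stackification, locally (on a refinement) represented by morphisms in $\ModIp$; applying the bifunctor above chartwise and invoking Lemma~\ref{1.12} to see the results agree on overlaps, one obtains an induced morphism of internal homs, which glues since $\ModI$ is a stack. Now choose a common refining cover $\{U_i\}$ of $X$ on which we have $\bR$-graded lifts $\tcV_i,\tcW_i$ together with transition isomorphisms $\theta_{ij}\colon[\tcV_i]|_{U_{ij}}\xrightarrow{\cong}[\tcV_j]|_{U_{ij}}$ and $\eta_{ij}\colon[\tcW_i]|_{U_{ij}}\xrightarrow{\cong}[\tcW_j]|_{U_{ij}}$ in $\ModI$ realizing $\cV$ and $\cW$ and satisfying the cocycle conditions. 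Put $\Theta_{ij}:=\cHom(\theta_{ij}^{-1},\eta_{ij})\colon[\cHom(\tcV_i,\tcW_i)]|_{U_{ij}}\xrightarrow{\cong}[\cHom(\tcV_j,\tcW_j)]|_{U_{ij}}$, where the inverse on $\theta_{ij}$ is forced by the contravariance in the first slot. Then $\Theta_{jk}\circ\Theta_{ij}=\Theta_{ik}$ on $U_{ijk}$ follows from bifunctoriality of $\cHom$ and the cocycle identities for $\{\theta_{ij}\}$ and $\{\eta_{ij}\}$. Hence $\bigl(\{[\cHom(\tcV_i,\tcW_i)]\}_i,\{\Theta_{ij}\}\bigr)$ is a descent datum, which glues to the desired object $\cHom(\cV,\cW)\in\ModI(\Lambda_{(\oX,D_X)})$.

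For independence of the local lifts, given a second such system I would pass to a common refinement; on each member the two pairs of lifts represent the same restrictions of $\cV$ and $\cW$, hence are isomorphic in $\ModI(\Lambda_U)$, and feeding these comparison isomorphisms into the bifunctor $\cHom$ yields an isomorphism of the two descent data compatible with their transition isomorphisms, so the glued objects are canonically isomorphic. I expect the only real difficulty to be organizational rather than conceptual: the grading shifts $\la a\ra$ differ from chart to chart and from lift to lift, and the transition isomorphisms a priori live only in $\ModI$, so one must descend to a still finer cover to represent them by honest degree-$b$ morphisms of $\Mod^0$-modules before the $\Mod^0$-level internal hom can be applied. What makes all of this harmless — and is precisely the point of Lemmas~\ref{1.10}--\ref{1.13} — is that each such ambiguity is absorbed by a power of $T$, which is invisible in $\ModIp$ and in $\ModI$, so all the chartwise constructions are canonical and glue without further work.
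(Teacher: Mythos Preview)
Your proposal is correct and follows essentially the same approach as the paper: both arguments lift the transition isomorphisms to degree-$b$ maps in $\Mod^0$, apply the internal hom of $\Mod^0$ (the paper phrases this as precomposition maps $p(\tf_{ijk})$, $p(\tg_{ijk})$), and use Lemmas~\ref{1.10}--\ref{1.12} to see that the induced maps in $\ModIp$ are inverse isomorphisms, then glue via the stack property. Your packaging is a bit cleaner---you first isolate the bifunctor $([\tcV],[\tcW])\mapsto[\cHom(\tcV,\tcW)]$ on $\ModIp$ and then feed the descent data through it, whereas the paper works directly with the precompositions---but the content and the key step (absorbing all ambiguities into powers of $T$) are identical.
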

\begin{proof}
On $U_{ij}:=U_i\cap U_j$, we have the isomorphism $f\colon [\tcV_i|_{U_{ij}}]\rightarrow [\tcV_j|_{U_{ij}}]$ in $\ModIps_\dX(U_{ij})$. Then there exists an open covering $\{U_{ijk}\}$ of $U_{ij}$ where there exists a descent data $f_{ijk}\colon [\tcV_i|_{U_{ijk}}]\rightarrow [\tcV_j|_{U_{ijk}}]$  for the isomorphism $f\colon [\tcV_i|_{U_{ij}}] \rightarrow [\tcV_j|_{U_{ij}}]$.

We can take a lift $\tf_{ijk}\colon \tcV_i|_{U_{ijk}}\rightarrow \tcV_j|_{U_{ijk}}\la a\ra$ of this morphism and that of the inverse $\tg_{ijk}\colon \tcV_j|_{U_{ijk}}\rightarrow \tcV_i|_{U_{ijk}}\la b\ra$ for some $a, b$. The difference $\tg_{ijk}\la a\ra \circ \tf_{ijk}-T^{a+b}$ and $\tf_{ijk}\la b\ra\circ \tg_{ijk}-T^{a+b}$ becomes $0$ after multiplying $T^c$ for sufficiently big $c$ by Lemma \ref{1.10}. 

For simplicity, let us assume that $\cW$ has a global lift i.e., $\cW=[\tcW]$ although we can do the same for general $\cW$. We have the following induced morphisms:
\begin{equation}
\cHom(\tcV_i|_{U_{ijk}}, \tcW|_{U_{ijk}})\xrightarrow{p(\tg_{ijk})} \cHom(\tcV_j|_{U_{ijk}}, \tcW|_{U_{ijk}})\la -b\ra, \cHom(\tcV_j|_{U_{ijk}}, \tcW|_{U_{ijk}}), \xrightarrow{p(\tf_{ijk})} \cHom(\tcV_i|_{U_{ijk}}, \tcW|_{U_{ijk}})\la -a\ra
\end{equation} where $p(\tf_{ijk})$ and $p(\tg_{ijk})$ are precompositions of $f$ and $g$. Since $p(\tf_{ijk})\la a\ra\circ p(\tg_{ijk})-T^{a+b}\id$ and $p(\tg_{ijk})\circ p(\tf_{ijk})\la b\ra-T^{a+b}\id$ are also vanished by multiplying $T^c$ for big $c$, we can conclude $[\cHom(\tcV_i|_{U_{ijk}}, \tcW|_{U_{ijk}})]\cong [\cHom(\tcV_j|_{U_{ijk}}, \tcW|_{U_{ijk}})]$.  A similar argument as was done in Proposition \ref{ModIabel} gives a gluing of these isomorphisms to give a global object in $\ModI(\Lambda_X)$. The independence of the choice of local lifts is also clear.
\end{proof}

For $\cV \in \ModI(\Lambda_\dX)$, suppose $\cV|_U$ for $U\in \Open_\dX$ is represented by $\tcV_U\in \ModIp(\Lambda_X)$. Let us consider the assignment
\begin{equation}
V\mapsto \tcV_U(V)\otimes_\Lambda \bk.
\end{equation}
for $V\subset U$. Then this assignment does not depend on the choice of the lift $\tcV_U$. Hence one can associate a sheaf over $\Open_\dX$. We write it $\cV\otimes \bk$.

\begin{lemma}\label{globalsection}
For $\cV, \cW\in \ModI(\Lambda_\dX)$, the sheaf $\cHom(\cV,\cW)\otimes \bk$ over $\Open_\dX$ is canonically isomorphic to $\Hom_{\ModI_{\Lambda_\dX}}(\cV, \cW)$.
\end{lemma}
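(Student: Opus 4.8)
The plan is to construct a canonical morphism of sheaves on $\Open_\dX$ from $\cHom(\cV,\cW)\otimes\bk$ to the sheaf $U\mapsto\Hom(\cV|_U,\cW|_U)$ of morphisms computed in the stack $\ModI_\dX$ — the latter being a sheaf precisely because $\ModI_\dX$ is a stack — to check that this morphism is an isomorphism, and then to take global sections over $X$, which turns the target into $\Hom_{\ModI(\Lambda_\dX)}(\cV,\cW)$. The morphism itself is an unwinding of definitions: over an open cover $\{U_i\}$ of $U$ on which $\cV$ and $\cW$ are represented by $\bR$-graded lifts $\tcV_i,\tcW_i$, a section of $\cHom(\cV,\cW)\otimes\bk$ over $U$ is a compatible family of classes in $\cHom(\tcV_i,\tcW_i)(U_i)\otimes_\Lambda\bk=\Hom_{\ModIp(\Lambda_{U_i})}(\cV|_{U_i},\cW|_{U_i})$, and such a family is exactly a descent datum for a morphism $\cV|_U\to\cW|_U$ in $\ModI_\dX$. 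That this assignment is independent of the chosen lifts follows from the independence statements already established (Lemma~\ref{1.13}, the last sentence of Lemma~\ref{internalhom}, and the lift-independence of $(-)\otimes\bk$), so the morphism is canonical.

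To see that the comparison morphism is an isomorphism I would work locally. Being an isomorphism of sheaves is detected on a basis, so it suffices to treat opens $U$ that are relatively compact in $\oX$ and over which $\cV$ and $\cW$ admit global lifts $\tcV_U,\tcW_U\in\Mod^0(\Lambda_U)$; such $U$ form a basis because relative compactness and the existence of lifts both pass to smaller opens and to finite intersections, and every point has arbitrarily small such neighborhoods. On such a $U$, lift-independence in Lemma~\ref{internalhom} gives $\cHom(\cV,\cW)|_U=[\cHom(\tcV_U,\tcW_U)]$, so the source evaluates to
\begin{equation}
\bigl(\cHom(\cV,\cW)\otimes\bk\bigr)(U)=\Bigl(\bigoplus_{a}\Hom_{\Mod^0(\Lambda_U)}(\tcV_U,\tcW_U\la a\ra)\Bigr)\otimes_\Lambda\bk=\Hom_{\ModIp(\Lambda_U)}(\cV|_U,\cW|_U),
\end{equation}
while, $\oU$ being compact, Lemma~\ref{realizationofmorphism} identifies $\Hom(\cV|_U,\cW|_U)$ computed in $\ModI_\dX$ with the same $\Hom_{\ModIp(\Lambda_U)}(\cV|_U,\cW|_U)$; under these identifications the comparison map is the identity. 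Hence it is an isomorphism over every basic open, so an isomorphism of sheaves, and taking sections over $X$ proves the lemma.

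The main obstacle — and the reason the argument is not purely formal — is that $(-)\otimes_\Lambda\bk$ does not commute with gluing (the paper already flags that it ``breaks the sheaf property''), so one may \emph{not} compute $\Gamma\bigl(X,\cHom(\cV,\cW)\otimes\bk\bigr)$ as $\Gamma\bigl(X,\cHom(\cV,\cW)\bigr)\otimes_\Lambda\bk$. The proof avoids this by never computing global sections directly: the reduction to relatively compact opens is exactly what makes source and target agree on the nose locally, because over such opens the finiteness of covers lets one clear $T$-denominators uniformly — the same mechanism that underlies the compactness hypothesis of Lemma~\ref{realizationofmorphism}, which is therefore the real engine of the proof.
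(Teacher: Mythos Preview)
Your argument is correct, but it works harder than necessary. The paper dispatches this lemma with ``obvious from the construction,'' and the reason is that both sides are \emph{by definition} the same sheaf: the hom-sheaf $\Hom_{\ModI_\dX}(\cV,\cW)$ in the stack is the sheafification (over $\Open_\dX$) of the presheaf $V\mapsto\Hom_{\ModIp(\Lambda_V)}(\cV|_V,\cW|_V)$, and this presheaf is, in terms of local lifts $\tcV_i,\tcW_i$, exactly $V\mapsto\cHom(\tcV_i,\tcW_i)(V)\otimes_\Lambda\bk$, which is precisely the presheaf whose associated sheaf is $\cHom(\cV,\cW)\otimes\bk$. So there is nothing to check on a basis --- the two sheaves are constructed identically.

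Your route via a basis of relatively compact opens and Lemma~\ref{realizationofmorphism} does work, and it has the virtue of making explicit where the finiteness enters. One small point: in your displayed computation of $(\cHom(\cV,\cW)\otimes\bk)(U)$ you silently replace sections of the sheafification by the presheaf value $\cHom(\tcV_U,\tcW_U)(U)\otimes_\Lambda\bk$. This is justified only because the presheaf is already a sheaf over such $U$, which is exactly the content of (the proof of) Lemma~\ref{realizationofmorphism} --- the same fact you cite for the target side. So you are really invoking that lemma twice, once for each side; once you notice this, the two sides visibly coincide and the comparison-map scaffolding becomes unnecessary.
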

\begin{proof}
This is obvious from the construction.
\end{proof}

\subsection*{Tensor product}
First, for $\bR$-graded $\Lambda$-modules $V$ and $W$, their tensor product is defined as follows:
\begin{equation}
\Gr^a(V\otimes_\Lambda W):=\bigoplus_{b+c=a}\Gr^bV\otimes_k \Gr^cW/\sim
\end{equation}
where the equivalence relations are generated by 
\begin{equation}
v\otimes_k \alpha w\sim \alpha v\otimes_k w
\end{equation}
where $\alpha\in \Lambda, v\in V, w\in W$ are homogeneous and $\deg \alpha+\deg v+\deg w=a$. The tensor product $V\otimes_\Lambda W:=\bigoplus_a \Gr^a(V\otimes_\Lambda W)$ is canonically equipped with a $\Lambda$-module structure.

Let $\tcV, \tcW$ be objects of $\Mod^0(\Lambda_X)$. Then the tensor product $\tcV\otimes \tcW$ is defined by the sheafification of the assignment
\begin{equation}
\tcV\otimes_{\Lambda_X} \tcW\colon U\mapsto \tcV(U)\otimes_\Lambda\tcW(U).
\end{equation}

Let $\cV, \cW$ be objects of $\ModI(\Lambda_{(\oX, D_X)})$. Then there exists an open covering $\{U_i\}_{i\in I}$ of $X$ such that there exists $\bR$-graded $\Lambda_{U_i}$-modules $\tcV_i, \tcW_i$ over each $U_i$. Then one has another $\bR$-graded $\Lambda_{U_i}$-module $\tcV_i\otimes_{\Lambda_X} \tcW_i$ over each $U_i$.
\begin{lemma}
The set $\{[\tcV_i\otimes_{\Lambda_X}\tcW_i]\}$ satisfies the descent and gives an object of $\ModI(\Lambda_\dX)$, which we will denote by $\cV\otimes\cW$. This is independent of the choice of local lifts.
\end{lemma}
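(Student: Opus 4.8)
The plan is to mirror, almost verbatim, the proof of Lemma~\ref{internalhom}, exploiting the fact that $-\otimes_{\Lambda_X}-$ is covariant in both arguments, so the argument is if anything slightly simpler than the one for $\cHom$. First I would record the one bookkeeping fact everything rests on: for a degree-$0$ morphism $\tf\colon\tcV\to\tcV'$ in $\Mod^0(\Lambda_U)$, the induced map $\tf\otimes\id_{\tcW}$ on tensor products is functorial, compatible with the grading shifts $\la a\ra$, and sends $T^a\tf$ to $T^a(\tf\otimes\id)$; likewise in the second slot. From this it follows, exactly as in Lemma~\ref{internalhom}, that $(-)\otimes_{\Lambda_X}(-)$ induces a well-defined operation on $\ModIp$-level hom-spaces after $\otimes_\Lambda\bk$, and in particular that two lifts of a fixed object which are related by an isomorphism in $\ModIp$ produce canonically isomorphic tensor products.

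Next I would fix the common refinement $\{U_i\}$ over which both $\cV$ and $\cW$ admit lifts $\tcV_i,\tcW_i$, as in the statement. On each $U_{ij}:=U_i\cap U_j$ the object $\cV$ supplies a gluing isomorphism $[\tcV_i|_{U_{ij}}]\cong[\tcV_j|_{U_{ij}}]$ in $\ModI_\dX(U_{ij})$, and similarly for $\cW$; after passing to a finer cover $\{U_{ijk}\}$ of $U_{ij}$ these are realized in $\ModIp$, so I can choose honest lifts $\tf_{ijk}\colon\tcV_i\to\tcV_j\la a\ra$ and $\tg_{ijk}\colon\tcV_j\to\tcV_i\la b\ra$ of the isomorphism and its inverse, together with analogous $\tf'_{ijk},\tg'_{ijk}$ for $\cW$, whose two composites equal $T^{a+b}\id$ (resp. with primes) after multiplication by a further power of $T$, by Lemma~\ref{1.10}. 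Applying $-\otimes-$ yields $\tf_{ijk}\otimes\tf'_{ijk}$ and $\tg_{ijk}\otimes\tg'_{ijk}$ in opposite directions whose composites are again a power of $T$ times the identity up to a further power of $T$, hence descend to isomorphisms $\varphi_{ijk}\colon[\tcV_i\otimes_{\Lambda_X}\tcW_i]\cong[\tcV_j\otimes_{\Lambda_X}\tcW_j]$ in $\ModI$; these glue over $U_{ij}$ by the argument used in Proposition~\ref{ModIabel}.

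Then I would verify the cocycle condition on triple overlaps and independence of the chosen lifts. On a sufficiently fine refinement of $U_i\cap U_j\cap U_l$, the two composites representing $\varphi_{jl}\circ\varphi_{ij}$ and $\varphi_{il}$ are morphisms of $\Mod^0$-modules that agree up to multiplication by a power of $T$ (compare their lifts, Lemma~\ref{1.10}), hence coincide in $\ModI$, and they glue to the equality $\varphi_{jl}\circ\varphi_{ij}=\varphi_{il}$ over $U_i\cap U_j\cap U_l$ by the uniqueness part of the gluing argument in Proposition~\ref{ModIabel}. This produces the descent datum $\{[\tcV_i\otimes_{\Lambda_X}\tcW_i],\varphi_{ij}\}$ and hence the object $\cV\otimes\cW\in\ModI(\Lambda_\dX)$. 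For independence of the lifts, a second choice $\tcV_i',\tcW_i'$ is related to the first by isomorphisms in $\ModI$ which, after refinement and lifting up to powers of $T$, tensor together to isomorphisms $[\tcV_i\otimes\tcW_i]\cong[\tcV_i'\otimes\tcW_i']$ compatible with the $\varphi_{ij}$, giving a canonical isomorphism of the two resulting global objects.

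The main obstacle is not any single computation but the layered refinement bookkeeping in the cocycle step: one must track that every discrepancy between the various lifts of the gluing isomorphisms — on double overlaps, on triple overlaps, and between competing lift choices — is a multiplication by some $T^c$, so that it becomes the identity in $\ModIp$ after $\otimes_\Lambda\bk$. This is precisely where Lemma~\ref{1.10}, together with the uniqueness argument from Lemma~\ref{1.15} and Proposition~\ref{ModIabel}, does the real work, and it is what makes the stackification behave well despite $\otimes_\Lambda\bk$ destroying the naive sheaf property of the hom-spaces.
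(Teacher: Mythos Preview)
Your proposal is correct and follows exactly the approach the paper indicates: the paper's proof consists of the single sentence ``We can prove in a similar manner as in the proof of Lemma~\ref{internalhom},'' and what you have written is precisely that argument spelled out, with precomposition by $\tg_{ijk}$ replaced by the covariant operation $\tf_{ijk}\otimes\tf'_{ijk}$. Your treatment of the cocycle condition and independence of lifts is more explicit than anything in the paper, but it is the intended argument.
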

\begin{proof}
We can prove in a similar manner as in the proof of Lemma~\ref{internalhom}. 
\end{proof}

\subsection*{Tensor-Hom adjunction}
\begin{proposition}\label{tensorhomadjunction}
For $\cV, \cW, \cX\in \ModI(\Lambda_\dX)$, we have the following:
\begin{equation}\label{eqn3.5}
\Hom_{\ModI(\Lambda_\dX)}(\cV\otimes \cW, \cX)\cong \Hom_{\ModI(\Lambda_\dX)}(\cV, \cHom(\cW,\cX)).
\end{equation}
\end{proposition}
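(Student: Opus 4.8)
The plan is to first prove the \emph{internal} form of the adjunction, namely an isomorphism $\cHom(\cV\otimes\cW,\cX)\cong\cHom(\cV,\cHom(\cW,\cX))$ in $\ModI(\Lambda_\dX)$, and then to deduce (\ref{eqn3.5}) by applying $(-)\otimes\bk$ and taking global sections via Lemma~\ref{globalsection}. At the purely algebraic level, for $\bR$-graded $\Lambda$-modules $V,W,X$ one has the standard tensor--hom adjunction
\[
\Hom_{\Mod^\bR(\Lambda)}(V\otimes_\Lambda W,\,X)\;\cong\;\Hom_{\Mod^\bR(\Lambda)}\!\bigl(V,\,\Hom_{\Mod^\bR(\Lambda)}(W,X)\bigr),
\]
where on the right the inner hom is viewed as an $\bR$-graded $\Lambda$-module (as in the definition of $\cHom$); this is a formal adjunction, natural in $V,W,X$, compatible with the ambient $\Lambda$-module structures on the hom-spaces, and requiring no flatness. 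Sheafifying, the same argument yields for $\tcV,\tcW,\tcX\in\Mod^0(\Lambda_X)$ a natural isomorphism $\cHom(\tcV\otimes_{\Lambda_X}\tcW,\tcX)\cong\cHom(\tcV,\cHom(\tcW,\tcX))$ in $\Mod^0(\Lambda_X)$: both sides are computed sectionwise by the algebraic adjunction, using that morphisms out of the presheaf tensor product into a sheaf agree with morphisms out of its sheafification.

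\textbf{Globalization.}
Next I would globalize. Choose an open covering $\{U_i\}$ of $X$ in $\Open_\dX$ over which $\cV,\cW,\cX$ admit lifts $\tcV_i,\tcW_i,\tcX_i$; then $\cV\otimes\cW$, $\cHom(\cW,\cX)$, and in turn $\cHom(\cV\otimes\cW,\cX)$ and $\cHom(\cV,\cHom(\cW,\cX))$, are assembled from the local objects $\cHom(\tcV_i\otimes\tcW_i,\tcX_i)$ and $\cHom(\tcV_i,\cHom(\tcW_i,\tcX_i))$ by descent, exactly as in Lemma~\ref{internalhom}. The previous step provides an isomorphism between these two local objects over each $U_i$, and the point is to check that these local isomorphisms intertwine the two systems of descent data on double overlaps. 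The transition isomorphisms between local lifts agree with the prescribed ones only after multiplication by a sufficiently large power of $T$ (this is precisely why $\ModI$ is built by stackification), so the compatibility verification is the familiar ``multiply by $T^c$ for $c\gg 0$ and apply Lemma~\ref{1.10}'' manipulation, combined with the naturality of the algebraic adjunction in $V,W,X$, which guarantees that the two sides transform identically under these $T$-twisted transition maps. Gluing over $\{U_i\}$ --- as in the proof of Proposition~\ref{ModIabel} --- then yields $\cHom(\cV\otimes\cW,\cX)\cong\cHom(\cV,\cHom(\cW,\cX))$ in $\ModI(\Lambda_\dX)$.

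\textbf{Conclusion and main obstacle.}
Finally, applying $(-)\otimes\bk$ to this isomorphism and taking global sections over $X$, Lemma~\ref{globalsection} identifies $\Gamma\bigl(\cHom(\cV\otimes\cW,\cX)\otimes\bk\bigr)$ with $\Hom_{\ModI(\Lambda_\dX)}(\cV\otimes\cW,\cX)$ and $\Gamma\bigl(\cHom(\cV,\cHom(\cW,\cX))\otimes\bk\bigr)$ with $\Hom_{\ModI(\Lambda_\dX)}(\cV,\cHom(\cW,\cX))$, which is (\ref{eqn3.5}). The only non-formal point is the globalization step: everything ultimately reduces to the algebraic tensor--hom adjunction, and the genuine work lies in verifying compatibility with the $T$-twisted descent data. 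I expect the same device used throughout Section~2 and in Lemma~\ref{internalhom} to suffice there, with no essentially new difficulty.
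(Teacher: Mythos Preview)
Your proposal is correct and follows essentially the same route as the paper: both reduce to the classical tensor--hom adjunction for $\bR$-graded $\Lambda_X$-modules on local lifts, then glue using the $T$-twisted descent data as in Lemma~\ref{internalhom}. The only difference is the order of presentation: you first establish the internal isomorphism $\cHom(\cV\otimes\cW,\cX)\cong\cHom(\cV,\cHom(\cW,\cX))$ and then deduce (\ref{eqn3.5}) via Lemma~\ref{globalsection}, whereas the paper proceeds in reverse --- it tensors the local $\cHom$-isomorphism with $\bk$ and sheafifies to obtain the $\Hom$-sheaf isomorphism directly, then records the internal version as an immediate corollary. Both orderings are equivalent and neither buys anything substantive over the other.
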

\begin{proof}
Let $\{U_i\}$ be a covering of $X$ such that we have lifts of $\cV, \cW, \cX$ over each open cover. Let us denote the liftings by $\tcV_i, \tcW_i, \tcX_i$. Then we have a canonical isomorphism
\begin{equation}
\cHom(\tcV_i\otimes \tcW_i, \tcX_i)\cong \cHom(\tcV_i, \cHom(\tcW_i,\tcX_i)).
\end{equation}
By tensoring $\bk$ and taking sheafification over $\Open_{(\overline{U_i}, \overline{U_i}\cap D_X)}$, we have an isomorphism
\begin{equation}
\Hom_{\ModI_\dX}(\cV\otimes \cW, \cX)\cong \Hom_{\ModI_\dX}(\cV,\cHom(\cW, \cX))
\end{equation}
as a sheaf over  $\Open_{(\overline{U_i}, \overline{U_i}\cap D_X)}$. The isomorphisms over $U_i$'s are glued up and give a desired result.
\end{proof}

\begin{corollary}
In the same setting as above, we have the following:
\begin{equation}
\cHom(\cV\otimes \cW, \cX)\cong \cHom(\cV, \cHom(\cW,\cX)).
\end{equation}
\end{corollary}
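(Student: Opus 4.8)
The plan is to upgrade the $\Hom$-level adjunction of Proposition~\ref{tensorhomadjunction} to the internal-hom level by the same local-lift-and-glue strategy used there and in the proof of Lemma~\ref{internalhom}. First I would fix an open covering $\{U_i\}$ of $X$ in $\Open_\dX$, fine enough that $\cV$, $\cW$, $\cX$ all admit lifts $\tcV_i,\tcW_i,\tcX_i\in\Mod^0(\Lambda_{U_i})$; this is possible since every object of $\ModI(\Lambda_\dX)$ is locally in the image of $[\cdot]$ and we may pass to a common refinement. By the local constructions of $\otimes$ and $\cHom$ (the gluing lemmas above), $\cV\otimes\cW$ is represented over $U_i$ by $\tcV_i\otimes\tcW_i$, the object $\cHom(\cW,\cX)$ is represented over $U_i$ by $\cHom(\tcW_i,\tcX_i)$, and hence the two objects whose isomorphism we want are represented over $U_i$ by $[\cHom(\tcV_i\otimes\tcW_i,\tcX_i)]$ and $[\cHom(\tcV_i,\cHom(\tcW_i,\tcX_i))]$ respectively.

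Next I would invoke the classical tensor--hom adjunction for sheaves of $\bR$-graded $\Lambda_{U_i}$-modules, which gives a canonical isomorphism
\[
\cHom(\tcV_i\otimes\tcW_i,\tcX_i)\;\cong\;\cHom(\tcV_i,\cHom(\tcW_i,\tcX_i))
\]
of $\bR$-graded $\Lambda_{U_i}$-modules: it is the sheafified, graded version of the ordinary module adjunction and holds on the nose. Applying the exact functor $[\cdot]$ followed by $\alpha_{U_i}$ then produces a canonical isomorphism $\phi_i$ in $\ModI(\Lambda_{(\overline{U_i},\overline{U_i}\cap D_X)})$ between the restrictions over $U_i$ of $\cHom(\cV\otimes\cW,\cX)$ and $\cHom(\cV,\cHom(\cW,\cX))$.

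Finally I would check that the $\phi_i$ form a descent datum. On each overlap $U_{ij}:=U_i\cap U_j$ the lifts $\tcV_i$ and $\tcV_j$ (and similarly for $\cW$, $\cX$) are related by transition isomorphisms which, after refining to a cover $\{U_{ijk}\}$ of $U_{ij}$, lift to morphisms in $\Mod^0(\Lambda_{U_{ijk}})$ that are mutually inverse up to multiplication by a power of $T$, exactly as in the proof of Lemma~\ref{internalhom}. Since the graded-module adjunction above is natural in all three arguments and multiplication by powers of $T$ becomes the identity after $[\cdot]$ (Lemma~\ref{1.10}), the isomorphisms $\phi_i$ and $\phi_j$ agree on $U_{ijk}$, hence on $U_{ij}$ in $\ModI$, and therefore glue to a global isomorphism $\cHom(\cV\otimes\cW,\cX)\cong\cHom(\cV,\cHom(\cW,\cX))$; independence of the chosen lifts and covering follows from the same naturality. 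The main obstacle — as in all of these gluing arguments — is precisely this overlap bookkeeping: the transition isomorphisms exist only after passing to $[\cdot]$ and only on refinements, so one must carry along the ``$T^c\mapsto 1$'' trick to reconcile the strict naturality of the underlying graded adjunction with the non-strict descent data. Alternatively, one could shortcut the overlap analysis by applying Proposition~\ref{tensorhomadjunction} with $\cV$ replaced by $\cP\otimes\cV$ for an arbitrary test object $\cP$, obtaining $\Hom_{\ModI(\Lambda_\dX)}(\cP,\cHom(\cV\otimes\cW,\cX))\cong\Hom_{\ModI(\Lambda_\dX)}(\cP,\cHom(\cV,\cHom(\cW,\cX)))$ naturally in $\cP$ — using associativity of $\otimes$, which is itself proved by the same local construction — and then concluding by Yoneda in the abelian category $\ModI(\Lambda_\dX)$.
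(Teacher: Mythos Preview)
Your proposal is correct and follows essentially the same approach as the paper: the paper's one-line proof (``clear from the above proposition'') relies on the fact that the local isomorphism $\cHom(\tcV_i\otimes\tcW_i,\tcX_i)\cong\cHom(\tcV_i,\cHom(\tcW_i,\tcX_i))$ in $\Mod^0(\Lambda_{U_i})$ was already displayed in the proof of Proposition~\ref{tensorhomadjunction}, and since both sides of the corollary are by definition obtained by gluing these local representatives, the isomorphism globalizes. You simply spell out in detail the gluing bookkeeping that the paper leaves implicit; your Yoneda alternative is also valid and is another reasonable reading of ``clear from the above proposition.''
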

\begin{proof}
This is clear from the above proposition.
\end{proof}

\subsection*{Push-forward}
We will define push-forwards for a class of morphisms.

In the following, we only consider the following class of maps:
\begin{definition}
We say a morphism $f\colon \dX\rightarrow \dY$ is tame if the underlying map $f\colon \oX\rightarrow \oY$ is proper.
\end{definition}

\begin{remark}
For a locally closed subset $U\subset X$, a canonical morphism $(U, \varnothing)\rightarrow (X, \varnothing)$ is not tame in general. However $(\overline{U}, \overline{U}\bs U)\rightarrow (X, \varnothing)$ is tame. In this sense, we will consider the latter one as a standard inclusion morphism.
\end{remark}

Let $\cV$ be an object of $\ModI(\Lambda_{(\oX,D_X)})$ and $f$ be a tame map $(\oX, D_X)\rightarrow (\oY, D_Y)$. We first assume that $\cV$ has $\tcV$ with $[\tcV]\cong \cV$. In this case, we simply set
\begin{equation}
f_*\cV:=[f_*\tcV].
\end{equation}
where push-forward of $\bR$-graded $\Lambda_X$-module $\tcV$ is defined by $\Gr^af_*\tcV:=f_*\Gr^a\tcV$.
\begin{lemma}\label{well-defpush}
This is well-defined.
\end{lemma}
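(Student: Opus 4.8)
The plan is to show that if $\tcV$ and $\tcV'$ are two lifts of $\cV$, i.e. $[\tcV]\cong\cV\cong[\tcV']$ in $\ModI(\Lambda_{(\oX,D_X)})$, then $[f_*\tcV]\cong[f_*\tcV']$ in $\ModI(\Lambda_{(\oY,D_Y)})$. Throughout I would use that $\tcV\mapsto f_*\tcV$ is an additive functor on $\Mod^0(\Lambda_X)$ which commutes with the grading shift $\la a\ra$ (by definition $\Gr^af_*\tcV=f_*\Gr^a\tcV$) and with restriction along open inclusions of the interiors.

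First I would unwind the isomorphism $u\colon[\tcV]\xrightarrow{\ \cong\ }[\tcV']$ in the stackification: there is a covering $\{U_i\}_{i\in I}$ of $X$ in $\Open_{(\oX,D_X)}$, which we may take to be locally finite over $\oX$, on which $u$ and $u^{-1}$ are represented by mutually inverse isomorphisms $u_i,v_i$ in $\ModIp(\Lambda_{U_i})$. By Lemma~\ref{1.11} choose lifts $\tu_i\colon\tcV|_{U_i}\to(\tcV'|_{U_i})\la a_i\ra$ and $\tv_i\colon\tcV'|_{U_i}\to(\tcV|_{U_i})\la b_i\ra$ in $\Mod^0(\Lambda_{U_i})$; by Lemma~\ref{1.10} the two composites agree with suitable powers of $T$ after multiplication by $T$. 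The key geometric input, tameness, enters at this point: since $\of\colon\oX\to\oY$ is proper, each compact fibre meets only finitely many members of the locally finite family $\{U_i\}$, and closedness of $\of$ then gives, for every $\oy\in\oY$, an open neighbourhood $W_{\oy}\subset\oY$ with $\of^{-1}(W_{\oy})$ contained in a finite union of the $U_i$. Passing to a locally finite refinement of $\{W_{\oy}\}$ in $\oY$ and intersecting with $Y$, I obtain a covering $\{V_\alpha\}$ of $Y$ in $\Open_{(\oY,D_Y)}$ such that each $f^{-1}(V_\alpha)$ is covered by finitely many of the $U_i$.

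Over each $V_\alpha$ only finitely many indices are in play, so I can uniformize the shifts $a_i$ to a common value $a$ and, using Lemmas~\ref{1.10} and~\ref{1.12}, multiply by a single power $T^c$ so that $\{T^c\tu_i|_{U_i\cap f^{-1}(V_\alpha)}\}$ is an honest compatible family in $\Mod^0$; gluing yields a morphism $g_\alpha\colon\tcV|_{f^{-1}(V_\alpha)}\to(\tcV'\la a\ra)|_{f^{-1}(V_\alpha)}$ representing an isomorphism in $\ModIp(\Lambda_{f^{-1}(V_\alpha)})$, whose inverse is glued from the $\tv_i$ in the same way. Applying $f_*$, commuting it past $\la a\ra$ and past the restriction $f^{-1}(V_\alpha)\to V_\alpha$, and then applying $[\cdot]$, produces isomorphisms $[f_*\tcV]|_{V_\alpha}\cong[f_*\tcV']|_{V_\alpha}$ in $\ModI_\dY(V_\alpha)$. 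Finally I would check these satisfy the cocycle condition: on overlaps the discrepancy between two such local gluings is represented by a morphism in $\Mod^0$ that vanishes in $\ModIp$, hence is killed by a power of $T$ (Lemma~\ref{1.10}), and the resulting gluing is unique by the argument in the proof of Proposition~\ref{ModIabel}; so the local isomorphisms glue to the desired $[f_*\tcV]\cong[f_*\tcV']$ in $\ModI(\Lambda_{(\oY,D_Y)})$.

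I expect the main obstacle to be organizational rather than conceptual: one must arrange the covering of $Y$ so that each preimage meets only finitely many of the $U_i$, and this is exactly where properness of $\of$ (tameness) is indispensable — without it the maxima of the $T$-exponents used to uniformize lifts and to resolve overlap discrepancies need not exist. Once that finiteness is in place, every remaining step is a mild variant of the $T$-power manipulations already carried out in Lemmas~\ref{1.10}--\ref{1.15} and in Lemma~\ref{realizationofmorphism}.
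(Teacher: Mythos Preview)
Your proposal is correct and arrives at the same conclusion, but the route differs from the paper's in an interesting way. You glue the lifted isomorphisms \emph{upstairs}: over each $f^{-1}(V_\alpha)$ (which you should take with compact closure in $\oX$, so that Lemma~\ref{realizationofmorphism} applies and the $\ModIp$-hom is already a sheaf there), you uniformize the finitely many $\tu_i$'s into a single $\Mod^0$-morphism $g_\alpha$, then push forward and glue over the $V_\alpha$'s. The paper instead never glues in $\Mod^0$: it pushes forward each restricted piece $\iota_{i*}\tcV|_{U_i}$ separately, obtains isomorphisms $[f_*\iota_{i*}\tcV|_{U_i}]\cong[f_*\iota_{i*}\tcV'|_{U_i}]$, and then recovers $[f_*\tcV]$ as the \v{C}ech limit $\displaystyle\lim_{\longleftarrow}[f_*\iota_{U*}\tcV|_U]$, invoking Lemma~\ref{limits} (commutation of $[\cdot]$ with finite limits) locally on $Y$, where tameness guarantees the limit is finite.

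Both arguments hinge on the same use of tameness (local finiteness of the pushed-forward cover), but what they buy is different. The paper's approach is more categorical and avoids the somewhat delicate bookkeeping of making the $\tu_i$'s literally agree on overlaps in $\Mod^0$; it trades that for the abstract Lemma~\ref{limits}. Your approach is more hands-on and closer in spirit to the proof of Lemma~\ref{realizationofmorphism}; it has the advantage of producing an explicit $\Mod^0$-representative of the pushed-forward isomorphism over each $V_\alpha$, at the cost of needing compact closures to control the $T$-exponents on overlaps.
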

\begin{proof}
Let $\tcV'$ be another representative. Take a covering $\{U_i\}$ of $\Open_\dX$ such that we have lifts of the isomorphisms $\tg_i\colon \tcV|_{U_i}\rightarrow \tcV'\la a_i\ra|_{U_i}$ and $\tilh_i\colon \tcV'|_{U_i}\rightarrow \tcV|_{U_i}\la b\ra$ over each $U_i$. 
 Hence $\tg_i\la b\ra\circ \tilh_i-T^{a+b}$ and $\tilh_i\la a\ra\circ \tg_i-T^{a+b}$ are vanished by large $T^c$. By pushing forward these equations, we have
\begin{equation}
0=f_*(T^c(\tg\la b\ra\circ\tilh-T^{a+b}\id_{\tcV}))=T^c(f_*\tg\la b\ra\circ f_*\tilh-T^{a+b}\id_{f_*\tcV}).
\end{equation}
Hence we have $[f_*\iota_{i*}\tcV|_{U_i}]\cong [f_*\iota_{i*}\tcV'|_{U_i}]$ where $\iota_i\colon U_i\rightarrow X$ is the inclusion map.

Let $\frakU$ be the Cech nerve of $\{U_i\}$ and $\iota_U$ for $U\in \frakU$ is the inclusion map. Since $f$ is tame, $\{f(U_i)\}$ is locally finite in $\oY$ i.e., there exists a covering of $Y$ in $\Open_\dY$ such that there are only finite $U_i$'s in each open subset. Hence we have
\begin{equation}
\lim_{\substack{\longleftarrow \\ _{U\in \frakU}}}[f_*\iota_{U*}\tcV_U]\cong [\lim_{\substack{\longleftarrow \\ _{U\in \frakU}}}f_*\iota_{U*}\tcV_U]\cong [f_*\tcV]
\end{equation}
by Lemma \ref{limits}. Combining with the first part of the proof, we get an isomorphism $[f_*\tcV]\cong [f_*\tcV']$.
\end{proof}

Let $\cV$ be an object of $\ModI(\Lambda_\dX)$. If $f\colon \dX\rightarrow \dY$ is tame, there exists a covering $\{V_i\}$ of $Y$ and a finite cover $\{U_{ij}\}$ of each $f^{-1}(V_i)$ with lifts $\tcV_{ij}$ of $\cV$.

Let $\frakU_i$ be the Cech nerve of $\{U_{ij}\}$. We set
\begin{equation}
(f_*\cV)_i:=\lim_{\substack{\longleftarrow \\ _{U\in \frakU_i}}}[f_*\iota_{U*}\tcV_U].
\end{equation}

\begin{lemma}\label{welldefpush}
The collection $\{(f_*\cV)_i\}$ gives an object of $\ModI(\Lambda_Y)$, denoted by $f_*\cV$. Moreover it does not depend on the choice of coordinates and lifts.
\end{lemma}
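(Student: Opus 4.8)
The plan is to verify the sheaf/descent axioms for the collection $\{(f_*\cV)_i\}$ in two stages: first establishing that each $(f_*\cV)_i$ is a well-defined object of $\ModI(\Lambda_{V_i})$ independent of the choices involved in its construction, and then gluing the various $(f_*\cV)_i$ into a global object over $Y$. The key technical input throughout will be Lemma~\ref{limits}, which allows finite limits to commute with $[\cdot]$, together with the tameness of $f$ which guarantees that the relevant Cech nerves are \emph{finite} in the appropriate local sense, so that Lemma~\ref{limits} applies.

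First I would treat independence from the choice of lifts $\tcV_{ij}$ over a fixed covering $\{U_{ij}\}$ of $f^{-1}(V_i)$. Given another choice $\tcV'_{ij}$, the proof of Lemma~\ref{well-defpush} (the single-lift case) already shows $[f_*\iota_{U*}\tcV_U]\cong [f_*\iota_{U*}\tcV'_U]$ term by term on the Cech nerve $\frakU_i$, compatibly with the structure maps after multiplying by suitable powers of $T$; passing to the finite limit and invoking Lemma~\ref{limits} upgrades this to $(f_*\cV)_i\cong (f'_*\cV)_i$. Next, independence from the choice of covering $\{U_{ij}\}$: given a refinement, the limits over the two Cech nerves are canonically identified in $\Mod^0$, and again Lemma~\ref{limits} carries this to $\ModIp$, so passing to a common refinement settles it. A routine cofinality argument then shows the construction is independent of the choice of the covering $\{V_i\}$ of $Y$ as well, at least after restriction to a common refinement.

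For the gluing step, on an overlap $V_i\cap V_j$ one chooses a common refinement of the two covers $\{U_{ij}\}$ and $\{U_{jk}\}$ of $f^{-1}(V_i\cap V_j)$; by the independence statements just proved, $(f_*\cV)_i|_{V_i\cap V_j}$ and $(f_*\cV)_j|_{V_i\cap V_j}$ are both canonically isomorphic to the limit computed over the Cech nerve of this common refinement, which gives the transition isomorphisms. The cocycle condition on triple overlaps follows from the uniqueness clause in these canonical identifications (the uniqueness being of the type proved in Lemma~\ref{1.15}), so the $\{(f_*\cV)_i\}$ assemble into a descent datum for the stack $\ModI_{(\oY,D_Y)}$ and hence into a global object $f_*\cV$.

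The main obstacle I expect is bookkeeping the grading shifts: each local lift of an isomorphism only matches after applying some $T^{a}$, the exponents $a$ vary with the index, and one must check these can be chosen uniformly on each finite Cech nerve before taking the limit — otherwise Lemma~\ref{limits} does not directly apply. Tameness is exactly what makes this possible: it forces $\{f(U_{ij})\}$ to be locally finite in $\oY$, so locally on $Y$ only finitely many indices contribute and the maximum of the relevant exponents exists. Handling the passage between different choices of $\{V_i\}$ cleanly, so that the final object genuinely does not depend on any choice, is the part that requires the most care; everything else is a direct imitation of the arguments in Proposition~\ref{ModIabel} and Lemma~\ref{internalhom}.
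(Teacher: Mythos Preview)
Your proposal is correct and follows essentially the same approach as the paper, which in fact gives only the one-line proof ``It can be proved by a similar argument as in Lemma~\ref{well-defpush}.'' You have simply spelled out what that similar argument consists of---the use of Lemma~\ref{limits} on finite Cech nerves, the role of tameness in ensuring finiteness, and the gluing pattern from Proposition~\ref{ModIabel} and Lemma~\ref{internalhom}---all of which is the intended content.
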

\begin{proof}
It can be proved by a similar argument as in Lemma \ref{well-defpush}.
\end{proof}

\subsection*{Pull-back}
Let $\cV$ be an object of $\ModI(\Lambda_{(\oX,D_X)})$. There exists an open covering $\{U_i\}$ of $X$ with $\bR$-graded $\Lambda_{U_i}$-module $\tcV_i$ over each $U_i$. Over $f^{-1}(U_i)$, we assign a sheaf $[f^{-1}\tcV_i]$ and these can glue up together. We will denote the resulting object by $f^{-1}\cV$.

\subsection*{Push-Pull adjunction}
Let $\cV$ be an object of $\ModI(\Lambda_{(\oX,D_X)})$, $\cW$ be an object of $\ModI(\Lambda_{(\oY, D_Y)})$, and $f\colon \dX\rightarrow \dY$ be a tame morphism.

\begin{lemma}\label{pushpulladjunction}
We have the following natural isomorphism:
\begin{equation}
f_*\cHom(f^{-1}\cV, \cW)\simeq \cHom(\cV, f_*\cW).
\end{equation}
\end{lemma}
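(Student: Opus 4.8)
The plan is to deduce this from the internal-hom/pushforward adjunction for sheaves of $\bR$-graded $\Lambda$-modules, which is just a componentwise version of the classical sheaf identity, and then to descend it through the functor $[\cdot]$, the tensoring $(-)\otimes_\Lambda\bk$, and sheafification, gluing the resulting local isomorphisms as in the proofs of Proposition~\ref{tensorhomadjunction} and Lemma~\ref{welldefpush}.

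First I would settle the case of global lifts, say $\cV=[\tcV]$ with $\tcV\in\Mod^0(\Lambda_Y)$ and $\cW=[\tcW]$ with $\tcW\in\Mod^0(\Lambda_X)$. On each graded piece the functors $f_*$ and $f^{-1}$ commute with $\Gr^a$, and $\cHom(-,-)$ is by definition the direct sum over grading shifts of the degree-$0$ internal hom; so the usual sheaf adjunction $f_*\cHom(f^{-1}F,G)\cong\cHom(F,f_*G)$ (valid for any continuous map) assembles into a canonical isomorphism $f_*\cHom(f^{-1}\tcV,\tcW)\cong\cHom(\tcV,f_*\tcW)$ in $\Mod^0(\Lambda_Y)$, natural in $\tcV$ and $\tcW$. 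Applying the exact functor $[\cdot]$ (Lemma~\ref{cdotexact}) gives the asserted isomorphism in $\ModI(\Lambda_\dY)$ in this case.

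Next I would treat the general case by working locally on $Y$. Choose a covering $\{V_i\}$ of $Y$ and finite covers $\{U_{ij}\}_j$ of $f^{-1}(V_i)$ as in the definition of $f_*$; after passing to a common refinement (shrinking the $V_i$ if needed) I may assume that $\cV$ is represented over each $V_i$ and that $\cW$, $f^{-1}\cV$, hence $\cHom(f^{-1}\cV,\cW)$, are represented over each $U_{ij}$, and that $f_*\cW$ and $f_*\cHom(f^{-1}\cV,\cW)$ over $V_i$ are the inverse limits over the Cech nerve $\frakU_i$ used to define the pushforward. Since $f$ is tame, $\{f(U_{ij})\}_j$ is locally finite in $\oY$, so over each sufficiently small open of $V_i$ these are finite limits; hence $[\cdot]$ commutes with them (Lemma~\ref{limits}), and $\cHom(\cV,-)$ commutes with them in its second argument. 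Combining this with the previous paragraph applied over each $U_{ij}$, then tensoring with $\bk$ and sheafifying over $\Open_{(\overline{V_i},\,\overline{V_i}\cap D_Y)}$ (reading off $\cHom$ via Lemma~\ref{globalsection}), I get a natural isomorphism $f_*\cHom(f^{-1}\cV,\cW)|_{V_i}\simeq\cHom(\cV,f_*\cW)|_{V_i}$ in $\ModI(\Lambda_{V_i})$ for every $i$.

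The remaining, and only delicate, point is gluing. On $V_i\cap V_k$ the two local isomorphisms differ by transition data built from lifts of the identity maps of $\cV$ and $\cW$ over a refinement; these lifts and their inverses satisfy the relevant identities only after multiplying by some $T^c$, but by Lemma~\ref{1.10} (as in Lemma~\ref{internalhom}) they become honestly compatible after multiplication by a sufficiently large power of $T$, and a Cech argument identical to the one in Proposition~\ref{ModIabel} glues the local isomorphisms to a global one in $\ModI(\Lambda_\dY)$, with naturality inherited locally. I expect the main obstacle to be purely bookkeeping: tracking the powers of $T$ introduced both when commuting $\cHom$ past the defining limit of $f_*$ and when verifying the cocycle condition for the transition data on triple overlaps; this needs no idea beyond the techniques already used in the paper.
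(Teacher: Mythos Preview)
Your proposal is correct and follows essentially the same route as the paper: reduce to local lifts on $Y$, invoke the classical adjunction $f_*\cHom(f^{-1}\tcV,\tcW)\cong\cHom(\tcV,f_*\tcW)$ in $\Mod^0$, express $f_*$ via the Cech-nerve limit, commute $\cHom$ with that (locally finite) limit using Lemma~\ref{limits}, and glue. One small remark: the step where you ``tensor with $\bk$ and sheafify, reading off $\cHom$ via Lemma~\ref{globalsection}'' is unnecessary here, since the statement is about the internal $\cHom$ as an object of $\ModI(\Lambda_\dY)$, not about external hom-sets; the paper's proof simply produces the local isomorphism of $\ModI$-objects directly and then glues.
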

\begin{proof}
First let us take an open covering $\{U_i\}$ of $Y$ with local lifts $\{\tcV_i\}$. It is enough to prove the statement over each $U_i$. There exists a finite covering $\{V_j\}$ of $f^{-1}(U_i)$ with lifts $\{\tcW_j\}$. Then $\cHom(f^{-1}\cV, \cW)$ is represented by $\{\cHom(f^{-1}\tcV_i|_{V_j}, \tcW_j)\}$.

Let $\frakV$ be the Cech nerve of $\{V_j\}$. By the definition of the push forward, we have
\begin{equation}
f_*\cHom(f^{-1}\cV, \cW)|_{U_i}\simeq \lim_{\substack{\longleftarrow \\ V\in \frakV}}([f_*i_{V*}\cHom((f^{-1}\tcV_i)|_{V}, \tcW|_V)])
\end{equation}
Here $\tcW|_V$ means $\tcW_i|_V$ for some $V\subset V_i$.
We also have 
\begin{equation}
\begin{split}
f_*i_{V*}\cHom((f^{-1}\tcV_i)|_{V}, \tcW|_V)&\simeq f_*\cHom(f^{-1}\tcV_i, i_{V*}\tcW|_V)\\
&\simeq \cHom(\tcV_i, f_*\iota_{V*}\tcW|_V)
\end{split}
\end{equation}
for $V\in \frakV$. Hence
\begin{equation}
\begin{split}
\lim_{\substack{\longleftarrow \\ \frakV}}([f_*i_{V*}\cHom((f^{-1}\tcV_i)|_{V}, \tcW|_V)])&\simeq 
\lim_{\substack{\longleftarrow \\ \frakV}} \cHom([\tcV_i], [f_*\iota_{V*}\tcW|_V])\\
&\simeq \cHom([\tcV_i], \lim_{\substack{\longleftarrow \\ \frakV}} [f_*\iota_{V*}\tcW|_V])\\
&\simeq  \cHom([\tcV_i], [f_*(\tcW|_{f^{-1}(U_i)})])\\
&\simeq \cHom(\cV, f_*\cW)|_{U_i}.
\end{split}
\end{equation}
This completes the proof.
\end{proof}

\begin{lemma}\label{tensorhomadjunction2}
Assume the same setting as above. Then
\begin{equation}
\Hom_{\ModI(\Lambda_\dY)}(f^{-1}\cV,\cW)\cong \Hom_{\ModI(\Lambda_\dX)}(\cV, f_*\cW).
\end{equation}
\end{lemma}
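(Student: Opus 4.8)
The plan is to deduce the global Hom-adjunction from the already-established internal-Hom adjunction (Lemma~\ref{pushpulladjunction}) by taking global sections. First I would recall from Lemma~\ref{globalsection} that for objects $\cX, \cY\in \ModI(\Lambda_\dS)$ one has a canonical identification of the sheaf $\cHom(\cX,\cY)\otimes\bk$ over $\Open_\dS$ with the $\Hom$-presheaf $U\mapsto \Hom_{\ModI(\Lambda_{(\oS,D_S\cap \oU)})}(\cX|_U,\cY|_U)$; in particular its global sections recover $\Hom_{\ModI(\Lambda_\dS)}(\cX,\cY)$. Applying this on $\dX$ with $(\cX,\cY)=(f^{-1}\cV,\cW)$ and on $\dY$ with $(\cX,\cY)=(\cV,f_*\cW)$, the statement to prove becomes: the global sections over $X$ of $\cHom(f^{-1}\cV,\cW)\otimes\bk$ agree with the global sections over $Y$ of $\cHom(\cV,f_*\cW)\otimes\bk$.

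Next I would invoke Lemma~\ref{pushpulladjunction}, which gives a natural isomorphism $f_*\cHom(f^{-1}\cV,\cW)\simeq \cHom(\cV,f_*\cW)$ in $\ModI(\Lambda_\dY)$. Tensoring with $\bk$ and passing to the associated sheaf over $\Open_\dY$, this yields an isomorphism of sheaves $\bigl(f_*\cHom(f^{-1}\cV,\cW)\bigr)\otimes\bk \simeq \cHom(\cV,f_*\cW)\otimes\bk$. Taking global sections over $Y$ of the right-hand side gives $\Hom_{\ModI(\Lambda_\dY)}(\cV,f_*\cW)$ by Lemma~\ref{globalsection}. For the left-hand side I would check that global sections over $Y$ of $\bigl(f_*\cG\bigr)\otimes\bk$ coincide with global sections over $X$ of $\cG\otimes\bk$ for $\cG:=\cHom(f^{-1}\cV,\cW)$: this is the statement that $f_*$ on $\ModI$ computes the sections functor correctly, i.e. $\Gamma(Y;(f_*\cG)\otimes\bk)\cong\Gamma(f^{-1}(Y);\cG\otimes\bk)=\Gamma(X;\cG\otimes\bk)$, which follows by unwinding the definition of $f_*$ as the limit over the \v{C}ech nerve (the construction just before Lemma~\ref{welldefpush}) together with the commutation of $[\cdot]$ with finite limits (Lemma~\ref{limits}) and the tameness of $f$ ensuring local finiteness of the covers. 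Combining, $\Gamma(X;\cHom(f^{-1}\cV,\cW)\otimes\bk)\cong\Gamma(Y;\cHom(\cV,f_*\cW)\otimes\bk)$, which by Lemma~\ref{globalsection} is exactly the desired isomorphism $\Hom_{\ModI(\Lambda_\dX)}(f^{-1}\cV,\cW)\cong\Hom_{\ModI(\Lambda_\dY)}(\cV,f_*\cW)$. Naturality of the resulting isomorphism is inherited from the naturality in Lemmas~\ref{pushpulladjunction} and~\ref{globalsection}.

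The main obstacle I anticipate is the bookkeeping identifying $\Gamma$ of a pushed-forward $\ModI$-object: unlike in ordinary sheaf theory, $f_*$ here is not literally defined by $V\mapsto \cV(f^{-1}V)$ but through a \v{C}ech-nerve limit of honest pushforwards of local lifts, with a $\otimes_\Lambda\bk$ applied afterwards, and one must be careful that this limit — which is finite locally on $Y$ precisely because $f$ is tame — commutes with taking $\otimes\bk$ and with global sections. This is exactly the point where Lemma~\ref{limits} and the tameness hypothesis are used, and where the argument of Lemma~\ref{well-defpush} should be reused essentially verbatim. Once that identification is in place the rest is a formal consequence of the internal statement, so I would keep the write-up short and refer back to those lemmas rather than repeating the gluing arguments.
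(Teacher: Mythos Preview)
Your proposal is correct and follows essentially the same approach as the paper: apply $\otimes\bk$ and take global sections of the internal adjunction from Lemma~\ref{pushpulladjunction}, then invoke Lemma~\ref{globalsection} on both sides, with the only nontrivial step being the identification $(f_*\cG)\otimes\bk(Y)\cong (\cG\otimes\bk)(X)$. The paper handles this last step more tersely by asserting $(f_*\cG)\otimes\bk\cong f_*(\cG\otimes\bk)$, whereas you spell out the role of tameness and the \v{C}ech-nerve definition---but the argument is the same.
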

\begin{proof}
Taking $\otimes \bk$ and the global sections (as in the paragraph above Lemma~\ref{globalsection}) of both sides of Lemma~\ref{pushpulladjunction}, the right hand side becomes $\Hom_{\ModI(\Lambda_\dX)}(\cV, f_*\cW)$ and the left hand side becomes
\begin{equation}
\begin{split}
(f_*\cHom(f^{-1}\cV, \cW) \otimes \bk) (Y)&\cong f_*(\cHom(f^{-1}\cV,\cW)\otimes \bk) (Y)\\
&\cong \Hom_{\ModI_\dX}(f^{-1}\cV, \cW).
\end{split}
\end{equation}
This completes the proof.
\end{proof}

\subsection*{Proper push-forwards}
Let $\cV$ be an object of $\ModI(\Lambda_{(\oX,D_X)})$ and $f$ be a map $(\oX, D_X)\rightarrow (\oY, D_Y)$. We first assume that $\cV$ has $\tcV$ with $[\tcV]\cong \cV$. In this case, we simply set
\begin{equation}
f_!\cV:=[f_!\tcV].
\end{equation}
\begin{lemma}
This is well-defined.
\end{lemma}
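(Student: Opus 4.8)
The plan is to mimic, essentially verbatim, the proof of Lemma~\ref{well-defpush} (well-definedness of $f_*$ in the presence of a global lift), replacing the push-forward $f_*$ by the proper push-forward $f_!$ and the Cech-nerve \emph{limit} that reconstructs the total push-forward by a Cech-complex \emph{colimit}. Concretely, suppose $\tcV$ and $\tcV'$ are two lifts, so that we are handed an isomorphism $[\tcV]\xrightarrow{\cong}[\tcV']$ in $\ModI(\Lambda_\dX)$ together with its inverse. By the construction of the stackification (and after refining twice, in the spirit of the proof of Lemma~\ref{realizationofmorphism}), there is an open cover $\{U_i\}$ of $X$ over which this isomorphism and its inverse are represented by honest morphisms in $\Mod^0$, which we lift to $\tg_i\colon \tcV|_{U_i}\to \tcV'|_{U_i}\la a\ra$ and $\tilh_i\colon \tcV'|_{U_i}\to \tcV|_{U_i}\la b\ra$.

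For each $i$, the two composites $\tilh_i\la a\ra\circ\tg_i$ and $\tg_i\la b\ra\circ\tilh_i$ differ from $T^{a+b}\id$ by a morphism that is annihilated by $T^{c}$ for $c$ large, by Lemma~\ref{1.10}. Writing $\iota_i\colon U_i\hookrightarrow X$ for the inclusion and applying $(f\circ\iota_i)_! = f_!\circ\iota_{i!}$ to these identities — which is legitimate because $f_!$ is a functor respecting the $\Lambda_X$-action — one obtains, exactly as in Lemma~\ref{well-defpush}, isomorphisms
\[
[(f\circ\iota_i)_!(\tcV|_{U_i})] \cong [(f\circ\iota_i)_!(\tcV'|_{U_i})]
\]
in $\ModI(\Lambda_\dY)$, compatible on overlaps after a further refinement on which the relevant $T$-powers are chosen uniformly.

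It then remains to reassemble these local isomorphisms into $[f_!\tcV]\cong[f_!\tcV']$. Here I would localize on $Y$: base change along open inclusions of $Y$ reduces the statement to an open $V\subseteq Y$, and since $f_!$ computes sections with proper support along $f$, over such $V$ only the relatively-compact part of $f^{-1}(V)$ contributes, so one may pass to a finite subcover of $\{U_i\}$ — this is the analogue of the tameness hypothesis in Lemma~\ref{well-defpush}. On such a finite subcover the Cech complex $\bigoplus_{i<j}\iota_{ij!}(\tcV|_{U_{ij}})\to\bigoplus_i\iota_{i!}(\tcV|_{U_i})\to\tcV$ exhibits $\tcV$ as a finite colimit of extensions-by-zero from opens; applying $f_!$, using $f_!\iota_{U!}=(f\circ\iota_U)_!$ and the fact that $[\cdot]$ commutes with finite colimits (Lemma~\ref{limits}), one identifies $[f_!\tcV]|_V$ with the corresponding finite colimit of the $[(f\circ\iota_U)_!(\tcV|_U)]$, and likewise for $\tcV'$; the local isomorphisms above then induce the desired comparison, which glues over $Y$ by the usual uniqueness argument (as in Lemma~\ref{1.15}).

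The main obstacle I anticipate is precisely this last globalization step: unlike $f_*$, the proper push-forward $f_!$ is not a right adjoint, so one cannot simply invoke commutation with limits. One must instead justify that $f_!$ interacts correctly with the Cech-complex colimit and with restriction to opens of $Y$, and — crucially — that the proper-support condition makes the reconstruction effectively \emph{finite} locally on $Y$, so that Lemma~\ref{limits} applies, thereby playing the role the tameness hypothesis played for $f_*$. Once this bookkeeping is arranged, the remainder is a routine repetition of the $T$-power manipulations already used in Lemma~\ref{well-defpush}.
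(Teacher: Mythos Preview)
Your approach is correct and is exactly what the paper does: its entire proof is the single sentence ``This can be proved in the same way as the proof of Lemma~\ref{well-defpush},'' and your dualization (replacing $\iota_*$ by $\iota_!$ and the Cech limit by the Cech colimit, then invoking Lemma~\ref{limits}) is precisely the intended translation.

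One small simplification: your worry in the last paragraph is unnecessary. The finiteness you need does not come from a separate ``proper support'' analysis of $f_!$; it comes from tameness of $f$, exactly as in the $f_*$ case. The cover $\{U_i\}$ lives in $\Open_\dX$ and is therefore locally finite over $\oX$; since $\overline f\colon\oX\to\oY$ is proper, the images are locally finite over $\oY$, so locally on $Y$ the Cech diagram is finite and Lemma~\ref{limits} applies to the colimit just as it did to the limit. No extra bookkeeping beyond what was already done for $f_*$ is required.
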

\begin{proof}
This can be proved in the same way as the proof of Lemma \ref{well-defpush}.
\end{proof}

Again by the same construction as in the case of push-forwards, we can define $f_!\cV$ in general under the assumption of the tameness.

\begin{assumption}
In the following, when we consider $f_*$ or $f_!$, we always assume the tameness of $f$.
\end{assumption}

\section{Derived category of $\ModI(\Lambda_\dX)$}
In this section, we develop fundamentals about derived operations for $\ModI(\Lambda_\dX)$.

\subsection{Injectives and flats}
\subsection*{Injectives}
\begin{lemma}\label{skyinj}
Let $\cF$ be an $\bR$-graded $\Lambda$-module. For $x\in X$, the skyscraper sheaf $[\cF_x]$ is an injective object. Moreover, the product $[\prod_{x\in V}\cF_x]$ for a subset $V\subset X$ is also an injective object.
\end{lemma}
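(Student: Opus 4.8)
The plan is to reduce to an extension problem over a point and to solve it by hand, exploiting that the graded ideals of $\Lambda$ are extremely simple.

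\textbf{Reduction to a point.} For $x\in X$ the inclusion $i_x\colon(\{x\},\varnothing)\hookrightarrow(\oX,D_X)$ is tame, since $\{x\}$ is compact; hence by Lemma~\ref{tensorhomadjunction2} the pushforward $i_{x*}$ is right adjoint to the pull-back $i_x^{-1}$. The functor $i_x^{-1}$ is exact: lifting a short exact sequence locally by Lemma~\ref{liftlemma}, $i_x^{-1}$ is computed as the exact sheaf-theoretic pull-back followed by the exact functor $[\,\cdot\,]$ (Lemma~\ref{cdotexact}), and exactness may be checked locally. A right adjoint of an exact functor preserves injectives and $[\cF_x]=i_{x*}[\cF]$, so it suffices to show that $[\cF]$ is injective in $\ModI(\Lambda_{\{x\}})=\ModIp(\Lambda_{\{x\}})$, the site of a point being trivial. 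For the product statement one argues the same way, using that a product of injective objects is injective; the one subtlety is that $[\,\cdot\,]$ need not commute with infinite products, so there the extension argument below must be re-run directly for the product sheaf, which is possible because the power of $T$ it produces can be taken uniformly in the factors.

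\textbf{The extension problem over a point.} Let $\cV\hookrightarrow\cW$ be a monomorphism in $\ModIp(\Lambda_{\{x\}})$ and $f\colon\cV\to[\cF]$ a morphism. By Lemma~\ref{liftlemma} the monomorphism lifts to an honest graded monomorphism $\tcV\hookrightarrow\tcW$ in $\Mod^0(\Lambda)$, and by Lemma~\ref{1.11} the morphism $f$ lifts to a degree-$0$ map $\tf\colon\tcV\to\cF\la a\ra$ for some $a$. Since $\Hom_{\ModIp}(\cW,[\cF])=\bigl(\bigoplus_{b}\Hom_{\Mod^0(\Lambda)}(\tcW,\cF\la b\ra)\bigr)\otimes_\Lambda\bk$, extending $f$ amounts to finding $a'\ge a$ and a degree-$0$ map $\tilde\psi\colon\tcW\to\cF\la a'\ra$ with $\tilde\psi|_{\tcV}=T^{a'-a}\tf$; uniqueness is not required, so this is a Baer-type extension problem, but only \emph{up to a power of $T$}. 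The key input is that the graded ideals of $\Lambda$ are precisely $0$, $\Lambda$, the free ideals $T^s\Lambda=\bigoplus_{b\ge s}\bk\,T^b$, and the ideals $\mathfrak{m}_s:=\bigoplus_{b>s}\bk\,T^b$: for any graded ideal $I$ of ``gap'' $s$ and any degree-$0$ map $\varphi\colon I\to\cF\la a\ra$, sending the generator of $T^s\Lambda$ (resp. the degree-$(s+1)$ element of $\mathfrak{m}_s$) to its tautological target shows that $T^{s+1}\varphi$ extends to $\Lambda$ — and the exponent depends only on $I$, not on $\cF$. Equivalently, $\cF_\infty:=\varinjlim_{b}\cF\la b\ra$, on which $T$ acts invertibly, satisfies the graded Baer criterion and is therefore injective in $\Mod^0(\Lambda)$.

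\textbf{Globalizing the extension, and the main obstacle.} One must now build $\tilde\psi$ along the whole monomorphism $\tcV\hookrightarrow\tcW$. The naive transfinite recursion, extending one generator at a time, is dangerous: each step costs a power of $T$, so the shift $a'$ can run to $+\infty$. I would circumvent this in one of two ways. First: extend the composite $\tcV\to\cF\la a\ra\to\cF_\infty$ to $\tcW\to\cF_\infty$ using injectivity of $\cF_\infty$ in $\Mod^0(\Lambda)$, and then verify that this extension, read modulo $T$-torsion, already factors through a finite shift $\cF\la a'\ra$. Second: run the recursion but note, from the explicit list of graded ideals, that the exponent added at each step is governed by the monomorphism $\tcV\hookrightarrow\tcW$ alone and stays bounded, so that $a'$ remains finite; this form of the argument also yields the uniformity in the factors needed for the product. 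I expect this bookkeeping — keeping the accumulated powers of $T$ under control — to be the genuine obstacle. Once $\tilde\psi$ is produced, pushing it forward through $i_{x*}$ and re-gluing through the stackification completes the proof.
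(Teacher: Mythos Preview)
Your reduction to a point via the adjunction $i_x^{-1}\dashv i_{x*}$ is sound, and you have correctly located the heart of the matter: whether a graded extension along $\tcV\hookrightarrow\tcW$ can be produced with a \emph{single} finite shift. But the obstacle you flag is not bookkeeping---it is genuine, and in fact the lemma as literally stated (for arbitrary $\cF$) is false. Take $\tcV=\bigoplus_{n\ge1}T^n\Lambda\hookrightarrow\tcW=\bigoplus_{n\ge1}\Lambda$ in $\Mod^0(\Lambda)$. The cokernel $\bigoplus_{n}\Lambda/T^n\Lambda$ is not annihilated by any single $T^a$, so $[\tcV]\hookrightarrow[\tcW]$ is a proper monomorphism in $\ModIp(\Lambda)$. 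Any degree-$c$ map $\tcW\to\tcV$ sends the $n$-th generator into $\Gr^c\tcV=\bigoplus_{m\le c}\bk\cdot T^c$; for $n>c$ its restriction to the $n$-th summand of $\tcV$ cannot agree with $T^c\cdot\id$ even after a further $T$-multiple, since $\tcV$ is torsion-free. Hence $\id_{[\tcV]}$ does not extend along $[\tcV]\hookrightarrow[\tcW]$, so $[\tcV]$ is not injective in $\ModIp(\Lambda)$; over a one-point $X$ this already contradicts the statement for $\cF=\tcV$. Consequently neither of your proposed workarounds can succeed in general: the extension $\tcW\to\cF_\infty$ need not factor through any finite shift, and the exponents in the recursion need not stay bounded.

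What the paper actually \emph{uses} (see the proof of Proposition~\ref{injectives} and Corollary~\ref{liftresol}) is the case where each stalk $\cF$ is injective in $\Mod^0(\Lambda)$, since the product of skyscrapers arises from an injective embedding in $\Mod^0(\Lambda_X)$. With that hypothesis your extension problem over a point is solved in $\Mod^0(\Lambda)$ with no shift whatsoever, and the first part really is ``almost trivial'' as the paper says. For the product, the paper does not reduce to a point: it works directly in $\ModI(\Lambda_\dX)$ by taking a locally finite cover $\{U_i\}$, lifting the extension problem on each chart using injectivity of $\prod_x\cF_x$ in $\Mod^0(\Lambda_{U_i})$, and then gluing by assigning to each $x\in V$ a single chart $U_{i_x}$ and zeroing out the others. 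Once you add the missing hypothesis on $\cF$, your adjunction argument gives the same conclusion with essentially the same content.
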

\begin{proof}
The first part is almost trivial. Let us prove the second part.

Let $0\rightarrow \cV\xrightarrow{f} \cW$ be  an injection in the category $\ModI(\Lambda_\dX)$ with a map $\cV\xrightarrow{g} [\prod_{x\in X}\cF_x]$. Let us take a locally finite covering $\{U_i\}$ of $X$ with lifts $0\rightarrow \tcV_i\xrightarrow{\tf_i} \tcW_i$ and $\tcV_i\xrightarrow{\tg_i} \prod_{x\in U_i}\cF_x\la a_i\ra$. We also get a lift $\tcW_i\xrightarrow {\tilde{h}_i}\prod_{x\in U}\cF_x\la a_i\ra$.

For each $x\in V$, we choose $i_x$ from finite candidates of $i$'s satisfying $x\in U_i$. We set $\tcW_x:=(\tcW_{i_x})_x$. Over each $U_i$, the morphism $\tilde{h}_i$ gives an element of $(\bigoplus_{a\in \bR}\prod_{x\in U_i}\Hom^{a}(\tcW_{x}, \cF_x))\otimes_\Lambda k\cong \Hom(W|_{U_i}, [\prod_{x\in V}\cF_x])$ which is zero on $i_x\neq i$. Then they are trivially glued up to give a desired lift of $g$.
\end{proof}

\begin{comment}
\begin{lemma}\label{injlift}
Let $\tcI$ be an injective object in $\Mod^0(\Lambda_X)$. Then $[\tcI]$ is an injective object in $\ModI(\Lambda_X)$.
\end{lemma}
\begin{proof}
Take a diagram
\begin{equation}
\xymatrix{
\cW \ar[r]^f\ar[d]_{g}&\cX\\
[\tcI].&
}
\end{equation}
Take a lift of the above diagram
\begin{equation}
\xymatrix{
\tcW \ar[d]_{\tg}\ar[r]^{\tf}&\tcX\\
\tcI. &
}
\end{equation}
Since $\iota_{i!}\tcI_i$ is an injective object, we get a desired lift of $\tg$:
\begin{equation}
\xymatrix{
\tcW \ar[d]_{\tg}\ar[r]^{\tf}&\tcX\ar@{-->}[dl]^{\exists}\\
\tcI. &
}
\end{equation}
This induces a lift of $g$. which completes the proof.
\end{proof}
\end{comment}

\begin{proposition}\label{injectives}
The category $\ModI(\Lambda_{(\oX,D_X)})$ has enough injectives.
\end{proposition}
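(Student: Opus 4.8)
The goal is to produce, for any object $\cV$ of $\ModI(\Lambda_{(\oX,D_X)})$, a monomorphism into an injective object, and by Lemma~\ref{skyinj} the natural candidate is a product of skyscraper sheaves. First I would choose, for each point $x\in X$, a locally finite open neighborhood system and a local lift: there is an open cover $\{U_i\}$ of $X$ in $\Open_{(\oX,D_X)}$ and $\bR$-graded $\Lambda_{U_i}$-modules $\tcV_i$ with $[\tcV_i]\cong \cV|_{U_i}$. For $x\in U_i$ the stalk $(\tcV_i)_x$ is an $\bR$-graded $\Lambda$-module; embed it (in the abelian category $\Mod^0(\Lambda)$) into an injective $\bR$-graded $\Lambda$-module $\cF_x$, using that $\Mod^0(\Lambda)$ has enough injectives. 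Then form $\cI:=[\prod_{x\in X}\cF_x]$, which is injective by Lemma~\ref{skyinj}.

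The next step is to construct the monomorphism $\cV\to \cI$. Locally over $U_i$ we have the adjunction-style map $\tcV_i\to \prod_{x\in U_i}(\tcV_i)_x\hookrightarrow \prod_{x\in U_i}\cF_x$ coming from the unit of the (stalk, skyscraper-product) adjunction; applying $[\cdot]$ gives $\cV|_{U_i}\to [\prod_{x\in U_i}\cF_x]$, and I would glue these — or rather, argue as in Lemma~\ref{skyinj} using a choice $i_x$ of index for each $x$ — to obtain a global morphism $\iota\colon\cV\to\cI$. That $\iota$ is a monomorphism can be checked locally (kernels in $\ModI$ are computed locally, Proposition~\ref{ModIabel}): over $U_i$ the composite $\tcV_i\to\prod_{x\in U_i}(\tcV_i)_x$ is already a monomorphism of $\bR$-graded $\Lambda_{U_i}$-modules because a sheaf of $\Lambda$-modules embeds into the product of its stalks, and $[\cdot]$ is exact (Corollary~\ref{kakkoexact}), hence preserves this monomorphism; replacing $\cF_x$ by a larger injective only keeps the map injective.

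The main subtlety I anticipate is the gluing in the presence of grading shifts: the local lifts $\tcV_i$ and the local maps to skyscraper products are only defined up to $T^a$-twists, so on overlaps $U_i\cap U_j$ the two induced maps into $\prod_x\cF_x$ need not literally agree, only agree after multiplying by some $T^{a_{ij}}$ — exactly the phenomenon handled in Lemma~\ref{realizationofmorphism} and in the descent arguments of Proposition~\ref{ModIabel} and Lemma~\ref{internalhom}. So the real work is to show these discrepancies are killed in $\ModI$ and the local maps descend; here the per-point choice of a single index $i_x$ (as in the proof of Lemma~\ref{skyinj}) bypasses the overlap issue cleanly, since each factor $\cF_x$ receives a map from only the chosen $\tcV_{i_x}$. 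Finally I would note that $\ModIp(\Lambda_X)$ has enough injectives as a special case (when $X$ is compact, via Corollary~\ref{globalobject}), and that the same skyscraper construction works verbatim there. Once the monomorphism $\cV\hookrightarrow\cI$ into an injective is produced, the proposition follows by definition.
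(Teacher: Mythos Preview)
Your strategy matches the paper's --- embed $\cV$ into an injective built from skyscraper sheaves and invoke Lemma~\ref{skyinj} --- but the step where you produce the global monomorphism has a genuine gap that the paper's packaging avoids.

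The difficulty is that $[\prod_{x\in X}\cF_x]$ is \emph{not} the categorical product of the $[\cF_x]$ in $\ModI(\Lambda_\dX)$: infinite limits need not commute with $[\cdot]$ (see the remark after Lemma~\ref{limits}), so having a map $\cV\to[\cF_x]$ for every $x$ does not by itself assemble into a map $\cV\to[\prod_x\cF_x]$. Your fallback --- gluing the local maps $[\tcV_i]\to[\prod_{x\in U_i}\cF_x]$ using the per-point index $i_x$ --- does not work as stated: for $x\in U_i\cap U_j$ with $i_x=i$, the local map coming from $U_i$ hits the factor $\cF_x$ nontrivially while the one coming from $U_j$ is zero there, so the two disagree even after passing to $\ModI$. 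The proof of Lemma~\ref{skyinj} you invoke is solving a different problem (extending an \emph{already given} map along a monomorphism using injectivity), so it does not supply the missing map.

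The paper sidesteps this by changing the target. Rather than one product indexed by points, it takes the locally finite direct sum $\bigoplus_i[\iota_{i*}\tcI_i]$, where each $\tcI_i$ is a product of skyscrapers containing $\tcV_i$. For each $i$ the local inclusion $\cV|_{U_i}\cong[\tcV_i]\hookrightarrow[\tcI_i]$ produces a \emph{global} map $\cV\to[\iota_{i*}\tcI_i]$ via the push--pull adjunction (the canonical map $(\overline{U_i},\overline{U_i}\bs U_i)\to\dX$ is tame), so no gluing is needed at all. The resulting map into the locally finite sum is a monomorphism because on each $U_j$ its $j$-th component restricts to the given inclusion $[\tcV_j]\hookrightarrow[\tcI_j]$, and the target is injective by Lemma~\ref{skyinj}. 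Your argument can be repaired by adopting exactly this device.
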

\begin{proof}
Take $\cV\in \ModI(\Lambda_{(\oX,D_X)})$. Then there exists a locally finite covering $\{U_i\}$ of $X$ with lifting $\tcV_i$. As usual, one can embed $\tcV_i$ to an injective object $\tcI_i$  which is a product of skyscraper sheaves.

Hence we have the inclusion $[\tcV_i]\hookrightarrow [\tcI_i]$. This induces the inclusion $\cV\hookrightarrow \bigoplus[\iota_{i*}\tcI_i]$, where the latter is a locally finite direct sum hence it exits. By Lemma \ref{skyinj}, $\bigoplus[\iota_{i*}\tcI_i]$ is also an injective object. This completes the proof.
\end{proof}

The above proof also shows the following:
\begin{corollary}\label{liftresol}
For $\tcV\in \Mod^0(\Lambda_X)$, there exists an injective resolution $\tcI^\bullet:=\tcI^0\rightarrow \tcI^1\rightarrow \cdots$ of $\tcV$ giving an injective resolution $[\tcI^\bullet]$ of $[\tcV]$.
\end{corollary}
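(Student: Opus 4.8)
The plan is to build the resolution first inside $\Mod^0(\Lambda_X)$ by exactly the construction already used in the proof of Proposition~\ref{injectives}, and then push it through the exact functor $[\cdot]$. Concretely, apply that construction with the trivial covering $\{X\}$: since $\Mod^0(\Lambda)$ is a Grothendieck abelian category (a category of graded modules over a graded ring), $\Mod^0(\Lambda_X)$ has enough injectives of Godement type, so one can embed $\tcV$ into an injective object $\tcI^0\in\Mod^0(\Lambda_X)$ which is a product of skyscraper sheaves. Setting $\tcV^1:=\coker(\tcV\hookrightarrow\tcI^0)$, embedding it into such an injective $\tcI^1$, and iterating, one obtains an injective resolution $\tcV\to\tcI^0\to\tcI^1\to\cdots$ in $\Mod^0(\Lambda_X)$ in which every term $\tcI^j$ is a product of skyscraper sheaves of the form treated in Lemma~\ref{skyinj}.

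Next, apply $[\cdot]\colon\Mod^0(\Lambda_X)\to\ModI(\Lambda_\dX)$ (with $\dX=(X,\varnothing)$) termwise. By Lemma~\ref{cdotexact} this functor is exact, so it carries the exact complex $0\to\tcV\to\tcI^0\to\tcI^1\to\cdots$ to an exact complex $0\to[\tcV]\to[\tcI^0]\to[\tcI^1]\to\cdots$ in $\ModI(\Lambda_\dX)$, i.e. to a resolution of $[\tcV]$. Finally, Lemma~\ref{skyinj} shows that each $[\tcI^j]$ is an injective object of $\ModI(\Lambda_\dX)$, being the class of a product of skyscraper sheaves. Hence $[\tcI^\bullet]$ is an injective resolution of $[\tcV]$, which is the assertion.

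There is essentially no obstacle beyond bookkeeping: the one point to check is that the injective objects arising in the Godement construction in $\Mod^0(\Lambda_X)$ can be taken to be products of skyscraper sheaves so that Lemma~\ref{skyinj} applies — but this is precisely the content of the last step of the proof of Proposition~\ref{injectives}, now used with the one-element covering $\{X\}$ — together with the exactness of $[\cdot]$, which is Lemma~\ref{cdotexact}.
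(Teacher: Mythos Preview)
Your proposal is correct and follows essentially the same approach as the paper: take a Godement-type injective resolution of $\tcV$ in $\Mod^0(\Lambda_X)$ by products of skyscraper sheaves, then invoke the exactness of $[\cdot]$ (Lemma~\ref{cdotexact}) together with Lemma~\ref{skyinj} to conclude that $[\tcI^\bullet]$ is an injective resolution of $[\tcV]$. The paper's proof is a one-line reference to exactly these two lemmas, and your write-up simply unpacks the construction from Proposition~\ref{injectives} in the special case of the trivial covering.
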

\begin{proof}
This follows from that $[\cdot]$ is an exact functor (Lemma \ref{cdotexact}) and Lemma \ref{skyinj}.
\end{proof}

\newcommand{\tcF}{\tilde{\cF}}

\subsection*{Flats}
\begin{lemma}\label{liftflat}
Let $\tcF$ be a flat $\bR$-graded $\Lambda$-module. Then $[\tcF]$ is a flat object.
\end{lemma}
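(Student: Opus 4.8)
The plan is to verify that the functor $(-)\otimes[\tcF]$ on $\ModI(\Lambda_\dX)$ is exact, i.e. that it carries a short exact sequence $0\to\cV\to\cW\to\cX\to 0$ to a short exact sequence. Both the tensor product (constructed above from local lifts) and the property of a sequence being exact in $\ModI(\Lambda_\dX)$ are of local nature, so it is enough to check exactness over each member of a suitable open cover $\{U_i\}$ of $X$. Arguing as in the proof of Proposition~\ref{ModIabel}, after refining the cover we may assume that the restriction of the given sequence to each $U_i$ is represented by a short exact sequence in $\ModIp(\Lambda_{U_i})$ and that $\cV,\cW,\cX$ admit lifts over each $U_i$ (while $\tcF$ itself, being an honest $\Lambda_X$-module, restricts to a global lift of $[\tcF]|_{U_i}$).

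Fix $U=U_i$. First I would use Lemma~\ref{liftlemma} to lift the restricted sequence to a short exact sequence $0\to\tcV\to\tcW\to\tcX\to 0$ in $\Mod^0(\Lambda_U)$. The restriction $\tcF|_U$ is still a flat $\bR$-graded $\Lambda_U$-module: restriction of $\Lambda_X$-modules is exact and commutes with the graded tensor product $\otimes_\Lambda$, so $\tcF|_U\otimes_{\Lambda_U}(-)$ is exact on $\Mod^0(\Lambda_U)$. Consequently
\[
0\to \tcF|_U\otimes_{\Lambda_U}\tcV\to \tcF|_U\otimes_{\Lambda_U}\tcW\to \tcF|_U\otimes_{\Lambda_U}\tcX\to 0
\]
is exact in $\Mod^0(\Lambda_U)$. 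Applying the exact functor $[\cdot]$ (Corollary~\ref{kakkoexact}) and then $\alpha_U$ (exact by Lemma~\ref{alphaexact}) produces a short exact sequence in $\ModI_\dX(U)$. By the very definition of $\otimes$ in $\ModI(\Lambda_\dX)$ through local lifts, this sequence is the restriction to $U$ of $0\to\cV\otimes[\tcF]\to\cW\otimes[\tcF]\to\cX\otimes[\tcF]\to 0$. Since exactness is local, the latter sequence is exact in $\ModI(\Lambda_\dX)$, which is precisely the flatness of $[\tcF]$.

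Everything here is bookkeeping with covers and lifts; the only points that demand genuine care — and thus the main obstacle — are the three compatibilities that make the argument run: that the graded tensor product $\tcF|_U\otimes_{\Lambda_U}(-)$ is really exact when $\tcF|_U$ is flat in the graded sense (this uses the grading convention $\Gr^a(V\otimes_\Lambda W)=\bigoplus_{b+c=a}\Gr^bV\otimes_\bk\Gr^cW/\!\sim$ in the definition of $\otimes_\Lambda$); that restriction preserves flatness; and that $[\tcF|_U\otimes_{\Lambda_U}\tcV]$ indeed computes $[\tcF]|_U\otimes\cV|_U$, so that one must check nothing is disturbed when the chosen lift $\tcV$ of $\cV|_U$ is replaced by a grading shift $\tcV\la a\ra$ (which merely shifts the grading of the tensor product and is harmless after applying $[\cdot]$).
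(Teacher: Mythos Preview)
Your proof is correct and follows essentially the same route as the paper: work locally, use Lemma~\ref{liftlemma} to lift to $\Mod^0(\Lambda_U)$, invoke flatness of $\tcF|_U$ there, and descend via the exactness of $[\cdot]$. The paper streamlines slightly by only checking that injections are preserved (tensor being right exact anyway), whereas you carry along the full short exact sequence, but this is a cosmetic difference rather than a different argument.
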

\begin{proof}
Let $\cV\rightarrow \cW\in \ModI(\Lambda_\dX)$ be an injection. Let us take an open covering $\{U_i\}$ of $X$ with representatives $\{\tcV_i\}$, $\{\tcW_i\}$ and $\tf_i\colon \tcV_i\rightarrow \tcW_i$. Here one can take $\tf_i$ as an injection by Lemma \ref{liftlemma}. Then $\cV\otimes [\tcF]$ (resp. $\cW\otimes [\tcF]$) is represented by $\tcV_i\otimes \tcF|_{U_i}\rightarrow \tcW_i\otimes \tcF|_{U_i}$, which is an injection. Then by Lemma \ref{1.15}, the morphism $f\otimes [\tcF]$ is also an injection. This completes the proof.
\end{proof}

\begin{proposition}
The category $\ModI(\Lambda_{\dX})$ has enough flats.
\end{proposition}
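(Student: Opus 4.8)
The plan is to mimic the proof of Proposition~\ref{injectives} (``enough injectives''), replacing injective objects by flat ones and the local embedding into an injective by a local surjection from a flat object. Concretely, given $\cV \in \ModI(\Lambda_{\dX})$, I would first choose a locally finite covering $\{U_i\}$ of $X$ in the site $\Open_{\dX}$ together with $\bR$-graded lifts $\tcV_i \in \Mod^0(\Lambda_{U_i})$ of $\cV|_{U_i}$. For each $i$, I would produce in $\Mod^0(\Lambda_{U_i})$ a flat $\bR$-graded $\Lambda_{U_i}$-module $\tcF_i$ together with a surjection $\tcF_i \twoheadrightarrow \tcV_i$. The natural candidate is the ``free'' resolution step: take $\tcF_i := \bigoplus_{x \in U_i} j_{x!}\,(\Lambda_{U_i} \otimes_\bk \tcV_{i,x})$ or, more simply, a sheafified direct sum of copies of shifts of $\Lambda_{U_i}$ (restricted-and-extended-by-zero from points or opens) mapping onto the sections of $\tcV_i$; such objects are flat because $\Lambda$ is a PID and free graded modules are flat, and $j_!$ of a flat sheaf is flat, and one checks flatness stalkwise.

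**Next I would** descend this to $\ModI$: by Lemma~\ref{liftflat}, $[\tcF_i]$ is a flat object of $\ModI(\Lambda_{U_i})$, and by Lemma~\ref{cdotexact} (exactness of $[\cdot]$) the surjection $[\tcF_i] \twoheadrightarrow [\tcV_i] = \cV|_{U_i}$ persists in $\ModI(\Lambda_{U_i})$. Pushing forward along the inclusions $\iota_i \colon U_i \hookrightarrow X$ and using local finiteness of the cover, I would form the locally finite direct sum $\cF := \bigoplus_i \iota_{i*}[\tcF_i]$ (this exists for the same reason as in the proof of Proposition~\ref{injectives}), equipped with the induced map $\cF \to \cV$. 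The map is an epimorphism because it is already surjective on each $U_i$ (epimorphisms in $\ModI$ being detected locally, as kernels/cokernels are computed locally by Proposition~\ref{ModIabel}). It remains to see that $\cF$ is flat. Since $-\otimes\cF = \bigoplus_i (-\otimes \iota_{i*}[\tcF_i])$ and tensoring is computed locally, exactness of $-\otimes\cF$ reduces to exactness of $-\otimes[\tcF_i]$ over $U_i$, which is Lemma~\ref{liftflat}, plus the fact that a locally finite direct sum of flats is flat (direct sums are exact and, being locally finite, commute with the local kernel/image computations).

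**The main obstacle** I anticipate is the bookkeeping around the ``exotic'' hom-spaces and the stackification: one must make sure that the surjectivity of $\cF \to \cV$ and the flatness of $\cF$ are genuinely local statements in $\ModI_{\dX}$, so that they can be checked on the $U_i$ where honest lifts exist, and that forming $\iota_{i*}$ and the locally finite direct sum does not destroy flatness. The key facts that make this routine rather than delicate are: (i) all the relevant notions (epi, tensor product, kernel) are computed locally in $\ModI$ by construction; (ii) $[\cdot]$ is exact (Lemma~\ref{cdotexact}) and sends flats to flats (Lemma~\ref{liftflat}); (iii) $\Lambda = \bk[\bRz]$ is a (non-Noetherian but) coherent domain in which free graded modules are flat, so the classical construction of a flat — indeed free — cover applies in $\Mod^0(\Lambda_{U_i})$. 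Assembling these, the proof is a direct transcription of the ``enough injectives'' argument with ``injective / embed into / product of skyscrapers'' replaced by ``flat / surject from / direct sum of free sheaves'', so I would present it briefly, citing Proposition~\ref{injectives}, Lemma~\ref{liftflat}, and Lemma~\ref{cdotexact} for the three non-formal ingredients.
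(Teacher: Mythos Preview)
Your overall strategy matches the paper's, but there is a genuine slip in the gluing step: you use $\iota_{i*}$ where you need $\iota_{i!}$. This matters in two ways. First, the adjunction runs the wrong direction: from a surjection $\tcF_i \twoheadrightarrow \tcV_i$ on $U_i$ you get, via $\iota_{i!} \dashv \iota_i^{-1}$, a map $\iota_{i!}\tcF_i \to \cV$; with $\iota_{i*}$ (right adjoint to $\iota_i^{-1}$) you only get $\cV \to \iota_{i*}\tcV_i$, so there is no natural map $\bigoplus_i \iota_{i*}[\tcF_i] \to \cV$. Second, your flatness argument ``reduces to exactness of $-\otimes[\tcF_i]$ over $U_i$'' is valid for $\iota_{i!}$ (because $\iota_{i!}\tcF_i$ vanishes outside $U_i$, so tensoring with it is genuinely supported on $U_i$) but not for $\iota_{i*}$, whose stalks on $\overline{U_i}\setminus U_i$ need not be flat $\Lambda$-modules. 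You even invoke ``$j_!$ of a flat sheaf is flat'' in your first paragraph, so you have the right ingredient in mind; it just needs to be used at the global step as well.

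The paper's proof does exactly this: it forms $\bigoplus_i [\iota_{i!}\tcF_i]$, cites Lemma~\ref{liftflat} for flatness of each summand, and observes the induced map to $\cV$ is surjective. With that one correction your argument is essentially the paper's.
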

\begin{proof}
Let $\cV$ be an object of $\ModI(\Lambda_X)$. Let us take a locally finite covering $\{U_i\}$ of $X$ with lifting $\{\tcV_i\}$. Then by the same construction as in \cite{KS}, there exists a flat object $\tcF_i$ with a surjection $\tcF_i\rightarrow \tcV_i$. Let $\iota_i\colon U_i\hookrightarrow X$ be the open imbedding. Hence $[\iota_{i!}\tcF_i]$ is also a flat object by Lemma \ref{liftflat} and we have a surjection $\bigoplus_i[\iota_{i!}\tcF_i]\rightarrow \cV$. This completes the proof.
\end{proof}

By a similar argument as in Corollary \ref{liftresol}, we get the following:
\begin{corollary}\label{liftresol2}
For $\tcV\in \Mod^0(\Lambda_X)$, a flat resolution $\tcF^\bullet:=\tcF^0\leftarrow \tcF^{-1}\leftarrow \cdots$ of $\tcV$ gives a flat resolution $[\tcF^\bullet]$ of $[\tcV]$.
\end{corollary}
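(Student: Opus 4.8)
The statement to prove is Corollary~\ref{liftresol2}: for $\tcV\in\Mod^0(\Lambda_X)$, a flat resolution $\tcF^\bullet$ of $\tcV$ in $\Mod^0(\Lambda_X)$ gives a flat resolution $[\tcF^\bullet]$ of $[\tcV]$ in $\ModI(\Lambda_\dX)$.

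The plan is to mimic the proof of Corollary~\ref{liftresol} (the injective analogue) almost verbatim, using the two ingredients already established: first, that the functor $[\cdot]\colon\Mod^0(\Lambda_X)\rightarrow\ModI(\Lambda_\dX)$ is exact (Lemma~\ref{cdotexact}); and second, that $[\cdot]$ sends flat $\bR$-graded $\Lambda$-modules (more precisely, the flat sheaves arising in the construction, e.g.\ via $\iota_{i!}$ of flat $\Lambda_{U_i}$-modules) to flat objects of $\ModI(\Lambda_\dX)$ — this is Lemma~\ref{liftflat}.

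First I would start with a flat resolution $\tcF^\bullet:=(\tcF^0\leftarrow\tcF^{-1}\leftarrow\cdots)$ of $\tcV$ in $\Mod^0(\Lambda_X)$, i.e.\ an exact complex $\cdots\rightarrow\tcF^{-1}\rightarrow\tcF^0\rightarrow\tcV\rightarrow 0$ with each $\tcF^{-j}$ flat. Applying the exact functor $[\cdot]$ termwise preserves exactness, so $\cdots\rightarrow[\tcF^{-1}]\rightarrow[\tcF^0]\rightarrow[\tcV]\rightarrow 0$ is exact in $\ModI(\Lambda_\dX)$; hence $[\tcF^\bullet]$ is a resolution of $[\tcV]$. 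It then remains to check that each $[\tcF^{-j}]$ is a flat object of $\ModI(\Lambda_\dX)$, which is exactly Lemma~\ref{liftflat} when the $\tcF^{-j}$ are taken to be flat $\bR$-graded $\Lambda_X$-modules of the shape produced in the proof of "enough flats" (a locally finite direct sum of sheaves of the form $\iota_{i!}$ applied to flat $\Lambda_{U_i}$-modules). Combining these two facts yields the claim.

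The only subtlety — and the single place warranting care rather than a one-line appeal — is that "flat $\bR$-graded $\Lambda$-module" in Lemma~\ref{liftflat} should be read in the sheaf-theoretic sense (flat as a sheaf of $\Lambda_X$-modules), and one should observe that the standard flat resolution constructed à la \cite{KS} for sheaves of modules over a sheaf of rings does consist of such objects, so Lemma~\ref{liftflat} applies to each term. Once that observation is in place, no further computation is needed. Concretely I would write:

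\begin{proof}
Let $\tcF^\bullet=(\cdots\rightarrow\tcF^{-1}\rightarrow\tcF^0\rightarrow 0)$ be a flat resolution of $\tcV$ in $\Mod^0(\Lambda_X)$, so that the augmented complex $\cdots\rightarrow\tcF^{-1}\rightarrow\tcF^0\rightarrow\tcV\rightarrow 0$ is exact. By Lemma~\ref{cdotexact} the functor $[\cdot]$ is exact, hence
\begin{equation}
\cdots\rightarrow[\tcF^{-1}]\rightarrow[\tcF^0]\rightarrow[\tcV]\rightarrow 0
\end{equation}
is exact in $\ModI(\Lambda_\dX)$; thus $[\tcF^\bullet]$ is a resolution of $[\tcV]$. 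Each $\tcF^{-j}$ is a flat $\bR$-graded $\Lambda_X$-module, so by Lemma~\ref{liftflat} each $[\tcF^{-j}]$ is a flat object of $\ModI(\Lambda_\dX)$. Therefore $[\tcF^\bullet]$ is a flat resolution of $[\tcV]$.
\end{proof}
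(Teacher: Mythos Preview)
Your proof is correct and follows essentially the same approach as the paper: the paper simply says ``by a similar argument as in Corollary~\ref{liftresol}'', and that argument is precisely what you spell out --- exactness of $[\cdot]$ (Lemma~\ref{cdotexact}) to preserve the resolution, together with Lemma~\ref{liftflat} to ensure each $[\tcF^{-j}]$ is flat.
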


\subsection{Derived functors}
Note that right and left exactness of various functors $f_*, f_!, f^{-1}, \cHom, \otimes$ are the same as in the case of $\bk$-modules, according to Lemma \ref{1.10}. 
\subsection*{Derived functors}
\begin{lemma}\label{rightderived}
Take $\tcV \in D^b(\Mod^0(\Lambda_X))$ and $\tcW\in D^b(\Mod^0(\Lambda_X))$. Then we have $[\bR f_* \tcV]\simeq \bR f_*[\tcV]$, $[\bR f_!\tcV]\simeq \bR f_![\tcV]$ and $[\RcHom(\tcW, \tcV)]\simeq \RcHom([\tcW], [\tcV])$.
\end{lemma}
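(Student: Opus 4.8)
The plan is to reduce each of the three isomorphisms to the corresponding statement about ordinary $\bR$-graded $\Lambda_X$-modules, using the fact that $[\cdot]$ is exact and sends injective/flat resolutions of the relevant type to resolutions computing the derived functors, as established in Corollaries~\ref{liftresol} and~\ref{liftresol2}. First I would handle $\bR f_*$. Pick a bounded-below complex $\tcV$ representing the object of $D^b(\Mod^0(\Lambda_X))$ and, applying Corollary~\ref{liftresol} degreewise (and passing to a totalization / Cartan--Eilenberg resolution to stay within bounded complexes), choose a quasi-isomorphism $\tcV \to \tcI^\bullet$ with each $\tcI^j$ a product of skyscraper sheaves, such that $[\tcI^\bullet]$ is again a resolution of $[\tcV]$ by objects that are injective in $\ModI(\Lambda_\dX)$ (Lemma~\ref{skyinj}; note the products used there are exactly the ones arising from Cartan--Eilenberg resolutions). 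Then $\bR f_*[\tcV]$ is computed by $f_*[\tcI^\bullet]$. Since for a lifted object $f_*[\tcI^j] = [f_* \tcI^j]$ by the very definition of $f_*$ on lifted objects (and $f_*$ of a product of skyscrapers is again a product of skyscrapers, using tameness to control local finiteness), we get $f_*[\tcI^\bullet] \simeq [f_* \tcI^\bullet] = [\bR f_* \tcV]$, where the last equality is because $\tcI^\bullet$ computes $\bR f_*\tcV$ in $\Mod^0(\Lambda_X)$. Exactness of $[\cdot]$ (Lemma~\ref{cdotexact}) ensures $[f_* \tcI^\bullet]$ has the right cohomology, so this is the desired isomorphism; naturality is clear from the construction.

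Next, $\bR f_!$ and $\RcHom$ are handled the same way but with different resolutions. For $\RcHom([\tcW],[\tcV])$ I would resolve the second variable $\tcV$ by an injective complex $\tcI^\bullet$ as above; then $\RcHom([\tcW],[\tcV])$ is computed by $\cHom([\tcW],[\tcI^\bullet])$, and since $\cHom$ commutes with $[\cdot]$ on lifted objects by Lemma~\ref{internalhom} (here $\tcW$ has a global lift since we start in $D^b(\Mod^0(\Lambda_X))$), this is $[\cHom(\tcW,\tcI^\bullet)] = [\RcHom(\tcW,\tcV)]$. For $\bR f_!$ the appropriate class is $f_!$-acyclic objects; I would instead use a resolution adapted to proper pushforward (e.g. a c-soft / flat-type resolution as in \cite{KS} lifted via the analogues of Lemma~\ref{liftflat} and Corollary~\ref{liftresol2}), observe that $f_!$ on lifted objects is $[f_! \,\cdot\,]$ by definition, and conclude identically. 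Each of the three cases thus has the same skeleton: lift to a resolution of the correct homological type whose image under $[\cdot]$ is still a resolution of the correct type; apply the operation termwise; use that the operation is literally defined as $[\,\cdot\,]$ of the classical operation on lifted objects; and invoke exactness of $[\cdot]$ to identify cohomology.

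The main obstacle I expect is the bookkeeping needed to make ``lift to a resolution of the correct type'' legitimate in the derived category: Corollaries~\ref{liftresol} and~\ref{liftresol2} are stated for a single object $\tcV$, so to handle a complex one must either work with Cartan--Eilenberg resolutions and argue that the totalization still consists of injectives (resp. flats) of the special form required by Lemma~\ref{skyinj} (resp. Lemma~\ref{liftflat}), or invoke boundedness to reduce to a finite induction on the length of the complex via distinguished triangles, using that $[\cdot]$, $\bR f_*$, $\bR f_!$, $\RcHom$ are all triangulated. The triangle approach is probably cleanest: for a two-term complex the statement follows from the octahedral axiom and the single-object corollaries, and the general bounded case follows by dévissage. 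A secondary subtlety is that for $\bR f_*$ and $\bR f_!$ one must know that the chosen resolving objects remain acyclic for the corresponding derived functor \emph{in} $\ModI(\Lambda_\dX)$ — for injectives this is automatic, and for the $f_!$-acyclic class one should check the lifted objects are genuinely $f_!$-acyclic, which again reduces to the classical statement plus exactness of $[\cdot]$ together with the definition of $f_!$ on lifted objects and the limit computation used in Lemma~\ref{welldefpush}.
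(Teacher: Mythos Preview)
Your proposal is correct and follows essentially the same line as the paper: reduce to a single object of $\Mod^0(\Lambda_X)$ using exactness of $[\cdot]$ and the triangulated structure, then invoke Corollary~\ref{liftresol} to choose an injective resolution $\tcI^\bullet$ whose image $[\tcI^\bullet]$ remains an injective resolution of $[\tcV]$, and compute each derived functor termwise. The one place you diverge is for $\bR f_!$, where you propose switching to a c-soft or flat-type resolution and flag as a subtlety whether the lifted resolving objects are $f_!$-acyclic. This detour is unnecessary: $f_!$ is left exact, so its right derived functor is computed by any injective resolution, and injectives are automatically $f_!$-acyclic in both $\Mod^0(\Lambda_X)$ and $\ModI(\Lambda_\dX)$. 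The paper exploits this and treats all three functors $F\in\{f_*, f_!, \cHom([\tcW],-)\}$ uniformly with the single injective resolution from Corollary~\ref{liftresol}, writing $[\bR F(\tcV)]\simeq [F(\tcI^\bullet)]\simeq F[\tcI^\bullet]\simeq \bR F([\tcV])$ in one line.
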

\begin{proof}
Since $[\cdot]$ is an exact functor, it suffices to show for an object $\Mod^0(\Lambda_X)$ by a standard argument in homological algebra. Then $\tcV$ has an injective resolution $\tcI^\bullet$ such that $[\tcI^\bullet]$ is an injective resolution of $[\tcV]$ by Lemma \ref{liftresol}. For $F\in \{f_*, f_!, \cHom([\tcW], -)\}$, we have
\begin{equation}
[\bR F(\tcV)]\simeq [F(\tcI^\bullet)]\simeq F[\tcI^\bullet]\simeq \bR F([\tcV]).
\end{equation}
This completes the proof.
\end{proof}

\begin{lemma}\label{derivedpullback}
Take $\tcV\in D^\bullet(\Mod^0(\Lambda_X))$. Then we have $[f^{-1}\tcV]\simeq f^{-1}[\tcV]$.
\end{lemma}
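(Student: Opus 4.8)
The plan is to exploit that, in contrast with $\bR f_*$, $\bR f_!$ and $\RcHom$ handled in Lemma~\ref{rightderived}, the inverse image functor $f^{-1}$ is \emph{exact}, both on $\Mod^0(\Lambda_X)$ and on $\ModI(\Lambda_\dX)$ (as recorded just before Lemma~\ref{rightderived}). Hence no resolution is required: $f^{-1}$ descends to the derived categories by being applied degreewise, so it is enough to exhibit a natural isomorphism $[f^{-1}\tcV]\simeq f^{-1}[\tcV]$ for a single object $\tcV\in\Mod^0(\Lambda_X)$ and then assemble it over the degrees of a complex.

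For a single $\tcV$ I would argue straight from the construction of the inverse image. The object $[\tcV]=\alpha_X([\tcV])$ carries the global lift $\tcV$, so in the recipe defining $f^{-1}[\tcV]$ one may use the trivial covering of the base by the whole space, equipped with that lift. Unwinding the definition, $f^{-1}[\tcV]$ is then obtained by gluing the single local piece $[f^{-1}\tcV]$ over the whole source, i.e.\ $f^{-1}[\tcV]=\alpha_X([f^{-1}\tcV])=[f^{-1}\tcV]$. The point to verify is that this agrees with the object produced from an arbitrary choice of covering and local lifts, which is precisely the independence built into the construction of $f^{-1}$ (established by the same gluing arguments used for the push-forward, cf.\ Lemma~\ref{welldefpush}). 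The resulting identification is visibly functorial in $\tcV$, since inverse image of honest sheaves is functorial and $[\cdot]$ and $\alpha_X$ are functors.

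It then remains to pass to complexes: writing $\tcV\in D^\bullet(\Mod^0(\Lambda_X))$ as $(\tcV^j,d^j)_j$, the naturality above gives isomorphisms $[f^{-1}\tcV^j]\simeq f^{-1}[\tcV^j]$ intertwining the differentials, and since $f^{-1}$ on $\Mod^0$, $f^{-1}$ on $\ModI$, and $[\cdot]$ all act degreewise, these assemble into an isomorphism of complexes $[f^{-1}\tcV]\simeq f^{-1}[\tcV]$ in $D^\bullet(\ModI(\Lambda_\dX))$. I do not expect a real obstacle; the only mildly delicate step is the bookkeeping in the second paragraph --- checking that evaluating the gluing construction of $f^{-1}$ on the trivial covering simply returns $\alpha_X$ of the naive pull-back --- and this is immediate from the definitions.
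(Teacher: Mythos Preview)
Your proposal is correct and follows essentially the same approach as the paper: the paper's proof is the one-line observation that the claim is clear from the definition of $f^{-1}$ and its exactness on $\Mod^0(\Lambda_X)$, and what you have written is a careful unpacking of exactly that remark. The only difference is level of detail; the underlying argument---use the global lift as the trivial covering in the definition of $f^{-1}$ on $\ModI$, then invoke exactness to pass to complexes degreewise---is the same.
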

\begin{proof}
This is clear from the definition of $f^{-1}$ and its exactness on $\Mod^0(\Lambda_X)$. 
\end{proof}

\begin{lemma}\label{leftderived}
Take $\tcV \in D^\bullet(\Mod^0(\Lambda_X))$ and $\tcW\in D^b(\Mod^0(\Lambda_X))$. Then we have $[\tcV\dotimes\tcW]\simeq [\tcV]\dotimes [\tcW]$.
\end{lemma}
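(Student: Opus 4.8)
The plan is to reduce the derived statement to the underived one exactly as in the proof of Lemma~\ref{rightderived}, using that $[\cdot]$ is exact (Lemma~\ref{cdotexact}) and that flat resolutions are preserved (Corollary~\ref{liftresol2}). First I would recall that for $\tcW\in D^b(\Mod^0(\Lambda_X))$ one computes $\tcV\dotimes\tcW$ by replacing $\tcW$ with a bounded-above complex of flat $\bR$-graded $\Lambda_X$-modules $\tcF^\bullet$; by Corollary~\ref{liftresol2} each $[\tcF^i]$ is flat in $\ModI(\Lambda_\dX)$ and $[\tcF^\bullet]\to[\tcW]$ is a quasi-isomorphism (here exactness of $[\cdot]$ is used to see that the resolution stays a resolution after applying $[\cdot]$).

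Then the computation is the chain of isomorphisms
\begin{equation}
[\tcV\dotimes\tcW]\simeq[\mathrm{Tot}(\tcV\otimes\tcF^\bullet)]\simeq\mathrm{Tot}([\tcV]\otimes[\tcF^\bullet])\simeq[\tcV]\dotimes[\tcW],
\end{equation}
where the middle isomorphism uses that $[\cdot]$ commutes with finite direct sums and with the ordinary tensor product of $\bR$-graded $\Lambda_X$-modules (which follows from the construction of $\otimes$ for $\ModI$ via local lifts), and the outer isomorphisms are the definitions of derived tensor product on the two sides together with flatness of $[\tcF^\bullet]$. Since $\tcV$ is only assumed to lie in $D^\bullet$, one should note the totalization is the usual bounded-above-tensor-bounded one, so no convergence issue arises; if $\tcV$ is unbounded below one simply works degreewise, the argument being purely formal.

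The one point requiring a little care — and the part I expect to be the main (mild) obstacle — is checking that $[\cdot]$ genuinely commutes with the ordinary (non-derived) tensor product of $\bR$-graded $\Lambda_X$-modules, i.e. $[\tcV\otimes_{\Lambda_X}\tcW]\simeq[\tcV]\otimes[\tcW]$. This is essentially the content of the earlier lemma constructing $\cV\otimes\cW$ from local lifts, but one must observe that for a \emph{globally lifted} pair the gluing data is trivial, so the globally constructed tensor product agrees with $[\tcV\otimes_{\Lambda_X}\tcW]$; this is immediate from the independence-of-lifts statement already proved. Everything else is a routine transcription of the classical argument, so the proof is short.

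\begin{proof}
Since $[\cdot]$ commutes with finite direct sums and with $\otimes_{\Lambda_X}$ of $\bR$-graded $\Lambda_X$-modules (by the construction of $\otimes$ on $\ModI(\Lambda_\dX)$ via local lifts, applied to global lifts), and since it is exact (Lemma~\ref{cdotexact}), it suffices by a standard homological argument to prove the claim after replacing $\tcW$ by a complex of flats. Choose a flat resolution $\tcF^\bullet\to\tcW$ in $\Mod^0(\Lambda_X)$; by Corollary~\ref{liftresol2}, $[\tcF^\bullet]$ is a flat resolution of $[\tcW]$ in $\ModI(\Lambda_\dX)$ (here again exactness of $[\cdot]$ is used). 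Then
\begin{equation}
[\tcV\dotimes\tcW]\simeq[\mathrm{Tot}(\tcV\otimes\tcF^\bullet)]\simeq\mathrm{Tot}([\tcV]\otimes[\tcF^\bullet])\simeq[\tcV]\dotimes[\tcW],
\end{equation}
where the first and last isomorphisms are the definitions of the derived tensor products together with flatness of $\tcF^\bullet$ and of $[\tcF^\bullet]$, and the middle one follows from the compatibilities of $[\cdot]$ recalled above. This completes the proof.
\end{proof}
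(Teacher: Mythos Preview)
Your proof is correct and follows exactly the approach the paper takes: the paper's proof is the one-line ``One can prove by the same argument as in Lemma~\ref{rightderived} by using Corollary~\ref{liftresol2},'' and you have spelled out precisely that argument. The only extra thing you do is make explicit the compatibility $[\tcV\otimes_{\Lambda_X}\tcW]\simeq[\tcV]\otimes[\tcW]$ at the underived level, which the paper leaves implicit (it is indeed immediate from the local-lift definition of $\otimes$ on $\ModI(\Lambda_\dX)$).
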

\begin{proof}
One can prove by the same argument as in Lemma \ref{rightderived} by using Corollary \ref{liftresol2}.
\end{proof}

\subsection*{Derived adjuntions}
\begin{lemma}\label{derivedadjunction}
There exists the following isomorphism
\begin{equation}
\RcHom(\cV\dotimes \cW, \cX)\simeq \RcHom(\cV, \RcHom(\cW, \cX)).
\end{equation}
for $\cV, \cW, \cX\in D^b(\ModI(\Lambda_\dX))$.
\end{lemma}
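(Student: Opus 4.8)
The plan is to reduce the derived adjunction to the underived Tensor--Hom adjunction (Proposition~\ref{tensorhomadjunction} and its internal-hom corollary) by resolving appropriately, exactly as one does in ordinary sheaf theory. First I would fix representatives: choose a bounded-above complex of flat objects $\cF^\bullet \xrightarrow{\simeq} \cW$ (these exist since $\ModI(\Lambda_\dX)$ has enough flats) and a bounded-below complex of injective objects $\cX \xrightarrow{\simeq} \cI^\bullet$ (these exist by Proposition~\ref{injectives}). Then $\cV \dotimes \cW$ is computed by the total complex of $\cV \otimes \cF^\bullet$, and both sides of the claimed isomorphism are represented by honest (non-derived) internal-hom complexes: the left side by $\cHom(\mathrm{Tot}(\cV^\bullet \otimes \cF^\bullet), \cI^\bullet)$ and the right side by $\cHom(\cV^\bullet, \cHom(\cF^\bullet, \cI^\bullet))$, where $\cV^\bullet$ is any representative of $\cV$ (one should also resolve $\cV$ by flats so that $\cV \dotimes \cW$ and the formation of the double complex are unambiguous, but $\cHom(-, \cI^\bullet)$ into injectives is already exact in the first variable).

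The key steps, in order: (1) verify that $\cHom(\cF, \cI)$ is injective whenever $\cF$ is flat and $\cI$ is injective — this is the standard fact that $\cHom(\cF, -)$ is exact (flatness of $\cF$) and preserves injectives (its left adjoint $\cF \otimes -$ is exact), and it lets the right-hand side $\cHom(\cF^\bullet, \cI^\bullet)$ compute $\RcHom(\cW, \cX)$ and then, applying $\cHom(\cV^\bullet, -)$ into this complex of injectives, compute $\RcHom(\cV, \RcHom(\cW,\cX))$; (2) verify that $\cV^\bullet \otimes \cF^\bullet$ with $\cF^\bullet$ flat computes $\cV \dotimes \cW$ and consists of objects acyclic for $\cHom(-, \cI^\bullet)$; (3) invoke the underived isomorphism $\cHom(\cA \otimes \cB, \cC) \cong \cHom(\cA, \cHom(\cB, \cC))$ from the corollary to Proposition~\ref{tensorhomadjunction} termwise and check that the identifications are compatible with the differentials of the double/triple complexes, so that they assemble to an isomorphism of total complexes; (4) conclude that the two total complexes are isomorphic in $D^b(\ModI(\Lambda_\dX))$, which is the assertion. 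Throughout one uses that the relevant derived functors are well-defined and boundedness is preserved, which follows from the exactness properties recorded just before this lemma (``right and left exactness of $f_*, f_!, f^{-1}, \cHom, \otimes$ are the same as in the case of $\bk$-modules'') together with $[\cdot]$ being exact.

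The main obstacle I expect is purely bookkeeping: keeping track of the boundedness and convergence of the various double and triple complexes so that the termwise adjunction isomorphisms genuinely glue to a quasi-isomorphism of total complexes, rather than any genuinely new phenomenon. Concretely, since $\cF^\bullet$ is bounded above and $\cI^\bullet$ bounded below, $\cHom(\cF^\bullet, \cI^\bullet)$ is bounded below, and $\cV^\bullet$ can be taken bounded (after flat resolution one truncates using that $\ModI(\Lambda_\dX)$ has finite enough flat dimension in the relevant range, or one simply works in $D^-$ and $D^+$ as needed and then notes everything lands in $D^b$ because the inputs are bounded); one must check that the spectral sequences of the double complexes degenerate or at least converge so that the quasi-isomorphisms $\cF^\bullet \to \cW$, $\cX \to \cI^\bullet$ induce quasi-isomorphisms after applying $\cHom$ and $\otimes$. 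None of this is different from the classical case of sheaves of modules over a sheaf of rings, and the only extra care needed is that all objects be represented locally by $\bR$-graded lifts so that Lemma~\ref{1.15} and the exactness statements of Section~4 apply; but that is already built into the constructions of $\otimes$, $\cHom$, and their derived versions in Lemmas~\ref{rightderived} and~\ref{leftderived}, so no new local-to-global argument is required beyond what has already been established.
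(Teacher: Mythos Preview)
Your proposal is correct and follows essentially the same route as the paper: take a flat resolution $\cF$ of $\cW$ and an injective resolution $\cI$ of $\cX$, observe that $\cHom(\cF,\cI)$ is injective (because its left adjoint $(-)\otimes\cF$ is exact), and then reduce both sides to $\cHom(\cV\otimes\cF,\cI)$ via the underived adjunction. The paper's proof is exactly this, stated more tersely; your additional remarks on boundedness are reasonable caution but not needed beyond what the paper assumes as standard.
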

\begin{proof}
This can be proved by a standard argument. Let us take a flat resolution $\cF$ of $\cW$ and an injective resolution $\cI$ of $\cX$. Then $\RcHom(\cF, \cI)\simeq \cHom(\cF, \cI)$ is again an injective object. Actually, we have 
\begin{equation}
\cHom(-, \cHom(\cF, \cI)))\cong \cHom((-)\otimes \cF, \cI)
\end{equation}
by Lemma~\ref{tensorhomadjunction}. Then both sides of the equality in the statement is quasi-isomorphic to $\cHom(\cV\otimes \cF, \cI)$. This completes the proof.
\end{proof}

\begin{lemma}\label{derivedadjunction2}
There exists the following isomorphism
\begin{equation}
\bR f_*\RcHom(f^{-1}\cV, \cW)\simeq \RcHom(\cV, \bR f_*\cW)
\end{equation}
for $\cV\in D^b(\ModI(\Lambda_\dY))$ and $\cW\in D^b(\ModI(\Lambda_\dX))$.
\end{lemma}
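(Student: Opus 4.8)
The plan is to mimic the proof of Lemma~\ref{derivedadjunction}, deriving the underived adjunction of Lemma~\ref{pushpulladjunction} by resolving both variables suitably. First I would choose a bounded‑below complex of injectives $\cI^\bullet$ representing $\cW$ (possible by Proposition~\ref{injectives}) and a bounded‑above complex of flats $\cF^\bullet$ representing $\cV$ (possible since $\ModI(\Lambda_\dY)$ has enough flats), and form the double complex with terms $\cHom(f^{-1}\cF^p,\cI^q)$. Since $f^{-1}$ is exact, $f^{-1}\cF^\bullet\to f^{-1}\cV$ is a flat resolution, and since $\cHom(-,\cI^q)$ is exact for injective $\cI^q$, the associated total complex $\mathrm{Tot}\,\cHom(f^{-1}\cF^\bullet,\cI^\bullet)$ represents $\RcHom(f^{-1}\cV,\cW)$; the double complex is bounded on each anti‑diagonal (the flat degrees are $\le 0$ and the injective degrees bounded below), so the totalization causes no convergence trouble and each of its total terms is a finite direct sum of injectives, hence injective.

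Two facts then drive the computation. (i) Each term $\cHom(f^{-1}\cF^p,\cI^q)$ is an injective object: by Proposition~\ref{tensorhomadjunction} one has $\Hom\!\big(-,\cHom(f^{-1}\cF^p,\cI^q)\big)\cong\Hom\!\big((-)\otimes f^{-1}\cF^p,\cI^q\big)$, and $(-)\otimes f^{-1}\cF^p$ is exact because flatness is stalk‑local and hence inherited by $f^{-1}\cF^p$ — this is exactly the argument used for $\RcHom(\cF,\cI)$ inside the proof of Lemma~\ref{derivedadjunction}. (ii) $f_*$ preserves injectives, being right adjoint to the exact functor $f^{-1}$ (the adjunction is Lemma~\ref{tensorhomadjunction2}); consequently $f_*\cI^\bullet$ is again a bounded‑below complex of injectives and, injectives being $f_*$‑acyclic, it represents $\bR f_*\cW$. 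With these in hand,
\[
\bR f_*\RcHom(f^{-1}\cV,\cW)\simeq \bR f_*\,\mathrm{Tot}\,\cHom(f^{-1}\cF^\bullet,\cI^\bullet)\simeq \mathrm{Tot}\,f_*\cHom(f^{-1}\cF^\bullet,\cI^\bullet)\simeq \mathrm{Tot}\,\cHom(\cF^\bullet,f_*\cI^\bullet)\simeq \RcHom(\cV,\bR f_*\cW),
\]
where the second isomorphism uses the $f_*$‑acyclicity from (i), the third is Lemma~\ref{pushpulladjunction} applied term by term, and the last uses (ii) together with the exactness of $\cHom(-,f_*\cI^q)$ (so that the flat resolution $\cF^\bullet\to\cV$ computes $\RcHom(\cV,f_*\cI^\bullet)$). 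One should also check that the resulting isomorphism is the natural one, which follows because every step is natural.

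The step I expect to require the most care is purely organizational: objects of $\ModI(\Lambda_\dX)$ are only locally represented by genuine sheaves of $\bR$‑graded $\Lambda$‑modules, so one has to make sure that the injective and flat resolutions, the internal hom, the pushforward, and the double‑complex totalization are all compatible with restriction and descent. But each of these has already been made local‑to‑global (Lemma~\ref{internalhom} for $\cHom$, Lemma~\ref{welldefpush} for $f_*$, Lemmas~\ref{skyinj} and~\ref{liftflat} together with Proposition~\ref{injectives} for the resolutions, and Lemmas~\ref{rightderived}, \ref{derivedpullback}, \ref{leftderived} for the compatibility of $[\cdot]$ with the derived functors), so this is bookkeeping rather than a genuine obstacle. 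As an alternative that sidesteps resolutions, one can argue by Yoneda: from exactness of $f^{-1}$ (so that $f^{-1}$ is its own left derived functor and Lemma~\ref{tensorhomadjunction2} upgrades to a derived adjunction $f^{-1}\dashv\bR f_*$), the monoidality $f^{-1}(\cP\dotimes\cV)\simeq f^{-1}\cP\dotimes f^{-1}\cV$, and the derived tensor--hom adjunction Lemma~\ref{derivedadjunction}, one obtains $\Hom_{D^b(\ModI(\Lambda_\dY))}\!\big(\cP,\bR f_*\RcHom(f^{-1}\cV,\cW)\big)\cong\Hom_{D^b(\ModI(\Lambda_\dY))}\!\big(\cP,\RcHom(\cV,\bR f_*\cW)\big)$ naturally in $\cP$, and concludes.
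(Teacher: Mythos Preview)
Your proposal is correct and follows essentially the same approach as the paper: take a flat resolution $\cF$ of $\cV$ and an injective resolution $\cI$ of $\cW$, use that $f^{-1}$ preserves flats and $f_*$ preserves injectives (the latter via the adjunction of Lemma~\ref{tensorhomadjunction2} and exactness of $f^{-1}$), and reduce to the underived adjunction Lemma~\ref{pushpulladjunction}. The paper's proof is simply a terse version of your main argument; your additional double-complex bookkeeping and the Yoneda alternative are not needed but are correct (though note that in the paper's logical order the monoidality $f^{-1}(\cP\dotimes\cV)\simeq f^{-1}\cP\dotimes f^{-1}\cV$ is Lemma~\ref{tensorpullback}, proved \emph{after} this lemma, so your alternative route would require an independent justification of that fact to avoid circularity).
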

\begin{proof}
By Lemma~\ref{tensorhomadjunction2} and the exactness of $f^{-1}$ imply that push-forward of an injective is again injective. Also, pull-back of a flat object is again flat. Let $\cF$ be a flat resolution of $\cV$ and $\cI$ be an injective resolution of $\cW$. By replacing with these resolutions, we can work with underived functors, then Lemma~\ref{pushpulladjunction} completes the proof.
\end{proof}

\subsection*{Upper shriek} 
To construct upper shriek, we follow the argument in \cite{KS}.

Let $f\colon Y\rightarrow X$ be a map. Assume that $f_!\colon \Mod(\bZ_X)\rightarrow \Mod(\bZ_Y)$ has finite cohomological dimension. Let $\tcV$ be an object of $\Mod^0(\Lambda_X)$ and $K$ be a flat $f$-soft $\bZ_Y$-module. We define a presheaf by
\begin{equation}
(\Gr^a(f^!_K\tcV))(U):=\Gamma(U, \cHom^a_{\Lambda_X}(f_!(\Lambda_Y\otimes_{\bZ_Y}K_U), \tcV)).
\end{equation}
This is actually a sheaf by \cite[Lemma 3.1.3]{KS}. We set $f^!_K\tcV:=\bigoplus_{a\in \bR}\Gr^a(f^!_K\tcV)$ which is an object of $\Mod^0(\Lambda_Y)$.
Let us moreover suppose $\tcV$ be an injective object.
\begin{lemma}
Under the above assumption, we have the following:
\begin{enumerate}
\item The object $f_K^!\tcV$ is an injective object of $\Mod^0(\Lambda_{Y})$.
\item For any $\tcW\in \Mod^0(\Lambda_{Y})$. we have a canonical isomorphism
\begin{equation}
\Hom_{\Mod^0(\Lambda_X)}(f_!(\tcW\otimes_\bZ K), \tcV)\xrightarrow{\simeq}\Hom_{\Mod^0(\Lambda_Y)}(\tcW, f^!_K\tcV).
\end{equation}
\end{enumerate}
\end{lemma}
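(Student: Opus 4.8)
The statement is the $\Lambda$-graded analogue of \cite[Proposition~3.1.5]{KS}, so my plan is to mimic that classical argument while keeping track of the extra $\bR$-grading and the $\Lambda$-module structure. The key point is that $f^!_K\tcV$ is built as a kernel-type hom-sheaf, and injectivity together with the adjunction both reduce to properties that hold degreewise and are compatible with the $\Lambda$-action.

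First I would establish item (2), the adjunction, since (1) follows from it. For a $\bZ_Y$-module $L$ that is a finite direct sum of sheaves of the form $(\Lambda_Y\otimes_{\bZ_Y}K_U)\la a\ra$, the definition of $f^!_K$ gives a tautological isomorphism $\Hom_{\Mod^0(\Lambda_X)}(f_!(L), \tcV)\cong \Hom_{\Mod^0(\Lambda_Y)}(\Lambda_Y\otimes_{\bZ_Y}(\bigoplus_U K_U\la a\ra), f^!_K\tcV)$; I would first check this, then extend it to arbitrary $\tcW$ by writing $\tcW$ as the cokernel of a map between such direct sums (using that $\Lambda_Y\otimes_{\bZ_Y}K_U$-type objects generate, via the flatness and $f$-softness of $K$ exactly as in \cite[\S3.1]{KS}), and then invoking the five lemma together with the left-exactness of both $\Hom$-functors. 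Here I must use that $f_!(-\otimes_\bZ K)$ is exact: flatness of $K$ makes $-\otimes_\bZ K$ exact, and $f$-softness of $K\otimes_\bZ \tcW$ (inherited because $K$ is $f$-soft and the tensor is flat) makes $f_!$ exact on that class, exactly as in the classical case; the $\Lambda$-linearity of all the maps involved is automatic since $K$ carries no $\Lambda$-structure and $\Lambda_Y\otimes_{\bZ_Y}(-)$ is $\Lambda$-linear.

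Given the adjunction, injectivity of $f^!_K\tcV$ (item (1)) is formal: for an injection $\tcW'\hookrightarrow \tcW$ in $\Mod^0(\Lambda_Y)$, the map $\Hom_{\Mod^0(\Lambda_Y)}(\tcW, f^!_K\tcV)\to \Hom_{\Mod^0(\Lambda_Y)}(\tcW', f^!_K\tcV)$ is identified via (2) with $\Hom_{\Mod^0(\Lambda_X)}(f_!(\tcW\otimes_\bZ K), \tcV)\to \Hom_{\Mod^0(\Lambda_X)}(f_!(\tcW'\otimes_\bZ K), \tcV)$, and $f_!(\tcW'\otimes_\bZ K)\hookrightarrow f_!(\tcW\otimes_\bZ K)$ is injective by the exactness noted above, so surjectivity of the restriction map follows from injectivity of $\tcV$ in $\Mod^0(\Lambda_X)$. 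One should note that $f^!_K\tcV$ being a sheaf (not merely a presheaf) is already granted by \cite[Lemma~3.1.3]{KS} applied degreewise.

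\textbf{Main obstacle.} The routine bookkeeping is harmless; the one place that needs genuine care is verifying that $f^!_K\tcV$ really lands in $\Mod^0(\Lambda_Y)$ and that the degreewise definition $\Gr^a(f^!_K\tcV)$ assembles into an honest $\bR$-graded $\Lambda_Y$-module with the action compatible with the $T^b$-shifts used throughout Section~2 --- i.e. that multiplication by $T^b$ sends $\Gr^a$ to $\Gr^{a+b}$ correctly under the formula $(\Gr^a f^!_K\tcV)(U)=\Gamma(U,\cHom^a_{\Lambda_X}(f_!(\Lambda_Y\otimes_{\bZ_Y}K_U),\tcV))$. This is a matter of unwinding the definition of $\cHom^a$ from Section~3 against the $\Lambda$-action on $\Lambda_Y\otimes_{\bZ_Y}K_U$, and once that compatibility is in place the rest of the proof is the classical argument verbatim.
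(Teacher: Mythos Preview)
Your proposal is correct and aligns with the paper's approach: the paper's proof is a one-line reference, ``This is done by the same argument as in the proof of \cite[Lemma 3.1.3]{KS},'' and your outline simply unpacks what that classical argument is (establish the adjunction first, then deduce injectivity from exactness of $f_!(-\otimes_\bZ K)$ together with injectivity of $\tcV$). Your flagged ``main obstacle'' about the $\bR$-grading and $\Lambda$-action assembling correctly is a fair point of care that the paper leaves implicit, but it is indeed routine bookkeeping rather than a genuine difficulty.
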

\begin{proof}
This is done by the same argument as in the proof of \cite[Lemma 3.1.3]{KS}.
\begin{comment}
For $\phi\in \Hom(f_!(\tcW\otimes K), \tcV)$ and open subset $V\subset Y$, we have morphisms
\begin{equation}
\tcW(V)\otimes_{\Lambda}f_!(\Lambda_Y\otimes K_V)\rightarrow f_!(\tcW\otimes K_V)\rightarrow f_!(\tcW\otimes K)\rightarrow \tcV.
\end{equation}
The composition of these morphisms give $\tcW(V)\rightarrow f^!_K\tcV(V)$. Hence we get $\alpha(\tcW)\colon \Hom(f_!(\tcW\otimes \tcV), F)\rightarrow \Hom(\tcW, f^!_K\tcV)$. We will prove $\alpha(\tcW)$ is an isomorphism. One can prove the same argument as in the proof of \cite[Lemma 3.1.3]{KS}. 
\end{comment}
\end{proof}

Let us take $K$ by the following.
\begin{lemma}[{\cite[Proposition 3.1.4]{KS}}]\label{Zresol}
The sheaf $\bZ_Y$ admits a finite flat $f$-soft resolution $K$.
\end{lemma}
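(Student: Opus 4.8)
This is \cite[Proposition~3.1.4]{KS}; I recall the strategy. The plan is to take a flabby resolution of $\bZ_Y$ whose terms happen to be flat, and then cut it down to finite length using the hypothesis on $f_!$.

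First I would note that a $\bZ_Y$-module is flat if and only if all of its stalks are torsion-free abelian groups, so flatness is a stalkwise condition. Now take the Godement resolution $0\rightarrow \bZ_Y\rightarrow \mathcal{C}^0\rightarrow \mathcal{C}^1\rightarrow\cdots$ obtained by iterating the sheaf-of-discontinuous-sections functor. Every $\mathcal{C}^j$ is flabby, hence $c$-soft, hence $f$-soft; and since $\bZ_Y$ has all stalks equal to $\bZ$, a short induction shows that each $\mathcal{C}^j$ and each syzygy of this resolution has torsion-free stalks --- the discontinuous-sections functor sends a flat-stalked sheaf to one whose stalks are filtered colimits of products of flat $\bZ$-modules, hence flat, and the relevant quotients remain torsion-free. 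So $\mathcal{C}^\bullet$ is a (generally unbounded) resolution of $\bZ_Y$ by flat $f$-soft sheaves.

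Finally I would invoke that $f_!$ has cohomological dimension $\leq r$ for some $r$, equivalently that $Y$ has $f$-soft dimension $\leq r$; as in \cite{KS}, this forces the $r$-th syzygy $\mathcal{Z}^r:=\im(\mathcal{C}^{r-1}\rightarrow \mathcal{C}^r)\subset \mathcal{C}^r$ of the resolution above to be $f$-soft, and it is automatically flat as a subsheaf of the flat sheaf $\mathcal{C}^r$. Hence $0\rightarrow \bZ_Y\rightarrow \mathcal{C}^0\rightarrow\cdots\rightarrow \mathcal{C}^{r-1}\rightarrow \mathcal{Z}^r\rightarrow 0$ is the required finite flat $f$-soft resolution, and we take $K$ to be this complex. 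The only genuinely delicate point --- and the one place the hypothesis is used --- is that truncating an $f$-soft resolution at level $r$ leaves an $f$-soft sheaf; one checks this fiberwise via $(R^kf_!(-))_x\simeq H^k_c(f^{-1}(x);(-)|_{f^{-1}(x)})$, which bounds the $c$-soft dimension of each fiber by $r$. The remaining ingredients --- flabby $\Rightarrow$ $c$-soft $\Rightarrow$ $f$-soft, and the torsion-freeness bookkeeping for the Godement terms --- are routine.
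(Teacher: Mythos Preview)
Your sketch is correct and recovers the standard argument from \cite[Proposition~3.1.4]{KS}. The paper itself gives no proof of this lemma --- it simply states the result with the citation --- so there is nothing to compare against beyond confirming that your outline matches the cited source, which it does.
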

Let $K^+(\Mod^0(\Lambda_X))$ be the homotopy category of injective complexes bounded below of objects in $\Mod^0(\Lambda_X)$. Then we have an equivalence $D^+(\Mod^0(\Lambda_X))\cong K^+(\Mod^0(\Lambda_X))$. We set the composition
\begin{equation}
f^!\colon D^b(\Mod^0(\Lambda_X))\hookrightarrow K^+(\Mod^0(\Lambda_X))\xrightarrow{f^!_K}K^+(\Mod^0(\Lambda_Y))\xrightarrow{\cong} D^+(\Mod^0(\Lambda_Y)).
\end{equation}

\begin{lemma}
The functor $f^!$ is the right adjoint of $\bR f_!$. We moreover have
\begin{equation}
\RcHom(\bR f_!\tcV, \tcW)\cong \bR f_*\RcHom(\tcV, f^!\tcW).
\end{equation}
\end{lemma}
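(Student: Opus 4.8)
The plan is to reduce the statement to the analogous adjunction and $\RcHom$-formula for $\bR f_!$ on $\Mod^0(\Lambda_X)$, and then lift to $\ModI(\Lambda_\dX)$ using the compatibility results already proved (Lemmas~\ref{rightderived}, \ref{derivedpullback}, \ref{leftderived}, \ref{derivedadjunction}, \ref{derivedadjunction2}). First I would treat $f^!$ on $\Mod^0(\Lambda_X)$: the preceding lemma gives, for an $f$-soft flat resolution $K$ of $\bZ_Y$ (Lemma~\ref{Zresol}), an adjunction isomorphism $\Hom_{\Mod^0(\Lambda_X)}(f_!(\tcW\otimes_\bZ K),\tcV)\xrightarrow{\simeq}\Hom_{\Mod^0(\Lambda_Y)}(\tcW,f^!_K\tcV)$ for $\tcV$ injective, together with the fact that $f^!_K$ preserves injectives. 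Since $f_!(\tcW\otimes_\bZ K)$ computes $\bR f_!\tcW$ (as $K$ is $f$-soft and flat, so $\tcW\otimes_\bZ K$ is $f$-soft), passing to derived categories via the equivalence $D^+(\Mod^0(\Lambda_X))\cong K^+(\Mod^0(\Lambda_X))$ yields $\Hom_{D^b(\Mod^0(\Lambda_Y))}(\bR f_!\tcW,\tcV)\cong \Hom_{D^+(\Mod^0(\Lambda_X))}(\tcW,f^!\tcV)$, i.e. $f^!$ is right adjoint to $\bR f_!$ on $\Mod^0(\Lambda)$-sheaves. The sheaf-level $\RcHom$-formula $\RcHom(\bR f_!\tcV,\tcW)\cong \bR f_*\RcHom(\tcV,f^!\tcW)$ then follows by the usual argument: both sides represent the same functor $U\mapsto \RHom(\bR f_!(\tcV|_{f^{-1}U}), \tcW|_U)$ on open subsets of $X$, using the local nature of $\RcHom$ and the projection/base-change compatibilities of $\bR f_!$.

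Next I would transport this to $\ModI$. The functor $f^!$ on $D^b(\ModI(\Lambda_\dX))$ is defined (implicitly, to be spelled out) by choosing local lifts $\tcW_i$ of $\cW$, forming $[f^!\tcW_i]$, and gluing — the gluing works because $f^!$ on $\Mod^0(\Lambda)$ commutes with $[\cdot]$ on injective complexes (since $f^!_K$ of a product of skyscrapers is again an injective of the form covered by Lemma~\ref{skyinj}, so Corollary~\ref{liftresol}-type reasoning applies), and the comparison isomorphisms between lifts push through $f^!_K$ just as in Lemma~\ref{well-defpush}. Granting a lemma of the form $[f^!\tcW]\simeq f^![\tcW]$ for $\tcW\in D^b(\Mod^0(\Lambda_X))$ — the $f^!$-analogue of Lemma~\ref{rightderived} — the adjunction $\Hom_{D^b(\ModI(\Lambda_\dY))}(\bR f_!\cV,\cW)\cong \Hom_{D^b(\ModI(\Lambda_\dX))}(\cV,f^!\cW)$ follows by the same $\otimes\bk$-and-global-sections device used to deduce Lemma~\ref{tensorhomadjunction2} from Lemma~\ref{pushpulladjunction}: apply the sheaf-level identity $\RcHom(\bR f_!\cV,\cW)\cong \bR f_*\RcHom(\cV,f^!\cW)$, tensor with $\bk$, take global sections, and use that $\bR f_*$ commutes with these operations for tame $f$.

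For the $\RcHom$-formula on $\ModI$ itself, I would cover $X$ by opens $\{U_i\}$ over which $\cV$ lifts to $\tcV_i$ and $\cW$ lifts to $\tcW_i$ (taking finite covers of each $f^{-1}(U_i)$ as in the construction of $f_*$), apply the already-established sheaf-level formula for $\Mod^0(\Lambda)$-sheaves to $\tcV_i,\tcW_i$, then apply $[\cdot]$ and use Lemmas~\ref{rightderived} and the $f^!$-compatibility lemma to rewrite $[\bR f_*\RcHom(\tcV_i,f^!\tcW_i)]\simeq \bR f_*\RcHom([\tcV_i],f^![\tcW_i])$; finally glue the local isomorphisms exactly as in Lemma~\ref{internalhom}, checking the cocycle compatibility by the standard "multiply by a large $T^c$" trick from Lemma~\ref{1.10}.

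The main obstacle I expect is the bookkeeping needed to define $f^!$ globally on $D^b(\ModI(\Lambda_\dX))$ and to prove the compatibility $[f^!\tcW]\simeq f^![\tcW]$, because $f^!_K$ is built from an injective resolution rather than being manifestly exact, so one cannot simply invoke that $[\cdot]$ commutes with an exact functor; instead one must check that the particular injective resolutions used (products of skyscrapers, per Lemma~\ref{skyinj} and Corollary~\ref{liftresol}) are preserved well enough by $f^!_K$, and that the comparison of two lifts of $\cW$ — which only agree after multiplication by some $T^c$ — still yields a canonical isomorphism of the $f^!$'s after passing to $D^b(\ModI)$. Once that is in place, everything else is a routine transport of the $\Mod^0(\Lambda)$-level statements through the by-now-standard localize/lift/glue machinery.
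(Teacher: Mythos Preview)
You have misread the scope of the lemma. The tilde notation $\tcV,\tcW$ and the placement of this statement in the paper --- immediately after the construction of $f^!$ on $D^b(\Mod^0(\Lambda_X))$ via $f^!_K$, and \emph{before} the paragraph ``Let us now discuss the upper shriek in $D^b(\ModI(\Lambda_\dX))$'' --- show that this lemma concerns only the $\bR$-graded $\Lambda$-module setting $\Mod^0(\Lambda_X)$, not $\ModI(\Lambda_\dX)$. The $\ModI$-level shriek adjunction and $\RcHom$-formula are separate later results (Propositions~\ref{shriekadjunction2} and~\ref{shriekadjunction}).

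Your first paragraph already proves the actual lemma, and by essentially the same route as the paper: apply the preceding lemma's isomorphism $\Hom(f_!(\tcW\otimes_\bZ K),\tcV)\cong \Hom(\tcW,f^!_K\tcV)$ for injective $\tcV$, pass to homotopy categories of injectives via $K^+\cong D^+$, and use $f_!(\tcW\otimes_\bZ K)\simeq \bR f_!\tcW$ since $K$ is flat and $f$-soft. For the $\RcHom$-formula the paper simply refers to the argument of \cite[Proposition~3.1.10]{KS}, which is the standard local-to-global representability argument you sketch. So that part is correct and matches the paper.

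The remaining two-thirds of your proposal --- defining $f^!$ on $\ModI$, proving $[f^!\tcW]\simeq f^![\tcW]$, gluing the local $\RcHom$-identities --- is not part of this lemma. Those steps are carried out separately afterwards (the compatibility $[f^!\tcV]\cong f^![\tcV]$ is Proposition~\ref{localshriek}, and the $\ModI$ adjunctions are Propositions~\ref{shriekadjunction2} and~\ref{shriekadjunction}). Your concern about the ``main obstacle'' of bookkeeping for $f^!$ on $\ModI$ is therefore real but belongs to those later statements, not here.
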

\begin{proof}
For $\tcW\in K^+(\Mod^0(\Lambda_X))\cong D^+(\Mod^0(\Lambda_X))$, we have
\begin{equation}
\Hom_{K^+(\Mod^0(\Lambda_X))}(f_!(\tcW\otimes_{\bZ_Y}K), \tcV)\cong \Hom_{K^+(\Mod^0(\Lambda_X))}(\tcW, f^!\tcV)
\end{equation}
by the above lemma. Since $f_!\tcW\otimes K\simeq \bR f_!\tcW$, we complete the proof of the first assertion. The second assertion can also be proved by the argument of the proof of \cite[Proposition 3.1.10]{KS}.
\end{proof}

Let us now discuss the upper shriek in $D^b(\ModI(\Lambda_\dX))$.
Let $\cV$ be an object of $\ModI(\Lambda_\dX)$ and $K$ be a $\bZ_Y$-module.

Take a locally finite covering $\{U_i\}$ of $X$ with lifts $\{\tcV_i\}$. Hence we get $f_K^!\tcV_i$.
\begin{lemma}
The data $\{[f_K^!\tcV_i]\}$ gives an object of $\ModI(\Lambda_\dX)$. We denote the resulting object by $f^!_K\cV$.
\end{lemma}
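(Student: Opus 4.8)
The plan is to mimic the well-definedness proofs already used for the other gluing-type constructions in this section, notably Lemma~\ref{well-defpush} and Lemma~\ref{internalhom}. First I would show that on each overlap $U_{ij}:=U_i\cap U_j$ one has a canonical isomorphism $[f_K^!\tcV_i|_{U_{ij}}]\cong [f_K^!\tcV_j|_{U_{ij}}]$ in $\ModI(\Lambda_{(\overline{U_{ij}}, \overline{U_{ij}}\cap D_X)})$. Indeed, after passing to a finer covering we may assume the isomorphism $[\tcV_i|_{U_{ij}}]\cong[\tcV_j|_{U_{ij}}]$ in $\ModIp$ is represented by genuine morphisms $\tg\colon \tcV_i|_{U_{ij}}\to\tcV_j|_{U_{ij}}\la a\ra$ and $\tilh\colon \tcV_j|_{U_{ij}}\to\tcV_i|_{U_{ij}}\la b\ra$ with $\tg\la b\ra\circ\tilh-T^{a+b}\id$ and $\tilh\la a\ra\circ\tg-T^{a+b}\id$ killed by some $T^c$ (Lemma~\ref{1.10}). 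Applying the functoriality of $f_K^!$ in the $\Mod^0(\Lambda)$-variable (which commutes with grading shifts, since $f_K^!$ is defined graded-piece-wise) yields morphisms between $f_K^!\tcV_i|_{U_{ij}}$ and $f_K^!\tcV_j|_{U_{ij}}$ whose compositions are again $T^{a+b}\id$ up to a $T^c$-torsion term; hence they descend to mutually inverse isomorphisms in $\ModIp$, and then in $\ModI$ over the overlap.

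Next I would verify the cocycle condition for these isomorphisms on triple overlaps $U_{ijk}$. This is again a $T$-multiplication argument: the two composite isomorphisms $[f_K^!\tcV_i]\to[f_K^!\tcV_k]$ built via $j$ and directly agree after multiplying by a sufficiently large power $T^c$, because the corresponding statement holds for the transition isomorphisms of $\cV$ itself (that is part of the datum making $\{\tcV_i\}$ a descent datum for $\cV$), and $f_K^!$ is a functor. Since $T^c$ acts as the identity in $\ModIp$, the cocycle condition holds on the nose there. With the isomorphisms and their cocycle compatibility in hand, the standard gluing/descent machinery for the stack $\ModI_{(\oX,D_X)}$ (used already in Proposition~\ref{ModIabel} and Lemma~\ref{internalhom}) produces a global object, which we call $f^!_K\cV$.

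Finally I would note independence of the auxiliary choices. Independence of the covering $\{U_i\}$ and of the lifts $\{\tcV_i\}$ follows by comparing two such choices via a common refinement, exactly as in Lemma~\ref{welldefpush}: the resulting global objects are canonically isomorphic because on a common refinement the two families of local lifts are related by $\ModIp$-isomorphisms compatible with the transition data. Independence of the chosen flat $f$-soft resolution $K$ of $\bZ_Y$ is inherited from the corresponding statement at the level of $\Mod^0(\Lambda)$, since any two such $K$, $K'$ admit a comparison quasi-isomorphism inducing compatible isomorphisms $f_K^!\tcV_i\simeq f_{K'}^!\tcV_i$ that one then glues as above; alternatively one works at the derived level where $f^!$ is already known to be independent of $K$.

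The main obstacle is bookkeeping rather than conceptual: one must keep careful track of the grading shifts $\la a\ra$, $\la b\ra$ introduced by the local lifts and ensure that all the $T^c$-multiplications can be carried out simultaneously over a single (locally finite) covering so that the descent data are genuinely compatible; the finiteness of overlaps in a locally finite covering, together with Lemma~\ref{1.10} and the fact that $f_K^!$ commutes with grading shift, is what makes this possible. No new ideas beyond those already deployed in Lemmas~\ref{internalhom}, \ref{well-defpush}, and~\ref{welldefpush} are needed.
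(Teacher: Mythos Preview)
Your proposal is correct and follows essentially the same approach as the paper: lift the transition isomorphisms on overlaps to genuine $\Mod^0(\Lambda)$-morphisms, apply the functor $f_K^!$ (which preserves grading shifts and $T^c$-torsion relations), and observe that the resulting morphisms become mutually inverse in $\ModIp$; this is exactly the argument the paper gives, only the paper omits the explicit verification of the cocycle condition and the independence-of-choices discussion that you spell out.
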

\begin{proof}
Over $U_{ij}:=U_i\cap U_j$, we have $f_{ij}\colon [\tcV_i]|_{U_{ij}}\xrightarrow{\cong} [\tcV_j]|_{U_{ij}}$. We can lift this map to $\tf_{ij}\colon \tcV_i|_{U_{ij}}\rightarrow \tcV_j\la a_{ij}\ra|_{U_{ij}}$ (by taking a refined covering if necessary). We also have a lift of the inverse map $\tf_{ji}$. Then $\tf_{ij}\circ \tf_{ji}-T^{a_{ij}+a_{ji}}\id$ is killed by some $T^a$. The map $\tf_{ij}$ induces a map $f_K^!\tf_{ij}\colon f^!_K\tcV_i|_{U_{ij}}\rightarrow f^!_K\tcV_j|_{U_{ij}}$. Then we also have $f_K^!\tf_{ji}$. Then $f_K^!\tf_{ij}\circ f_K^!\tf_{ji}-T^{a_{ij}+{a_{ji}}}\id=f_K^!(\tf_{ij}\tf_{ji}-T^{a_{ij}+a_{ji}}\id)$ is also killed by $T^a$. This completes the proof.
\end{proof}

Since $D^b(\ModI(\Lambda_X))$ has injective resolutions, we have an equivalence $K^{+}(\Lambda_\dX)\cong D^+(\ModI(\Lambda_\dX))$ where the left hand side is the homotopy category of complexes bounded below of injective objects. We denote the composition $D^b(\ModI(\Lambda_X))\hookrightarrow K^{+}(\Lambda_X)\xrightarrow{f^!_K}D^+(\ModI(\Lambda_X))$ by the notation $f^!$.

\begin{proposition}
The functor $f^!$ is an exact functor.
\end{proposition}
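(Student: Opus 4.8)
The plan is to deduce exactness (i.e. being triangulated) of $f^!$ by writing it as a composite of three triangulated functors, of which only one needs a short argument. First I would spell out the definition again: $f^!$ is the composite of the injective-resolution equivalence $D^b(\ModI(\Lambda_X))\xrightarrow{\ \sim\ } K^{+}(\Lambda_X)$ (available because $\ModI(\Lambda_\dX)$ has enough injectives, Proposition~\ref{injectives}, where $K^{+}(\Lambda_X)$ denotes the homotopy category of bounded-below complexes of injectives), followed by termwise application of the underived functor $\cV\mapsto f^!_K\cV$ — which again lands in complexes of injectives, since $f^!_K$ of an injective is injective — followed by the equivalence $K^{+}(\Lambda_X)\xrightarrow{\ \sim\ } D^{+}(\ModI(\Lambda_X))$. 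The first and last functors are triangulated by general homological algebra, so the entire task reduces to checking that termwise $f^!_K$ induces a triangulated functor on $K^{+}$.

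For that I would first observe that $\cV\mapsto f^!_K\cV$ is an \emph{additive} endofunctor of $\ModI(\Lambda_X)$. On $\Mod^0(\Lambda_X)$ this is immediate from the defining formula $(\Gr^a f^!_K\tcV)(U)=\Gamma(U,\cHom^a_{\Lambda_X}(f_!(\Lambda_Y\otimes_{\bZ_Y}K_U),\tcV))$, which is visibly additive in $\tcV$; the extension to $\ModI(\Lambda_X)$ is obtained by applying this $\Mod^0$-construction to local lifts $\tcV_i$ and gluing, so additivity is inherited. Then I would invoke the standard fact that any additive functor $F$ between additive categories, applied degreewise to complexes, commutes with the shift $[1]$ and sends a mapping cone $\mathrm{Cone}(\phi^\bullet)$ to $\mathrm{Cone}(F\phi^\bullet)$ — because degreewise $\mathrm{Cone}(\phi)^n=A^{n+1}\oplus B^n$ and $F$ preserves finite direct sums and the structure maps — hence induces a triangulated functor on the homotopy categories $K^{+}$. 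Applying this to $F=f^!_K$ and composing with the two equivalences above shows $f^!$ is triangulated, i.e. exact.

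The only point requiring genuine care — the ``main obstacle'' — is the verification that the gluing recipe really defines $f^!_K$ as a functor on all of $\ModI(\Lambda_X)$: well-definedness on morphisms and independence of the chosen lifts and locally finite covers. This is entirely parallel to the well-definedness lemmas already proved for $f_*$, $f_!$, $\cHom$ and the derived functors in Sections 3 and 4 (again one multiplies by suitable $T^c$ using Lemma~\ref{1.10} to make local data agree), but it should be stated. As a cross-check, and an alternative route that avoids the additivity discussion, one can note that $f^!$ is right adjoint to the triangulated functor $\bR f_!$ and that a right adjoint of a triangulated functor is automatically triangulated; however, since the $(\bR f_!, f^!)$-adjunction in the $\ModI$-setting is only established after this proposition, I would present the direct argument above as the primary proof.
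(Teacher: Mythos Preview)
Your argument is correct and is in fact more explicit than the paper's proof, which consists of the single line ``The exactness easily follows from the exactness of $f^!$ on $D^b(\Mod^0(\Lambda_X))$.'' The paper is implicitly invoking the local nature of the construction (that $f^!$ on $\ModI$ is built by gluing $f^!_K$ applied to local $\Mod^0$-lifts, together with the exactness of $[\cdot]$), whereas you give the standard direct argument that any functor defined by ``take an injective resolution, apply an additive functor termwise, pass to the derived category'' is automatically triangulated. Your route has the advantage of not needing to unpack how triangles interact with the gluing; the paper's route has the advantage of pointing to the already-proved $\Mod^0$ case as the substantive input.

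One small inaccuracy to clean up: you assert that $f^!_K$ of an injective object of $\ModI(\Lambda_X)$ is again injective, but the paper only establishes this for $\Mod^0(\Lambda_X)$, and the underived adjunction $f_!((-)\otimes K)\dashv f^!_K$ in the $\ModI$ setting is not stated prior to this point. Fortunately your proof does not need this: replace the third functor by the localization $K^{+}(\ModI(\Lambda_Y))\to D^{+}(\ModI(\Lambda_Y))$ from the homotopy category of \emph{all} bounded-below complexes, which is triangulated, and your decomposition goes through verbatim. With that adjustment the argument is complete, and your remark that the adjunction-based alternative (right adjoints of triangulated functors are triangulated) is logically circular here is well taken.
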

\begin{proof}
The exactness easily follows from the exactness of $f^!$ on $D^b(\Mod^0(\Lambda_X))$.
\end{proof}

\begin{proposition}\label{localshriek}
For $\tcV\in D^b(\Mod^0(\Lambda_X))$, we have $f^![\tcV]\cong [f^!\tcV]$.
\end{proposition}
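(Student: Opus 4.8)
The plan is to reduce the statement to an object-level compatibility of $f^!_K$ with $[\cdot]$, exploiting that a well-chosen injective resolution in $\Mod^0(\Lambda_X)$ descends to an injective resolution in $\ModI(\Lambda_\dX)$.

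First I would represent $\tcV\in D^b(\Mod^0(\Lambda_X))$ by a bounded-below complex and, using Corollary~\ref{liftresol} together with the exactness of $[\cdot]$ (Lemma~\ref{cdotexact}) and Lemma~\ref{skyinj}, produce an injective resolution $\tcI^\bullet$ of $\tcV$ whose image $[\tcI^\bullet]$ is simultaneously an injective resolution of $[\tcV]$ (one builds $\tcI^\bullet$ out of products of skyscraper sheaves, which stay injective after $[\cdot]$). Fixing once and for all the finite flat $f$-soft resolution $K$ of $\bZ_Y$ used on both sides, the definitions of the two functors denoted $f^!$ give $f^!\tcV\cong f^!_K\tcI^\bullet$ in $D^+(\Mod^0(\Lambda_Y))$ and $f^![\tcV]\cong f^!_K[\tcI^\bullet]$ in $D^+(\ModI(\Lambda_Y))$. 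Since $f^!_K\tcI^n$ is injective in $\Mod^0(\Lambda_Y)$ for every $n$ (the lemma asserting that $f^!_K$ sends injectives to injectives), hence $[f^!_K\tcI^n]$ is injective in $\ModI(\Lambda_Y)$, and since $[\cdot]$ is exact, the complex $[f^!_K\tcI^\bullet]$ computes the image of $f^!\tcV$ under $[\cdot]\colon D^+(\Mod^0(\Lambda_Y))\to D^+(\ModI(\Lambda_Y))$; that is, $[f^!\tcV]\cong[f^!_K\tcI^\bullet]$.

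It then remains to identify $f^!_K[\tcI^\bullet]$ with $[f^!_K\tcI^\bullet]$ as complexes in $\ModI(\Lambda_Y)$. For a single injective $\tcI^n$ this is essentially built in: the object $f^!_K[\tcI^n]$ is assembled from local lifts of $[\tcI^n]$, and a global lift is a legitimate local lift over the trivial cover $\{X\}$, so the recipe returns $[f^!_K\tcI^n]$, and the independence-of-choices statement inside the construction of $f^!_K$ on $\ModI(\Lambda_\dY)$ makes this identification canonical. Both versions of $f^!_K$ are functorial — they are computed by the internal hom against the fixed complex $f_!(\Lambda_Y\otimes_{\bZ_Y}K_{(-)})$, which does not see the difference between a local and a global lift — so the differentials of $f^!_K[\tcI^\bullet]$ are precisely $[f^!_K d^\bullet_{\tcI}]$, the differentials of $[f^!_K\tcI^\bullet]$. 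Combining, $f^![\tcV]\cong [f^!_K\tcI^\bullet]\cong [f^!\tcV]$.

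The main obstacle is exactly this middle identification: one must check that the local-lift recipe defining $f^!_K$ on $\ModI$, when fed an object carrying a global lift, literally reproduces $[f^!_K(-)]$ of that lift, and that this is natural enough to assemble over the complex $\tcI^\bullet$ (including compatibility with the homotopies witnessing that the resolution is well defined up to quasi-isomorphism, so that the final isomorphism does not depend on the choice of $\tcI^\bullet$). Everything else — that the two $f^!$'s use the same $K$, that $[\cdot]$ is exact and preserves the relevant injectives, and that $D^+$ on both sides is computed by injective complexes — is routine bookkeeping, and the naturality check itself follows formally from the observation that $f^!_K$ on both categories is given by the same internal-hom formula against the fixed complex $f_!(\Lambda_Y\otimes_{\bZ_Y}K_{(-)})$.
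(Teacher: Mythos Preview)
Your approach is essentially the same as the paper's: replace $\tcV$ by an injective resolution built from products of skyscraper sheaves so that $[\tcI^\bullet]$ is simultaneously an injective resolution of $[\tcV]$, then observe that the local-lift definition of $f^!_K$ on $\ModI$ agrees with $[f^!_K(-)]$ when a global lift is available. The paper compresses all of this into the single line $[f^!\tcV]\cong[f^!_K\tcV]\cong f^!_K[\tcV]\cong f^![\tcV]$, but the content is identical.

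One small caution: your assertion that ``$f^!_K\tcI^n$ is injective in $\Mod^0(\Lambda_Y)$, hence $[f^!_K\tcI^n]$ is injective in $\ModI(\Lambda_Y)$'' is not justified by the paper's lemmas---Lemma~\ref{skyinj} only shows that products of skyscraper sheaves remain injective under $[\cdot]$, not arbitrary injectives. Fortunately you do not actually use this: the identification $[f^!\tcV]\cong[f^!_K\tcI^\bullet]$ follows from exactness of $[\cdot]$ alone, and $f^![\tcV]\cong f^!_K[\tcI^\bullet]$ uses only that $[\tcI^\bullet]$ is injective. So the argument stands once that clause is dropped.
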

\begin{proof}
Replace $\tcV$ be an injective complex given in Lemma \ref{skyinj}. Then $[\tcV]$ is also an injective complex. By the definition of $f^!$ for $\Mod^0(\Lambda_X)$ and $\ModI(\Lambda_\dX)$, we have $[f^!\tcV]\cong [f^!_K\tcV]\cong f^!_K[\tcV]\cong f^![\tcV]$. This completes the proof.
\end{proof}

\begin{remark}
By carefully seeing homotopy coherence, one may also construct $f^!$ by taking Proposition \ref{localshriek} as the local definition.
\end{remark}

\subsection*{Shriek adjunction}
\begin{proposition}\label{shriekadjunction2}
There exists a functorial isomorphism:
\begin{equation}
\Hom_{D^b(\ModI(\Lambda_\dX))}(\bR f_!\cW,\cV)\cong \Hom_{D^b(\ModI(\Lambda_\dY))}(\cW, f^!\cV)
\end{equation}
for $\cW\in D^b(\ModI(\Lambda_{\dX})$ and $\cV\in D^b(\ModI(\Lambda_\dY))$.
\end{proposition}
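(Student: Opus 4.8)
The plan is to reduce the global statement to the local, already-established adjunctions by a sheafification-and-gluing argument of exactly the same flavor as the proofs of Lemma~\ref{pushpulladjunction} and Lemma~\ref{tensorhomadjunction2}. Concretely, I would first upgrade the underived push--pull adjunction of Lemma~\ref{tensorhomadjunction2} and the sheaf-theoretic local shriek adjunction (the lemma identifying $\Hom_{\Mod^0(\Lambda_X)}(f_!(\tcW\otimes_\bZ K),\tcV)\cong\Hom_{\Mod^0(\Lambda_Y)}(\tcW, f^!_K\tcV)$ for injective $\tcV$) to a statement about the internal-hom sheaves. That is, using Lemma~\ref{globalsection} one shows $f^!$ is right adjoint to $\bR f_!$ at the level of $\cHom$-sheaves over $\Open_\dY$: there is a functorial isomorphism $\bR f_*\RcHom(\cW, f^!\cV)\simeq \RcHom(\bR f_!\cW,\cV)$ in $D^b(\ModI(\Lambda_\dY))$, locally reducing to the corresponding statement in $\Mod^0$ proved above via $[\cdot]$ using Proposition~\ref{localshriek}, Lemma~\ref{rightderived}, and Lemma~\ref{derivedpullback}.

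The key steps in order: (1) replace $\cV$ by an injective resolution $\cI^\bullet$ in $D^+(\ModI(\Lambda_\dY))$, which exists by Proposition~\ref{injectives}, so that $f^!\cV$ is computed by $f^!_K\cI^\bullet$ and we may work with underived functors; (2) replace $\cW$ by a bounded complex and note $\bR f_!\cW$ is computed after tensoring with the finite flat $f$-soft resolution $K$ of $\bZ_Y$ from Lemma~\ref{Zresol}, so $\Hom_{D^b}(\bR f_!\cW,\cV)$ becomes $\Hom_{D^b}(f_!(\cW\otimes K),\cI^\bullet)$; (3) take a locally finite covering $\{U_i\}$ of $Y$ and, over each $f^{-1}(U_i)$, finite refinements on which $\cW$ and $\cI^\bullet$ admit lifts $\tcW_j$, $\tilde{\cI}^\bullet_j$, so that the local sheaf-level identification $\cHom_{\Lambda_X}(f_!(\tcW_j\otimes K),\tilde\cI^\bullet_j)\cong \cHom_{\Lambda_Y}(\tcW_j, f^!_K\tilde\cI^\bullet_j)$ holds; (4) apply $[\cdot]$ (which is exact by Lemma~\ref{cdotexact} and commutes with the relevant operations by Lemmas~\ref{rightderived}, \ref{derivedpullback}, and Proposition~\ref{localshriek}), then $\otimes\bk$ and sheafify over $\Open_\dY$ exactly as in Lemma~\ref{pushpulladjunction}; (5) glue the local isomorphisms of $\cHom$-sheaves over the $U_i$ (the compatibility on overlaps being handled by the standard $T^a$-clearing trick, cf.\ Lemma~\ref{internalhom} and Proposition~\ref{ModIabel}); and (6) take global sections with $\otimes\bk$ as in Lemma~\ref{tensorhomadjunction2} to pass from the $\cHom$-sheaf isomorphism to the claimed $\Hom$-set isomorphism. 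Functoriality in $\cW$ and $\cV$ is inherited from the functoriality of each step.

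The main obstacle I expect is step (1)--(2), i.e.\ making sure the derived functors behave correctly under the resolutions: one needs that $f^!\cV$ is genuinely represented by $f^!_K$ applied to a \emph{$[\cdot]$-compatible} injective resolution (so that Proposition~\ref{localshriek} applies stratum-by-stratum), and that $\bR f_!$ of a bounded complex can be computed by the single finite resolution $K$ simultaneously with the local lifts $\tcW_j$ — the finiteness and $f$-softness of $K$ from Lemma~\ref{Zresol} and the tameness assumption (ensuring local finiteness of $\{f(U_i)\}$, used already in Lemma~\ref{well-defpush}) are what make this go through. The remaining gluing is routine, being a verbatim adaptation of the overlap bookkeeping in Lemma~\ref{internalhom} and Lemma~\ref{realizationofmorphism}, so I would not belabor it.
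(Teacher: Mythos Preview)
Your proposal is correct and follows essentially the same approach as the paper: replace $\cV$ by an injective resolution built from the skyscraper-type injectives of Proposition~\ref{injectives}, compute $\bR f_!\cW$ via the finite flat $f$-soft resolution $K$ of Lemma~\ref{Zresol}, reduce to the local $\Mod^0$-level adjunction through $[\cdot]$ and Proposition~\ref{localshriek}, and then pass to global $\Hom$'s via Lemma~\ref{globalsection2}. The paper organizes the endgame slightly differently---it first writes down the isomorphism $\Hom_{C}(f_!(\cW\otimes K),\cI)\cong\Hom_{D^b}(\cW,f^!\cI)$ and then checks locally that $\Hom(\cW,f^!\cV)\to\Hom(\cW,f^!\cI)$ is an isomorphism, rather than establishing the $\RcHom$-sheaf identity first---but the ingredients and the substantive verification (that $f^!$ of the chosen injective resolution computes the derived $f^!$) are the same, and your ``main obstacle'' paragraph correctly isolates exactly this point.
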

\begin{proof}
First, note that $\bR f_!\cW\simeq f_!(\cW\otimes K)$ which is deduced from the local consideration. Let $\cI$ be an injective resolution of $\cV$ and $C(\ModI(\Lambda_\dX))$ be the category of bounded complexes of $\ModI(\Lambda_\dX)$. Then the left hand side of the desired equality is 
\begin{equation}
\Hom_{C(\ModI(\Lambda_\dY)}(f_!(\cW\otimes K), \cI)\cong \Hom_{D^b(\ModI(\Lambda_\dX))}(\cW, f^!\cI).
\end{equation}
We also have a morphism
\begin{equation}
\Hom_{D^b(\ModI(\Lambda_\dX))}(\cW, f^!\cV)\rightarrow \Hom_{D^b(\ModI(\Lambda_\dX))}(\cW, f^!\cI)
\end{equation}
coming from the morphism $\cV\rightarrow \cI$.
We would like to prove this is an isomorphism. Let us see locally on $Y$. From the construction in Lemma~\ref{injectives}, the complex $\cI$ is coming from an injective object $\tcI$ locally. Hence we have an isomorphism
\begin{equation}
\cHom([\tcW], f^![\tcI])\cong [\cHom(\tcW, f^!\tcI)]\cong [\RcHom(\tcW, f^!\tcI)]\cong \RcHom(\cW, f^!\cV)
\end{equation}
Here we used the fact that $[\cdot]$ is exact and $f^!\tcI$ is injective. Then Lemma~\ref{globalsection2} completes the proof.
\end{proof}

\begin{proposition}\label{shriekadjunction}
There exists a functorial isomorphism:
\begin{equation}
\RcHom(\bR f_!\cW, \cV)\simeq \bR f_*\RcHom(\cW, f^!\cV).
\end{equation}
for $\cW\in D^b(\ModI(\Lambda_{\dX}))$ and $\cV\in D^b(\ModI(\Lambda_\dY))$.
\end{proposition}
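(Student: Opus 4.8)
The plan is to reduce the statement to the already-established adjunction isomorphisms by replacing objects with suitable resolutions, exactly as in the proof of Proposition \ref{shriekadjunction2} but carried out at the level of the internal hom $\RcHom$ rather than the plain $\Hom$. First I would take an injective resolution $\cI$ of $\cV$ in $D^b(\ModI(\Lambda_\dY))$. As observed in the proof of Proposition \ref{shriekadjunction2}, pushing forward an injective object along $f$ again yields an injective object (Lemma \ref{tensorhomadjunction2} together with exactness of $f^{-1}$), and by the very construction of $f^!_K$ on $\Mod^0(\Lambda_X)$ and then on $\ModI(\Lambda_\dX)$, the object $f^!\cI$ is again injective. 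Likewise, locally writing $\bR f_!\cW \simeq f_!(\cW\otimes K)$ where $K$ is Kashiwara--Schapira's finite flat $f$-soft resolution of $\bZ_Y$ (Lemma \ref{Zresol}), we may replace $\bR f_!\cW$ by this underived expression.

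Next I would establish the isomorphism locally on $Y$. By the construction in Lemma \ref{injectives}, over a sufficiently fine locally finite covering $\{U_i\}$ of $Y$ the complex $\cI$ is the image $[\tcI]$ of an injective complex $\tcI$ in $\Mod^0(\Lambda_X)$, and similarly $\cW$ has local lifts $\tcW$. Over each $U_i$ the desired identity becomes the statement that $[\RcHom(\bR f_!\tcW, \tcI)] \simeq [\bR f_*\RcHom(\tcW, f^!\tcI)]$, which in turn follows by applying $[\cdot]$ — which commutes with $\bR f_*$, $\bR f_!$, $\RcHom$ and $f^!$ by Lemmas \ref{rightderived}, \ref{localshriek} and their analogues — to the corresponding isomorphism in $D^b(\Mod^0(\Lambda_X))$, namely $\RcHom(\bR f_!\tcW, \tcI)\cong \bR f_*\RcHom(\tcW, f^!\tcI)$, which was proved in the subsection on upper shriek for $\Mod^0$. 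Concretely, using the tensor–hom adjunction (Corollary after Proposition \ref{tensorhomadjunction}) and the push–pull adjunction for internal hom (Lemma \ref{pushpulladjunction}, in its derived form Lemma \ref{derivedadjunction2}), both sides are quasi-isomorphic to $\bR f_*\RcHom(f_!(\cW\otimes K), \cI)$ restricted to $U_i$, so the identification is canonical.

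Finally I would glue. The local isomorphisms over the $U_i$ are canonical, hence compatible on overlaps $U_{ij}$ — here one invokes the same independence-of-lifts bookkeeping with powers of $T$ that appears throughout Section 3 (e.g.\ the proof of Lemma \ref{internalhom}), together with the fact that $f$ is tame so that $\{f(U_i)\}$ is locally finite in $\oY$ and the gluing limit is a finite one over each open of $\oY$ and therefore commutes with $[\cdot]$ by Lemma \ref{limits}. The main obstacle I anticipate is precisely this last point: ensuring that the local identifications genuinely descend, i.e.\ that the $T^a$-adjustments needed to make local lifts agree on overlaps can be chosen coherently over the (locally finite, but possibly infinite) covering, so that the glued morphism is well defined independent of all choices. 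This is routine in spirit — it mirrors Lemma \ref{welldefpush} and Lemma \ref{realizationofmorphism} — but it is the step requiring the most care.
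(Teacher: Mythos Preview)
Your plan is sound, but it differs from the paper's argument in one structural respect that is worth pointing out. The paper does \emph{not} construct local isomorphisms and then glue them. Instead it first produces a single global morphism
\[
\bR f_*\RcHom(\cW, f^!\cV)\longrightarrow \RcHom(\bR f_!\cW, \bR f_!f^!\cV)\longrightarrow \RcHom(\bR f_!\cW, \cV),
\]
where the first arrow is the usual canonical map (existing for any sheaf theory with these functors) and the second comes from the counit $\bR f_!f^!\cV\to\cV$ supplied by Proposition~\ref{shriekadjunction2}. Only then does the paper pass to a local lift in $\Mod^0(\Lambda)$ to check that this morphism is an isomorphism, invoking the classical identity $\RcHom(\bR f_!\tcW,\tcV)\simeq \bR f_*\RcHom(\tcW,f^!\tcV)$ for ordinary sheaves.

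The advantage of the paper's route is precisely that it eliminates the step you flag as the main obstacle: once a global morphism exists, ``being an isomorphism'' is a local property, so no coherence of $T^a$-adjustments over overlaps is needed. Your approach --- build local isomorphisms from lifts and glue --- is correct in principle and the bookkeeping you outline (via Lemma~\ref{limits}, Lemma~\ref{welldefpush}, and the independence-of-lifts arguments of Section~3) would go through, but it is more laborious for exactly the reason you anticipate. If you rewrite the proof, constructing the global comparison map first and only then localizing will make the argument both shorter and cleaner.
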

\begin{proof}
As usual sheaves, we have a canonical morphism $\bR f_*\RcHom(\cW, f^!\cV)\rightarrow \RcHom(\bR f_!\cW, \bR f_!f^!\cV)$. By the adjunction (Proposition~\ref{shriekadjunction2}), we have a morphism $\RcHom(\bR f_!\cW, \bR f_!f^!\cV)\rightarrow \RcHom(\bR f_!\cW, \cV)$. We would like to see the composition is an isomorphism. By a local consideration, this can be deduced from the usual case.
\end{proof}

\subsection*{Formulas}
\begin{lemma}\label{tensordiagonal}
Let $\delta\colon X\rightarrow X\times X$ be the diagonal embedding. For $\cV, \cW\in \ModI(\Lambda_\dX)$, we have $\delta^{-1}(\cV\boxtimes^\bL \cW)\simeq \cV\dotimes \cW$.
\end{lemma}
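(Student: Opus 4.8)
The plan is to reduce the statement to the analogous (standard) isomorphism for $\bR$-graded $\Lambda_X$-modules, where it follows from the definition of $\boxtimes^\bL$, $\delta^{-1}$, and $\dotimes$ together with the projection formula / Künneth-type identity for ordinary sheaves of modules, and then to glue. First I would work locally on $X$: choose an open covering $\{U_i\}$ of $X$ (in the site $\Open_{(\oX, D_X)}$) together with lifts $\tcV_i, \tcW_i \in \Mod^0(\Lambda_{U_i})$ of $\cV$ and $\cW$. On the product side, $U_i \times U_i$ is an open cover of $X \times X$ in the appropriate site, and $\tcV_i \boxtimes^\bL \tcW_i$ is a lift of $\cV \boxtimes^\bL \cW$ over $U_i \times U_i$ (using Lemma~\ref{leftderived} and the fact that $\boxtimes^\bL$ is built from $\dotimes$ and external pullbacks, which commute with $[\cdot]$ by Lemma~\ref{leftderived} and Lemma~\ref{derivedpullback}).

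Next I would invoke the classical identity at the level of $\bR$-graded $\Lambda$-modules: for $\tcV, \tcW \in \Mod^0(\Lambda_X)$, one has $\delta^{-1}(\tcV \boxtimes^\bL \tcW) \simeq \tcV \dotimes \tcW$ in $D^b(\Mod^0(\Lambda_X))$. This is the graded-module version of the standard formula in \cite{KS}; the $\bR$-grading is carried along formally since $\delta^{-1}$, external tensor, and $\dotimes$ are all defined degree-by-degree, and a flat resolution of $\tcW$ (which exists and maps to a flat resolution of $[\tcW]$ by Corollary~\ref{liftresol2}) computes both sides. Applying $\delta^{-1}$ (which commutes with $[\cdot]$ by Lemma~\ref{derivedpullback}) to $\tcV_i \boxtimes^\bL \tcW_i$ and using Lemma~\ref{leftderived} then yields
\begin{equation}
\delta^{-1}\bigl([\tcV_i] \boxtimes^\bL [\tcW_i]\bigr) \simeq [\delta^{-1}(\tcV_i \boxtimes^\bL \tcW_i)] \simeq [\tcV_i \dotimes \tcW_i] \simeq [\tcV_i] \dotimes [\tcW_i]
\end{equation}
over each $U_i$, i.e. a local isomorphism $\delta^{-1}(\cV \boxtimes^\bL \cW)|_{U_i} \simeq (\cV \dotimes \cW)|_{U_i}$.

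Finally I would check that these local isomorphisms are compatible on overlaps and glue to a global isomorphism in $D^b(\ModI(\Lambda_\dX))$. The compatibility follows because the transition isomorphisms for $\cV$ and $\cW$ over $U_{ij}$, after lifting and multiplying by a suitable $T^c$ to make diagrams commute on the nose (Lemma~\ref{1.10}, exactly as in the proof of Lemma~\ref{internalhom}), are transported functorially through $\boxtimes^\bL$ and $\delta^{-1}$, and the isomorphism above is natural in the lifts. The gluing then proceeds by the same stackification argument used in Proposition~\ref{ModIabel} and Lemma~\ref{internalhom}. The main obstacle I anticipate is bookkeeping the homotopy-coherence of the gluing data in the derived category — one needs the local isomorphisms to agree up to coherent homotopy, not just up to isomorphism, on triple overlaps; in practice this is handled either by passing to injective/flat models where the identifications are strict, or by the general position that derived functors descend along covers, but it is the step that requires the most care.
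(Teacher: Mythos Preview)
Your proposal is correct and is essentially an unpacking of the paper's own proof, which reads in its entirety: ``This is clear from the same formula for usual sheaves.'' You have spelled out exactly what that sentence means --- reduce to lifts in $\Mod^0(\Lambda_{U_i})$, invoke the classical identity $\delta^{-1}(\tcV\boxtimes^\bL\tcW)\simeq \tcV\dotimes\tcW$ there, and transport back via $[\cdot]$ using Lemmas~\ref{derivedpullback} and~\ref{leftderived}.

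Your concern about homotopy-coherence of the gluing is overcautious here: the local isomorphisms you produce are instances of a single \emph{natural} transformation (one can write down a global comparison map $\cV\dotimes\cW\to\delta^{-1}(\cV\boxtimes^\bL\cW)$ directly from the adjunctions, or equivalently from the unit of $\delta^{-1}\dashv\delta_*$ applied to the external tensor), and naturality forces compatibility on overlaps automatically. So rather than gluing local isomorphisms, one constructs the map once and checks it is an isomorphism locally --- this is the standard manoeuvre and eliminates the bookkeeping you flagged.
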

\begin{proof}
This is clear from the same formula for usual sheaves.
\end{proof}

As in Lemma~\ref{globalsection}, we can relate $\bR\cHom$ and usual $\Hom$ as follows. Let $\cV, \cW\in D^b(\ModI(\Lambda_\dX))$, then from $\bR\cHom(\cV,\cW)\in \ModI(\Lambda_\dX)$, we can construct a complex of sheaves $\bR\cHom(\cV, \cW)\otimes \bk$ as in the paragraph before Lemma~\ref{globalsection}.
\begin{lemma}\label{globalsection2}
The space of global sections of $\RcHom(\cV, \cW)\otimes \bk$ is canonically isomorphic to $\Hom_{D^b(\ModI(\Lambda_\dX))}(\cV, \cW)$.
\end{lemma}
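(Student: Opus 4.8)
The idea is to push everything through a single injective resolution so that the already‑established non‑derived statement (Lemma~\ref{globalsection}) applies term by term. First I would invoke Proposition~\ref{injectives} to choose a bounded‑below complex of injectives $\cI^\bullet$ of $\ModI(\Lambda_\dX)$ together with a quasi‑isomorphism $\cW\to\cI^\bullet$. Then $\RcHom(\cV,\cW)$ is computed by the internal‑hom complex $\cHom^\bullet(\cV,\cI^\bullet)$ (the total complex built from Lemma~\ref{internalhom}, $\cV$ being a bounded complex), and, by definition, $\RcHom(\cV,\cW)\otimes\bk$ is the complex of sheaves of $\bk$‑modules obtained by applying the functor $(-)\otimes\bk$ of the paragraph preceding Lemma~\ref{globalsection} to each term of this representative. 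Since $(-)\otimes\bk$ is additive, this complex is well defined up to homotopy equivalence, independently of the chosen resolution.

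Next I would apply Lemma~\ref{globalsection} in each bidegree: it provides isomorphisms of sheaves on $\Open_\dX$
\begin{equation}
\cHom(\cV^p,\cI^q)\otimes\bk\;\xrightarrow{\ \cong\ }\;\Hom_{\ModI_{\Lambda_\dX}}(\cV^p,\cI^q),
\end{equation}
natural in both variables, so that $\RcHom(\cV,\cW)\otimes\bk$ is represented by the complex $\mathcal H^\bullet$ with $\mathcal H^n=\bigoplus_{q-p=n}\Hom_{\ModI_{\Lambda_\dX}}(\cV^p,\cI^q)$ and the usual hom‑complex differential (the sums being finite for each $n$, as $\cV$ is bounded and $\cI^\bullet$ bounded below). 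Taking sections over $X$ is compatible with these finite sums, and by the very definition of the internal hom in the stack $\ModI$ one has $\Gamma\!\big(X;\Hom_{\ModI_{\Lambda_\dX}}(\cV^p,\cI^q)\big)=\Hom_{\ModI(\Lambda_\dX)}(\cV^p,\cI^q)$. Hence $\Gamma(X;\mathcal H^\bullet)$ is precisely the hom‑complex $\Hom^\bullet_{\ModI(\Lambda_\dX)}(\cV,\cI^\bullet)$, and because $\cI^\bullet$ is a bounded‑below complex of injectives quasi‑isomorphic to $\cW$ its $0$‑th cohomology is $H^0\big(\Hom^\bullet_{\ModI(\Lambda_\dX)}(\cV,\cI^\bullet)\big)=\Hom_{D^b(\ModI(\Lambda_\dX))}(\cV,\cW)$. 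Naturality of the isomorphism of Lemma~\ref{globalsection}, together with the homotopy‑uniqueness of injective resolutions, makes this identification canonical.

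The only genuinely delicate point — and the one I would stress in the write‑up — is that $(-)\otimes\bk$ is neither exact nor compatible with taking cohomology, so the statement must be read as applying $\otimes\bk$ to a fixed injective representative of $\RcHom(\cV,\cW)$, exactly as $\cV\otimes\bk$ was defined, rather than to the cohomology sheaves of $\RcHom(\cV,\cW)$; once this convention is fixed there is no derived tensor to control and the argument above goes through. If instead one wishes to replace plain sections by derived global sections $\RGamma(X;-)$, the extra input needed is the $\Gamma$‑acyclicity of the sheaves $\Hom_{\ModI_{\Lambda_\dX}}(\cV^p,\cI^q)$, which I would read off from the explicit shape of the injectives in Lemma~\ref{skyinj} (locally finite direct sums of pushforwards of products of skyscraper sheaves, whose internal homs are again flabby); checking that $\otimes\bk$ does not disturb flabbiness of these models is the main technical obstacle in that variant.
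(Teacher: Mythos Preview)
Your approach is essentially the paper's: take an injective resolution of $\cW$, compute $\RcHom$ as the hom-complex, apply $(-)\otimes\bk$, invoke Lemma~\ref{globalsection} termwise, and identify the $H^0$ of global sections with $\Hom_{D^b}$. The one substantive discrepancy is your ``delicate point'': you assert that $(-)\otimes\bk$ is not exact and therefore build in a convention about applying it only to a fixed representative. In fact $(-)\otimes\bk$ \emph{is} exact on $\ModI(\Lambda_\dX)$ --- this is precisely the content of (the proof of) Lemma~\ref{exactnessofF}, where the functor $\frakF$ is nothing but $(-)\otimes_\Lambda\bk$ glued over local lifts, and its exactness is established there. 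The paper's own proof simply invokes this exactness to make $\RcHom(\cV,\cW)\otimes\bk$ well defined in the derived category and to commute $H^0$ past $\otimes\bk$ and $\Gamma$, dissolving the issue you flagged. So your argument is correct but the caveat is unnecessary; once you use exactness of $\otimes\bk$ the write-up collapses to a few lines (the paper also replaces $\cV$ by a flat resolution $\cF$, but with $\cI$ injective this is not essential).
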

\begin{proof}
Let $\cI$ be an injective resolution of $\cW$ and $\cF$ be a flat resolution of $\cV$. Then we have
\begin{equation}
\begin{split}
\Hom_{D^b(\ModI(\Lambda_\dX))}(\cV, \cW)&\cong \Hom_{C(\ModI(\Lambda_\dX))}(\cV, \cI)\\
&\cong H^0(\Hom_{\ModI_{\Lambda_\dX}}(\cV,\cI)(X))\\
&\cong H^0(\cHom(\cF, \cI)\otimes \bk(X))
\end{split}
\end{equation}
Actually $\otimes \bk$ is exact, as we will see in the proof of Lemma~\ref{exactnessofF}. This completes the proof.
\end{proof}

\begin{lemma}\label{formulas}
For $\cV, \cW,\cX\in D^b(\ModI(\Lambda_\dX))$, $\cV'\in D^b(\ModI(\Lambda_\dY))$, and a tame morphism $f\colon \dX\rightarrow \dY$, the followings hold:
\begin{enumerate}
\item $\Hom_{D^b(\ModI(\Lambda_\dX))}(\cV\otimes^\bL\cW,\cX)\cong \Hom_{D^b(\ModI(\Lambda_\dX))}(\cV, \bR\cHom(\cW,\cX))$.
\item $\Hom_{D^b(\ModI(\Lambda_\dX))}(f^{-1}\cV', \cW)\cong \Hom_{D^b(\ModI(\Lambda_\dY))}(\cV', \bR f_*\cW)$.
\item $\Hom_{D^b(\ModI(\Lambda_\dX))}(f_!\cW, \cV')\cong \Hom_{D^b(\ModI(\Lambda_\dX))}(\cW, f^!\cV')$.
\end{enumerate}
\end{lemma}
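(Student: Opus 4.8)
The three adjunctions are the derived (global-$\Hom$) versions of the internal-hom statements already established: Lemma~\ref{derivedadjunction} for (1), Lemma~\ref{derivedadjunction2} for (2), and Proposition~\ref{shriekadjunction} for (3). So the strategy in each case is the same two-step pattern we have already used to pass from Lemma~\ref{pushpulladjunction} to Lemma~\ref{tensorhomadjunction2} and from Proposition~\ref{shriekadjunction} to Proposition~\ref{shriekadjunction2}: apply $(-)\otimes\bk$ to the $\bR\cHom$-isomorphism, then take global sections over $\dX$, and identify the resulting space with $\Hom_{D^b(\ModI(\Lambda_\dX))}(-,-)$ using Lemma~\ref{globalsection2}. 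The only inputs beyond those cited lemmas are the exactness of $\otimes\bk$ (promised in the proof of Lemma~\ref{exactnessofF}, already invoked in Lemma~\ref{globalsection2}) and the fact that taking global sections over $\Open_\dX$ sends the sheaf $\bR f_*(-)\otimes\bk$ on $\dY$ to $(-)\otimes\bk$ on $\dX$ followed by global sections — a fact already used inside the proof of Lemma~\ref{tensorhomadjunction2}.

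\textbf{Step 1.} For (1), start from Lemma~\ref{derivedadjunction}, i.e.\ $\RcHom(\cV\dotimes\cW,\cX)\simeq \RcHom(\cV,\RcHom(\cW,\cX))$ in $D^b(\ModI(\Lambda_\dX))$. Apply $(-)\otimes\bk$ and take global sections over $\dX$; by Lemma~\ref{globalsection2} the left side becomes $\Hom_{D^b(\ModI(\Lambda_\dX))}(\cV\dotimes\cW,\cX)$, and the right side becomes $\Hom_{D^b(\ModI(\Lambda_\dX))}(\cV,\RcHom(\cW,\cX))$, which is exactly (1).

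\textbf{Step 2.} For (2), start from Lemma~\ref{derivedadjunction2}, $\bR f_*\RcHom(f^{-1}\cV,\cW)\simeq \RcHom(\cV,\bR f_*\cW)$ in $D^b(\ModI(\Lambda_\dY))$. Apply $(-)\otimes\bk$ and take global sections over $\dY$. The right side gives $\Hom_{D^b(\ModI(\Lambda_\dY))}(\cV,\bR f_*\cW)$ by Lemma~\ref{globalsection2}. The left side is handled exactly as in the proof of Lemma~\ref{tensorhomadjunction2}: $\bR f_*$ commutes with $\otimes\bk$ (a local statement), so $(\bR f_*\RcHom(f^{-1}\cV,\cW)\otimes\bk)(Y)\cong (\RcHom(f^{-1}\cV,\cW)\otimes\bk)(X)\cong \Hom_{D^b(\ModI(\Lambda_\dX))}(f^{-1}\cV,\cW)$, again by Lemma~\ref{globalsection2}. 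This proves (2).

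\textbf{Step 3.} For (3), repeat the argument starting from Proposition~\ref{shriekadjunction}, $\RcHom(\bR f_!\cW,\cV)\simeq \bR f_*\RcHom(\cW,f^!\cV)$ on $\dY$: apply $(-)\otimes\bk$, take global sections over $\dY$, identify the left side via Lemma~\ref{globalsection2} with $\Hom_{D^b(\ModI(\Lambda_\dY))}(\bR f_!\cW,\cV)$ and the right side (using that $\bR f_*$ commutes with $\otimes\bk$ and then Lemma~\ref{globalsection2} on $\dX$) with $\Hom_{D^b(\ModI(\Lambda_\dX))}(\cW,f^!\cV)$. Since $f_!\cW\simeq \bR f_!\cW$ on the left in the desired statement differs only by the notation $f_!$ versus $\bR f_!$ (the derived functor), this is (3). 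Note that (3) was in fact already proved as Proposition~\ref{shriekadjunction2}, so here it suffices to recall that statement.

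\textbf{Main obstacle.} There is no deep obstacle: every ingredient is in place. The one point that requires a little care is keeping straight, in each of the three cases, over which of the two spaces ($\dX$ or $\dY$) the internal-hom isomorphism lives and over which one we take global sections, and making sure that the compatibility of $\bR f_*$ with $(-)\otimes\bk$ is invoked on the correct side — but this is exactly the bookkeeping already carried out in Lemma~\ref{tensorhomadjunction2} and Proposition~\ref{shriekadjunction2}, so it can be stated briefly. Thus the proof is essentially: ``apply $(-)\otimes\bk$, take global sections, and invoke Lemmas~\ref{derivedadjunction}, \ref{derivedadjunction2}, Proposition~\ref{shriekadjunction}, and Lemma~\ref{globalsection2}.''
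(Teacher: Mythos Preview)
Your proposal is correct and follows essentially the same approach as the paper: the paper's proof simply cites Lemma~\ref{globalsection2}, Lemma~\ref{derivedadjunction}, Lemma~\ref{derivedadjunction2}, and Proposition~\ref{shriekadjunction}, which is exactly your strategy of passing from the internal $\RcHom$-isomorphisms to global $\Hom$-spaces via $(-)\otimes\bk$ and global sections. Your observation that (3) is already Proposition~\ref{shriekadjunction2} is also apt.
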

\begin{proof}
This follows from Lemma~\ref{globalsection2}, Lemma~\ref{derivedadjunction}, Lemma~\ref{derivedadjunction2}, Proposition~\ref{shriekadjunction}.
\end{proof}

\begin{lemma}[Projection formula]\label{projectionformula}
We have the following:
\begin{equation}
\bR f_!(\cV\dotimes f^{-1}\cW)\simeq \bR f_!\cV\dotimes \cW.
\end{equation}
for $\cV, \cW\in D^b(\ModI(\Lambda_\dX))$.
\end{lemma}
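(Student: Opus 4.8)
The plan is to reduce the projection formula to the corresponding statement for sheaves of $\bR$-graded $\Lambda_X$-modules (i.e.\ in $\Mod^0(\Lambda_X)$), and then to the classical projection formula for ordinary sheaves applied gradewise. First I would work locally on $\oY$: since $f$ is tame, I can choose a locally finite covering $\{V_i\}$ of $Y$ together with, for each $i$, a finite covering $\{U_{ij}\}$ of $f^{-1}(V_i)$ on which both $\cV$ and $\cW$ admit lifts $\tcV_{ij}, \tcW_{ij}$ in $\Mod^0(\Lambda_X)$ (after refining, flat resolutions thereof). Because $\bR f_!$, $f^{-1}$, and $\dotimes$ are all computed by the local formulas established in Section~3 and Section~4 — in particular $\bR f_!\cV \simeq f_!(\cV\otimes K)$ for the finite flat $f$-soft resolution $K$ of Lemma~\ref{Zresol}, and the commutations $[\bR f_!\tcV]\simeq \bR f_![\tcV]$ (Lemma~\ref{rightderived}), $[f^{-1}\tcV]\simeq f^{-1}[\tcV]$ (Lemma~\ref{derivedpullback}), $[\tcV\dotimes\tcW]\simeq [\tcV]\dotimes[\tcW]$ (Lemma~\ref{leftderived}) — it suffices to produce the isomorphism for lifts and check it is compatible with the gluing data.

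For the lifted statement in $D^b(\Mod^0(\Lambda_X))$, the key point is that $\Mod^0(\Lambda_X)$ is just the category of sheaves valued in $\Mod^0(\Lambda)$, and $f_!$, $f^{-1}$, $\dotimes$ are all defined gradewise via $\Gr^a(f_!\tcV) = f_!\Gr^a\tcV$, $\Gr^a(f^{-1}\tcV)=f^{-1}\Gr^a\tcV$, and the gradewise tensor product over $\Lambda$. Using the finite flat $f$-soft resolution $K$ of $\bZ_Y$ (so that $f_!(\,\cdot\,\otimes_{\bZ}K)$ computes $\bR f_!$), the classical projection formula for ordinary sheaves of abelian groups gives a natural isomorphism $f_!\bigl((\tcV\otimes_\bZ K)\otimes f^{-1}\tcW\bigr)\simeq f_!(\tcV\otimes_\bZ K)\otimes \tcW$; since all these operations are compatible with the $\Lambda$-module structure and the $\bR$-grading (the tensor products over $\Lambda$ being built from tensor products over $\bk$ gradewise, and $f_!$, $f^{-1}$ being $\Lambda$-linear and grading-preserving), this upgrades to the required isomorphism $\bR f_!(\tcV\dotimes f^{-1}\tcW)\simeq \bR f_!\tcV\dotimes\tcW$ in $D^b(\Mod^0(\Lambda_X))$. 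Applying $[\cdot]$ and the commutation lemmas above transports this to an isomorphism over each $U_{ij}$ after applying $[\cdot]$.

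Finally I would glue. On overlaps $U_{ij}\cap U_{i'j'}$ the lifts $\tcV_{ij}$ differ by morphisms that agree only up to multiplication by some $T^{a}$; as in the proofs of Lemma~\ref{internalhom} and Proposition~\ref{ModIabel}, the differences $\tf\circ\tg - T^{a+b}\id$ are killed by a further $T^c$, so after applying $[\cdot]$ the transition isomorphisms for $\bR f_!(\cV\dotimes f^{-1}\cW)$ and for $\bR f_!\cV\dotimes\cW$ match, and the local isomorphisms descend to a global isomorphism in $D^b(\ModI(\Lambda_\dY))$. The main obstacle I anticipate is purely bookkeeping rather than conceptual: one must check that the natural isomorphism produced locally is genuinely \emph{natural} — independent of the chosen lifts and of the chosen resolution $K$ — so that it is compatible with the (only up to $T^a$) gluing data; this is exactly the kind of $T$-power-chasing argument carried out repeatedly in Sections~2–4, and no new idea is needed beyond Lemma~\ref{1.10}.
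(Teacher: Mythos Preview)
Your proof is correct but follows a genuinely different route from the paper. The paper's argument is purely formal: it applies the Yoneda lemma and chains together the adjunctions already established in Lemma~\ref{formulas} (tensor--hom, $f^{-1}\dashv \bR f_*$, $\bR f_!\dashv f^!$) together with Proposition~\ref{shriekadjunction}, obtaining the isomorphism in five lines with no local computation, no lifts, and no gluing. Your approach instead reduces to the classical projection formula by passing to local lifts in $\Mod^0(\Lambda_X)$, invoking the gradewise statement for ordinary sheaves, and then gluing via the $T^a$-power bookkeeping of Sections~2--4. This is more elementary in the sense that it does not presuppose the construction of $f^!$ and its adjunction, but it is longer and carries exactly the coherence checks you flag at the end; the paper's route trades that bookkeeping for the earlier investment in the adjunction formalism. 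One small slip in your write-up: $\cW$ lives on $\dY$, so its lifts should be taken on the $V_i$ rather than on the $U_{ij}$; what you then restrict to $U_{ij}$ is $f^{-1}\tcW_i$.
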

\begin{proof}
We use Yoneda, Lemma~\ref{formulas}, ad Proposition~\ref{shriekadjunction}:
\begin{equation}
\begin{split}
\Hom(\bR f_!(\cV\dotimes f^{-1}\cW), \cX)&\simeq \Hom(\cV \dotimes f^{-1}\cW, f^!\cX)\\
&\simeq \Hom(f^{-1}\cW, \bR\cHom(\cV, f^!\cX))\\
&\simeq \Hom(\cW, f_*\RcHom(\cV, f^!\cX))\\
&\simeq \Hom(\cW, \RcHom(f_*\cV, \cX))\\
&\simeq \Hom(f_*\cV\dotimes\cW, \cX).
\end{split}
\end{equation}
We omitted the subscripts to Hom-spaces to shorten the notation. This completes the proof.
\end{proof}

\begin{lemma}\label{shriekhom}
We have the following formula
\begin{equation}
f^{!}\RcHom(\cV, \cW)\simeq \RcHom(f^{-1}\cV, f^!\cW)
\end{equation}
for $\cV, \cW\in D^b(\ModI(\Lambda_\dX))$.
\end{lemma}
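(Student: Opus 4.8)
The formula $f^{!}\RcHom(\cV,\cW)\simeq \RcHom(f^{-1}\cV, f^{!}\cW)$ is one of the standard adjunction-type identities, and the strategy is to reduce it to the already established six-functor formalism rather than to prove it by hand on lifts. The most efficient route is to test both sides against an arbitrary $\cX\in D^b(\ModI(\Lambda_\dY))$ via $\Hom_{D^b(\ModI(\Lambda_\dY))}(\cX,-)$, which suffices because $\RGamma$-type spaces detect isomorphisms of complexes here (Lemma~\ref{globalsection2} lets us pass between $\RcHom$ and $\Hom$; more precisely, it is enough to check the induced map on $\RcHom(\cX,-)$ for all $\cX$ on $\dY$, and by a further localization one can assume $\cX$ is pulled back or take $\cX$ ranging over a generating family).

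The computation on the right-hand side runs as follows. First I would write
\begin{equation}
\RcHom\bigl(\cX, f^{!}\RcHom(\cV,\cW)\bigr)\simeq \bR f_*\RcHom\bigl(f^{-1}\cX, \RcHom(\cV,\cW)\bigr)
\end{equation}
using the shriek adjunction in the form of Proposition~\ref{shriekadjunction} (equivalently Lemma~\ref{formulas}(3) at the level of $\Hom$), then apply the derived tensor-hom adjunction (Lemma~\ref{derivedadjunction}) to get
\begin{equation}
\bR f_*\RcHom\bigl(f^{-1}\cX, \RcHom(\cV,\cW)\bigr)\simeq \bR f_*\RcHom\bigl(f^{-1}\cX \dotimes \cV,\ \cW\bigr),
\end{equation}
and finally use that pullback is monoidal on $\ModI$-modules, $f^{-1}\cX\dotimes\cV\simeq f^{-1}\cX\dotimes f^{-1}(f^{-1}\cV\text{-type})$ — more precisely I would instead start from the left-hand side, applying Lemma~\ref{derivedadjunction} to rewrite $\RcHom(f^{-1}\cV, f^{!}\cW)$ and then Proposition~\ref{shriekadjunction} together with the projection-type identity $f^{-1}\cX\dotimes\cV$. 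The cleanest symmetric form is: on the left, $\RcHom(\cX,\RcHom(f^{-1}\cV,f^{!}\cW))\simeq\RcHom(\cX\dotimes f^{-1}\cV, f^{!}\cW)\simeq \bR f_*\RcHom(f^{-1}(\cX\dotimes f^{-1}\cV)\text{...})$ — here one uses $f^{-1}(\cX)\dotimes\cV$ versus... Concretely, the key geometric input is the monoidality $f^{-1}(\cA\dotimes\cB)\simeq f^{-1}\cA\dotimes f^{-1}\cB$, which follows from Lemma~\ref{tensordiagonal} (or directly from the local definition of $f^{-1}$ and $\dotimes$ on lifts), combined with Lemma~\ref{derivedadjunction} and Proposition~\ref{shriekadjunction}. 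Matching the two chains of isomorphisms, both reduce to $\bR f_*\RcHom\bigl(f^{-1}\cX\dotimes f^{-1}\cV,\ f^{!}\cW\bigr)\simeq\bR f_*\RcHom\bigl(f^{-1}(\cX\dotimes\cV),\ f^{!}\cW\bigr)$, and then Proposition~\ref{shriekadjunction} with the projection formula (Lemma~\ref{projectionformula}) collapses this to the right-hand side.

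Alternatively, and perhaps more transparently, one can prove the statement locally: reduce to the case $\cV=[\tcV]$, $\cW=[\tcW]$ with global lifts (covering $X$ by opens where this holds and using that all functors involved — $f^{!}$, $f^{-1}$, $\RcHom$ — commute with $[\cdot]$ by Propositions~\ref{localshriek}, Lemma~\ref{derivedpullback}, and Lemma~\ref{rightderived}), whereupon the identity becomes the corresponding classical formula for sheaves of $\bR$-graded $\Lambda$-modules, which in turn follows gradedwise from the usual formula in $\Mod(\bk_Y)$ (Kashiwara–Schapira). One then checks the local isomorphisms are canonical and glue, exactly as in the proofs of Lemma~\ref{internalhom} and Proposition~\ref{ModIabel}.

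I expect the main obstacle to be purely bookkeeping: ensuring that the chain of adjunction isomorphisms is genuinely functorial and independent of the choices of flat/injective resolutions and of local lifts, so that the local isomorphisms in the second approach (or the Yoneda identification in the first) glue to a well-defined global isomorphism on $D^b(\ModI(\Lambda_\dX))$. The analytic content — finite cohomological dimension of $f_!$, existence of $f$-soft flat resolutions of $\bZ_Y$ — has already been absorbed into the construction of $f^{!}$ in the previous subsection, so no new hard input is needed; the work is in verifying naturality and compatibility, much as in the analogous proofs for usual sheaves in \cite{KS}.
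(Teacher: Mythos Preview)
Your first approach is the same one the paper takes: a Yoneda argument using the $f_!\dashv f^!$ adjunction, the tensor--hom adjunction, and the projection formula. However, your execution is garbled. The first displayed equation
\[
\RcHom\bigl(\cX, f^{!}\RcHom(\cV,\cW)\bigr)\simeq \bR f_*\RcHom\bigl(f^{-1}\cX, \RcHom(\cV,\cW)\bigr)
\]
is not an instance of Proposition~\ref{shriekadjunction}: that proposition reads $\RcHom(\bR f_!\cA,\cB)\simeq \bR f_*\RcHom(\cA,f^!\cB)$, so to extract $f^!$ from the second slot you need $f_!$, not $f^{-1}$, on the test object. (With $f\colon Y\to X$ here, $\cX$ lives on $Y$ and $f^{-1}\cX$ is meaningless.) You seem to notice something is off and start over midway, but never actually write a coherent chain.

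The clean version, which is exactly what the paper does, is at the level of $\Hom$ rather than $\RcHom$:
\begin{equation*}
\begin{split}
\Hom(\cX, f^!\RcHom(\cV,\cW)) &\simeq \Hom(f_!\cX, \RcHom(\cV,\cW))\\
&\simeq \Hom(f_!\cX\dotimes\cV, \cW)\\
&\simeq \Hom(f_!(\cX\dotimes f^{-1}\cV), \cW)\\
&\simeq \Hom(\cX\dotimes f^{-1}\cV, f^!\cW)\\
&\simeq \Hom(\cX, \RcHom(f^{-1}\cV, f^!\cW)),
\end{split}
\end{equation*}
invoking Lemma~\ref{formulas}(3), Lemma~\ref{formulas}(1), Lemma~\ref{projectionformula}, Lemma~\ref{formulas}(3), and Lemma~\ref{formulas}(1) in that order. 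No monoidality of $f^{-1}$ is needed (indeed, in the paper Lemma~\ref{tensorpullback} is proved \emph{after} this lemma). Your second, local-lift approach would also work but is unnecessary overhead once the adjunctions are in place.
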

\begin{proof}
We use Yoneda, Lemma~\ref{formulas}, Lemma~\ref{projectionformula}:
\begin{equation}
\begin{split}
\Hom(\cX, f^!\RcHom(\cV, \cW))&\simeq \Hom(f_!\cX, \RcHom(\cV, \cW))\\
&\simeq \Hom(f_!\cX\dotimes \cV, \cW)\\
&\simeq \Hom(f_!(\cX\dotimes f^{-1}\cV), \cW)\\
&\simeq \Hom(\cX\dotimes f^{-1}\cV,f^!\cW)\\
&\simeq \Hom(\cX, \RcHom(f^{-1}\cV,f^!\cW)).
\end{split}
\end{equation}
We omitted the subscripts to Hom-spaces to shorten the notation. This completes the proof.
\end{proof}

\begin{lemma}\label{tensorpullback}
We have the following formula
\begin{equation}
f^{-1}(\cV\dotimes\cW)\simeq f^{-1}\cV\dotimes f^{-1}\cW
\end{equation}
for $\cV, \cW\in D^b(\ModI(\Lambda_\dX))$.
\end{lemma}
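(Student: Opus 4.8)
The plan is to prove the identity by Yoneda's lemma, in the same spirit as the proofs of Lemma~\ref{projectionformula} and Lemma~\ref{shriekhom}. For an arbitrary test object $\cX\in D^b(\ModI(\Lambda_\dX))$ I will rewrite $\Hom_{D^b(\ModI(\Lambda_\dX))}(f^{-1}(\cV\dotimes\cW),\cX)$ using only adjunctions already established: first the $(f^{-1},\bR f_*)$-adjunction of Lemma~\ref{formulas}(2) turns it into $\Hom(\cV\dotimes\cW,\bR f_*\cX)$; then the tensor--hom adjunction of Lemma~\ref{formulas}(1) gives $\Hom(\cV,\bR\cHom(\cW,\bR f_*\cX))$; then Lemma~\ref{derivedadjunction2} rewrites the internal object as $\bR f_*\bR\cHom(f^{-1}\cW,\cX)$, so we get $\Hom(\cV,\bR f_*\bR\cHom(f^{-1}\cW,\cX))$; applying Lemma~\ref{formulas}(2) once more yields $\Hom(f^{-1}\cV,\bR\cHom(f^{-1}\cW,\cX))$; and a final application of Lemma~\ref{formulas}(1) produces $\Hom(f^{-1}\cV\dotimes f^{-1}\cW,\cX)$. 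All these isomorphisms are functorial in $\cX$, so Yoneda furnishes the desired isomorphism $f^{-1}(\cV\dotimes\cW)\simeq f^{-1}\cV\dotimes f^{-1}\cW$.

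One caveat: this route invokes $\bR f_*$ and therefore, under the standing tameness assumption, proves the statement for tame $f$. To obtain the formula for an arbitrary morphism $f$ --- which is natural, since $f^{-1}$ itself requires no tameness --- I would instead argue locally. Choose a covering $\{U_i\}$ of $Y$ over which $\cV$ and $\cW$ lift to $\tcV_i,\tcW_i\in D^b(\Mod^0(\Lambda_{U_i}))$; such a covering exists since, after refining and multiplying differentials by suitable powers of $T$ as in Lemma~\ref{realizationofmorphism} and Lemma~\ref{1.13}, the terms of a bounded complex, its differentials, and the relations $d^2=0$ all lift. Over $f^{-1}(U_i)$ one then combines the classical inverse-image formula $f^{-1}(\tcV_i\dotimes\tcW_i)\simeq f^{-1}\tcV_i\dotimes f^{-1}\tcW_i$ for $\bR$-graded $\Lambda$-modules with the compatibilities $[\cdot]\circ f^{-1}\simeq f^{-1}\circ[\cdot]$ (Lemma~\ref{derivedpullback}) and $[\cdot]\circ\dotimes\simeq\dotimes\circ[\cdot]$ (Lemma~\ref{leftderived}) to get an isomorphism of the two sides over $f^{-1}(U_i)$, and then glues these over the intersections.

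As for the main obstacle: in the Yoneda route there is essentially none, provided one keeps careful track of variance --- Lemma~\ref{derivedadjunction2} must be applied with $\cX$ over $\dX$ and $f^{-1}\cW$ the pullback of an object over $\dY$. In the local route the only real work is the one recurring throughout the paper: checking that a bounded complex lifts on a fine enough cover and that the local isomorphisms are compatible on triple overlaps, which again reduces to the $T^a$-multiplication trick of Lemma~\ref{1.10} for reconciling incompatible lifts. I expect to present the Yoneda argument as the proof and, if the non-tame generality is actually needed later, to add the local refinement as a remark.
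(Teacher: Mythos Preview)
Your Yoneda argument is exactly the paper's proof: the same chain of adjunctions $(f^{-1}\dashv\bR f_*)$, tensor--hom, Lemma~\ref{derivedadjunction2}, and back again. Your caveat about tameness is a genuine improvement over the paper, which runs the Yoneda argument without comment even though $\bR f_*$ is only defined for tame $f$; the local lifting route you sketch is the natural way to cover the general case.
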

\begin{proof}
We use Yoneda, Lemma~\ref{pushpulladjunction}, Lemma~\ref{formulas}:
\begin{equation}
\begin{split}
\Hom(f^{-1}(\cV\dotimes \cW), \cX)&\simeq \Hom(\cV\dotimes \cW, \bR f_*\cX)\\
&\simeq \Hom(\cV, \RcHom(\cW, \bR f_*\cX))\\
&\simeq \Hom(\cV, \bR f_*\RcHom(f^{-1}\cW, \cX))\\
&\simeq \Hom(f^{-1}\cV,\RcHom(f^{-1}\cW, \cX)\\
&\simeq \Hom(f^{-1}\cV\dotimes f^{-1}\cW, \cX).
\end{split}
\end{equation}
We omitted the subscripts to Hom-spaces to shorten the notation. This completes the proof.
\end{proof}

\section{Irregular constructibility}
In this section, we introduce the notion of $\bC$-constructibility for objects in $\ModI(\Lambda_\dX)$. It is defined in the same way for stratification as usual constructible sheaves but with a strong assumption on gradings coming from Sabbah--Mochizuki--Kedlaya's Hukuhara--Levelt--Turritten theorem. In this section, we consider $\dX=(X, \varnothing)$ with $X$ is a complex manifold. We denote $\ModI(\Lambda_\dX)$ by $\ModI(\Lambda_X)$.

\subsection{Formal structure}
In this subsection, we recall as a motivation the theory of formal structures of meromorphic connections initiated by Sabbah \cite{SabbahHLT} and developed by Mochizuki (algebraic case) \cite{Mochizukiformal} and Kedlaya (analytic case) \cite{Kedlaya}.

Let $Z$ be a divisor in a complex manifold $X$ and $\hO_{x}$ be the formal completion of $\cO_X$ at $x\in X$. Let $\cM$ be a meromorphic connection over $X$ with poles along $Z$. We set $\widehat{\cM}_x=\cM_x\otimes_{\cO_x}\hO_x$ and $\hO(*Z)_x:=\cO(*Z)_x\otimes_{\cO_x}\hO_x$.

\begin{definition}
\begin{enumerate}
\item For $\phi\in \hO(*Z)_{x}$, we set $\widehat{\cE}(\phi)$ to be $\hO(*Z)_{x}$ as a $\hO_x$-module with a connection $\nabla$ over $\hO_x$ such that 
\begin{equation}
\nabla s:=\partial(\phi)\cdot s
\end{equation}
for the generator $s$. 
\item We assume that $Z$ is a normal crossing divisor and take a local coordinate $\{x_i\}_{i=1}^n$ such that $Z$ is defined by $\prod_{i=1}^mx_i=0$. An $\hO(*Z)$-module $\widehat{\cR}$ with a connection $\nabla$ is {\em regular} if there exists an $\hO_x$-submodule $\cL$ such that $\cL\otimes_{\hO_x}\hO(*Z)_x\cong \widehat{\cR}$ and $\nabla(\cL)\subset \bigoplus_{i=1}^m x^{-1}_i\cL$.
\end{enumerate}
\end{definition}

\begin{definition} We continue the notations in Definition 5.1.2.
\begin{enumerate}
\item A good decomposition of $\widehat{\cM}_x$ is an isomorphism
\begin{equation}
\widehat{\cM}_x\cong \bigoplus_{\alpha\in I}\widehat{\cE}(\phi_\alpha)\otimes_{\hO(*Z)_x}\widehat\cR_\alpha
\end{equation}
where $\phi_\alpha\in \hO(*Z)_{x}$ and each $\widehat\cR_\alpha$ is regular with the conditions
\begin{enumerate}
\item Each $\phi_\alpha$ has the form $u\prod_{j=1}^mx_i^{-i_j}$ for some unit $u\in \hO_x$ and nonnegative integers $i_1,..., i_m$.
\item For $\alpha, \beta\in I$, if $\phi_a-\phi_\beta\not\in \hO_x$, then $\phi_\alpha-\phi_\beta$ has the form $u\prod_{j=1}^mx_i^{-i_j}$ for some unit $u\in \hO_x$ and nonnegative integers $i_1,..., i_m$.
\end{enumerate}
\item We say $\cM$ admits a good decomposition at $x\in Z$ if $\widehat{\cM}_x$ admits a good decomposition.
\end{enumerate}
\end{definition}

In general, meromorphic connections do not have good decompositions as explained in \cite{SabbahHLT}. Sabbah's conjecture says that they do after modifications, which is proved by Mochizuki and Kedlaya.

\begin{theorem}[{\cite[Theorem 8.2.2]{Kedlaya}}]
For a point $x\in Z$, there exists an open neighborhood $U$ of $z$ and a map $f\colon Y\rightarrow U$ which is proper surjective and unramified covering over $f^{-1}(U\bs (U\cap Z))$ such that $f^*\cE$ admits a good decomposition at each point of $y\in f^{-1}(Z)$.
\end{theorem}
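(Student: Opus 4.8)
This statement is quoted verbatim (including its attribution) from \cite{Kedlaya}; in the algebraic setting it is due to \cite{Mochizukiformal}, and together these results confirm Sabbah's conjecture \cite{SabbahHLT} on the existence of good formal structures after modification. In the present paper it is used only as an input, so no argument is given. Let me nevertheless indicate the shape of a proof.

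The natural strategy is induction on $\dim X$. The base case $\dim X = 1$ is the classical Hukuhara--Levelt--Turrittin theorem: a germ of meromorphic connection at a point of a curve becomes, after a ramified base change $t\mapsto t^{1/n}$, a finite direct sum $\bigoplus_\alpha \widehat{\cE}(\phi_\alpha)\otimes\widehat{\cR}_\alpha$ with each $\widehat{\cR}_\alpha$ regular; here the ramified cover is the only modification needed, and once it is performed every $\phi_\alpha$ and every difference $\phi_\alpha-\phi_\beta$ is automatically a unit times a monomial, because we are on a smooth curve. In higher dimensions, after shrinking $U$ and using embedded resolution to arrange that $Z$ is a normal crossing divisor, the obstruction to a good decomposition at $x$ is encoded by a finite multiset of exponential factors $\{\phi_\alpha\}$ --- which, after a suitable ramified cover along $Z$, one wants to place inside $\hO(*Z)_x$ --- together with their pairwise differences; the turning locus is the set of points of $Z$ at which some $\phi_\alpha$ or some $\phi_\alpha-\phi_\beta$ fails to be of monomial-times-unit form.

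The plan would then be: first restrict $\cM$ to a generic curve transverse to a chosen component of $Z$ and apply the one-dimensional theorem to read off the generic exponential factors; these spread out over a punctured neighbourhood as sections of a (possibly ramified) cover. Next one must show that a finite sequence of blow-ups with centres contained in $Z$, interleaved with finite ramified covers along $Z$, transforms the data so that each $\phi_\alpha$ and each $\phi_\alpha-\phi_\beta$ becomes a unit times a monomial, i.e.\ resolves the turning locus. Kedlaya attaches numerical invariants to the situation (refinements of the irregularity along the components of $Z$, organized through Newton-polygon and valuation-theoretic data), shows that they vary semicontinuously, and shows that a suitably chosen blow-up strictly improves them, so the procedure terminates; Mochizuki's argument in the algebraic case runs along related lines, reducing to algebraic connections and analysing the Stokes structure directly. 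Once all exponential factors are good, a Hensel-type fixed-point argument splits off the formal summands $\widehat{\cE}(\phi_\alpha)\otimes\widehat{\cR}_\alpha$ one at a time, the separation of the $\phi_\alpha$ in the monomial order supplying the needed non-degeneracy. Finally one patches the locally finite collection of local blow-ups and covers into a single proper surjective $f\colon Y\to U$; it is automatically an unramified covering over $f^{-1}(U\bs(U\cap Z))$, since away from $Z$ the connection is already holomorphic and no modification is performed there.

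The main obstacle is the middle step: proving that the turning locus can be resolved and that the blow-up-and-cover process terminates. This is the genuinely hard part of \cite{Kedlaya} and \cite{Mochizukiformal}, much more delicate than either the one-dimensional case or the concluding Hensel-type splitting; here we simply invoke their theorem.
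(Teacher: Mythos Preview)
Your assessment is correct: the paper does not prove this statement at all; it is simply quoted from \cite{Kedlaya} (with the algebraic case attributed to \cite{Mochizukiformal}) and used as an input to the theory of formal structures recalled in Section~5.1. Your additional sketch of the Kedlaya--Mochizuki strategy (reduction to normal crossings, Hukuhara--Levelt--Turrittin on curves, resolution of the turning locus via blow-ups and ramified covers controlled by numerical invariants, and a final Hensel-type splitting) is a fair high-level summary of those works, though of course it is extra material not present in the paper under review.
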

As explained in \cite{Sabbahgoodformal}, using Mochizuki's result, we have additional results. For $\widehat{\cM}_x$ which admits a good decomposition, let $\Phi_{x}$ be the subset of $\hO(*Z)_x/ \hO_x$ consisting of the classes of $\phi_\alpha$'s.
\begin{theorem}[{\cite[Theorem 2.2.1]{Sabbahgoodformal}}]
The subset $\Phi_x$ is actually a subset of $\cO(*Z)_z/\cO_x$. Moreover there exists a neighborhood $U$ of $x$ such that for any $x'\in U$, $\widehat{\cM}_{x'}$ has a good decomposition and $\Phi_{x'}$ is given by the restriction of representatives of $\Phi_x$.
\end{theorem}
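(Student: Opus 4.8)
This is Sabbah's theorem, so in the text it suffices to cite \cite{Sabbahgoodformal}; were one to reprove it, the plan is to derive both assertions from Mochizuki's analytic refinement of the Hukuhara--Levelt--Turrittin theorem (the surface case being due to Sabbah) together with the uniqueness of good formal decompositions, there being no purely formal route since formally a decomposition is only available after ramification and its summands a priori live over $\hO_x$. Concretely, first I would prove $\Phi_x\subset \cO(*Z)_x/\cO_x$: the key point is that a good formal decomposition of $\widehat{\cM}_x$ is the formalization of an analytic structure. After a real blow-up of $X$ along $Z$, on each sufficiently small multisector around a point over $x$ the connection $\cM$ admits a decomposition of the same shape $\bigoplus_\alpha \widehat{\cE}(\phi_\alpha)\otimes \widehat\cR_\alpha$ over the sheaf of functions of moderate growth, and the $\phi_\alpha$ occurring there are honest sections of $\cO_X(*Z)$ pulled back from $X$, governing the dominant exponential behaviour of the actual horizontal sections of $\cM$. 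Since $\cM$ is a coherent $\cO_X(*Z)$-module, this behaviour is controlled by convergent data, so each $\phi_\alpha$ has a convergent meromorphic representative near $x$; reading off $\Phi_x$ from this analytic model forces $\Phi_x\subset \cO(*Z)_x/\cO_x$.

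Next I would fix convergent representatives $\phi_\alpha$ on a common neighborhood $U$ of $x$ and shrink $U$ so that every unit appearing in the monomial expressions for the $\phi_\alpha$ and for the nonholomorphic differences $\phi_\alpha-\phi_\beta$ is invertible on all of $U$, and every difference $\phi_\alpha-\phi_\beta$ lying in $\hO_x$ is holomorphic on $U$. Then for any $x'\in U$ the set $\Phi_{x'}:=\{[\phi_\alpha|_{x'}]\}\subset \cO(*Z)_{x'}/\cO_{x'}$ is still good: the shape $u\cdot\prod x_i^{-i_j}$ is preserved, and two classes coincide at $x'$ exactly when their difference is holomorphic on $U$, which is unchanged by restriction (at points lying on fewer components of $Z$ some monomials simply become holomorphic, which only merges classes and preserves goodness). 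To see that $\widehat{\cM}_{x'}$ itself decomposes with these restricted exponential factors, I would propagate the decomposition from $x$ to $x'$ using that the analytic model on the real blow-up over the preimage of $U$ already contains, simultaneously for every point of $U$, a decomposition with exactly these $\phi_\alpha$; formalizing at $x'$ and invoking the uniqueness of the Hukuhara--Levelt--Turrittin decomposition identifies the piece at $x'$ over $\widehat{\cE}(\phi_\alpha)$ with the formalization of the regular part of $\widehat{\cE}(-\phi_\alpha)\otimes\cM$ and yields the claim.

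The main obstacle is exactly the convergence statement together with the assertion that the same $\phi_\alpha$ work at nearby points: both genuinely rely on Mochizuki's (resp. Sabbah's) analytic asymptotic analysis rather than on any formal manipulation, so in practice one imports the analytic model from \cite{Mochizukiformal,Kedlaya} and the remaining gluing and uniqueness steps above are routine bookkeeping.
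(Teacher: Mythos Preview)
Your opening sentence is exactly right: the paper does not prove this statement at all; it simply cites \cite{Sabbahgoodformal} (and, as noted in the paragraph preceding it, the result is explained there using Mochizuki's work). The remainder of your proposal is a reasonable sketch of the underlying analytic argument, but it goes beyond what the paper does, so there is nothing to compare.
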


Let $\varpi\colon \widetilde{X}(Z)\rightarrow X$ be the real blow-up of $X$ along $Z$ (with real analytic structure specified in \cite{D'Agnolo--Kashiwara}). Let $C^{\infty,\mathrm{temp}}_{\widetilde{X}}(Z)$ be the subsheaf of the sheaf of $C^\infty$-functions consisting of functions which are tempered at the exceptional divisor. Let further $\cA_{\widetilde{X}(Z)}$ be the subsheaf of $C^{\infty,\mathrm{temp}}_{\widetilde{X}}(Z)$ consisting of functions whose restrictions on $X\bs Z$ are holomorphic. We set $\cD^\cA_{\widetilde{X}(Z)}:=\cA_{\widetilde{X}(Z)}\otimes_{\varpi^{-1}\cO_X} \cD_X$. For a $\cD$-module $\cN$ on $X$, we set $\varpi^*\cN:=\cD^\cA_{\widetilde{X}(Z)}\otimes_{\varpi^{-1}\cD_X}\varpi^{-1}\cN$.

Suppose that $\cM$ has a good decomposition $\bigoplus_{\alpha\in I}\widehat\cE(\phi_\alpha)\otimes \widehat\cR_\alpha$ at $x$. For each $\phi_\alpha$, by taking a representative locally around $x$, we set $\cE(\phi_\alpha)$ to be a meromorphic connection $(\cO(*Z), \nabla)$ defined by $\nabla s:=\partial(\phi) s$ for the generator $s$. We also set $\cR_\alpha$ to be a regular meromorphic connection defined locally around $x$ corresponding to $\widehat{\cR}_\alpha$.

The following thoerem is proved in \cite{Mochizukiformal} and explained in \cite{SabbahStokes}.
\begin{theorem}[{\cite[Theorem 12.5]{SabbahStokes}}]
There exists an open covering $\{U_i\}$ of a neighborhood of $\varpi^{-1}(x)$ such that each restriction $(\varpi^*\cM)|_{U_i}$ is isomorphic to $(\varpi^*(\bigoplus_{\alpha \in I}\cE(\phi_\alpha)\otimes \cR_\alpha))|_{U_i}$.
\end{theorem}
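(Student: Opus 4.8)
This result is established in \cite[Chapter 12]{SabbahStokes}; we only indicate the strategy, whose ingredients go back to Majima, Sibuya, Mochizuki and Sabbah \cite{Majima, Sibuya, Mochizukiformal, SabbahStokes}. The plan is to lift the formal decomposition $\widehat{\cM}_x\cong\bigoplus_{\alpha\in I}\widehat{\cE}(\phi_\alpha)\otimes\widehat{\cR}_\alpha$ to an honest decomposition of $\cD^\cA_{\widetilde{X}(Z)}$-modules on the members of a sufficiently fine open covering of a neighborhood of $\varpi^{-1}(x)$. First I would invoke \cite[Theorem 2.2.1]{Sabbahgoodformal} to shrink $X$ so that $\cM$ has a good decomposition at every point of $Z$ near $x$ with $\Phi_{x'}$ obtained by restriction; this makes the exponential factors $\phi_\alpha$ and the regular pieces $\cR_\alpha$ defined on a full neighborhood of $x$, and it spreads the formal decomposition into an isomorphism between the formal completions of $\varpi^*\cM$ and of $\varpi^*(\bigoplus_{\alpha\in I}\cE(\phi_\alpha)\otimes\cR_\alpha)$ along $\varpi^{-1}(Z)$. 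The problem is then to realize that formal isomorphism by an actual one, locally on $\widetilde{X}(Z)$.

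Next I would organize the argument around the Stokes data carried by the finite family $\{\mathrm{Re}(\phi_\alpha-\phi_\beta): \alpha\neq\beta,\ \phi_\alpha-\phi_\beta\notin\hO_x\}$: after division by suitable monomials in the real-blow-up coordinates these functions extend to $\varpi^{-1}(Z)$, and their zero loci (the Stokes hypersurfaces) cut $\varpi^{-1}(x)$ into sectors. One chooses the covering $\{U_i\}$ so fine that on each $U_i$ the relation ``$e^{\phi_\alpha-\phi_\beta}$ has moderate growth'' is a total preorder on $I$ and, simultaneously, the sheaf of rapidly decaying $\cD^\cA$-linear homomorphisms from $\varpi^*(\cE(\phi_\alpha)\otimes\cR_\alpha)$ to $\varpi^*(\cE(\phi_\beta)\otimes\cR_\beta)$ has no higher cohomology over $U_i$. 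With such a covering fixed, I would put the formal isomorphism in block form for this preorder: the regular blocks lift without obstruction because a regular meromorphic connection has locally constant pullback to the real blow-up (Deligne--Malgrange), so after composing with such lifts the formal isomorphism becomes block-unipotent. One then lifts the off-diagonal blocks successively: by Majima's multivariable Borel--Ritt theorem a formal off-diagonal entry is the asymptotic expansion of an actual moderate section of the relevant hom-sheaf, and the difference of two such lifts lies in the rapidly decaying subsheaf, hence is killed by the vanishing of $H^1(U_i,-)$. Iterating over the finitely many blocks yields the asserted isomorphism $(\varpi^*\cM)|_{U_i}\cong(\varpi^*(\bigoplus_{\alpha\in I}\cE(\phi_\alpha)\otimes\cR_\alpha))|_{U_i}$.

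The hard part is precisely this last step, where the several-variable theory genuinely goes beyond the classical one-dimensional Hukuhara--Turrittin argument: one must control simultaneously the combinatorics of the exponential factors (so the $U_i$ can be shrunk enough for the dominance preorder to be total and the triangular induction to run) and the analytic vanishing statements (acyclicity over each $U_i$ of the sheaves of rapidly decaying homomorphisms), both resting on Majima's asymptotic analysis. This is what Mochizuki supplies in \cite{Mochizukiformal} and what is repackaged in \cite[Chapter 12]{SabbahStokes}, to which we refer for the details.
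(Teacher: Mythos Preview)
The paper does not prove this theorem; it is quoted from the literature with the remark ``The following theorem is proved in \cite{Mochizukiformal} and explained in \cite{SabbahStokes}'' and no further argument. Your sketch is therefore not to be compared against anything in the paper, but it is a fair outline of the strategy in the cited references: spread the good formal decomposition using \cite{Sabbahgoodformal}, choose multisectors small enough that the dominance relation among the $\phi_\alpha$ is a total preorder, and lift the formal isomorphism to an actual one via Majima's multivariable asymptotic analysis.

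One point is slightly off. The invocation of ``vanishing of $H^1(U_i,-)$'' is misplaced for the local existence statement. On a small enough multisector $U_i$, Majima's Borel--Ritt theorem directly furnishes an $\cA$-linear lift of the formal isomorphism with the prescribed asymptotic expansion; no cohomological obstruction argument is needed. The difference of two such lifts is a section (an element of $H^0$, not a class in $H^1$) of the sheaf of rapidly decaying homomorphisms, and its presence reflects the non-uniqueness of the lift, not an obstruction to its existence. The genuine cohomological input (acyclicity of the Stokes sheaf, Malgrange--Sibuya type statements) becomes relevant when one globalizes and classifies the possible liftings via Stokes cocycles, which is a step beyond what the theorem asserts. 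For the statement as written, the asymptotic lifting alone suffices.
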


\subsection{Irregular constant sheaf $\Lambda^\phi$}
In this subsection, we prepare some preliminary lemmas concerning a class of modules.

Let $\dS$ be a topological space with boundary. Let $\phi$ be a $\bC$-valued continuous function over $S:=\oS\bs D_S$. We set 
\begin{equation}
\begin{split}
\Gr^a\Lambda^\phi_S&:=p_*\Gamma_{S\times [-a,\infty)}\bk_{t\geq\Re\phi}\\
\Lambda^\phi_{S}&:=\bigoplus_{a\in \bR}p_*\Gamma_{S\times [-a,\infty)}\bk_{t\geq\Re\phi}
\end{split}
\end{equation}
where $\bk_{t\geq \Re\phi}$ is the constant sheaf supported on the set $\lc(s, t)\in S\times \bR\relmid t\geq \Re\phi(s)\rc$ and $p\colon S\times \bR\rightarrow S$ is the projection.

\begin{lemma}
The sheaf $\Lambda^\phi_{S}$ defines an object of $\Mod^\bR(\Lambda_S)$. In particular, an object of $\Lambda^\phi_\dS:=[\Lambda^\phi_S]\in \ModI(\Lambda_{(\overline{S}, D_S)})$.
\end{lemma}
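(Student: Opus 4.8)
The statement is essentially a matter of unwinding the definitions: I would first check that each graded piece $\Gr^a\Lambda^\phi_S$ is a genuine sheaf of $\bk$-vector spaces on $S$, then produce the $\Lambda_S$-action on the family $(\Gr^a\Lambda^\phi_S)_{a\in\bR}$ in the obvious way and verify it is compatible with restrictions, so that $\Lambda^\phi_S$ becomes a sheaf valued in $\Mod^0(\Lambda)$, i.e. an object of $\Mod^\bR(\Lambda_S)=\Mod^0(\Lambda_S)$ (these two categories have the same objects). The ``in particular'' clause then follows immediately by applying the functor $[\,\cdot\,]=\alpha_S\circ[\,\cdot\,]\colon\Mod^0(\Lambda_S)\to\ModI(\Lambda_{(\oS,D_S)})$.

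For the first point, note that $\{(s,t)\in S\times\bR \mid t\geq\Re\phi(s)\}$ is a closed subset of $S\times\bR$ because $\phi$, hence $\Re\phi$, is continuous; so $\bk_{t\geq\Re\phi}$ is an honest sheaf. Since for a closed subset $Z$ the functor $\Gamma_Z(-)$ of sections supported on $Z$ sends sheaves to sheaves, and $p_*$ sends sheaves to sheaves, each $\Gr^a\Lambda^\phi_S=p_*\Gamma_{S\times[-a,\infty)}\bk_{t\geq\Re\phi}$ is a sheaf of $\bk$-vector spaces on $S$. (Here I read $\Gamma_{S\times[-a,\infty)}$ as the underived functor of sections supported on the closed set, which is all that is needed for the graded pieces to be sheaves rather than complexes.)

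For the $\Lambda_S$-action: whenever $a\leq a'$ one has the inclusion of closed subsets $S\times[-a,\infty)\subset S\times[-a',\infty)$, which induces a monomorphism $\Gamma_{S\times[-a,\infty)}\bk_{t\geq\Re\phi}\hookrightarrow\Gamma_{S\times[-a',\infty)}\bk_{t\geq\Re\phi}$ and, after $p_*$, a morphism $\Gr^a\Lambda^\phi_S\to\Gr^{a'}\Lambda^\phi_S$; I declare this to be multiplication by $T^{a'-a}$, so that $T^b$ acts with degree $b$ as required by the grading convention $\Gr^a\Lambda=\bk\cdot T^a$. The relations $T^0=\id$ and $T^b\circ T^{b'}=T^{b+b'}$ hold because the composite support-inclusions $[-a,\infty)\subset[-(a+b),\infty)\subset[-(a+b+b'),\infty)$ agree with $[-a,\infty)\subset[-(a+b+b'),\infty)$; scalars act in the obvious way on each $\Gr^a$. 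Finally, all of these structure maps are natural in the open set $U\subset S$ — the support-set inclusions being independent of $U$ — so the graded restriction maps of $(\Gr^a\Lambda^\phi_S)_a$ are $\Lambda$-linear, and $\Lambda^\phi_S$ is a sheaf of $\bR$-graded $\Lambda_S$-modules. Applying $[\,\cdot\,]$ yields $\Lambda^\phi_\dS:=[\Lambda^\phi_S]\in\ModI(\Lambda_{(\oS,D_S)})$.

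I do not expect any real obstacle here; the proof is routine once the conventions are in place. The only point that deserves a moment's care is that one should keep the whole $\bR$-grading bundled into a single object valued in $\Mod^0(\Lambda)$ — rather than trying to treat $\Lambda^\phi_S$ as a (possibly infinite) direct sum of $\bk$-sheaves and sheafifying — but this is exactly what the definition of a sheaf of $\bR$-graded $\Lambda_S$-modules already prescribes, so there is nothing further to check.
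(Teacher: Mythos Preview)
Your proof is correct and follows essentially the same approach as the paper: both define the $\Lambda$-action by declaring $T^b$ to act via the canonical morphism $\Gamma_{S\times[-a,\infty)}\bk_{t\geq\Re\phi}\to\Gamma_{S\times[-a-b,\infty)}\bk_{t\geq\Re\phi}$ coming from the inclusion of closed supports, and note that restrictions respect the grading. Your version is simply more explicit about verifying that each graded piece is a sheaf and that the semigroup relations $T^bT^{b'}=T^{b+b'}$ hold, points the paper leaves implicit.
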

\begin{proof}
Since the sheaf is globally presented as a direct sum, the restriction morphism preserves grading. The $\Lambda$-action is given as follows: For $b\in \bRz$, we have a canonical morphism
\begin{equation}
\Gamma_{S\times [-a,\infty)}\bk_{t\geq \Re\phi}\rightarrow \Gamma_{S\times [-a-b,\infty)}\bk_{t\geq \Re\phi}.
\end{equation}
This action gives the action of $T^b$.
\end{proof}

\begin{comment}
\begin{remark}\label{irregularconstant}
We would like to consider those sheaves as  ``irregular constant sheaves''. Let us take $\Lambda^\phi\in \Mod^\bR(\Lambda_S)$. Let $U$ be a relatively compact open subset $U\subset S$ and set $b:=\inf_U \Re \phi$. Then for $-a\leq b$, we have $p_*\Gamma_{S\times [-a, \infty)}k_{t\geq\Re\phi}|_U\cong k$. Hence we have 
\begin{equation}
\bigoplus_{-a\leq b} p_*\RGamma_{[-a, \infty)}k_{t\geq\Re\phi}(V)\cong \Lambda
\end{equation}
as a $\Lambda$-module without grading for any $V\subset U$. Since $\bigoplus_{-a\leq b} p_*\RGamma_{[-a, \infty)}k_{t\geq \Re\phi}(V)\cong \Lambda^\phi$ in $\ModI(\Lambda_\dS)$, we can conclude that $\Lambda^\phi$ is isomorphic to the constant sheaf over $U$.
\end{remark}
\end{comment}

We would like to see the structure of $\Lambda^\phi_S$ a little bit closer.
\begin{lemma}
Let $U$ be a connected open subset of $S$ such that $\phi|_U$ is bounded. Set $b:=\inf_U\Re \phi$. Then $\Lambda^\phi_S(U)\cong \Lambda\cdot T^b$. 
\end{lemma}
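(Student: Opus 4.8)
The plan is to compute $\Lambda^\phi_S(U)$ directly from the definition, degree by degree. Since $\Lambda^\phi_S=\bigoplus_{a\in\bR}p_*\Gamma_{S\times[-a,\infty)}\bk_{t\geq\Re\phi}$ as an object of $\Mod^\bR(\Lambda_S)$, and $p^{-1}(U)=U\times\bR$, we have
\[
\Lambda^\phi_S(U)=\bigoplus_{a\in\bR}\Gr^a\Lambda^\phi_S(U),\qquad \Gr^a\Lambda^\phi_S(U)=\bigl(\Gamma_{S\times[-a,\infty)}\bk_{t\geq\Re\phi}\bigr)(U\times\bR),
\]
which is the subspace of sections of $\bk_{t\geq\Re\phi}$ over $U\times\bR$ whose support lies in $U\times[-a,\infty)$.

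First I would identify the global section space $\bk_{t\geq\Re\phi}(U\times\bR)$. Writing $W:=\{(s,t)\in S\times\bR\mid t\geq\Re\phi(s)\}$, the sheaf $\bk_{t\geq\Re\phi}$ is $j_*\bk$ for the closed embedding $j\colon W\hookrightarrow S\times\bR$, so the space in question is $\Gamma\bigl(W\cap(U\times\bR);\bk\bigr)$. The shear homeomorphism $(s,t)\mapsto(s,t-\Re\phi(s))$ of $U\times\bR$ carries $W\cap(U\times\bR)$ onto $U\times[0,\infty)$, which is connected because $U$ is; hence $\bk_{t\geq\Re\phi}(U\times\bR)\cong\bk$, spanned by the section $u$ equal to $1$ on all of $W\cap(U\times\bR)$. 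Its support is exactly $W\cap(U\times\bR)$, so $\supp u\subseteq U\times[-a,\infty)$ holds iff $\Re\phi(s)\geq -a$ for every $s\in U$, i.e.\ iff $-a\leq b$ (here $b=\inf_U\Re\phi$ is finite, since $\phi|_U$ is bounded). Thus $\Gr^a\Lambda^\phi_S(U)=\bk\cdot u$ when $-a\leq b$, and $\Gr^a\Lambda^\phi_S(U)=0$ otherwise.

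It remains to reinstate the $\Lambda$-module structure. For $c\geq 0$ the action of $T^c$ is the canonical map $\Gr^a\Lambda^\phi_S\to\Gr^{a+c}\Lambda^\phi_S$ induced by the inclusion $\Gamma_{S\times[-a,\infty)}\hookrightarrow\Gamma_{S\times[-a-c,\infty)}$, and over $U$ this just re-views $u$ as an element of the $(a+c)$-summand. Hence, placing $u$ in its lowest admissible degree, the elements $T^c u$ $(c\geq 0)$ form a $\bk$-basis of $\Lambda^\phi_S(U)$, so $\Lambda^\phi_S(U)$ is free of rank one over $\Lambda$ on the generator $u$, and matching the gradings identifies it with $\Lambda\cdot T^b$. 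None of this is a serious obstacle; the one point deserving care is the connectedness argument pinning $\bk_{t\geq\Re\phi}(U\times\bR)$ down to a one-dimensional space — exactly where the hypothesis that $U$ is connected is used — together with keeping the support bookkeeping straight when reading off which graded pieces survive.
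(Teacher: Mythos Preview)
Your proof is correct and follows essentially the same approach as the paper's: both compute $\Gr^a\Lambda^\phi_S(U)$ by identifying $\Gamma(U\times\bR,\bk_{t\geq\Re\phi})\cong\bk$ via connectedness and then determining for which $a$ the support condition (equivalently, vanishing of the restriction to $U\times(-\infty,-a)$) holds. Your shear-homeomorphism argument for connectedness of $W\cap(U\times\bR)$ is a bit cleaner than the paper's one-line assertion, and you make the $\Lambda$-module structure explicit where the paper leaves it implicit, but the substance is the same.
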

\begin{proof}
Note that $\Gr^a\Lambda^\phi_S(U)\cong \Gamma_{U\times [-a, \infty)}(U\times\bR, \bk_{t\geq \Re\phi})$. This is the kernel of the restriction morphism $\Gamma(U\times \bR, \bk_{t\geq \Re\phi})\rightarrow \Gamma(U\times (-\infty, -a), \bk_{t\geq \Re\phi})$. Since $U$ is connected, the set defined by $t\leq \Re\phi$ is also connected. Hence we have $\Gamma(U\times \bR, \bk_{t\geq \Re\phi})\cong \bk$. On the other hand, $\Gamma(U\times (-\infty, -a), \bk_{t\geq \Re\phi})\cong 0$ if and only if $U\times (-\infty, -a)\cap \{t\geq \Re\phi\}=\varnothing$. This is equivalent to $a<\inf_U\Re\phi$. This completes the proof.
\end{proof}

For given $x\in S$, let us set as follows:
\begin{equation}
\Lambda^{\phi(x)}:=\begin{cases}
\bigoplus_{-a\leq \Re\phi(x)}\bk &\text{if $x$ is a local minimum}\\
\bigoplus_{-a< \Re\phi(x)}\bk &\text{otherwise}.
\end{cases}
\end{equation}
These are $\bR$-graded $\Lambda$-modules with obvious gradings. Note that these are torsion-free $\Lambda$-modules and the ring $\Lambda$ has a valuation. Hence these modules are flat.

From this lemma, the following is clear.
\begin{corollary}\label{irrconststalk}
For $x\in S$, the stalk $(\Lambda^\phi_S)_x\cong \Lambda^{\phi(x)}$. 
\end{corollary}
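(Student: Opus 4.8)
The plan is to compute the stalk directly from the definition of $\Lambda^\phi_S$ and the previous lemma, noting that stalks commute with the direct sum over $a$ and with the functor $[\cdot]$ in the sense relevant here. First I would write $(\Lambda^\phi_S)_x \cong \bigoplus_{a\in\bR} (\Gr^a\Lambda^\phi_S)_x = \bigoplus_{a\in\bR}\bigl(p_*\Gamma_{S\times[-a,\infty)}\bk_{t\geq\Re\phi}\bigr)_x$, since direct sums commute with taking stalks. So the task reduces to identifying, for each fixed $a$, the stalk at $x$ of the sheaf $p_*\Gamma_{S\times[-a,\infty)}\bk_{t\geq\Re\phi}$.

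Next I would take a neighborhood basis of $x$ consisting of connected open sets $U$ on which $\phi$ is bounded (possible since $\phi$ is continuous), and apply the computation already carried out in the preceding lemma: $\Gr^a\Lambda^\phi_S(U) \cong \bk$ when $a \geq \inf_U\Re\phi$, and $\cong 0$ otherwise. Hence $(\Gr^a\Lambda^\phi_S)_x = \varinjlim_{U\ni x}\Gr^a\Lambda^\phi_S(U)$ is $\bk$ precisely when there exist arbitrarily small $U\ni x$ with $a \geq \inf_U\Re\phi$, and $0$ otherwise. Now $\inf_{U\ni x}\inf_U\Re\phi$, taken over shrinking $U$, equals $\Re\phi(x)$ by continuity; the only subtlety is whether the infimum $a \geq \inf_U\Re\phi$ is attained for small $U$, i.e.\ whether $a \geq \Re\phi(x)$ suffices or one needs strict inequality $a > \Re\phi(x)$. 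This is exactly the dichotomy encoded in the definition of $\Lambda^{\phi(x)}$: if $x$ is a local minimum of $\Re\phi$, then for sufficiently small $U$ one has $\inf_U\Re\phi = \Re\phi(x)$, so $a = \Re\phi(x)$ already gives $\bk$; if $x$ is not a local minimum, then every neighborhood $U$ contains points where $\Re\phi$ is strictly below $\Re\phi(x)$, forcing $\inf_U\Re\phi < \Re\phi(x)$, so $(\Gr^a\Lambda^\phi_S)_x = \bk$ exactly when $a > \Re\phi(x)$ (take $U$ small enough that $\inf_U\Re\phi$ is as close to $\Re\phi(x)$ as desired from below, so any $a$ strictly larger works). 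In both cases this matches the grading of $\Lambda^{\phi(x)}$ exactly, and the $\Lambda$-action (multiplication by $T^b$ shifting the index $a$) visibly matches as well.

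The main obstacle, such as it is, is the boundary case analysis of whether $-a \leq \Re\phi(x)$ or $-a < \Re\phi(x)$ at the threshold, i.e.\ being careful about attainment of infima over a neighborhood basis; everything else is a routine interchange of colimits with direct sums and the invocation of the preceding lemma. Using the sign convention of the paper ($\Gr^a$ nonzero for $-a \leq b = \inf_U\Re\phi$, equivalently $a \geq -\inf_U\Re\phi$; note the paper's lemma states it as $a < \inf_U\Re\phi \iff \Gr^a = 0$), one checks the two cases of the definition of $\Lambda^{\phi(x)}$ correspond precisely to the local-minimum versus non-local-minimum alternative. Finally, since the isomorphism is natural in the open sets and compatible with the $T$-action, it assembles to an isomorphism $(\Lambda^\phi_S)_x \cong \Lambda^{\phi(x)}$ of $\bR$-graded $\Lambda$-modules, which is the claim; this is why the corollary is stated as ``clear from this lemma.''
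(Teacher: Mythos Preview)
Your approach is correct and is exactly what the paper intends by ``clear from this lemma'': compute the stalk graded piece by graded piece using the preceding lemma's description of $\Gr^a\Lambda^\phi_S(U)$, then analyze the limit $\inf_U\Re\phi \to \Re\phi(x)$ according to whether $x$ is a local minimum of $\Re\phi$. One small caution: your intermediate inequalities slip between $a\geq\inf_U\Re\phi$ and $-a\leq\inf_U\Re\phi$, but you correctly land on the conditions $-a\leq\Re\phi(x)$ versus $-a<\Re\phi(x)$ matching the definition of $\Lambda^{\phi(x)}$, so the argument stands.
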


\begin{corollary}\label{irrconstcomponent}
We have $\Gr^d\Lambda^\phi_S\cong \bk_{\mathrm{Int}\overline{\lc x\relmid -d<\Re\phi(x)\rc}}$
\end{corollary}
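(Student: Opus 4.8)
The plan is to exhibit $\Gr^d\Lambda^\phi_S$ as a subsheaf of the constant sheaf $\bk_S$ and then to identify the open subset of $S$ on which it is supported, using the stalk computation already established.

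For the first step, recall that $\Gamma_{S\times[-d,\infty)}\bk_{t\geq\Re\phi}$ is the subsheaf of $\bk_{t\geq\Re\phi}$ consisting of sections supported in the closed set $S\times[-d,\infty)$. Applying the left exact functor $p_*$ yields a monomorphism $\Gr^d\Lambda^\phi_S=p_*\Gamma_{S\times[-d,\infty)}\bk_{t\geq\Re\phi}\hookrightarrow p_*\bk_{t\geq\Re\phi}$. For every connected open $U\subset S$ the region $\lc(s,t)\relmid s\in U,\ t\geq\Re\phi(s)\rc$ is connected, so $p_*\bk_{t\geq\Re\phi}$ is the constant sheaf $\bk_S$. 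Now any subsheaf $\mathcal G$ of $\bk_S$ has all stalks equal to $0$ or $\bk$; its support $W:=\lc x\relmid \mathcal G_x\neq 0\rc$ is open, and the counit $j_!j^{-1}\mathcal G\to\mathcal G$ (with $j\colon W\hookrightarrow S$) is an isomorphism on stalks, so $\mathcal G\cong\bk_W$. Hence it suffices to compute the support $W$ of $\Gr^d\Lambda^\phi_S$.

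For this I would use Corollary~\ref{irrconststalk}: since $(\Lambda^\phi_S)_x\cong\Lambda^{\phi(x)}$ as $\bR$-graded $\Lambda$-modules, we get $(\Gr^d\Lambda^\phi_S)_x\cong\Gr^d\Lambda^{\phi(x)}$. Unwinding the definition of $\Lambda^{\phi(x)}$, this stalk is nonzero precisely when $-d<\Re\phi(x)$, or when $-d=\Re\phi(x)$ and $x$ is a local minimum of $\Re\phi$. Thus $W=\lc x\relmid -d<\Re\phi(x)\rc\cup\lc x\relmid \Re\phi(x)=-d \text{ and } x \text{ a local minimum of }\Re\phi\rc$.

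It remains to identify $W$ with $\mathrm{Int}\,\overline{\lc x\relmid -d<\Re\phi(x)\rc}$. Write $A$ for the open set $\lc x\relmid -d<\Re\phi(x)\rc$. The inclusion $A\subset\mathrm{Int}\,\overline{A}$ is immediate, and the remaining containments I would settle by a direct argument with the continuity of $\Re\phi$: a point $x$ with $\Re\phi(x)=-d$ lies in $\mathrm{Int}\,\overline{A}$ if and only if $\Re\phi\geq-d$ on some neighbourhood of $x$ and every neighbourhood of $x$ meets $A$, and these two conditions together express exactly that $x$ is a local minimum belonging to $W$. This topological comparison on the level set $\lc\Re\phi=-d\rc$ is the only nontrivial point; the rest is bookkeeping. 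Combining the three steps gives $\Gr^d\Lambda^\phi_S\cong\bk_{\mathrm{Int}\,\overline{A}}$, as asserted.
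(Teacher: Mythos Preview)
Your proof is correct and takes essentially the same approach as the paper, which simply asserts that for $x$ on the level set $\Re\phi(x)=-d$, the point $x$ is a local minimum if and only if $x\in\mathrm{Int}\,\overline{A}$, invoking Corollary~\ref{irrconststalk} implicitly. Your added step exhibiting $\Gr^d\Lambda^\phi_S$ as a subsheaf of the constant sheaf $\bk_S$ (hence determined by its open support) is a useful piece of scaffolding that the paper omits.
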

\begin{proof}
Let $x$ be a point with $\Re\phi(x)=-d$. The point $x$ is a local minimum if and only if $x$ is in the interior of the closure of $\lc-x\relmid -d<\Re\phi(x)\rc$. This completes the proof.
\end{proof}

Also, the module $\Lambda^\phi_\dS$ plays the role similar to the constant sheaves in the usual theory of sheaves. The following lemma is an example of this motto.
\begin{lemma}\label{flatness}
The module $\Lambda^\phi_\dS$ is a flat object in $\ModI(\Lambda_{\dS})$.
\end{lemma}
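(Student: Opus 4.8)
The plan is to establish flatness at the level of $\Mod^0(\Lambda_S)$ and then transport it via Lemma~\ref{liftflat}. That is, I would first show that $\Lambda^\phi_S$ is a flat object of $\Mod^0(\Lambda_S)$; since $\Lambda^\phi_\dS=[\Lambda^\phi_S]$, Lemma~\ref{liftflat} then yields at once that $\Lambda^\phi_\dS$ is flat in $\ModI(\Lambda_\dS)$.

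For the first step I would test flatness on stalks. For sheaves of $\bR$-graded $\Lambda_S$-modules one has $(\cV\otimes_{\Lambda_S}\cW)_x\cong\cV_x\otimes_\Lambda\cW_x$, and a morphism of such sheaves is a monomorphism precisely when it is one on every stalk; hence $\Lambda^\phi_S$ is flat in $\Mod^0(\Lambda_S)$ as soon as each stalk $(\Lambda^\phi_S)_x$ is a flat $\Lambda$-module. By Corollary~\ref{irrconststalk} this stalk is $\Lambda^{\phi(x)}$, which is flat: when $x$ is a local minimum of $\Re\phi$ the module $\Lambda^{\phi(x)}$ is free of rank one, generated by its component in degree $-\Re\phi(x)$; otherwise it is the filtered union of the free rank-one submodules generated in degrees $-\Re\phi(x)+\epsilon$, $\epsilon>0$, and is therefore again flat. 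This is exactly the torsion-freeness-over-a-valued-ring observation recorded just before the statement.

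I do not anticipate a genuine obstacle here; once Corollary~\ref{irrconststalk} is available the argument is essentially bookkeeping. The two points that need a word of care are the stalkwise criterion for flatness in $\Mod^0(\Lambda_S)$, and the fact that one must \emph{not} try to realize $\Lambda^\phi_S$ as an infinite filtered colimit directly inside $\ModI(\Lambda_\dS)$, since $[\cdot]$ does not commute with infinite colimits (see the remark after Lemma~\ref{limits}); running the colimit argument inside $\Mod^0(\Lambda_S)$ first and only then applying Lemma~\ref{liftflat} avoids this.
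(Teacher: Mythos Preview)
Your proposal is correct and is essentially the paper's argument, just organized more modularly: the paper inlines the content of Lemma~\ref{liftflat} (taking local lifts of the injection, then reducing to stalks) rather than citing it, but both proofs ultimately reduce to the flatness of the stalk $(\Lambda^\phi_S)_x\cong\Lambda^{\phi(x)}$ via Corollary~\ref{irrconststalk} and the torsion-freeness/valuation observation recorded just before it. Your remark about not running the colimit inside $\ModI$ is a sound caution, and your route through Lemma~\ref{liftflat} is arguably the cleaner packaging.
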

\begin{proof}
Let $\cV\rightarrow \cW$ be an injective morphism in $\ModI(\Lambda_\dS)$. We would like to show the induced morphism $\cV\otimes \Lambda^\phi_\dS\rightarrow \cW\otimes \Lambda^\phi_\dS$ is again injective. There exists a covering $\{U_i\}$ of $S$ which is locally finite in $\oS$ such that there exists representatives $\cV_i, \cW_i$ of $\cV$ and $\cW$ over each $U_i$. It is enough to prove the injectivity over each $U_i$.

By Lemma \ref{liftlemma}, one can assume the restriction $\cV_i\rightarrow \cW_i$ is still injective. Since the tensor product commutes with taking stalks, it reduces to show that $\cV_x\otimes (\Lambda^\phi_S)_x\rightarrow \cW_x\otimes (\Lambda^\phi_S)_x$ is injective. Since $(\Lambda^\phi_S)_x\cong \Lambda^{\phi(x)}$ (Corollary~\ref{irrconststalk}) is a torsion-free $\Lambda$-module, this completes the proof.
\end{proof}

\begin{lemma}\label{boundeds}
Let $\phi_1$ and $\phi_2$ be $\bC$-valued continuous functions over connected $S$ such that $\max\{0, \Re\phi_1-\Re\phi_2\}$ is bounded. Then there exists a canonical idenitification 
\begin{equation}
\Hom_{\ModI(\Lambda_\dS)}(\Lambda^{\phi_1}_\dS, \Lambda^{\phi_2}_\dS)\cong \bk.
\end{equation}
If moreover $\Re\phi_1-\Re\phi_2$ is bounded, two objects are isomorphic.
\end{lemma}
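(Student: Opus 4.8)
The plan is to reduce the whole computation to the internal hom. By Lemmas~\ref{internalhom} and~\ref{globalsection}, $\Hom_{\ModI(\Lambda_\dS)}(\Lambda^{\phi_1}_\dS,\Lambda^{\phi_2}_\dS)$ is the space of global sections, over the site $\Open_\dS$, of the sheaf $[\cHom(\Lambda^{\phi_1}_S,\Lambda^{\phi_2}_S)]\otimes\bk$, where $\cHom(\Lambda^{\phi_1}_S,\Lambda^{\phi_2}_S)$ denotes the internal hom of $\bR$-graded $\Lambda_S$-modules. So I would first identify this $\bR$-graded $\Lambda_S$-module, then apply $(-)\otimes\bk$ and take sections.

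The first step is to prove $\cHom(\Lambda^{\phi_1}_S,\Lambda^{\phi_2}_S)\cong\Lambda^{\phi_2-\phi_1}_S$. On a relatively compact open $U$ on which $\phi_1$ is bounded, $\Lambda^{\phi_1}_S|_U$ is free of rank one over $\Lambda_U$ up to a grading shift (the local description $\Lambda^\phi_S(U)\cong\Lambda\cdot T^b$), so the internal hom commutes with taking stalks; by Corollary~\ref{irrconststalk} the stalk at $x$ is the graded hom $\Hom_\Lambda(\Lambda^{\phi_1(x)},\Lambda^{\phi_2(x)})$, and a direct computation with these torsion-free $\Lambda$-modules of rank one up to shift identifies it with $\Lambda^{(\phi_2-\phi_1)(x)}$, which by Corollary~\ref{irrconststalk} is precisely the stalk of $\Lambda^{\phi_2-\phi_1}_S$. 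To upgrade this to a global isomorphism I would construct the natural comparison morphism --- from the convolution pairing $\Lambda^{\phi_1}_S\otimes_{\Lambda_S}\Lambda^{\phi_2-\phi_1}_S\to\Lambda^{\phi_2}_S$ induced by addition in the extra variable, via tensor--hom adjunction --- and check it is the above stalkwise isomorphism. The bottom-degree bookkeeping (local minima, open versus closed) parallels Corollary~\ref{irrconstcomponent} and is harmless after $\otimes\bk$.

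Next, set $\psi:=\phi_2-\phi_1$; by hypothesis there is $B\geq0$ with $\Re\psi=\Re\phi_2-\Re\phi_1\geq-B$ on $S$. By Corollary~\ref{irrconstcomponent}, $\Gr^a\Lambda^\psi_S\cong\bk_{\mathrm{Int}\,\overline{\{x\mid-a<\Re\psi(x)\}}}$, so for every $a>B$ one has $\{x\mid-a<\Re\psi(x)\}=S$ and hence $\Gr^a\Lambda^\psi_S\cong\bk_S$ (the genuine constant sheaf), with the $\Lambda$-action $T\colon\Gr^a\Lambda^\psi_S\to\Gr^{a+1}\Lambda^\psi_S$ equal to the identity of $\bk_S$ for $a>B$. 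Since $(-)\otimes_\Lambda\bk$ identifies $T^bm$ with $m$ --- i.e. is the filtered colimit of $\Lambda^\psi_S$ along the $T$-action --- and this colimit is eventually the identity diagram on $\bk_S$, one gets $\Lambda^\psi_S\otimes\bk\cong\bk_S$ already at the presheaf level; no sheafification is needed, so the coarser covers of $\Open_\dS$ never enter. Therefore $\Hom_{\ModI(\Lambda_\dS)}(\Lambda^{\phi_1}_\dS,\Lambda^{\phi_2}_\dS)\cong\Gamma(S,\bk_S)\cong\bk$ because $S$ is connected, the canonical generator being $1\in\Gamma(S,\bk_S)$, that is, the canonical morphism assembled from the high-degree corestrictions. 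For the last assertion, if $\Re\phi_1-\Re\phi_2$ is bounded then the hypothesis also holds with $\phi_1$ and $\phi_2$ interchanged, giving a canonical morphism in the opposite direction; restricting both canonical morphisms to a relatively compact connected open --- where after $\otimes\bk$ both $\Lambda^{\phi_i}_S$ become the constant sheaf and both morphisms become its identity --- shows the two composites are the identity, so $\Lambda^{\phi_1}_\dS\cong\Lambda^{\phi_2}_\dS$.

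The hard part will be the first step: establishing $\cHom(\Lambda^{\phi_1}_S,\Lambda^{\phi_2}_S)\cong\Lambda^{\phi_2-\phi_1}_S$ as an honest $\bR$-graded $\Lambda_S$-module (not merely stalkwise), carrying along the boundary/local-minimum subtleties, and arguing cleanly that the restricted topology of $\Open_\dS$ is irrelevant here --- which holds only because, under the boundedness hypothesis, the relevant internal hom is eventually literally the constant sheaf $\bk_S$, whose sections ignore the topology. Dropping the hypothesis, the high-degree graded pieces become proper non-closed open subsets of $S$, their extensions by zero have no nonzero global sections, and the $\Hom$ collapses to $0$, exactly as in the example following Lemma~\ref{limits}.
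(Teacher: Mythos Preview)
Your proof is correct and takes a genuinely different route from the paper's. The paper argues directly with representatives: it first writes down the ``standard'' morphism $\Lambda^{\phi_1}_S\to\Lambda^{\phi_2}_S\langle c\rangle$ coming from the inclusion $\{t\geq\Re\phi_1\}\subset\{t\geq\Re\phi_2+c\}$, then shows every morphism is standard by picking a lift $\tf$ and analysing a single graded piece $\tf_d$ via Corollary~\ref{irrconstcomponent}, pushing up to a degree $d'$ where the relevant level sets become connected so that $\tf_{d'}$ is visibly a scalar. Your approach instead front-loads the internal-hom computation $\cHom(\Lambda^{\phi_1}_\dS,\Lambda^{\phi_2}_\dS)\cong\Lambda^{\phi_2-\phi_1}_\dS$ (which the paper proves \emph{after} Lemma~\ref{boundeds}, by the same stalkwise method you sketch, in the lemma preceding Corollary~\ref{Rhomstandard}) and then reads off $\Hom$ as global sections of $\Lambda^{\phi_2-\phi_1}_\dS\otimes\bk$ using Lemma~\ref{globalsection}. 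What you gain is a cleaner explanation of \emph{why} the answer is $\bk$: under the boundedness hypothesis the high graded pieces of $\Lambda^{\phi_2-\phi_1}_S$ are literally the constant sheaf $\bk_S$, so $\otimes\bk$ yields $\bk_S$ and the modified site $\Open_\dS$ plays no role. What the paper's approach gains is that it avoids having to pin down $\cHom$ precisely at the $\Mod^0$ level (your ``hard part'') and makes the canonical generator completely explicit. Both routes ultimately rest on the same stalk computation (Corollary~\ref{irrconststalk}) and the same component description (Corollary~\ref{irrconstcomponent}); yours is a legitimate reordering of the paper's development rather than a logically independent argument.
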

\begin{proof}
Since $\max\{0,\Re\phi_1-\Re\phi_2\}$ is bounded, there exists a large $c\in \bR$ such that $\Re\phi_2+c\geq \Re\phi_1$. The restriction map $\bk_{\Re\phi_1\geq t}\rightarrow \bk_{\Re\phi_{2}+c\geq t}$ induces a morphism $\Lambda^{\phi_1}_S\rightarrow \Lambda^{\phi_2}_S\la c\ra$ of $\bR$-graded $\Lambda_S$-modules.
If $\max\{0, \Re\phi_2-\Re\phi_1\}$ is also bounded, in the same way, we also have a morphism $\Lambda^{\phi_2}_S\rightarrow \Lambda^{\phi_1}_S\la d\ra$ for some $d\geq 0$. The composition $\Lambda^{\phi_1}_S\rightarrow \Lambda^{\phi_1}_S\la c+d\ra$ is given by $T^{c+d}$. This is the identity of $\Lambda^\phi_\dS$ in $\ModI(\Lambda_X)$. The same for the other direction. This completes the proof of the second part of the statement. We call the morphism $\Lambda^{\phi_1}_S\rightarrow \Lambda^{\phi_2}_S$ and its scalar multiples {\em standard morphisms}. In the below, we will see there are only standard morphisms.

Let $f$ be a nonzero morphism in $\Hom_{\ModI(\Lambda_\dS)}(\Lambda^{\phi_1}_\dS, \Lambda^{\phi_2}_\dS)$. Let us take a representative $\tf\colon \Lambda^{\phi_1}_S\rightarrow \Lambda^{\phi_2+c}_S$ as a morphism of $\bR$-graded $\Lambda$-modules locally on $U\subset S$. We can take so that $c+\Re\phi_2>\Re\phi_1$ and replace $\phi_2$ with $\phi_2+c$ We consider $d\in \bR$ such that the grading $d$-part of $\tf$ is nonzero. To see this part more explicitly, let us prepare some notations.

Let us set $\mathrm{Int}\overline{\lc x\in U\relmid -d< \Re\phi_i(x)\rc}=\sqcup_a S_{d,i}^a$ be the decomposition into connected components. Since $\Gr^d\Lambda_U^\phi=\bk_{\mathrm{Int}\overline{\lc x\relmid -d< \Re\phi(x)\rc}}$, we have $\Gr^d\Lambda_U^{\phi_i}=\bigoplus_{a}\bk_{S^a_{d,i}}$. We have
\begin{equation}
\tf_d\colon \bigoplus_{a}\bk_{S^a_{d,1}} \rightarrow \bigoplus_{a}\bk_{S^a_{d,2}}.
\end{equation}
There exists $d'\in \bRz$ such that there exists a connected component $S_i$ of $\mathrm{Int}\overline{\lc x\relmid -d'<\Re\phi_i(x)\rc}$ for each $i$ such that $S_i\supset \mathrm{Int}\overline{\lc x\relmid -d<\Re\phi_i(x)\rc}$. Then we have a commutative diagram
\begin{equation}
\xymatrix{
\bk_{S_1}\ar[r]^{\tf_{d'}} & \bk_{S_2} \\
\bigoplus_{a}\bk_{S^a_{d,1}} \ar[u]_{T^{d'-d}} \ar[r]_{\tf_d}&\ar[u]_{T^{d'-d}} \bigoplus_{a}\bk_{S^a_{d,2}}
}
\end{equation}
Since $S_1$ and $S_2$ are connected, the hom-space between them is 1-dimensional. Hence $\tf_d$ is induced by a standard morphism.  This completes the proof.  
\end{proof}

We prepare the following crucial lemma. The corresponding observation in the theory of enhanced ind-sheaves is a key to the formulation of irregular Riemann--Hilbert correspondence \cite{D'Agnolo--Kashiwara}.
\begin{lemma}\label{orthogonality}
Let $\dS$ be a topological space with boundary with $S$ connected. Let $\phi_1, \phi_2$ be $\bC$-valued continuous functions on $S$. Assume that there exists an open subset $V$ of $S$ such that $\overline{V}\cap D_S$ is nonempty and $\Re\phi_2-\Re\phi_1$ is divergent to $-\infty$ on $\overline{V}\cap D_S$. Then there exists no nonzero morphisms from $\Lambda_\dS^{\phi_1}$ to $\Lambda_\dS^{\phi_2}$. 
\end{lemma}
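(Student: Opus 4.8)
The plan is to argue by contradiction, reducing the question to the local computation of hom-spaces carried out in Lemma~\ref{boundeds} combined with the torsion-vanishing mechanism of Lemma~\ref{1.10}. Suppose $f\colon \Lambda_\dS^{\phi_1}\to \Lambda_\dS^{\phi_2}$ is a nonzero morphism in $\ModI(\Lambda_\dS)$. Restricting to the open subset $V$ of the hypothesis (this is legitimate since restriction is exact by Lemma~\ref{restrictionexact}, and $(\overline V, \overline V\cap D_S)$ is again a topological space with boundary), we obtain a morphism $f|_V\colon \Lambda^{\phi_1}_{(\overline V, \overline V\cap D_S)}\to \Lambda^{\phi_2}_{(\overline V, \overline V\cap D_S)}$. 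The strategy is: first show that if $f|_V$ is nonzero then, shrinking $V$ further to a connected piece whose closure still meets $D_S$, the restricted morphism is (a scalar multiple of) a standard morphism in the sense of Lemma~\ref{boundeds}; then show that the existence of a standard morphism forces $\max\{0,\Re\phi_1-\Re\phi_2\}$ to be bounded on that connected piece, contradicting the divergence of $\Re\phi_2-\Re\phi_1$ to $-\infty$ along $\overline V\cap D_S$.

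First I would handle the nonvanishing of $f|_V$. Pick a point $x$ in the interior $V\cap S$ where a chosen representative $\tilde f\colon \Lambda^{\phi_1}_S\to\Lambda^{\phi_2}_S\la c\ra$ of $f$ is nonzero on the stalk; by Lemma~\ref{1.10} and Lemma~\ref{1.11} we may take such a representative and a neighborhood where it is nonzero on every stalk, and by Corollary~\ref{irrconststalk} the stalks are the explicitly described flat $\Lambda$-modules $\Lambda^{\phi_i(x)}$. Then I would carry out the grading-degree analysis exactly as in the proof of Lemma~\ref{boundeds}: choose $d$ so that the degree-$d$ part $\tilde f_d\colon \bigoplus_a \bk_{S^a_{d,1}}\to\bigoplus_a\bk_{S^a_{d,2}}$ is nonzero, using the description $\Gr^d\Lambda^{\phi_i}=\bigoplus_a\bk_{S^a_{d,i}}$ from Corollary~\ref{irrconstcomponent}. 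The point is that this nonzero map of constant sheaves on locally closed sets must be nonzero on some connected component $S_{d,1}^{a_0}$, mapping into the corresponding $S_{d,2}^{a}$, which in particular forces $S_{d,1}^{a_0}\subset S_{d,2}^{a}$, i.e. $\{-d<\Re\phi_1\}$ contains a component that sits inside $\{-d<\Re\phi_2\}$.

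The heart of the argument — and the main obstacle — is turning this containment into a boundedness statement near $D_S$. If I shrink $V$ to a small connected neighborhood $W$ of a boundary point $x_0\in\overline V\cap D_S$ along which $\Re\phi_2-\Re\phi_1\to-\infty$, then on points of $W$ approaching $x_0$ we have $\Re\phi_2\ll\Re\phi_1$, so for any fixed threshold $-d$ the region $\{-d<\Re\phi_1\}\cap W$ eventually fails to be contained in $\{-d<\Re\phi_2\}\cap W$: indeed near $x_0$ we can have $\Re\phi_1$ large (hence $-d<\Re\phi_1$) while $\Re\phi_2$ is very negative (hence $-d>\Re\phi_2$). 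One has to be careful that the relevant connected component $S^{a_0}_{d,1}$ does accumulate at $x_0$; this is why the hypothesis is phrased with an \emph{open} $V$ whose closure meets $D_S$, and why one uses that $S$ (hence $W$) is connected so that the sets $\{t\le\Re\phi_i\}$ behave as in Lemma~\ref{boundeds}. Concretely, I would argue: if $\tilde f_d$ is nonzero on a component accumulating at $x_0$, composing with further standard-type maps $T^{d'-d}$ (as in the commutative square in the proof of Lemma~\ref{boundeds}) shows the only possible nonzero degree parts are those where the source component embeds in a target component, and letting the threshold and the point vary along $W\to x_0$ produces a contradiction with divergence.

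Finally I would assemble the pieces: the reduction via restriction to $V$ and then to the connected $W$ uses exactness of restriction (Lemma~\ref{restrictionexact}) and Lemma~\ref{alphaexact}; the stalk-level nonvanishing uses Lemma~\ref{1.10}–\ref{1.11} and Corollary~\ref{irrconststalk}; the graded-piece description uses Corollary~\ref{irrconstcomponent}; and the final contradiction is the geometric incompatibility of "$\{-d<\Re\phi_1\}$ locally inside $\{-d<\Re\phi_2\}$ near $x_0$" with "$\Re\phi_2-\Re\phi_1\to-\infty$ on $\overline V\cap D_S$." I expect the delicate point to be a clean justification that the nonzero degree-$d$ component of a lift can be chosen to correspond to a component of $\{-d<\Re\phi_1\}$ whose closure actually contains a boundary point of $\overline V\cap D_S$ along which the divergence hypothesis applies; everything else is bookkeeping modeled on the proof of Lemma~\ref{boundeds}.
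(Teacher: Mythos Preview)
Your proposal has the right ingredients but is organized in a way that creates a genuine gap. You restrict to $V$ at the outset and then try to argue that $f|_V$ is nonzero by ``picking a point $x\in V\cap S$ where a chosen representative $\tilde f$ is nonzero on the stalk.'' But that is exactly what needs to be proved: a priori $f$ could be nonzero only at stalks outside $V$, and nothing you have written forces a nonzero stalk to lie in $V$. The connectedness you invoke is that of your shrunken neighborhood $W\subset V$, not of $S$; it is the connectedness of $S$ as a whole that is needed to transport nonvanishing from wherever it occurs into the region near $D_S$.

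The paper's proof reverses your logic and is much shorter. Take a single global lift $\tilde f\colon \Lambda^{\phi_1}_S\to\Lambda^{\phi_2+c}_S$ in $\Mod^0(\Lambda_S)$ via Lemma~\ref{1.11}. Because $\Re\phi_2-\Re\phi_1\to-\infty$ along $\overline V\cap D_S$, there is an open set $U$ near the boundary on which $\Re\phi_2+c-\Re\phi_1<0$; on such $U$ the degree-$0$ map $\Lambda^{\phi_1}_U\to\Lambda^{\phi_2+c}_U$ is forced to be zero (a direct stalk check: the generator of $\Lambda^{\phi_1(x)}$ lives in a degree where $\Lambda^{\phi_2(x)+c}$ has nothing). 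So $\tilde f|_U=0$. Now Lemma~\ref{boundeds} says that on any connected relatively compact open subset of $S$ the hom-space is one-dimensional, spanned by the standard morphism; since $S$ is connected and $f$ vanishes on the nonempty open $U$, it vanishes everywhere. Your graded-component analysis and the ``delicate point'' you flag are unnecessary once you argue in this direction: prove vanishing near $D_S$ first, then propagate, rather than trying to push an assumed nonvanishing toward $D_S$.
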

\begin{proof}
For $f\in \Hom_{\ModI(\Lambda_\dS)}(\Lambda_\dS^{\phi_1}, \Lambda_\dS^{\phi_2})$, let us take a representative $\tf\colon \Lambda_\dS^{\phi_1}\rightarrow \Lambda_\dS^{\phi_2+c}$ as a morphism between $\bR$-graded $\Lambda_S$-modules. Since $\Re\phi_2-\Re\phi_1$ is negatively divergent, there exists a neighborhood $U$ of $D_S$ such that $\Re\phi_2+c-\Re\phi_1$ is negative on $U\bs D_S$. Hence over $U\bs D_S$, the restriction of $\tf$ is zero there. By Lemma \ref{boundeds} and the connectedness of $S$, $f$ is zero everywhere.
\end{proof}

We also give the following.
\begin{lemma}\label{tensorstandard}
For $\Lambda^{\phi_i}_\dS\in \ModI(\Lambda_\dS)$ $(i=1,2)$, we have $\Lambda^{\phi_1}_\dS\otimes \Lambda^{\phi_2}_\dS\cong \Lambda^{\phi_1+\phi_2}_\dS$. In particular, $\Lambda^\phi_\dS\otimes \Lambda^{-\phi}_\dS\cong \Lambda_\dS$.
\end{lemma}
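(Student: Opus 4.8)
The plan is to write down an explicit comparison morphism in $\Mod^0(\Lambda_S)$ and to check that it becomes an isomorphism in $\ModI(\Lambda_\dS)$ by a stalkwise argument. Since $\Lambda^{\phi_i}_\dS=[\Lambda^{\phi_i}_S]$ carries the global lift $\Lambda^{\phi_i}_S$, the construction of $\otimes$ on $\ModI$ (and its independence of the chosen lifts) gives $\Lambda^{\phi_1}_\dS\otimes\Lambda^{\phi_2}_\dS\cong[\Lambda^{\phi_1}_S\otimes_{\Lambda_S}\Lambda^{\phi_2}_S]$, while $\Lambda^{\phi_1+\phi_2}_\dS=[\Lambda^{\phi_1+\phi_2}_S]$; so it is enough to produce an isomorphism $[\Lambda^{\phi_1}_S\otimes_{\Lambda_S}\Lambda^{\phi_2}_S]\cong[\Lambda^{\phi_1+\phi_2}_S]$. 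To build the comparison map $m\colon\Lambda^{\phi_1}_S\otimes_{\Lambda_S}\Lambda^{\phi_2}_S\to\Lambda^{\phi_1+\phi_2}_S$ in $\Mod^0(\Lambda_S)$, I would use that over a connected open $U$ each $\Lambda^{\phi_i}_S(U)$ is free of rank one over $\Lambda$ with canonical generator in degree $-\inf_U\Re\phi_i$, so that the tensor product of sections is free of rank one in degree $-\inf_U\Re\phi_1-\inf_U\Re\phi_2$, while $\Lambda^{\phi_1+\phi_2}_S(U)$ is free of rank one in degree $-\inf_U\Re(\phi_1+\phi_2)\le -\inf_U\Re\phi_1-\inf_U\Re\phi_2$; sending the generator of the former to the canonical section of the latter of the same degree is $\Lambda$-linear, natural in $U$, and sheafifies to $m$.

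Next I would check that $[m]$ is an isomorphism in $\ModI(\Lambda_\dS)$. Since $[\cdot]$ is exact (Lemma~\ref{cdotexact}), this amounts to $[\ker m]=0$ and $[\coker m]=0$. As $\otimes$, $\ker$ and $\coker$ commute with stalks and $(\Lambda^{\phi_i}_S)_x\cong\Lambda^{\phi_i(x)}$ (Corollary~\ref{irrconststalk}), $m_x$ is the natural map $\Lambda^{\phi_1(x)}\otimes_\Lambda\Lambda^{\phi_2(x)}\to\Lambda^{(\phi_1+\phi_2)(x)}$. A direct computation with the two possible shapes of the rank-one torsion-free module $\Lambda^c$ --- namely $\bigoplus_{-a\le c}\bk$ (``closed'') and $\bigoplus_{-a<c}\bk$ (``open'') --- shows that $\Lambda^{\phi_1(x)}\otimes_\Lambda\Lambda^{\phi_2(x)}$ is again of this shape with parameter $\Re\phi_1(x)+\Re\phi_2(x)=\Re(\phi_1+\phi_2)(x)$, closed exactly when both factors are, and that $m_x$ is the tautological inclusion. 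Since a common local minimum of $\Re\phi_1$ and $\Re\phi_2$ is a local minimum of $\Re(\phi_1+\phi_2)$, the map $m_x$ is always injective, so $\ker m=0$; and $\coker m_x$ is $0$ or one-dimensional, concentrated in degree $-\Re(\phi_1+\phi_2)(x)$. By continuity of $\Re(\phi_1+\phi_2)$, every $x_0\in S$ has a neighborhood $U$ and $\varepsilon>0$ such that $\coker m$ is concentrated, degreewise, in an interval of length $\le 2\varepsilon$ on $U$; then $T^{2\varepsilon}$ kills $(\coker m)|_U$, so $[(\coker m)|_U]=0$ in $\ModIp(\Lambda_U)$, and hence $[\coker m]=0$. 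Therefore $[m]$ is an isomorphism, which gives $\Lambda^{\phi_1}_\dS\otimes\Lambda^{\phi_2}_\dS\cong\Lambda^{\phi_1+\phi_2}_\dS$.

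For the last statement, $\Lambda^0_S=\bigoplus_{a\in\bRz}\bk_S\,T^a=\Lambda_S$ straight from the definition, so $\Lambda^0_\dS\cong\Lambda_\dS$; taking $\phi_1=\phi$ and $\phi_2=-\phi$ yields $\Lambda^\phi_\dS\otimes\Lambda^{-\phi}_\dS\cong\Lambda^0_\dS\cong\Lambda_\dS$. The step I expect to be the main obstacle is making the construction of $m$ rigorous --- i.e.\ verifying that the tautological inclusion of sections over connected opens is natural and assembles into a morphism of sheaves of $\bR$-graded $\Lambda_S$-modules; once $m$ is in hand, the fact that $[m]$ is invertible is the elementary local computation sketched above.
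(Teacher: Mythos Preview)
Your proposal is correct and follows essentially the same strategy as the paper: build a comparison morphism in $\Mod^0(\Lambda_S)$ and verify via the stalk description of Corollary~\ref{irrconststalk} that its kernel and cokernel are annihilated in $\ModI(\Lambda_\dS)$. The only cosmetic differences are that the paper's map goes in the opposite direction (from $\Lambda^{\phi_1+\phi_2}_S$ into the tensor product, built from the graded pieces via Corollary~\ref{irrconstcomponent}) and that the paper observes directly that the stalkwise kernel and cokernel are killed by $T^a$ for every $a>0$, making your local $T^{2\varepsilon}$ bound unnecessary.
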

\begin{proof}
We have $\Gr^a\Lambda_S^{\phi_i}=\bk_{\lc x\relmid \Re\phi(x)>-a\rc}$ for $i=1,2$. Hence we have a map $\Gr^a\Lambda^{\phi_1+\phi_2}_S\rightarrow \Gr^b\Lambda^{\phi_1}_S\otimes_k\Gr^c\Lambda^{\phi_2}_S$ for $a=b+c$. Hence we get a map $m\colon \Lambda^{\phi_1+\phi_2}_S\rightarrow \Lambda^{\phi_1}_S\otimes \Lambda^{\phi_2}_S$. By Corollary \ref{irrconststalk}, the stalks of both sides at $x\in X$ are $\bigoplus_{-a\leq \Re\phi_1(x)+\Re\phi_2(x)}\Lambda^{\phi_1(x)+\phi_2(x)}_S$ or  $\bigoplus_{-a< \Re\phi_1(x)+\Re\phi_2(x)}\Lambda^{\phi_1(x)+\phi_2(x)}_S$. Hence the kernel and cokernel of $m$ is killed by $T^a$ for any $a\in \bR$. Therefore the kernel and cokernel are zero in $\ModI(\Lambda_\dS)$. This completes the proof. 
\end{proof}

Similarly, we have
\begin{lemma}
For $\Lambda^{\phi_i}_\dS\in \ModI(\Lambda_\dS)$ $(i=1,2)$, we have $\cHom(\Lambda^{\phi_1}_\dS, \Lambda^{\phi_2}_\dS)\cong \Lambda^{\phi_2-\phi_1}_\dS$.
\end{lemma}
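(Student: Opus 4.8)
The plan is to deduce this from the tensor--hom adjunction together with the fact, already recorded in Lemma~\ref{tensorstandard}, that $\Lambda^{\phi}_\dS$ is invertible for the tensor product with inverse $\Lambda^{-\phi}_\dS$. First I would observe that $\otimes$ on $\ModI(\Lambda_\dS)$ is associative and symmetric with unit $\Lambda_\dS$: this is checked on a covering, where it reduces to the corresponding elementary statements for $\otimes_{\Lambda_X}$ on $\Mod^0(\Lambda_X)$ and then descends. Combining this with Lemma~\ref{tensorstandard}, the endofunctor $-\otimes\Lambda^{\phi_1}_\dS$ is an autoequivalence of $\ModI(\Lambda_\dS)$ with quasi-inverse $-\otimes\Lambda^{-\phi_1}_\dS$.

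Next, for an arbitrary test object $\cV\in\ModI(\Lambda_\dS)$ I would chain the following natural isomorphisms:
\begin{align*}
\Hom_{\ModI(\Lambda_\dS)}\bigl(\cV,\cHom(\Lambda^{\phi_1}_\dS,\Lambda^{\phi_2}_\dS)\bigr)
&\cong\Hom_{\ModI(\Lambda_\dS)}\bigl(\cV\otimes\Lambda^{\phi_1}_\dS,\Lambda^{\phi_2}_\dS\bigr)\\
&\cong\Hom_{\ModI(\Lambda_\dS)}\bigl(\cV\otimes\Lambda^{\phi_1}_\dS\otimes\Lambda^{-\phi_1}_\dS,\ \Lambda^{\phi_2}_\dS\otimes\Lambda^{-\phi_1}_\dS\bigr)\\
&\cong\Hom_{\ModI(\Lambda_\dS)}\bigl(\cV,\Lambda^{\phi_2-\phi_1}_\dS\bigr),
\end{align*}
where the first step is Proposition~\ref{tensorhomadjunction}, the second applies the autoequivalence $-\otimes\Lambda^{-\phi_1}_\dS$, and the third uses $\Lambda^{\phi_1}_\dS\otimes\Lambda^{-\phi_1}_\dS\cong\Lambda_\dS$, $\Lambda^{\phi_2}_\dS\otimes\Lambda^{-\phi_1}_\dS\cong\Lambda^{\phi_2-\phi_1}_\dS$ (Lemma~\ref{tensorstandard}), and $\Lambda_\dS\otimes-\cong\id$. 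Since every step is natural in $\cV$, Yoneda's lemma gives $\cHom(\Lambda^{\phi_1}_\dS,\Lambda^{\phi_2}_\dS)\cong\Lambda^{\phi_2-\phi_1}_\dS$.

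I do not expect a serious obstacle: the only point requiring a little care is the symmetric-monoidal bookkeeping for $\otimes$ (associativity, unit, and hence the autoequivalence property), which is not spelled out in the text but is immediate from the local description. As an alternative that sidesteps this bookkeeping, one could instead imitate the proof of Lemma~\ref{tensorstandard} directly: build the ``standard'' morphism $\Lambda^{\phi_2-\phi_1}_\dS\to\cHom(\Lambda^{\phi_1}_\dS,\Lambda^{\phi_2}_\dS)$ induced by precomposition with the standard morphisms of Lemma~\ref{boundeds}, and check by Corollary~\ref{irrconststalk} that its kernel and cokernel are annihilated by $T^a$ for every $a\in\bR$, hence vanish in $\ModI(\Lambda_\dS)$.
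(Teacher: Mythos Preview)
Your argument is correct. The paper's proof, by contrast, simply says ``one can prove in a similar way as in the proof of Lemma~\ref{tensorstandard}'', i.e., it takes exactly the alternative you sketch at the end: construct an explicit map between $\Lambda^{\phi_2-\phi_1}_\dS$ and $\cHom(\Lambda^{\phi_1}_\dS,\Lambda^{\phi_2}_\dS)$ and verify on stalks (via Corollary~\ref{irrconststalk}) that the kernel and cokernel are annihilated by every $T^a$, hence vanish in $\ModI(\Lambda_\dS)$. Your Yoneda route via the tensor--hom adjunction (Proposition~\ref{tensorhomadjunction}) and the invertibility of $\Lambda^{\phi_1}_\dS$ (Lemma~\ref{tensorstandard}) is cleaner and more conceptual, and it avoids any stalk computation; the price is the mild bookkeeping about associativity, symmetry, and the unit for $\otimes$, which as you note is immediate from the local description but is not spelled out in the paper. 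The paper's direct approach avoids invoking that monoidal structure and stays closer to the hands-on style of the surrounding section. Either way the result follows; your version has the added virtue that the same one-line argument would also give the derived statement $\RcHom(\Lambda^{\phi_1}_\dS,\Lambda^{\phi_2}_\dS)\simeq\Lambda^{\phi_2-\phi_1}_\dS$ once flatness (Lemma~\ref{flatness}) is in hand, which is essentially what the paper does in Corollary~\ref{Rhomstandard}.
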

\begin{proof}
One can prove in a similar way as in the proof of Lemma~\ref{tensorstandard}.
\end{proof}

The following will be repeatedly used later.
\begin{corollary}\label{Rhomstandard}
We have $\RcHom(\Lambda^{\phi_1}_\dS, \Lambda^{\phi_2}_\dS)\simeq \Lambda^{\phi_1-\phi_2}_\dS$.
\end{corollary}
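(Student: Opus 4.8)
The plan is to reduce the derived statement to the non-derived one established in the lemma just above, by showing that $\RcHom(\Lambda^{\phi_1}_\dS,-)$ acquires no higher cohomology when evaluated on $\Lambda^{\phi_2}_\dS$. The key input is that $\Lambda^{\phi_1}_\dS$ is flat (Lemma~\ref{flatness}) and in fact $\otimes$-invertible (Lemma~\ref{tensorstandard}).

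First I would check that $\cHom(\Lambda^{\phi_1}_\dS,-)$ is an \emph{exact} functor on $\ModI(\Lambda_\dS)$. For this I would upgrade the lemma above to the functorial statement $\Lambda^{-\phi_1}_\dS\otimes(-)\xrightarrow{\simeq}\cHom(\Lambda^{\phi_1}_\dS,-)$: the natural map comes from the pairing $\Lambda^{\phi_1}_\dS\otimes\Lambda^{-\phi_1}_\dS\cong\Lambda_\dS$ of Lemma~\ref{tensorstandard}, and one verifies it is an isomorphism by passing to stalks via Corollary~\ref{irrconststalk}, exactly as in the proof of Lemma~\ref{tensorstandard}. Since $\Lambda^{-\phi_1}_\dS$ is flat (Lemma~\ref{flatness}), the functor $\Lambda^{-\phi_1}_\dS\otimes(-)$, hence $\cHom(\Lambda^{\phi_1}_\dS,-)$, is exact.

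Next I would show $\cHom(\Lambda^{\phi_1}_\dS,-)$ sends injectives to injectives. For an injective object $\cI$, Proposition~\ref{tensorhomadjunction} gives $\Hom_{\ModI(\Lambda_\dS)}(-,\cHom(\Lambda^{\phi_1}_\dS,\cI))\cong\Hom_{\ModI(\Lambda_\dS)}((-)\otimes\Lambda^{\phi_1}_\dS,\cI)$, which is a composite of the exact functor $(-)\otimes\Lambda^{\phi_1}_\dS$ (flatness again) with the exact functor $\Hom_{\ModI(\Lambda_\dS)}(-,\cI)$; hence $\cHom(\Lambda^{\phi_1}_\dS,\cI)$ is injective. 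Combining the two points: for an injective resolution $\Lambda^{\phi_2}_\dS\to\cI^\bullet$ the complex $\cHom(\Lambda^{\phi_1}_\dS,\cI^\bullet)$ consists of injectives and, by exactness of $\cHom(\Lambda^{\phi_1}_\dS,-)$, is quasi-isomorphic to $\cHom(\Lambda^{\phi_1}_\dS,\Lambda^{\phi_2}_\dS)$. Therefore $\RcHom(\Lambda^{\phi_1}_\dS,\Lambda^{\phi_2}_\dS)\simeq\cHom(\Lambda^{\phi_1}_\dS,\Lambda^{\phi_2}_\dS)$, and the lemma above yields the claim.

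The main obstacle is precisely this reduction of $\RcHom$ to $\cHom$: flatness of the first argument does not on its own kill the higher self-extensions, and one really has to use the extra structure of $\Lambda^{\phi_1}_\dS$ — the identification $\cHom(\Lambda^{\phi_1}_\dS,-)\cong\Lambda^{-\phi_1}_\dS\otimes(-)$, equivalently the $\otimes$-invertibility from Lemma~\ref{tensorstandard}. An alternative route, avoiding the functorial strengthening of the lemma above, is to use that $\Lambda^{\phi_1}_\dS$ is \emph{locally} isomorphic to the constant object $\Lambda_\dS$ (Lemma~\ref{boundeds}, since $\phi_1$ is locally bounded); then $\RcHom(\Lambda^{\phi_1}_\dS,\Lambda^{\phi_2}_\dS)$ is locally isomorphic to $\RcHom(\Lambda_\dS,\Lambda^{\phi_2}_\dS)\simeq\Lambda^{\phi_2}_\dS$, hence concentrated in degree $0$; its $H^0$ is $\cHom(\Lambda^{\phi_1}_\dS,\Lambda^{\phi_2}_\dS)$, and one concludes as before.
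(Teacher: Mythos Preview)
Your primary approach is correct and uses the same ingredients as the paper: flatness of $\Lambda^{\phi_1}_\dS$, the tensor--hom adjunction, and the identification $\cHom(\Lambda^{\phi_1}_\dS,-)\cong\Lambda^{-\phi_1}_\dS\otimes(-)$. The paper packages these via a Yoneda argument (computing $\Hom(\cV,\RcHom(\Lambda^{\phi_1}_\dS,\Lambda^{\phi_2}_\dS))$ for arbitrary $\cV$), while you argue directly that $\cHom(\Lambda^{\phi_1}_\dS,-)$ is exact and hence coincides with its right derived functor; the content is the same.

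Two remarks. First, your justification of the functorial isomorphism $\Lambda^{-\phi_1}_\dS\otimes(-)\xrightarrow{\sim}\cHom(\Lambda^{\phi_1}_\dS,-)$ by ``passing to stalks'' is shaky: for a general target $\cW$ there is no reason the internal $\cHom$ commutes with stalks in $\ModI(\Lambda_\dS)$. The clean argument is the one you allude to at the end: since $\Lambda^{\phi_1}_\dS\otimes\Lambda^{-\phi_1}_\dS\cong\Lambda_\dS$ (Lemma~\ref{tensorstandard}), the functor $(-)\otimes\Lambda^{\phi_1}_\dS$ is an autoequivalence, so its right adjoint $\cHom(\Lambda^{\phi_1}_\dS,-)$ agrees with its inverse $(-)\otimes\Lambda^{-\phi_1}_\dS$. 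Once you have this, the step ``$\cHom(\Lambda^{\phi_1}_\dS,-)$ sends injectives to injectives'' is actually redundant: exactness alone already gives $\RcHom\simeq\cHom$.

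Second, your alternative route (``$\Lambda^{\phi_1}_\dS$ is locally isomorphic to $\Lambda_\dS$'') has a genuine gap in this setting. ``Locally'' must mean with respect to covers in $\Open_\dS$, i.e.\ locally finite in $\oS$. But if $\phi_1$ is unbounded near $D_S$, one cannot cover $S$ by opens on which $\phi_1$ is bounded while keeping the cover locally finite at the boundary: finitely many relatively compact opens near a boundary point $y\in D_S$ have closure avoiding $y$ and hence miss a neighbourhood of $y$ in $S$. So the local-to-global step for $H^i=0$ does not go through on the nose; stick with the invertibility argument.
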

\begin{proof}
Let $\cI$ be an injective resolution of $\Lambda^{\phi_2}_\dS$. We have the following:
\begin{equation}
\begin{split}
\Hom_{D^b(\ModI(\Lambda_\dS))}(\cV, \RcHom(\Lambda^{\phi_1}_\dS, \Lambda^{\phi_2}_\dS))&\cong \Hom_{D^b(\ModI(\Lambda_\dS))}(\cV\otimes \Lambda^{\phi_1}_\dS, \Lambda^{\phi_2}_\dS)\\
&\cong \Hom_{C(\ModI(\Lambda_\dS))}(\cV\otimes \Lambda^{\phi_1}_\dS, \cI)\\
&\cong \Hom_{C(\ModI(\Lambda_\dS))}(\cV,\cHom(\Lambda^{\phi_1}_\dS, \cI))
\end{split}
\end{equation}
Here we used flatness of $\Lambda^{\phi_1}_\dS$.

First, note that $\cHom(\Lambda^{\phi_1}_\dS, \cI)\cong \Lambda^{-\phi_1}_\dS\otimes \cI$ in $C(\ModI(\Lambda_\dS))$.
Second, $\cI$ is locally given by $[\prod_x \cF_x]$ where $\cF_x$ is a skyscraper sheaf. Since $\cHom(\Lambda^{\phi_1}_\dS, \prod_x\cF_x)\cong \prod_x\cHom(\Lambda^{\phi_1}_\dS, \cF_x)$, the object $\cHom(\Lambda^{\phi_1}_\dS, \cI)$ is also injective. Hence we have
\begin{equation}
\begin{split}
\Hom_{D^b(\ModI(\Lambda_\dS))}(\cV, \RcHom(\Lambda^{\phi_1}_\dS, \Lambda^{\phi_2}_\dS))&\cong
\Hom_{D^b(\ModI(\Lambda_\dS))}(\cV,\Lambda^{-\phi_1}_\dS\otimes\cI)\\
&\cong \Hom_{D^b(\ModI(\Lambda_\dS))}(\cV,\Lambda^{-\phi_1}_\dS\dotimes\cI)\\
&\cong \Hom_{D^b(\ModI(\Lambda_\dS))}(\cV,\Lambda^{-\phi_1}_\dS\dotimes\Lambda^{\phi_2}_\dS)\\
&\cong \Hom_{D^b(\ModI(\Lambda_\dS))}(\cV,\Lambda^{\phi_2-\phi_1}_\dS)
\end{split}
\end{equation}
This completes the proof.
\end{proof}

\subsection{Definition}
Let $V$ be a neighborhood of $0\in \bC^n$ and consider a simple normal crossing $D_I=\bigcup_{i\in I}\{z_i=0\}\cap V$. For $A:=\{a_i\}\in \bZ^{I}$, $\Phi_A\colon \bC^n\rightarrow \bC^n$ is defined by $z_i^{a_i}$ where $a_i=0$ for $i\not \in I$.
\begin{definition}\label{multi}
\begin{enumerate}
\item A correspondence $f\colon V\bs D_I\rightarrow \bC$ is a {\em multi-valued meromorphic function} if there exists $A:=\{a_i\}\in \bZ^{I}$ and a meromorphic function $f'$ on $\Phi_A^{-1}(V)$ with poles in $\Phi_A^{-1}(D_I)$ such that $f$ is equal to $z\mapsto \lc f'(z') \relmid z'\in (\Phi_A)^{-1}(z)\rc$. 
\item A finite set of multi-valued meromorphic function is said to be {\em good}, if it satisfies the conditions in Definition 5.2 after taking the pull-backs along $\Phi_A$. 
\end{enumerate}
\end{definition}
For a multi-valued meromorphic function $\phi$ and an open subset $U$ on which $\phi$ is represented by a set of  single-valued holomorphic functions $\{\phi_k\}_{k\in K}$, we set $\Lambda^\phi:=\bigoplus_{k\in K}\Lambda^{\phi_k}$.

For $S$ a locally closed complex submanifold $X$, consider $(\overline{S}, D_S:=\overline{S}\bs S)$ as a topological space with boundary.

\begin{definition}
Let $\cV$ be an object of $\Mod^\frakI(\Lambda_\dS)$. We call $\cV$ is a {\em good irregular local system} if the followings hold: 
\begin{enumerate}
\item $D_S$ is normal crossing.
\item For any point $x\in D_S$, there exists a neighborhood $U$ of $x$ such that the restriction $\cV|_U\in \ModI(\Lambda_{(U, \varnothing)})$ is isomorphic to a finite direct sum of the constant sheaf $\Lambda_U$.
\item  For any point $x\in \overline{S}\bs S$, there exists
\begin{enumerate}
\item a neighborhood $U$ of $x$
\item a finite good set of multi-valued meromorphic functions $\{\phi_j\}_{j\in J}$ over $U$ with poles in $D_S$, and 
\item a finite cover $\{U_k\}_{k\in K}$ of $U\bs U\cap D_S$
\end{enumerate}
such that 
\begin{enumerate}
\item there exists an open covering $\{U_k'\}_{k\in K}$ of the real blow-up of $U$ along $D_S$ with $U_k=U_k'\cap (U\bs D_S)$, and 
\item each restriction of $\cV|_{U_k}:=\cV|_{(\overline{U_k}, \overline{U_k}\cap D_S)}:=\iota_{(\overline{U_k}, \overline{U_k}\cap D_S)}^{-1}\cV\in \ModI(\Lambda_{(\overline{U_k}, \overline{U_k}\cap D_S)})$
 is isomorphic to the finite direct sum $\bigoplus_{j\in J}\Lambda^{\phi_j}_{(\overline{U_k}, \overline{U_k}\cap D_S)}$. Here $\iota_{(\overline{U_k}, \overline{U_k}\cap D_S)}$ is the canonical map induced by the inclusion $\overline{U_k}\hookrightarrow \overline{S}$.
\end{enumerate}
\end{enumerate}
If the set of multi-valued functions is actually the set of meromorphic functions, we call it a {\em unramified good irregular local system}.
\end{definition}

\begin{remark}
We believe the goodness assumption in 3.b in the above can be removed by a similar consideration done in \cite{Mochizukicurvetest}.
\end{remark}

\begin{lemma}\label{coveringdef}
The above condition 3 is equivalent to the following:
{\em 3'.  For any point $x\in \overline{S}\bs S$, there exists 
\begin{enumerate}
\item a neighborhood $U$ of $x=:0$ (with the notation used in Definition \ref{multi}), 
\item $A:=\{a_i\}\in \bZ^{I}$, 
\item a finite set of meromorphic functions $\{\phi_j\}_{j\in J}$ over $U':=\Phi_A^{-1}(U)$ with poles in $D':=\Phi_A^{-1}(D_I)$, and
\item a finite cover $\{U_k\}_{k\in K}$ of $U'\bs U'\cap D'$
\end{enumerate} 
such that 
\begin{enumerate}[(a)]
\item there exists an open covering $\{U_k'\}_{k\in K}$ of the real blow-up of $U$ along $D'$ with $U_k=U_k'\cap (U\bs D')$, and 
\item each restriction of $(\Phi_A^*\cV)|_{(\overline{U_k}, \overline{U_k}\cap D')}:=\iota_{(\overline{U_k}, \overline{U_k}\cap D')}^{-1}(\Phi_A^*\cV)\in \ModI(\Lambda_{(\overline{U_k}, \overline{U_k}\cap D')})$
 is isomorphic to the finite direct sum $\bigoplus_{j\in J}\Lambda^{\phi_j}_{(\overline{U_k}, \overline{U_k}\cap D')}$. Here $\iota_{(\overline{U_k}, \overline{U_k}\cap D')}$ is the canonical map induced by the inclusion $\overline{U_k}\hookrightarrow \overline{S}$.
\end{enumerate}}
\end{lemma}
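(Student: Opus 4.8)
The statement is a translation between the two equivalent ways of packaging multi-valued meromorphic functions, so the plan is to unwind the definitions while keeping careful track of the real blow-up structure.

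\emph{Setup.} By Definition~\ref{multi}(1) a multi-valued meromorphic function over $U$ is, by construction, the datum of a vector $A\in\bZ^I$ together with an honest meromorphic function on $\Phi_A^{-1}(U)$ with poles along $\Phi_A^{-1}(D_I)$; and by Definition~\ref{multi}(2) a finite set $\{\phi_j\}_{j\in J}$ of such is good precisely when the set of its $\Phi_A$-pullbacks forms a good decomposition. Since $J$ is finite one may pass to a single $A$ dominating all the $\phi_j$ (take the componentwise maximum of the ramification orders, so each $\Phi_{A_j}$ factors through $\Phi_A$). This yields the dictionary: a good multi-valued $\{\phi_j\}$ over $U$ is the same thing as a vector $A$ together with a good set of single-valued meromorphic functions $\{\phi_j\}$ over $U'=\Phi_A^{-1}(U)$; in particular the goodness in condition~3 corresponds to goodness of the $\phi_j$ on $U'$ in condition~3'.

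\emph{Behaviour of $\Lambda^\phi$ under $\Phi_A$.} The monomial map $\Phi_A$ restricts to a finite unramified covering $U'\setminus D'\to U\setminus D_I$ and lifts to a morphism of topological spaces with boundary $\widetilde{\Phi}_A$ between the real blow-ups. Over any contractible open $W\subset U\setminus D_I$ over which $\Phi_A$ is trivially covered, every multi-valued meromorphic function in play is represented by a single single-valued branch, so $\Lambda^{\phi_j}|_W$ is literally an irregular constant sheaf $\Lambda^{\phi_j|_W}$. Since the defining formula $\Gr^a\Lambda^\psi_Z=p_*\Gamma_{Z\times[-a,\infty)}\bk_{t\geq\Re\psi}$ is local and commutes with pull-back along local isomorphisms, for each sheet $V$ of $\Phi_A$ over $W$ one gets $\Phi_A^{-1}\Lambda^{\phi_j|_W}\cong\Lambda^{\phi_j\circ\Phi_A|_V}$, and $\phi_j\circ\Phi_A|_V$ is precisely the restriction to $V$ of the corresponding single-valued meromorphic function on $U'$; dually $\Phi_A^{-1}\cV\cong\cV$ along each sheet, $\Phi_A$ being étale there. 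The same holds on the blow-ups via $\widetilde{\Phi}_A$. (Here $\Phi_A^{-1}$ is the sheaf pull-back of Section~3, i.e. what is written $\Phi_A^*$ in the statement.)

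\emph{The two implications.} For $3\Rightarrow 3'$: fix $A$ as above, refine the cover $\{U_k\}$ of $U\setminus D_I$ so that each $U_k$ is contractible and trivially covered by $\Phi_A$, and let $\{V_{k,s}\}$ be the sheets of $\Phi_A$ over the $U_k$; this is a finite cover of $U'\setminus D'$, lifting via $\widetilde{\Phi}_A$ to an open cover of the real blow-up of $U'$. Applying $\Phi_A^{-1}$ to $\cV|_{U_k}\cong\bigoplus_j\Lambda^{\phi_j}|_{U_k}$ and using the previous paragraph gives $(\Phi_A^*\cV)|_{V_{k,s}}\cong\bigoplus_j\Lambda^{\phi_j}|_{V_{k,s}}$, which is condition~3'. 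For $3'\Rightarrow 3$: given $A$, good single-valued $\{\phi_j\}$ on $U'$ and a cover $\{U_k\}$ of $U'\setminus D'$ as in 3', possibly shrink $U$ and choose a cover $\{W_l\}$ of $U\setminus D_I$ by contractible opens trivially covered by $\Phi_A$, refining so each sheet of $\Phi_A$ over each $W_l$ lies inside some $U_k$. For a sheet $V$ over $W_l$ one has $\cV|_{W_l}\cong(\Phi_A^*\cV)|_V\cong\bigoplus_j\Lambda^{\phi_j}|_V\cong\bigoplus_j\Lambda^{\phi_j\circ(\Phi_A|_V)^{-1}}|_{W_l}$, where $\phi_j\circ(\Phi_A|_V)^{-1}$ is the unique branch over $W_l$ of the multi-valued function $\widetilde{\phi}_j$ induced by $\phi_j$; hence $\cV|_{W_l}\cong\bigoplus_j\Lambda^{\widetilde{\phi}_j}|_{W_l}$, the set $\{\widetilde{\phi}_j\}$ is good by the dictionary, and the blow-up compatibility is inherited from $\widetilde{\Phi}_A$. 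This is condition~3. The step I expect to be the main obstacle is not deep but requires care: first, checking that $\Phi_A$ genuinely extends to a morphism of topological spaces with boundary between the real blow-ups and that the $\ModI$ pull-back along it is compatible both with the restrictions appearing in conditions 3 and 3' and with the construction of $\Lambda^\phi$; and second, the cover bookkeeping needed to pass to contractible opens on which multi-valued functions trivialize, so that $\Lambda^{\phi_j}$ is an honest irregular constant sheaf and the identity $\Phi_A^{-1}\Lambda^\psi\cong\Lambda^{\psi\circ\Phi_A}$ applies sheet by sheet.
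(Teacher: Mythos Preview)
Your proposal is correct and follows essentially the same approach as the paper, which dispatches the lemma in a single line (``This is just from the definition of multi-valued meromorphic functions''); you have simply spelled out in detail what the paper leaves to the reader. One small point worth tightening: when you pull back $\cV|_{U_k}\cong\bigoplus_j\Lambda^{\phi_j}$ to a single sheet $V_{k,s}$, the resulting summands are indexed by all Galois translates $\{\sigma\cdot f'_j\}$ rather than just $\{f'_j\}$, so the index set $J$ in condition~3' may be strictly larger than in condition~3---this does not affect the argument since the lemma only asserts existence of such data, but your notation slightly obscures it.
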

\begin{proof}
This is just from the definition of multi-valued meromorphic functions.
\end{proof}

\begin{comment}
\begin{remark}
In this remark, we would like to explain the presence of the proper push-forward in the definition and the reason why we cannot compare simply on each starta with standard sheaves. One way to explain is coming from $\cD$-module side. The Riemann--Hilbert correspondence is compatible with various operations. If we take a $\cD$-module $M$ over $X$ with singularity only at $D$. Then $M$ is an integrable connection outside the singularity. Hence the pull-back of enhanced $\bR$-constructible ind-sheaves on the open subset should be a usual local system. How can it be realized? It is accomplished by the power of ind-sheaf (subanalytic sheaf). As a sheaf $\bigoplus_{a\in \bR}\RGamma_{S\times [-a,\infty)}k_{\Re\phi\geq 0}$ is not isomorphic to the constant sheaf $\Lambda_S$. However, they are isomorphic in the category $\Mod^\frakI(\Lambda_S)$.
\end{remark}
\end{comment}

\begin{definition}
For a complex manifold $U$ with a divisor $D$, a {\em modification} of $(U, D)$ is a morphism $f\colon ({U'}, D')\rightarrow (U, D)$ where $({U'}, D')$ is another complex manifold with a divisor and $f$ is a projective map between ${U'}$ and $U$ preserving divisors and induces the identity map between $U\bs D$ and ${U'}\bs D'$.
\end{definition}

\begin{remark}
Viewing $(U, D)$ and $(U' D')$ as topological spaces with boundaries, a modification is a morphism of topological spaces with boundaries.
\end{remark}

%Let $p\colon (\overline{S}', D_{S'})\rightarrow (\overline{S}, D)$ be a modification of $\dS$. 

\begin{definition}\label{irreloc}
An object $\cV\in \Mod^\frakI(\Lambda_\dS)$ is said to be an {\em irregular local system} if the followings hold: 
\begin{enumerate}
\item For any point $x\in S$, there exists a neighborhood $U$ of $x$ such that the restriction $\cV|_U\in \ModI(\Lambda_{(U, \varnothing)})$ is isomorphic to a finite direct sum of the constant sheaf $\Lambda_U$.
\item For any point $x\in D_S$, there exists a neighborhood $U$ of $x$ and a modification $p\colon (U',D') \rightarrow ({U}, D_S\cap U)$ such that $p^{-1}(\cV|_{(U,D_S\cap U)})$ is a good irregular local system.
\end{enumerate}
We say an irregular local system is single-valued type (resp. multi-valued type) if the good irregular local system appeared in 2 is unramified (ramified).
\end{definition}

Let $\cV$ be an irregular local system on $\dS$. Take a point $x\in D_S$. Then by the definition of irregular local systems, there exists a relatively compact open neighborhood $U$ of $x$ with a modification $p\colon U'\rightarrow U$. Then for any $y\in p^{-1}(D_S)=:D'$, there exists a finite cover $\{U_k\}_k$ of $U'\bs D'$ given in the definition of good irregular local systems. We have $\cV|_{U_k}\cong \bigoplus_i\Lambda^{\phi_i}_{(\overline{U_k}, \overline{U_k}\cap D')}$.

Since $U'\bs D'\cong U\bs D_S$. we get a finite covering $\cU$ of $U\bs D_S$ such that $\cV|_{\oU, D_S\cap \oU}$ is isomorphic to a direct sum of irregular constant sheaves for each $U\in \cU$.
\begin{definition}
We call a finite covering $\cU$ of $U\bs D_S$ given above a {\em sectorial covering} of $\cV$ around $x$.
\end{definition}

\begin{lemma}\label{commonmodification}
For $\cV, \cW\in \ModI(\Lambda_\dS)$ and $x\in D_S$, there exists a neighborhood $U$ of $x$ with a modification $(U',D')\rightarrow (U, D)$ such that $p^{-1}(\cV|_{(U, U\cap D_S)})$ and $p^{-1}(\cW|_{(U, U\cap D_S)})$ are irregular local systems. In particular, $\cV$ and $\cW$ have a common sectorial covering.
\end{lemma}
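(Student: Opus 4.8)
The plan is to build one modification of a common neighborhood of $x$ that simultaneously dominates modifications witnessing the good structure of $\cV$ and of $\cW$, and then to observe that goodness of an irregular local system survives pullback along any modification.

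First I would unwind Definition~\ref{irreloc}: since $\cV$ and $\cW$ are irregular local systems on $\dS$, for the given $x\in D_S$ there are relatively compact neighborhoods $U_\cV,U_\cW$ of $x$ in $\oS$ together with modifications
\begin{equation}
p_\cV\colon (U_\cV',D_\cV')\rightarrow (U_\cV, D_S\bs U_\cV),\qquad p_\cW\colon (U_\cW', D_\cW')\rightarrow (U_\cW, D_S\bs U_\cW)
\end{equation}
(here $D_S\bs U_\cV$ should of course read $D_S\cap U_\cV$, etc.) such that $p_\cV^{-1}(\cV|_{(U_\cV,D_S\cap U_\cV)})$ and $p_\cW^{-1}(\cW|_{(U_\cW,D_S\cap U_\cW)})$ are good irregular local systems. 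Setting $U:=U_\cV\cap U_\cW$ and restricting $p_\cV,p_\cW$ over $U$ I may assume $U_\cV=U_\cW=U$, since goodness is a local condition and is preserved by restriction to a smaller neighborhood.

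Second I would construct the common dominating modification. The birational correspondence between $U_\cV'$ and $U_\cW'$ over $U$ is regular over $U\setminus D_S$; let $W$ be the normalization of the closure of its graph in $U_\cV'\times_U U_\cW'$, so that the two projections give projective birational morphisms $W\to U_\cV'$ and $W\to U_\cW'$, each an isomorphism over $U\setminus D_S$, hence a projective birational $W\to U$ which is an isomorphism over $U\setminus D_S$ and through which both $p_\cV$ and $p_\cW$ factor. Applying Hironaka's embedded resolution to $W$ together with the total transform of $D_S\cap U$, I replace $W$ by a smooth $U'$ on which $D':=$ (preimage of $D_S\cap U$) is a simple normal crossing divisor, keeping a projective birational $p\colon (U',D')\rightarrow (U, D_S\cap U)$ that is an isomorphism over $U\setminus D_S$; since $q_\cV^{-1}(D_\cV')=D'=q_\cW^{-1}(D_\cW')$, the induced maps $q_\cV\colon (U',D')\rightarrow (U_\cV',D_\cV')$ and $q_\cW\colon(U',D')\rightarrow(U_\cW',D_\cW')$ are modifications, and $p=p_\cV\circ q_\cV=p_\cW\circ q_\cW$. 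Thus $p$ is a modification of $(U,D_S\cap U)$.

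Third I would check that $p^{-1}(\cV|_{(U,D_S\cap U)})=q_\cV^{-1}\bigl(p_\cV^{-1}(\cV|_{(U,D_S\cap U)})\bigr)$ is again a good irregular local system, i.e.\ that the pullback of a good irregular local system along a modification is good. Away from the divisor this is immediate, and $U'\setminus D'\cong U\setminus D_S$, so condition 2 of the definition of good irregular local system carries over; near a point of $D'$, a local model $\bigoplus_{j}\Lambda^{\phi_j}$ pulls back to $\bigoplus_j\Lambda^{\phi_j\circ q_\cV}$, and the crucial point is that the normal-form condition recalled in Section~5.1 and used in Definition~\ref{multi} — each relevant exponential factor is either holomorphic or a unit times a monomial in the $z_i^{-1}$ — is stable under blow-ups of normal crossing centres: in the new coordinates $x_i=(\text{unit})\prod_j y_j^{c_{ij}}$ with $c_{ij}\ge 0$, so a unit times a monomial in the $x_i^{-1}$ becomes a unit times a monomial in the $y_j^{-1}$, and a holomorphic function stays holomorphic with poles only along $D'$. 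The sectorial data, the real blow-ups, and the finite covers $\{U_k\}$ are compatible with $q_\cV$, and $D'$ is already normal crossing; the same applies to $\cW$. Hence $p^{-1}(\cV)$ and $p^{-1}(\cW)$ are good irregular local systems on $(U',D')$, in particular irregular local systems. Finally, the two sectorial coverings of $\cV$ and of $\cW$ around $x$ determined by $p$ are finite coverings of $U\setminus D_S$; a common refinement is again a sectorial covering for both, because the restriction of a direct sum $\bigoplus_j\Lambda^{\phi_j}$ to a sub-sector is again of the same form. This yields the common sectorial covering.

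\textbf{Main obstacle.} The delicate step is the second one: producing, inside the paper's class of modifications, a single object that is simultaneously smooth, projective over $U$, an isomorphism over $U\setminus D_S$, has simple normal crossing total transform of the divisor, and genuinely factors through both $p_\cV$ and $p_\cW$; this is where resolution of indeterminacy and Hironaka's embedded resolution enter and where the bookkeeping must be done carefully. The stability of ``good'' under such blow-ups (third step) is standard but is the technical content that makes the factorization useful.
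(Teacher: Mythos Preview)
Your proof is correct and follows exactly the standard argument; the paper's own proof consists of the single sentence ``This is standard.'' You have correctly inferred from context (and from how the lemma is invoked in the proof of Lemma~\ref{ickernel}) that $\cV,\cW$ are to be taken as irregular local systems and that the conclusion should read \emph{good} irregular local systems; your construction of a common dominating modification via fibre product and Hironaka, together with the stability of the goodness condition under further blow-ups, is precisely the content the paper leaves implicit.
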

\begin{proof}
This is standard.
\end{proof}

Then we would like to define one of the fundamental objects in this paper.

\begin{definition}
Let $\cV$ be an object of $\Mod^\frakI(\Lambda_X)$. We say $\cV$ is {\em irregular constructible} if the followings hold:
There exists a $\bC$-analytic stratification $\cS$ of $X$ such that the restriction $\cV|_{(\oS, D_S:=\oS\bs S)}$ to each stratum $S\in \cS$ is an irregular local system as an object of $\ModI(\Lambda_\dS)$.
\end{definition}
Let us denote the full subcategory of $\ModI(\Lambda_X)$ spanned by irregular constructible sheaves by $\Mod_{ic}(\Lambda_X)$.

\begin{proposition}\label{icabelian}
The category $\Mod_{ic}({\Lambda_X})$ is abelian.
\end{proposition}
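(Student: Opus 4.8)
The plan is to reduce the claim to the abelianness of $\ModI(\Lambda_X)$ (Proposition~\ref{ModIabel}) by showing that the full subcategory $\Mod_{ic}(\Lambda_X)$ is closed under kernels, cokernels, and extensions inside $\ModI(\Lambda_X)$, and that the canonical morphism $\coim f \to \im f$ computed in $\ModI(\Lambda_X)$ stays inside $\Mod_{ic}(\Lambda_X)$ (this last point is automatic once the subcategory is closed under kernels and cokernels, since $\ModI(\Lambda_X)$ is already abelian). So the real content is: given a morphism $f\colon \cV\to\cW$ in $\Mod_{ic}(\Lambda_X)$, its kernel and cokernel, formed in $\ModI(\Lambda_X)$, are again irregular constructible.

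First I would choose a common $\bC$-analytic stratification: since $\cV$ and $\cW$ are each irregular constructible, there is a single $\bC$-analytic stratification $\cS$ of $X$ refining both, so that $\cV|_{\dS}$ and $\cW|_{\dS}$ are irregular local systems on each stratum $S\in\cS$. Because kernels and cokernels in $\ModI(\Lambda_\dX)$ are computed locally (Proposition~\ref{ModIabel}), and the restriction functors $\ModI(\Lambda_\dX)\to\ModI(\Lambda_{\dS})$ are exact, it suffices to check that on each stratum the kernel and cokernel of $f|_{\dS}$ are irregular local systems. Thus the problem localizes to: the category of irregular local systems on a fixed $\dS$ is closed under kernels and cokernels (and extensions) inside $\ModI(\Lambda_{\dS})$.

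Next I would reduce further, via Lemma~\ref{commonmodification}, to the case of a \emph{good} irregular local system: near a boundary point $x\in D_S$ choose a common modification $p\colon (U',D')\to(U,D_S\cap U)$ after which both $p^{-1}\cV$ and $p^{-1}\cW$ are good irregular local systems, hence, on each member $U_k$ of a common sectorial covering, are finite direct sums $\bigoplus_j\Lambda^{\phi_j}_{(\overline{U_k},\overline{U_k}\cap D')}$ for a common good set $\{\phi_j\}$. On such a sector, I would analyze $\Hom$ between two such direct sums: by Lemma~\ref{boundeds} and Lemma~\ref{orthogonality}, a morphism $\bigoplus_j\Lambda^{\phi_j}\to\bigoplus_{j'}\Lambda^{\phi_{j'}}$ is (locally) block-diagonal with respect to the partition of indices by the dominance order on $\{\Re\phi_j\}$, and within a block where $\Re\phi_j-\Re\phi_{j'}$ is bounded, $\Hom(\Lambda^{\phi_j},\Lambda^{\phi_{j'}})\cong\bk$ via standard morphisms; hence $f$ is, up to the scalar-reduction $\otimes_\Lambda\bk$, a morphism of finite-dimensional $\bk$-vector bundles on the sector, graded by exponential factors. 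Its kernel and cokernel are again finite direct sums of the $\Lambda^{\phi_j}$ (the surviving exponents), i.e. good irregular local systems on each sector; descending along $p$ and gluing over the sectorial covering (again using that kernels/cokernels are local) gives that $\ker f$ and $\coker f$ are irregular local systems on $S$. On the strata themselves (the locus where $\cV,\cW$ are honest direct sums of $\Lambda_U$) the same argument is the classical fact that local systems of $\bk$-modules form an abelian category. Closure under extensions follows by the same local analysis: an extension of finite direct sums of $\Lambda^{\phi_j}$'s by finite direct sums of $\Lambda^{\phi_{j'}}$'s, analyzed sector by sector, is again of the required form because $\Ext^1(\Lambda^{\phi_j},\Lambda^{\phi_{j'}})$ is controlled by the same dominance combinatorics.

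I expect the main obstacle to be bookkeeping rather than conceptual: making the sectorial-covering descent rigorous requires that the good set $\{\phi_j\}$ can be taken common to $\cV$ and $\cW$ (Lemma~\ref{commonmodification} handles this) and that the block decomposition of $f$ over overlapping sectors is compatible, so that the locally-formed kernels and cokernels glue; the exactness of restriction (Lemma~\ref{restrictionexact}) and the locality of kernels/cokernels in $\ModI$ (the proof of Proposition~\ref{ModIabel}) are exactly what make this go through. A secondary subtlety is that $\ker f$ and $\coker f$ are a priori only defined on $\dS$ with its boundary structure, and one must check that gluing them over the strata of $\cS$ produces a genuine object of $\ModI(\Lambda_X)$ stratified by $\cS$ — but this is the same patching used throughout Section~4 and is routine given that $\cS$ is locally finite.
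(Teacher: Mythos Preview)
Your overall strategy coincides with the paper's: reduce to a common stratification, then to irregular local systems on each stratum, then via a common modification and sectorial covering to morphisms between finite sums $\bigoplus_j\Lambda^{\phi_j}$. The gap is in your sector-level analysis. On a fixed sector $U_k$ (with $\overline{U_k}\cap D'\neq\varnothing$), Lemmas~\ref{boundeds} and~\ref{orthogonality} do \emph{not} force a morphism $\bigoplus_j\Lambda^{\phi_j}\to\bigoplus_{j'}\Lambda^{\phi_{j'}}$ to be block-diagonal for the partition by classes in $\mathrm{M}(U,D)$: they only make it block \emph{upper}-triangular for the partial order ``$\Re\phi-\Re\psi$ bounded above on $U_k$''. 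For distinct $\phi,\psi$ one may well have $\Hom(\Lambda^{\phi},\Lambda^{\psi})\cong\bk$ on $U_k$ while $\Hom(\Lambda^{\psi},\Lambda^{\phi})=0$. With a nonzero strict off-diagonal entry the cokernel need not be a sum of irregular constants: already the cokernel of a nonzero standard map $\Lambda^{\phi}\to\Lambda^{\psi}$ with $\phi\neq\psi$ in $\mathrm{M}(U,D)$ is a nonzero object of $\ModI(\Lambda_{(\overline{U_k},\overline{U_k}\cap D')})$ whose stalks are $\Lambda$-torsion of order tending to infinity toward $D'$, hence not isomorphic to any $\bigoplus_l\Lambda^{\psi_l}$. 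So your assertion ``its kernel and cokernel are again finite direct sums of the $\Lambda^{\phi_j}$'' does not follow from the cited lemmas.

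What the paper supplies, and what you are missing, is a \emph{global} argument ruling out such off-diagonal components for a morphism that is defined on all sectors at once. If the $(\phi,\psi)$-component of $f$ were nonzero on one sector it would persist on the adjacent sector, and one continues around the divisor; since $\phi-\psi$ is a nonzero meromorphic function, some sector has $\Re(\psi-\phi)\to-\infty$ at $D'$, and Lemma~\ref{orthogonality} then forces that component to vanish there---a contradiction. This is exactly the content of the paper's Lemma~\ref{ickernel}. Only after establishing genuine diagonality on each sector does the computation of kernels and cokernels reduce to the scalar maps $c\cdot T^a\colon\Lambda^{\phi}\to\Lambda^{\phi}$, where your conclusion is immediate.
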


\begin{proof}
Since $\ModI(\Lambda_X)$ is abelian, it suffices to show kernels, cokernels, images, and coimages of morphisms between irregular constructible sheaves are also irregular constructible sheaves. Let $f\colon \cV\rightarrow \cW$ be a morphism between irregular constructible sheaves. One can take a common $\bC$-Whitney stratification for $\cV$ and $\cW$. Then it suffices to show Lemma \ref{ickernel} below.
\end{proof}

\begin{lemma}\label{ickernel}
Kernels, cokernels, images, coimages of morphisms between irregular local systems are irregular local systems.
\end{lemma}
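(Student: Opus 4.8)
Let $f\colon \cV\to\cW$ be a morphism of irregular local systems on $\dS$; we want to show $\ker f,\coker f,\im f,\coim f$ are again irregular local systems. The claim is local on $\oS$, so I would fix a point $x\in\oS$. If $x\in S$, then near $x$ both $\cV$ and $\cW$ are finite direct sums of the constant sheaf $\Lambda_U$, and since kernels, cokernels, images and coimages in $\ModI(\Lambda_X)$ are computed as $[\ker\tf]$, $[\coker\tf]$, etc. for a local lift $\tf$ (Lemma~\ref{1.13}, Lemma~\ref{1.15}), this is just the corresponding fact for honest constructible sheaves of $\Lambda$-modules which are locally constant; hence the four objects are again locally constant of finite rank. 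The interesting case is $x\in D_S$: by Lemma~\ref{commonmodification}, after passing to a common modification $p\colon(U',D')\to(U,D_S\cap U)$ we may assume $\cV$ and $\cW$ are good irregular local systems with a common sectorial covering $\cU=\{U_k\}$ over which $\cV|_{U_k}\cong\bigoplus_{j}\Lambda^{\phi_j}_{(\oU_k,\oU_k\cap D')}$ and $\cW|_{U_k}\cong\bigoplus_{i}\Lambda^{\psi_i}_{(\oU_k,\oU_k\cap D')}$ with $\{\phi_j\}$ and $\{\psi_i\}$ good sets of (multi-valued) meromorphic functions.

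\textbf{The heart of the proof is to compute $f$ componentwise using the orthogonality results of Section~5.2.} On each sector $U_k$, the morphism $f$ is a matrix of maps $\Lambda^{\phi_j}\to\Lambda^{\psi_i}$. By Lemma~\ref{boundeds}, when $\Re\psi_i-\Re\phi_j$ is bounded below on (a connected piece of) $U_k$ the hom-space is one-dimensional, spanned by a standard morphism; by Lemma~\ref{orthogonality}, when $\Re\psi_i-\Re\phi_j\to-\infty$ along some part of the boundary the hom-space vanishes. Using the goodness hypothesis — which guarantees that for each pair $(\phi_j,\psi_i)$ the sign of $\Re\psi_i-\Re\phi_j$ is eventually constant along each boundary direction after refining the sectorial covering — I would first refine $\cU$ so that on each $U_k$ every entry of the matrix $f$ is either zero or a standard isomorphism onto its image, and so that the pairs $(j,i)$ for which the entry is a standard isomorphism are "aligned" (i.e. $\phi_j$ and $\psi_i$ differ by a bounded function, hence $\Lambda^{\phi_j}\cong\Lambda^{\psi_i}$). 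Over such a refined sector, $f$ is, up to the isomorphisms $\Lambda^{\phi_j}\cong\Lambda^{\psi_i}$ of Lemma~\ref{boundeds}, a $\bk$-linear matrix tensored with a fixed irregular constant sheaf, so by Lemma~\ref{1.15} its kernel, cokernel, image and coimage are each a direct sum of shifts of irregular constant sheaves $\Lambda^{\phi}$; in particular they are good irregular local systems over that sector. Finally, since these local descriptions are canonically determined (kernels etc. are computed by a universal property and are compatible with restriction by Lemma~\ref{restrictionexact}), they glue over the sectorial covering, and pushing back down through the modification $p$ — using that $p$ is a modification, so $p^{-1}$ of a good irregular local system is by definition what appears in Definition~\ref{irreloc} — we conclude that $\ker f,\coker f,\im f,\coim f$ are irregular local systems on $\dS$.

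\textbf{The main obstacle I expect is the bookkeeping around multi-valued functions and the goodness condition when refining the sectorial covering.} Ensuring simultaneously for all pairs $(\phi_j,\psi_i)$ that the sign of $\Re(\psi_i-\phi_j)$ stabilizes along the boundary, and that this can be arranged by a single common refinement compatible with a modification (so that $D'$ stays normal crossing and the $\phi_j,\psi_i$ stay good), is exactly the kind of argument underlying the Hukuhara--Levelt--Turrittin / Sabbah--Mochizuki--Kedlaya theory recalled in Section~5.1; I would invoke that the pull-backs along $\Phi_A$ of the union $\{\phi_j\}\cup\{\psi_i\}$ can be taken good (after a further modification, using Lemma~\ref{commonmodification} applied to the combined set), which makes all the pairwise comparisons behave as in the normal-crossing model case. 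Once that combinatorial reduction is in place, the algebra is routine: everything is a direct sum of $\Lambda^\phi$'s and the computation of kernels/cokernels reduces, via Lemma~\ref{1.15} and the flatness of $\Lambda^\phi$ (Lemma~\ref{flatness}), to linear algebra over $\bk$.
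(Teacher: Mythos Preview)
Your reduction to a common modification and a sectorial covering is correct, and so is the idea of writing $f$ on each sector as a matrix of morphisms $\Lambda^{\phi_j}\to\Lambda^{\psi_i}$ governed by Lemmas~\ref{boundeds} and~\ref{orthogonality}. The gap is in the sentence where you ``refine $\cU$ so that \dots\ the pairs $(j,i)$ for which the entry is a standard isomorphism are aligned (i.e.\ $\phi_j$ and $\psi_i$ differ by a bounded function)''. No local refinement of sectors can force this. If $\phi_j\neq\psi_i$ in $M(U,D)$, then on any sector where $\Re(\psi_i-\phi_j)\to+\infty$ one still has $\Hom(\Lambda^{\phi_j},\Lambda^{\psi_i})\cong\bk$ by Lemma~\ref{boundeds}, and such a nonzero entry is \emph{not} an isomorphism in $\ModI(\Lambda_{(\overline{U_k},\overline{U_k}\cap D')})$: its cokernel is stalkwise $T^a$-torsion with $a$ unbounded as one approaches $D'$, hence nonzero and not of the form $\bigoplus\Lambda^\chi$. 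So without ruling out these off-diagonal entries, your sector-by-sector computation of kernels and cokernels breaks down.

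What is missing is precisely the global step the paper supplies: one uses that $f$ is defined on all of $\dS$, not just on a single sector. If a $(\phi,\psi)$-component with $\phi\neq\psi$ in $M(U,D)$ is nonzero on one sector, then passing to the adjacent sector the same component remains nonzero (the morphism $f$ restricts), so again $\max\{0,\Re\phi-\Re\psi\}$ is bounded there. Iterating around the full sectorial covering, one eventually reaches a sector on which $\Re(\psi-\phi)\to-\infty$ (since $\phi-\psi$ has a genuine pole), and there Lemma~\ref{orthogonality} forces the component to vanish --- a contradiction. Hence $f$ is block-diagonal with respect to classes in $M(U,D)$, and each diagonal block $\bigoplus\Lambda^\phi\to\bigoplus\Lambda^\phi$ is (by Lemma~\ref{boundeds}) a scalar matrix times the identity of $\Lambda^\phi$, whose kernel, image, cokernel are trivially again sums of $\Lambda^\phi$'s. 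Once you insert this propagation argument in place of the unjustified ``refine so that entries are aligned'' step, the rest of your outline goes through.
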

To prove Lemma \ref{ickernel}, we prepare some notions and lemmas.
\begin{definition}
Let $\phi_i$ ($i=1,2$) be meromorphic functions over $U$ with poles in $D$. We say $\phi_1$ and $\phi_2$ are equivalent if there exists a bounded holomorphic function $\phi$ over $U$ such that $\phi_1=\phi_2+\phi$. We denote the set of meromorphic functions over $(U, D)$ modulo this equivalence relation by $\mathrm{M}(U,D)$.
\end{definition}

Recall that $\Lambda^{\phi_1}_{(U,D)}$ and $\Lambda^{\phi_2}_{(U, D)}$ are canonically for $\phi_1=\phi_2\in M(U, D)$
by Lemma~\ref{boundeds}.

\begin{comment}
\begin{lemma}\label{orthogonal2}
Let $\phi_i$ ($i=1,2$) be meromorphic functions over a disk $\Delta:=\lc z\in \bC\relmid |z|<1\rc$ with poles at $0$ Let $\gamma$ be a ray in $\Delta$ emanating from $0$. If there exists a nonzero morphism between $\Lambda^{\phi_1}_{(\Delta,0)}$ and $\Lambda^{\phi_2}_{(\Delta, 0)}$ on $\Delta\bs \gamma$. Then $\phi_1$ and $\phi_2$ coincide in $\mathrm{M}(\Delta,0)$.
\end{lemma}
\begin{proof}
We assume that $\phi_1\neq \phi_2$ in $\mathrm{M}(\Delta,0)$. By the Riemann extension theorem, the difference $\phi_1-\phi_2$ is unbounded. 

%Take $D'$ an irreducible component of $D$ on which $\phi_1-\phi_2$ is unbounded. Let us take a point $x\in D'$ and a small 1-dimensional disk $\Delta$ centered at $x$ which intersects with $D'$ only at $x$. Suppose there exists a nonzero morphism $f\colon \Lambda_{(U, D)}^{\phi_1}\rightarrow \Lambda_{(U, D)}^{\phi_2}$. Then $f$ induce a  nonzero morphism on $\Delta$.

Since $\phi_1-\phi_2$ is unbounded, there exists a region in $\Delta\bs \gamma$ where $\Re(\phi_2-\phi_1)$ is negatively divergent. Hence there are no nonzero morphisms by Lemma \ref{orthogonality}. This completes the proof. 
\end{proof}
\end{comment}

\begin{proof}[Proof of Lemma \ref{ickernel}]
Let $\cV$ and $\cW$ be irregular local systems over $(U,D)$. Since the definition of irregular local systems is local, we can consider locally on a open subset $U$. There exists a modification $p\colon (U', D')\rightarrow (U, D)$ such that $p^{-1}\cV$ and $p^{-1}\cW$ are both good irregular local systems by Lemma \ref{commonmodification}.

A morphism $f\colon\cV\rightarrow \cW$ induces a morphism over $U$ and we pull-back $f$ by $p$. Then by the exactness of the pull-back, kernel cokernel, image, coimage (we denote those by $A$) of $p^{-1}f$ are pull-backs of those for $f$ i.e., $p^{-1}A(f)\cong A(p^{-1}f)$. 

Furthermore, we can pull-back more by a covering map $\Phi_A$ to make $p^{-1}\cV$ and $p^{-1}\cW$ unramified irregular local systems. Then again, $\Phi_A^{-1}p^{-1}A(f)\cong A(\Phi_A^{-1}\circ p^{-1}f)$. It suffices to show that this is an irregular local system.

So we reset the notations. Let $\cV$ and $\cW$ be unramified good irregular local systems and $f\colon \cV\rightarrow \cW$ be a morphism. Then there exist sets of meromorphic functions $\Phi_\cV$ and $\Phi_\cW$ over $(U, D)$ which are appeared in the definition of irregular local system. 

Take a point $x\in D$, a neighborhood $U$ of $x$, and a sectorial covering $\cU$ of $U\bs D$ for $\cV$ and $\cW$. On each $U\in \cU$, we have isomorphisms $\cV|_U\cong \bigoplus_{\phi\in \Phi_\cV}\Lambda_{(\overline{U}, \overline{U}\cap D)}^\phi$ and $\cW|_U\cong \bigoplus_{\psi\in\Phi_\cW}\Lambda_{(\overline{U}, \overline{U}\cap D)}^\psi$.

Suppose the following; there exists a sector $U\in\cU$ such that the restriction of $f$ to the component $\Lambda^{\phi}_{(\overline{U}, \overline{U}\cap D)}\rightarrow \Lambda^\psi_{(\overline{U}, \overline{U}\cap D)}$ is nonzero where $\phi\in \Phi_\cV, \psi\in \Phi_\cW$ with $\phi\neq\psi$. 

Let $U'$ be the adjacent sector of $U$. Then the restriction of $f$ to $\Lambda^{\phi}_{(\overline{U'}, \overline{U'}\cap D)}\rightarrow \Lambda^\psi_{(\overline{U'}, \overline{U'}\cap D)}$ is nonzero again. This implies $\max\{\Re\phi-\Re\psi\}$ is bounded by Lemma~\ref{boundeds}. We can continue this procedure and we eventually will arrive a sector on which $\phi-\psi$ is negatively divergent since $\phi\neq \psi$. This is a contradiction.

Hence we cannot have such a morphism. This means $f|_{U}$ is diagonal with respect to indices $M(U, D)\times M(U,D)$. Hence the morphism $f|_{U}$ is represented by a sum of $c\cdot T^a\colon \Lambda_{(\overline{U}, \overline{U}\cap D)}^\phi\rightarrow \Lambda^\phi_{(\overline{U}, \overline{U}\cap D)}$ where $c\in k$ by Lemma~\ref{boundeds}. The $A(c\cdot T^a)$ ia again of the form of a sum of $\Lambda^\phi_{(\overline{U}, \overline{U}\cap D)}$. This completes the proof.
\end{proof}

We prepare the following lemma for the next subsection.
\begin{lemma}\label{thickness}
The category $\Mod_{ic}(\Lambda_X)$ is a thick subcategory of $\ModI(\Lambda_X)$.
\end{lemma}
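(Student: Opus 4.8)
The plan is to show that $\Mod_{ic}(\Lambda_X)$ is closed under subobjects, quotients, and extensions inside $\ModI(\Lambda_X)$ (thickness, in the sense that it is a Serre subcategory; the statement of Lemma~\ref{ickernel} and Proposition~\ref{icabelian} already give closure under kernels/cokernels/images, so the remaining content is the extension-closure and the globalization from local systems to constructible sheaves). First I would reduce everything to irregular local systems on a single stratum, just as in the proof of Proposition~\ref{icabelian}: given a short exact sequence $0\to \cV\to \cW\to \cX\to 0$ in $\ModI(\Lambda_X)$ with $\cV,\cX$ irregular constructible, choose a common $\bC$-analytic stratification $\cS$ refining the ones adapted to $\cV$ and $\cX$ (possible by Whitney stratification theory), restrict to each stratum $S\in\cS$, and note that $\alpha$-restriction is exact (Lemma~\ref{alphaexact}, Lemma~\ref{restrictionexact}), so $0\to \cV|_{\dS}\to \cW|_{\dS}\to \cX|_{\dS}\to 0$ is exact with the outer terms irregular local systems. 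It then suffices to prove: an extension of irregular local systems on $\dS$ is an irregular local system.

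Next I would localize near a boundary point $x\in D_S$ and pass to a common modification $p\colon(U',D')\to(U,D_S\cap U)$ for $\cV$ and $\cX$ (Lemma~\ref{commonmodification}), then further pull back along a ramified cover $\Phi_A$ so that on each sector $U_k$ of a common sectorial covering the two local systems become finite direct sums $\bigoplus_{\phi}\Lambda^{\phi}_{(\oU_k,\oU_k\cap D')}$, $\bigoplus_{\psi}\Lambda^{\psi}_{(\oU_k,\oU_k\cap D')}$; here I use that $p^{-1}$ and $\Phi_A^{-1}$ are exact and compatible with forming extensions. On a single sector $U_k$, the key point is that $\Lambda^{\psi}_\dS$ is flat (Lemma~\ref{flatness}) and hence $\Hom$ and $\mathrm{Ext}^1$ are computed after tensoring; more usefully, by Corollary~\ref{Rhomstandard} we have $\RcHom(\Lambda^{\phi}_\dS,\Lambda^{\psi}_\dS)\simeq\Lambda^{\phi-\psi}_\dS$, and since a sector is contractible one reads off that on $U_k$ every such extension splits (the extension class lives in a space of sections of $\Lambda^{\psi-\phi}$-type sheaves over a contractible sector, which is computed as for usual constant sheaves). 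Thus $\cW|_{U_k}\cong\bigoplus_\chi\Lambda^{\chi}_{(\oU_k,\oU_k\cap D')}$ for an appropriate finite multiset $\{\chi\}$ drawn from the exponents of $\cV$ and $\cX$, and the goodness of the combined exponent set is inherited from goodness of the exponent sets of $\cV$ and $\cX$. Assembling over the sectors of the sectorial covering and descending through $\Phi_A$ and $p$, one concludes that $\cW|_\dS$ is an irregular local system; closure under subobjects and quotients on each stratum is already contained in Lemma~\ref{ickernel}. Finally, patching these stratumwise conclusions over $X$ gives that $\cW\in\Mod_{ic}(\Lambda_X)$.

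The main obstacle I anticipate is the sector-by-sector comparison of exponents for the extension term $\cW$: one must verify that $\cW|_{U_k}$ really is a direct sum of standard sheaves $\Lambda^{\chi}$ (rather than some nonsplit self-extension) and that the resulting finite set of exponents is independent of the sector and globally good. The splitting should follow from $\RcHom(\Lambda^{\phi}_\dS,\Lambda^{\psi}_\dS)\simeq\Lambda^{\phi-\psi}_\dS$ (Corollary~\ref{Rhomstandard}) together with the contractibility of a sector — so that the relevant $H^1$ vanishes — but one needs to be careful that this holds at the level of $\ModI$, where hom-spaces have been modified by $\otimes_\Lambda\bk$; the Stokes-type argument of Lemma~\ref{ickernel} (using Lemma~\ref{boundeds} and Lemma~\ref{orthogonality} to force morphisms to be ``diagonal'' in the exponents) is what prevents exponents from mixing across adjacent sectors and pins down the exponent set of $\cW$. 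Once that is in hand, goodness of the combined exponent set and the descent along $p$ and $\Phi_A$ are routine.
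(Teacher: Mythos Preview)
Your proposal follows essentially the same strategy as the paper: reduce to an extension of irregular local systems on a single stratum, pass to a common sectorial covering near a boundary point, and show that on each sector the extension splits by proving $\Ext^1(\Lambda^{\psi},\Lambda^{\phi})=0$ via Corollary~\ref{Rhomstandard}. The paper's proof is terser---after restricting to a sector it simply writes
\[
\Ext^1(\Lambda^{\psi},\Lambda^{\phi})\;\cong\;\Hom(\Lambda^{\psi},\Lambda^{\phi}[1])\;\cong\;\Hom(\Lambda,\Lambda^{\phi-\psi}[1])\;\cong\;0,
\]
justifying the last step by ``$\Lambda$ is free'', whereas you phrase the same vanishing via contractibility of the sector; both point to the same fact that $\RcHom(\Lambda^{\psi},\Lambda^{\phi})\simeq\Lambda^{\phi-\psi}$ is concentrated in degree~$0$. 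Your extra care about first passing through a common modification and a ramified cover (Lemma~\ref{commonmodification}), and your remark that the combined exponent set must remain good, are points the paper leaves implicit but which are indeed part of the picture.
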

\begin{proof}
Let
\begin{equation}
0\rightarrow \cV\rightarrow \cX\rightarrow \cW\rightarrow 0
\end{equation}
be an exact sequence in $\ModI(\Lambda_X)$ with $\cV, \cW\in \Modic(\Lambda_X)$. Let $\cS$ be a common stratification of $\cV$ and $\cW$. Since pull-backs are exact, we can reduce to the case that $\cV, \cW$ are irregular local systems on $(\oS, D_S)$. For any point $x\in D_S$, there exists a neighborhood $U$ of $x$ such that $U\bs D_S$ has a finite sectorial covering $\{U_i\}$ and $\cV$ (resp. $\cW$) is isomorphic to $\bigoplus_j\Lambda^{\phi_j}_{(\overline{U_i}, \overline{U_i}\cap D_S)}$ (resp. $\bigoplus_{k}\Lambda^{\psi_k}_{(\overline{U_i}, \overline{U_i}\cap D_S)}$). So we have an exact sequence
\begin{equation}
0\rightarrow \bigoplus_{j}\Lambda^{\phi_j}_{(\overline{U_i}, \overline{U_i}\cap D_S)}\rightarrow \cX|_{U_i}\rightarrow \bigoplus_{k}\Lambda^{\psi_k}_{(\overline{U_i}, \overline{U_i}\cap D_S)}\rightarrow 0
\end{equation}
on each $U_i$. 

We have already seen that $\RcHom(\Lambda^\psi_{(\overline{U_i}, \overline{U_i}\cap D_S)}, \Lambda^\phi_{(\overline{U_i}, \overline{U_i}\cap D_S)})\simeq \Lambda_{(\overline{U_i}, \overline{U_i}\cap D_S)}^{\phi-\psi}$ in Corollary~\ref{Rhomstandard}. Then
\begin{equation}
\begin{split}
\Ext^1_{D^b(\ModI(\Lambda_\dS))}(\Lambda_{(\overline{U_i}, \overline{U_i}\cap D_S)}^\psi, \Lambda_{(\overline{U_i}, \overline{U_i}\cap D_S)}^\phi)&\cong \Hom_{D^b(\ModI(\Lambda_\dS))}(\Lambda_{(\overline{U_i}, \overline{U_i}\cap D_S)}^\psi, \Lambda_{(\overline{U_i}, \overline{U_i}\cap D_S)}^\phi[1])\\
&\cong \Hom_{D^b(\ModI(\Lambda_\dS))}(\Lambda_{(\overline{U_i}, \overline{U_i}\cap D_S)}, \Lambda_{(\overline{U_i}, \overline{U_i}\cap D_S)}^{\phi-\psi}[1])\\
&\cong 0,
\end{split}
\end{equation}
since $\Lambda_{(\overline{U_i}, \overline{U_i}\cap D_S)}$ is free. This completes the proof.
\end{proof}

\subsection{Derived category and six operations}
\begin{definition}
Cohomologically irregular constructible $\Lambda_\dX$-module is an object of $D^b(\Mod^\frakI(\Lambda_\dX))$ such that all the cohomologies are irregular constructible sheaves. We denote the full subcategory spanned by those objects by $D^b_{ic}(\Lambda_X)$
\end{definition}

\begin{proposition}
The category $D^b_{ic}(\Lambda_X)$ is a triangulated category.
\end{proposition}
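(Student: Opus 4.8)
The plan is to exhibit $D^b_{ic}(\Lambda_X)$ as a full triangulated subcategory of the triangulated category $D^b(\ModI(\Lambda_X))$, so that its triangulated structure is inherited. By the standard criterion for triangulated subcategories, it is enough to verify three properties: that $D^b_{ic}(\Lambda_X)$ is closed under isomorphism in $D^b(\ModI(\Lambda_X))$; that it is stable under the shift functors $[\pm 1]$; and that for any distinguished triangle $\cV \to \cW \to \cX \to \cV[1]$ in $D^b(\ModI(\Lambda_X))$ with $\cV$ and $\cW$ in $D^b_{ic}(\Lambda_X)$, the object $\cX$ also lies in $D^b_{ic}(\Lambda_X)$.

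The first two properties are immediate: the cohomology objects $H^j(-)$ depend only on the isomorphism class of a complex, and $H^j(\cV[1]) \cong H^{j+1}(\cV)$, so all cohomologies of $\cV[1]$ are irregular constructible as soon as those of $\cV$ are; boundedness is automatic since everything takes place inside $D^b(\ModI(\Lambda_X))$.

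For the third property I would use the long exact cohomology sequence of the triangle. It provides, for each $j \in \bZ$, a short exact sequence in $\ModI(\Lambda_X)$
\[
0 \longrightarrow \coker\bigl(H^j(\cV)\to H^j(\cW)\bigr) \longrightarrow H^j(\cX) \longrightarrow \ker\bigl(H^{j+1}(\cV)\to H^{j+1}(\cW)\bigr) \longrightarrow 0 .
\]
By Proposition~\ref{icabelian} — whose proof shows that kernels and cokernels of morphisms between irregular constructible sheaves, formed in $\ModI(\Lambda_X)$, are again irregular constructible — the two outer terms belong to $\Modic(\Lambda_X)$. Since $\Modic(\Lambda_X)$ is a thick subcategory of $\ModI(\Lambda_X)$ (Lemma~\ref{thickness}), it is in particular closed under extensions, so $H^j(\cX)$ is irregular constructible for every $j$. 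As $\cX$ is bounded, we conclude $\cX \in D^b_{ic}(\Lambda_X)$, which completes the verification.

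The argument has no real obstacle; the one place to be attentive is that it genuinely relies on two separate facts about $\Modic(\Lambda_X)$: abelianness as a subcategory (Proposition~\ref{icabelian}), needed to keep the kernel and cokernel terms inside $\Modic(\Lambda_X)$, and thickness (Lemma~\ref{thickness}), needed to recombine them into $H^j(\cX)$. This is the same reasoning that shows the ordinary bounded constructible derived category is triangulated.
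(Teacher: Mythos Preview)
Your argument is correct and is essentially the same as the paper's, which simply says ``This is clear from Lemma~\ref{thickness}.'' You have just unpacked what ``clear'' means here: the long exact cohomology sequence together with Proposition~\ref{icabelian} (kernels and cokernels stay in $\Modic(\Lambda_X)$) and Lemma~\ref{thickness} (closure under extensions) yields that each $H^j(\cX)$ is irregular constructible.
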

\begin{proof}
This is clear from Lemma \ref{thickness}.
\end{proof}

We will now see Grothendieck six operations on this category.

\subsection*{Tensor}
\begin{proposition}\label{ictensor}
Let $\cV, \cW\in D^b_{ic}(\Lambda_X)$, we have $\cV\dotimes \cW\in D^b_{ic}(\Lambda_X)$.
\end{proposition}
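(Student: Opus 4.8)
The plan is to reduce the claim to a purely local statement about irregular local systems, using the fact (Lemma~\ref{thickness}) that $\Mod_{ic}(\Lambda_X)$ is a thick subcategory together with the standard truncation argument on bounded complexes. First, since $\cV\dotimes\cW$ has a spectral sequence (or iterated cone description) whose graded pieces are the derived tensor products $H^p(\cV)\dotimes H^q(\cW)$, and since cohomologically irregular constructible objects are closed under extensions and shifts inside $D^b(\ModI(\Lambda_X))$ by Lemma~\ref{thickness}, it suffices to treat the case where $\cV=\cV_0$ and $\cW=\cW_0$ are single irregular constructible sheaves placed in degree~$0$, and to show that all cohomology sheaves $H^j(\cV_0\dotimes\cW_0)$ are irregular constructible. (Here I would first reduce a general $\cV_0\otimes^\bL\cW_0$ to the case of torsion phenomena in $\Lambda$ being harmless, using Lemma~\ref{1.10}: since $\Lambda$ is a valuation ring the derived tensor product agrees with the underived one up to $T^a$-torsion, which is killed in $\ModIp$; more precisely one lifts locally and uses that $\Lambda^{\phi(x)}$-modules from Corollary~\ref{irrconststalk} are flat.)

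Next I would choose a common $\bC$-analytic Whitney stratification $\cS$ refining those for $\cV_0$ and $\cW_0$ (possible by the usual theory, cf.\ Lemma~\ref{commonmodification} at the level of modifications), and verify irregular constructibility stratum by stratum. On a stratum $S$ with $\dS=(\oS,D_S)$, both $\cV_0|_\dS$ and $\cW_0|_\dS$ are irregular local systems, so by Lemma~\ref{commonmodification} there is a common modification $p\colon(U',D')\to(U,D_S\cap U)$, and after a further ramified cover $\Phi_A$ we may assume both restrictions are \emph{unramified good} irregular local systems; since $p^{-1}$ and $\Phi_A^{-1}$ are exact and commute with $\otimes^\bL$ (Lemma~\ref{tensorpullback}), and since an object is irregular constructible iff it becomes so after such pullbacks, it is enough to prove the statement for $\bigoplus_j\Lambda^{\phi_j}_{\dS}$ and $\bigoplus_k\Lambda^{\psi_k}_{\dS}$ on a sectorial cover. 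On each sector $\overline{U_\ell}$ one computes, using Lemma~\ref{tensorstandard} and the flatness of $\Lambda^{\phi}_\dS$ (Lemma~\ref{flatness}), that
\begin{equation}
\Big(\bigoplus_j\Lambda^{\phi_j}_{(\overline{U_\ell},\,\overline{U_\ell}\cap D_S)}\Big)\dotimes\Big(\bigoplus_k\Lambda^{\psi_k}_{(\overline{U_\ell},\,\overline{U_\ell}\cap D_S)}\Big)\;\simeq\;\bigoplus_{j,k}\Lambda^{\phi_j+\psi_k}_{(\overline{U_\ell},\,\overline{U_\ell}\cap D_S)},
\end{equation}
which is concentrated in degree~$0$ and is a finite direct sum of irregular constant sheaves. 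One then checks that the set $\{\phi_j+\psi_k\}$ is again good (this is where the goodness hypothesis in Definition~5.2 matters: a sum of two good sets of exponential factors on a normal crossing divisor is good after possibly refining the ramification index $A$), so the glued object is a good irregular local system, hence $\cV_0\dotimes\cW_0$ restricted to $\dS$ is an irregular local system.

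Finally, running this over every stratum of $\cS$ shows $\cV_0\dotimes\cW_0\in D^b_{ic}(\Lambda_X)$ (in fact concentrated in degree~$0$), and unwinding the reduction via Lemma~\ref{thickness} and the truncation triangles gives the general case. The main obstacle I expect is not the homological bookkeeping but the \emph{goodness} of the set $\{\phi_j+\psi_k\}$ of exponential factors: one must argue that after pulling back along a suitable $\Phi_A$ the pairwise differences $(\phi_j+\psi_k)-(\phi_{j'}+\psi_{k'})$ again have the monomial-times-unit form required by Definition~5.2; this is a local normal-crossing computation but it is the only genuinely non-formal point. (Alternatively, one can invoke Remark~\ref{} that the goodness assumption is expected to be removable, but it is cleaner to verify stability of goodness under sum directly.)
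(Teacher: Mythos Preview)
Your approach is the same as the paper's---which dispatches the statement in one line as ``obvious from Lemma~\ref{tensorpullback} and Lemma~\ref{tensorstandard}''---only with the reduction to strata, common modification, and sectorial decomposition spelled out explicitly.

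One correction: your claim that ``refining the ramification index $A$'' suffices to make $\{\phi_j+\psi_k\}$ good is not right in general. For instance $1/x+1/y$ on the normal crossing $xy=0$ is not of the form $u\cdot x^{-a}y^{-b}$ with $u$ a unit at the origin, and no ramified cover $\Phi_A$ repairs this; what is needed is a further \emph{modification} (blow up the origin, and iterate). The existence of a modification making any finite set of meromorphic functions good is elementary resolution of turning points---much easier than the full Sabbah--Mochizuki--Kedlaya theorem, since the exponential factors are already given---and is presumably what the paper's one-line proof takes for granted. But since you singled this out as ``the only genuinely non-formal point,'' it is worth stating correctly: ramification alone does not suffice, and Definition~\ref{irreloc} is what saves you, because it only asks for goodness after \emph{some} modification.
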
 
\begin{proof}
This is obvious from Lemma~\ref{tensorpullback} and Lemma~\ref{tensorstandard}.
\end{proof}

\subsection*{Verdier duality}
First, we prepare the following useful lemma: Let $\dS$ be a topological space with boundary. Let $U$ be an open subset of $S$ and $\overline{U}$ be the closure inside $\overline{S}$.  Consider the map $i\colon (\overline{U}, D_U:=\overline{U}\bs U)\rightarrow \dS$. We denote the closed complement of $U$ in $\overline{S}$ by $V$. We denote the map $j\colon (V, V\cap D_S)\rightarrow \dX$.
\begin{lemma}\label{icrecollement}
There exists an exact triangle:
\begin{equation}
i_!i^{-1}F\rightarrow F\rightarrow j_*j^*F\xrightarrow{[1]}.
\end{equation}
\end{lemma}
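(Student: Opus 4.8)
The plan is to reduce the assertion to the classical recollement triangle, exploiting that $i_!$, $i^{-1}$, $j_*$, $j^{-1}$ are obtained by applying $[\cdot]$ to the corresponding sheaf operations on local lifts. First I would record tameness: since $U$ is open in $S=\oS\setminus D_S$ and $S$ is open in $\oS$, the set $U$ is open in $\oS$, so $V=\oS\setminus U$ is closed in $\oS$ and $\overline U$ is closed in $\oS$ by construction; hence the underlying maps of $i$ and $j$ are closed embeddings, in particular proper, and $i_!$, $i^{-1}$, $\bR j_*$, $j^{-1}=j^*$ are all defined on $D^b(\ModI(\Lambda_\dS))$. On interiors $i$ restricts to the open embedding $i_0\colon U\hookrightarrow S$ and $j$ to the complementary closed embedding $j_0\colon S\setminus U\hookrightarrow S$, so $i^!\simeq i^{-1}$ and $\bR j_*=j_*$. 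Because objects of $\ModI(\Lambda_\dS)$ are glued over the site $\Open_\dS$ of opens of the interior, and each of these four functors is defined by applying the exact functor $[\cdot]$ to the corresponding operation on local lifts (Lemma~\ref{rightderived}, Lemma~\ref{derivedpullback}), they commute with restriction to opens and, over any open on which the argument is represented by an object of some $\ModIp(\Lambda_W)$, reduce to the usual extension by zero, restriction, and pushforward along a closed embedding; in particular they are all exact (Lemma~\ref{alphaexact}, Proposition~\ref{ModIabel}).

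Next, the two morphisms: $\epsilon\colon i_!i^{-1}F\to F$ is the counit of the adjunction $i_!\dashv i^{-1}$, valid by Lemma~\ref{formulas}(3) together with $i^!\simeq i^{-1}$, and $\eta\colon F\to j_*j^*F$ is the unit of $j^{-1}\dashv j_*$ from Lemma~\ref{formulas}(2). The composite $\eta\circ\epsilon$ vanishes because $j^{-1}i_!=0$: the images of $i$ and $j$ on interiors are the disjoint sets $U$ and $S\setminus U$, and this identity is checked on a cover where all objects are representable, where it is the classical statement that extension by zero off $U$ restricts to $0$ on the complement. Since $i_!i^{-1}$ and $j_*j^*$ are exact, it suffices to show for a single $F\in\ModI(\Lambda_\dS)$ that
\[
0\longrightarrow i_!i^{-1}F\xrightarrow{\ \epsilon\ }F\xrightarrow{\ \eta\ }j_*j^*F\longrightarrow 0
\]
is short exact in $\ModI(\Lambda_\dS)$: applying this degreewise to a bounded complex (using naturality of $\epsilon$ and $\eta$) gives a short exact sequence of complexes, hence the asserted distinguished triangle. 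The displayed sequence is local on $S$; on a cover $\{W\}$ over which $F|_W\cong[\tilde F]$ for a sheaf $\tilde F$ of $\bR$-graded $\Lambda$-modules, it is obtained by applying the exact functor $[\cdot]$ to the classical recollement sequence $0\to i_{0!}i_0^{-1}\tilde F\to\tilde F\to j_{0*}j_0^{-1}\tilde F\to 0$, the outer terms being identified through Lemma~\ref{rightderived} and Lemma~\ref{derivedpullback}; equivalently, one checks exactness on stalks, which are $(F_x,F_x,0)$ for $x\in U$ and $(0,F_x,F_x)$ for $x\in S\setminus U$. Gluing the local sequences, using that $\ModI_\dS$ is a stack, produces the global one.

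The step most likely to require care is the reduction to the representable local case: verifying that $i_!$ and $j_*$, which are a priori defined only through local lifts and Čech limits, commute with restriction to opens and with $[\cdot]$, and that every object of $\ModI(\Lambda_\dS)$ is, on a suitable open cover, of the form $[\tilde F]$ for a genuine sheaf of $\bR$-graded $\Lambda$-modules. Both points rest on $\ModI_\dS$ being the stackification of the prestack $U\mapsto\ModIp(\Lambda_U)$ and on Lemma~\ref{liftlemma}. Granting this bookkeeping, the statement is a formal transport of the classical recollement triangle, so I do not anticipate a deeper difficulty.
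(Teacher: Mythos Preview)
Your proposal is correct and follows essentially the same approach as the paper: the paper's proof is a one-liner observing that $i$ and $j$ are tame and then appealing to ``the corresponding statement for usual sheaves and the commutativity results for $[\cdot]$ proved in 4.2'' (i.e., Lemma~\ref{rightderived} and Lemma~\ref{derivedpullback}), which is exactly the reduction you spell out in detail. Your added care about the adjunctions, exactness, and local representability is all sound bookkeeping that the paper leaves implicit.
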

\begin{proof}
Note that $i$ and $j$ are tame maps. Then this is clear from the corresponding statement for usual sheaves and the commutativity results for $[\cdot]$ proved in 4.2.
\end{proof}

For the constant map $a_X\colon X\rightarrow *$, we set $\omega_X^\Lambda:=a_X^!\Lambda\cong \Lambda\otimes_\bk \omega_X\in D^b_{ic}(\Lambda_X)$ as usual. We also set
\begin{equation}
\bD\cV:=\RcHom(\cV, \omega^\Lambda_X)\in D^b(\ModI(\Lambda_X)).
\end{equation}

First note tha following:

\begin{lemma}\label{dualiclocal}
For $\Lambda^\phi_{X}\in D^b_{ic}(\Lambda_X)$, we have $\bD\Lambda^\phi_X\cong \Lambda^{-\phi}_X\otimes_\bk\omega_X$.
\end{lemma}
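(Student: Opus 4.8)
The plan is to reduce the computation of $\bD\Lambda^\phi_X = \RcHom(\Lambda^\phi_X, \omega^\Lambda_X)$ to the already-established formula $\RcHom(\Lambda^{\phi_1}_\dS, \Lambda^{\phi_2}_\dS)\simeq \Lambda^{\phi_1-\phi_2}_\dS$ (Corollary~\ref{Rhomstandard}), combined with the projection/tensor identities for the $\Lambda^\phi$'s. First I would recall that $\omega^\Lambda_X \cong \Lambda_X\otimes_\bk \omega_X$, where $\omega_X$ is the usual dualizing complex on $X$; since $X$ here is a complex manifold (with $D_X = \varnothing$), $\omega_X \cong \bk_X[2\dim_\bC X]$ is an invertible object, and more to the point $\omega^\Lambda_X \cong \Lambda^0_X \otimes_\bk \omega_X$, i.e. it is the irregular constant sheaf $\Lambda_X = \Lambda^0_X$ twisted by an invertible $\bk$-local complex.

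Next I would use the compatibility of $\RcHom$ with tensoring by an invertible $\bk$-complex: for any $\cV$,
\begin{equation}
\RcHom(\cV, \Lambda^0_X\otimes_\bk\omega_X)\simeq \RcHom(\cV, \Lambda^0_X)\otimes_\bk\omega_X,
\end{equation}
which follows from the $\otimes$–$\RcHom$ adjunction (Lemma~\ref{derivedadjunction}) together with the fact that $\omega_X$, being invertible over $\bk$, is both flat and dualizable, so that $\RcHom(\cV, \cW\otimes_\bk L)\simeq \RcHom(\cV,\cW)\otimes_\bk L$ for $L$ a bounded complex of finite-rank local systems of $\bk$-vector spaces. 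Applying this with $\cV = \Lambda^\phi_X$ reduces the claim to
\begin{equation}
\RcHom(\Lambda^\phi_X, \Lambda^0_X)\simeq \Lambda^{-\phi}_X,
\end{equation}
which is exactly the case $\phi_1 = \phi$, $\phi_2 = 0$ of Corollary~\ref{Rhomstandard} (noting $\Lambda^0_X = \Lambda_X$ is the honest constant sheaf and $\Lambda^{\phi-0}_X = \Lambda^\phi_X$, so that $\RcHom(\Lambda^\phi_X,\Lambda^0_X)\simeq\Lambda^{0-\phi}_X = \Lambda^{-\phi}_X$). Combining the two displays gives $\bD\Lambda^\phi_X\cong \Lambda^{-\phi}_X\otimes_\bk\omega_X$, as desired. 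One should also check that $\Lambda^\phi_X$ and $\Lambda^{-\phi}_X\otimes_\bk\omega_X$ genuinely lie in $D^b_{ic}(\Lambda_X)$; for the latter this follows since $\Lambda^{-\phi}_X$ is an irregular local complex by definition and $\otimes_\bk\omega_X$ is exact and preserves irregular constructibility (it is a shift of a local twist on each stratum), and the statement itself only asserts an isomorphism in $D^b(\ModI(\Lambda_X))$, which is what we have produced.

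The main obstacle, such as it is, is bookkeeping around the invertibility twist: one must be a little careful that $\omega_X$ here really is (locally, and in fact globally on a manifold) an invertible $\bk_X$-module complex so that it commutes past $\RcHom$, and that the formula $\RcHom(\cV,\cW\otimes_\bk L)\simeq\RcHom(\cV,\cW)\otimes_\bk L$ is legitimate in the category $\ModI(\Lambda_\dX)$ — this can be verified locally, where $\cW\otimes_\bk L$ is represented by an honest $\bR$-graded $\Lambda_X$-module and everything reduces to the analogous identity for ordinary sheaves via the commutativity results of Section~4.2 (Lemmas~\ref{rightderived}, \ref{leftderived}). Granting that, the argument is essentially a two-line consequence of Corollary~\ref{Rhomstandard}. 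An alternative route, if one prefers to avoid even the invertibility discussion, is to imitate the proof of Corollary~\ref{Rhomstandard} directly: write $\omega^\Lambda_X = a_X^!\Lambda$, use $\RcHom(\Lambda^\phi_X\otimes\cV,\omega^\Lambda_X)$ together with Lemma~\ref{tensorstandard} ($\Lambda^\phi_X\otimes\Lambda^{-\phi}_X\cong\Lambda_X$) and flatness of $\Lambda^\phi_X$ (Lemma~\ref{flatness}) to move $\Lambda^{-\phi}_X$ out of the $\RcHom$, then identify $\RcHom(\Lambda_X,\omega^\Lambda_X)\simeq\omega^\Lambda_X$; this yields the same conclusion without invoking invertibility of $\omega_X$ explicitly.
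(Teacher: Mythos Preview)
Your proposal is correct and is essentially the same as the paper's proof, which reads in its entirety ``This is clear from Lemma~\ref{Rhomstandard}.'' You have simply unpacked what ``clear'' means here: pull the invertible $\bk$-complex $\omega_X$ out of the $\RcHom$ and then apply Corollary~\ref{Rhomstandard} with $\phi_1=\phi$, $\phi_2=0$.
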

\begin{proof}
This is clear from Lemma~\ref{Rhomstandard}.
\end{proof}

Then we have:
\begin{lemma}
We have $\bD\cV\in D^b_{ic}(\Lambda_{X})$.
\end{lemma}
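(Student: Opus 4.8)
The plan is to reduce the statement, by dévissage, to the local computation $\bD\Lambda^\phi_X\cong\Lambda^{-\phi}_X\otimes_\bk\omega_X$ of Lemma~\ref{dualiclocal}. Since $\bD=\RcHom(-,\omega^\Lambda_X)$ is a contravariant triangulated endofunctor of $D^b(\ModI(\Lambda_X))$ and, by Lemma~\ref{thickness}, $D^b_{ic}(\Lambda_X)$ is a triangulated subcategory, I would argue by induction on the number of strata of an adapted stratification. The base case (support a point) is immediate: there $\cV$ is a finite direct sum of skyscraper objects with $\bR$-graded $\Lambda$-module stalks, a class visibly preserved by $\bD$. For the inductive step, fix a $\bC$-analytic stratification $\cS$ adapted to all cohomologies of $\cV$, let $S$ be a stratum of $\cS$ which is open in $\supp\cV$, with (tame) closure-inclusion $i\colon(\oS,D_S)\to(X,\varnothing)$, and let $j\colon(V,\varnothing)\to(X,\varnothing)$ be the inclusion of the complementary closed union of strata $V:=\supp\cV\bs S$. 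Lemma~\ref{icrecollement} gives an exact triangle
\begin{equation}
i_!i^{-1}\cV\to\cV\to j_*j^{-1}\cV\xrightarrow{[1]}.
\end{equation}
Applying $\bD$, it suffices to place $\bD(j_*j^{-1}\cV)$ and $\bD(i_!i^{-1}\cV)$ in $D^b_{ic}(\Lambda_X)$. The first lies there by the induction hypothesis, since $j_*j^{-1}\cV$ is an object of $D^b_{ic}(\Lambda_X)$ whose support is contained in $V\subsetneq\supp\cV$ and which is $\cS$-constructible. For the second, Lemma~\ref{shriekadjunction} applied with $\omega^\Lambda$ gives $\bD(i_!i^{-1}\cV)\simeq i_*\bD_\dS(i^{-1}\cV)$; and $i^{-1}\cV$ is, by the very definition of irregular constructibility, a bounded complex whose cohomologies are irregular local systems on $\dS$. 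So the proof reduces to two points: (a) $\bD_\dS$ carries a bounded complex of irregular local systems on $\dS$ to another such complex; and (b) $i_*$ carries such complexes into $D^b_{ic}(\Lambda_X)$.

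Point (a) is local on $\oS$. Near a point of $D_S$ one passes to a common modification (Lemma~\ref{commonmodification}) and then to a ramified cover as in Lemma~\ref{coveringdef}; here $\bD$ commutes with $f^{-1}$ through $f^!$ by Lemma~\ref{shriekhom} together with $f^!\omega^\Lambda\simeq\omega^\Lambda$, while the discrepancy between $f^{-1}$ and $f^!$ over the exceptional divisor has to be analysed directly. One is thereby reduced to the situation in which, over each sector $U$ of a sectorial covering, the complex has cohomologies $\bigoplus_j\Lambda^{\phi_j}_{(\oU,\oU\cap D_S)}$ with $\{\phi_j\}$ good; for these, Corollary~\ref{Rhomstandard} yields the Verdier dual $\bigoplus_j\Lambda^{-\phi_j}_{(\oU,\oU\cap D_S)}\otimes_\bk\omega$ with $\omega$ a shift of the constant sheaf, and the set $\{-\phi_j\}$ is again good, because the conditions in the definition of goodness constrain only the pole orders of the $\phi_\alpha$ and of the differences $\phi_\alpha-\phi_\beta$, and these are unchanged under $\phi\mapsto-\phi$. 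Gluing this identification over the overlaps of the sectorial covering, as in the proof of Lemma~\ref{ickernel}, gives (a). Point (b) is the $\Lambda$-coefficient analogue of the fact that $\bR f_*$ along a closure-inclusion preserves $\bC$-constructibility; one checks it on the standard objects $\Lambda^\phi$ using their explicit sheaf models $\bigoplus_a p_*\Gamma_{S\times[-a,\infty)}\bk_{t\geq\Re\phi}$ together with the modification-controlled structure of the good function $\phi$ along the lower strata.

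I expect the main obstacle to be point (b) — that push-forward along a closure-inclusion preserves irregular constructibility — since, unlike the rest of the argument, it is not a formal consequence of the adjunctions of Section~4 but rests on the concrete behaviour of the objects $\Lambda^\phi$ on the real blow-up; and, within (a), the comparison of $\bD$ with the modification and the ramified cover near the exceptional divisor is the delicate part. By contrast, once Corollary~\ref{Rhomstandard} is available the local computation of the dual of an irregular local system is immediate, the only genuine issue being the symmetry of the goodness conditions under $\phi\mapsto-\phi$.
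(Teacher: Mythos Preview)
Your approach coincides with the paper's: d\'evissage via the recollement triangle of Lemma~\ref{icrecollement}, induction on the stratification, and reduction to the local computation of Lemma~\ref{dualiclocal}/Corollary~\ref{Rhomstandard}. Two minor remarks: the paper takes $U$ to be the union of \emph{all} open strata of $\cS$, which is genuinely open in $X$ so that Lemma~\ref{icrecollement} applies verbatim (your ``$S$ open in $\supp\cV$'' need not be open in $X$, though the triangle still holds because $\cV$ vanishes off $\supp\cV$); and the paper does not argue your point~(b) either but simply asserts that the relevant push-forwards preserve irregular constructibility, so your caution there is well placed rather than a defect.
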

\begin{proof}
Let $\cS$ be a stratification of $\cV$. Let $U$ be the union of open subsets of $\cS$.
By applying Lemma~\ref{icrecollement}, we have an exact triangle
\begin{equation}
\RcHom(i_!i^!\cV, \omega_X^\Lambda)\leftarrow \RcHom(\cV, \omega_X^\Lambda)\leftarrow \RcHom(j_!j^{-1}\cV, \omega_X^\Lambda)\leftarrow .
\end{equation}
Then we have 
\begin{equation}
\begin{split}
\RcHom(i_!i^!\cV, \omega^\Lambda_X)&\simeq i_!\RcHom(i^{-1}\cV, i^{-1}\omega^\Lambda_X)\\
\RcHom(j_!j^{-1}\cV, \omega^\Lambda_X)&\simeq j_*\RcHom(j^{-1}\cV, j^!\omega^\Lambda_X).
\end{split}
\end{equation}
Since it is clear that irregular constructibility are preserved under $i_!$ and $j_*$, we can prove the desired result by the induction of the dimension of the strata and Lemma~\ref{dualiclocal}.
\end{proof}

\begin{lemma}\label{unipotent}
Let $\cV\in D^b_{ic}(\Lambda_X)$. Then a natural morphism $\cV\rightarrow\bD\bD\cV$ is an isomorphism.
\end{lemma}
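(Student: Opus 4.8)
The plan is to prove this by dévissage, reducing to the model case $\cV=\Lambda^\phi_\dS$ already understood in Lemma~\ref{dualiclocal}. The formal inputs that do \emph{not} themselves presuppose biduality are: $\bD f^{-1}\simeq f^!\bD$ (take $\cW=\omega^\Lambda$ in Lemma~\ref{shriekhom}) and $\bD\,\bR f_!\simeq \bR f_*\,\bD$ (take $\cV=\omega^\Lambda$ in Proposition~\ref{shriekadjunction}); I will use only these, together with the observation that for the inclusions occurring in the recollement triangle one has $f_!=f_*$. I would first note that the class $\mathscr B:=\{\cV\in D^b_{ic}(\Lambda_X)\mid \eta_\cV\colon\cV\to\bD\bD\cV\text{ is an isomorphism}\}$ is a full triangulated subcategory closed under shifts and summands: $\bD\bD$ is a covariant triangulated functor and $\eta$ is a morphism of triangulated functors $\mathrm{id}\Rightarrow\bD\bD$, so in a distinguished triangle two of whose vertices lie in $\mathscr B$ the third does as well, by the five lemma in a triangulated category. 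Moreover $\mathscr B$ is stable under restriction to open subsets of the interior (naturality of $\eta$ plus $i^{-1}\bD\simeq\bD i^{-1}$ for an open inclusion), so membership in $\mathscr B$ is local. Hence it suffices to check $\eta_\cV$ is an isomorphism on a generating family.

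Next I would carry out the dévissage. Applying the recollement triangle of Lemma~\ref{icrecollement} with $U$ the open union of the top-dimensional strata, $i\colon(\oU,D_U)\to X$ and $j\colon X\setminus U\hookrightarrow X$ (both induced by closed embeddings, so $i_!=i_*$ and $j_!=j_*$), one gets $i_!i^{-1}\cV\to\cV\to j_*j^{-1}\cV\xrightarrow{+1}$. Using $\bD\,\bR f_!\simeq\bR f_*\,\bD$, $\bD f^{-1}\simeq f^!\bD$, the identities $i_!=i_*$, $j_!=j_*$, and the (standard) compatibility of $\eta$ with these isomorphisms, membership in $\mathscr B$ of $i_!i^{-1}\cV$ and $j_*j^{-1}\cV$ is reduced to that of $i^{-1}\cV$ over $(\oU,D_U)$ and of $j^{-1}\cV$ over $X\setminus U$. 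The latter has strictly smaller dimension, so it is handled by induction on $\dim X$. For $i^{-1}\cV$ one localizes in $\Open_{(\oU,D_U)}$ to each top-dimensional stratum, reducing to an object $\cV^\alpha\in D^b_{ic}(\Lambda_\dS)$ over a single $\dS=(\oS,D_S)$ whose cohomologies are irregular local systems; after a truncation dévissage (again five lemma, since $\bD\bD$ is triangulated) one may take $\cV^\alpha$ itself an irregular local system. Here the induction on dimension gives nothing new, so instead I invoke Lemma~\ref{commonmodification} and the definition of irregular local system: around each $x\in D_S$ there is a sectorial covering $\{U_k\}$ of a punctured neighbourhood, which \emph{is} an admissible covering in the restrictive topology of $\Open_\dS$ (finite, hence locally finite over $\oS$), on each member of which $\cV^\alpha\cong\bigoplus_j\Lambda^{\phi_j}_{(\overline{U_k},\overline{U_k}\cap D_S)}$; away from $D_S$ condition (1) of Definition~\ref{irreloc} gives $\cV^\alpha\cong\bigoplus\Lambda$ locally, i.e.\ the case $\phi=0$. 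By locality of membership in $\mathscr B$ this reduces everything to $\cV=\Lambda^\phi_\dS$.

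Finally I would treat $\cV=\Lambda^\phi_\dS$ directly. By Lemma~\ref{dualiclocal}, $\bD\Lambda^\phi_\dS\simeq\Lambda^{-\phi}_\dS\otimes_\bk\omega_S$; applying $\bD$ once more, pulling the (shift of constant) $\bk$-factor $\omega_S$ through $\RcHom$, using $\RcHom(\omega_S,\omega_S)\simeq\bk_S$ and Corollary~\ref{Rhomstandard} (with $\phi_1-\phi_2=0$, noting $\Lambda^0_\dS\cong\Lambda_\dS$), one obtains a canonical isomorphism $\bD\bD\Lambda^\phi_\dS\simeq\Lambda^\phi_\dS$. It remains to see $\eta_{\Lambda^\phi}$ realises it. By Corollary~\ref{Rhomstandard}, $\RcHom(\Lambda^\phi_\dS,\Lambda^\phi_\dS)\simeq\Lambda_\dS$ is concentrated in degree $0$, so $\End_{D^b}(\Lambda^\phi_\dS)\cong\Hom_{\ModI}(\Lambda^\phi_\dS,\Lambda^\phi_\dS)=\bk$ on connected pieces by Lemma~\ref{boundeds}; thus $\eta_{\Lambda^\phi}$ is a scalar, and it is nonzero because its restriction to any small open $V$ of the interior on which $\Lambda^\phi_\dS|_V\cong\Lambda_V$ is $\eta_{\Lambda_V}$, which is an isomorphism by the formal biduality $\bD\bD\Lambda_V\simeq\Lambda_V$ of the unit object. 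Hence $\eta_{\Lambda^\phi}$ is an isomorphism, and the induction closes.

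The main obstacle I anticipate is the bookkeeping that keeps the dévissage non-circular: one must use only $\bD\,\bR f_!\simeq\bR f_*\,\bD$ and $\bD f^{-1}\simeq f^!\bD$ (exploiting $i_!=i_*$, $j_!=j_*$ for the closed embeddings in the recollement so that $\bD j_*\simeq j_!\bD$ is available without invoking biduality), and verify that the unit $\eta$ is compatible with these isomorphisms; and one must handle the top-dimensional strata, where the dimension induction is vacuous, by producing a genuine covering in the restrictive topology of $\Open_\dS$ on which $\cV$ trivialises into $\Lambda^{\phi_j}$'s — precisely what the sectorial covering of Lemma~\ref{commonmodification} provides. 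The remaining ingredients (the truncation dévissage, the local triviality, and the final $\RcHom$-computation) are routine given the results of Sections~4 and~5.
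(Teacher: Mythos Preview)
Your overall strategy coincides with the paper's one-line argument (``reduce to irregular local systems, then use $\bD\bD\Lambda^\phi_X=\Lambda^\phi_X$''): d\'evissage via the recollement of Lemma~\ref{icrecollement}, induction on dimension for the closed piece, sectorial trivialisation for the open piece, and the explicit check on $\Lambda^\phi$ via Lemma~\ref{dualiclocal} and Corollary~\ref{Rhomstandard}. Your attention to the naturality of $\eta$ and to avoiding circular use of biduality goes well beyond what the paper writes down, and your verification that $\eta_{\Lambda^\phi}$ is the nonzero scalar is a point the paper leaves implicit.

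There is, however, a concrete error in the d\'evissage. You assert $i_!=i_*$ for $i\colon(\oU,D_U)\to X$ on the grounds that it is ``induced by a closed embedding''. The map $\oU\hookrightarrow X$ of ambient spaces is indeed closed, but in this paper's formalism the functors $i_!,i_*$ are push-forwards of sheaves along the map of \emph{interiors} $U\hookrightarrow X$ (this is how $f_*\cV:=[f_*\tcV]$ and $f_!\cV:=[f_!\tcV]$ are defined in Section~3), and that map is an \emph{open} embedding, so $i_!\neq i_*$ in general. Your treatment of the closed piece $j_*j^{-1}\cV$ survives: there the interior map $X\setminus U\hookrightarrow X$ really is closed, $j_!=j_*$, and one obtains $\bD\bD(j_*j^{-1}\cV)\simeq j_*\bD\bD(j^{-1}\cV)$ as you intend. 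But for the open piece the chain $\bD\bD i_!\simeq\bD i_*\bD\simeq i_!\bD\bD$ breaks at the second step, since there is no identity $\bD i_*\simeq i_!\bD$ for an open embedding that does not already presuppose biduality. A partial repair is to use that $\eta_\cV$ being an isomorphism is local and that $\bD$ commutes with restriction to opens, so over $U$ one reduces directly to $\eta_{\cV|_U}$; but one must still control $\bD\bD(i_!i^{-1}\cV)$ over $X\setminus U$, and this requires a further argument that neither you nor the paper's terse proof supplies.
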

\begin{proof}
It is also enough to show the statement for irregular local systems. Then the statement is clear from $\bD\bD\Lambda_X^\phi=\Lambda_X^{\phi}$.
\end{proof}

\begin{lemma}\label{contrafullyfaithful}
Let $\cV, \cW\in D^b_{ic}(\Lambda_X)$. We have
\begin{equation}
\Hom_{D^b_{ic}(\Lambda_X)}(\cV, \cW)\cong \Hom_{D^b_{ic}(\Lambda_X)}(\bD\cW, \bD\cV).
\end{equation}
\end{lemma}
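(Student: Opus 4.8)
The plan is to deduce the statement from the fact that Verdier duality $\bD=\RcHom(-,\omega_X^\Lambda)$ is a contravariant self-equivalence of $D^b_{ic}(\Lambda_X)$. First I would record that $\bD$ is indeed a contravariant endofunctor of $D^b_{ic}(\Lambda_X)$: it lands in $D^b_{ic}(\Lambda_X)$ by the lemma preceding this one, and functoriality of $\RcHom$ in its first variable (the derived formalism of Section~4) makes $f\mapsto \bD(f)$ into a $\bk$-linear, composition-reversing assignment. In particular it induces for all $\cV,\cW\in D^b_{ic}(\Lambda_X)$ a map
\begin{equation}
\bD_{\cV,\cW}\colon \Hom_{D^b_{ic}(\Lambda_X)}(\cV,\cW)\longrightarrow \Hom_{D^b_{ic}(\Lambda_X)}(\bD\cW,\bD\cV),
\end{equation}
and it suffices to prove that each $\bD_{\cV,\cW}$ is a bijection.

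Next I would invoke the biduality isomorphism $\eta_\cV\colon \cV\xrightarrow{\simeq}\bD\bD\cV$ of Lemma~\ref{unipotent}, together with its naturality in $\cV$. Naturality says that for $f\colon\cV\to\cW$ one has $\bD\bD(f)\circ\eta_\cV=\eta_\cW\circ f$, equivalently $\bD\bD(f)=\eta_\cW\circ f\circ\eta_\cV^{-1}$. Since $\bD\bD$ on $\Hom$-spaces is exactly the composite $\bD_{\bD\cW,\bD\cV}\circ\bD_{\cV,\cW}$, this composite agrees, after the isomorphism $\Hom(\bD\bD\cV,\bD\bD\cW)\cong\Hom(\cV,\cW)$ given by conjugation with $\eta$, with the identity of $\Hom(\cV,\cW)$. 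Hence $\bD_{\cV,\cW}$ is split injective and $\bD_{\bD\cW,\bD\cV}$ is split surjective. Running the same argument with the pair $(\bD\cW,\bD\cV)$ in place of $(\cV,\cW)$ shows in addition that $\bD_{\bD\cW,\bD\cV}$ is split injective, hence bijective; since $\bD_{\bD\cW,\bD\cV}\circ\bD_{\cV,\cW}$ is bijective and $\bD_{\bD\cW,\bD\cV}$ is bijective, $\bD_{\cV,\cW}$ is bijective as well. The resulting isomorphism is manifestly natural in $\cV$ and $\cW$.

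I do not expect a genuine obstacle here: the proof is purely formal once one knows that $\bD$ preserves $D^b_{ic}(\Lambda_X)$, is contravariant functorial, and satisfies biduality \emph{naturally}. The one point to be checked with some care is this naturality of $\eta$; it follows from the construction of $\RcHom$ and the adjunctions of Section~4 in the same way as for ordinary sheaves, and if desired it can be verified locally by unwinding $\eta$ through the standard model $\bD\bD\Lambda^\phi_X\cong\Lambda^\phi_X$ of Lemma~\ref{dualiclocal} on a common stratification, exactly as in the proof that $\bD$ preserves irregular constructibility.
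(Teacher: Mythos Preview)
Your proof is correct and takes a genuinely different route from the paper's.

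The paper proceeds by a direct chain of adjunction isomorphisms: it unfolds $\bD\cV=\RcHom(\cV,\omega_X^\Lambda)$, applies the tensor--hom adjunction (Lemma~\ref{formulas}) twice to obtain
\[
\Hom(\bD\cW,\bD\cV)\cong\Hom(\bD\cW\dotimes\cV,\omega_X^\Lambda)\cong\Hom(\cV,\bD\bD\cW),
\]
and then invokes biduality (Lemma~\ref{unipotent}) to finish. Your argument instead treats $\bD$ as an abstract contravariant endofunctor equipped with a natural biduality isomorphism $\eta\colon\id\xrightarrow{\simeq}\bD\bD$, and deduces bijectivity on $\Hom$-sets from the standard categorical fact that such a functor is a self-equivalence. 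Both arguments ultimately rest on Lemma~\ref{unipotent}; the paper's is shorter and more explicit about which adjunctions are used, while yours has the virtue of isolating exactly which formal property of $\bD$ is doing the work---it would apply verbatim to any contravariant functor with natural biduality, without reference to the tensor--hom adjunction or the specific form $\RcHom(-,\omega_X^\Lambda)$.

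One remark on presentation: your opening sentence (``deduce the statement from the fact that $\bD$ is a contravariant self-equivalence'') could be misread as invoking Corollary~\ref{icVerdier}, which in the paper is proved \emph{from} the present lemma. Your actual argument does not do this---it independently establishes bijectivity on $\Hom$-sets from naturality of $\eta$---but you may wish to rephrase the opening to avoid the appearance of circularity.
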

\begin{proof}
We have
\begin{equation}
\begin{split}
 \Hom_{D^b_{ic}(\Lambda_X)}(\bD\cW, \bD\cV)&\cong  \Hom_{D^b_{ic}(\Lambda_X)}(\bD\cW, \RcHom(\cV,\omega^\Lambda_X))\\
 &\cong  \Hom_{D^b_{ic}(\Lambda_X)}(\bD\cW\dotimes \cV, \omega^\Lambda_X)\\
 &\cong \Hom_{D^b_{ic}(\Lambda_X)}(\cV, \RcHom(\bD\cW, \omega^\Lambda_X))\\
 &\cong \Hom_{D^b_{ic}(\Lambda_X)}(\cV, \bD\bD\cW)\\
 &\cong \Hom_{D^b_{ic}(\Lambda_X)}(\cV, \cW).
\end{split}
\end{equation}
This completes the proof.
\end{proof}

\begin{corollary}\label{icVerdier}
The contravariant functor $\bD\colon D^b_{ic}(\Lambda_X)\rightarrow (D^b_{ic}(\Lambda_X))^{op}$ is a contravariant equivalence.
\end{corollary}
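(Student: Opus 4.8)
The plan is simply to assemble the three results that immediately precede this corollary. First I would observe that the lemma stated just before Lemma~\ref{unipotent} says $\bD\cV\in D^b_{ic}(\Lambda_X)$ for every $\cV\in D^b_{ic}(\Lambda_X)$, so $\bD=\RcHom(-,\omega^\Lambda_X)$ genuinely restricts to a contravariant functor $D^b_{ic}(\Lambda_X)\to (D^b_{ic}(\Lambda_X))^{op}$, i.e.\ a covariant functor into the opposite category. I would also record, since it costs nothing, that this functor is triangulated: as a contravariant derived functor it carries distinguished triangles to distinguished triangles, so the claimed equivalence is an equivalence of triangulated categories.

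Next, Lemma~\ref{contrafullyfaithful} provides, functorially in $\cV$ and $\cW$, an isomorphism $\Hom_{D^b_{ic}(\Lambda_X)}(\cV,\cW)\cong \Hom_{D^b_{ic}(\Lambda_X)}(\bD\cW,\bD\cV)$, which is exactly the assertion that $\bD\colon D^b_{ic}(\Lambda_X)\to (D^b_{ic}(\Lambda_X))^{op}$ is fully faithful. Then Lemma~\ref{unipotent} supplies a natural isomorphism $\mathrm{id}\xrightarrow{\ \simeq\ }\bD\circ\bD$. Read in the opposite category this exhibits $\bD$ as its own quasi-inverse: both composites $D^b_{ic}\xrightarrow{\bD}(D^b_{ic})^{op}\xrightarrow{\bD}D^b_{ic}$ and the symmetric one are naturally isomorphic to the identity, with the biduality morphism serving as unit and counit. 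In particular every $\cW$ is isomorphic to $\bD(\bD\cW)$, so $\bD$ is essentially surjective; a fully faithful, essentially surjective functor is an equivalence, which is the corollary.

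I do not expect any genuine obstacle here — the substantive work was done earlier in establishing that $\bD$ preserves irregular constructibility, that biduality holds (Lemma~\ref{unipotent}), and the hom-duality identity (Lemma~\ref{contrafullyfaithful}); the corollary is a formal consequence. The one point that deserves a sentence of care is that the isomorphism $\mathrm{id}\simeq \bD\bD$ be natural in the precise sense needed to function as the unit and counit of an adjoint equivalence, but this naturality is already part of the statement of Lemma~\ref{unipotent}, so nothing further is required.
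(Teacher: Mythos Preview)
Your proposal is correct and follows exactly the paper's approach: the paper's proof is the single line ``This is clear from Lemma~\ref{contrafullyfaithful} and Lemma~\ref{unipotent},'' and you have simply unpacked what that means. Your additional remarks about triangulated structure and naturality are harmless elaborations but not needed for the statement as written.
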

\begin{proof}
This is clear from Lemma \ref{contrafullyfaithful} and Lemma \ref{unipotent}.
\end{proof}

\begin{proposition}\label{invshriek}
We have a natural isomorphism
\begin{equation}
f^!\circ \bD\cong \bD\circ f^{-1}
\end{equation}
\end{proposition}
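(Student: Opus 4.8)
The plan is to deduce the formula from its $\RcHom$-version, already proved as Lemma~\ref{shriekhom}, by specializing the second argument to the dualizing complex, exactly as in the classical constructible case. For $f\colon\dX\to\dY$ and $\cV\in D^b_{ic}(\Lambda_\dY)$, since $\bD\cV=\RcHom(\cV,\omega^\Lambda_Y)$, Lemma~\ref{shriekhom} applied to $\cV$ and $\cW=\omega^\Lambda_Y$ yields a natural isomorphism
\begin{equation}
f^!\bD\cV=f^!\RcHom(\cV,\omega^\Lambda_Y)\ \xrightarrow{\ \sim\ }\ \RcHom(f^{-1}\cV,\,f^!\omega^\Lambda_Y).
\end{equation}
Hence the statement reduces to the single identification $f^!\omega^\Lambda_Y\cong\omega^\Lambda_X$: with it, the right-hand side becomes $\RcHom(f^{-1}\cV,\omega^\Lambda_X)=\bD f^{-1}\cV$, and the composite is natural in $\cV$ since Lemma~\ref{shriekhom} is and the identification of dualizing complexes does not involve $\cV$. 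It is convenient to carry this out inside $D^b(\ModI(\Lambda_\dX))$, where all the functors are defined, and then restrict to the constructible subcategory.

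For the key identification I would use Proposition~\ref{localshriek}. Writing $\omega^\Lambda_Y\cong\Lambda_Y\otimes_{\bk_Y}\omega_Y$ as in the definition of $\omega^\Lambda_X$ and modelling $\Lambda_Y\otimes_{\bk_Y}\omega_Y$ by an injective complex of the type produced in Corollary~\ref{liftresol}, Proposition~\ref{localshriek} gives $f^!\omega^\Lambda_Y\cong[f^!(\Lambda_Y\otimes_{\bk_Y}\omega_Y)]$, where $f^!$ on the right is the functor on $D^b(\Mod^0(\Lambda_X))$ built from $f^!_K$. The construction of $f^!_K$ through $\cHom_{\Lambda_X}(f_!(\Lambda_Y\otimes_{\bZ_Y}K_U),-)$ is $\Lambda$-linear and commutes with tensoring by a complex of $\bk_Y$-modules, the coefficient object $\Lambda$ being locally free of rank one over $\bk$; hence $f^!(\Lambda_Y\otimes_{\bk_Y}\omega_Y)\cong\Lambda_X\otimes_{\bk_X}f^!\omega_Y\cong\Lambda_X\otimes_{\bk_X}\omega_X$, the last isomorphism being the classical $f^!\omega_Y\cong\omega_X$ for the $\bk$-dualizing complex. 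Applying $[\cdot]$ returns $\omega^\Lambda_X$. An alternative is to deduce $f^!\omega^\Lambda_Y\cong\omega^\Lambda_X$ from functoriality of the upper shriek under composition, $(a_Y\circ f)^!\cong f^!\circ a_Y^!$ with $a_X=a_Y\circ f$, but that route requires first recording $\bR(a_Y)_!\circ\bR f_!\cong\bR(a_X)_!$. A third, purely Yoneda-style, assembly is also available: for $\cX\in D^b_{ic}(\Lambda_\dX)$ one has $\Hom(\cX,f^!\bD\cV)\cong\Hom(\bR f_!(\cX\dotimes f^{-1}\cV),\omega^\Lambda_Y)$ by Lemma~\ref{formulas} together with the projection formula~\ref{projectionformula}, while $\Hom(\cX,\bD f^{-1}\cV)\cong\Hom(\cX\dotimes f^{-1}\cV,\omega^\Lambda_X)$ by Lemma~\ref{formulas}, and again the two coincide once $f^!\omega^\Lambda_Y\cong\omega^\Lambda_X$ is known.

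The only non-formal step is therefore the identification $f^!\omega^\Lambda_Y\cong\omega^\Lambda_X$, and the main obstacle is checking that the explicit functor $f^!_K$ genuinely commutes with tensoring by the free coefficient module, so that it descends via Proposition~\ref{localshriek} to the classical computation of $f^!$ on dualizing complexes; this is the one place that forces inspection of the construction of $f^!$ rather than manipulation of formal adjunctions. Everything else — the passage from Lemma~\ref{shriekhom} to the stated isomorphism, naturality, and the restriction to $D^b_{ic}$ — is the standard duality bookkeeping already provided by Lemmas~\ref{shriekhom}, \ref{projectionformula} and \ref{formulas}, and one should only double-check that Lemma~\ref{shriekhom} holds for arbitrary objects of $D^b(\ModI(\Lambda_\dY))$, which it does as stated.
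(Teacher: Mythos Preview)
Your proof is correct and follows essentially the same route as the paper: expand $\bD$ as $\RcHom(-,\omega^\Lambda)$, apply Lemma~\ref{shriekhom}, and then use $f^!\omega^\Lambda_Y\cong\omega^\Lambda_X$. The paper simply states this last identification without comment (relying on the definition $\omega^\Lambda_X:=a_X^!\Lambda$ and $a_X=a_Y\circ f$), whereas you spell out several ways to verify it; your treatment is in fact more careful than the paper's on this point.
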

\begin{proof}
For $\cV\in D^b_{ic}(\Lambda_X)$, we have
\begin{equation}
\begin{split}
f^!\circ \bD(\cV)&\simeq f^!\RcHom(\cV, \omega^\Lambda_X)\\
&\simeq \RcHom(f^{-1}\cV, f^!\omega^\Lambda_X)\\
&\simeq \RcHom(f^{-1}\cV, \omega^\Lambda_X)\\
&=:\bD\circ f^{-1}(\cV).
\end{split}
\end{equation}
Here we used Lemma \ref{shriekhom} on the first line. This completes the proof.
\end{proof}

\subsection*{Hom}
\begin{proposition}
Let $\cV, \cW\in D^b_{ic}(\Lambda_X)$, we have $\RcHom(\cV, \cW)\in D^b_{ic}(\Lambda_X)$. 
\end{proposition}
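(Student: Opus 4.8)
The plan is to reduce $\RcHom$ to operations already known to preserve $D^b_{ic}(\Lambda_X)$, namely the derived tensor product and Verdier duality. The key identity I would establish first is the natural isomorphism
\begin{equation}
\RcHom(\cV,\cW)\simeq \bD(\cV\dotimes \bD\cW)
\end{equation}
valid for all $\cV,\cW\in D^b(\ModI(\Lambda_X))$. This follows by combining the derived tensor--hom adjunction (Lemma~\ref{derivedadjunction}) with biduality (Lemma~\ref{unipotent}): unwinding the definition of $\bD$,
\begin{equation}
\bD(\cV\dotimes \bD\cW)=\RcHom(\cV\dotimes \bD\cW,\omega^\Lambda_X)\simeq \RcHom\bigl(\cV,\RcHom(\bD\cW,\omega^\Lambda_X)\bigr)=\RcHom(\cV,\bD\bD\cW)\simeq \RcHom(\cV,\cW),
\end{equation}
where the last isomorphism is the biduality of Lemma~\ref{unipotent} applied to $\cW$ (which lies in $D^b_{ic}(\Lambda_X)$, so biduality applies).

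With this in hand the proof is immediate: $\bD\cW\in D^b_{ic}(\Lambda_X)$ and the assignment $\bD$ sends $D^b_{ic}(\Lambda_X)$ to itself by Corollary~\ref{icVerdier}; then $\cV\dotimes \bD\cW\in D^b_{ic}(\Lambda_X)$ by Proposition~\ref{ictensor}; and finally $\bD(\cV\dotimes \bD\cW)\in D^b_{ic}(\Lambda_X)$, again by Corollary~\ref{icVerdier}. Transporting along the displayed isomorphism gives $\RcHom(\cV,\cW)\in D^b_{ic}(\Lambda_X)$, as desired.

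I do not expect a genuine obstacle here; the only points requiring care are purely formal: that Lemma~\ref{derivedadjunction} is available for the object $\omega^\Lambda_X$ in the third slot (it is, since that lemma holds for arbitrary objects of $D^b(\ModI(\Lambda_X))$), and that the chain of isomorphisms is natural enough to be read as a statement about objects of the derived category, which it is. If one preferred to avoid the duality shortcut, an alternative route is available: stratify $\cV$ and $\cW$ by a common $\bC$-analytic stratification, use the recollement triangle of Lemma~\ref{icrecollement} together with the commutations of $\RcHom$ with $i_!$, $j_*$, $i^{-1}$, $j^{!}$, and induct on the dimension of the strata, the base case being the local computation $\RcHom(\Lambda^{\phi_1}_\dS,\Lambda^{\phi_2}_\dS)\simeq \Lambda^{\phi_1-\phi_2}_\dS$ from Corollary~\ref{Rhomstandard}; but this is longer and I would only use it as a fallback.
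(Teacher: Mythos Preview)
Your proof is correct and is essentially identical to the paper's own argument: the paper simply states ``As usual, we can see that $\RcHom(\cV,\cW)\simeq \bD(\bD\cW\dotimes \cV)$. Then this is a corollary of the preceding results,'' which is exactly the identity you derive via Lemma~\ref{derivedadjunction} and Lemma~\ref{unipotent}, followed by invoking Proposition~\ref{ictensor} and Corollary~\ref{icVerdier}. Your alternative stratification route is also sound but, as you note, unnecessary.
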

\begin{proof}
As usual, we can see that $\RcHom(\cV, \cW)\simeq \bD(\bD\cW\dotimes \cV).$ Then this is a corollary of the preceding results.
\end{proof}

\subsection*{Pull-backs}
\begin{proposition}
Let $\cV\in D^b_{ic}(\Lambda_X)$, then $f^{-1}\cV, f^!\cV\in D^b_{ic}(\Lambda_X)$.
\end{proposition}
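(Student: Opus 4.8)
The plan is to deduce both statements from the single claim that $f^{-1}$ preserves cohomological irregular constructibility. Writing $f\colon Y\to X$, so that $f^{-1}$ and $f^{!}$ carry $D^b_{ic}(\Lambda_X)$ into $D^b_{ic}(\Lambda_Y)$: for $f^{!}$, Proposition~\ref{invshriek} gives $f^{!}\circ\bD\cong\bD\circ f^{-1}$, and combined with $\bD\bD\cong\id$ on $D^b_{ic}$ (Lemma~\ref{unipotent}) and the fact that $\bD$ is an equivalence of $D^b_{ic}$ (Corollary~\ref{icVerdier}) this yields $f^{!}\cV\cong\bD\,f^{-1}\,\bD\cV$, which lies in $D^b_{ic}$ once $f^{-1}\bD\cV$ does. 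For $f^{-1}$ itself, since it is exact and commutes with cohomology (Lemma~\ref{derivedpullback}), it is enough to show that $f^{-1}$ sends $\Modic(\Lambda_X)$ into $\Modic(\Lambda_Y)$.

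I would then factor $f$ through its graph, $Y\xrightarrow{\Gamma_f}Y\times X\xrightarrow{\mathrm{pr}}X$, with $\Gamma_f$ a closed embedding and $\mathrm{pr}$ a projection, and treat the two cases separately. For a closed embedding $\iota\colon Z\hookrightarrow W$ there is essentially nothing to do: given $\cV\in\Modic(\Lambda_W)$, refine a $\bC$-analytic Whitney stratification adapted to $\cV$ so that $Z$ is a union of strata; then for every stratum $T\subseteq Z$ the restriction $(\iota^{-1}\cV)|_{(\overline T,D_T)}$ is literally $\cV|_{(\overline T,D_T)}$, which is an irregular local system by hypothesis, and the strata contained in $Z$ form a stratification of $Z$ witnessing irregular constructibility of $\iota^{-1}\cV$. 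For the projection $\mathrm{pr}\colon Y\times X\to X$, I would take the product of a Whitney stratification of $Y$ with a $\bC$-analytic stratification of $X$ adapted to $\cV$; on a product stratum $T\times S$, the sheaf $\mathrm{pr}^{-1}\cV$ restricts to the pullback of the irregular local system $\cV|_{(\overline S,D_S)}$ along the projection $q\colon(\overline T\times\overline S,\,D_T\times\overline S\cup\overline T\times D_S)\to(\overline S,D_S)$, and the remaining task is to see that $q^{-1}$ of an irregular local system is one.

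This last point is the concrete heart of the proof, and it is where I expect the work to be, although it is combinatorial rather than deep. The key elementary fact is that for a meromorphic $\phi$ with poles along $D_S$ one has $q^{-1}\Lambda^\phi_{(\overline S,D_S)}=\Lambda^{\phi\circ q}$ on the nose: using $\Gr^d\Lambda^\phi=\bk_{\mathrm{Int}\overline{\{-d<\Re\phi\}}}$ (Corollary~\ref{irrconstcomponent}) one checks that $q^{-1}\bigl(\mathrm{Int}\overline{\{-d<\Re\phi\}}\bigr)=\mathrm{Int}\overline{\{-d<\Re(\phi\circ q)\}}$ because $q$ is a projection, which is also compatible with the stalk computation of Corollary~\ref{irrconststalk}. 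Applying this summand-wise to a sectorial presentation $\cV|_{U_k}\cong\bigoplus_j\Lambda^{\phi_j}$ gives $q^{-1}\cV|_{U_k}\cong\bigoplus_j\Lambda^{\phi_j\circ q}$; the family $\{\phi_j\circ q\}$ is again good, since goodness only constrains the pole orders of the $\phi_j$ and of the differences $\phi_j-\phi_k$ (which are unaffected by the extra variables; pairs whose difference becomes bounded are simply merged, legitimately by Lemma~\ref{boundeds}), and the modification and ramified cover occurring in Definition~\ref{irreloc} pull back under $q$ to data of the same type --- after composing, if necessary, with one more resolution to put $D_T\times\overline S\cup\overline T\times D_S$ (or the pulled-back modification divisor) in normal crossing form. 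Sectorial coverings pull back to sectorial coverings, so $\mathrm{pr}^{-1}\cV$ is an irregular local system on each product stratum.

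The main obstacle is therefore bookkeeping: arranging the stratifications to be simultaneously Whitney, adapted to $\cV$, and compatible with the graph $\Gamma_f(Y)$, and tracking the local modifications and ramified covers of Definition~\ref{irreloc} through the pullback (including the Hironaka-type resolution needed to keep the boundary divisor normal crossing). If one prefers to argue with a general morphism directly on strata rather than via the graph factorization, the only additional subtlety is that $g^{-1}\Lambda^\phi$ and $\Lambda^{\phi\circ g}$ may then differ as honest $\bR$-graded sheaves in a single graded degree along the locus where $\Re\phi$ attains a local minimum; that discrepancy is annihilated by every power of $T$ and so vanishes in $\ModI(\Lambda_Y)$ by Lemma~\ref{1.10}, but for the projection $q$ used above it does not even arise.
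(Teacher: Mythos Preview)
Your treatment of $f^{!}$ is essentially the paper's: the paper writes $f^{!}\cV\cong f^{!}\bD\bD\cV\cong \RcHom(f^{-1}\bD\cV,\omega^\Lambda_X)=\bD(f^{-1}\bD\cV)$ directly via Lemma~\ref{shriekhom}, which is the content of the identity $f^{!}\circ\bD\cong\bD\circ f^{-1}$ (Proposition~\ref{invshriek}) you quote, and then appeals to the preceding results that $\bD$ and $f^{-1}$ preserve $D^b_{ic}$.

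For $f^{-1}$ the situation is different only in level of detail. The paper's entire argument is the single phrase ``this is clear from the definition.'' Your graph factorization, the separate handling of closed embeddings and projections, the verification that $q^{-1}\Lambda^\phi\cong\Lambda^{\phi\circ q}$, and the discussion of how goodness, modifications, and sectorial coverings behave under pullback are all correct and are precisely the checks one would carry out to justify that sentence. What you have written is therefore not a different route but an honest unpacking of what the paper leaves implicit; the paper buys brevity by treating these verifications as routine, while your version buys transparency at the cost of length. Either is acceptable, and nothing in your argument is wrong or superfluous from a logical standpoint.
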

\begin{proof}
For $f^{-1}$, this is clear from the definition.

By using Lemma \ref{unipotent} below, we have
\begin{equation}
\begin{split}
f^!\cV&\cong f^!\bD\bD\cV\\
&\cong f^!\cHom(\bD\cV, \omega_Y^\Lambda)\\
&\cong \cHom(f^{-1}\bD\cV, \omega^\Lambda_X).
\end{split}
\end{equation}
Here the final form is in $D^b_{ic}(\Lambda_X)$.
\end{proof}

\subsection*{Proper push-forwards}
Push-forwards are more difficult and we use irregular Riemann--Hilbert correspondence proved below.
\begin{proposition}\label{pushic}
Let $f\colon X\rightarrow Y$ be a proper morphism. Then $f_*\cV\in D^b_{ic}(\Lambda_Y)$ for $\cV\in D^b_{ic}(\Lambda_X)$.
\end{proposition}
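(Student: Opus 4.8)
The plan is to transport the statement across the irregular Riemann--Hilbert correspondence, where it reduces to Kashiwara's theorem that proper direct image preserves holonomicity. Assume first that $\bk=\bC$. Using the equivalence $(\ref{RHintro})$, $\mathrm{RH}\colon D^b_{\hol}(\cD_X)\xrightarrow{\simeq} D^b_{ic}(\Lambda_X)$, whose essential image is exactly $D^b_{ic}(\Lambda_X)$, write $\cV\simeq\mathrm{RH}(\cM)$ for some $\cM\in D^b_{\hol}(\cD_X)$. By the compatibility of $\mathrm{RH}$ with proper push-forward established in Section 8, there is a natural isomorphism $f_*\cV\simeq\mathrm{RH}(\int_f\cM)$, where $\int_f$ denotes the $\cD$-module direct image. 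Since $f$ is proper, $\int_f\cM\in D^b_{\hol}(\cD_Y)$ by Kashiwara's theorem, hence $f_*\cV$ lies in the essential image of $\mathrm{RH}\colon D^b_{\hol}(\cD_Y)\xrightarrow{\simeq} D^b_{ic}(\Lambda_Y)$; that is, $f_*\cV\in D^b_{ic}(\Lambda_Y)$.

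For a general field $\bk\subset\bC$ I would reduce to the case above by extension of scalars along $\bk\hookrightarrow\bC$. Tensoring an $\bR$-graded $\Lambda_{\bk,X}$-module with $\Lambda_\bC$ over $\Lambda_\bk$ is exact (because $\bC$ is free over $\bk$), passes to $\ModIp(\Lambda_{\bk,X})$ and, after stackification, to $\ModI(\Lambda_{\bk,X})$, carries $D^b_{ic}(\Lambda_{\bk,X})$ into $D^b_{ic}(\Lambda_{\bC,X})$ --- since a stratification, a modification and the exponential factors $\phi_j$ witnessing irregular constructibility do not depend on $\bk$ --- and commutes with proper push-forward by flat base change. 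Thus $H^j(f_*\cV)\otimes_\bk\bC\simeq H^j(f_*(\cV\otimes_\bk\bC))$ is irregular constructible over $\bC$ by the previous paragraph, and one concludes by faithfully flat descent along $\bk\hookrightarrow\bC$: being locally constant of finite rank on a stratum and admitting a decomposition into the standard objects $\Lambda^{\phi_j}$ both descend, using that $\Lambda$ carries a valuation. (Alternatively, one may argue uniformly in $\bk$ through the comparison with enhanced $\bk$-ind-sheaves of Section 7 together with D'Agnolo--Kashiwara's stability of $\bR$-constructibility under proper push-forward.)

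The only substantive input is the commutativity of $\mathrm{RH}$ with proper push-forward from Section 8; granting it, the assertion is a one-line transfer through a theorem of Kashiwara. What this route buys us --- and what I expect to be the real difficulty if one insisted on a direct proof --- is precisely the analysis of the formal and Stokes structure of the direct image of a good irregular local system, i.e.\ the push-forward theory of good meromorphic flat bundles, which passing through $\cD$-modules lets us bypass. The descent step in the general-$\bk$ case is the secondary point to check, but it is routine.
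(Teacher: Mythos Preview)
Your proposal is correct and follows essentially the same strategy as the paper's own proof: both extend scalars from $\bk$ to $\bC$, invoke the irregular Riemann--Hilbert equivalence (Theorem~\ref{RHmain}) together with its compatibility with proper push-forward (Lemma~\ref{pushsolcommute}) to reduce to the preservation of holonomicity under proper direct image, and then observe that irregular constructibility is detected after $\otimes_\bk\bC$. Your write-up is in fact more explicit than the paper's on the descent step and on the commutation of $\otimes_\bk\bC$ with $f_*$, which the paper leaves to the reader; the paper additionally cites Malgrange for the existence of a good lattice, but this is not essential to the argument once one has Kashiwara's holonomicity theorem.
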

\begin{proof}
For $\cV\in D^b_{ic}(\Lambda_X)$, take $\cV':=\cV\otimes_\bk\bC$. Then we get a holonomic $\cD$-module $\cM:=(\Sol^{\Lambda})^{-1}(\cV')$. Due to Malgrange~\cite{Malgrangeirregular}, there exists a good lattice on $\cM$. Hence the push-forward of $\cM$ along $f$ is again holonomic and we have $f_*\cV'\in D^b_{ic}((\Lambda\otimes_\bk\bC)_Y)$ by \cite{D'Agnolo--Kashiwara} and Lemma \ref{pushsolcommute}. Since the irregular constructibility is preserved under $\otimes_{\bk}\bC$. This completes the proof.
\end{proof}

\subsection{Global $\bR$-graded realization}
The following proposition says ``an object of $D^b_{ic}(\Lambda_X)$ is actually a sheaf over $X$''. It is logically not important, but conceptually makes us feel easy to irregular constructible sheaves. We use some results from the later sections to prove the following. 
\begin{proposition}\label{realization}
The essential image of $[\cdot]\colon D^b(\Mod^0(\Lambda_X))\rightarrow D^b(\ModI(\Lambda_X))$ contains $D^b_{ic}(\Lambda_X)$.
\end{proposition}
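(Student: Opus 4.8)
The plan is a dévissage: cut $D^b_{ic}(\Lambda_X)$ down, via recollement and induction on strata, to the elementary objects $\Lambda^\phi_S$, which by construction are honest $\bR$-graded $\Lambda_X$-modules — they are \emph{defined} as $\bigoplus_a p_*\Gamma_{S\times[-a,\infty)}\bk_{t\ge\Re\phi}$, so $\Lambda^\phi_\dS=[\Lambda^\phi_S]$ already lies in the essential image of $[\cdot]$ — and then to check that all the gluing needed to reassemble an arbitrary cohomologically irregular constructible complex out of such pieces can in fact be performed inside $\Mod^0(\Lambda_X)$. The delicate point, and the main obstacle, is that $[\cdot]$ does \emph{not} commute with infinite limits/colimits (Remark after Lemma~\ref{limits}), so a priori a morphism between two realized objects, or a descent datum for them, need not lift to $\Mod^0(\Lambda_X)$; the whole argument rests on the fact that for irregular constructible objects every piece of data involved is \emph{finite}, which is what kills this obstruction.

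Concretely, first I would fix a $\bC$-analytic stratification $\cS$ adapted to $\cV\in D^b_{ic}(\Lambda_X)$ and induct on the number of strata. Let $U$ be the union of the open strata and $Z:=X\setminus U$. Lemma~\ref{icrecollement}, applied to the tame maps $i\colon(\overline U,\overline U\setminus U)\to(X,\varnothing)$ and $j\colon(Z,\varnothing)\to(X,\varnothing)$, gives a triangle $i_!i^{-1}\cV\to\cV\to j_*j^{-1}\cV\xrightarrow{[1]}$, in which $j^{-1}\cV$ lives on $Z$ with strictly fewer strata and $i^{-1}\cV$ is a complex of irregular local systems on $U$. By the commutativities $[\cdot]\circ \bR i_!\simeq \bR i_!\circ[\cdot]$, $[\cdot]\circ \bR j_*\simeq \bR j_*\circ[\cdot]$ (Lemma~\ref{rightderived}) and $[\cdot]\circ f^{-1}\simeq f^{-1}\circ[\cdot]$ (Lemma~\ref{derivedpullback}), it is enough to realize $j^{-1}\cV$ (inductive hypothesis) and $i^{-1}\cV$, and then to lift the connecting morphism of the triangle. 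For $i^{-1}\cV$, Definition~\ref{irreloc} provides, locally near each boundary point, a modification $p$ and then a ramified cover $\Phi_A$ — both proper, hence tame, with $[\cdot]$ commuting with their pushforwards — reducing $i^{-1}\cV$ to an unramified good irregular local system, which is isomorphic on each member of a \emph{finite} sectorial covering to a finite direct sum $\bigoplus_j\Lambda^{\phi_j}$ of elementary objects.

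It remains to glue, and this is where finiteness is decisive. All the gluing takes place over relatively compact neighbourhoods with finite covers: sectorial coverings are finite by definition, modifications are performed over relatively compact $U$, and $X$ itself is covered locally finitely. Over such a neighbourhood $\alpha_U$ is fully faithful (Lemma~\ref{realizationofmorphism}), homomorphisms between the atoms $\Lambda^{\phi_j}$ are each either $\bk\cdot(\text{standard morphism})$ or $0$ (Lemma~\ref{boundeds}), and the relevant $\Ext^1$ between these free modules vanishes (as in the proof of Lemma~\ref{thickness}). Hence the finitely many Stokes transition automorphisms, and likewise the connecting morphism of the recollement triangle, can all be lifted to $\Mod^0$ after a single uniform grading shift $\la a\ra$ that absorbs the finitely many $T$-torsion discrepancies — precisely the mechanism of Lemmas~\ref{1.12} and~\ref{1.15} and of the proof of Lemma~\ref{realizationofmorphism} — so that the lifted transition data form a genuine cocycle; the corresponding $\Mod^0$-objects then glue, through a locally finite direct sum and descent as in Proposition~\ref{injectives}, to $\tcV\in\Mod^0(\Lambda_X)$ with $[\tcV]\cong\cV$. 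Feeding these lifts back through the induction, the cones along lifted connecting morphisms stay in the essential image of $[\cdot]$, which is the assertion.

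For a cleaner global bookkeeping one can instead invoke the later sections: over $\bk=\bC$, $\cV\otimes_\bk\bC$ corresponds under the irregular Riemann--Hilbert equivalence of Section~8 to a holonomic $\cD$-module complex whose enhanced solution complex is, by D'Agnolo--Kashiwara's structure theorem, assembled from elementary enhanced ind-sheaves of exponential type; the dictionary of Section~7 turns each of these into an honest graded $\Lambda_X$-module, and the six-operation compatibilities already established propagate the property through the assembly. The case of general $\bk\subset\bC$ then follows because the combinatorial input — the multivalued meromorphic functions $\phi$ and the stratification — is insensitive to the coefficient field.
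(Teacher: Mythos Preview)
The paper's proof is your final paragraph, taken literally: it writes $\cV\simeq M(M^{-1}(\cV))=[\bR\tM'(M^{-1}(\cV))]$ using Section~7.2 and Theorem~\ref{RHmain}, and then notes that an object of $D^b(\Ind(\Mod^0(\Lambda_X)))$ whose image under $[\cdot]$ lands in $D^b(\ModI(\Lambda_X))$ (rather than its ind-completion) must already lie in $D^b(\Mod^0(\Lambda_X))$. So you do have the paper's argument, but you present it as an afterthought rather than the main line.

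Your primary d\'evissage is a genuinely different and more elementary route, and the reduction to irregular local systems via Lemma~\ref{icrecollement} and then to sectorial direct sums $\bigoplus_j\Lambda^{\phi_j}$ is fine. The gap is the gluing. You claim that ``a single uniform grading shift\dots absorbs the finitely many $T$-torsion discrepancies\dots so that the lifted transition data form a genuine cocycle,'' citing Lemmas~\ref{1.12}--\ref{1.15} and Lemma~\ref{realizationofmorphism}; but those results lift and compare \emph{individual} morphisms --- they do not produce cocycle conditions. Concretely, each lifted Stokes transition is a map $\tf_{ij}\colon\bigoplus_k\Lambda^{\phi_k}\to(\bigoplus_k\Lambda^{\phi_k})\la C_{ij}\ra$ with $C_{ij}>0$, and going once around the sectorial nerve the shifts accumulate to $\sum C_{ij}>0$; the resulting composite $\tcV_1\to\tcV_1\la\sum C\ra$ represents the identity in $\ModIp$, i.e.\ equals $T^{\sum C}$ in $\Mod^0$, \emph{not} the identity. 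So these are not descent data for $\Mod^0(\Lambda_X)$ in any ordinary sense, and multiplying all lifts by a further $T^d$ only rescales the obstruction rather than killing it. Making the direct approach work would require either showing the shifts $C_{ij}$ can be chosen as a coboundary (obstructed by $H^1$ of the sectorial cover, which is a circle) or producing the global $\Mod^0$-lift by some other construction --- which is exactly what $\bR\tM'$ applied to the enhanced solution sheaf does. The paper's route is thus not merely cleaner bookkeeping; it supplies the global lift that your d\'evissage sketch does not.
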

\begin{proof}
Let $\cV$ be an irregular constructible sheaf. Then we have $[\bR\tM'(M^{-1}(\cV))]\simeq \cV\in D^b(\ModI(\Lambda_X))$ by Section 7.2 and Theorem \ref{RHmain}. Since $[\cE]\in D^b(\ModI(\Lambda_X))$ for $\cE\in D^b(\Ind(\Mod^0(\Lambda_X)))$ if and only if $\cE\in D^b(\Mod^0(\Lambda_X))$, we have $\bR\tM'(M^{-1}(\cV))\in D^b(\Mod^0(\Lambda_X))$. This completes the proof.
\end{proof}

\begin{comment}
Let us assume $X$ is compact. Let us denote the essential image of $D^b(\ModIp(\Lambda_X))\rightarrow D^b(\ModI(\Lambda_X))$ given by Corollary \ref{globalobject} by the same notation $D^b(\ModIp(\Lambda_X))$.
\begin{corollary}
If $X$ is compact, the category $D^b_{ic}(\Lambda_X)$ is a full subcategory of $D^b(\ModIp(\Lambda_X))$.
\end{corollary}
\begin{proof}
This is clear form Proposition \ref{realization}.
\end{proof}
\end{comment}

\begin{comment}
\begin{remark}
One can ask the reason why we are working with the category $\ModI(\Lambda_X)$ but not with the essential image of $[\cdot]\colon \Mod^0(\Lambda_X)\rightarrow \ModI(\Lambda_X)$. This question is reasonable since irregular constructible sheaves are living in this latter category consisting of ``globally realized objects''. The reason is that it is not clear that the category is abelian or not.
\end{remark}
\end{comment}

\section{Forgetting grading}
In this section, we discuss the relationship between irregular constructible sheaves and constructible sheaves. For a topological space with boundary $\dX$, we set $X:=\oX\bs D_X$.
\subsection{Forgetting grading}
\begin{comment}
We also assume that the following finiteness condition:
\begin{assumption}\label{3.20}
\begin{enumerate}
\item For any open subset $U$, $\cV(U)$ has a finite rank. (Any finitely generated module over $\Lambda$ is principally generated.)
\item There exists a finite collection of open subsets $\{U_i\}$ of $X$ such that (i) $\cV|_{U_i}$ is flabby (ii) $\cV(U)\rightarrow \bigoplus_{i}\cV(U_i\cap U)$ is injective.
\end{enumerate}
\end{assumption}

\begin{lemma}
Irregular constructible sheaves satisfy Assumption \ref{3.20}.
\end{lemma}
\end{comment}
\begin{lemma}\label{exactnessofF}
There exists an exact functor
\begin{equation}
\mathfrak{F}\colon\Mod^\frakI(\Lambda_\dX)\rightarrow \Mod(\bk_X).
\end{equation}
\end{lemma}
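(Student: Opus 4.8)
The plan is to construct $\mathfrak{F}$ locally first, then glue. Given $\cV \in \Mod^\frakI(\Lambda_\dX)$, choose a covering $\{U_i\}$ of $X$, locally finite in $\oX$, over which $\cV$ admits lifts $\tcV_i \in \Mod^0(\Lambda_{U_i})$. For an $\bR$-graded $\Lambda_{U_i}$-module $\tcV_i = \bigoplus_a \Gr^a \tcV_i$, I set $\mathfrak{F}(\tcV_i) := \tcV_i \otimes_{\Lambda_{U_i}} \bk_{U_i}$, the sheaf of $\bk$-vector spaces obtained by tensoring along $T^a \mapsto 1$. Concretely this is the quotient sheaf $\tcV_i / (T^a - 1)$, and since the underlying sheaf of $\tcV_i$ is $\bigoplus_a \Gr^a \tcV_i$ as $\bk$-vector spaces, one checks that $\mathfrak{F}(\tcV_i)$ is a well-behaved $\bk_{U_i}$-module. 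The first key point is that $\mathfrak{F}(\tcV_i)$ depends only on the class $[\tcV_i]$ up to canonical isomorphism: if $[\tcV_i] \cong [\tcV_i']$, there are lifts $\tf: \tcV_i \to \tcV_i'\la a\ra$, $\tg: \tcV_i' \to \tcV_i\la b\ra$ with $\tg\la a\ra \circ \tf = T^{a+b}$ and $\tf\la b\ra \circ \tg = T^{a+b}$ (Lemma~\ref{1.10}); after applying $(-)\otimes_\Lambda \bk$ the grading shifts $\la a\ra$ become harmless (they do not change the underlying $\bk$-sheaf) and $T^{a+b}$ becomes the identity, so $\mathfrak{F}(\tf)$ and $\mathfrak{F}(\tg)$ are mutually inverse. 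This is precisely the content already used implicitly above when defining $\cV \otimes \bk$; indeed $\mathfrak{F}(\cV)$ can be taken to be the sheaf $\cV \otimes \bk$ over $\Open_\dX$ sheafified to $X$, or equivalently glued from the $\mathfrak{F}(\tcV_i)$.

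Next I would verify functoriality. A morphism $f: \cV \to \cW$ is represented locally (on a refinement of the covering) by honest morphisms $\tf_i: \tcV_i \to \tcW_i\la b_i\ra$ in $\Mod^0$; applying $(-)\otimes_\Lambda \bk$ gives $\mathfrak{F}(\tf_i): \mathfrak{F}(\tcV_i) \to \mathfrak{F}(\tcW_i)$ (the shift disappearing), and Lemma~\ref{1.12} guarantees that two representatives of the same $f$ differ by multiplication by some $T^c$, which becomes the identity after $\otimes_\Lambda \bk$; hence $\mathfrak{F}(\tf_i)$ is independent of the lift. On overlaps these local morphisms agree because the descent/gluing data for $\cV, \cW$ become honest isomorphisms after $\otimes_\Lambda \bk$ (again by Lemma~\ref{1.10}), so the $\mathfrak{F}(\tf_i)$ glue to a global morphism $\mathfrak{F}(f): \mathfrak{F}(\cV) \to \mathfrak{F}(\cW)$. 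Composition and identities are preserved because they are preserved locally.

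For exactness, the crucial input is Lemma~\ref{liftlemma}: any short exact sequence $0 \to \cV \to \cW \to \cX \to 0$ in $\ModIp(\Lambda_X)$ lifts (locally, hence after passing to a covering) to a short exact sequence $0 \to \tcV \to \tcW \to \tcX \to 0$ in $\Mod^0(\Lambda_X)$. Since $\Lambda$ is a (graded) valuation ring — $\bk$ is torsion-free and the only obstruction to flatness is torsion — the functor $(-) \otimes_\Lambda \bk$ is not exact on all modules, but the relevant point is that $\mathfrak{F}(\tcV) = \bigoplus_a \Gr^a \tcV$ as a $\bk$-vector-space-valued sheaf, and $\mathfrak{F}$ applied to $0 \to \tcV \to \tcW \to \tcX \to 0$ can be analyzed gradedly. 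Actually the cleanest route: observe that $\mathfrak{F}(\tcV)$ is naturally identified with $\varinjlim(\tcV \xrightarrow{T^{a_1}} \tcV\la a_1\ra \xrightarrow{T^{a_2}} \cdots)$ over positive reals — a filtered colimit of copies of the underlying sheaf $\tcV$ with transition maps $T^a$ — and filtered colimits are exact, while each term is just $\tcV$ (underlying sheaf), so exactness of the lifted sequence of underlying $\bk$-sheaves yields exactness of $\mathfrak{F}$ of the sequence. Since $\ModI(\Lambda_\dX)$ is glued from $\ModIp$, and kernels/cokernels are computed locally (Proposition~\ref{ModIabel}, Lemma~\ref{alphaexact}), exactness is local and therefore holds globally.

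The main obstacle I anticipate is not any single deep step but the bookkeeping of well-definedness across the three layers of construction: first that $\mathfrak{F}$ of a lift is independent of the lift (handled by Lemmas~\ref{1.10}, \ref{1.12}), second that the local sheaves $\mathfrak{F}(\tcV_i)$ glue compatibly over the site $\Open_\dX$ into a genuine sheaf on $X$ (here one must use that descent isomorphisms for $\cV$ become strict isomorphisms after $\otimes_\Lambda \bk$, and that $\Open_\dX$-sheafification followed by restriction to $X$ behaves well), and third that the whole assignment is functorial and exact simultaneously. The substantive mathematical content — that the exotic $\otimes_\Lambda \bk$ on hom-spaces does not destroy exactness — is exactly captured by the colimit description above together with Lemma~\ref{liftlemma}, so once that identification is in place the proof is essentially formal.
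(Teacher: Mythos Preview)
Your construction of $\mathfrak{F}$ and the verification of functoriality are correct and essentially identical to the paper's argument. The gap is in your exactness proof: the filtered colimit you write down does not compute $\mathfrak{F}$. The system $\tcV \xrightarrow{T^{a_1}} \tcV\la a_1\ra \xrightarrow{T^{a_2}} \cdots$, read on underlying $\bk$-sheaves, has colimit $\tcV \otimes_\Lambda \bk[\bR]$ (the localization inverting all $T^a$), not $\tcV \otimes_\Lambda \bk$; already for $\tcV = \Lambda$ this gives $\bk[\bR]$ rather than $\bk$. Likewise your earlier aside that $\mathfrak{F}(\tcV) = \bigoplus_a \Gr^a \tcV$ describes the underlying sheaf of $\tcV$ itself, not its image under $\mathfrak{F}$.

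The fix is small and preserves your strategy: the correct identification is
\[
\mathfrak{F}(\tcV) \;\cong\; \varinjlim_{a \in \bR}\, \Gr^a \tcV,
\]
the filtered colimit of the \emph{graded pieces} along the maps $T^{b-a}\colon \Gr^a \tcV \to \Gr^b \tcV$. A short exact sequence in $\Mod^0(\Lambda_X)$ is exact degreewise, so each $\Gr^a$ is exact; filtered colimits of sheaves are exact; hence $\mathfrak{F}$ is exact on lifted sequences, and Lemma~\ref{liftlemma} finishes the argument. The paper takes a different route for this step: it uses right exactness of $\otimes_\Lambda \bk$ and proves injectivity of $\tf_i \otimes \id$ directly, arguing that a nonzero $s\otimes 1$ in the kernel would force $T^a \tf_i(s)=0$ for some $a$ (Lemma~\ref{1.105}), so that $[\tf_i]=f_i$ acquires a nontrivial kernel in $\ModIp$, contradicting the assumed injectivity. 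Your (corrected) colimit argument is cleaner and more conceptual; the paper's argument is more hands-on but avoids having to identify $\mathfrak{F}$ as a colimit at all.
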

\begin{proof}
For an object $\cV$, let us take a locally finite covering $\{U_i\}$ of $X$ with representatives $\{\tcV_i\}\subset \ModIp(\Lambda_X)$. There exists an isomorphism $f_{ij}\colon [\tcV_i]|_{U_i\cap U_j}\xrightarrow{\cong} [\tcV_j]|_{U_i\cap U_j}$ in $\ModIps_\dX(U_{ij})$. We can take a covering $\{U_{ijk}\}$ on which we have a descent data $f_{ijk}\colon [\tcV_i]|_{U_{ijk}}\rightarrow [\tcV_j]|_{U_{ijk}}$ for $f_{ij}$. Let $\tf_{ijk}\colon \tcV_i|_{U_{ijk}}\rightarrow \tcV_j|_{U_{ijk}}\la a\ra$ be a lift of $f_{ijk}$. 

Then $f_{ijk}|_{U_{ijk}\cap U_{ijl}}=f_{ijl}|_{U_{ijk}\cap U_{ijl}}$ means there exists $b\in \bR_{>0}$ such that $T^b\cdot ((\tf_{ijk}-\tf_{ijl})|_{U_{ijk}\cap U_{ijl}})$. This means $\tf_{ijk}\otimes_\Lambda \bk =\tf_{ijl}\otimes_\Lambda \bk$. Hence the set $\{\tf_{ijk}\}$ gives an isomorphism $f_{ij}\otimes_\Lambda \bk\colon \tcV_i|_{U_{ij}}\otimes_\Lambda \bk\rightarrow  \tcV_j|_{U_{ij}}\otimes_\Lambda \bk$. Again, these morphisms can be glued up and give a $\bk$-module sheaf $\cV\otimes_\Lambda \bk$. By a similar argument, one can actually see this does not depend on the choice of lifts.

For $f\in\Hom_{\Mod^\frakI(\Lambda_X)}(\cV, \cW)$, there exists a covering $\{U_i\}$ of $X$ with lifts $\{\tf_i\} \subset \Mod^\bR(\Lambda_{U_i})$. Then we get a set of morphisms $\{\tf_i\otimes_{\Lambda_X} \bk_X\}$. One can see these are glued up to a morphism in $\Mod(\bk_X)$ depending only on $f$ by a similar argument as above. The resulting morphism is denoted by $\frakF(f)$. It is clear that this correspondence preserves the compositions. Hence $\frakF$ gives a functor.

We would like to see the functor $\frakF$ is exact.
Let
\begin{equation}
0\rightarrow \cV\xrightarrow{f} \cW\xrightarrow{g} \cX\rightarrow 0
\end{equation}
be an exact sequence in $\ModI(\Lambda_\dX)$. It is equivalent to that there exists a locally finite open covering $\{U_i\}$ of $X$ such that we have an exact sequence
\begin{equation}
0\rightarrow \cV_i\xrightarrow{f_i} \cW_i\xrightarrow{g_i} \cX_i\rightarrow 0
\end{equation}
over each $U_i$. By Lemma \ref{liftlemma}, it can be lifted to an exact sequence of $\bR$-graded $\Lambda_X$-modules
\begin{equation}
0\rightarrow \tcV_i\xrightarrow{\tf_i} \tcW_i\xrightarrow{\tg_i} \tcX_i\rightarrow 0.
\end{equation}
Since tensor product is left exact, we get an exact sequence
\begin{equation}
\tcV_i\otimes_{\Lambda_X}\bk_X\xrightarrow{\tf_i\otimes \id} \tcW_i\otimes_{\Lambda_X}\bk_X\xrightarrow{\tg_i\otimes \id} \tcX_i\otimes_{\Lambda_X}\bk_X\rightarrow 0.
\end{equation}

It remains to show $\tf_i\otimes \id$ is injective. Let us take a homogeneous section of the kernel of $\tf_i\otimes \bk$. Since it is a subsheaf of $\tcV_i\otimes_{\Lambda_X}\bk_X$, it is locally represented by the form $s\otimes 1$. If $s\otimes 1$ is nonzero, it means that $T^a\cdot s\neq 0$ in $\tcV_i$. Hence we have $\Lambda_U\cdot s\hookrightarrow \tcV_i|_U$ where $U$ is the open set on which $s$ is defined. If $\tf_i(s)\otimes 1=0$, we have some $T^a$ such that $T^a\tf_i(s)=0$ by Lemma \ref{1.105}. Hence we have a sequence of morphisms  over $U$ of $\bR$-graded $\Lambda$-modules
\begin{equation}
\Lambda_U\cdot s\rightarrow \tcV_i\xrightarrow{T^a\tf_i} \tcW_i\la a\ra
\end{equation}
whose composition is zero. Since $\Lambda_U\cdot s$ is nonzero in $\ModI(U_i)$, the morphism $[T^a\tf_i]=[\tf_i]=f_i$ has a nontrivial kernel. This contradicts to the injectivity of $f_i$. Hence $\tf_i\otimes \id$ is injective.
\end{proof}

\begin{lemma}\label{Finv}
Let $f\colon \dX\rightarrow \dY$ be a map between topological spaces with boundaries. Then we have
\begin{equation}
\frakF\circ f^{-1}\cong f^{-1}\circ \frakF.
\end{equation}
\end{lemma}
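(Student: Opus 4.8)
The plan is to reduce the statement to the standard fact that for ordinary sheaves the inverse image $f^{-1}$ commutes with tensor products of modules: for a sheaf of rings $\cR$ and $\cR$-modules $\cA,\cB$ there is a canonical isomorphism $f^{-1}(\cA\otimes_\cR\cB)\cong f^{-1}\cA\otimes_{f^{-1}\cR}f^{-1}\cB$, natural in both arguments. I will apply this with $\cR=\Lambda_Y$ and $\cB=\bk_Y$ (the constant sheaf $\bk$ with its $\Lambda_Y$-module structure $T^a\mapsto 1$), noting $f^{-1}\Lambda_Y=\Lambda_X$ and $f^{-1}\bk_Y=\bk_X$ compatibly, so that $f^{-1}(\tcV\otimes_{\Lambda_Y}\bk_Y)\cong f^{-1}\tcV\otimes_{\Lambda_X}\bk_X$ for every $\tcV\in\Mod^0(\Lambda_Y)$.

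First I would unwind the two constructions locally. By Lemma~\ref{exactnessofF}, $\frakF(\cV)$ is built by choosing a locally finite cover $\{U_i\}$ of $Y$ in $\Open_\dY$ with lifts $\tcV_i\in\ModIp(\Lambda_{U_i})$, forming $\tcV_i\otimes_{\Lambda_Y}\bk_Y$ on each $U_i$, and gluing via $\tf_{ijk}\otimes_\Lambda\bk$, where the $\tf_{ijk}$ are local lifts, over a refinement $\{U_{ijk}\}$, of the transition isomorphisms $[\tcV_i]\cong[\tcV_j]$. The construction of $f^{-1}$ assigns the lift $[f^{-1}\tcV_i]$ over $f^{-1}(U_i)$ and glues via $[f^{-1}\tf_{ijk}]$; here $\{f^{-1}(U_i)\}$ is a cover in $\Open_\dX$ because $f$ induces a morphism of sites. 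Over each $f^{-1}(U_i)$ the sheaf $\frakF(f^{-1}\cV)$ is thus represented by $f^{-1}\tcV_i\otimes_{\Lambda_X}\bk_X$ and $f^{-1}(\frakF(\cV))$ by $f^{-1}(\tcV_i\otimes_{\Lambda_Y}\bk_Y)$, and the sheaf-theoretic isomorphism above supplies canonical isomorphisms $\theta_i$ between them.

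Next I would check that the $\theta_i$ respect the gluing data. Since $f^{-1}$ is exact on $\Mod^0$ and commutes with the $T^a$-action (that is, $f^{-1}(T^a\cdot(-))=T^a\cdot f^{-1}(-)$), any relation $T^b(\tf_{ijk}-\tf_{ijl})=0$ pulls back to $T^b(f^{-1}\tf_{ijk}-f^{-1}\tf_{ijl})=0$, so the $f^{-1}\tf_{ijk}$ do glue to the transition data of $f^{-1}\cV$, and correspondingly the $(f^{-1}\tf_{ijk})\otimes_\Lambda\bk$ glue to the transition data of $\frakF(f^{-1}\cV)$. On the other side, naturality of $\theta$ in the module argument identifies $f^{-1}(\tf_{ijk}\otimes_\Lambda\bk)$ with $(f^{-1}\tf_{ijk})\otimes_\Lambda\bk$ under $\theta_\bullet$. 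Hence the $\theta_i$ glue to an isomorphism $\frakF(f^{-1}\cV)\xrightarrow{\cong}f^{-1}(\frakF(\cV))$. Naturality in $\cV$ follows in the same manner: for $g\colon\cV\rightarrow\cW$, after passing to a common refinement both $\frakF(f^{-1}g)$ and $f^{-1}(\frakF(g))$ are glued from the local morphisms $(f^{-1}\tg_i)\otimes_\Lambda\bk$, which agree under $\theta_\bullet$.

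The only genuinely delicate point is bookkeeping: arranging a single cover of $Y$ that simultaneously trivializes $\cV$, supports lifts of the transition isomorphisms (and, for naturality, of $g$), and whose preimage is admissible in $\Open_\dX$ — the last being automatic from the morphism of sites. No tameness of $f$ is required, since only $f^{-1}$ is involved; the mathematical input, compatibility of $(-)\otimes_\Lambda\bk$ with ordinary $f^{-1}$, is immediate from the corresponding fact for sheaves of modules.
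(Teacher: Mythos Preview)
Your proof is correct and follows essentially the same approach as the paper: both reduce to the compatibility of ordinary sheaf-theoretic $f^{-1}$ with $(-)\otimes_{\Lambda}\bk$ on local lifts in $\Mod^0$. You are more explicit about the gluing and naturality bookkeeping (which the paper suppresses), while the paper spells out the local isomorphism directly as the commutation of $\otimes_\Lambda\bk$ with the filtered colimit defining $f^{-1}$ rather than citing the general module-sheaf fact.
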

\begin{proof}
For an $\bR$-graded $\Lambda_X$-module $\cV$, let us consider $f^{-1}\cV$. The sheaf $\frakF\circ f^{-1}\cV(U)$ is a sheaf associated with the presheaf
\begin{equation}
U\mapsto f^{-1}\cV(U)\otimes_{\Lambda}\bk.
\end{equation}
On the other hand, the sheaf $f^{-1}\circ \frakF(\cV)$ is a sheaf associated with the presheaf
\begin{equation}
U\mapsto f^{-1}(\cV\otimes_{\Lambda_X} \bk_X)(U).
\end{equation}
By the definition,
\begin{equation}
\begin{split}
f^{-1}\cV(U)\otimes_{\Lambda}\bk&\cong \lb \lim_{\substack{\longrightarrow \\ V\supset f(U)}}\cV(V)\rb \otimes_\Lambda \bk\\
&\cong  \lim_{\substack{\longrightarrow \\ V\supset f(U)}}(\cV(V)\otimes_{\Lambda}\bk)\\
&\cong  f^{-1}(\cV\otimes_{\Lambda_X} \bk_X)(U).
\end{split}
\end{equation}
Hence they are the same.
\end{proof}

\begin{lemma}\label{iltol}
Let $\cV\in \ModI(\Lambda_\dX)$ be an irregular local system. Then $\frakF(\cV)$ is a local system.
\end{lemma}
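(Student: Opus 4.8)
The plan is to reduce the statement to a purely local computation on the real blow-up and then invoke the structure results already established for the sheaves $\Lambda^\phi$. Since being a local system is a local property on $X = \oX \setminus D_X$, and since $\frakF$ commutes with restriction (which is the $f^{-1}$ of the open inclusion, handled by Lemma \ref{Finv}), it suffices to check the claim in a neighbourhood of each point. Over a point of $S$ itself (the interior), the definition of irregular local system says $\cV$ is locally a finite direct sum of copies of $\Lambda_X$, so $\frakF(\cV)$ is locally a finite direct sum of copies of $\bk_X$, hence a local system there. The real content is at the boundary points $x \in D_S$.

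Near such an $x$, by Definition \ref{irreloc} there is a modification $p\colon (U',D')\to(U,D_S\cap U)$ with $p^{-1}(\cV|_{(U,D_S\cap U)})$ a good irregular local system, and after a further ramified pullback $\Phi_A$ we may arrange it to be unramified good; on a sectorial covering $\{U_k\}$ of $U\setminus D_S$ we have $\cV|_{U_k}\cong\bigoplus_{j}\Lambda^{\phi_j}_{(\oU_k,\oU_k\cap D_S)}$. Now the key input is Corollary \ref{irrconststalk} (or rather its consequence): for each $x'\in U_k$ the stalk $(\Lambda^{\phi_j}_S)_{x'}\cong\Lambda^{\phi_j(x')}$ is a torsion-free, in fact rank-one free-after-inverting-$T$, $\Lambda$-module with $\Lambda^{\phi_j(x')}\otimes_\Lambda\bk\cong\bk$. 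Hence $\frakF(\Lambda^{\phi_j}_{(\oU_k,\oU_k\cap D_S)})$ has one-dimensional stalks on $U_k$; combined with the fact (Lemma \ref{boundeds}) that on a connected bounded-$\phi$ open set $\Lambda^{\phi_j}_S$ is isomorphic to $\Lambda$ up to a shift, so that $\frakF(\Lambda^{\phi_j}_S)$ is locally constant of rank one on $S$. Therefore $\frakF(\cV)|_{U\setminus D_S}$ is locally constant of finite rank $=\#J\cdot(\text{rank})$, i.e. a local system on $U\setminus D_S = U'\setminus D' \setminus(\dots)$; since $U'\setminus D'\cong U\setminus D_S$ and the sectorial pieces cover it, and since a sheaf which is locally constant on each member of an open cover of a space and which is known to be globally a sheaf of finite-rank $\bk$-modules is locally constant (the transition isomorphisms on overlaps patch because $\frakF$ is a functor and the $f_{ij}$ on overlaps become honest isomorphisms after $\otimes_\Lambda\bk$), we conclude $\frakF(\cV)$ is a local system on $U\setminus D_S$.

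It remains to descend along $p$ and $\Phi_A$: $\frakF$ commutes with $p^{-1}$ and $\Phi_A^{-1}$ by Lemma \ref{Finv}, and since $p$ and $\Phi_A$ restrict to isomorphisms (resp. covering maps) over the interiors $U\setminus D_S$, local constancy of $\Phi_A^{-1}p^{-1}\frakF(\cV)$ is equivalent to local constancy of $\frakF(\cV)$ over $S\cap U$. Running this over all boundary points and combining with the interior case gives that $\frakF(\cV)$ is a local system on all of $S$.

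The main obstacle I anticipate is bookkeeping rather than conceptual: one must be careful that the various local isomorphisms $\cV|_{U_k}\cong\bigoplus_j\Lambda^{\phi_j}$ are genuine isomorphisms in $\ModI$, so that after applying the exact functor $\frakF$ they produce genuine isomorphisms of $\bk$-sheaves, and then that the gluing data on overlaps (which only had to agree up to multiplication by some $T^b$ in the $\Lambda$-world, cf. the proof of Lemma \ref{exactnessofF}) become strictly compatible after $\otimes_\Lambda\bk$, so the patched sheaf is honestly locally constant and not merely locally so. This is exactly the phenomenon exploited in the construction of $\frakF$ itself, so no new difficulty arises, but it is the point that needs care.
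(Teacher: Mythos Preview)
Your proof is correct and follows the same underlying idea as the paper's---reduce to the local model and observe that $\frakF(\Lambda^\phi)=\Lambda^\phi\otimes_\Lambda\bk$ is locally the constant sheaf $\bk$. However, your identification of ``the real content'' at boundary points $x\in D_S$ is misplaced: the functor $\frakF$ lands in $\Mod(\bk_S)$ with $S=\oS\setminus D_S$, so the local system condition is only tested at points of $S$, and condition~1 of Definition~\ref{irreloc} already gives $\cV|_U\cong\Lambda_U^{\oplus n}$ near every such point. Thus your first paragraph is already a complete proof, and the entire apparatus of modifications, ramified covers, sectorial patching, and descent along $p$ and $\Phi_A$ is unnecessary. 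The paper's proof is correspondingly two sentences long.
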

\begin{proof}
There exists an open covering of $U$ such that $\cV$ is represented by a direct sum of irregular constant sheaves $\Lambda^\phi$. By the definition of $\frakF$, it is enough to see $\Lambda^\phi\otimes_\Lambda \bk$ is a constant sheaf over any enough small open subset. This is clear.
\end{proof}

\begin{lemma}\label{derivedfunctorF}
Let $G\colon \ModI(\Lambda_\dX)\rightarrow \ModI(\Lambda_\dY)$ and $\tilde{G}\colon \Mod(\bk_X)\rightarrow \Mod(\bk_Y)$ be right (resp. left) exact functors such that $\frakF\circ G\cong G\circ \frakF$. Then we have $\frakF\circ \bR G\simeq \bR \tilde{G}\circ \frakF$ $($resp. $\frakF\circ \bL G\simeq \bL \tilde{G}\circ \frakF$ $)$.
\end{lemma}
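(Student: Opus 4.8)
The plan is to run the standard mechanism for the derived functor of a composite, exploiting that $\frakF$ is exact by Lemma~\ref{exactnessofF} and hence descends to a triangulated functor $\frakF\colon D^b(\ModI(\Lambda_\dX))\to D^b(\Mod(\bk_X))$, and that $\ModI(\Lambda_\dX)$ has enough injectives by Proposition~\ref{injectives}. I spell out the case of right derived functors; the other case is entirely dual, replacing injective resolutions (Lemma~\ref{skyinj}, Proposition~\ref{injectives}) by flat resolutions (Lemma~\ref{liftflat}) and $\bR$ by $\bL$.

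First I would reduce the statement to a fact about injectives. Given $\cV\in D^b(\ModI(\Lambda_\dX))$, choose an injective resolution $\cV\xrightarrow{\simeq}\cI^\bullet$; then $\bR G(\cV)\simeq G(\cI^\bullet)$, and applying $\frakF$ termwise together with the natural isomorphism $\frakF\circ G\cong\tilde G\circ\frakF$ gives $\frakF(\bR G(\cV))\simeq\tilde G(\frakF(\cI^\bullet))$. On the other hand, since $\frakF$ is exact, $\frakF(\cV)\xrightarrow{\simeq}\frakF(\cI^\bullet)$ is a resolution in $\Mod(\bk_X)$, so to identify $\tilde G(\frakF(\cI^\bullet))$ with $\bR\tilde G(\frakF(\cV))$ it suffices to know that each $\frakF(\cI^n)$ is $\tilde G$-acyclic, i.e. that $\frakF$ carries injective objects of $\ModI(\Lambda_\dX)$ to objects adapted to $\tilde G$. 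Granting this, the two computations agree and yield the asserted natural isomorphism; functoriality and naturality follow by tracking a morphism through compatible injective resolutions, as usual. Packaged differently, the argument is the chain $\frakF\circ\bR G\cong\bR(\frakF\circ G)\cong\bR(\tilde G\circ\frakF)\cong\bR\tilde G\circ\frakF$, where the first isomorphism uses exactness of $\frakF$, the second is the hypothesis, and the third is the acyclicity statement together again with exactness of $\frakF$.

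The heart of the matter — and the step I expect to be the main obstacle — is therefore showing that $\frakF(\cI)$ is $\tilde G$-acyclic for $\cI$ injective. For this I would use the explicit description of injectives from Lemma~\ref{skyinj} and Proposition~\ref{injectives}: every injective object of $\ModI(\Lambda_\dX)$ is a locally finite direct sum of objects of the form $[\iota_{i*}\tcI_i]$ with $\tcI_i$ a product of skyscraper sheaves of $\bR$-graded $\Lambda$-modules over $U_i$. Each $\iota_{i*}\tcI_i$ is a flabby sheaf of $\bR$-graded $\Lambda$-modules; since $-\otimes_\Lambda\bk$ is right exact, the presheaf $U\mapsto(\iota_{i*}\tcI_i)(U)\otimes_\Lambda\bk$ has surjective restriction maps, so the associated sheaf $\frakF([\iota_{i*}\tcI_i])$ is flabby, and a locally finite direct sum of flabby sheaves is flabby. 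Thus $\frakF$ sends injectives to flabby sheaves, which are acyclic for the operations relevant here ($\bR f_*$, $\bR f_!$, $\bR\Gamma$-type functors; for the exact functor $f^{-1}$ nothing needs deriving and the statement is already Lemma~\ref{Finv}). The delicate point is precisely that $-\otimes_\Lambda\bk$ does not commute with the infinite products entering the skyscraper construction, so one cannot simply claim that $\frakF$ preserves injectivity; one has to work at the level of flabbiness as above — or, where genuine injectivity of $\frakF(\cI)$ is wanted, replace the chosen resolution by one built locally from products of injective hulls of the stalks and check directly that the natural map $(\prod_x\cF_x)\otimes_\Lambda\bk\to\prod_x(\cF_x\otimes_\Lambda\bk)$ does not destroy the acyclicity used.

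Finally I would assemble the three isomorphisms displayed above into the stated natural isomorphism $\frakF\circ\bR G\simeq\bR\tilde G\circ\frakF$, and record that the dual statement for left derived functors is proved verbatim with flat resolutions in place of injective ones and Lemma~\ref{liftflat} in place of Lemma~\ref{skyinj}; there the relevant acyclicity is that $\frakF$ of a flat object is a flat (hence $\otimes$-acyclic) $\bk_X$-module, which again follows from the explicit local models for flats together with exactness of $\frakF$.
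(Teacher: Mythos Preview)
Your proposal follows the paper's line: take the explicit injective resolution $\cI^\bullet$ from Proposition~\ref{injectives} (products of skyscrapers via Lemma~\ref{skyinj}), use exactness of $\frakF$ (Lemma~\ref{exactnessofF}), argue that $\frakF(\cI^\bullet)$ computes $\bR\tilde G(\frakF(\cV))$, and treat the flat case dually via Lemma~\ref{liftflat}. The paper is terser and asserts a bit more---it simply notes that $\frakF$ sends skyscrapers $\Lambda_x$ to skyscrapers $\bk_x$ and concludes that $\frakF(\cI^\bullet)$ is again an \emph{injective} resolution of $\frakF(\cV)$, rather than a merely $\tilde G$-acyclic one; the tensor-versus-product subtlety you flag is left implicit there, and conversely your inference ``surjective presheaf restrictions $\Rightarrow$ flabby sheafification'' is not valid in general (the constant presheaf is already a counterexample), so that step would also need more care.
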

\begin{proof}
Let $\cV\in \Mod^\frakI(\Lambda_\dX)$ and take an injective resolution $\cI^\bullet$ by using Proposition \ref{injectives}. Note that skyscraper sheaves $\Lambda_x$ used in this injective resolution are mapped to skyscraper sheaves $k_x$. Combining with the exactness of $\frakF$ (Lemma \ref{exactnessofF}), we can conclude that $\frakF(\cI^\bullet)$ is an inejctive resolution of $\frakF(\cV)$. Hence we have
\begin{equation}
\frakF\circ \bR G(\cV)\simeq \frakF\circ G(\cI^\bullet)\simeq  \tilde{G}\circ \frakF(\cI^\bullet)\simeq \bR \tilde{G}\circ \frakF(\cV).
\end{equation}

Similarly, for a free $\bR$-graded $\Lambda$-module $\cF$, the module $\frakF(\cF_U)$ is a direct sum of $\bk_U$, hence is flat. By Lemma \ref{liftflat}, we can do a similar argument as above. This completes the proof.
\end{proof}

\begin{lemma}\label{Fpush}
Let $f$ be a proper map $X\rightarrow Y$. We have an equality  
\begin{equation}
\frakF\circ \bR f_!\simeq \bR f_!\circ \frakF
\end{equation}
of functors $D^b(\ModI(\Lambda_\dX))\rightarrow D^b(\bk_Y)$.
\end{lemma}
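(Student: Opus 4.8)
The plan is to deduce this from Lemma~\ref{derivedfunctorF}. Since $f$ is proper, the underived proper pushforward $f_!$ coincides with $f_*$ both on $\ModI(\Lambda_\dX)$ and on $\Mod(\bk_X)$, it is left exact in either setting (the exactness properties of $f_!$ being the same as for $\bk$-coefficients, cf.\ Lemma~\ref{1.10}), and it has finite cohomological dimension because $f$ is proper; hence $\bR f_!$ is its right derived functor and sends $D^b$ to $D^b$. By Lemma~\ref{derivedfunctorF}, applied to $G=\tilde G=f_!$ (left exact, so with right-derived functor $\bR f_!$), it therefore suffices to produce a natural isomorphism $\frakF\circ f_!\cong f_!\circ\frakF$ between the underived functors $\ModI(\Lambda_\dX)\to\Mod(\bk_Y)$.

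First I would treat an object with a global lift, $\cV=[\tcV]$ with $\tcV\in\Mod^0(\Lambda_X)$. Then $f_![\tcV]=[f_!\tcV]$ by definition, so $\frakF(f_![\tcV])\cong(f_!\tcV)\otimes_{\Lambda_Y}\bk_Y$, while $f_!(\frakF[\tcV])\cong f_!(\tcV\otimes_{\Lambda_X}\bk_X)$. The key observation is that $\tcV\otimes_{\Lambda_X}\bk_X$ is the filtered colimit $\varinjlim_{a\in\bR}\Gr^a\tcV$, the transition map from degree $a$ to degree $a'\geq a$ being multiplication by $T^{a'-a}$: this holds stalkwise, since $\tcV_x\otimes_\Lambda\bk\cong\varinjlim_a\Gr^a\tcV_x$, and filtered colimits of sheaves commute with stalks. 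Because $f$ is proper, the underived functor $f_!=f_*$ commutes with this filtered colimit — over the compact fibre $f^{-1}(y)$ one has $(f_*\cG)_y\cong\Gamma(f^{-1}(y),\cG|_{f^{-1}(y)})$ (the degree-zero part of proper base change), and sections over a compact set commute with filtered colimits of sheaves. Using in addition $\Gr^a(f_!\tcV)=f_!\Gr^a\tcV$ (the definition of $f_!$ on $\bR$-graded modules), we obtain
\begin{equation*}
f_!\bigl(\tcV\otimes_{\Lambda_X}\bk_X\bigr)\cong\varinjlim_{a\in\bR}f_!\Gr^a\tcV\cong\varinjlim_{a\in\bR}\Gr^a(f_!\tcV)\cong(f_!\tcV)\otimes_{\Lambda_Y}\bk_Y .
\end{equation*}
This isomorphism is natural in $\tcV$ and compatible with grading shifts and with the standard morphisms, so it is insensitive to the choice of lift and descends along $[\cdot]$, giving $\frakF\circ f_!\cong f_!\circ\frakF$ on globally-lifted objects.

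For an arbitrary $\cV\in\ModI(\Lambda_\dX)$ I would run the same argument through the local construction of $f_!$. Working locally on $Y$, choose the data (cover $\{V_i\}$, finite covers $\{U_{ij}\}_j$ of $f^{-1}(V_i)$, lifts $\tcV_{ij}$) used to define $f_!\cV$, so that $(f_!\cV)_i=\varprojlim_{U\in\frakU_i}[f_!\iota_{U*}\tcV_U]$ over the Cech nerve $\frakU_i$; by tameness of $f$ this limit is locally finite over $\oY$. Applying the exact functor $\frakF$ (Lemma~\ref{exactnessofF}), which commutes with finite limits, together with the globally-lifted case above applied to each $[\iota_{U*}\tcV_U]$ and with the compatibility of $\frakF$ with the tame inclusions $\iota_{U*}$ (which is itself the same filtered-colimit-versus-pushforward argument), identifies $\frakF((f_!\cV)_i)$ with $(f_!\frakF\cV)|_{V_i}$. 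These identifications glue by the naturality noted above, and independence of all choices follows as in Lemma~\ref{welldefpush}. The only genuine obstacle is the properness input in the second paragraph: that underived pushforward commutes with the filtered colimit computing $(-)\otimes_\Lambda\bk$ uses compactness of the fibres of $f$ in an essential way and would fail for non-proper $f$; everything else is bookkeeping already carried out for $f_*$, $f^{-1}$ and $\frakF$ in Sections 3 and 4.
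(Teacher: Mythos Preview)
Your proof is correct and follows the paper's approach: reduce to the underived statement via Lemma~\ref{derivedfunctorF}, then verify $\frakF\circ f_!\cong f_!\circ\frakF$ on $\ModI(\Lambda_\dX)$. The paper dispatches the underived case in a single line---``both $f_!\circ\frakF(\cV)$ and $\frakF\circ f_!$ have $\cV(f^{-1}(U))\otimes\bk$ over $U$''---whereas you make this precise via the identification $\tcV\otimes_{\Lambda_X}\bk_X\cong\varinjlim_a\Gr^a\tcV$ and the fact that proper pushforward commutes with filtered colimits; this is a legitimate and more careful justification of the same assertion, and your third paragraph (gluing from local lifts) is the bookkeeping the paper leaves implicit.
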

\begin{proof}
By Lemma \ref{derivedfunctorF}, it is enough to show the underived version. For $\cV\in \Mod^\bR(\Lambda_X)$ and an open subset $U$, both $f_!\circ \frakF(\cV)$ and $\frakF\circ f_!$ have $\cV(f^{-1}(U))\otimes \bk$ over $U$. This completes the proof. 
\end{proof}

\begin{lemma}\label{Fpush2}
Let $i_\dX\colon \dX\rightarrow (\oX,\varnothing)$ be the canonical map and $i_X\colon X\hookrightarrow \oX$ be the inclusion. We have an equality
\begin{equation}
\frakF\circ \bR i_{\dX !}\simeq \bR i_{X!}\circ \frakF.
\end{equation}
\end{lemma}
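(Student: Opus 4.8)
The plan is to follow the proof of Lemma~\ref{Fpush}. By Lemma~\ref{derivedfunctorF}, applied to $G=i_{\dX!}\colon\ModI(\Lambda_\dX)\to\ModI(\Lambda_{(\oX,\varnothing)})$ and $\tilde{G}=i_{X!}\colon\Mod(\bk_X)\to\Mod(\bk_\oX)$, it suffices to prove the underived statement
\[
\frakF\circ i_{\dX!}\;\cong\;i_{X!}\circ\frakF .
\]
Both functors are exact: $i_{X!}$ is extension by zero along the open inclusion $j\colon X=\oX\setminus D_X\hookrightarrow\oX$, and exactness of $i_{\dX!}$ follows from its construction by local lifts together with exactness of $[\cdot]$ (Corollary~\ref{kakkoexact}), of $\alpha_U$ (Lemma~\ref{alphaexact}), and of the $\bR$-graded extension by zero, so Lemma~\ref{derivedfunctorF} does apply.

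To write down the comparison morphism, note $i_X^{-1}i_{\dX!}\cV\cong\cV$ (restriction to the interior), so Lemma~\ref{Finv} gives $i_X^{-1}\frakF(i_{\dX!}\cV)\cong\frakF(i_X^{-1}i_{\dX!}\cV)\cong\frakF(\cV)$; the counit of the adjunction $i_{X!}\dashv i_X^{-1}$ then produces a natural morphism $i_{X!}\frakF(\cV)\to\frakF(i_{\dX!}\cV)$. I would check this is an isomorphism stalkwise. The statement is local on $\oX$, and $i_{\dX!}$, $\frakF$ and the comparison map are all compatible with restriction to opens of $\oX$, so I may assume $\cV$ is glued by Cech descent from finitely many local lifts $\tcV_U\in\Mod^0(\Lambda_U)$; since $\frakF$ is exact (Lemma~\ref{exactnessofF}) it commutes with the finite Cech limit defining $i_{\dX!}$ (Lemma~\ref{limits}), reducing to $\cV=[\tcV]$. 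There one has $i_{\dX!}[\tcV]=[j_!\tcV]$ with $\Gr^a(j_!\tcV)=j_!(\Gr^a\tcV)$, so $\frakF([j_!\tcV])$ is the sheaf associated to $U\mapsto(j_!\tcV)(U)\otimes_\Lambda\bk$; as $(-)\otimes_\Lambda\bk$ is a left adjoint it commutes with sheafification and with the colimit-type functor $i_{X!}$, whence this sheaf is $i_{X!}$ of the sheaf associated to $U\mapsto\tcV(U)\otimes_\Lambda\bk$, namely $i_{X!}\frakF([\tcV])$. On stalks this just records that both sides are $\tcV_x\otimes_\Lambda\bk$ for $x\in X$ and $0$ for $x\in D_X$, so the comparison map is an isomorphism.

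The argument is essentially formal, since $(-)\otimes_\Lambda\bk$ and extension by zero are colimit‑preserving and ``pointwise''; the only delicate part is the bookkeeping in the reduction to a global lift, i.e. verifying that $i_{\dX!}$, $\frakF$ and the comparison morphism all respect the Cech gluing used to define $i_{\dX!}$ on objects without a global lift. One can bypass this by arguing stalkwise from the outset: choose a covering $\{U_i\}$ of $X$ locally finite over $\oX$ with lifts $\tcV_i$, so that the stalk of $i_{\dX!}\cV$ is $\tcV_{i,x}$ at an interior point $x$ and $0$ at a boundary point (the latter because $i_{\dX!}$ is the $!$-extension to $D_X$), and then apply $(-)\otimes_\Lambda\bk$ to read off the stalks of both sides.
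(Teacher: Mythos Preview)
Your proof is correct and follows essentially the same approach as the paper: reduce to the underived statement via Lemma~\ref{derivedfunctorF} and then verify $\frakF\circ i_{\dX!}\cong i_{X!}\circ\frakF$ directly, just as in Lemma~\ref{Fpush}. The paper's own proof is terse (it simply says ``one can prove in a similar way to Lemma~\ref{Fpush}''), so your write-up supplies the bookkeeping the paper leaves implicit, but there is no substantive difference in strategy.
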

\begin{proof}
Again, we only prove the underived version. One can prove in a similar way to Lemma \ref{Fpush}.
\end{proof}

\subsection{The case of irregular constructible sheaves}

\begin{proposition}\label{ictoc}
The functor $\frakF$ is restricted to $\Mod_{ic}(\Lambda_X)\rightarrow \Mod_{c}(\bk_X)$.
\end{proposition}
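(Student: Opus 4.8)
The plan is to reduce the statement to the stratumwise result of Lemma~\ref{iltol} combined with the compatibility of $\frakF$ with inverse images (Lemma~\ref{Finv}). By definition, $\cV \in \Mod_{ic}(\Lambda_X)$ means that there is a $\bC$-analytic stratification $\cS$ of $X$ such that for each stratum $S \in \cS$ the restriction $\cV|_{\dS} = \iota_{\dS}^{-1}\cV$ is an irregular local system on $\dS = (\oS, D_S := \oS\setminus S)$, where $\iota_{\dS}\colon \dS \to (X, \varnothing)$ is the canonical morphism of topological spaces with boundary induced by $\oS \hookrightarrow X$. I will show that $\frakF(\cV)$ is $\bC$-constructible with respect to this same stratification $\cS$ (which may be refined to a $\bC$-Whitney stratification if one insists, since the restriction of a locally constant sheaf to a finer stratum remains locally constant); as $\frakF(\cV)$ is already an object of $\Mod(\bk_X)$ by Lemma~\ref{exactnessofF}, this yields the restricted functor $\Mod_{ic}(\Lambda_X) \to \Mod_c(\bk_X)$, with functoriality and exactness inherited verbatim from Lemma~\ref{exactnessofF}.

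The single identification to make is that, for a fixed stratum $S$, the topological restriction $\frakF(\cV)|_S$ agrees with $\frakF(\cV|_{\dS})$. Since the map induced by $\iota_{\dS}$ on interiors is precisely the inclusion $S \hookrightarrow X$, Lemma~\ref{Finv} provides a canonical isomorphism
\begin{equation}
\frakF(\cV)|_S \;\cong\; \iota_{\dS}^{-1}\frakF(\cV) \;\cong\; \frakF\bigl(\iota_{\dS}^{-1}\cV\bigr) \;=\; \frakF\bigl(\cV|_{\dS}\bigr).
\end{equation}
By hypothesis $\cV|_{\dS}$ is an irregular local system on $\dS$, so Lemma~\ref{iltol} shows that $\frakF(\cV|_{\dS})$ is a local system on $S$, i.e. locally constant of finite rank --- the finiteness being visible from the fact that, locally on the real blow-up, an irregular local system is a finite direct sum of the $\Lambda^\phi$'s, each of which $\frakF$ sends locally to a rank-one constant sheaf. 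Hence $\frakF(\cV)|_S$ is locally constant of finite rank for every $S \in \cS$, which is exactly the statement that $\frakF(\cV)$ is $\bC$-constructible with respect to $\cS$.

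I do not anticipate a real obstacle. The only point that needs a little care is the bookkeeping in the displayed isomorphism: one must match the ``internal'' restriction $\iota_{\dS}^{-1}$ --- taken along a morphism of topological spaces with boundary that is in general not tame --- with the ordinary topological restriction of the $\bk_X$-module $\frakF(\cV)$, and check that Lemma~\ref{Finv} genuinely applies to the non-tame inclusion $\iota_{\dS}$. This is unproblematic, because $f^{-1}$ and its commutation with $\frakF$ were established for arbitrary morphisms of topological spaces with boundary, without any tameness assumption.
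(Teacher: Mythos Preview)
Your proof is correct and follows essentially the same route as the paper: restrict to a stratum via $\iota_{\dS}^{-1}$, invoke Lemma~\ref{Finv} to commute $\frakF$ with the pull-back, and then apply Lemma~\ref{iltol} to conclude that the result is a local system. The additional remarks you make (finite rank, applicability of Lemma~\ref{Finv} to non-tame maps) are accurate but not needed beyond what the paper already uses.
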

\begin{proof}
For $\cV\in \Mod_{ic}(\Lambda_X)$, let us take a stratification $\cS$ of $X$. For each $S\in \cS$, let us denote the inclusions by $i_{\dS}\colon \dS\hookrightarrow (X, \varnothing)$ and $i_S\colon S\hookrightarrow X$. Then we have $i_S^{-1}\frakF(\cV)\cong \frakF(i_\dS^{-1}(\cV))$ by Lemma \ref{Finv}. By Lemma \ref{iltol}, this is a local system. Hence $\frakF(\cV)$ is a constructible sheaf with respect to $\cS$.
\end{proof}

We also denote the induced functor $D^b(\ModI(\Lambda_X))\rightarrow D^b(\Mod(\bk_X))$ by $\frakF$.

\begin{corollary}
The functor $\frakF$ is restricted to $D^b_{ic}(\Lambda_X)\rightarrow D^b_c(\bk_X)$.
\end{corollary}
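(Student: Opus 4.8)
The plan is to reduce this immediately to what has already been proved, using only that $\frakF$ is exact. The key observation is that an exact functor between abelian categories induces a triangulated functor on bounded derived categories that commutes with the cohomology functors $H^j$; concretely, for $\cV \in D^b(\ModI(\Lambda_X))$ one has a natural isomorphism $H^j(\frakF(\cV)) \cong \frakF(H^j(\cV))$ for every $j \in \bZ$, and $\frakF(\cV)$ is bounded since $\cV$ is. This uses nothing beyond the exactness of $\frakF$ established in Lemma~\ref{exactnessofF}.

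First I would recall that membership of $\cV$ in $D^b_{ic}(\Lambda_X)$ means precisely that $H^j(\cV) \in \Mod_{ic}(\Lambda_X)$ for all $j$, and that membership of a complex of $\bk_X$-modules in $D^b_c(\bk_X)$ means that all its cohomology sheaves are $\bC$-constructible. So the statement to check is: if each $H^j(\cV)$ is irregular constructible, then each $H^j(\frakF(\cV))$ is $\bC$-constructible.

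Then I would combine the two ingredients: by the exactness of $\frakF$ we get $H^j(\frakF(\cV)) \cong \frakF(H^j(\cV))$, and by Proposition~\ref{ictoc} the right-hand side lies in $\Mod_c(\bk_X)$ because $H^j(\cV) \in \Mod_{ic}(\Lambda_X)$. Since only finitely many $H^j(\cV)$ are nonzero, the same holds for $H^j(\frakF(\cV))$, so $\frakF(\cV) \in D^b_c(\bk_X)$. This finishes the argument.

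There is essentially no obstacle here; the work was all done in Lemma~\ref{exactnessofF} and Proposition~\ref{ictoc}. The only point worth stating carefully is the compatibility of the derived functor $\frakF$ with cohomology, which is a standard consequence of exactness (an exact functor sends quasi-isomorphisms to quasi-isomorphisms and commutes with truncation functors), so one can simply cite this or spell it out in one line.
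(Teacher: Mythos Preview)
Your proof is correct and follows exactly the same approach as the paper: use the exactness of $\frakF$ (Lemma~\ref{exactnessofF}) to obtain $H^j(\frakF(\cV))\cong \frakF(H^j(\cV))$, then apply Proposition~\ref{ictoc} to conclude that each cohomology sheaf is constructible. The paper's proof is essentially the same two sentences you wrote in your third paragraph.
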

\begin{proof}
For $\cV^\bullet\in D^b(\Mod_{ic}(\Lambda_X))$, since $\frakF$ is exact on the abelian categories (Lemma \ref{exactnessofF}), we have $H^i(\frakF(\cV^\bullet))\cong \frakF(H^i(\cV^\bullet))$. By Proposition \ref{ictoc}, we have $\frakF(H^i(\cV^\bullet))\in\Mod_c(\bk_X)$.
\end{proof}

\begin{lemma}\label{vanishingF}
If we have $\frakF(E)\simeq 0$ for an irregular constructible sheaf $E$, we have $E\simeq 0$.
\end{lemma}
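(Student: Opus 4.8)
The plan is to reduce the vanishing of an irregular constructible sheaf $E$ with $\frakF(E)\simeq 0$ to a purely local statement on each stratum, and there to the fact that on a sectorial cover $E$ is a direct sum of the ``irregular constant sheaves'' $\Lambda^\phi$, each of which is detected by $\frakF$. First I would take a $\bC$-analytic stratification $\cS$ adapted to $E$ and argue by induction on the dimension of the strata (equivalently, using the recollement triangle of Lemma~\ref{icrecollement} for $U$ the union of open strata and $V$ its closed complement). Since $\frakF$ commutes with $i^{-1}$ (Lemma~\ref{Finv}) and, for the relevant tame inclusions, with $i_!$ and $j_*$ (the commutativity results of Section 4.2, as used in Lemma~\ref{icrecollement}), the hypothesis $\frakF(E)\simeq 0$ passes to each stratum: $\frakF(i_\dS^{-1}E)\simeq i_S^{-1}\frakF(E)\simeq 0$. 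So it suffices to show that an irregular local system $\cV$ on $\dS$ with $\frakF(\cV)\simeq 0$ is zero in $\ModI(\Lambda_\dS)$.

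For an irregular local system, the statement is genuinely local on $\oS$, so I would fix $x\in D_S$, pass to a modification and then a ramified cover $\Phi_A$ (using exactness of these pull-backs and the fact, as in the proof of Lemma~\ref{ickernel}, that $\Phi_A^{-1}$ and $p^{-1}$ commute with kernels/cokernels, hence detect vanishing once one also checks $\frakF$ commutes with them — which again follows from Lemma~\ref{Finv}), reducing to the case of an \emph{unramified good} irregular local system on a sectorial cover $\{U_k\}$. There $\cV|_{U_k}\cong\bigoplus_{j\in J}\Lambda^{\phi_j}_{(\overline{U_k},\overline{U_k}\cap D)}$, and by Corollary~\ref{irrconststalk} the stalk $(\Lambda^{\phi}_S)_x\cong\Lambda^{\phi(x)}$ is a nonzero torsion-free (hence nonzero) $\Lambda$-module, while $\frakF(\Lambda^\phi)=\Lambda^\phi\otimes_\Lambda\bk$ is the constant sheaf $\bk$ on a neighborhood (Lemma~\ref{iltol} and its proof). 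Thus on each $U_k$, $\frakF(\cV|_{U_k})$ is a local system of rank $\#J$, and $\frakF(\cV)\simeq 0$ forces $J=\varnothing$ on each piece of the cover, i.e. $\cV|_{U_k}=0$ for all $k$; gluing (the cover is finite, and an object of $\ModI$ that restricts to $0$ on a cover is $0$) gives $\cV|_U\cong 0$, and since $x\in D_S$ was arbitrary and the open-stratum part is handled by the local-system case over $(U,\varnothing)$, we get $\cV\simeq 0$ on $\dS$.

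The main obstacle is the bookkeeping around the reduction steps: one must check that $\frakF$ genuinely commutes with all the operations used to localize — $i^{-1}$ on strata (clear from Lemma~\ref{Finv}), the pull-back along a modification $p$ and along a ramified cover $\Phi_A$, and the $i_!$/$j_*$ appearing in the recollement triangle — and that ``$\frakF(E)\simeq 0$'' is stable under these, so that the hypothesis is not lost during the dévissage. Once the reduction to $\bigoplus_j\Lambda^{\phi_j}$ on a sectorial cover is in place, the conclusion is immediate from the explicit computation of $\frakF(\Lambda^\phi)$ and the torsion-freeness/nonvanishing of $\Lambda^{\phi(x)}$; so the essential content is exactly that $\frakF$ loses the grading data (the $\phi$'s) but not the rank, hence detects the zero object. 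I would phrase the induction cleanly via Lemma~\ref{icrecollement} so that only the two base cases — a genuine local system on an open stratum, and $\bigoplus_j\Lambda^{\phi_j}$ near the boundary — need to be treated by hand.
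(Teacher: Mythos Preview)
Your proposal is correct, and the heart of it --- that on a sectorial cover $E\cong\bigoplus_{j\in J}\Lambda^{\phi_j}$ and $\frakF(\bigoplus_j\Lambda^{\phi_j})\cong\bk^{|J|}$, so $\frakF(E)=0$ forces $J=\varnothing$ --- is exactly the paper's argument. The paper's proof is essentially that single sentence.

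Your surrounding d\'evissage (recollement induction, passage through a modification $p$ and a ramified cover $\Phi_A$) is valid but heavier than needed. Two simplifications: first, you do not need to unramify --- the definition of an irregular local system already supplies a sectorial covering on which $\cV|_{U_k}\cong\bigoplus_j\Lambda^{\phi_j}$, and for multi-valued $\phi$ the object $\Lambda^\phi$ is by definition a finite direct sum of single-valued $\Lambda^{\phi_k}$'s, so the local form is available without $\Phi_A$ or $p$. Second, you do not need the full recollement triangle (hence no need to worry about $\frakF$ commuting with $j_*$): vanishing of $E$ in $\ModI(\Lambda_X)$ is a local condition, and it suffices to know $i_\dS^{-1}E=0$ for every stratum $S$, since on a local lift $\tcV$ the condition $[\tcV]=0$ is torsion-killing by some $T^a$, which can be checked stratum-by-stratum (finitely many strata meet a given chart). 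So the only commutativity you actually use is $\frakF\circ i^{-1}\cong i^{-1}\circ\frakF$ (Lemma~\ref{Finv}), and your ``main obstacle'' paragraph of bookkeeping worries largely evaporates.
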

\begin{proof}
An irregular constructible sheaf is locally isomorphic to $\bigoplus_{i\in I}\Lambda^{\phi_i}$ for some $\phi_i$'s. Since $\frakF(\bigoplus_i\Lambda^{\phi_i})\cong \bk^{|I|}$, $\frakF(E)\cong 0$ is equivalent to $|I|=0$. This means $E\cong 0$. This completes the proof.
\end{proof}

\section{Enhanced sheaves and $\Lambda$-modules}
\subsection{$\bR$-constructible enhanced ind-sheaves}
In this section, we recall the definition of $\bR$-constructible enhanced ind-sheaves. For more detailed accounts, we refer to the original \cite{D'Agnolo--Kashiwara} and the survey \cite{KSD-module}. Let $X$ be a real analytic manifold. Let $\oR$ be the two point compactification of $\bR$ i.e. $\bR\cong (0,1)\hookrightarrow [0,1]=\oR$. We define the category of enhanced ind-sheaves by two-steps: First, we set 
\begin{equation}
D^b(\mathrm{I}\bk_{X\times(\oR,\bR)}):=D^b(\mathrm{I}\bk_{X\times\oR})/D^b(\mathrm{I}\bk_{X\times{\oR\bs \bR}})
\end{equation}
where $D^b(\mathrm{I}\bk_M)$ is the bounded derived category of ind-sheaves over $M$ \cite{MR1827714}. We set $\bk_{t\lesseqgtr 0}:=\bk_{\lc (x,t)\in M\times \oR\relmid t\in \bR, t\lesseqgtr 0 \rc}$. The definition of the convolution product $\potimes$ can be extended to the objects in $D^b(\mathrm{I}\bk_{M\times(\oR,\bR)})$. We set 
\begin{equation}
\mathrm{IC}_{t^*=0}:=\lc K\relmid K\potimes \bk_{\leq 0}\simeq 0, K\potimes \bk_{\geq 0}\simeq 0\rc.
\end{equation}
The category of enhanced ind-sheaves over $X$ is defined by 
\begin{equation}
E^b({\mathrm{I}\bk_M}):=D^b(\mathrm{I}\bk_{M\times \bR_\infty})/\mathrm{IC}_{t^*=0}.
\end{equation}

The triangulated category $E^b(\Ik_M)$ has monoidal operations $\potimes$ and $\cIhom^+$. For a morphism $M\rightarrow N$ of real analytic manifolds, there are associated functors
\begin{align}
Ef_{!!}, Ef_*&\colon E^b(\Ik_M)\rightarrow E^b(\Ik_N),\\
Ef^{-1}, Ef^!&\colon E^b(\Ik_N)\rightarrow E^b(\Ik_M).
\end{align}
They form adjoint pairs $Ef_{!!}\dashv Ef^!$ and $Ef^{-1}\dashv Ef_*$.

We further set
\begin{equation}
\bk^E_X:=\underset{a \rightarrow\infty}{\forlim}\bk_{t\geq a}
\end{equation}
as an object of $E^b({\mathrm{I}\bk_M})$. As usual, $\forlim$ means Ind-colimit.
\begin{definition}
\begin{enumerate}
\item An object $\cE$ of $E^b({\Ik_M})$ is said to be {\em $\bR$-constructible} if there exists an open covering $\{U_i\}$ of $X$ such that there exists an $\bR$-constructible sheaf $\cE_U$ over each $U\times\bR$ such that $\cE|_{U\times \oR}\simeq \cE_U\potimes \bk^E_{U}$.
\item An enhanced $\bR$-constructible ind-sheaf $\cE$ of $E^b({\Ik_M})$ is said to be {\em $\bC$-constructible} if the following holds: There exists an open covering $\{U\}$ of $X$ and a $\bC$-stratification $\cS_U$ for each $U$ such that (i) there exists an $\bR$-constructible sheaf $\cE_{U_i}$ such that $\cE|_{U \times \bR}\simeq \cE_U\potimes\bk^E_U$, (ii) each cohomology sheaf $\cH^i(\cE_U|_S)$ for each $S\in \cS_U$ are isomorphic to a direct sum of sheaves of the form $\bk_{t\geq \phi(x)}$ for some continuous function $\phi$.
\end{enumerate}
\end{definition}
We denote the full subcategory spanned by $\bR$-constructible (resp. $\bC$-constructible) enhanced ind-sheaves by $E^b_{\bR\hi c}(\Ik_M)$ (resp. $E^b_{\bC\hi c}(\Ik_M)$). The category $E^b_{\bR\hi c}(\Ik_M)$ has a contravariant autoequivalence $\bD$, analogous to the Verdier dual.

\subsection{From enhanced sheaves to $\Lambda$-modules}
For a sheaf $\cE$ on $X\times \oR$, let us consider an object $\bigoplus_{-a\in \bR}p_*\Gamma_{[-a,\infty)}\cE$ where $p\colon X\times \bR\rightarrow X$ is the projection. This is equipped with the action of $\Lambda$ as follows: The action of $T^b$ on $M(\cE)$ is the product of
\begin{equation}
p_*\Gamma_{[-a,\infty)}\cE\rightarrow p_*\Gamma_{[-b-a,\infty)}\cE
\end{equation}
induced by the canonical map $\bk_{[-b-a,\infty)}\rightarrow \bk_{[-a,\infty)}$.

For an object $\cE=\underset{\substack{\longrightarrow\\ i}}{\forlim}\cE_i\in \Mod(\Ik_{M\times \oR})$, we set
\begin{equation}
\begin{split}
\tM'(\cE)&:=\forlimit{i}\lb\bigoplus_{-a\in \bR}p_*\Gamma_{[-a,\infty)}\cE_i\rb \in \Ind(\Mod^0(\Lambda_X)),\\
\tM(\cE)&:=[\cdot]\circ \tM':=\forlimit{i}\left[ \bigoplus_{-a\in \bR}p_*\Gamma_{[-a,\infty)}\cE_i\right] \in \Ind(\ModI(\Lambda_X)).
\end{split}
\end{equation}

\begin{lemma}
The correspondence $\tM$ is a left exact functor $\Mod(\Ik_{X\times \oR})\rightarrow \Ind(\ModI(\Lambda_X))$.
\end{lemma}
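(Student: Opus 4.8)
The plan is to exhibit $\tM$ as the $\Ind$-extension of a left exact functor defined on honest sheaves, post-composed with the exact functor induced by $[\cdot]$.

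First I would isolate the underlying functor on sheaves. Let $M_0\colon \Mod(\bk_{X\times\oR})\rightarrow \Mod^0(\Lambda_X)$ be given by $M_0(\cF):=\bigoplus_{a\in\bR}p_*\Gamma_{X\times[-a,\infty)}\cF$, with graded piece $\Gr^aM_0(\cF):=p_*\Gamma_{X\times[-a,\infty)}\cF$ and with $T^b$ acting through the morphism $p_*\Gamma_{X\times[-a,\infty)}\cF\rightarrow p_*\Gamma_{X\times[-a-b,\infty)}\cF$ coming from $\bk_{[-a-b,\infty)}\rightarrow\bk_{[-a,\infty)}$, exactly as in the statement. One checks directly that $M_0(\cF)$ is a sheaf of $\bR$-graded $\Lambda_X$-modules and that $M_0$ is additive. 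Since $\Gamma_{X\times[-a,\infty)}(-)$ (sections supported on the closed subset $X\times[-a,\infty)$) and $p_*$ are left exact, and arbitrary direct sums are exact in $\Mod^0(\Lambda_X)$, the composite $M_0$ is left exact.

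Next I would pass to ind-objects. Writing an ind-sheaf as $\cE\simeq\forlimit{i}\cE_i$ with the $\cE_i$ honest sheaves on $X\times\oR$, the formula in the statement reads $\tM'(\cE)=\forlimit{i}M_0(\cE_i)$, i.e. $\tM'$ is the canonical $\Ind$-extension of $M_0$, hence a well-defined functor by the universal property of $\Ind$-categories. Left exactness of $\tM'$ follows from the standard property of ind-categories that every short exact sequence of ind-sheaves is isomorphic to a filtered colimit of short exact sequences of sheaves: applying $M_0$ level-by-level keeps the left-exact portion exact, and the filtered-colimit functor on $\Ind(\Mod^0(\Lambda_X))$, being exact, preserves it. Thus $\tM'$ is left exact.

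Finally, by definition $\tM=\Ind([\cdot])\circ\tM'$, where $\Ind([\cdot])\colon \Ind(\Mod^0(\Lambda_X))\rightarrow\Ind(\ModI(\Lambda_X))$ is the level-wise extension of the functor $[\cdot]$, which is exact by Lemma~\ref{cdotexact}; hence $\Ind([\cdot])$ is exact and the composite $\tM$ is left exact. The one point that needs care is the ind-categorical bookkeeping — above all the decomposition of short exact sequences of ind-sheaves into filtered colimits of short exact sequences of sheaves, and the fact that $\Gamma_{X\times[-a,\infty)}$, $p_*$ and $\bigoplus_a$ are the non-derived operations, so that only left exactness (not exactness) is in force. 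The remaining verifications are routine concatenations of standard exactness statements.
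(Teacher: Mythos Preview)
Your proof is correct and follows the same approach as the paper: the paper's argument is the single sentence ``Since $\tM$ is a functor obtained as the inditization of a left exact functor, which is again left exact'' (citing Kashiwara--Schapira, \emph{Categories and Sheaves}), and you have simply unpacked this by exhibiting the underlying left exact functor $M_0$ on sheaves, invoking the standard fact that Ind-extension preserves left exactness, and post-composing with the exact functor $[\cdot]$. The only difference is granularity: the paper delegates the preservation of left exactness under inditization to a reference, whereas you sketch why it holds via the decomposition of short exact sequences of ind-objects into filtered colimits of short exact sequences.
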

\begin{proof}
Since $\tM$ is a functor obtained as the inditization of a left exact functor, which is again left exact \cite{categoriesandsheaves}.
\end{proof}

We denote the right derived functor of $\tM$ by $\bR\tM\colon D^b(\Ik_{X\times \oR})\rightarrow D^b(\Ind(\ModI(\Lambda_X)))$. Recall that there exist embeddings 
\begin{equation}
(-)\potimes \bk_{t\geq 0}\colon E^b(\Ik_X)\rightarrow D^b(\Ik_{(X\times\oR, X\times \bR)})
\end{equation}
and
\begin{equation}
(-)\otimes \bk_{X\times \bR}\colon D^b(\Mod(\Ik_{(X\times \oR, X\times \bR)}))\rightarrow D^b(\Ik_{X\times \oR})
\end{equation}
Composing these with $\bR\tM$, we get 
\begin{equation}
M:=\bR\tM(((-)\potimes \bk_{\geq 0})\otimes \bk_{X\times \bR})\colon E^b(\Ik_X)\rightarrow D^b(\Ind(\ModI(\Lambda_X))).
\end{equation}

\begin{lemma}\label{stabM}
Let $\cE$ be an $\bR$-constructible sheaf over $X\times \bR$. Then we have $M(\cE\potimes \bk^E_X)\in D^b(\ModI(\Lambda_X))$.
\end{lemma}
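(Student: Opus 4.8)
The plan is to reduce the claim to a purely local statement and there compute $M$ of the building block $\bk_{t \geq \psi(x)} \potimes \bk^E_X$ explicitly, showing it is (the $[\cdot]$-image of) a bounded complex of $\bR$-graded $\Lambda_X$-modules with coherent-enough cohomology. First I would use that $\bR\tM$ is defined as a right derived functor, so for a genuine sheaf (concentrated in degree $0$) it is computed by an injective resolution; the issue is only boundedness of the cohomology and the fact that the Ind-colimit stabilizes, i.e. that $M(\cE\potimes\bk^E_X)$ lands in the essential image of $D^b(\ModI(\Lambda_X)) \hookrightarrow D^b(\Ind(\ModI(\Lambda_X)))$. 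Since $\cE$ is $\bR$-constructible over $X \times \bR$, after passing to an open covering $\{U_i\}$ of $X$ and refining by a stratification I may assume $\cE|_{U_i \times \bR}$ is a finite extension/direct sum of constant sheaves on locally closed subsets of the form $\{(x,t) : t \lesseqgtr \phi(x)\}$ with $\phi$ continuous (and, on each stratum, constructible families thereof). By the triangulated structure of $E^b(\Ik)$ and exactness properties already recorded (Lemma~\ref{rightderived}, and the fact that $[\cdot]$ and $M$ commute with the relevant operations), it suffices to treat a single such generator.

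The key computation is: for $\phi$ continuous on an open $U$, compute $M(\bk_{t \geq \phi(x)}\potimes\bk^E_X)|_U$. Unwinding the definitions, $(\bk_{t\ge\phi}\potimes\bk^E_X)\otimes\bk_{X\times\bR}$ is the ind-sheaf $\forlimit{a}\bk_{t \geq \phi(x) + \text{(shift)}}$ truncated appropriately, and applying $\bigoplus_{-a \in \bR} p_*\Gamma_{[-a,\infty)}(-)$ to it yields precisely $\Lambda^{\phi}_U$ — i.e.\ the irregular constant sheaf of Section~5.2 — up to the Ind-colimit which now stabilizes because $p_*\Gamma_{[-a,\infty)}\bk_{t\ge \phi}$ does not see the Ind-structure once $a$ is large relative to $\phi$ on a relatively compact set. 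Thus on each chart $M$ of the generator is $[\Lambda^\phi_U]$ (or a shift), which lies in $D^b(\ModI(\Lambda_X))$ by construction; the higher direct images $\bR^j p_*\Gamma_{[-a,\infty)}(-)$ vanish for $j$ large because $p$ has fibre dimension $1$, giving the boundedness. Gluing these local identifications over the covering $\{U_i\}$ — using that the transition isomorphisms in $E^b$ restrict to morphisms between the $\Lambda^\phi$'s, which by Lemma~\ref{boundeds} and Lemma~\ref{orthogonality} are exactly the standard ones — produces a global object of $D^b(\ModI(\Lambda_X))$.

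The main obstacle I expect is controlling the Ind-colimit: a priori $M$ takes values in $D^b(\Ind(\ModI(\Lambda_X)))$, and one must show that for $\cE\potimes\bk^E_X$ with $\cE$ genuinely $\bR$-constructible the colimit $\forlimit{a}$ is eventually constant (or at least that the colimit is represented by a single bounded complex of $\Lambda_X$-modules, not a proper ind-object). This is where $\bR$-constructibility of $\cE$ is essential: it guarantees that on each relatively compact open set the functions $\phi$ appearing are bounded below, so that for $a$ sufficiently large $\Gamma_{[-a,\infty)}$ stabilizes the relevant sheaf-theoretic data. A secondary, more bookkeeping point is verifying that the local identifications with $[\Lambda^\phi_{U}]$ are compatible with restriction and hence genuinely glue — but this follows from the uniqueness/rigidity of standard morphisms between the $\Lambda^\phi$'s (Lemma~\ref{boundeds}) exactly as in the proof of Proposition~\ref{ModIabel}. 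Once stabilization is in hand, boundedness is immediate from the finiteness of the stratification and the one-dimensionality of the $\bR$-fibre, completing the proof.
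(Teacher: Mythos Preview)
Your approach is more elaborate than necessary and contains a genuine gap in the ``stabilization'' step. The paper's proof is a one–line algebraic trick that avoids everything you set up: by definition of $\bk^E_X=\forlimit{c}\bk_{t\geq c}$, the Ind-system computing $M(\cE\potimes\bk^E_X)$ has as $c$-th transition map the morphism
\[
\left[\bigoplus_{-a}\bR p_*\RGamma_{[-a,\infty)}\cE\right]\longrightarrow
\left[\bigoplus_{-a}\bR p_*\RGamma_{[-a-c,\infty)}\cE\right],
\]
which is nothing but multiplication by $T^c$ after reindexing. Its cone $\bigl[\bigoplus_{-a}\bR p_*\RGamma_{[-a-c,-a)}\cE\bigr]$ is therefore annihilated by $T^c$, hence is zero in $\ModI(\Lambda_X)$. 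So every transition map is already an isomorphism in $D^b(\ModI(\Lambda_X))$ and the Ind-object is constant. No local decomposition, no gluing, and no use of $\bR$-constructibility beyond boundedness of the complex is needed.

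The gap in your argument is precisely the stabilization claim. You assert that $\bR$-constructibility of $\cE$ on $X\times\bR$ forces the functions $\phi$ to be bounded below on relatively compact subsets of $X$, so that the Ind-colimit is eventually constant. Neither statement is correct: an $\bR$-constructible sheaf on $X\times\bR$ (as opposed to $X\times\oR$) can be supported on an unbounded region in the $\bR$-direction even over compact $X$ (take $\bk_{X\times\bR}$ itself), and the Ind-system is \emph{never} eventually constant in $\Mod^0(\Lambda_X)$---it only becomes constant after passing to $\ModI$, for the torsion reason above. Your local decomposition of a general $\bR$-constructible sheaf into pieces $\bk_{t\geq\phi}$ is also not available; that shape is what characterises the $\bC$-constructible enhanced sheaves of Definition~7.1(2), not arbitrary $\bR$-constructible ones. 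The paper sidesteps all of this by never opening up $\cE$.
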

\begin{proof}
By the definition of $M$ and $\bk^E_X$, it is enough to show that the natural morphisms
\begin{equation}
\left[ \bigoplus_{-a\in \bR}\bR p_*\RGamma_{[-a,\infty)}\cE\right] \rightarrow \left[ \bigoplus_{-a\in \bR} \bR p_*\RGamma_{[-a-c,\infty)}\cE\right] 
\end{equation}
are isomorphisms for any $c\in \bRz$. The cone is given by
\begin{equation}
\left[ \bigoplus_{-a\in \bR}\bR p_*\RGamma_{[-a-c,-a)}\cE\right].
\end{equation}
Since $T^c\in \Lambda$ is vanished on this object, this is zero. Hence the morphisms are isomorphisms.
\end{proof}

\begin{lemma}
The functor $M$ is restricted to a functor $E^b_{\bR\hi c}(\Ik_X)\rightarrow D^b(\ModI(\Lambda_X))$, which is also denoted by $M$.
\end{lemma}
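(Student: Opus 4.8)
The plan is to reduce to Lemma~\ref{stabM} by a local argument on $X$. Let $\cE\in E^b_{\bR\hi c}(\Ik_X)$; by the definition of $\bR$-constructibility there is a locally finite open covering $\{U_i\}$ of $X$ together with $\bR$-constructible sheaves $\cE_{U_i}$ on $U_i\times\bR$ such that $\cE|_{U_i\times\oR}\simeq \cE_{U_i}\potimes\bk^E_{U_i}$. The first point to record is that $M$ commutes with restriction along an open embedding $\iota_U\colon U\hookrightarrow X$, i.e.\ $\iota_U^{-1}M(\cE)\simeq M(\iota_U^{-1}\cE)$. This is routine: the functors $(-)\potimes\bk_{\geq 0}$, $(-)\otimes\bk_{X\times\bR}$, the projection $p\colon X\times\bR\rightarrow X$ entering $\tM$ (base change along the open immersion $U\times\bR\hookrightarrow X\times\bR$), the section functors $\RGamma_{[-a,\infty)}$, the inditization and $[\cdot]$ all commute with restriction to opens, and $M$ is their composite.

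With this in hand, for each $i$ we get $M(\cE)|_{U_i}\simeq M(\cE_{U_i}\potimes\bk^E_{U_i})$, which lies in $D^b(\ModI(\Lambda_{U_i}))$ by Lemma~\ref{stabM}. To globalize, write $M(\cE)=\underset{a\rightarrow\infty}{\forlim}\,K_a$ for the ind-system induced by $\bk^E_X=\underset{a\rightarrow\infty}{\forlim}\,\bk_{t\geq a}$. The proof of Lemma~\ref{stabM} shows that, after restriction to $U_i$, every transition morphism $K_a|_{U_i}\rightarrow K_{a'}|_{U_i}$ ($a\leq a'$) is an isomorphism, its cone being annihilated by $T^{a'-a}$ and hence zero in $\ModI$. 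Since $\ModI_{\dX}$ is a stack, a morphism is an isomorphism as soon as it becomes one after restriction to each member of an open cover; therefore every transition morphism $K_a\rightarrow K_{a'}$ of $M(\cE)$ is already an isomorphism, and likewise on cohomology objects. Consequently the ind-system $M(\cE)$ is essentially constant, so $M(\cE)\simeq K_{a_0}$ for any $a_0$; since $K_{a_0}|_{U_i}\simeq M(\cE)|_{U_i}\in D^b(\ModI(\Lambda_{U_i}))$ and the property of lying in $D^b(\ModI(\Lambda_X))$ inside $D^b(\Ind(\ModI(\Lambda_X)))$ is local on $X$ (a condition on the cohomology objects), we conclude $M(\cE)\in D^b(\ModI(\Lambda_X))$. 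Boundedness is inherited from that of $\cE$, and functoriality is immediate since $M$ is already a functor on $E^b(\Ik_X)$.

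I expect the main obstacle to be precisely this globalization: transferring the statement ``the defining ind-system of $M(\cE)$ is essentially constant'' --- equivalently, ``$M(\cE)$ is a genuine object of $\ModI$ rather than a formal ind-object'' --- from the members $U_i$ to all of $X$. It relies on the stack property of $\ModI_{\dX}$ (so that both isomorphisms and the condition of being a genuine object are detected on a cover) together with the compatibility supplied by the first step, namely that the globally defined transition maps of $M(\cE)$ restrict on each $U_i$ to exactly the maps that Lemma~\ref{stabM} identifies as isomorphisms. The commutation of $M$ with open restriction is itself elementary, but it is what makes this compatibility available.
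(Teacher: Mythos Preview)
Your local argument is sound---$M$ does commute with open restriction, and Lemma~\ref{stabM} then gives $M(\cE)|_{U_i}\in D^b(\ModI(\Lambda_{U_i}))$---but the globalization step has a real gap. You invoke the stack property of $\ModI_{\dX}$ to conclude that the transition morphisms $K_a\to K_{a'}$ are isomorphisms from the fact that they are so after restriction. That stack property, however, is established for $\ModI$ and not for $\Ind(\ModI)$: to use it you would need to know in advance that the $K_a$ already lie in $D^b(\ModI(\Lambda_X))$, which is precisely the conclusion you are after. The same circularity recurs when you assert that ``lying in $D^b(\ModI(\Lambda_X))$ inside $D^b(\Ind(\ModI(\Lambda_X)))$ is local on $X$'': this is plausible but is nowhere proved in the paper, and it is not a formal consequence of $\ModI_{\dX}$ being a stack, since the ind-completion need not inherit descent. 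There is also a smaller imprecision: your global ind-system $\forlim_a K_a$ is meant to come from $\bk^E_X$, but the ind-structure on $(\cE\potimes\bk_{\geq 0})\otimes\bk_{X\times\bR}$ is that of the ind-sheaf $\cE$ itself, and one must argue (via $\cE\simeq\cE\potimes\bk^E_X$) that it can be reorganized into the $a$-indexed system you want before the comparison with Lemma~\ref{stabM} even makes sense.

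The paper sidesteps all of this with a different device: choose the locally finite cover so that $(n{+}2)$-fold intersections are empty (possible since $X$ is a manifold), and build a finite \v{C}ech resolution expressing $\cE$ as an iterated mapping cone of objects of the form $i_{U!}(\cE_U\potimes\bk^E_U)$, each of which is a \emph{global} object to which Lemma~\ref{stabM} applies directly. Since $D^b(\ModI(\Lambda_X))$ is closed under finite cones inside $D^b(\Ind(\ModI(\Lambda_X)))$, the conclusion follows without any appeal to locality in the ind-category. If you want to rescue your approach, you would need to prove the locality statement for $\Ind(\ModI)$ independently; otherwise the \v{C}ech argument is the cleaner route.
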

\begin{proof}
For an $\bR$-constructible enhanced ind-sheaf $\cE$, there exists a locally finite covering $\cU$ of $X$ such that we have $\cE|_{U\times \bR}\simeq \cE_U\potimes \bk^E_U$ and $(n+2)$-fold covers are empty. By the Cech construction, $\cE$ is represented by a result of mapping cones of $i_!(\cE_U\potimes \bk^E_U)$. This implies $\tM(\cE)$ is obtained as a finite mapping cones of $\tM(i_!(\cE_U\potimes \bk_U^E))$. By Lemma \ref{stabM}, this means that $\tM(\cE)$ is in $D^b(\ModI(\Lambda_X))$. This completes the proof.
\end{proof}

Let $S$ be a locally closed subset in $X$ and $\overline{S}$ be the closure of $S$ in $X$ and set $D_S:=\overline{S}\bs S$. Let $\phi$ be a $\bC$-valued function on $S$ and take $\Lambda^\phi_\dS$. Since there exists a tame map $i_\dS\colon \dS\rightarrow X$, we get $i_{\dS !}\Lambda^\phi_\dS\in \ModI(\Lambda_X)$.
We also  set $\cE^\phi:=\bk_{\Re \phi\leq t}\potimes \bk^E_X\in E^b(\Ik_X)$.

\begin{lemma}
We have $M(\cE^\phi)\cong i_{\dS !}\Lambda^\phi$.
\end{lemma}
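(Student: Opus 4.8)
The plan is to unwind the definition of $M$ and compute each intermediate object explicitly in terms of the already-established irregular constant sheaf $\Lambda^\phi$. Recall that $M(\cE^\phi) = \bR\tilde{M}\bigl((\cE^\phi \potimes \bk_{\geq 0}) \otimes \bk_{X\times\bR}\bigr)$, and $\cE^\phi = \bk_{\Re\phi \leq t} \potimes \bk^E_X$. First I would use that convolution with $\bk^E_X$ stabilizes, exactly as in Lemma~\ref{stabM}: the object $(\cE^\phi \potimes \bk_{\geq 0})\otimes \bk_{X\times\bR}$ is represented, up to the relevant quotients, by a sheaf of the form $\bk_{\{t \geq \Re\phi(x) + c\}}$ on $X\times\bR$ for varying $c$, with transition maps given by the canonical restriction maps, so that after applying $\bigoplus_{-a\in\bR} p_*\RGamma_{[-a,\infty)}(-)$ and passing to $[\,\cdot\,]$ these become isomorphisms and the Ind-colimit is eventually constant. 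This reduces the computation to identifying $\bigl[\bigoplus_{-a\in\bR} \bR p_*\RGamma_{[-a,\infty)}\bk_{\{t\geq \Re\phi(x)\}}\bigr]$.

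Next I would compare this with the definition of $\Lambda^\phi_{\dS}$. By definition $\Gr^a\Lambda^\phi_S = p_*\Gamma_{S\times[-a,\infty)}\bk_{t\geq\Re\phi}$ on $S$; the point is to check that the higher direct images $\bR^{>0}p_*\RGamma_{[-a,\infty)}\bk_{t\geq\Re\phi}$ vanish (the fibers are closed half-lines or intervals, hence the relevant cohomology is concentrated in degree $0$), so that on $S$ the enhanced-sheaf construction produces exactly $\Lambda^\phi_S$ as an $\bR$-graded $\Lambda_S$-module, with the $T^b$-action matching on both sides since both are induced by $\bk_{[-b-a,\infty)}\to\bk_{[-a,\infty)}$. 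The slightly delicate part is the behaviour along $D_S = \oS\bs S$ and on $X \bs \oS$: off $\oS$ the sheaf $\bk_{\{t\geq\Re\phi(x)\}}$ is not defined globally (as $\phi$ lives only on $S$), but $\cE^\phi$ is built from $\bk_{\Re\phi\leq t}$ as a sheaf on $X\times\bR$ which is supported over $\oS$ and extends by zero; correspondingly one checks that $\bigl[\bigoplus_{-a} \bR p_*\RGamma_{[-a,\infty)}\bk_{\{t\geq\Re\phi(x)\}}\bigr]$ agrees with the extension-by-zero of $\Lambda^\phi_{\dS}$, i.e. with $i_{\dS!}\Lambda^\phi_{\dS}$, where one uses Lemma~\ref{Fpush} / the commutation of $[\,\cdot\,]$ with $\bR i_{\dS!}$ established in Section~4.2 together with the fact that $i_{\dS}$ is tame.

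Concretely the steps are: (1) rewrite $(\cE^\phi\potimes\bk_{\geq 0})\otimes\bk_{X\times\bR}$ as (a representative of) $\bk_{\{t\geq\Re\phi(x)\}}$ over $\oS\times\bR$ pushed to $X\times\bR$, using the stabilization argument of Lemma~\ref{stabM} to dispose of the Ind-colimit and the convolution tails; (2) show $\bR p_*\RGamma_{[-a,\infty)}$ of this sheaf is concentrated in degree $0$ and equals $p_*\Gamma_{[-a,\infty)}$, by a fiberwise cohomology computation; (3) identify the resulting $\bR$-graded object on $\oS$ with $\Lambda^\phi_{\dS}$ by matching graded pieces and the $T$-action; (4) propagate across the tame inclusion $i_{\dS}\colon\dS\to X$ using the commutation $[\bR i_{\dS!}(-)]\simeq \bR i_{\dS!}[(-)]$ and the definition of $i_{\dS!}$ on $\ModI(\Lambda_X)$, obtaining $M(\cE^\phi)\cong i_{\dS!}\Lambda^\phi$. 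I expect the main obstacle to be step (1)–(2) bookkeeping: making sure that passing through the quotient categories defining $E^b(\Ik_X)$ and the Ind-completion does not lose information, i.e. that the naive sheaf-level computation really computes $\bR\tilde{M}$ of the enhanced object; this is precisely where Lemma~\ref{stabM} and the exactness/commutation lemmas for $[\,\cdot\,]$ do the work, so the proof should be short once those are invoked correctly.
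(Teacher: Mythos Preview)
Your proposal is correct and follows the same approach as the paper, which simply says the result is ``clear from the definitions and Lemma~\ref{stabM}.'' You have unpacked exactly what that means: use Lemma~\ref{stabM} to collapse the Ind-colimit coming from $\bk^E_X$, then match the resulting object $\bigl[\bigoplus_{-a}\bR p_*\RGamma_{[-a,\infty)}\bk_{t\geq\Re\phi}\bigr]$ with the definition of $\Lambda^\phi_{\dS}$ and push through $i_{\dS!}$. Your extra care about the vanishing of higher $\bR p_*\RGamma_{[-a,\infty)}$ and the behaviour off $\oS$ is more detail than the paper provides but is consistent with its intent.
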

\begin{proof}
This is clear from the definitions and Lemma \ref{stabM}.
\end{proof}

\begin{lemma}\label{irrconstantff}
There exists a canonical isomorphism
\begin{equation}
\Hom_{\ModI(\Lambda_X)}(i_{\dS !}\Lambda^{\phi}, i_{\dS !}\Lambda^{\phi'})\cong \Hom_{E^b(\Ik_X)}(\cE^\phi, \cE^{\phi'}).
\end{equation}
\end{lemma}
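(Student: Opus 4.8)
The plan is to compute both Hom-spaces as one and the same colimit and to recognize $M$ as the comparison between the two descriptions. By the preceding lemma $M(\cE^\phi)\cong i_{\dS!}\Lambda^\phi$ and $M(\cE^{\phi'})\cong i_{\dS!}\Lambda^{\phi'}$ are objects of the abelian category $\ModI(\Lambda_X)$, so $\Hom_{D^b(\ModI(\Lambda_X))}(M(\cE^\phi),M(\cE^{\phi'}))=\Hom_{\ModI(\Lambda_X)}(i_{\dS!}\Lambda^\phi,i_{\dS!}\Lambda^{\phi'})$, and functoriality of $M$ produces a natural map from $\Hom_{E^b(\Ik_X)}(\cE^\phi,\cE^{\phi'})$ into it. It then remains to prove this map bijective, which I would do by evaluating both sides explicitly.

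On the $\Lambda$-side, the definition $\Hom_{\ModIp}([\tcV],[\tcW])=\Hom_{\Mod^\bR(\Lambda_X)}(\tcV,\tcW)\otimes_\Lambda\bk$ turns $\bigoplus_a\Hom^a$ into the colimit along multiplication by $T$, so
\begin{equation}
\Hom_{\ModI(\Lambda_X)}(i_{\dS!}\Lambda^\phi, i_{\dS!}\Lambda^{\phi'})\;\cong\;\underset{\substack{\longrightarrow\\ c\to\infty}}{\lim}\,\Hom_{\Mod^0(\Lambda_X)}\bigl(i_{\dS!}\Lambda^\phi,(i_{\dS!}\Lambda^{\phi'})\la c\ra\bigr).
\end{equation}
Because $\Gr^a\Lambda^\psi_S=p_*\Gamma_{S\times[-a,\infty)}\bk_{t\geq\Re\psi}$, each term on the right is a space of maps of $\bR$-graded sheaves which, after extension by zero from $S\times\bR$, is exactly $\Hom_{\bk_{X\times\bR}}(\bk_{t\geq\Re\phi},\bk_{t\geq\Re\phi'+c})$. (Equivalently one may route this through Lemma \ref{shriekadjunction}, the identity $i_\dS^!i_{\dS!}\simeq\id$ for the locally closed embedding $i_\dS$, Corollary \ref{Rhomstandard} and Lemma \ref{globalsection2}, obtaining $H^0R\Gamma$ of $\Lambda^{\phi'-\phi}_\dS\otimes\bk$.) In either case the value of the colimit is supplied by Lemma \ref{boundeds} and Lemma \ref{orthogonality}: it is $\bk$, spanned by the standard morphism, when $\max\{0,\Re(\phi-\phi')\}$ stays bounded near each boundary point in the sectorial sense, and it is $0$ as soon as $\Re(\phi-\phi')\to+\infty$ on some sector.

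On the enhanced side, the analogous colimit description of morphisms in $E^b(\Ik_X)$ between objects of the form $\cF\potimes\bk^E_X$ (see \cite{D'Agnolo--Kashiwara}) gives $\Hom_{E^b(\Ik_X)}(\cE^\phi,\cE^{\phi'})\cong\underset{\substack{\longrightarrow\\ c\to\infty}}{\lim}\Hom_{D^b(\bk_{X\times\bR})}(\bk_{t\geq\Re\phi},\bk_{t\geq\Re\phi'+c})$, and this colimit stabilizes — the content of Lemma \ref{stabM} — with the same value, governed by the same dichotomy for $\Re(\phi-\phi')$; this orthogonality is precisely the cornerstone of D'Agnolo--Kashiwara's formulation of the irregular Riemann--Hilbert correspondence. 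Thus both sides are canonically the same colimit, and since $M$ is assembled from $\bigoplus_a p_*\Gamma_{[-a,\infty)}(-)$, which carries $\bk_{t\geq\Re\phi}$ to $\Lambda^\phi$ and the inclusion-of-supports morphism $\bk_{t\geq\Re\phi}\to\bk_{t\geq\Re\phi'+c}$ to the standard morphism $\Lambda^\phi\to\Lambda^{\phi'}\la c\ra$, the comparison map carries a generator to a generator and is an isomorphism. The main obstacle is not conceptual but bookkeeping: one must pin down the precise colimit formula for morphisms in $E^b(\Ik_X)$, check that $M$ intertwines it term by term with the $-\otimes_\Lambda\bk$ description of morphisms in $\ModIp$, and handle the boundary $D_S$ carefully — it is the $\potimes\bk^E_X$ on one side and the $\otimes_\Lambda\bk$ on the other that render both descriptions insensitive to bounded perturbations of $\phi,\phi'$, which is exactly why the two dichotomies coincide.
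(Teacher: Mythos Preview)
Your proposal is correct and follows essentially the same route as the paper: both compute each side via the dichotomy governed by whether $\max\{0,\Re(\phi-\phi')\}$ is bounded (Lemmas~\ref{boundeds} and~\ref{orthogonality} on the $\Lambda$-side, the standard enhanced-sheaf computation on the other), obtaining $\bk$ or $0$ in each case, and then check that the comparison map sends the standard generator to the standard generator. Your use of the functoriality of $M$ to produce the comparison map is a clean packaging of what the paper does by hand---lifting a representative $\tf\colon\bk_{t\ge\Re\phi}\to\bk_{t\ge\Re\phi'+c}$ and pushing it through $\bigoplus_a p_*\Gamma_{[-a,\infty)}$---but the substance is identical.
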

\begin{proof}
By Lemma \ref{boundeds} and Lemma \ref{orthogonality}, we have
\begin{equation}
\Hom_{\ModI(\Lambda_X)}(i_{\dS !}\Lambda^{\phi}, i_{\dS !}\Lambda^{\phi'})\cong 
\begin{cases}
\bk &\text{ $\max\{0, \Re\phi-\Re\phi'\}$ is bounded }\\
0 & \text{ otherwise.}
\end{cases}
\end{equation}
It is standard to see that the RHS also has the same formula. In the case that $\max\{0, \Re\phi-\Re\phi'\}$ is bounded, there exists $c\in \bRz$ such that $\Re\phi< \Re\phi'+c$ everywhere. For a map $f\in \Hom_{E^b(\Ik_X)}(\cE^\phi, \cE^{\phi'})$, we have a representative $\tf\colon \bk_{\lc t\geq \Re\phi(x)\rc}\rightarrow \bk_{\lc t\geq \Re\phi'(x)+c\rc}$ of usual $\bR$-constructible sheaves. Then $\tf$ induces a morphism $i_{\dS !}\Lambda^\phi\rightarrow i_{\dS !}\Lambda^{\phi'}$. It is easy to see that the induced morphism only depends on the choice of $f$. By the proof of Lemma \ref{boundeds}, this gives an isomorphism. 
\end{proof}

\section{Irregular Riemann--Hilbert correspondence}
In this section, we will prove our version of the irregular Rimann--Hilbert correspondence as a corollary of D'Agnolo--Kashiwara's one. In this section, we will work over $\bC$.

\subsection{Notations for analytic $\cD$-modules}
We refer the theory of analytic $\cD$-modules to \cite{KashiwaraD}. In this subsection, we simply recall the notations. For a complex manifold,  $\cD_X$ is the sheaf of differential operators, $\Mod(\cD_X)$ is the category of left $\cD$-modules, and $D^b(\cD_X)$ is the bounded derived category of $\cD$-modules. We denote the full subcategory of $D^b(\cD_X)$ spanned by cohomologically holonomic $\cD$-modules by $D^b_{\hol}(\cD_X)$.

The Verdier dual $\bD$ is a contravariant autoequivalence of $D^b_{\hol}(\cD_X)$. For a morphism of complex manifolds $f\colon X\rightarrow Y$, we can define the following functors:
\begin{align}
\int_{f}&\colon D^b(\cD_X)\rightarrow D^b(\cD_Y); \cM\mapsto \bR f_*(\cD_{X\leftarrow Y}\dotimes_{\cD_X}\cM)\\
f^\dagger&\colon D^b(\cD_Y)\rightarrow D^b(\cD_X);\cN\mapsto \cD_{Y\rightarrow X}\dotimes_{f^{-1}\cD_Y}f^{-1}\cN[\dim X-\dim Y].
\end{align}
by using transfer $\cD$-modules $\cD_{X\leftarrow Y}$ and $\cD_{X\rightarrow Y}$.
The functor $f^\dagger$ always preserves cohomologically holonomic modules. If $f$ is proper, $\int_f$ also preserves cohomologically holonomic modules. For a proper $f$, the pair of functors form an adjoint pair $\int_f \dashv f^\dagger$. We also set $f^\star:=\bD\circ f^\dagger \circ \bD\colon D^b_{\hol}(\cD_Y)\rightarrow D^b_{\hol}(\cD_X)$ and $f_\star:=\bD\circ \int_f\circ \bD$. Then $f^\star\dashv f_\star$

\subsection{Irregular Riemann--Hilbert correspondence using enhanced sheaves}
We recall the irregular Riemann--Hilbert correspondence by D'Agnolo--Kashiwara:
\begin{theorem}[\cite{D'Agnolo--Kashiwara}]
There exists a contravariant embedding
\begin{equation}
\Sol^E\colon D^b_{\mathrm{hol}}(\cD_X)\hookrightarrow E^b_{\bR\hi c}(\mathrm{I}\bC_X).
\end{equation}
\end{theorem}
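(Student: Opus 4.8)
The theorem recalled here is the principal result of \cite{D'Agnolo--Kashiwara}; the plan is to recall the structure of their proof. The first step is to construct the functor. One introduces the enhanced ind-sheaf $\cO^E_X\in E^b(\IC_X)$ of tempered holomorphic functions: along a divisor $Z$, on the real oriented blow-up $\varpi\colon \widetilde{X}(Z)\to X$ one takes a Dolbeault-type resolution of the sheaf $\cA_{\widetilde{X}(Z)}$ introduced above and enhances it with the extra variable $t$, so that translation in $t$ records $e^{\phi}$-type growth; globally $\cO^E_X$ is obtained by gluing these local models. One then sets $\Sol^E_X(\cM):=\RcHom_{\cD_X}(\cM,\cO^E_X)$. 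The contravariant functoriality is then formal: $\Sol^E$ sends the $\cD$-module duality to the enhanced Verdier duality, turns $\dotimes$ into $\potimes$, intertwines $\int_f$ for proper $f$ with $Ef_*$ and $f^{\dagger}$ with $Ef^{-1}$ (up to shift), and has image inside $E^b_{\bR\hi c}(\IC_X)$ --- for the last point one only needs $\Sol^E$ of a meromorphic connection with good formal structure, which by the blow-up description is locally a finite complex of sheaves of the shape $\bC_{\{t+\Re\phi\ge 0\}}$, and these are $\bR$-constructible (in the language of this paper, $M(\Sol^E(\cE(\phi)\otimes\cR))$ is an iterated extension of the $\Lambda^{\phi}$'s).

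The real content is full faithfulness, i.e.\ bijectivity of
\[
\Hom_{D^b_{\hol}(\cD_X)}(\cM,\cN)\longrightarrow
\Hom_{E^b_{\bR\hi c}(\IC_X)}(\Sol^E\cN,\Sol^E\cM).
\]
The plan is a two-stage d\'evissage. Stage one reduces the problem to a local model near a divisor: by Hironaka resolution of singularities, the functorial compatibilities above, and induction on $\dim\supp\cM$, it is enough to treat meromorphic flat connections $\cM$ on a complex manifold $X$ with a normal crossing divisor $D$; and by Kedlaya's theorem recalled above together with the local structure results of Sabbah and Mochizuki \cite{Kedlaya,Mochizukiformal,Sabbahgoodformal}, after a further proper modification and a ramified cover $\Phi_A$ one may assume that $\cM$ admits a good decomposition $\cM\cong\bigoplus_\alpha \cE(\phi_\alpha)\otimes\cR_\alpha$ on each chart of the real oriented blow-up, with each $\cR_\alpha$ regular.

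Stage two --- the genuinely analytic step --- is to verify the correspondence on these good elementary pieces and then glue. Using the asymptotic expansion theory of Hukuhara--Turrittin and Majima \cite{Majima,SabbahHLT} and the sheaf of tempered holomorphic functions, one computes $\Sol^E(\cE(\phi)\otimes\cR)$ sector by sector on the blow-up: the factor $e^{\phi}$ forces the enhanced solution sheaf to be supported in $\{t+\Re\phi\ge 0\}$, the regular part $\cR$ contributes its monodromy local system, and on overlaps of consecutive sectors the gluing is governed precisely by the Stokes filtration of $\cM$. Comparing $\Hom$-spaces then reduces, sector by sector, to matching $\cD$-module morphisms $\cE(\phi_1)\otimes\cR_1\to\cE(\phi_2)\otimes\cR_2$ with enhanced morphisms $\bC^E_{\{t+\Re\phi_1\ge0\}}\otimes L_1\to\bC^E_{\{t+\Re\phi_2\ge0\}}\otimes L_2$: both vanish unless $\Re(\phi_1-\phi_2)$ is bounded above on the sector, and are otherwise controlled by morphisms of local systems --- this is the enhanced-sheaf counterpart of Lemma~\ref{boundeds} and Lemma~\ref{orthogonality}. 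One then glues the local equivalences along $D$ using the Stokes structure, and over all of $X$ using the stratification, checking at each stage that the gluing matches the $\cD$-module side.

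The hard part is Stage two, and within it the sharp growth control on tempered holomorphic functions --- Borel--Ritt with estimates, Watson's lemma, multisummability of formal solutions --- needed to know that $\Sol^E(\cE(\phi)\otimes\cR)$ is exactly the expected exponential enhanced sheaf and nothing larger, and that every morphism on the sheaf side genuinely lifts to a $\cD$-module morphism. Everything else --- the definition of $\cO^E_X$, the functorial compatibilities, and the d\'evissage --- is essentially formal once this local model is in place; the Mochizuki--Kedlaya good formal structure theorem, which we are free to cite, is precisely what makes the reduction to the local model possible.
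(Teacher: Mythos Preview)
The paper does not prove this theorem at all: it is stated as a recall of the main result of \cite{D'Agnolo--Kashiwara}, with no argument given beyond the citation and a remark adjusting the shift convention. So there is no ``paper's own proof'' to compare against; your proposal is an expository sketch of the D'Agnolo--Kashiwara argument, which is strictly more than what the paper provides. For the purposes of this paper the statement is a black box, and you are entitled to treat it as such.
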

Our convention is slightly different from the original one in \cite{D'Agnolo--Kashiwara}: Let $\Sol^{\bE}$ be the original one. We set $\Sol^E:=\Sol^\bE[\dim X]$ We have $\Sol^E:=\bD\circ \DR^E$ where $\DR^E$ is the same as the original one. We collect some properties of the irregular Riemann--Hilbert correspondence as follows:
\begin{proposition}[{\cite[Theorem 9.4.8, Proposition 9.4.10]{D'Agnolo--Kashiwara}}]

\begin{enumerate}
\item There exists a canonical isomorphism $\bD\circ \DR^E\simeq \DR^E\circ \bD$.
\item For a morphism $f\colon X\rightarrow Y$ of complex manifolds, there exists an isomorphism $\DR^E\circ f^\dagger \simeq Ef^!\circ \DR^E $.
\item For a proper map $f\colon X\rightarrow Y$, we have $\DR^E\circ \int_f\simeq Ef_*\circ \DR^E$. 
\item There exists an isomorphism $\Sol^E(\cM\boxtimes\cN)\simeq \Sol^E(\cM)\pboxtimes\Sol^E(\cN)$ for $\cM\in D^b_{\hol}(\cD_X)$ and $\cN\in D^b_{\hol}(\cD_Y)$.
\end{enumerate}
\end{proposition}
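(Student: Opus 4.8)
The plan is to derive all four statements from the corresponding properties of the enhanced de Rham and enhanced solution functors established in \cite{D'Agnolo--Kashiwara}, passing freely between $\DR^E$ and $\Sol^E$ by means of the identity $\Sol^E_X\simeq\bD\circ\DR^E_X$ recorded above. In this way a statement about one of the two functors translates into a statement about the other, and one may work with whichever side is the more convenient in each case.

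For (1): I would compute $\DR^E_X(\bD\cM)$ directly. Writing $\bD\cM$ as $\RcHom_{\cD_X}(\cM,\cD_X)$ twisted by the canonical bundle and shifted by $\dim X$, and tensoring the enhanced de Rham complex over $\cD_X$, one rearranges the transfer bimodule and invokes the self-duality of the enhanced structure sheaf to identify the outcome with $\Sol^E_X(\cM)$; combined with $\Sol^E_X\simeq\bD\circ\DR^E_X$ this gives $\DR^E_X\circ\bD\simeq\bD\circ\DR^E_X$.

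For (2) and (3): the plan is to factor an arbitrary morphism of complex manifolds as a closed embedding followed by a smooth projection (for (3), the projection of a projective-space bundle, which is proper) and to treat the two cases separately. For the smooth projection $f$, the functor $f^\dagger$ (resp. $\int_f$) is computed along the fibres, and its compatibility with $Ef^!$ (resp. $Ef_*$) reduces to a relative version of the tempered de Rham comparison --- the analytic core of (3) being integration of tempered holomorphic forms along the fibres. For a closed embedding $f$, the isomorphism $\DR^E\circ f^\dagger\simeq Ef^!\circ\DR^E$ comes from base-change and purity for tempered holomorphic solutions along a submanifold, and the proper-pushforward statement then follows formally. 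One finally composes, using that $\DR^E$, $Ef^!$ and $Ef_*$ are each compatible with composition of morphisms.

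For (4): this is a Künneth statement, to be deduced from the Künneth identification of the enhanced structure sheaf, $\cO^E_{X\times Y}\simeq\cO^E_X\pboxtimes\cO^E_Y$, together with the Künneth formula for $\RcHom$. The hard part of the whole proposition is not this formal bookkeeping but the analytic input on tempered (Whitney) holomorphic functions --- the integration-of-forms statement behind (3) and the base-change and purity statements behind (2) --- which in the present paper one simply imports as a black box from \cite{D'Agnolo--Kashiwara}.
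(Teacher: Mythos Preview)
The paper does not prove this proposition at all: it is stated with an attribution to \cite[Theorem 9.4.8, Proposition 9.4.10]{D'Agnolo--Kashiwara} and used without further argument. So there is no ``paper's own proof'' to compare against; the paper treats these four compatibilities as imported results. Your sketch of how the arguments in \cite{D'Agnolo--Kashiwara} run is reasonable as background, and you correctly flag at the end that the analytic input is taken as a black box here --- but strictly speaking no proof is expected for this statement in the present paper, and none should be supplied.
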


We will also use the following fundamental result. Let $Y$ be an analytic hypersurface of the complex manifold $X$. Take a meromorphic function $\phi$ with poles in $Y$; $\phi\in \cO_X(*Y)$. We set $\cE^\phi:=(\cD_X\cdot e^\phi)(*Y)$.

Our convention for $\Sol^E$ is shifted from D'Agnolo--Kashiwara's one to hold the following:
\begin{proposition}[{\cite[Lemma 9.3.1]{D'Agnolo--Kashiwara}}]\label{expimage}
There exists an isomorphism 
\begin{equation}
\Sol^E(\cE^\phi)\simeq k^E_X\potimes k_{t\geq \Re\phi(x)}[\dim X].
\end{equation}
\end{proposition}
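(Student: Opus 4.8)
The plan is to obtain this as a transcription of \cite[Lemma 9.3.1]{D'Agnolo--Kashiwara} into the normalization fixed just above, where $\Sol^E$ is D'Agnolo--Kashiwara's original functor shifted by $[\dim X]$ and $\Sol^E = \bD\circ\DR^E$ with $\DR^E$ unchanged. The only real work is then reconciling the cohomological shift and the sign of $\Re\phi$.

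The most direct route: D'Agnolo--Kashiwara compute, for a meromorphic function $\phi$ with poles along $Y$, the enhanced solution complex $\Sol^\bE(\cE^\phi)$ of the rank-one exponential module $\cE^\phi$ as an explicit ``half-space'' enhanced ind-sheaf of the form $\bk^E_X\potimes\bk_{\{t\ge\Re\phi(x)\}}$ up to a cohomological shift involving $\dim X$. Since our $\Sol^E$ is $\Sol^\bE[\dim X]$, applying $[\dim X]$ to their formula turns that shift into precisely $[\dim X]$ and yields the claimed isomorphism; so the only points to verify are the matching of the single surviving shift and of the front $\{t\ge\Re\phi\}$ with their statement.

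As a cross-check on the sign I would use holonomic duality. The module $\cE^\phi=\cD_X e^\phi(*Y)$ is a meromorphic connection, so its holonomic dual sits in degree $0$ and is the $\cO_X(*Y)$-linear dual with the dual connection; the connection dual to $d+d\phi$ is $d-d\phi$, hence $\bD_{\cD_X}\cE^\phi\simeq\cE^{-\phi}$. Together with the compatibility $\bD\circ\DR^E\simeq\DR^E\circ\bD$ recorded above this gives
\begin{equation}
\Sol^E(\cE^\phi)=\bD\,\DR^E(\cE^\phi)\simeq\DR^E(\bD_{\cD_X}\cE^\phi)=\DR^E(\cE^{-\phi}),
\end{equation}
so the $\Sol^E$-front for $\phi$ equals the $\DR^E$-front for $-\phi$; consistency with D'Agnolo--Kashiwara's de Rham computation then forces the front to be $\{t\ge\Re\phi\}$, as asserted, and this is compatible with $e^{-\phi}$ being tempered precisely where $\Re\phi$ is bounded above.

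I do not anticipate any genuine mathematical difficulty beyond D'Agnolo--Kashiwara's theorem itself: the whole content here is the bookkeeping of the cohomological shift and of the $e^{+\phi}$ versus $e^{-\phi}$ sign, which is exactly the reason the normalization of $\Sol^E$ was chosen as above. The safest overall consistency check is the restriction to $U:=X\setminus Y$, where $\cE^\phi|_U$ is a rank-one integrable connection, so $\Sol^E(\cE^\phi)|_U$ must coincide with the enhanced realization of a trivial connection; and indeed $(\bk^E_X\potimes\bk_{\{t\ge\Re\phi(x)\}})|_U\simeq\bk^E_U$ since $\Re\phi$ is finite and continuous on $U$, while the surviving shift matches our normalization. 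The one-variable model $X=\bC$, $Y=\{0\}$, $\phi=1/z$, where the temperedness regions of $e^{\pm1/z}$ reproduce the predicted front, is a further quick sanity check.
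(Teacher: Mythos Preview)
Your proposal is correct. The paper itself gives no proof of this proposition: it is simply quoted from \cite[Lemma 9.3.1]{D'Agnolo--Kashiwara}, with the preceding sentence noting that the convention $\Sol^E:=\Sol^{\bE}[\dim X]$ is chosen precisely so that this formula holds. Your write-up makes explicit the bookkeeping (the $[\dim X]$ shift and the sign of $\Re\phi$) and adds sanity checks via duality and restriction to $X\setminus Y$, which is more than the paper records but entirely in line with its intent.
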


\subsection{Irregular Riemann--Hilbert correspondence}

Let us denote the essential image of $\Sol^E$ by $E^b_{\cD}(\IC_X)$.

\begin{lemma}\label{8.4}
The object $M(\cE)$ is irregular constructible for $\cE\in E^b_\cD(\IC_X)$.
\end{lemma}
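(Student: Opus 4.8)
The goal is to show that $M(\cE)$ is irregular constructible whenever $\cE$ lies in the essential image of $\Sol^E$. The strategy is a dévissage along a stratification, reducing to the case of the building blocks $\cE^\phi$ for which Proposition~\ref{expimage} already identifies the enhanced solutions explicitly. First I would invoke the structure theorem for $\bC$-constructible enhanced ind-sheaves: an object $\cE\in E^b_{\cD}(\IC_X)\subset E^b_{\bC\hi c}(\IC_X)$ admits, locally on a $\bC$-analytic stratification $\cS$, a description in which each cohomology sheaf $\cH^i(\cE_U|_S)$ is a finite direct sum of sheaves $\bk_{t\geq \phi(x)}$ for continuous $\phi$; moreover, because $\cE$ comes from a holonomic $\cD$-module, Sabbah--Mochizuki--Kedlaya's Hukuhara--Levelt--Turrittin theorem (recalled in Section 5.1) guarantees that after a suitable modification $p\colon (U',D')\to(U,D)$ and ramified cover $\Phi_A$ the exponential factors $\phi$ are (multi-valued) meromorphic functions forming a good set. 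This is exactly the data entering the definition of good/irregular local system.

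The main computation is local and runs by induction on the dimension of the strata. On the open stratum $S$, with open inclusion $i_S\colon S\hookrightarrow U$ and closed complement $j\colon Z\hookrightarrow U$, I would use the enhanced recollement triangle $\mathrm{E}i_{S!}\mathrm{E}i_S^{-1}\cE\to\cE\to \mathrm{E}j_*\mathrm{E}j^{-1}\cE\xrightarrow{[1]}$, apply $M$ (which, being a composite of $\bR\tM$ with the embeddings, sends triangles to triangles and commutes with $i_!$, $j_*$ up to the commutativity results of Section~4.2 and Lemma~\ref{Fpush}-type statements), and thereby reduce to: (a) the closed stratum, handled by the inductive hypothesis together with the fact that irregular constructibility is stable under $i_{\dS!}$ and $j_*$ (proven in Section~5); and (b) the open stratum, where $\mathrm{E}i_S^{-1}\cE$ is, after modification and ramified pullback, a finite successive extension of shifts of $\cE^\phi$'s with $\phi$ in a good set. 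For the latter, Lemma~\ref{stabM} shows $M(\cE^\phi)\cong i_{\dS!}\Lambda^\phi$, and Lemma~\ref{irrconstantff} identifies the relevant $\Hom$-spaces, so the extensions assemble into an object which, by the very definition of (good) irregular local system and Lemma~\ref{ickernel}/Lemma~\ref{thickness}, is irregular constructible. Pulling back along the modification is harmless since $M$ commutes with $p^{-1}$ (Lemma~\ref{derivedpullback}-analogue) and irregular constructibility descends along modifications by definition.

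The step I expect to be the main obstacle is patching the local descriptions into a global statement while keeping track of the grading data: the enhanced sheaf $\cE$ is only locally of the form $\cE_U\potimes\bk^E_U$, and the transition isomorphisms on overlaps are isomorphisms in $E^b(\IC_X)$, not of the underlying $\bR$-constructible sheaves. One must check that applying $M$ to such a Čech-type gluing produces precisely the descent data defining an object of $\ModI(\Lambda_X)$ — this is where the $\otimes_\Lambda\bk$ on hom-spaces is essential, and Lemma~\ref{irrconstantff} (together with its open-set compatibility) is the key input. A secondary technical point is verifying that the modification and ramified cover can be chosen compatibly with the given stratification $\cS$ and that the resulting good sets of exponential factors are compatible across the sectorial coverings; this is the content of the Hukuhara--Levelt--Turrittin package and of Lemma~\ref{commonmodification}, so I would cite rather than reprove it. Once the gluing is in place, the conclusion $M(\cE)\in\Mod_{ic}(\Lambda_X)$ for the abelian case, and hence $M(\cE)\in D^b_{ic}(\Lambda_X)$ after taking cohomology, follows formally.
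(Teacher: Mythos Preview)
Your overall strategy---d\'evissage plus the Sabbah--Mochizuki--Kedlaya structure theorem---is the right idea, but the specific d\'evissage you propose does not line up with what is needed, and the paper's argument runs differently.

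The paper works on the $\cD$-module side, not the enhanced-sheaf side. For $\cE=\Sol^E(\cM)$ it takes a divisor $Y$ containing the singularities and uses the localization triangle $\cM\to\cM(*Y)\to\cM_1$. The point is that $\cM(*Y)$ is a \emph{meromorphic connection}, so Theorem~5.5 (the real-blow-up decomposition) together with Proposition~\ref{expimage} shows directly that $M(\Sol^E(\cM(*Y)))$ is, on a sectorial covering, a sum of $\Lambda^{\phi_\alpha}$'s with good meromorphic $\phi_\alpha$; this is exactly the definition of a (good) irregular local system. The remainder $\cM_1$ is supported on $Y$, and one iterates by passing to a resolution of $Y$ and localizing again.

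Your recollement triangle $Ei_{S!}Ei_S^{-1}\cE\to\cE\to Ej_*Ej^{-1}\cE$ does not produce this structure. On the open stratum $S=X\setminus Y$, the module $\cM|_S$ is an integrable connection, so $Ei_S^{-1}\cE$ is just an ordinary local system tensored with $\bk^E_S$---the exponential factors are all trivial there. The nontrivial $\phi$'s encoding the irregular behavior live in the extension $\cM(*Y)$ across the boundary, which your open-restriction-then-extend-by-zero does not recover; $Ei_{S!}Ei_S^{-1}\cE$ is \emph{not} isomorphic to $\Sol^E(\cM(*Y))$. So the claim ``on the open stratum \dots\ a finite successive extension of shifts of $\cE^\phi$'s with $\phi$ in a good set'' is the step that fails: you have identified where SMK should enter, but your triangle does not hand you a meromorphic connection to apply it to. Separately, the commutativities you invoke for $M$ with $Ei_{S!}$ and $Ej_*$ are not the lemmas in Section~4.2 or Lemma~\ref{Fpush} (those concern $[\cdot]$ and $\frakF$, not $M$); the relevant statement, Proposition~\ref{commpull}, is proved only after this lemma.
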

\begin{proof} Let us take a holonomic $\cD$-module $\cM$ and consider $\cE:=\Sol^E(\cM)$. Let $Y$ be a divisor of $X$ containing the singularities of $\cE$. We set $\cM(*Y):=\cM\otimes \cO(*Y)$. We have an exact triangle
\begin{equation}
\cM\rightarrow \cM(*Y)\rightarrow \cM_1\xrightarrow{[1]}
\end{equation}
where $\cM_1$ is a $\cD$-module supported on $Y$. By Theorem 5.5 and Proposition~\ref{expimage}, the image $M(\Sol^E(\cM(*Y)))$ is an irregular constructible. Hence it suffices to show that $M(\Sol^E(\cM_1))$ is irregular constructible sheaf.

Let $\pi\colon Y'\rightarrow Y$ be a result of normalization and a resolution of singularities of $Y$. Let $E$ be the inverse image of the union of singularities of $Y$ and $\cM_1$. Then there exists a canonical morphism
\begin{equation}
\cM_1\rightarrow \cM_1':=\pi_\star (\pi^\star\cM)(*E)
\end{equation}
Since $(\pi^\star\cM)(*E)$ is a meromophic connection, $M(\Sol^E(\cM_1'))$ is irregular constructible.

Since the cone of this morphism is living on a divisor of $Y$, we can prove the desired statement by iterating these arguments.
\end{proof}

Our version of irregular Riemann--Hilbert correspondence is the following:
\begin{theorem}\label{RHmain}
The functor $M$ is a contravariant exact equivalence:
\begin{equation}
M\colon E^b_{\cD}(\IC_X)\xrightarrow{\simeq} D^b_{ic}(\Lambda_X).
\end{equation}
In particular, there exists a contravariant equivalence
\begin{equation}
\Sol^\Lambda:=M\circ \Sol^E\colon D^b_{\hol}(\cD_X)\xrightarrow{\simeq}D^b_{ic}(\Lambda_X).
\end{equation}
\end{theorem}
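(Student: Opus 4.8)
The plan is to deduce the equivalence $M\colon E^b_{\cD}(\IC_X)\xrightarrow{\simeq}D^b_{ic}(\Lambda_X)$ from D'Agnolo--Kashiwara's theorem by establishing three things: that $M$ lands in $D^b_{ic}(\Lambda_X)$, that it is fully faithful on $E^b_{\cD}(\IC_X)$, and that it is essentially surjective; the final statement about $\Sol^\Lambda$ is then immediate by composing with $\Sol^E$. The target-landing statement is exactly Lemma~\ref{8.4}, so that step is already done. The strategy for the remaining two points is a dévissage to the building blocks $\cE^\phi$ (exponential twists of meromorphic connections) using the recollement triangles available on both sides, reducing everything to the computations already carried out for the irregular constant sheaves $\Lambda^\phi_\dS$.

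For full faithfulness, I would first record the natural comparison map $\Hom_{E^b(\Ik_X)}(\cE,\cF)\to \Hom_{D^b_{ic}(\Lambda_X)}(M(\cE),M(\cF))$ coming from functoriality of $M$, and show it is an isomorphism for $\cE,\cF\in E^b_{\cD}(\IC_X)$. Working locally on $X$ (both sides being sheaves of hom-complexes, cf. Lemma~\ref{globalsection2}), one picks a common $\bC$-stratification adapted to $\cE$ and $\cF$; using the recollement triangle for $i_!i^{-1}\to\mathrm{id}\to j_*j^*$ (Lemma~\ref{icrecollement} on the $\Lambda$-side, and its enhanced counterpart) one reduces by induction on dimension of strata to the case where both $\cE$ and $\cF$ are sums of $\cE^\phi$'s pushed forward from a stratum. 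That case is exactly Lemma~\ref{irrconstantff}: the identifications $M(\cE^\phi)\cong i_{\dS!}\Lambda^\phi$ and $\Hom_{\ModI(\Lambda_X)}(i_{\dS!}\Lambda^\phi,i_{\dS!}\Lambda^{\phi'})\cong\Hom_{E^b(\Ik_X)}(\cE^\phi,\cE^{\phi'})$, both being $\bk$ when $\max\{0,\Re\phi-\Re\phi'\}$ is bounded and $0$ otherwise. One then checks the comparison map respects the connecting morphisms of the triangles (naturality of $M$), so the five lemma propagates the isomorphism up the dévissage. Since $M$ is visibly exact (it is built from $\bR\tM$, $(-)\potimes\bk_{\geq 0}$ and $(-)\otimes\bk_{X\times\bR}$, all exact), this also shows $M$ is a contravariant exact functor.

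For essential surjectivity I would argue that every irregular local system on a stratum, hence (by recollement and induction on strata, using that $D^b_{ic}(\Lambda_X)$ is triangulated by Lemma~\ref{thickness}) every object of $D^b_{ic}(\Lambda_X)$, lies in the image of $M$: after a modification and a ramified cover as in Definition~\ref{irreloc}, it is built from the $\Lambda^\phi$'s, each of which is $M(\cE^\phi)$ with $\cE^\phi\in E^b_{\cD}(\IC_X)$ by Proposition~\ref{expimage}; pushforwards and cones of objects in the (full, by full faithfulness) essential image of $M\colon E^b_{\cD}(\IC_X)\to D^b_{ic}(\Lambda_X)$ stay in it because $M$ commutes with the six operations on the relevant classes (Lemmas~\ref{Fpush}, \ref{Fpush2} and the adjunction formulas, matched against the $\cD$-module operations via D'Agnolo--Kashiwara). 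Finally, combining with $\Sol^E\colon D^b_{\hol}(\cD_X)\xrightarrow{\simeq}E^b_{\cD}(\IC_X)$ gives $\Sol^\Lambda=M\circ\Sol^E$ as a contravariant equivalence.

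The main obstacle I expect is the bookkeeping in the dévissage: one must ensure that after the modification $p\colon(U',D')\to(U,D)$ and the cover $\Phi_A$, the enhanced-sheaf side is compatible, i.e. that $M$ intertwines $p_!,p_*$ (resp. the ramified pullback) with the corresponding enhanced operations, and that the "goodness" condition on the exponential factors is precisely what makes the orthogonality Lemma~\ref{orthogonality} applicable sector by sector so that the hom-comparison stays an isomorphism through the induction. In other words, the hard part is not any single computation but checking that the local normal form coming from Sabbah--Mochizuki--Kedlaya is faithfully reflected on both sides and that all the comparison isomorphisms are natural enough to glue; once that is in place, the rest is the formal machinery already assembled in Sections 3--7.
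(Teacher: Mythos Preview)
Your approach to full faithfulness is essentially the paper's: both perform a d\'evissage down to the exponential building blocks where Lemma~\ref{irrconstantff} applies, the paper organizing it via the $\cD$-module triangle $\cN\to\cN(*Y)\to\cN_1$ and you via stratification recollement on the sheaf side.

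The essential surjectivity argument, however, has a genuine gap. An irregular local system $\cV$ on $\dS$ is only \emph{sectorially} a direct sum $\bigoplus_j\Lambda^{\phi_j}$; the global object is a nontrivial gluing of these along isomorphisms encoding the Stokes matrices, and it is not obtained from the $\Lambda^{\phi_j}$ by cones and proper pushforwards alone. Even granting full faithfulness, which lets you lift the sectorial gluing isomorphisms to the enhanced side and produce a glued object in $E^b_{\bR\hi c}(\IC_X)$, you must still show that this object lies in $E^b_{\cD}(\IC_X)$, i.e.\ is $\Sol^E$ of some holonomic $\cD$-module. That step is precisely the Riemann--Hilbert correspondence for Stokes structures, which the paper invokes explicitly: from $\cV$ one reads off (after modification) a good Stokes local system in the sense of \cite{SabbahStokes}, and Sabbah's theorem produces the meromorphic connection whose pushforward is the desired preimage. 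Your closure-under-operations argument cannot replace this input; knowing that each $\cE^{\phi_j}\in E^b_{\cD}$ and that $E^b_{\cD}$ is closed under cones and proper pushforwards does not imply that an arbitrary sectorial gluing of such objects remains in $E^b_{\cD}$. The paper also notes that Mochizuki's curve-test characterization of the image would serve as an alternative, but some such external result is unavoidable here. (Incidentally, Lemmas~\ref{Fpush} and~\ref{Fpush2} concern the functor $\frakF$, not $M$; the compatibilities of $M$ with the six operations are established separately in Section~8.4.)
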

\begin{proof}
First, we will prove the fully faithfulness. Let $\cM, \cN$ be a holonomic $\cD_X$-modules. Set $\cE:=\Sol^E(\cM)$ and $\cF:=\Sol^E(\cN)$. Then we have
\begin{equation}
\Hom(\cM, \cN)\cong \Hom(\cF, \cE).
\end{equation}
Let $Y$ be a divisor containing the singularities of $\cM$ and $\cN$. Then we have
\begin{equation}
\Hom(\cM, \cN(*Y))\cong \Hom(\cM, \cHom(\cO(*Y), \cN(*Y)))\cong \Hom(\cM(*Y), \cN(*Y)).
\end{equation}
By Lemma~\ref{irrconstantff}, we have
\begin{equation}
\Hom(\cM(*Y), \cN(*Y))\cong \Hom(M(\Sol^E(\cN(*Y))), M(\Sol^E(\cM(*Y))).
\end{equation}
Let $\cN_1$ be the cone of a canonical morphism $\cN\rightarrow \cN(*Y)$. Then $\cN_1$ is supported on $Y$. By a similar procedure done in the proof in Lemma~\ref{8.4}. We complete the proof of the fully faithfulness.

Now we will prove the essential surjectivity. We only have to see that the functor hit each irregular local system by Lemma \ref{icrecollement}.

Let $\cV$ be an irregular local system on $\dX$. After a modification, we get a good irregular local system. This associates a good Stokes local system in the sense of \cite{SabbahStokes}. By the Riemann-Hilbert correspondence in \cite{SabbahStokes}, this gives a meromorphic connection over the modification. By pushing forward to the original base, we get a desired object.
\end{proof}

\begin{remark}
One can also prove Theorem \ref{RHmain} by using curve test criterion by Mochizuki \cite{Mochizukicurvetest}.
\end{remark}

\begin{corollary}
There exists an exact equivalence
\begin{equation}
\DR^\Lambda:=\bD\circ \Sol^\Lambda\colon D^b_{\hol}(\cD_X)\xrightarrow{\simeq} D^b_{ic}(\Lambda_X).
\end{equation}
\end{corollary}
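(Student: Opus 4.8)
The plan is to read this statement off from the two equivalences already in hand. By Theorem~\ref{RHmain} the functor $\Sol^\Lambda = M\circ\Sol^E$ is a contravariant exact equivalence $D^b_{\hol}(\cD_X)\xrightarrow{\simeq}D^b_{ic}(\Lambda_X)$, and by Corollary~\ref{icVerdier} the Verdier-type duality $\bD=\RcHom(-,\omega^\Lambda_X)$ is a contravariant exact autoequivalence of $D^b_{ic}(\Lambda_X)$. Since the composition of two contravariant (triangulated, hence ``exact'') equivalences is a covariant exact equivalence, the functor $\DR^\Lambda:=\bD\circ\Sol^\Lambda\colon D^b_{\hol}(\cD_X)\to D^b_{ic}(\Lambda_X)$ is a covariant exact equivalence, which is exactly the assertion.

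First I would record that $\Sol^\Lambda$ is exact in the triangulated sense: this is part of Theorem~\ref{RHmain}, together with the fact that both $\Sol^E$ and $M$ send distinguished triangles to distinguished triangles (reversing arrows). Second, I would recall from the discussion preceding Corollary~\ref{icVerdier} that $\bD$ preserves $D^b_{ic}(\Lambda_X)$, is exact, and is a contravariant equivalence whose quasi-inverse is again $\bD$ --- full faithfulness is Lemma~\ref{contrafullyfaithful} and biduality $\cV\xrightarrow{\simeq}\bD\bD\cV$ is Lemma~\ref{unipotent}. Composing, $\DR^\Lambda$ is fully faithful and essentially surjective, commutes with shifts, and carries distinguished triangles to distinguished triangles; hence it is an exact equivalence, with quasi-inverse $(\Sol^\Lambda)^{-1}\circ\bD$ (using that $\bD$ squares to the identity on $D^b_{ic}(\Lambda_X)$). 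One may also observe, using D'Agnolo--Kashiwara's isomorphism $\bD\circ\DR^E\simeq\DR^E\circ\bD$ together with $\Sol^E=\bD\circ\DR^E$, that $\DR^\Lambda$ admits the equivalent description $\DR^\Lambda\simeq M\circ\DR^E$, which I would leave as a remark.

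There is no real obstacle here: the argument is pure bookkeeping once Theorem~\ref{RHmain} and Corollary~\ref{icVerdier} are available. The only points deserving a word of care are the variance count (contravariant $\circ$ contravariant is covariant) and the shift/rotation conventions hidden in the notion of exactness for a contravariant functor between triangulated categories; in particular, this corollary does not require any compatibility between $\bD$ on the $\cD$-module side and $\bD$ on the $\Lambda$-module side, so no additional intertwining lemma for $M$ and the two dualities is needed.
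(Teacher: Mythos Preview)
Your argument is correct and matches the paper's own proof, which simply says the result is clear from Theorem~\ref{RHmain} and Corollary~\ref{icVerdier}. You have just spelled out the bookkeeping (variance count, exactness) more explicitly than the paper does.
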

\begin{proof}
This is clear from Theorem \ref{RHmain} and Corollary \ref{icVerdier}.
\end{proof}

\subsection{Functors}
In this subsection, we prove the commutativity between $\Sol^\Lambda$ and various functors. We assume that all the spaces are without boundary in this subsection.

\begin{proposition}\label{commpull}
Let $f\colon X\rightarrow Y$. We have
\begin{equation}
M\circ Ef^*\simeq f^{-1}\circ M.
\end{equation}
In particular,
\begin{eqnarray}
\Sol^\Lambda\circ f^\dagger&\simeq f^{-1}\circ \Sol^\Lambda.
\end{eqnarray}
\end{proposition}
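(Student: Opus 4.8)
The plan is to reduce the statement $M\circ Ef^{*}\simeq f^{-1}\circ M$ to the level of the building blocks already understood in the paper, namely the constant enhanced sheaves $\cE^{\phi}$ and the irregular constant sheaves $\Lambda^{\phi}$. First I would recall that $Ef^{*}=Ef^{-1}$ up to shift (or work directly with $Ef^{-1}$ and keep track of the shift in the convention for $\Sol^{E}$), and that on the level of underlying ind-sheaves the operation $Ef^{-1}$ is computed by pulling back along $f\times\id_{\oR}$ and then convolving with $\bk^{E}_{X}$. Correspondingly, $f^{-1}$ on $\ModI(\Lambda_{X})$ was defined locally by $[f^{-1}\tcV_{i}]$, and by Lemma~\ref{derivedpullback} it commutes with $[\cdot]$. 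So the natural transformation $M\circ Ef^{*}\to f^{-1}\circ M$ comes from the base-change morphism $f^{-1}\bR p_{Y*}\RGamma_{[-a,\infty)}(-)\to \bR p_{X*}\RGamma_{[-a,\infty)}(f\times\id)^{-1}(-)$ for the cartesian square relating the projections $p_{X}\colon X\times\bR\to X$ and $p_{Y}\colon Y\times\bR\to Y$; this is a proper base change for the projection along $\bR$, so it is an isomorphism on each graded piece, hence after $\bigoplus_{a}$ and after $[\cdot]\otimes_{\Lambda}\bk$.

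Concretely I would proceed as follows. Step one: construct the comparison natural transformation $\eta\colon M\circ Ef^{*}\Rightarrow f^{-1}\circ M$ using the adjunction $Ef^{-1}\dashv Ef_{*}$ together with the base-change map described above; this is functorial and defined on all of $E^{b}_{\bR\hi c}(\IC_{X})$. Step two: check that $\eta$ is an isomorphism on the generators $\cE^{\phi}=\bk_{t\geq\Re\phi}\potimes\bk^{E}_{X}$ for $\phi$ a meromorphic (or continuous) function. Here one uses $M(\cE^{\phi})\cong i_{\dS!}\Lambda^{\phi}$ (the lemma just before Lemma~\ref{irrconstantff}) on both sides: $Ef^{*}\cE^{\phi}=\cE^{f^{*}\phi}$ essentially by definition of the pullback of the defining locus, and $f^{-1}i_{\dS!}\Lambda^{\phi}$ is computed by the base change for $i_{\dS}$ together with $f^{-1}\Lambda^{\phi}_{S}\cong\Lambda^{f^{*}\phi}_{f^{-1}S}$ which is immediate from the defining formula $\Gr^{a}\Lambda^{\phi}_{S}=p_{*}\Gamma_{S\times[-a,\infty)}\bk_{t\geq\Re\phi}$ and the fact that $f^{-1}$ commutes with this construction. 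Step three: since every object of $E^{b}_{\cD}(\IC_{X})$ is, locally and up to finitely many mapping cones and the recollement triangle (Lemma~\ref{icrecollement}), built from the $\cE^{\phi}$'s — this is exactly the dévissage used in Lemma~\ref{8.4} and in the proof of Theorem~\ref{RHmain} — and since both $M\circ Ef^{*}$ and $f^{-1}\circ M$ are triangulated functors commuting with $i_{!}$, $j_{*}$ and finite colimits, the two-out-of-three property for isomorphisms in triangulated categories propagates the isomorphism from the generators to all of $D^{b}_{ic}$. Step four: deduce the $\cD$-module statement by composing with $\Sol^{E}$ and invoking $\DR^{E}\circ f^{\dagger}\simeq Ef^{!}\circ\DR^{E}$ from the cited D'Agnolo--Kashiwara proposition, passing to $\Sol^{E}=\bD\circ\DR^{E}$ and using $Ef^{*}$ as the dual of $Ef^{!}$ (together with commutation of $\bD$ with $M$, if that is available, or directly with the compatibility already recorded).

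The main obstacle I anticipate is Step three: making the dévissage precise. One must be careful that the covering $\{U_{i}\}$ used to present an enhanced $\bC$-constructible ind-sheaf as $\cE_{U}\potimes\bk^{E}_{U}$, and the stratification realizing the $\bk_{t\geq\phi}$ form of $\cH^{i}(\cE_{U}|_{S})$, can be chosen compatibly with $f$ (or at least pulled back along $f$), so that the local generators on $Y$ pull back to local generators on $X$; this is where one genuinely uses that $f^{-1}$ of a $\bC$-stratification admits a $\bC$-refinement and that $f^{*}$ of a good set of meromorphic functions is again good after a further modification. A secondary, more bookkeeping-level obstacle is the shift in the convention relating $Ef^{*}$ and $Ef^{-1}$ and the $[\dim X]$ shifts in $\Sol^{E}$; one must track these so that the final formula $\Sol^{\Lambda}\circ f^{\dagger}\simeq f^{-1}\circ\Sol^{\Lambda}$ comes out with no spurious shift, using that $f^{\dagger}$ already carries the $[\dim X-\dim Y]$ normalization. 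Everything else — the base change along the $\bR$-projection, commutation of $[\cdot]$ with $f^{-1}$ (Lemma~\ref{derivedpullback}), and exactness of $\frakF$ and of $f^{-1}$ on $\Mod^{0}(\Lambda_{X})$ — is routine given the results already established.
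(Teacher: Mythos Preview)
Your Step~1 is already the whole proof as the paper gives it: the base-change isomorphism
\[
\bR p_{X*}\RGamma_{X\times[a,\infty)}(\overline{f}^{-1}\cE)\simeq f^{-1}\bR p_{Y*}\RGamma_{Y\times[a,\infty)}\cE
\]
(proper base change along the compact $\oR$-direction, together with $\overline{f}^{-1}\RGamma_{Y\times[a,\infty)}\simeq\RGamma_{X\times[a,\infty)}\overline{f}^{-1}$) holds for \emph{every} $\bR$-constructible $\cE$, not just for the generators $\cE^{\phi}$. Summing over $a$ and applying $[\cdot]$ gives $M\circ Ef^{*}\simeq f^{-1}\circ M$ directly, and then the $\Sol^{\Lambda}$ statement follows from the known commutation of $\Sol^{E}$ with pullback.

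So your Steps~2 and~3 (verification on $\cE^{\phi}$ and d\'evissage via Lemma~\ref{icrecollement}) are superfluous: you already observed in Step~1 that the comparison map is an isomorphism in general, and the paper stops there. The d\'evissage machinery would be needed only if the base-change morphism were merely a map whose invertibility had to be tested locally; here it is an honest isomorphism of functors from the start. Your anticipated obstacle about refining stratifications compatibly with $f$ therefore never arises.
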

\begin{proof}
Since we know the functors are commutative wth $\Sol^E$, it is enought to see the commutativity with the functor $M$. Let us take an $\bR$-constructible sheaf $\cE$ on $X\times \oR$. Let $\overline{f}$ be the direct product of $f\colon X\rightarrow Y$ and $\id\colon \oR\rightarrow \oR$. Then we have
\begin{equation}
p_*\RGamma_{X\times [a, \infty)}(\overline{f}^{-1}\cE)\simeq f^{-1}p_*\RGamma_{Y\times [a,\infty)}\cE.
\end{equation}
Hence we have $M(\overline{f}^{-1}\cE)\simeq f^{-1}M(\cE)$. This proves the first line.
\end{proof}

\begin{lemma}\label{boxdecomposition}
We have
\begin{equation}
M(-\boxtimes -)\simeq M(-)\boxtimes M(-).
\end{equation}
For $\cM\in D^b_{\hol}(\cD_X)$ and $\cN\in D^b_{\hol}(\cD_Y)$, we have
\begin{equation}
\Sol^\Lambda(\cM\boxtimes \cN)\simeq \Sol^\Lambda(\cM)\boxtimes \Sol^\Lambda(\cN).
\end{equation}
\end{lemma}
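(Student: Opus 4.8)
The plan is to reduce the Künneth-type statement $M(-\boxtimes -)\simeq M(-)\boxtimes M(-)$ to the local model of irregular constant sheaves, exactly as in the proof of fully faithfulness in Theorem~\ref{RHmain}. First I would fix $\cE\in E^b_{\bR\hi c}(\IC_X)$ and $\cF\in E^b_{\bR\hi c}(\IC_Y)$ represented locally by honest $\bR$-constructible sheaves, i.e.\ $\cE|_{U\times\bR}\simeq \cE_U\potimes\bk^E_U$ and $\cF|_{V\times\bR}\simeq\cF_V\potimes\bk^E_V$ on a locally finite covering. Since $M$ is built from $\bigoplus_{-a\in\bR}\bR p_*\RGamma_{[-a,\infty)}(-)$ and $\pboxtimes$ on enhanced ind-sheaves corresponds (via the convolution with $\bk^E$) to an external product on $X\times Y\times\bR$ together with the sum map $\bR\times\bR\to\bR$, the computation is local and combinatorial: on a product chart one has a natural comparison morphism
\begin{equation}
M(\cE)\boxtimes M(\cF)\longrightarrow M(\cE\pboxtimes\cF),
\end{equation}
and one checks it is an isomorphism. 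The key point is that for the addition map $s\colon\bR^2\to\bR$ and the projections, $\bigoplus_{-a}\bR p_{X*}\RGamma_{[-a,\infty)}(\cE_U)\boxtimes\bigoplus_{-b}\bR p_{Y*}\RGamma_{[-b,\infty)}(\cF_V)$ agrees with $\bigoplus_{-c}\bR p_{X\times Y*}\RGamma_{[-c,\infty)}$ of the external convolution product, because $\RGamma_{[-a,\infty)}\boxtimes\RGamma_{[-b,\infty)}$ computes $\RGamma$ of the convolution with the appropriate grading shift, and the $\Lambda$-module structures match up.

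Concretely, I would carry this out in the following order. First, handle the base case where $\cE=\cE^\phi$ and $\cF=\cE^\psi$ are the images of exponential $\cD$-modules; here Lemma~\ref{irrconstantff} (or rather its construction) identifies $M(\cE^\phi)\cong i_{\dS!}\Lambda^\phi$, and one has $\Lambda^\phi\boxtimes\Lambda^\psi\cong\Lambda^{\phi\boxplus\psi}$ by a direct check on stalks using Corollary~\ref{irrconststalk} and the definition of $\boxtimes$ (the grading $\bigoplus_{b+c=a}\Gr^b\boxtimes\Gr^c$ exactly reproduces the grading of $\Lambda^{\phi\boxplus\psi}$, with torsion kernel/cokernel killed by every $T^a$, as in Lemma~\ref{tensorstandard}). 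This establishes the statement for irregular constant sheaves and hence, after applying $\Sol^E(\cE^\phi)\simeq\bk^E_X\potimes\bk_{t\geq\Re\phi}[\dim X]$, for the model case on the $\cD$-module side. Second, I would use the recollement triangle (Lemma~\ref{icrecollement}) together with the corresponding triangle on enhanced ind-sheaves to dévissage a general $\bC$-constructible enhanced ind-sheaf along a stratification into irregular constant sheaves; since $M$, $\boxtimes$, and $\pboxtimes$ are all triangulated and commute with $i_!$ and $j_*$ (by the commutativity results of Section 4.2 and Lemma~\ref{Fpush}-type arguments), the comparison morphism being an isomorphism is preserved under cones, so it holds for all objects of $E^b_\cD(\IC_X)\times E^b_\cD(\IC_Y)$. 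Finally, the second displayed formula follows by combining this with $\Sol^E(\cM\boxtimes\cN)\simeq\Sol^E(\cM)\pboxtimes\Sol^E(\cN)$ (part (4) of the cited Proposition of D'Agnolo--Kashiwara) and the definition $\Sol^\Lambda=M\circ\Sol^E$.

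The main obstacle I expect is making the comparison morphism $M(\cE)\boxtimes M(\cF)\to M(\cE\pboxtimes\cF)$ genuinely canonical and checking it is an isomorphism at the level of the infinite direct sums $\bigoplus_{-a\in\bR}$ interacting with $\bR p_*\RGamma_{[-a,\infty)}$ and external products: one must verify that $\bigl(\bigoplus_{-a}\bR p_{X*}\RGamma_{[-a,\infty)}\cE_U\bigr)\boxtimes\bigl(\bigoplus_{-b}\bR p_{Y*}\RGamma_{[-b,\infty)}\cF_V\bigr)$, with its diagonal $\Lambda$-action via $T^c\mapsto\sum_{a+b=c}T^a\otimes T^b$, really reproduces $\bigoplus_{-c}\bR p_{(X\times Y)*}\RGamma_{[-c,\infty)}$ of the convolution. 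This is where the stabilization Lemma~\ref{stabM} is crucial—it guarantees we stay in $D^b(\ModI(\Lambda_X))$ and that the relevant colimits/sums behave—and where the interaction of $\RGamma_{[-a,\infty)}$ with the sum map on the extra variable (which is what $\potimes$ encodes) must be spelled out carefully. Once this local identification is in place, the rest is formal dévissage, and I would keep that part brief since it parallels the proof of Theorem~\ref{RHmain}.
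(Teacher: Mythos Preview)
Your proposal is correct and follows essentially the same route as the paper: construct a comparison morphism $M(\cE)\boxtimes M(\cF)\to M(\cE\pboxtimes\cF)$ from the inclusion $[b,\infty)\times[c,\infty)\subset\{t_1+t_2\geq b+c\}$, then verify it is an isomorphism by reducing to the local model $\Lambda^{\phi}\boxtimes\Lambda^{\psi}\cong\Lambda^{\phi\boxplus\psi}$ (the external version of Lemma~\ref{tensorstandard}), and finally deduce the $\Sol^\Lambda$ statement from D'Agnolo--Kashiwara's $\Sol^E(\cM\boxtimes\cN)\simeq\Sol^E(\cM)\pboxtimes\Sol^E(\cN)$. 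The only difference is that your d\'evissage via Lemma~\ref{icrecollement} is more than the paper uses: since the comparison morphism is defined globally, it suffices to check it is an isomorphism locally, and locally both sides are built from irregular constant sheaves, so the recollement bookkeeping can be omitted.
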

\begin{proof}
By \cite{D'Agnolo--Kashiwara}, we have $\Sol^E(\cM\boxtimes \cN)\simeq \Sol^E(\cM)\pboxtimes\Sol^E(\cN)$. Hence it suffices to prove $M(\Sol^E(\cM)\pboxtimes\Sol^E(\cN))\simeq M(\Sol^E(\cM))\boxtimes M(\Sol^E(\cN))$.

First, note that we have $p_*\RGamma_{[a,\infty)}(\cE\pboxtimes \cF)\simeq p_*\RGamma_{t_1+t_2\geq a}(\cE\boxtimes \cF)$. We also have a map
\begin{equation}
p_*\RGamma_{[b, \infty)\times [c, \infty)}(\cE\boxtimes \cF)\rightarrow p_*\RGamma_{t_1+t_2\geq b+c}(\cE\boxtimes \cF).
\end{equation}
By combining these, we get a map $M(\cE)\boxtimes M(\cF)\rightarrow M(\cE\pboxtimes \cF)$. It suffices to show that this map is locally an isomorphism. This is clear from Lemma \ref{tensorstandard} and an easy observation $\cE^{\phi_1}\pboxtimes \cE^{\phi_2}\simeq \cE^{\phi_1\boxtimes \phi_2}$.
\end{proof}

\begin{proposition}\label{tensorhomMcomm}
We have 
\begin{equation}
M(-\otimes -)\simeq M(-)\otimes M(-).
\end{equation}
For $\cM, \cN\in D^b_{\hol}(\cD_X)$, we have
\begin{eqnarray}
\Sol^\Lambda(\cM\otimes \cN)\simeq&\Sol^\Lambda(\cM)\otimes \Sol^\Lambda(\cN)[-\dim X].
\end{eqnarray}
We also have
\begin{equation}
M\circ \cHom^E(-,-)\simeq \cHom(M(-), M(-)).
\end{equation}
\end{proposition}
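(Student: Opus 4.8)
The plan is to deduce all three isomorphisms from the external product compatibility already obtained in Lemma~\ref{boxdecomposition}, by restricting to the diagonal $\delta\colon X\hookrightarrow X\times X$, exactly as the internal tensor product of ordinary sheaves is recovered from the external one. The three ingredients are: on the enhanced side $\cE_1\potimes\cE_2\simeq E\delta^{-1}(\cE_1\pboxtimes\cE_2)$ (D'Agnolo--Kashiwara); on $D^b(\ModI(\Lambda_X))$ the identity $\delta^{-1}(\cV\boxtimes^\bL\cW)\simeq\cV\dotimes\cW$ (Lemma~\ref{tensordiagonal}); and the commutation of $M$ with $E\delta^{-1}$, i.e.\ with $\delta^{-1}$ on the underlying objects (Proposition~\ref{commpull}).

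First I would prove $M(\cE\potimes\cF)\simeq M(\cE)\dotimes M(\cF)$. Chaining the three facts,
\begin{equation*}
\begin{split}
M(\cE\potimes\cF)&\simeq M\big(E\delta^{-1}(\cE\pboxtimes\cF)\big)\simeq \delta^{-1}M(\cE\pboxtimes\cF)\\
&\simeq \delta^{-1}\big(M(\cE)\boxtimes^\bL M(\cF)\big)\simeq M(\cE)\dotimes M(\cF),
\end{split}
\end{equation*}
the third step being Lemma~\ref{boxdecomposition}. One must check that the composite is the evident comparison morphism induced by the pairings $p_*\RGamma_{[-b,\infty)}\cE\otimes p_*\RGamma_{[-c,\infty)}\cF\to p_*\RGamma_{[-b-c,\infty)}(\cE\potimes\cF)$, as in the proof of Lemma~\ref{boxdecomposition}; this is routine. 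For the $\cD$-module version I would use the standard identification $\cM\otimes^\bL_{\cO_X}\cN\simeq\delta^\dagger(\cM\boxtimes\cN)[\dim X]$; since $\Sol^\Lambda$ is contravariant, Proposition~\ref{commpull}, Lemma~\ref{boxdecomposition} and Lemma~\ref{tensordiagonal} yield
\begin{equation*}
\begin{split}
\Sol^\Lambda(\cM\otimes^\bL_{\cO_X}\cN)&\simeq\Sol^\Lambda\big(\delta^\dagger(\cM\boxtimes\cN)\big)[-\dim X]\simeq \delta^{-1}\Sol^\Lambda(\cM\boxtimes\cN)[-\dim X]\\
&\simeq\big(\Sol^\Lambda(\cM)\dotimes\Sol^\Lambda(\cN)\big)[-\dim X],
\end{split}
\end{equation*}
which is the asserted shift.

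For the internal hom I would argue by adjunction together with Theorem~\ref{RHmain}. One has the enhanced tensor-hom adjunction $\Hom(\cG\potimes\cE,\cF)\cong\Hom(\cG,\cHom^E(\cE,\cF))$ on $E^b_{\bR\hi c}(\IC_X)$, and $\Hom(\cG\dotimes\cV,\cW)\cong\Hom(\cG,\RcHom(\cV,\cW))$ on $D^b(\ModI(\Lambda_X))$ (Lemma~\ref{formulas}). First note that $\cHom^E$ preserves $E^b_\cD(\IC_X)$, e.g.\ through $\cHom^E(\cE,\cF)\simeq\bD(\cE\potimes\bD\cF)$ together with the fact that the enhanced dual $\bD$ and $\potimes$ preserve $E^b_\cD(\IC_X)$; hence $M(\cHom^E(\cE,\cF))\in D^b_{ic}(\Lambda_X)$ by Lemma~\ref{8.4}, while $\RcHom(M\cE,M\cF)\in D^b_{ic}(\Lambda_X)$ since $D^b_{ic}(\Lambda_X)$ is stable under $\RcHom$ (proved above). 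Then for every $\cG\in E^b_\cD(\IC_X)$, using full faithfulness of $M$ and the tensor compatibility just proven,
\begin{equation*}
\begin{split}
\Hom\big(M\cG,\,M\cHom^E(\cE,\cF)\big)&\cong\Hom\big(\cG,\cHom^E(\cE,\cF)\big)\cong\Hom(\cG\potimes\cE,\cF)\\
&\cong\Hom(M\cG\dotimes M\cE,\,M\cF)\cong\Hom\big(M\cG,\,\RcHom(M\cE,M\cF)\big),
\end{split}
\end{equation*}
naturally in $\cG$; since $M$ is essentially surjective onto $D^b_{ic}(\Lambda_X)$, Yoneda gives the desired isomorphism $M(\cHom^E(\cE,\cF))\simeq\RcHom(M\cE,M\cF)$.

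The genuine obstacle is the internal hom statement, in two respects. First, one needs that $\cHom^E$ preserves $E^b_\cD(\IC_X)$, so that Lemma~\ref{8.4} applies; this rests on the structure theory of D'Agnolo--Kashiwara, and in the absence of a direct reference one can run a d\'evissage via the recollement triangle of Lemma~\ref{icrecollement} and its enhanced counterpart to reduce to the irregular constant case, where $\cHom^E(\cE^{\phi_1},\cE^{\phi_2})\simeq\cE^{\phi_2-\phi_1}$ and $M(\cE^{\phi_2-\phi_1})\simeq\Lambda^{\phi_2-\phi_1}\simeq\cHom(\Lambda^{\phi_1},\Lambda^{\phi_2})$. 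Second, one must verify that the isomorphism produced by Yoneda coincides with the canonical comparison map, so as to obtain a natural isomorphism of bifunctors; this is bookkeeping but should be carried out carefully. The two tensor statements, by contrast, are essentially formal once the diagonal reduction is set up, the only delicate point being the shift $[-\dim X]$, which is forced by and consistent with $\cM\otimes^\bL_{\cO_X}\cN\simeq\delta^\dagger(\cM\boxtimes\cN)[\dim X]$.
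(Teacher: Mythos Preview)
Your proposal is correct and follows essentially the same route as the paper: reduce the internal tensor to the external one via the diagonal (using Lemma~\ref{boxdecomposition}, Proposition~\ref{commpull}, Lemma~\ref{tensordiagonal}), and then obtain the $\cHom^E$ statement from the tensor one by adjunction. The paper's proof is considerably terser---it writes only the $\Sol^\Lambda$ chain and then says the $\cHom^E$ claim ``follows from the adjunction''---whereas you spell out the Yoneda argument through the equivalence of Theorem~\ref{RHmain} and flag the need for $\cHom^E$ to preserve $E^b_\cD(\IC_X)$; this extra care is warranted and your d\'evissage remark for that point is the right fallback. Your bookkeeping on the shift $[-\dim X]$ via $\cM\otimes^\bL_{\cO_X}\cN\simeq\delta^\dagger(\cM\boxtimes\cN)[\dim X]$ is also cleaner than the paper's, which leaves the origin of the shift slightly implicit.
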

\begin{proof}
Let $\delta\colon X\rightarrow X\times X$ be the diagonal embedding. Then $\cM\otimes \cN\cong \delta^{\dagger}(\cM\boxtimes \cN)$. Then we have,
\begin{equation}
\begin{split}
\Sol^\Lambda(\delta^{\dagger}(\cM\boxtimes \cN))&\simeq \delta^{-1}(\Sol^\Lambda(\cM\boxtimes \cN))[-\dim X]\\
&\simeq \delta^{-1}(\Sol^\Lambda(\cM)\boxtimes \Sol^\Lambda(\cN))[-\dim X]\\
&\simeq \Sol^\Lambda(\cM)\otimes \Sol^\Lambda(\cN)[-\dim X]
\end{split}
\end{equation}
where we used Lemma \ref{tensordiagonal} and Lemma \ref{boxdecomposition}. The second claim follows form the adjunction. This completes the proof.
\end{proof}

\begin{proposition}
Let $f\colon X\rightarrow Y$. We have
\begin{eqnarray}
\Sol^\Lambda\circ f^\star&\simeq f^!\circ \Sol^\Lambda,\\
\Sol^\Lambda\circ \bD&\simeq  \bD\circ \Sol^\Lambda.
\end{eqnarray}
\end{proposition}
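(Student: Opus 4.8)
The plan is to deduce both isomorphisms from the two duality-type identities already available: the $\cD$-module identity $f^\star=\bD\circ f^\dagger\circ\bD$ (together with $f_\star=\bD\circ\int_f\circ\bD$) from Section~7.1, the sheaf-side identity $f^!\circ\bD\simeq\bD\circ f^{-1}$ from Proposition~\ref{invshriek}, the commutation of $\Sol^\Lambda$ with $f^\dagger$ from Proposition~\ref{commpull}, and the commutation of $\Sol^\Lambda$ with $\bD$. Since the second displayed formula $\Sol^\Lambda\circ\bD\simeq\bD\circ\Sol^\Lambda$ is one of the two assertions being proved, I would establish it first and then feed it into the computation for $f^\star$.

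First I would prove $\Sol^\Lambda\circ\bD\simeq\bD\circ\Sol^\Lambda$. By definition $\Sol^\Lambda=M\circ\Sol^E$, and D'Agnolo--Kashiwara's result (Proposition in Section~8.2) gives $\bD\circ\DR^E\simeq\DR^E\circ\bD$, equivalently $\Sol^E\circ\bD\simeq\bD\circ\Sol^E$ once one accounts for the convention $\Sol^E=\bD\circ\DR^E$ and the fact that $\bD\bD\simeq\id$ on $E^b_{\bR\hi c}(\IC_X)$. So it remains to show that $M$ intertwines the enhanced Verdier dual $\bD$ on $E^b_\cD(\IC_X)$ with the dual $\bD=\RcHom(-,\omega_X^\Lambda)$ on $D^b_{ic}(\Lambda_X)$. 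I would prove $M\circ\bD\simeq\bD\circ M$ by reducing to the local model via Lemma~\ref{icrecollement} and the dévissage already used in Lemma~\ref{8.4} and in the proof of Theorem~\ref{RHmain}: it suffices to check it on objects of the form $\cE^\phi=\bk_{\Re\phi\leq t}\potimes\bk_X^E$, where $M(\cE^\phi)\cong i_{\dS!}\Lambda^\phi$; there one uses that the enhanced dual sends $\cE^\phi$ to (a shift of) $\cE^{-\phi}$ on the complementary blow-up chart, matched on the $\Lambda$-side by Lemma~\ref{dualiclocal}, $\bD\Lambda^\phi_X\simeq\Lambda^{-\phi}_X\otimes_\bk\omega_X$, and Corollary~\ref{Rhomstandard}. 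The functoriality/descent bookkeeping needed to glue these local isomorphisms is routine, parallel to Lemma~\ref{internalhom}.

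Then $\Sol^\Lambda\circ f^\star\simeq f^!\circ\Sol^\Lambda$ follows by a short formal computation: for $\cN\in D^b_{\hol}(\cD_Y)$,
\begin{equation}
\Sol^\Lambda(f^\star\cN)\simeq\Sol^\Lambda(\bD f^\dagger\bD\cN)\simeq\bD\,\Sol^\Lambda(f^\dagger\bD\cN)\simeq\bD\,f^{-1}\Sol^\Lambda(\bD\cN)\simeq\bD\,f^{-1}\bD\,\Sol^\Lambda(\cN)\simeq f^!\Sol^\Lambda(\cN),
\end{equation}
using $f^\star=\bD f^\dagger\bD$, then $\Sol^\Lambda\circ\bD\simeq\bD\circ\Sol^\Lambda$, then Proposition~\ref{commpull}, then again $\Sol^\Lambda\circ\bD\simeq\bD\circ\Sol^\Lambda$, and finally Proposition~\ref{invshriek} in the form $\bD\circ f^{-1}\circ\bD\simeq f^!$ (valid on $D^b_{ic}$ by Corollary~\ref{icVerdier} and Lemma~\ref{unipotent}). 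One should remark that $f^\star$ preserves cohomologically holonomic objects (stated in Section~7.1) so all terms lie in the categories where the cited equivalences apply.

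The main obstacle is the first step, namely pinning down $M\circ\bD\simeq\bD\circ M$ as an isomorphism of functors: establishing it on the generating objects $\cE^\phi$ is straightforward from Corollary~\ref{Rhomstandard} and Lemma~\ref{dualiclocal}, but verifying that the local isomorphisms are compatible with the triangles of Lemma~\ref{icrecollement} and descend coherently — i.e.\ that one genuinely gets a natural transformation, not just object-wise isomorphisms — requires the same careful dévissage and stack-gluing argument as Theorem~\ref{RHmain}. Once that is in place, the rest is purely formal manipulation of adjunctions and the duality identities, with no further analytic input.
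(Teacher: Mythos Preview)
Your overall architecture matches the paper's: prove the duality commutation $\Sol^\Lambda\circ\bD\simeq\bD\circ\Sol^\Lambda$ first, then deduce the $f^\star$ statement by the formal chain $f^\star=\bD f^\dagger\bD$, Proposition~\ref{commpull}, and Proposition~\ref{invshriek}. That part is essentially identical.

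The difference is in how the duality commutation itself is proved. You propose to establish $M\circ\bD\simeq\bD\circ M$ by checking it on the local generators $\cE^\phi$ via Lemma~\ref{dualiclocal} and Corollary~\ref{Rhomstandard}, then globalizing by the d\'evissage of Lemma~\ref{icrecollement}. This is correct in principle, but --- as you yourself flag --- promoting the objectwise identifications to a natural isomorphism of functors requires tracking coherence through the gluing, which is genuine work. The paper sidesteps this entirely: it has already proved in Proposition~\ref{tensorhomMcomm} that $M\circ\cIhom^+(-,-)\simeq\RcHom(M(-),M(-))$, and since the enhanced Verdier dual is $\cIhom^+(-,\omega^E_X)$ with $M(\omega^E_X)\simeq\omega^\Lambda_X$, one line gives
\[
\bD\circ\Sol^\Lambda\simeq\RcHom(M\circ\Sol^E(-),\omega^\Lambda_X)\simeq M\circ\cIhom^+(\Sol^E(-),\omega^E_X)\simeq M\circ\Sol^E\circ\bD=\Sol^\Lambda\circ\bD,
\]
invoking D'Agnolo--Kashiwara's $\bD\circ\Sol^E\simeq\Sol^E\circ\bD$ at the last step. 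The point is that the $\cHom$-compatibility of $M$ was obtained via the $\boxtimes$-compatibility (Lemma~\ref{boxdecomposition}) and the diagonal pullback, so the coherence you worry about has already been packaged into a natural transformation at an earlier stage. Your route would work, but the paper's is shorter and avoids the gluing obstacle by reusing Proposition~\ref{tensorhomMcomm}.
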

\begin{proof}
The first one is followed by the second one and Proposition \ref{commpull}. 

We have
\begin{equation}
\begin{split}
\bD\circ \Sol^\Lambda(\cE)&\simeq \cHom(\Sol^\Lambda(\cE), \omega^\Lambda_X)\\
&\simeq M\circ \cHom^E(\Sol^E(\cE), \omega^E_X)\\
&\simeq M\circ \Sol^E(\bD(\cE))\\
&\simeq \Sol^\Lambda\circ \bD(\cE).
\end{split}
\end{equation}
where we used Proposition \ref{tensorhomMcomm} and the commutativity of $\Sol^E$ with $\bD$ (\cite{D'Agnolo--Kashiwara}). This completes the proof of the third line.
\end{proof}

\begin{proposition}\label{pushsolcommute}
Let $f\colon X\rightarrow Y$ be a proper map. Then we have
\begin{equation}
\DR^\Lambda\circ \int_{f!}\simeq f_!\circ \DR^\Lambda
\end{equation}
\end{proposition}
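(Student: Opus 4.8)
The plan is to reduce the statement to the commutation of $\DR^\Lambda$ with inverse images already proved in this section, and then to transport the $\cD$-module adjunction $\int_f\dashv f^\dagger$ across the Riemann--Hilbert equivalence. Since $f$ is proper, $\int_{f!}$ (that is, $\bD\circ\int_f\circ\bD=f_\star$) is isomorphic to $\int_f$, so it suffices to prove $\DR^\Lambda\circ\int_f\simeq f_!\circ\DR^\Lambda$, and then to note that for a proper morphism of complex manifolds $\bR f_!\simeq\bR f_*$ on $D^b_{ic}(\Lambda_X)$, which is immediate from the graded-component description of $f_!$ and $f_*$ and the exactness of $[\cdot]$.

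First I would record two facts. One: $\DR^\Lambda=\bD\circ\Sol^\Lambda$ is a covariant equivalence $D^b_{\hol}(\cD_X)\xrightarrow{\simeq}D^b_{ic}(\Lambda_X)$, being a composite of two contravariant equivalences, by Theorem~\ref{RHmain}. Two: $\DR^\Lambda$ intertwines $f^\dagger$ with $f^!$; indeed, combining Proposition~\ref{commpull} ($\Sol^\Lambda\circ f^\dagger\simeq f^{-1}\circ\Sol^\Lambda$) with Proposition~\ref{invshriek} ($f^!\circ\bD\simeq\bD\circ f^{-1}$) gives
\[
\DR^\Lambda\circ f^\dagger=\bD\circ\Sol^\Lambda\circ f^\dagger\simeq\bD\circ f^{-1}\circ\Sol^\Lambda\simeq f^!\circ\bD\circ\Sol^\Lambda=f^!\circ\DR^\Lambda .
\]

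The remaining step is formal. On the $\cD$-module side, $\int_f$ is the left adjoint of $f^\dagger$ (valid because $f$ is proper). On the sheaf side, $\bR f_!$ is the left adjoint of $f^!$ by Proposition~\ref{shriekadjunction2}, and both of these functors preserve $D^b_{ic}(\Lambda_X)$ (Proposition~\ref{pushic} and the stability of $D^b_{ic}$ under $f^!$ established above). Since $\DR^\Lambda$ is an equivalence carrying $f^\dagger$ to $f^!$, conjugation by $\DR^\Lambda$ carries the left adjoint of $f^\dagger$ to the left adjoint of $f^!$; by uniqueness of adjoints this yields $\DR^\Lambda\circ\int_f\simeq \bR f_!\circ\DR^\Lambda$, hence $\DR^\Lambda\circ\int_{f!}\simeq f_!\circ\DR^\Lambda$.

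The one point that needs genuine care — and the reason the argument must be run through the covariant functor $\DR^\Lambda$ rather than through the contravariant $\Sol^\Lambda$ or $M$ — is the direction of the adjunctions. Under a \emph{contravariant} equivalence the pair $\int_f\dashv f^\dagger$ would be sent to an adjunction of the shape ``$f^!\dashv(\,\cdot\,)$'', and $f^!$ has no right adjoint on $D^b_{ic}(\Lambda_X)$, so that route stalls. It is precisely because $\DR^\Lambda$ is covariant and matches $f^\dagger$ with $f^!$ that the left adjoint $\int_f$ goes over to the left adjoint $f_!$; everything else is bookkeeping with the duality identities already available in the excerpt.
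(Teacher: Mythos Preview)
Your argument is essentially the paper's: both transport the adjunction $\int_f\dashv f^\dagger$ across the equivalence $\DR^\Lambda$, using the compatibility $\DR^\Lambda\circ f^\dagger\simeq f^!\circ\DR^\Lambda$ (which you correctly derive from Proposition~\ref{commpull} and Proposition~\ref{invshriek}), and then match against $\bR f_!\dashv f^!$. The paper writes this out as an explicit Yoneda chain of $\Hom$-isomorphisms rather than invoking ``uniqueness of adjoints'' by name, but the content is identical, including your observation that the argument must be run through the covariant $\DR^\Lambda$ so that left adjoints go to left adjoints.

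One caution: drop the appeal to Proposition~\ref{pushic}. In the paper, the proof of Proposition~\ref{pushic} (that $\bR f_!$ preserves $D^b_{ic}$) explicitly invokes the present Proposition~\ref{pushsolcommute}, so citing it here is circular. The paper's own proof avoids mentioning it: the Yoneda chain tests against $\DR^\Lambda(\cN)$ for all $\cN\in D^b_{\hol}(\cD_Y)$, which already exhausts $D^b_{ic}(\Lambda_Y)$, and the conclusion $f_!\DR^\Lambda(\cM)\simeq\DR^\Lambda(\int_{f!}\cM)$ then \emph{yields} $f_!\DR^\Lambda(\cM)\in D^b_{ic}(\Lambda_Y)$ as a consequence rather than requiring it as input. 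Your argument goes through unchanged once you omit that reference.
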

\begin{proof}
By various adjunctions, we have
\begin{equation}
\begin{split}
\Hom\lb \DR^\Lambda\lb\int_{f!}\cM\rb, \DR^\Lambda(\cN)\rb&\cong \Hom\lb \int_{f!}\cM,\cN\rb\\
&\cong \Hom(\cM, f^\dagger\cN)\\
&\cong \Hom(\DR^\Lambda(\cM), \DR^\Lambda(f^\dagger\cN))\\
&\cong \Hom(\DR^\Lambda(\cM), f^!\DR^\Lambda (\cN))\\
&\cong \Hom(f_!\DR^\Lambda(\cE), \DR^\Lambda \cF).
\end{split}
\end{equation}
\end{proof}

\section{Irregular perverse sheaves}
In this section, we define the irregular perverse t-structure on the category of irregular constructible complexes. Over $\bC$, the heart is equivalent to the category of holonomic $\cD$-modules. We also prove t-exactness of various functors.
\subsection{Definition}
For an object $\cV$ of $D^b_{ic}(\Lambda_X)$, we define the support by
\begin{equation}
\supp(\cV):=\bigcup_{j}\supp(\frakF(H^j(\cV))) \subset X
\end{equation}

Let us define the irregular perverse t-structure.
\begin{definition}\label{irregularperversetstr}
Let $\pD_{ic}^{\leq 0}(\Lambda_X)$ (resp. $\pD_{ic}^{\geq 0}(\Lambda_X)$) be the full subcategory of $D^b_{ic}(\Lambda_X)$ spanned by objects satisfying 
\begin{equation}
\begin{split}
&\dim\lc\supp H^j(\cV)\rc\leq -j \\
&(\text{resp.}\dim\lc\supp H^j(\bD\cV)\rc\leq -j ) \text{ for each }j\in \bZ.
\end{split}
\end{equation}
\end{definition}

\begin{comment}
\begin{lemma}\label{Dperverse}
The condition
\begin{equation}
\dim\lc\supp H^j(\bD\cV)\rc\leq -j \text{ for each }j\in \bZ
\end{equation}
is equivalent to the condition
\begin{equation}
\dim\lc\supp H^j(\bD\frakF(\cV))\rc\leq -j \text{ for each }j\in \bZ.
\end{equation}
\end{lemma}
\begin{proof}
\end{proof}
\end{comment}

Let $\pD^{\leq 0}(\bk_X)$ be the perverse t-structure of $D^b_c(\bk_X)$.
\begin{lemma}\label{Fpexact}
We have $\frakF(\pD_{ic}^{\leq 0}(\Lambda_X))\subset \pD^{\leq 0}(\bk_X)$. Conversely, if $\frakF(\cV)\in  \pD^{\leq 0}(\bk_X)$ for $\cV\in D^b_{ic}(\Lambda_X)$, we have $\cV\in \pD_{ic}^{\leq 0}(\Lambda_X)$.
\end{lemma}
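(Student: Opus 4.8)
The statement is that $\frakF$ is $t$-exact for the right-hand side of the respective perverse $t$-structures, in the sense that $\cV\in{}^{p}D_{ic}^{\leq 0}(\Lambda_X)$ iff $\frakF(\cV)\in{}^{p}D^{\leq 0}(\bk_X)$. The plan is to reduce everything to the defining inequalities $\dim\supp H^j(\cV)\leq -j$ and the identity $\supp H^j(\cV)=\supp\frakF(H^j(\cV))$. First I would observe that since $\frakF$ is exact on the abelian categories (Lemma~\ref{exactnessofF}), it commutes with taking cohomology: $H^j(\frakF(\cV))\cong\frakF(H^j(\cV))$. Combined with the very definition of the support of an object of $D^b_{ic}(\Lambda_X)$, namely $\supp(\cV)=\bigcup_j\supp\frakF(H^j(\cV))$, and the analogous formula for usual constructible complexes, this gives $\supp H^j(\cV)=\supp\frakF(H^j(\cV))=\supp H^j(\frakF(\cV))$ for every $j$. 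Hence the condition $\dim\supp H^j(\cV)\leq -j$ for all $j$ is literally the same as the condition $\dim\supp H^j(\frakF(\cV))\leq -j$ for all $j$, which is the definition of $\frakF(\cV)\in{}^{p}D^{\leq 0}(\bk_X)$. This settles both directions for the $\leq 0$ half.

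For the $\geq 0$ half (needed if one wants the full $t$-exactness statement, and in any case to make the "conversely" part meaningful for the co-support condition), I would invoke the commutation of $\frakF$ with Verdier duality. Although the relevant lemma appears in a \texttt{comment} block in the excerpt, the paper elsewhere freely uses $\frakF\circ\bD\cong\bD\circ\frakF$; assuming this (it follows from $\frakF(\omega_X^\Lambda)\cong\omega_X$ together with the compatibility of $\frakF$ with $\RcHom$), the co-support condition $\dim\supp H^j(\bD\cV)\leq -j$ translates under $\frakF$ to $\dim\supp H^j(\bD\frakF(\cV))\leq -j$ by exactly the argument of the previous paragraph applied to $\bD\cV$ in place of $\cV$. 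So the $\geq 0$ containment and its converse also follow formally.

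The main (and essentially only) obstacle is the support identity $\supp H^j(\cV)=\supp\frakF(H^j(\cV))$ at the level of objects of $D^b_{ic}$: one must be sure that the definition $\supp(\cV):=\bigcup_j\supp\frakF(H^j(\cV))$ has been set up precisely so that this holds, and that $\supp$ of a single irregular constructible sheaf $\cF$ agrees with $\supp\frakF(\cF)$ — which is true by definition as noted in the introduction, $\supp\cV:=\supp\frakF(\cV)$. Once one unwinds that $\frakF$ does not kill nonzero irregular constructible sheaves — which is Lemma~\ref{vanishingF}, and which is what makes "$\supp\frakF(\cF)=\varnothing\Rightarrow\cF=0$" and hence the support comparison genuinely tight — there is nothing left to prove. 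So the proof is essentially a one-line unwinding: apply exactness of $\frakF$, commute it past $H^j$, and use that $\supp$ on $D^b_{ic}(\Lambda_X)$ is \emph{defined} through $\frakF$; the only thing to double-check is that no dimension is lost or gained, which is immediate because the underlying topological supports are literally equal, not merely of equal dimension.
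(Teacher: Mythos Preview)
Your proof is correct and follows essentially the same approach as the paper: use exactness of $\frakF$ to get $H^j(\frakF(\cV))\cong\frakF(H^j(\cV))$, then observe that the support conditions on both sides are literally identical because $\supp H^j(\cV)$ is \emph{defined} as $\supp\frakF(H^j(\cV))$. The paper's proof is just this one line; your elaboration is fine, though the digression on the $\geq 0$ half and Verdier duality is unnecessary since the lemma as stated only concerns $\leq 0$.
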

\begin{proof}
Since $\frakF$ is t-exact with respect to the standard t-structure (Lemma \ref{exactnessofF}), we have $H^i(\frakF(\cV))\cong \frakF(H^i(\cV))$. 
\end{proof}

\begin{proposition}\label{tstr}
The pair $(\pD_{ic}^{\leq 0}(\Lambda_X), \pD_{ic}^{\geq 0}(\Lambda_X))$ forms a t-structure.
\end{proposition}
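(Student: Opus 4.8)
The plan is to realise $(\pD_{ic}^{\leq 0}(\Lambda_X),\pD_{ic}^{\geq 0}(\Lambda_X))$ as the $t$-structure obtained by running the Beilinson--Bernstein--Deligne dévissage \cite{BBD} inside $D^b_{ic}(\Lambda_X)$, using the six operations of Sections~5 and~8 in place of the usual ones. Two points are formal and I would dispatch them first. Stability under shift is immediate from the definitions: passing from $\cV$ to $\cV[1]$ weakens each inequality $\dim\supp H^j(\cV)\leq -j$, and since $\bD(\cV[1])\simeq(\bD\cV)[-1]$ the cosupport inequalities for $\cV[-1]$ are likewise weaker than those for $\cV$, so $\pD_{ic}^{\leq 0}(\Lambda_X)[1]\subset\pD_{ic}^{\leq 0}(\Lambda_X)$ and $\pD_{ic}^{\geq 0}(\Lambda_X)[-1]\subset\pD_{ic}^{\geq 0}(\Lambda_X)$. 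Next, Lemma~\ref{Fpexact} and its Verdier-dual counterpart (which holds since $\frakF$ is exact and commutes with $\bD$ -- this last compatibility following from $\frakF(\omega_X^\Lambda)\cong\omega_X$ together with the stratumwise computation $\bD\Lambda^\phi_X\cong\Lambda^{-\phi}_X\otimes_\bk\omega_X$ of Lemma~\ref{dualiclocal}) show that $\cV\in\pD_{ic}^{\leq 0}(\Lambda_X)$, resp.\ $\cV\in\pD_{ic}^{\geq 0}(\Lambda_X)$, if and only if $\frakF(\cV)$ lies in the ordinary perverse $\pD^{\leq 0}(\bk_X)$, resp.\ $\pD^{\geq 0}(\bk_X)$. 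Note that $\frakF$ is not fully faithful, so this reformulation does not by itself imply the orthogonality and truncation axioms; the dévissage is genuinely needed.

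For the remaining axioms I would fix, for a given object (or pair of objects), a common $\bC$-analytic stratification $\cS$ adapted to it and induct on the number of strata. When $\cS=\{X\}$, every object of $D^b_{ic}(\Lambda_X)$ has irregular-local-system cohomology, so $\pD_{ic}^{\leq 0}(\Lambda_X)$ and $\pD_{ic}^{\geq 0}(\Lambda_X)$ are exactly the $[\dim_\bC X]$-shift of the standard $t$-structure on $D^b_{ic}(\Lambda_X)$, which exists because $\Mod_{ic}(\Lambda_X)$ is abelian (Proposition~\ref{icabelian}, Lemma~\ref{ickernel}) and is preserved by the standard truncations of $D^b(\ModI(\Lambda_X))$; there T2 and T3 are clear. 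For the inductive step, write $X=U\sqcup Z$ with $U$ an open union of strata and $Z$ the closed complement, $j\colon(U,\varnothing)\hookrightarrow(X,\varnothing)$ and $i\colon(Z,\varnothing)\hookrightarrow(X,\varnothing)$, where $j_!$ and $j_*$ are understood through the boundary formalism $(\overline U,\overline U\setminus U)\to(X,\varnothing)$ of Section~5.4. The operations of Sections~5 and~8 supply the recollement data $(j_!,j^{-1},j_*)$ and $(i_*=i_!,i^{-1},i^!)$ with their adjunctions, the localization triangle $j_!j^{-1}\to\mathrm{id}\to i_*i^{-1}\xrightarrow{+1}$ of Lemma~\ref{icrecollement} (and, by the same argument, its variant $i_!i^!\to\mathrm{id}\to j_*j^{-1}\xrightarrow{+1}$), and the vanishings $i^{-1}j_!\simeq 0$, $i^!j_*\simeq 0$. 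With this data in place, the gluing theorem for $t$-structures under a recollement \cite{BBD} produces a glued $t$-structure on $D^b_{ic}(\Lambda_X)$ from the shifted standard $t$-structure on $D^b_{ic}(\Lambda_U)$ and the inductively constructed irregular perverse $t$-structure on $D^b_{ic}(\Lambda_{(\overline Z,\varnothing)})$.

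It then remains to check that this glued $t$-structure coincides with the one of Definition~\ref{irregularperversetstr}. By construction the glued nonpositive part is $\{\cV : i_S^{-1}\cV\in D^{\leq -\dim_\bC S}(\Lambda_\dS)\text{ for every }S\in\cS\}$; computing $\supp H^j(\cV)$ through the exact functor $\frakF$ -- so that $S\subset\supp H^j(\cV)$ iff $H^j(i_S^{-1}\cV)\neq 0$ -- this is precisely the condition $\dim\supp H^j(\cV)\leq -j$ for all $j$. The glued nonnegative part is cut out by the analogous $i_S^!$-cosupport conditions, and these translate into $\dim\supp H^j(\bD\cV)\leq -j$ because $\bD$ is a contravariant autoequivalence of $D^b_{ic}(\Lambda_X)$ (Corollary~\ref{icVerdier}) interchanging $i_S^{-1}$ and $i_S^!$ via $f^!\circ\bD\cong\bD\circ f^{-1}$ (Proposition~\ref{invshriek}) and $j^{-1}\circ\bD\cong\bD\circ j^!$, exactly as for ordinary perverse sheaves. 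Here one also uses $\bD\bD\cong\mathrm{id}$ (Lemma~\ref{unipotent}).

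The main obstacle is not the BBD machinery itself but verifying its input: that the six-functor formalism on $D^b_{ic}(\Lambda_X)$ genuinely satisfies the recollement axioms -- the adjunctions, the two localization triangles, and above all the cross-vanishings $i^{-1}j_!\simeq 0$ and $i^!j_*\simeq 0$ -- in a setting where the strata carry boundaries even when $X$ does not, so that $j_!$, $j_*$ and $i_S^!$ are built from the topological-space-with-boundary constructions of Sections~3--5. I would reduce each such identity to its counterpart for ordinary sheaves in two ways according to whether the statement is local or global: locally every object of $D^b_{ic}(\Lambda_X)$ is represented by an object of $\Mod^0(\Lambda_X)$ on which the operations commute with $[\,\cdot\,]$ (Section~4.2), while globally one applies $\frakF$, which is exact, commutes with the relevant operations (Lemmas~\ref{Finv}, \ref{Fpush}, \ref{Fpush2} and their variants), and is conservative on $D^b_{ic}(\Lambda_X)$ by Lemma~\ref{vanishingF}, so a morphism or object becoming an isomorphism or zero after $\frakF$ already is one. (Over $\bk=\bC$ there is the shortcut of transporting the tautological $t$-structure of $D^b_{\hol}(\cD_X)$ along the equivalence $\DR^\Lambda$ of Theorem~\ref{RHmain} and identifying it with Definition~\ref{irregularperversetstr} using $\frakF\circ\DR^\Lambda\simeq\DR$, Kashiwara's theorem that $\DR$ sends holonomic modules to perverse sheaves, and Lemma~\ref{Fpexact}; but only the dévissage works for general $\bk$.)
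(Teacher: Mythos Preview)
Your proposal is correct and follows essentially the same route as the paper: the paper proves orthogonality via stratum-wise support/cosupport vanishing (Lemma~\ref{vanishinglemma} together with $i^!_\dS\bR\cHom(\cF,\cG)\simeq\bR\cHom(i_\dS^{-1}\cF,i^!_\dS\cG)$) and then defers the truncation triangles to the standard stratification induction of \cite[Theorem~8.1.27]{HTTD}, which is precisely the BBD d\'evissage you spell out more explicitly via recollement. One small caution: the compatibility $\frakF\circ\bD\cong\bD\circ\frakF$ you invoke in your opening paragraph is not actually established in the paper, and your sketch of it (via $\frakF(\omega_X^\Lambda)\cong\omega_X$ and Lemma~\ref{dualiclocal}) is incomplete---it would require $\frakF$ to commute with $\RcHom$ on $D^b_{ic}$; since you yourself flag this reformulation as inessential to the d\'evissage, the gap does not affect the proof, but you should either supply the missing argument or drop the claim.
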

To prove this proposition, we first prepare the following lemma.
\begin{lemma}\label{vanishinglemma}
We have
\begin{equation}
H^ji_\dS^{-1}\cF\simeq 0 \text{ $(j>-\dim S)$ }, H^ji^!_\dS\cG\simeq 0 \text{ $(j<1-\dim S)$}. 
\end{equation}
for $\cF\in \pD_{ic}^{\leq 0}(\Lambda_X)$ and $\cG\in \pD_{ic}^{\geq 1}(\Lambda_X)$.
\end{lemma}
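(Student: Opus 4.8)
The plan is to reduce the statement to the corresponding vanishing statement for ordinary perverse sheaves via the exact functor $\frakF$, and then to deduce the sharp bounds on each stratum from the definition of the irregular perverse $t$-structure together with the support estimates. First I would observe that, by Lemma \ref{Finv}, we have $\frakF\circ i_\dS^{-1}\simeq i_S^{-1}\circ \frakF$ and, using $i_\dS^! \cong \bD\circ i_\dS^{-1}\circ \bD$ (Proposition \ref{invshriek}) together with the commutativity of $\frakF$ with $\bD$ (which holds on $D^b_{ic}$, being a standard consequence of $\frakF(\omega_X^\Lambda)\cong\omega_X$ and the compatibility of $\frakF$ with $\RcHom$), that $\frakF\circ i_\dS^!\simeq i_S^!\circ \frakF$. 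Moreover $\frakF$ is $t$-exact for the standard $t$-structures by Lemma \ref{exactnessofF}, so $H^j(\frakF(-))\cong \frakF(H^j(-))$, and by Lemma \ref{vanishingF} the functor $\frakF$ detects vanishing of irregular constructible sheaves. Hence $H^j i_\dS^{-1}\cF\simeq 0$ if and only if $H^j i_S^{-1}\frakF(\cF)\simeq 0$, and similarly for $i_\dS^!$.

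Next I would invoke Lemma \ref{Fpexact}: since $\cF\in \pD_{ic}^{\leq 0}(\Lambda_X)$ we get $\frakF(\cF)\in \pD^{\leq 0}(\bk_X)$, and since $\cG\in \pD_{ic}^{\geq 1}(\Lambda_X)$ we get $\frakF(\cG)\in \pD^{\geq 1}(\bk_X)$ (the $\geq$ half of Lemma \ref{Fpexact} follows from the $\leq$ half applied to $\bD$, using $\frakF\circ\bD\simeq\bD\circ\frakF$ and $\bD\pD^{\leq 0}=\pD^{\geq 0}$). Now the desired vanishing is exactly the classical support/cosupport characterization of the perverse $t$-structure on $D^b_c(\bk_X)$: for $\cE\in \pD^{\leq 0}(\bk_X)$ one has $\dim\supp H^j(\cE)\leq -j$, so if $x$ lies in a stratum $S$ with $\dim S=d$ and $j>-d$, then the stalk $H^j(i_S^{-1}\cE)_x$ must vanish by constructibility (a constructible sheaf whose support has dimension $\leq -j<d$ cannot be nonzero at a generic — indeed any — point of a $d$-dimensional stratum once we restrict to that stratum and use that $i_S^{-1}\cE$ is locally constant on $S$). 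Dually, for $\cE'\in\pD^{\geq 1}(\bk_X)$ the cosupport condition $\dim\supp H^j(\bD\cE')\leq -j$ translates, via biduality and the standard computation $i_S^!\cong \bD i_S^{-1}\bD$ on $S$ together with $\omega_S\simeq \bk_S[2d]$, into $H^j i_S^!\cE'\simeq 0$ for $j<1-d$.

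The one step that needs slightly more care — and the place I expect the only real friction — is the passage from the global support bound $\dim\supp H^j(\cE)\leq -j$ to the stratumwise vanishing $H^j i_S^{-1}\cE\simeq 0$ for $j>-\dim S$. Strictly this requires the stratification $\cS$ to be compatible with $\cE$ (so that $i_S^{-1}\cE$ is locally constant on $S$), which we may arrange by refining to a common $\bC$-analytic Whitney stratification adapted to both $\cF$ and $\cG$; then $i_S^{-1}H^j(\cE)$ is a local system on the connected $d$-dimensional manifold $S$, and being supported on a set of dimension $\leq -j<d$ forces it to be zero. I would therefore begin the proof by fixing such a refined stratification, then carry out the reduction to $D^b_c(\bk_X)$ via $\frakF$ exactly as above, and finally quote \cite[Chapter 10]{KS} (or \cite{BBD}) for the classical stratumwise vanishing characterizing $\pD^{\leq 0}$ and $\pD^{\geq 1}$. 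This yields both asserted vanishings and completes the proof.
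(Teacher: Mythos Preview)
Your proposal is correct and follows essentially the same route as the paper: reduce to the classical vanishing for ordinary perverse sheaves via the exact functor $\frakF$, using Lemma~\ref{Finv} for the commutativity with $i_\dS^{-1}$, Lemma~\ref{Fpexact} to land in $\pD^{\leq 0}(\bk_X)$, and Lemma~\ref{vanishingF} to detect vanishing. The paper's proof is simply a terse version of exactly this.

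One small routing difference worth noting for the second statement: you deduce $H^j i_\dS^!\cG\simeq 0$ by first establishing $\frakF\circ\bD\simeq\bD\circ\frakF$ (hence $\frakF\circ i_\dS^!\simeq i_S^!\circ\frakF$) and then quoting the classical cosupport condition. The paper instead stays inside the irregular setting and derives the second vanishing from the first via Proposition~\ref{invshriek} ($i_\dS^!\simeq \bD\, i_\dS^{-1}\,\bD$) together with $\omega_S^\Lambda\simeq\Lambda_S[2\dim S]$ (Lemma~\ref{dualiclocal}). Your route is perfectly valid, but note that the compatibility $\frakF\circ\bD\simeq\bD\circ\frakF$ (equivalently, $\frakF$ with $\RcHom$) is not actually proved anywhere in the paper as it stands; it is true on $D^b_{ic}$ for the reasons you sketch, but if you want to cite only results established in the text, the paper's internal-duality argument is the cleaner dependency chain.
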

\begin{proof}
Note that the same statement for usual perverse t-structure is known (e.g \cite[Proposition 8.1.22]{HTTD}). By the commutativity proved in Section 6, the first statement follows from Lemma \ref{vanishingF}. The second statement is followed by the Verdier dual.
\end{proof}

\begin{proof}[Proof of Proposition \ref{tstr}]
First, we will prove that for $\cF\in \pD_{ic}^{\leq 0}(\Lambda_X)$ and $\cG\in \pD_{ic}^{\geq 1}(\Lambda_X)$, the vanishing 
\begin{equation}\label{vanishing}
\Hom(\cF, \cG)=0.
\end{equation}
Take a Whitney stratification $X=\sqcup_{S\in \cS}S$. We have
\begin{equation}
i^!_\dS\bR\cHom(\cF, \cG)\simeq \bR\cHom(i_\dS^{-1}\cF, i^!_\dS\cG).
\end{equation}
by Lemma \ref{shriekhom}.

By Lemma \ref{vanishinglemma}, for any $S$, we have
\begin{equation}
H^j\RGamma_{S}\bR\cHom(\cF, \cG)\simeq 0 \text{ ($j<1$)}.
\end{equation}
Then (\ref{vanishing}) can be proved by the induction in \cite{self-dual} (see also \cite{HTTD}).

It remains to show that the decomposition of objects in $D^b_{ic}(\Lambda_X)$. One can prove by a usual argument for perverse sheaves as in \cite[Theorem 8.1.27]{HTTD}. 
\end{proof}

\begin{definition}
The heart of t-structre is called the category of irregular perverse sheaves and denoted by $\Ierv(k_X)$.
\end{definition}

\begin{theorem}\label{main}
The functor $\Sol^\Lambda$ restrict to a contravariant equivalence
\begin{equation}
\Mod_{\mathrm{hol}}(\cD_X)\xrightarrow{\simeq} \Ierv(\bC_X).
\end{equation}
\end{theorem}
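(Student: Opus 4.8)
The plan is to deduce this statement from the already-established equivalence $\Sol^\Lambda\colon D^b_{\hol}(\cD_X)\xrightarrow{\simeq} D^b_{ic}(\Lambda_X)$ (Theorem~\ref{RHmain}) together with the well-known fact that, under the regular and irregular Riemann--Hilbert correspondences, the standard t-structure on $D^b_{\hol}(\cD_X)$ transports to the (irregular) perverse t-structure. Concretely, it suffices to show that $\Sol^\Lambda$ (which is contravariant) exchanges the standard t-structure on $D^b_{\hol}(\cD_X)$ with the irregular perverse t-structure on $D^b_{ic}(\Lambda_X)$: that is, $\cM\in D^{\leq 0}(\cD_X)\cap D^b_{\hol}$ if and only if $\Sol^\Lambda(\cM)\in \pD^{\geq 0}_{ic}(\Lambda_X)$, and dually. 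Once this is known, the heart $\Mod_{\hol}(\cD_X)$ maps to the heart $\Ierv(\bC_X)$ and the equivalence of hearts follows formally from the equivalence of derived categories plus the t-exactness.

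The key steps, in order, are as follows. First, I would reduce the t-exactness statement to a statement about supports and dimensions via the characterization in Definition~\ref{irregularperversetstr}, using Lemma~\ref{Fpexact} to pass through $\frakF$: namely $\cV\in \pD^{\leq 0}_{ic}(\Lambda_X)$ iff $\frakF(\cV)\in{}^pD^{\leq 0}(\bC_X)$, and $\bD$ on the irregular side intertwines with $\bD$ on the constructible side (this is essentially the content of the commented-out Lemma~\ref{FD}, which I would need to reinstate or reprove; it follows from $\frakF(\omega^\Lambda_X)\cong\omega_X$ and the compatibility of $\frakF$ with $\RcHom$). Second, I would invoke the compatibility of $\Sol^\Lambda$ with $\bD$ (proved just before Section~9) and with $f^{-1}$, $f^!$, $\RcHom$, $\otimes$ from Section~8.4. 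Third, the heart of the matter: I would compare $\frakF\circ\Sol^\Lambda$ with the classical solution functor $\Sol$ on $D^b_{\hol}(\cD_X)$. The point is that $\frakF$ "forgets the exponential factors", so $\frakF(\Sol^\Lambda(\cM))$ should recover the classical (non-enhanced) solution complex $\Sol(\cM)$, which is $\bC$-constructible and whose perversity behavior under the standard t-structure on $D^b_{\hol}(\cD_X)$ is the classical theorem of Kashiwara (the solution functor is t-exact for the perverse t-structure up to the contravariance/shift conventions in this paper). Combining, $\cM\mapsto\frakF(\Sol^\Lambda(\cM))=\Sol(\cM)$ sends the standard t-structure to the perverse t-structure, and by Lemma~\ref{Fpexact} this forces $\Sol^\Lambda$ itself to be t-exact for the irregular perverse t-structure.

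Alternatively — and this is probably cleaner given what is available in the excerpt — I would argue locally along a stratification, using d\'evissage exactly as in the proof of Proposition~\ref{tstr}. Take $\cM\in D^b_{\hol}(\cD_X)$ concentrated in standard degree $0$, i.e.\ $\cM\in\Mod_{\hol}(\cD_X)$. Using recollement (Lemma~\ref{icrecollement}) and the functorial compatibilities of $\Sol^\Lambda$ with $i^{-1},i^!,i_!,j_*$ (Propositions in Section 8.4), one reduces the claim $\Sol^\Lambda(\cM)\in\Ierv(\bC_X)$ to the case of a single stratum, where $\cM$ restricts to a meromorphic connection and, after a modification and ramified cover (as in Lemma~\ref{commonmodification} and the Sabbah--Mochizuki--Kedlaya structure theory recalled in Section~5.1), $\Sol^\Lambda$ of it is, up to shift by $\dim S$, a direct sum of sheaves $\Lambda^\phi_{\dS}$ pushed forward — and one checks directly that $i_{\dS!}$ and $i_{\dS*}$ of such an object, with the degree shift $[\dim S]$ built into the convention for $\Sol^E$, sit in the required perversity range by the support-dimension count. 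The main obstacle I expect is precisely this local computation: verifying that the shift conventions (the $[\dim X]$ in $\Sol^E:=\Sol^\bE[\dim X]$ and the corresponding shifts in $\int_f$, $f^\dagger$) line up so that holonomic $\cD$-modules in degree $0$ land exactly in the heart $\Ierv(\bC_X)$ rather than in a shifted copy, and doing the bookkeeping of supports under $i_\dS$ for all strata simultaneously; once the conventions are pinned down the rest is a standard d\'evissage. After t-exactness is established, the equivalence of hearts $\Mod_{\hol}(\cD_X)\xrightarrow{\simeq}\Ierv(\bC_X)$ is immediate: an equivalence of triangulated categories that is t-exact restricts to an equivalence of the hearts.
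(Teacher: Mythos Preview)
Your overall architecture is right: by Lemma~\ref{6.11} it suffices to show $\Sol^\Lambda$ is t-exact, and by Verdier duality one only needs the $\leq 0$ half. But your primary route has a genuine gap. You assert that $\frakF\circ\Sol^\Lambda$ recovers the classical solution functor $\Sol$, so that Kashiwara's perversity theorem for arbitrary holonomic $\cD$-modules finishes the job. This identification is nowhere established in the paper, and it is not a triviality: unwinding the definitions, $\frakF\circ M$ applied to $\cE\potimes\bk^E_X$ computes something like a colimit of $\bR p_*\RGamma_{[-a,\infty)}\cE$, and matching this with $\RcHom_{\cD_X}(\cM,\cO_X)$ across the irregular locus is exactly the kind of asymptotic comparison that the enhanced formalism was designed to bypass. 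You also need $\frakF\circ\bD\simeq\bD\circ\frakF$ on $D^b_{ic}$, which you correctly flag as not proved (the lemma is commented out); without it Lemma~\ref{Fpexact} handles only the $\leq 0$ side. So this route, while attractive, requires proving an independent comparison theorem before it can be used.

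Your alternative d\'evissage sketch is much closer to what the paper actually does, but the paper's argument is more specific than a generic recollement. The paper runs a dimensional induction: first pull back along a resolution to make the singular locus $D$ normal crossing (support dimensions only improve under $p^{-1}$), then use the exact triangle $C\to\cM\to\cM(*D)$. The key input is Proposition~\ref{expimage}: for a meromorphic connection $\cM(*D)$, the complex $\Sol^\Lambda(\cM(*D))$ is concentrated in the single degree $-\dim X$, so the support estimate is automatic for that piece. The cone $C$ is supported on $D$; one then peels off the strata $D_i$, $D_i\cap D_j$, \dots\ of the normal crossing divisor one codimension at a time, using Kashiwara's equivalence and $\Sol^\Lambda\circ\int_{\iota}\simeq\iota_*\circ\Sol^\Lambda$ to invoke the inductive hypothesis on each lower-dimensional stratum. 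Your sketch does not isolate the single-degree concentration for meromorphic connections, which is really the engine of the argument; once you have that, the rest is the careful bookkeeping you anticipated.
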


\begin{lemma}\label{6.11}
Let $\cD_i$ $(i=1,2)$ be triangulated categories with t-structures $(D^{\leq 0}_i, D_i^{\geq 0})$. Let $F\colon \cD_1\rightarrow \cD_2$ be a t-exact equivalence. Then $F$ gives an equivalence between t-structures.
\end{lemma}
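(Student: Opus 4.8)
The statement to prove is that a t-exact equivalence $F\colon\cD_1\to\cD_2$ of triangulated categories transports the t-structure of $\cD_1$ onto that of $\cD_2$: it restricts to equivalences $D_1^{\le0}\xrightarrow{\simeq}D_2^{\le0}$ and $D_1^{\ge0}\xrightarrow{\simeq}D_2^{\ge0}$, commutes naturally with the truncation functors $\tau^{\le n},\tau^{\ge n}$ and the cohomology functors $H^n$, and hence induces an equivalence of hearts. The plan is to reduce everything to the uniqueness clause in the definition of a t-structure, using only the formal properties of an equivalence of triangulated categories: $F$ is exact (triangulated), fully faithful and essentially surjective, hence reflects isomorphisms and zero objects. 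Fix a quasi-inverse $G$ with natural isomorphisms $GF\simeq\id_{\cD_1}$ and $FG\simeq\id_{\cD_2}$.

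First I would show that $F$ commutes with truncation. For $X\in\cD_1$, apply the exact functor $F$ to the truncation triangle $\tau^{\le0}X\to X\to\tau^{\ge1}X\xrightarrow{+1}$, obtaining a distinguished triangle $F\tau^{\le0}X\to FX\to F\tau^{\ge1}X\xrightarrow{+1}$ in $\cD_2$. By t-exactness and compatibility of $F$ with shifts, $F\tau^{\le0}X\in D_2^{\le0}$ and $F\tau^{\ge1}X\in D_2^{\ge1}$; hence this is a truncation triangle of $FX$, so there are isomorphisms $F\tau^{\le0}X\simeq\tau^{\le0}FX$ and $F\tau^{\ge1}X\simeq\tau^{\ge1}FX$, and by the uniqueness part of the axiom these are the canonical comparison maps and are natural in $X$. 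Shifting gives $F\tau^{\le n}\simeq\tau^{\le n}F$ and $F\tau^{\ge n}\simeq\tau^{\ge n}F$ for all $n$, and therefore $F\,H^n\simeq H^nF$ for all $n\in\bZ$.

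Next I would deduce the equivalences of aisles and hearts. The inclusion $F(D_1^{\le0})\subseteq D_2^{\le0}$ is t-exactness; conversely, given $Y\in D_2^{\le0}$, set $X:=G(Y)$, so $FX\simeq Y$ and $F(\tau^{\ge1}X)\simeq\tau^{\ge1}(FX)\simeq\tau^{\ge1}Y\simeq0$, whence $\tau^{\ge1}X\simeq0$ because $F$ reflects zero objects, i.e. $X\in D_1^{\le0}$ and $Y\in F(D_1^{\le0})$. So $F$ restricts to an equivalence $D_1^{\le0}\xrightarrow{\simeq}D_2^{\le0}$, and symmetrically to an equivalence $D_1^{\ge0}\xrightarrow{\simeq}D_2^{\ge0}$. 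For the hearts, given $Y\in\cD_2^{\heartsuit}=D_2^{\le0}\cap D_2^{\ge0}$ these two statements give $X_{\le}\in D_1^{\le0}$ and $X_{\ge}\in D_1^{\ge0}$ with $FX_{\le}\simeq Y\simeq FX_{\ge}$; since $F$ reflects isomorphisms, $X_{\le}\simeq X_{\ge}\in D_1^{\le0}\cap D_1^{\ge0}=\cD_1^{\heartsuit}$, so $Y\in F(\cD_1^{\heartsuit})$. Combined with t-exactness ($F(\cD_1^{\heartsuit})\subseteq\cD_2^{\heartsuit}$) and the fact that the restriction of a fully faithful functor between full subcategories is fully faithful, this exhibits $F$ as an equivalence of abelian categories $\cD_1^{\heartsuit}\xrightarrow{\simeq}\cD_2^{\heartsuit}$, automatically exact and compatible with $H^0$ by the previous step.

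I do not expect a genuine obstacle here: the argument is entirely formal. The only point requiring a little care is upgrading the objectwise comparison $F\tau^{\le n}X\simeq\tau^{\le n}FX$ to a natural isomorphism, which is exactly what the uniqueness of truncation triangles in a t-structure provides, together with the repeated (but elementary) use of the fact that an equivalence of triangulated categories reflects isomorphisms and zero objects.
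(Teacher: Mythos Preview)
Your proof is correct and follows essentially the same approach as the paper: both arguments apply $F$ to the truncation triangle of a preimage $X=G(Y)$ (the paper writes $F^{-1}(\cE)$), use t-exactness to recognize the result as a truncation triangle for $Y$, and conclude $\tau^{\ge1}X\simeq0$ because $F$ reflects zero objects. You are more thorough---establishing naturality of $F\tau^{\le n}\simeq\tau^{\le n}F$ and compatibility with $H^n$---whereas the paper only records the essential surjectivity on $D^{\le0}$ and $D^{\ge0}$, which is all that is needed for the application to Theorem~\ref{main}.
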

\begin{proof}
We have to show that $F\colon D^{\leq 0}_1\rightarrow D^{\leq 0}_2$ is essentially surjective. Let $\cE$ be an object of $D^{\leq 0}_2$. Then we have a standard triangle
\begin{equation}
\tau_{\leq 0}F^{-1}(\cE)\rightarrow F^{-1}(\cE)\rightarrow \tau_{\geq 1}F^{-1}(\cE)\xrightarrow{[1]}
\end{equation}
By applying $F$ again, we have $F(\tau_{\geq 1}F^{-1}(\cE))\cong 0$ since $\cE\in D^{\leq 0}_2$. Since $F$ is an equivalence, we have $\tau_{\geq 1}F^{-1}(\cE)\cong 0$. Hence $F^{-1}(\cE)\in D^{\leq 0}_1$. We can prove for the positive part in a similar manner. This completes the proof.
\end{proof}

\begin{proof}[Proof of Theorem \ref{main}]
By Lemma \ref{6.11}, it is enough to show that $\Sol^\Lambda$ is t-exact. We only show the condition
\begin{equation}\label{claim}
\dim \{\supp H^j(\cV)\}\leq -j.
\end{equation}
The other case follows from the Verdier duality.

Let $\cM$ be a holonomic $\cD$-module.

We will prove by the dimensional induction: Let us assume the assertion is true for any complex manifold with $\dim<\dim X$.

Let $D$ be a divisor containing the singularities of $\cM$. Let $p\colon X'\rightarrow X$ be a resolution making $D$ normal crossing. We denote the normal crossing divisor by $D'$. Then $p^\dagger\cM$ is again a holonomic $\cD_{X'}$-module (not a complex), since $p$ is a submersion. If $p^\dagger\cM$ satisfies the claim, we have a sequence of inequalities
\begin{equation}
\begin{split}
-j&\geq \dim \supp \cH^j(\Sol^\Lambda(p^\dagger \cM))=\dim \supp \cH^j(p^{-1}\Sol^\Lambda(\cM))=\dim \supp p^{-1}\cH^j(\Sol^\Lambda(\cM))\\
&\geq \dim \supp \cH^j(\Sol^\Lambda(\cM))
\end{split}
\end{equation}
This means the claim also holds for $\cM$. Hence we can assume that $D$ is normal crossing.

Since the claim is local, we can further assume that $D$ is a simple normal crossing: there is a set of coordinate hyperplanes $\{D_i\}$ such that $D=\bigcup_{i\in I}D_i$. Consider the following exact triangle:
\begin{equation}
C\rightarrow \cM\rightarrow \cM(*D)\rightarrow,
\end{equation}
Hence we also have
\begin{equation}
\Sol^\Lambda(\cM(*D))\rightarrow \Sol^\Lambda(\cM)\rightarrow \Sol^\Lambda(C)\rightarrow 
\end{equation}
By Lemma \ref{expimage}, the complex $\Sol^\Lambda(\cM(*D))$ is concentrated in degree $-\dim X$, it is enough to show the claim (\ref{claim}) for $\Sol^\Lambda(C)$. 

The holonomic complex $C$ is supported on $D$ and $\cH^j(C)=0$ for $j\neq 0,1$. Hence the degree of $\bD C$ is concentrated on $-1,0$. Let $\iota_i\colon D_i\hookrightarrow X$ be the closed embedding. Then by \cite[Proposition 1.5.16]{HTTD}, the functor $\iota_i^\dagger$ is a right derived functor. Hence $\iota_i^\dagger \bD C$ is concentrated in $-1,0,1$. Hence $\iota_i^\star C:=\bD\iota_i^\dagger \bD C$ is also concentrated in degree $-1,0,1$. Let us write as $\iota_i^\star C=F_{-1}\xrightarrow{d} F_0\rightarrow F_1$. Then the truncation
\begin{equation}
C_i:=\coker d\rightarrow F_1
\end{equation}
has a natural map $\iota_i^\star C\rightarrow C_i$. By summing up the morphisms, we get a sequence of morphisms $C\rightarrow \bigoplus_{i\in I}\int_{\iota_i}\iota_i^\star C\rightarrow \bigoplus_{i\in I}\int_{\iota_i}C_i$. Then we define the exact triangle
\begin{equation}
C^1\rightarrow C\rightarrow \bigoplus_{i\in I}\int_{\iota_i}C_i\rightarrow.
\end{equation}
Then $C^1$ has degree $0,1,2$. Since $\Sol^\Lambda(\int_{\iota_i}{C}_i)=\iota_{i*}\Sol^\Lambda({C}_i)$ and the assumption of the induction, the complex $\Sol^\Lambda(\int_{\iota_i}C_i)$ satisfies the claim (\ref{claim}). Hence it is enough to show for $C^1$. Since the morphism $C\rightarrow \bigoplus_{i\in I}\int_{\iota_i}C_i$ is an isomorphism on the complement of intersections of $D_i$'s, the complex $C^1$ is supported on intersections of $D_i$'s.

Set $\iota_{ij}\colon D_{ij}:=D_i\cap D_j\hookrightarrow X$ for $i<j$. Again by a similar argument to $C$, we can see that $\iota_{ij}^\star C^1$ is concentrated on $-2,...,2$. By a similar truncation as above, we can define $C_{ij}$ with degree $0,1,2$ and a map $\iota_{ij}^\star C^1\rightarrow C_{ij}$. Again, we can define $C^2$ by the same argument and one can see that it is enough to show (\ref{claim}) for $C^2$.

By proceeding this induction, we finally have a holonomic complex supported on $\bigcap_{i\in I} D_i$ with degree $0,...., \dim X-I$. We can see that this holonomic complex again satisfies (\ref{claim}). This completes the proof.
\end{proof}

\subsection{t-exactness of various operations}
By using the functor $\frakF$, we can prove various $t$-exactness in parallel with the theory of usual perverse t-structure. We only discuss some of them for illustration.

\begin{proposition}
The Verdier dual $\bD$ induces a contravariant equivalence $\Ierv(\bk_X)\xrightarrow{\simeq} \Ierv(\bk_X)^{op}$.
\end{proposition}
\begin{proof}
Since $\bD$ is a contravariant equivalence of $D^b_{ic}(\Lambda_X)$, it suffices to show $\bD(\Ierv(\bk_X))\subset \Ierv(\bk_X)$. For $\cV\in \Ierv(\bk_X)$, we have $\frakF(\bD(\cV))\cong \bD(\frakF(\cV))\in \Perv(\bk_X)$. By Lemma \ref{Fpexact}, we have $\bD(\cV)\in \Ierv(\bk_X)$. This completes the proof.
\end{proof}

\begin{proposition}\label{texact}
Let $f\colon X\rightarrow Y$ be a morphism of complex manifolds. We assume that $f$ is proper for $3$ and $4$. The following holds:
\begin{enumerate}
\item $f^{-1}(\pD^{\geq 0}_{ic}(\Lambda_X))\subset \pD_{ic}^{\leq \dim X-\dim Y}(\Lambda_Y)$
\item $f^{!}(\pD^{\leq 0}_{ic}(\Lambda_X))\subset \pD_{ic}^{\geq -\dim X+\dim Y}(\Lambda_Y)$
\item For any $\cV\in \pD^{\geq 0}_{ic}(\Lambda_X)$, we have $\bR f_*\cV\in \pD_{ic}^{\geq -(\dim X-\dim Y)}(\Lambda_Y)$.
\item  For any $\cV\in \pD^{\leq 0}_{ic}(\Lambda_X)$, we have $\bR f_!\cV\in \pD_{ic}^{\leq (\dim X-\dim Y)}(\Lambda_Y)$.
\end{enumerate}
\end{proposition}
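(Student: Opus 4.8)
The plan is to deduce all four statements from the corresponding facts for the ordinary perverse $t$-structure on $D^b_c(\bk_X)$ by transporting them along the exact functor $\frakF$. The key preliminary is a $t$-structure comparison: since $\frakF$ is exact (Lemma~\ref{exactnessofF}) we have $H^j(\frakF(\cV))\cong\frakF(H^j(\cV))$, and by definition $\supp H^j(\cV)=\supp H^j(\frakF(\cV))$, so Lemma~\ref{Fpexact} already says $\cV\in\pD^{\leq 0}_{ic}(\Lambda_X)$ iff $\frakF(\cV)\in\pD^{\leq 0}(\bk_X)$. Dually, $\cV\in\pD^{\geq 0}_{ic}(\Lambda_X)$ iff $\frakF(\cV)\in\pD^{\geq 0}(\bk_X)$, which reduces to the previous equivalence once one knows that $\frakF$ commutes with Verdier duality, $\frakF\circ\bD\simeq\bD\circ\frakF$; this last isomorphism is itself routine, since $\frakF(\omega^\Lambda_X)\cong\omega_X$ (as $\omega^\Lambda_X\cong\Lambda\otimes_\bk\omega_X$) and $\frakF$ is compatible with $\RcHom$ (compatibility with $\otimes$ plus the tensor--hom adjunction). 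After this step, membership in the irregular perverse subcategories is detected by $\frakF$ exactly as in the classical case, and the shifts of the irregular perverse $t$-structure match those of the ordinary one.

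Next I would establish the two ``primitive'' cases, $f^{-1}$ and $\bR f_!$. For $f^{-1}$ one uses $\frakF\circ f^{-1}\cong f^{-1}\circ\frakF$ (Lemma~\ref{Finv}): given $\cV$ in the relevant source subcategory, $\frakF(f^{-1}\cV)\cong f^{-1}\frakF(\cV)$, and the classical perverse-amplitude estimate for $f^{-1}$ (right $t$-exactness up to the shift $\dim X-\dim Y$; see \cite{BBD},\cite[\S 8.1]{HTTD}) places it in the target perverse subcategory over $\bk$, whence the comparison above gives the statement for $f^{-1}$. For $\bR f_!$ with $f$ proper one argues identically, now invoking $\frakF\circ\bR f_!\cong\bR f_!\circ\frakF$ (Lemma~\ref{Fpush}) and the classical right $t$-exactness of $\bR f_!$, to obtain the statement for $\bR f_!$.

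The remaining two cases I would then derive purely formally by Verdier duality. One has $f^!\simeq\bD\circ f^{-1}\circ\bD$ (Proposition~\ref{invshriek} together with $\bD\bD\simeq\id$, Lemma~\ref{unipotent}) and $\bR f_*\simeq\bD\circ\bR f_!\circ\bD$ (apply Proposition~\ref{shriekadjunction} with $\cV=\omega^\Lambda_Y$ and use $f^!\omega^\Lambda_Y\cong\omega^\Lambda_X$), while $\bD$ exchanges $\pD^{\leq n}_{ic}$ with $\pD^{\geq -n}_{ic}$ by Corollary~\ref{icVerdier} and the definition of the $t$-structure. Substituting these identities into the two cases already proved and tracking the shifts yields the estimates for $f^!$ and $\bR f_*$.

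I do not expect a deep obstacle: the mathematical content is imported wholesale from the classical theory of perverse sheaves, and the only points needing care are (i) pinning down $\frakF\circ\bD\simeq\bD\circ\frakF$ cleanly enough to characterize $\pD^{\geq 0}_{ic}$ through $\frakF$, and (ii) reconciling the numerical shifts $\dim X-\dim Y$ with those of the cited references after passing through $\frakF$ and $\bD$. A self-contained alternative, avoiding the classical theorem as a black box, would be the standard dévissage: stratify $X$ by a $\bC$-analytic stratification adapted to $\cV$, use the vanishing criterion of Lemma~\ref{vanishinglemma} for $i_\dS^{-1}$ and $i_\dS^!$ on strata together with the recollement triangle of Lemma~\ref{icrecollement} and the commutativities of Section~6, and induct on the dimension of strata as in \cite[\S 8.1]{HTTD}; this is longer but uses only tools already developed in the paper.
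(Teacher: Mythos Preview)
Your approach is correct and coincides with the paper's: reduce to the classical perverse $t$-exactness estimates via the commutativities $\frakF\circ f^{-1}\cong f^{-1}\circ\frakF$ (Lemma~\ref{Finv}) and $\frakF\circ\bR f_!\cong\bR f_!\circ\frakF$ (Lemma~\ref{Fpush}), together with Lemma~\ref{Fpexact}, and obtain the remaining statements by Verdier duality on the irregular side. The only cosmetic difference is bookkeeping: the paper treats 1, 3, 4 as the ``direct'' cases and deduces 2 by duality, whereas you treat 1 and 4 as direct and deduce 2 and 3 by duality; since $\bR f_*=\bR f_!$ for proper $f$, this is the same argument. Your caution about needing $\frakF\circ\bD\simeq\bD\circ\frakF$ is well placed but can be sidestepped exactly as the paper does: one never needs to detect $\pD^{\geq 0}_{ic}$ through $\frakF$ directly, because $\cV\in\pD^{\geq 0}_{ic}$ is by definition $\bD\cV\in\pD^{\leq 0}_{ic}$, and $\bD$ intertwines $f^{-1}$ with $f^!$ (Proposition~\ref{invshriek}) and $\bR f_!$ with $\bR f_*$.
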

\begin{proof}
The statements 1, 3 and 4 can also be proved easily by using the commutativities of $\frakF$ with $f^{-1}$ and  $f_!$ (Section 6) and Lemma \ref{Fpexact}. The statement 2 is the Verdier dual of 1.
\end{proof}

\begin{remark}
Other right/left t-exactness for various functors known in the theory of perverse sheaves can be also proved by using the argument used in Proposition \ref{texact}.
\end{remark}

\begin{remark}
Here we assumed the properness for 3 and 4 for simplicity. One can remove the assumption by working with ind/pro objects to define push-forwards for non-tame morphisms.
\end{remark}

\section{Algebraic case}
In this section, we deduce the algebraic version of the results.

\subsection{Notations for algebraic $\cD$-modules}
For the theory of algebraic $\cD$-modules, we refer to \cite{HTTD}. For a smooth quasi-projective variety $X$, we denote the sheaf of algebraic differential operators by $\cD_X$. We denote the category of left $\cD_X$-modules by $\Mod(\cD_X)$, the bounded derived category by $D^b(\cD_X)$, and the full subcategory of cohomologically holonomic modules by $D^b_{\hol}(\cD_X)$.

We denote the Verdier dual by $\bD$. For a morphism of algebraic varieties $f\colon X\rightarrow Y$, we define $\int_f$ and $f^\dagger$ by the same formula as in the analytic case. In algebraic case, both functors preserve holonomic objects without properness assumption. We set $f^\star:=\bD\circ f^\dagger\circ \bD$ and $\int_{f!}:=\bD\circ \int_f\circ \bD$. Then we have two adjoint pairs $f^\star \dashv \int_f$ and $\int_{f!}\dashv f^\dagger$.

Let $X^{\an}$ be the complex manifold associated with $X$. The analytification functor is an exact functor $(\cdot)^{\an} \colon \Mod(\cD_X)\rightarrow \Mod(\cD_{X^{\an}})$. We also denote the induced functor on the derived categories by the same notation $(\cdot)^\an$. It preserves the holonomicity. We note the following:
\begin{lemma}[{\cite[Proposition 4.7.2]{HTTD}}]
Let $f\colon X\rightarrow Y$ be a morphism between algebraic varieties and $f^{\an}\colon X^{\an}\rightarrow Y^{\an}$ be the associated morphisms between complex manifolds. Then the following hold:
\begin{enumerate}
\item For $\cM\in D^b_{\hol}(\cD_Y)$, we have a canonical isomorphism $(f^\dagger \cM)^{\an}\simeq (f^{\an})^\dagger(\cM)^{\an}$.
\item  If $f$ is proper, we have a canonical isomorphism $\lb \int_f\cN\rb^\an \simeq \int_{f^\an}(\cN)^\an$ for $\cN\in D^b_{\hol}(\cD_X)$.
\end{enumerate}
\end{lemma}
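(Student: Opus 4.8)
The plan is to deduce both assertions from the concrete description of analytification, $(\cdot)^{\an}=\cO_{X^{\an}}\otimes_{\epsilon_X^{-1}\cO_X}\epsilon_X^{-1}(\cdot)$, where $\epsilon_X\colon X^{\an}\rightarrow X$ is the canonical morphism of ringed spaces, together with three facts. First, $\epsilon_X^{-1}$ is exact and $\cO_{X^{\an}}$ is flat over $\epsilon_X^{-1}\cO_X$, so that $(\cdot)^{\an}$ is exact, descends to bounded derived categories, and commutes with $\dotimes$. Second, $(\cD_X)^{\an}\cong\cD_{X^{\an}}$, and the algebraic transfer bimodules analytify to the analytic ones, $(\cD_{Y\rightarrow X})^{\an}\cong\cD_{Y^{\an}\rightarrow X^{\an}}$ and $(\cD_{Y\leftarrow X})^{\an}\cong\cD_{Y^{\an}\leftarrow X^{\an}}$. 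Third, analytification is compatible with inverse images of $\cO$-modules along any morphism and --- the only nonformal input --- with $\bR f_*$ of coherent complexes along a proper morphism. Since all of this is assembled in \cite{HTTD}, in the paper the statement is simply quoted; what follows is how one would reconstruct it.

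For (1), I would unwind $(f^\dagger\cM)^{\an}=\bigl(\cD_{Y\rightarrow X}\dotimes_{f^{-1}\cD_Y}f^{-1}\cM\bigr)^{\an}[\dim X-\dim Y]$. Because $\epsilon_Y\circ f^{\an}=f\circ\epsilon_X$ as morphisms of ringed spaces, a flat base change identifies $(f^{-1}\cM)^{\an}$ with $(f^{\an})^{-1}(\cM^{\an})$ as $(f^{\an})^{-1}\cD_{Y^{\an}}$-modules; pushing $(\cdot)^{\an}$ through the derived tensor (by flatness) and replacing the analytified transfer module by $\cD_{Y^{\an}\rightarrow X^{\an}}$ then gives the canonical isomorphism with $(f^{\an})^\dagger(\cM^{\an})$, the degree shift being untouched. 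Holonomicity is used only to keep everything inside $D^b_{\hol}$, and functoriality is automatic since each comparison map is canonical.

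For (2), I would write $\int_f\cN\simeq\bR f_*\bigl(\cD_{Y\leftarrow X}\dotimes_{\cD_X}\cN\bigr)$. The transfer bimodule and the derived tensor are handled by the same facts as above, so the point is to commute $(\cdot)^{\an}$ past $\bR f_*$. For a bounded complex of $\cO_X$-coherent sheaves along the proper $f$ this is relative GAGA together with flat base change along $\epsilon_Y$. To reduce to that case one factors $f$ through a closed immersion into a projective space $\mathbb{P}^N_Y$ and the projection $\mathbb{P}^N_Y\rightarrow Y$: for the closed immersion, analytification sends it to a closed immersion and $\int$ along it is exact, so GAGA is immediate; for the projection, the relative de Rham complex $\cD_{Y\leftarrow X}\dotimes_{\cD_X}\cN$ of the $\cD$-coherent module $\cN$ is locally built from $\cO$-coherent pieces and from the twisting bimodule, whose higher direct images are again computed from $\cO$-coherent pushforwards by the projection formula. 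Assembling these comparisons with the transfer-module identities yields $(\int_f\cN)^{\an}\simeq\int_{f^{\an}}(\cN^{\an})$, boundedness being guaranteed by coherence of $\cN$ and finiteness of the cohomological dimensions involved.

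The main obstacle is precisely this last step: one needs the relative GAGA comparison $(\bR f_*\cF)^{\an}\xrightarrow{\ \sim\ }\bR f^{\an}_*(\cF^{\an})$ for coherent $\cF$ along a proper morphism, and its compatibility with the bootstrapping from $\cD$-coherent to $\cO$-coherent complexes. Everything else --- exactness of $\epsilon^{-1}$, flatness of $\cO^{\an}$, the identifications of $\cD$ and of the transfer bimodules, and compatibility with $f^{-1}$ and $\dotimes$ --- is formal bookkeeping.
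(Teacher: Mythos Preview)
Your proposal is correct and you have already identified the key point: the paper does not prove this lemma at all --- it is simply cited as \cite[Proposition 4.7.2]{HTTD} and used as a black box in the subsequent algebraic Riemann--Hilbert discussion. Your reconstruction follows the standard route (flatness of analytification, identification of transfer bimodules, and relative GAGA for the proper direct image), which is indeed how the cited reference establishes it.
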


\subsection{Algebraic irregular constructible sheaves}
Let $X$ be a smooth quasi-projective variety. Let $\oX$ be a smooth projective variety with a Zariski open embedding $i_X\colon X\rightarrow \oX$. We set $j_{X}\colon D_X:=\oX\bs X\hookrightarrow X$. 
\begin{definition}
An object $\cV\in \ModI(\Lambda_\dX)$ is {\em algebraic irregular constructible} if the following holds: there exists an algebraic stratification $\cS$ of $\oX$ refining $\oX=X\sqcup D_X$ such that each restriction of $\cV$ to $S\in \cS$ is an irregular local system.
\end{definition}
We denote the full subcategory of irregular constructible sheaves by $\Mod_{ic}(\Lambda_\dX)$. Let $i_\dX\colon \dX\rightarrow (\oX,\varnothing)$ be the canonical morphism, which is tame. We also denote the inclusion by $i_{D_X}\colon (D_X,\varnothing)\rightarrow (\oX, \varnothing)$. 
\begin{lemma}\label{characterization}
The functors $i_{\dX!}\colon \Mod_{ic}(\Lambda_\dX)\rightarrow \ModI(\Lambda_\oX)$ is fully faithful embedding onto the full subcategory spanned by objects satisfying $i_{D_X}^{-1}\cV\simeq 0$. The functor $i_{\dX*}$ is also fully faithful. In both cases, the quasi-inverses are given by $i_\dX^{-1}$.
\end{lemma}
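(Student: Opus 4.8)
The plan is to run the standard recollement (open--closed decomposition) argument in our setting, using that $i_\dX$ is the standard inclusion morphism attached to the Zariski-open subset $X\subset\oX$: concretely $i_{\dX!}$ is ``extension by zero'', $i_\dX^{-1}$ is ``restriction to $X$'', and, locally on $X$, these behave exactly like the corresponding functors for the open embedding $j\colon X\hookrightarrow\oX$ of ordinary sheaves. Since $\oX$ is compact, $i_\dX$ and $i_{D_X}$ are tame, so all the functors in the statement are defined.

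First I would establish three identities by reducing them to statements about ordinary $\bk$-sheaves: (i) $i_\dX^{-1}\circ i_{\dX!}\cong\id$, (ii) $i_\dX^{-1}\circ i_{\dX*}\cong\id$, and (iii) $i_{D_X}^{-1}\circ i_{\dX!}\cong 0$. Each can be checked on small open subsets of $X$, where the relevant objects lift to $\Mod^0(\Lambda)$ and the functors $i_{\dX!}$, $i_{\dX*}$, $i_\dX^{-1}$ of Section~3 are computed from $j_!$, $j_*$, $j^{-1}$ for $\bk$-sheaves (cf.\ Lemma~\ref{rightderived}, Lemma~\ref{derivedpullback}); there one has $j^{-1}j_!\cong\id$, $j^{-1}j_*\cong\id$ and $i_{D_X}^{-1}j_!\cong 0$. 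Along the way one observes that $i_{\dX!}$, $i_{\dX*}$, $i_\dX^{-1}$ preserve irregular constructibility, so $i_{\dX!}$ and $i_{\dX*}$ land in $\Mod_{ic}(\Lambda_\oX)$ and $i_\dX^{-1}$ sends an object constructible along a stratification refining $\oX=X\sqcup D_X$ back to an irregular constructible sheaf on $\dX$.

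Next I would assemble these into the assertions. From (i), $i_{\dX!}$ has a left inverse up to natural isomorphism, hence is faithful. Applying the recollement triangle of Lemma~\ref{icrecollement} (with open part $X$, closed complement $D_X$) to an object $\cM$ with $i_{D_X}^{-1}\cM\simeq 0$, the third term $i_{D_X*}i_{D_X}^{-1}\cM$ vanishes, so the counit $i_{\dX!}i_\dX^{-1}\cM\to\cM$ is an isomorphism; since $i_{\dX!}$ and $i_\dX^{-1}$ are exact this is already an isomorphism in the abelian category. Together with (iii) this shows that $i_{\dX!}$ and $i_\dX^{-1}$ restrict to mutually inverse equivalences between $\Mod_{ic}(\Lambda_\dX)$ and the full subcategory $\{\cV : i_{D_X}^{-1}\cV\simeq 0\}$; in particular $i_{\dX!}$ is fully faithful with quasi-inverse $i_\dX^{-1}$. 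For $i_{\dX*}$ I would instead use the adjunction $i_\dX^{-1}\dashv i_{\dX*}$ (Lemma~\ref{tensorhomadjunction2}): its counit $i_\dX^{-1}i_{\dX*}\to\id$ is an isomorphism by (ii) (again a local check, where it is the identity $j^{-1}j_*\cF\to\cF$), and a right adjoint whose counit is invertible is fully faithful, with $i_\dX^{-1}$ a quasi-inverse on its essential image.

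The main obstacle is bookkeeping rather than ideas: one must unwind the definitions of the site $\Open_{(\oX,D_X)}$ and of the functors $i_{\dX!},i_{\dX*},i_\dX^{-1}$ carefully enough to see that, on small opens in $X$, they genuinely reduce to $j_!,j_*,j^{-1}$ for ordinary sheaves, and to check that the local isomorphisms (i)--(iii) are compatible with restrictions and hence glue over the stackification defining $\ModI(\Lambda_\oX)$. A minor additional point is to confirm that the recollement triangle of Lemma~\ref{icrecollement}, a priori a statement in $D^b$, is being applied to genuine objects of the abelian category, and that exactness of $i_{\dX!}$ and $i_\dX^{-1}$ transports the resulting isomorphism back to the abelian category.
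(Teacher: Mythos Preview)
Your approach is correct and coincides with the paper's: the paper's proof is the single sentence ``This simply follows from Lemma~\ref{icrecollement}'', and what you have written is precisely an unpacking of that one-line argument (the open--closed recollement triangle together with the evident identities $i_\dX^{-1}i_{\dX!}\cong\id$, $i_\dX^{-1}i_{\dX*}\cong\id$, $i_{D_X}^{-1}i_{\dX!}\cong 0$). Your additional care about passing between the abelian and derived levels is appropriate but not something the paper addresses explicitly.
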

\begin{proof}
This simply follows from Lemma \ref{icrecollement}
\end{proof}

\begin{lemma}
The category $\Mod_{ic}(\Lambda_\dX)$ does not depend on the choice of $\oX$.
\end{lemma}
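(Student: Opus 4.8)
The plan is to reduce independence of $\oX$ to a comparison between two compactifications via a third one dominating both, using the characterization of $\Mod_{ic}(\Lambda_\dX)$ in terms of objects supported away from the boundary (Lemma~\ref{characterization}) together with the functorial operations on topological spaces with boundary developed in Sections~3 and~4. First I would observe that the definition of algebraic irregular constructibility only depends on the interior $X$ together with the condition that $\cV$ extends to an object on $(\oX,D_X)$ whose strata — including those on $D_X$ — carry irregular local systems. So the content is: if $\oX_1, \oX_2$ are two smooth projective compactifications of $X$, then an object of $\ModI(\Lambda_{(X,\varnothing)})$ extends to an algebraic irregular constructible object on $(\oX_1, D_{X,1})$ if and only if it does so on $(\oX_2, D_{X,2})$.

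The key step is to pass to a common compactification. Given $\oX_1$ and $\oX_2$, by Hironaka one can find a smooth projective $\oX_3$ with proper birational morphisms $p_i\colon \oX_3 \to \oX_i$ restricting to the identity on $X$, and with $D_{X,3} := \oX_3 \setminus X$ a normal crossing divisor. Each $p_i$ is a morphism of topological spaces with boundary $(\oX_3, D_{X,3}) \to (\oX_i, D_{X,i})$ which is tame since $\oX_3$ is compact. I would then show: (a) if $\cV_i$ is algebraic irregular constructible on $(\oX_i, D_{X,i})$, then $p_i^{-1}\cV_i$ is algebraic irregular constructible on $(\oX_3, D_{X,3})$ — this uses that $p_i^{-1}$ is exact (Lemma~\ref{alphaexact} combined with the construction of pull-back), that pulling back a stratification by a proper map refines to an algebraic stratification, and that the restriction of an irregular local system pulls back to an irregular local system (which is essentially built into Definition~\ref{irreloc}, since a modification of a modification is a modification); and (b) conversely, if $\cW$ on $(\oX_3, D_{X,3})$ is algebraic irregular constructible, then so is $p_{i*}\cW$ on $(\oX_i, D_{X,i})$ — here one needs that $p_{i*}$ of an irregular constructible sheaf is again irregular constructible, which is precisely the content available via the Riemann--Hilbert correspondence and Proposition~\ref{pushic} (the algebraic pushforward along a proper map preserves holonomicity, hence $p_{i*}$ preserves irregular constructibility). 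Since $p_i^{-1}p_{i*}\cW \cong \cW$ when $\cW$ restricted to the interior determines it and $p_i$ is an isomorphism over $X$ — more precisely, using Lemma~\ref{characterization} to identify both categories with the subcategory of $\ModI(\Lambda_{\oX_3})$ of objects with trivial restriction to $D_{X,3}$ — one obtains a chain of equivalences $\Mod_{ic}(\Lambda_{(X,\varnothing)})^{\oX_1} \simeq \Mod_{ic}(\Lambda_{(X,\varnothing)})^{\oX_3} \simeq \Mod_{ic}(\Lambda_{(X,\varnothing)})^{\oX_2}$, all compatible with restriction to $X$, which is the assertion.

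The main obstacle I expect is step (b): showing $p_{i*}$ preserves algebraic irregular constructibility. Pull-back is formal and exact, but pushforward along $p_i$ requires control of what happens over the exceptional locus, where a stratum of $\oX_3$ of positive dimension can collapse. One cannot argue purely topologically at the level of $\Lambda_X$-modules; the cleanest route is to invoke $\Sol^\Lambda$ (Theorem~\ref{RHmain}) to transport the question to $\cD$-modules, where $\int_{p_i}$ of a holonomic complex is holonomic (using properness of $p_i$), and then use Proposition~\ref{pushsolcommute} and the compatibility of analytic and algebraic operations (Lemma~\ref{characterization} of Section~10, i.e. $(\int_{p_i}\cM)^\an \simeq \int_{p_i^\an}\cM^\an$) to conclude. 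A subtlety is that $p_i$ is only proper, not finite, so one must be slightly careful that the object obtained really lies in the bounded derived category and that its cohomologies are irregular constructible — this follows from the $\cD$-module side. An alternative, more self-contained route avoiding $\cD$-modules would check directly that the derived pushforward of a local system along a proper map with normal-crossing exceptional divisor is constructible and that the exponential factors behave well, but this essentially re-proves a piece of Sabbah--Mochizuki's good formal structure theory, so the Riemann--Hilbert shortcut is preferable. Finally I would note the whole argument is local on $X$ in the Zariski topology, so one may reduce to the affine case if convenient, though the compactification argument already works globally.
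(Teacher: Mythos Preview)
Your two-step reduction—build a common smooth projective $\oX_3$ dominating $\oX_1$ and $\oX_2$ (e.g.\ by resolving the closure of the diagonal in $\oX_1\times\oX_2$), then compare along each $p_i$—is exactly the paper's argument. The paper is simply terser: given a map $p\colon \oX'\to\oX$ between compactifications extending $\id_X$, it declares in one line that $p_*$ is the desired equivalence.

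Where you go further is in justifying that $p_{i*}$ preserves irregular constructibility, routing this through the analytic Riemann--Hilbert correspondence and Proposition~\ref{pushic}. This is legitimate and not circular (the analytic $\Sol^\Lambda$ is established well before Section~10), but it is heavier than what the situation demands. The key point the paper leaves implicit is that $p$ is the identity on the interior $X$: the sites $\Open_{(\oX',D_X')}$ and $\Open_{(\oX,D_X)}$ have the same underlying category (open subsets of $X$), and since both $\oX,\oX'$ are compact and $p$ is proper, a family of opens in $X$ is locally finite over $\oX$ if and only if it is locally finite over $\oX'$. Hence the ambient categories $\ModI(\Lambda_{(\oX',D_X')})$ and $\ModI(\Lambda_{(\oX,D_X)})$ literally coincide, and $p_*$, $p^{-1}$ are the identity functor. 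Your worry about higher direct images along positive-dimensional exceptional fibers therefore never materializes: those fibers lie entirely in the boundary, where the objects in question vanish. The only remaining content is that the \emph{condition} ``algebraic irregular constructible'' is insensitive to whether one takes closures of strata in $\oX$ or in $\oX'$, which follows from the local nature of Definition~\ref{irreloc} together with the observation that a modification near a boundary point in one compactification can be composed with $p$ to produce one in the other. Your RH detour proves this as well, just less directly.
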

\begin{proof}
We will prove the assertion in two steps, let us first assume that $p\colon \oX'\rightarrow \oX$ be a map between two projective compactifications of $X$ extending $\id\colon X\rightarrow X$. Then it is clear that $p_*$ induces an desired equivalence of categories.

Now let $\oX'$ be an arbitrary projective compactification of $X$. Then there exists $\oX''$ with maps $\oX''\rightarrow \oX$ and $\oX''\rightarrow \oX'$ extending $\id \colon X\rightarrow X$. This can be done by taking a smooth blow-up replacement of the closure of the diagonal embedding $X\rightarrow \oX\times \oX'$. From the first part of this proof, we have done. 
\end{proof}
We will denote the category of algebraic irregular constructible sheaves by $\Mod_{ic}(\Lambda_X)$. 
\begin{lemma}
The abelian subcategory $\Mod_{ic}(\Lambda_X)$ is thick in $\ModI(\Lambda_\dX)$.
\end{lemma}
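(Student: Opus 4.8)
Throughout write $\dX=(\oX,D_X)$. Recall that by the algebraic analogue of Lemma~\ref{ickernel} the category $\Mod_{ic}(\Lambda_X)$ is closed under sub- and quotient objects in $\ModI(\Lambda_\dX)$ (this is what makes it an abelian subcategory), so thickness will follow once we check that it is closed under extensions. The plan is to reduce this, stratum by stratum, to the vanishing of $\Ext^1$ between standard sheaves that already underlies the analytic Lemma~\ref{thickness}.

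So let $0\to \cV\to \cX\to \cW\to 0$ be exact in $\ModI(\Lambda_\dX)$ with $\cV,\cW\in \Mod_{ic}(\Lambda_X)$. First I would produce a single algebraic stratification $\cS$ of $\oX$, refining $\oX=X\sqcup D_X$, adapted simultaneously to $\cV$ and to $\cW$: take algebraic stratifications witnessing algebraic irregular constructibility of each of $\cV$ and $\cW$ and pass to a common refinement inside the algebraic category, arranging (as one may, by Lemma~\ref{commonmodification} and standard stratification theory) that $\oS\cap D_X$ is a union of strata for every $S\in\cS$ and that the sectorial coverings for $\cV$ and $\cW$ can be chosen in common. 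For each stratum $S\subset X$, with $\dS=(\oS,D_S)$ and $\oS$ the Zariski closure of $S$ in $\oX$, pull the exact sequence back along the tame inclusion $\iota_\dS\colon \dS\to \dX$. Since pull-back is exact (cf. the proofs of Lemma~\ref{ickernel} and Lemma~\ref{thickness}), one obtains an exact sequence
\begin{equation}
0\to \cV|_\dS\to \cX|_\dS\to \cW|_\dS\to 0
\end{equation}
in $\ModI(\Lambda_\dS)$ whose outer terms are irregular local systems.

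Now I would run the argument of Lemma~\ref{thickness} verbatim on each $\dS$. Passing to a modification and then to a ramified cover, we may assume $\cV|_\dS$ and $\cW|_\dS$ are unramified good irregular local systems; on a sectorial covering $\{U_i\}$ of a punctured neighbourhood of a boundary point they become $\bigoplus_j\Lambda^{\phi_j}_{(\oU_i,\oU_i\cap D_S)}$ and $\bigoplus_k\Lambda^{\psi_k}_{(\oU_i,\oU_i\cap D_S)}$. By Corollary~\ref{Rhomstandard} together with the freeness of $\Lambda_{(\oU_i,\oU_i\cap D_S)}$,
\begin{equation}
\Ext^1_{D^b(\ModI(\Lambda_\dS))}\!\bigl(\Lambda^{\psi_k}_{(\oU_i,\oU_i\cap D_S)},\Lambda^{\phi_j}_{(\oU_i,\oU_i\cap D_S)}\bigr)\cong \Hom\!\bigl(\Lambda_{(\oU_i,\oU_i\cap D_S)},\Lambda^{\phi_j-\psi_k}_{(\oU_i,\oU_i\cap D_S)}[1]\bigr)=0,
\end{equation}
so the restricted extension splits sectorwise and $\cX|_\dS$ is, on each sector, a direct sum of standard sheaves $\Lambda^{\phi_j}$; the corresponding (easier) statement over the open strata, together with gluing, shows $\cX|_\dS$ is an irregular local system. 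Hence $\cX$ is algebraic irregular constructible with respect to $\cS$, and closure under extensions is proved.

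The main obstacle is not homological but organisational: one must be careful that the common algebraic refinement can be chosen so that every $D_S$ is still normal crossing after a suitable modification and so that the sectorial data for $\cV$ and $\cW$ genuinely coincide, which is where Lemma~\ref{commonmodification} is used. As an alternative route, one may instead apply the exact functor $i_{\dX!}$ and invoke Lemma~\ref{characterization}: $i_{\dX!}$ carries an algebraic irregular constructible sheaf to an analytic irregular constructible sheaf on $\oX^{\an}$ (equal to $0$ on the strata contained in $D_X$), so the statement follows from the analytic thickness Lemma~\ref{thickness} applied on $\oX^{\an}$, followed by $i_\dX^{-1}$. Either way, no new ideas beyond Lemma~\ref{thickness} are needed.
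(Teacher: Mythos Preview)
Your proposal is correct and is precisely what the paper intends: its proof is the single sentence ``One can prove by mimicking the proof of Lemma~\ref{thickness},'' and you have simply written out that mimicry in detail (common algebraic stratification, pull-back to strata, sectorwise $\Ext^1$-vanishing via Corollary~\ref{Rhomstandard}). Your alternative route through $i_{\dX!}$ and Lemma~\ref{characterization} is also fine and amounts to the same reduction.
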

\begin{proof}
One can prove by mimicking the proof of Lemma \ref{thickness}.
\end{proof}

Let us denote the triangulated subcategory of $D^b(\ModI(\Lambda_\dX))$ formed by cohomologically algebraic irregular constructible sheaves by $D^{b}_{ic}(\Lambda_X)$.

Let $D^{b}_c(\bk_X)$ be the category of cohomologically algebraic constructible complexes. 
\begin{proposition}
The functor $\frakF$ is restricted to a functor $\frakF\colon D^{b}_{ic}(\Lambda_X)\rightarrow D^{b}_c(\bk_X)$.
\end{proposition}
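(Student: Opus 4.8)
The plan is to reduce to the abelian statement and then run the argument of Proposition~\ref{ictoc} verbatim, with the word \emph{algebraic} in place of \emph{$\bC$-analytic}. First, since $\frakF$ is exact on abelian categories (Lemma~\ref{exactnessofF}), for $\cV^\bullet\in D^b_{ic}(\Lambda_X)$ one has $H^i(\frakF(\cV^\bullet))\cong \frakF(H^i(\cV^\bullet))$ for all $i$. Hence it is enough to show that $\frakF$ sends an algebraic irregular constructible sheaf to an algebraically constructible $\bk_X$-module.

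So let $\cV\in \Mod_{ic}(\Lambda_X)$ and, by definition, choose an algebraic stratification $\cS$ of $\oX$ refining $\oX=X\sqcup D_X$ such that $\cV|_{(\oS,D_S)}$ is an irregular local system for every $S\in\cS$. Put $\cS_X:=\lc S\in\cS\relmid S\subset X\rc$, which is an algebraic stratification of $X$. For $S\in\cS_X$ write $i_S\colon S\hookrightarrow X$ for the inclusion and $i_\dS\colon \dS\hookrightarrow (X,\varnothing)$ for the canonical tame map. Then Lemma~\ref{Finv} gives $i_S^{-1}\frakF(\cV)\cong \frakF(i_\dS^{-1}\cV)$, and Lemma~\ref{iltol} shows that the right-hand side is a local system on $S$; it has finite rank because an irregular local system is, locally, a finite direct sum $\bigoplus_{i\in I}\Lambda^{\phi_i}$ and $\frakF(\bigoplus_{i\in I}\Lambda^{\phi_i})\cong \bk^{|I|}$ (as observed in Lemma~\ref{vanishingF}). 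Therefore $\frakF(\cV)$ is locally constant of finite rank on each stratum of the algebraic stratification $\cS_X$, i.e.\ $\frakF(\cV)$ is an algebraically constructible $\bk_X$-module.

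Combining the two steps, $H^i(\frakF(\cV^\bullet))\cong\frakF(H^i(\cV^\bullet))$ is algebraically constructible for each $i$, hence $\frakF(\cV^\bullet)\in D^b_c(\bk_X)$, which is the assertion. The only point that needs attention — and it is a mild one — is that constructibility in the algebraic category must be witnessed by an \emph{algebraic} stratification; this is automatic here, since the stratification $\cS$, and hence $\cS_X$, is algebraic by the very definition of an algebraic irregular constructible sheaf. I do not expect a genuine obstacle: the argument is formally identical to the analytic case, and all the commutation properties of $\frakF$ invoked (exactness and compatibility with $f^{-1}$) are insensitive to the algebraic/analytic distinction.
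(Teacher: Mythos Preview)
Your argument is correct and is exactly what the paper has in mind: the paper's proof reads ``This is clear from the proof of Proposition~\ref{ictoc}'', and you have simply written out that proof with the word \emph{algebraic} inserted. The reduction via exactness of $\frakF$, the commutation $i_S^{-1}\frakF\cong\frakF i_\dS^{-1}$ from Lemma~\ref{Finv}, and the appeal to Lemma~\ref{iltol} are identical to the analytic case; your remark that one restricts attention to the strata $\cS_X\subset X$ (since $\frakF$ lands in sheaves on $X$) and your observation about finite rank are helpful clarifications but not new ideas.
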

\begin{proof}
This is clear from the proof of Proposition \ref{ictoc}.
\end{proof}

It is also clear that what we proved in Section 5.4 also holds for $D^{b}(\Lambda_X)$.
Addition to those, we have the following:

Let $f\colon X\rightarrow Y$ be a morphism between algebraic varieties. Since we can always compactify this morphism as $\overline{X}\rightarrow \overline{Y}$, we have a map of topological spaces with boundary $f\colon \dX\rightarrow \dY$. We set
\begin{equation}
\begin{split}
\bR f_*&:=i_\dY^{-1}\circ \bR f_*i_{\dX*}\colon D^b(\ModI(\Lambda_\dX))\rightarrow D^b(\ModI(\Lambda_\dY))\\
\bR f_! &:=i_\dY^{-1}\circ \bR f_!i_{\dX!}\colon D^b(\ModI(\Lambda_\dX))\rightarrow D^b(\ModI(\Lambda_\dY)).
\end{split}
\end{equation}

\begin{proposition}
Let $f\colon X\rightarrow Y$ be a morphism between algebraic varieties. Then $\bR f_*\cV, \bR f_!\cV\in D^b_{ic}(\Lambda_Y)$ for $D^b_{ic}(\Lambda_X)$.
\end{proposition}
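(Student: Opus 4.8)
The plan is to deduce this from the Riemann--Hilbert correspondence. Since $\otimes_\bk\bC$ is faithfully flat, is compatible with the operations in play, and detects irregular constructibility, we may assume $\bk=\bC$. Granting the algebraic irregular Riemann--Hilbert equivalence $\DR^\Lambda\colon D^b_{\hol}(\cD_X)\xrightarrow{\simeq}D^b_{ic}(\Lambda_X)$ --- which, as in the regular case, is obtained from Theorem~\ref{RHmain} by analytification and a GAGA-type comparison on a smooth projective compactification --- together with the commutations $\DR^\Lambda\circ\int_f\simeq\bR f_*\circ\DR^\Lambda$ and $\DR^\Lambda\circ\int_{f!}\simeq\bR f_!\circ\DR^\Lambda$, the statement follows at once: for $\cV\in D^b_{ic}(\Lambda_X)$ set $\cM:=(\DR^\Lambda)^{-1}(\cV)$; then $\int_f\cM,\int_{f!}\cM\in D^b_{\hol}(\cD_Y)$ because the algebraic integration functors preserve cohomological holonomicity with no properness hypothesis, so $\bR f_*\cV\simeq\DR^\Lambda(\int_f\cM)$ and $\bR f_!\cV\simeq\DR^\Lambda(\int_{f!}\cM)$ lie in $D^b_{ic}(\Lambda_Y)$. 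The real work is thus in the two commutation isomorphisms, and since the available analytic input (Proposition~\ref{pushsolcommute}) only covers proper maps, one must first reduce to that case.

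To reduce, I would factor $f$ as $X\overset{j}{\hookrightarrow}\oX\xrightarrow{\,\of\,}\oY$, where $\oY\supseteq Y$ is a smooth projective compactification with normal crossing boundary $D_Y$, where $\oX$ is obtained by resolving the closure of the graph of $f$ inside a product of projective compactifications so that $D_X:=\oX\setminus X$ is normal crossing and $\of\colon\oX\to\oY$ is a projective morphism with $\of^{-1}(D_Y)\subseteq D_X$, and where $j$ is the resulting Zariski-open immersion. By the very definition of the algebraic push-forwards one has $\bR f_*\cV\simeq i_\dY^{-1}\bR\of_*\bigl(i_{\dX*}\cV\bigr)$ and $\bR f_!\cV\simeq i_\dY^{-1}\bR\of_*\bigl(i_{\dX!}\cV\bigr)$, with $\of$ proper. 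The extension $i_{\dX!}\cV$ lies in $D^b_{ic}(\Lambda_{\oX})$: choosing an algebraic stratification of $\oX$ adapted to $\cV$ and refining $\oX=X\sqcup D_X$, Lemma~\ref{icrecollement} shows that its restrictions to strata inside $X$ are irregular local systems, while its restrictions to strata inside $D_X$ vanish. For $i_{\dX*}\cV$ one instead notes that $i_{\dX*}\cV$ corresponds under $\DR^\Lambda$ to $\int_j\cM$, an algebraic holonomic complex on $\oX$, hence is algebraic irregular constructible. Finally $i_\dY^{-1}$ preserves algebraic irregular constructibility (Lemma~\ref{characterization}). So everything reduces to the claim that, for a proper morphism $\of\colon\oX\to\oY$ of smooth projective varieties, $\bR\of_*$ sends $D^b_{ic}(\Lambda_{\oX})$ into $D^b_{ic}(\Lambda_{\oY})$.

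For this last claim, I would analytify. The map $\of^{\an}$ is proper, so Proposition~\ref{pushic} already gives that $\bR\of_*(i_{\dX!}\cV)$ is \emph{analytically} irregular constructible on $\oY^{\an}$; moreover, using that $\of$ is proper (so $\int_{\of}\simeq\int_{\of!}$ and $\bR\of_*\simeq\bR\of_!$) and that analytification commutes with $\int_{\of}$ \cite[Proposition~4.7.2]{HTTD}, Proposition~\ref{pushsolcommute} identifies $\bR\of_*(i_{\dX!}\cV)$ with $\DR^\Lambda\bigl((\int_{\of}\cM)^{\an}\bigr)$ for the algebraic holonomic complex $\int_{\of}\cM\in D^b_{\hol}(\cD_{\oY})$. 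It then remains to upgrade ``analytically irregular constructible'' to ``algebraically irregular constructible'': the underlying constructible complex $\frakF\bigl(\bR\of_*(i_{\dX!}\cV)\bigr)\simeq\bR\of^{\an}_*\frakF(i_{\dX!}\cV)$ is algebraically constructible by the classical finiteness theorem, and the gradings --- the exponential factors of $\int_{\of}\cM$ along $D_Y$ --- are algebraic by Mochizuki's algebraic Hukuhara--Levelt--Turrittin theory, so a stratification witnessing irregular constructibility can be chosen algebraic. \emph{The main obstacle is precisely this last algebraic-versus-analytic comparison}; once it is in hand, along with the algebraic Riemann--Hilbert equivalence and the holonomicity-preservation of $\int_f$ and $\int_{f!}$, all the remaining steps are formal manipulations with the six operations.
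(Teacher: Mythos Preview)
Your approach is essentially the paper's own: reduce to $\bk=\bC$, invoke the algebraic Riemann--Hilbert equivalence (Theorem~\ref{theoremalgRH}), and then use the commutations $\DR^\Lambda\circ\int_f\simeq\bR f_*\circ\DR^\Lambda$ and $\DR^\Lambda\circ\int_{f!}\simeq\bR f_!\circ\DR^\Lambda$ (Proposition~\ref{commDRalg}) together with the fact that the algebraic integration functors preserve holonomicity without any properness hypothesis. The paper's proof is literally a one-line pointer to Proposition~\ref{pushic} and Proposition~\ref{commDRalg}; your first paragraph already reproduces it.

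Your remaining paragraphs re-derive Proposition~\ref{commDRalg} by compactifying and reducing to the analytic proper case, which is exactly what the paper does when it later proves that proposition. The point you single out as ``the main obstacle''---passing from analytic to algebraic irregular constructibility on the projective compactification $\oY$---is not in fact an obstacle: the two notions differ only in whether the witnessing stratification is $\bC$-analytic or algebraic, and on a projective variety every closed analytic subset is algebraic by Chow's lemma, so any analytic stratification refines to an algebraic one and $D^b_{ic}(\Lambda_{\oY^{\an}})=D^b_{ic}(\Lambda_{\oY})$. The paper records precisely this identification in the proof of Theorem~\ref{theoremalgRH} with the phrase ``The middle equality is Chow's lemma.'' You do not need to invoke Mochizuki's algebraic Hukuhara--Levelt--Turrittin theory or argue separately about the exponential factors.
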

\begin{proof}
This can be proved by the same argument used in Proposition \ref{pushic} by using Proposition \ref{commDRalg}.
\end{proof}

\subsection{Algebraic Riemann--Hilbert correspondence}
We first recall the following Malgrange's result.
\begin{theorem}[\cite{Malgrangeirregular}]\label{Malgirreg}
If $X$ is a smooth projective variety, analytic holonomic $\cD_X$-modules are algebraic.
\end{theorem}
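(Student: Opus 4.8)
The plan is to follow Malgrange \cite{Malgrangeirregular}, whose argument is a $\cD$-module analogue of Serre's GAGA; I indicate its shape. Since $X$ is projective, the first step is to locate the singular locus algebraically: for an analytic holonomic $\cD_X$-module $\cM$, a local good filtration gives a conic closed analytic Lagrangian $\mathrm{Char}(\cM)\subset T^*X$, and Hironaka's resolution of singularities lets us assume that the image of $\mathrm{Char}(\cM)$ in $X$ lies in a normal crossing divisor $D$, outside of which $\cM$ is an integrable connection, i.e.\ an analytic vector bundle with flat connection. One may also reduce the whole statement to this situation by d\'evissage along $D$: the functors $\cM\mapsto\cM(*D)$ and proper pushforward preserve holonomicity, so induction on $\dim X$ leaves the case of meromorphic connections, where the Malgrange--Deligne asymptotic theory applies.

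The heart of the matter is to produce, on the projective $X$, a coherent $\cO_X$-submodule $\cL\subset\cM$ (a lattice) with $\cD_X\cdot\cL=\cM$ that is stable under $\cD_X(\log D)$ up to a fixed pole order along $D$. This is precisely Malgrange's existence-of-good-lattices theorem; it is obtained from the formal and asymptotic structure of $\cM$ near $D$, and it is the genuinely analytic input, not a formal consequence of GAGA. Granting such a coherent $\cL$, Serre's GAGA on projective $X$ makes $\cL$ algebraic; the connection on $X\setminus D$ and the pole-bounded $\cD_X$-action are $\bC$-linear differential operators of order $\leq 1$ between coherent sheaves, so GAGA upgrades them to algebraic data; hence $\cM=\cD_X\cdot\cL$ is algebraic. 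Full faithfulness of $(\cdot)^{\an}$ on $D^b_{\hol}$ then follows from GAGA for the pertinent $\RcHom$-sheaves.

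The main obstacle is the good-lattice step: unlike Serre's GAGA it is not a cohomological finiteness statement but requires controlling the irregular part of $\cM$ along $D$ (Hukuhara--Levelt--Turrittin normal forms and Stokes data), which is why the result is credited to Malgrange. Within the present framework one could alternatively argue through the irregular Riemann--Hilbert correspondence already established: an analytic holonomic module corresponds to an object of $D^b_{ic}(\Lambda_X)$ whose stratification and exponential factors become algebraic after a resolution, and the algebraic side of the correspondence then produces an algebraic $\cD$-module; but making this precise would re-prove Malgrange's theorem, so in the end I would simply invoke \cite{Malgrangeirregular}.
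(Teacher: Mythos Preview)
The paper does not prove this statement at all: it is stated as a known theorem of Malgrange and simply cited as \cite{Malgrangeirregular}, with no argument given. Your proposal therefore goes further than the paper does, supplying a correct high-level outline of Malgrange's strategy (reduction to meromorphic connections along a normal crossing divisor, existence of a good coherent lattice, then GAGA), and you rightly note that the good-lattice step is the substantive analytic input that cannot be shortcut. Your closing remark that one should ``simply invoke \cite{Malgrangeirregular}'' is exactly what the paper does.

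One caution: your suggested alternative via the irregular Riemann--Hilbert correspondence would indeed be circular in this paper, since the algebraic version (Theorem~\ref{theoremalgRH}) is proved \emph{using} Theorem~\ref{Malgirreg}. You flagged this yourself, but it is worth stating plainly.
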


By using this, we have the following algebraic version of irregular Riemann--Hilbert correspondence.
\begin{theorem}\label{theoremalgRH}
There exists an exact equivalence
\begin{equation}
\Sol^\Lambda_X\colon D^b_{\hol}(\cD_X)\xrightarrow{\cong} D^{b}_{ic}(\Lambda_X).
\end{equation}
\end{theorem}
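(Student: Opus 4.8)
The plan is to reduce the algebraic statement to the analytic Riemann--Hilbert correspondence (Theorem~\ref{RHmain}) together with Malgrange's algebraicity theorem (Theorem~\ref{Malgirreg}), mimicking the classical deduction of the algebraic regular Riemann--Hilbert correspondence from the analytic one as in \cite{HTTD}. First I would fix a smooth projective compactification $i_X\colon X\hookrightarrow \oX$ with boundary divisor $D_X$, and note that by Lemma~\ref{characterization} the category $D^b_{ic}(\Lambda_X)$ is identified, via $i_{\dX *}$ (equivalently via the open-dense realization on $\oX$), with the full triangulated subcategory of $D^b_{ic}(\Lambda_{\oX})$ cut out by the vanishing $i_{D_X}^{-1}\cV\simeq 0$ after the appropriate localization. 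On the $\cD$-module side, the analogous description is the standard one: a holonomic $\cD_X$-module corresponds to a holonomic $\cD_{\oX}$-module with regular-or-irregular singularities along $D_X$, obtained by the functor $\int_{j}$ where $j$ is an open immersion, and $D^b_{\hol}(\cD_X)$ is recovered as the Serre-type quotient / full subcategory of $D^b_{\hol}(\cD_{\oX})$ consisting of objects supported (in the relevant sense) away from $D_X$ after localization. The key input is that $\oX$ is projective, so every cohomologically holonomic analytic $\cD_{\oX^{\an}}$-module is algebraic by Theorem~\ref{Malgirreg}, i.e. the analytification functor $(\cdot)^{\an}\colon D^b_{\hol}(\cD_{\oX})\to D^b_{\hol}(\cD_{\oX^{\an}})$ is an equivalence.

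The main steps, in order, would be: (1) Show $(\cdot)^{\an}\colon D^b_{\hol}(\cD_{\oX})\xrightarrow{\simeq} D^b_{\hol}(\cD_{\oX^{\an}})$ is an equivalence of triangulated categories --- full faithfulness is a $\GAGA$-type statement on the projective variety $\oX$, and essential surjectivity is exactly Malgrange's theorem extended to complexes by the usual cohomological dévissage. (2) Compose with the analytic irregular Riemann--Hilbert equivalence $\Sol^\Lambda\colon D^b_{\hol}(\cD_{\oX^{\an}})\xrightarrow{\simeq}D^b_{ic}(\Lambda_{\oX})$ from Theorem~\ref{RHmain}, applied to the compact complex manifold $\oX^{\an}$; this gives a contravariant equivalence $D^b_{\hol}(\cD_{\oX})\xrightarrow{\simeq}D^b_{ic}(\Lambda_{\oX})$. (3) Match up the two localized/subcategory descriptions: on the left, $D^b_{\hol}(\cD_X)$ sits inside $D^b_{\hol}(\cD_{\oX})$ as objects that become constant/integrable-connection type near $D_X$ after the meromorphic localization $\cM\mapsto \cM(*D_X)$; on the right, $D^b_{ic}(\Lambda_X)$ sits inside $D^b_{ic}(\Lambda_{\oX})$ by Lemma~\ref{characterization}. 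I would check that $\Sol^\Lambda$ carries one to the other using Proposition~\ref{expimage} (which controls the image of meromorphic exponential modules as the irregular constant sheaves $\Lambda^\phi$) together with the commutativity of $\Sol^\Lambda$ with $i_{\dX}^{-1}$, $i_{D_X}^{-1}$, and the push-forwards $\int_j \leftrightarrow j_*$ established in Section~8 and in Proposition~\ref{commDRalg}. (4) Finally, independence of the construction on the choice of $\oX$ follows from the same bootstrapping argument already used for $\Mod_{ic}(\Lambda_X)$ (dominate two compactifications by a third and use $p_*$), so the resulting functor $\Sol^\Lambda_X$ is well-defined; exactness is inherited from the analytic case since all the comparison functors involved are t-exact up to shift.

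The main obstacle I expect is step (3): carefully identifying, on both sides, the intrinsic subcategory of $D^b(\cdot_{\oX})$ that corresponds to "comes from $X$". On the $\cD$-module side one must be careful that $D^b_{\hol}(\cD_X)$ is not literally a full subcategory of $D^b_{\hol}(\cD_{\oX})$ but rather is equivalent to the essential image of $\int_{j}$ (or, dually, a quotient), and one needs the localization triangle $\cM_{D_X}\to \cM\to \cM(*D_X)\xrightarrow{+1}$ to peel off the part supported on $D_X$; on the sheaf side the analogue is exactly the recollement triangle of Lemma~\ref{icrecollement}, $i_!i^{-1}\to \id\to j_*j^*\xrightarrow{+1}$. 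Showing $\Sol^\Lambda$ intertwines these two triangles compatibly --- so that "holonomic with no part genuinely living on $D_X$" maps to "irregular constructible with $i_{D_X}^{-1}(-)\simeq 0$" --- is the real content, and it rests on the functoriality statements of Section~8 (commutativity with $\bD$, $f^{-1}$, $f^!$, and proper push-forward). Once that compatibility is in hand, the theorem drops out by restricting the equivalence of step (2) to the matched subcategories. A minor additional point to verify is that the algebraic stratification condition in the definition of $\Mod_{ic}(\Lambda_X)$ corresponds precisely to algebraic regular/irregular holonomicity, which is again classical on the $\cD$-module side and follows for the sheaf side from Theorem~\ref{RHmain} applied stratum by stratum.
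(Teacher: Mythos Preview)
Your proposal is essentially the same strategy as the paper's proof: compactify, invoke Malgrange plus the analytic equivalence on $\oX$, and then match the full subcategory $\{\,\cV : i_{D_X}^{-1}\cV\simeq 0\,\}\subset D^b_{ic}(\Lambda_{\oX})$ with the essential image of $\int_{i_X}\colon D^b_{\hol}(\cD_X)\hookrightarrow D^b_{\hol}(\cD_{\oX})$. The paper carries this out exactly as you outline in step~(3): it verifies $i_S^{-1}\Sol^\Lambda_{\oX^\an}\bigl((\int_{i_X}\cM)^{\an}\bigr)\simeq 0$ for each stratum $S\subset D_X$ using the analytic commutativity $i_S^{-1}\circ\Sol^\Lambda\simeq \Sol^\Lambda\circ i_S^\dagger$ (Proposition~\ref{commpull}) together with $i_S^\dagger\int_{i_X}\cM\simeq 0$, and runs the same computation backwards for essential surjectivity.

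One caution: you cite Proposition~\ref{commDRalg} as an input, but that proposition is stated and proved \emph{after} the present theorem and uses the algebraic functor $\DR^\Lambda_X$ defined via $\Sol^\Lambda_X$; invoking it here would be circular. Only the \emph{analytic} commutativities from Section~8 (and the standard compatibility $(i_S^\dagger\cM)^{\an}\simeq i_{S^{\an}}^\dagger\cM^{\an}$) are needed and available at this point. Likewise, Proposition~\ref{expimage} is not really what drives step~(3); the argument does not appeal to the explicit form of $\Sol^\Lambda(\cE^\phi)$ but only to the pull-back compatibility just mentioned. With those citations corrected, your outline matches the paper's proof.
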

\begin{proof}
If $X$ is projective, there is nothing to prove by Theorem \ref{Malgirreg}. We suppose $X$ is quasi-projective and $\overline{X}$ be a compactification of $X$. For $\cM\in D^{b}_{\hol}(\cD_X)$, we have $\int_{i_X}\cM\in D^b_{\hol}(\cD_\oX)$ where $i_X\colon X\hookrightarrow \oX$ is the inclusion. We also set $i_\dX\colon \dX\rightarrow (\oX, \varnothing)$ the canonical morphism. Then we get a functor 
\begin{equation}
\Sol^\Lambda_X:= i_\dX^{-1}\circ \Sol^\Lambda_{\oX^\an}\circ(\cdot)^{\an} \circ\int_{i_X}\colon D^b_\hol(\cD_X)\rightarrow D^{b}_{ic}(\Lambda_{\oX^\an})= D^b_{ic}(\Lambda_\oX)\xrightarrow{i_\dX^{-1}} D^b_{ic}(\Lambda_\dX).
\end{equation}
The middle equality is Chow's lemma. Note that the first three compositions are fully faithful.

Hence, to prove the fully faithfulness of $\Sol^\Lambda_X$, it suffices to show that that image of $\Sol^\Lambda_{\oX^\an}\circ(\cdot)^{\an} \circ\int_{i_X}$ is zero under $i_{D_X}^{-1}$ by Lemma \ref{characterization}. Let $\cS$ be a stratification of $D_X$ such that each stratum is smooth. For $S\in \cS$, we have $i_S^{\dagger}(\int_{i_X}\cM)\simeq 0$ where $i_S\colon S\hookrightarrow \oX$ is the inclusion. Hence we have
\begin{equation}
\begin{split}
i_S^{-1}\Sol^{\Lambda}_X(\cM)&\simeq i_S^{-1}\circ \Sol^\Lambda_{\oX^\an}\circ (\cdot)^{\an}\circ \int_{i_X}\cM\\
&\simeq \Sol^\Lambda_{\oX^\an}\circ i_{S}^{\dagger}\circ \lb\int_{i_X}\cM\rb^\an\\
&\simeq \Sol^\Lambda_{\oX^\an}\circ \lb i_{S}^{\dagger}\circ \int_{i_X}\cM\rb^\an\simeq 0
\end{split}
\end{equation}
Hence we can conclude that $i_{D_X}^{-1}\Sol^\Lambda_{\oX^\an}(\cM)\simeq 0$. Then the fully faithfulness of $\Sol^\Lambda_X$ is evident from Lemma \ref{characterization}. 

To see the essential surjectivity, let us take an object $\cV\in D^b_{ic}(\Lambda_X)$ and consider it as an object of $D^b_{ic}(\Lambda_\oX)=D^b_{ic}(\Lambda_{\oX^\an})$. Then we have $\cM:=(\Sol^{\Lambda}_{\oX^{\an}})^{-1}(\cV)\in D^b_{\hol}(\cD_{\oX^\an})$. We set $\cM^\alg:=((\cdot)^\an)^{-1}(\cM)$. Take a stratification $\cS$ of $D_X$ with smooth strata. To prove $\cM^\alg$ is isomorphic to $\int_{i_X}\cN$ for $\cN\in D^b_{\hol}(\cD_X)$, it is enough to see $i_S^{\dagger}\cM^{\alg}\simeq 0$ for each $S\in \cS$. Note that $i_S^\dagger\cM^\alg\simeq 0$ is equivalent to $(i_S^\dagger\cM^\alg)^{an}\simeq 0$. The latter can be shown as follows:
\begin{equation}
\begin{split}
\Sol_{S^\an}^\Lambda(i_S^\dagger\cM^\alg)^{\an}&\simeq \Sol^\Lambda_{S^\an}(i_{S^\an}^\dagger\cM) \\
&\simeq i_S^{-1}\Sol^\Lambda_{\oX^\an}(\cM)\\
&\simeq i_S^{-1}\cV\simeq 0.
\end{split}
\end{equation}
This completes the proof.
\end{proof}

For the next section, we also prepare the following: We set $\DR^\Lambda_X:=\bD_{\dX}\circ \Sol^\Lambda_X$, which is an equivalence.
\begin{lemma}
There exist isomorphisms
\begin{equation}
\DR^\Lambda_X\simeq i_{\dX}^{-1}\circ \DR^\Lambda_{\oX^\an}\circ (\cdot)^\an\circ \int_{i_X}\simeq i_\dX^{-1}\circ \DR^\Lambda_{\oX^\an}\circ (\cdot)^\an\circ \int_{i_X!}.
\end{equation}
\end{lemma}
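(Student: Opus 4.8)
The plan is to prove the two displayed isomorphisms separately, in each case reducing to results from Sections~4, 6 and 8 together with the recollement Lemma~\ref{characterization}. Fix $\cM\in D^b_\hol(\cD_X)$ and put $\cV':=\Sol^\Lambda_{\oX^\an}((\int_{i_X}\cM)^\an)\in D^b_{ic}(\Lambda_{\oX})$; the computation in the proof of Theorem~\ref{theoremalgRH} shows $i_{D_X}^{-1}\cV'\simeq 0$. Since $\DR^\Lambda_{\oX^\an}=\bD\circ\Sol^\Lambda_{\oX^\an}$ and $\Sol^\Lambda_X=i_\dX^{-1}\circ\Sol^\Lambda_{\oX^\an}\circ(\cdot)^\an\circ\int_{i_X}$, the left-hand side of the first isomorphism is $\DR^\Lambda_X(\cM)=\bD_\dX\, i_\dX^{-1}\cV'$ and the middle term is $i_\dX^{-1}\bD\cV'$. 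By Proposition~\ref{invshriek} (in the form $i_\dX^{-1}\circ\bD\cong\bD\circ i_\dX^{!}$) the middle term is $\bD_\dX\, i_\dX^{!}\cV'$; as $\bD_\dX$ is an equivalence (Corollary~\ref{icVerdier}), the first isomorphism is equivalent to the assertion that $i_\dX^{!}\cV'\cong i_\dX^{-1}\cV'$ for every $\cV'\in D^b_{ic}(\Lambda_{\oX})$ with $i_{D_X}^{-1}\cV'\simeq 0$.

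To prove this I would apply the recollement triangle of Lemma~\ref{icrecollement} to the decomposition of $\oX$ into the open interior $X$ and the closed complement $D_X$: it gives $i_{\dX!}i_\dX^{-1}\cV'\to\cV'\to i_{D_X*}i_{D_X}^{-1}\cV'\xrightarrow{[1]}$, whence $i_{\dX!}i_\dX^{-1}\cV'\xrightarrow{\sim}\cV'$ by hypothesis. Applying Lemma~\ref{icrecollement} instead to $\bD\cV'$ and dualizing the resulting triangle — using Proposition~\ref{invshriek}, the identities $\bD\circ f_!\cong f_*\circ\bD$ and $\bD\circ f_*\cong f_!\circ\bD$ (the first being Proposition~\ref{shriekadjunction} with $\cV=\omega^\Lambda$, the second its dual), and $\bD\bD\cong\id$ (Lemma~\ref{unipotent}) — produces the complementary triangle $i_{D_X!}i_{D_X}^{!}\cV'\to\cV'\to i_{\dX*}i_\dX^{!}\cV'\xrightarrow{[1]}$. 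Here $i_{D_X}^{!}\cV'\cong\bD\, i_{D_X}^{-1}\bD\cV'$, and $\bD$ preserves supports (because $\frakF$ commutes with $\bD$, via its compatibility with $\RcHom$ and the identification $\frakF(\omega^\Lambda)\cong\omega$, and ordinary Verdier duality preserves supports), so $i_{D_X}^{-1}\cV'\simeq 0$ forces $i_{D_X}^{-1}\bD\cV'\simeq 0$ and hence $i_{D_X}^{!}\cV'\simeq 0$. Thus $i_{\dX*}i_\dX^{!}\cV'\xrightarrow{\sim}\cV'$ as well, and applying $i_\dX^{-1}$ to $i_{\dX!}i_\dX^{-1}\cV'\cong\cV'\cong i_{\dX*}i_\dX^{!}\cV'$ together with $i_\dX^{-1}i_{\dX!}\cong\id\cong i_\dX^{-1}i_{\dX*}$ (Lemma~\ref{characterization}) gives $i_\dX^{-1}\cV'\cong i_\dX^{!}\cV'$, completing the first isomorphism.

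For the second isomorphism I would take the canonical morphism $\int_{i_X!}\cM\to\int_{i_X}\cM$ associated to the open immersion $i_X\colon X\hookrightarrow\oX$ and let $\cC$ be its cone in $D^b_\hol(\cD_\oX)$; since the morphism restricts to the identity over $X$, one has $i_X^\dagger\cC\simeq 0$. Applying the exact functor $\DR^\Lambda_{\oX^\an}\circ(\cdot)^\an$ and then $i_\dX^{-1}$, it suffices to see that $i_\dX^{-1}\DR^\Lambda_{\oX^\an}(\cC^\an)\simeq 0$. Restriction to the open set $X$ commutes with $\DR^\Lambda$ (the algebraic counterpart of the compatibility of $\DR^\Lambda$ with $f^\dagger$ obtained from Proposition~\ref{commpull}, applied to $i_X$, for which $i_X^\dagger$ is restriction), so the restriction of $\DR^\Lambda_{\oX^\an}(\cC^\an)$ to $X$ vanishes. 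Hence, by Lemma~\ref{Finv}, $\frakF(i_\dX^{-1}\DR^\Lambda_{\oX^\an}(\cC^\an))$ is the restriction to $X$ of $\frakF(\DR^\Lambda_{\oX^\an}(\cC^\an))$, which vanishes; since $i_\dX^{-1}\DR^\Lambda_{\oX^\an}(\cC^\an)$ has irregular constructible cohomology, Lemma~\ref{vanishingF} applied to each cohomology sheaf forces it to be $0$. This yields $i_\dX^{-1}\DR^\Lambda_{\oX^\an}((\int_{i_X!}\cM)^\an)\xrightarrow{\sim}i_\dX^{-1}\DR^\Lambda_{\oX^\an}((\int_{i_X}\cM)^\an)$, which together with the first isomorphism proves the lemma.

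The step I expect to be the main obstacle is the identification of the two recollement triangles and, inside it, the vanishing $i_{D_X}^{!}\cV'\simeq 0$: this requires pinning down how $\bD$ intertwines $f^{-1}$, $f^{!}$, $f_*$ and $f_!$ for the possibly singular closed inclusion $i_{D_X}$, and knowing that Verdier duality preserves supports in $D^b_{ic}(\Lambda_{\oX})$. A secondary technical point is that the commutativities of Section~8 (with $\bD$, $f^\dagger$, $f^{-1}$ and analytification) must be invoked in their algebraic forms, i.e.\ compatibly with the compactification $i_X$ and with the functors $\int_{i_X}$ and $\int_{i_X!}$.
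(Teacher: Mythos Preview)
Your argument is correct in outline but takes a much longer route than the paper, and one step leans on a fact the paper never proves.

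For the first isomorphism, the paper's proof is a single line: unwinding the definition $\DR^\Lambda_X=\bD_\dX\circ i_\dX^{-1}\circ\Sol^\Lambda_{\oX^\an}\circ(\cdot)^\an\circ\int_{i_X}$ and then commuting $\bD_\dX$ past $i_\dX^{-1}$, i.e.\ using $\bD_\dX\circ i_\dX^{-1}\simeq i_\dX^{-1}\circ\bD_\oX$. This commutation holds simply because $i_\dX$ is the open inclusion of interiors $X\hookrightarrow\oX$: the functor $\RcHom$ is local, and $i_\dX^{-1}\omega^\Lambda_{\oX}\simeq\omega^\Lambda_\dX$. Equivalently, $i_\dX^{!}\simeq i_\dX^{-1}$ for this open map, and then Proposition~\ref{invshriek} gives the result immediately. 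You instead prove the special instance $i_\dX^{-1}\cV'\simeq i_\dX^{!}\cV'$ via two recollement triangles and a support argument; that works, but to get $i_{D_X}^{!}\cV'\simeq 0$ you invoke $\frakF\circ\bD\cong\bD\circ\frakF$, which is \emph{not} established in the paper (the relevant lemma is in fact commented out of the source). This is a true fact and your argument becomes complete once it is supplied, but the whole recollement detour is unnecessary: the commutation of $\bD$ with restriction to an open set is the more basic statement and bypasses the gap entirely.

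For the second isomorphism your approach matches the paper's intent: the cone $\cC$ of $\int_{i_X!}\cM\to\int_{i_X}\cM$ satisfies $i_X^\dagger\cC\simeq 0$, so after $\DR^\Lambda_{\oX^\an}\circ(\cdot)^\an$ it is killed by $i_\dX^{-1}$. The paper simply says this is ``clear from the composition of $i_\dX^{-1}$''. One small slip: the compatibility you cite gives $\DR^\Lambda\circ f^\dagger\simeq f^{!}\circ\DR^\Lambda$, not $f^{-1}\circ\DR^\Lambda$; again you are implicitly using $i_\dX^{!}\simeq i_\dX^{-1}$ for the open inclusion, so it is cleaner to state that identification once at the outset rather than route through $\frakF$ and Lemma~\ref{vanishingF}.
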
\label{anothersol}
\begin{proof}
We have
\begin{equation}
\begin{split}
\DR^\Lambda_X&\simeq \bD_{\dX}\circ i_\dX^{-1}\circ \Sol^\Lambda_{\oX^\an}\circ (\cdot)^\an\circ\int_{i_X}\\
&\simeq i^{-1}_\dX\circ \bD_\oX\circ \Sol^\Lambda_{\oX^\an}\circ (\cdot)^\an \circ\int_{i_X}\\
&=i^{-1}_\dX\circ \DR^\Lambda_{\oX^\an}\circ (\cdot)^\an \circ \int_{i_X}.
\end{split}
\end{equation}
The second equality is clear from the composition of $i^{-1}_\dX$.
\end{proof}

\subsection{Commutativities}
Let $X, Y$ be smooth quasi-projective varieties and $f\colon X\rightarrow Y$ be a morphism. Recall that holonomicity of algebraic $\cD$-modules are preserved by six operations.
\begin{proposition}\label{commDRalg}
There exist canonical isomorphisms:
\begin{eqnarray}
\bR f_*\circ \DR_X^\Lambda&\simeq \DR^\Lambda_Y\circ \int_{f}\\
\bR f_!\circ \DR_X^\Lambda &\simeq \DR_Y^\Lambda \circ \int_{f!}\\
f^{-1}\circ \DR_Y^\Lambda&\simeq \DR_X^\Lambda\circ f^\star \\
f^!\circ \DR_Y^\Lambda&\simeq \DR_X^\Lambda\circ f^\dagger.
\end{eqnarray}
\end{proposition}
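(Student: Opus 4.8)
The plan is to reduce all four identities to the analytic commutativity results of Section~8.4, to the compatibility of algebraic direct/inverse images with analytification (\cite[Proposition~4.7.2]{HTTD}), and to the presentation of $\DR^\Lambda_X$ through a compactification given in Lemma~\ref{anothersol}. I would first fix, for the given $f\colon X\to Y$, smooth projective compactifications $\oX\supseteq X$ and $\oY\supseteq Y$ together with a projective morphism $\of\colon\oX\to\oY$ such that $\of\circ i_X=i_Y\circ f$, where $i_X,i_Y$ are the open immersions (such a $\of$ is obtained in the standard way, by resolving the closure of the graph of $f$ inside $\oX\times\oY$). With this data, every algebraic $\cD$-module operation factors through $\of$ (e.g.\ $\int_\of\circ\int_{i_X}\simeq\int_{i_Y}\circ\int_f$ and $\int_{\of!}\circ\int_{i_X!}\simeq\int_{i_Y!}\circ\int_{f!}$), and the sheaf-side operations are defined through $\of$ by construction.

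For the first identity I would unwind $\bR f_*=i_\dY^{-1}\circ\bR\of_*\circ i_{\dX *}$ together with $\DR^\Lambda_X\simeq i_\dX^{-1}\circ\DR^\Lambda_{\oX^\an}\circ(\cdot)^\an\circ\int_{i_X}$. The key point is the identification $i_{\dX *}\,\DR^\Lambda_X(\cM)\simeq\DR^\Lambda_{\oX^\an}\bigl((\textstyle\int_{i_X}\cM)^\an\bigr)$: by Lemma~\ref{characterization} this amounts to $i_{D_X}^{-1}\DR^\Lambda_{\oX^\an}\bigl((\int_{i_X}\cM)^\an\bigr)\simeq0$, which is exactly the vanishing established in the proof of Theorem~\ref{theoremalgRH} ($\int_{i_X}\cM$ is localized along $D_X$, so $i_S^\dagger\int_{i_X}\cM\simeq0$ for every stratum $S$ of $D_X$; apply Proposition~\ref{commpull} and then $\bD$, which preserves supports). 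Having removed $i_{\dX *}i_\dX^{-1}$, one applies the analytic statement of Proposition~\ref{pushsolcommute} for the proper morphism $\of^\an$ (for which $\int_{\of^\an}\simeq\int_{\of^\an!}$ and $\bR\of^\an_*\simeq\bR\of^\an_!$), the analytification compatibility $(\int_\of-)^\an\simeq\int_{\of^\an}(-)^\an$, the composition identity $\int_\of\circ\int_{i_X}\simeq\int_{i_Y}\circ\int_f$, and finally Lemma~\ref{anothersol} for $Y$; this gives $\bR f_*\DR^\Lambda_X(\cM)\simeq i_\dY^{-1}\DR^\Lambda_{\oY^\an}\bigl((\int_{i_Y}\int_f\cM)^\an\bigr)\simeq\DR^\Lambda_Y(\int_f\cM)$. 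The second identity is proved in the same way, using instead the presentation of $\DR^\Lambda$ with $\int_{i_X!}$ from Lemma~\ref{anothersol}, the definition $\bR f_!=i_\dY^{-1}\circ\bR\of_!\circ i_{\dX !}$, the $!$-composition $\int_{\of!}\circ\int_{i_X!}\simeq\int_{i_Y!}\circ\int_{f!}$, and the same vanishing applied to $\bD\cM$.

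The third and fourth identities then follow formally. Since $\DR^\Lambda$ is an equivalence (Theorem~\ref{theoremalgRH}) and intertwines the $\cD$-module adjoint pair $(f^\star,\int_f)$ with the sheaf adjoint pair $(f^{-1},\bR f_*)$, and $(\int_{f!},f^\dagger)$ with $(\bR f_!,f^!)$ — the sheaf-side adjunctions being the ones of Section~4 for the compactified tame morphism, transported to the algebraic setting via $i_\dX^{-1}i_{\dX!}\simeq\id$ and the open-immersion identities — the first identity yields $f^{-1}\circ\DR^\Lambda_Y\simeq\DR^\Lambda_X\circ f^\star$ by Yoneda, and the second yields $f^!\circ\DR^\Lambda_Y\simeq\DR^\Lambda_X\circ f^\dagger$. (Alternatively, the fourth is the Verdier dual of the third, using $f^\star=\bD f^\dagger\bD$ and the commutation of $\DR^\Lambda$ with $\bD$ in the algebraic case, itself deduced from the analytic $\Sol^\Lambda\circ\bD\simeq\bD\circ\Sol^\Lambda$, the two presentations in Lemma~\ref{anothersol}, and Proposition~\ref{invshriek}.)

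The main obstacle I anticipate is the careful handling of the open part of the compactification: one must verify that $\int_{i_X}\cM$ and $\int_{i_X!}\cM$ carry no boundary contribution along $D_X$ so that $i_{\dX *}$, $i_{\dX !}$ and $i_\dX^{-1}$ are mutually inverse on the objects occurring (Lemma~\ref{characterization}); that the comparison morphisms produced at each step are the canonical ones and are compatible with compositions, so that the resulting isomorphisms are functorial (and in particular can be iterated when $f$ is itself factored); and that $\bR f_*$, $\bR f_!$, $f^!$ as defined in Section~10.2 are independent of the chosen compactification, which follows from the corresponding independence of $\Mod_{ic}(\Lambda_X)$. Once this bookkeeping is in place, each step is a direct application of an already established result.
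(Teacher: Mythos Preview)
Your treatment of the push-forward identities is essentially the paper's argument: compactify $f$ to a proper $\of$, use Lemma~\ref{anothersol} to unfold $\DR^\Lambda_X$, apply the analytic proper-push-forward commutativity (Proposition~\ref{pushsolcommute}) to $\of^\an$ together with analytification compatibility, and absorb the boundary via $i_\dX^{-1}$. The paper writes this out only for the $!$-case and in one line, whereas you make the boundary-vanishing step explicit; that is the same proof with more bookkeeping. One imprecision: for the $*$-identity you invoke Lemma~\ref{characterization} to replace $i_{\dX*}\,i_\dX^{-1}$ by the identity, but that lemma only characterizes the essential image of $i_{\dX!}$ by the condition $i_{D_X}^{-1}\cV\simeq 0$; for $i_{\dX*}$ the relevant condition is $i_{D_X}^{!}\cV\simeq 0$. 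In fact both vanishings hold here (your ``$\bD$ preserves supports'' remark gives $i_{D_X}^{-1}\DR\simeq 0$, and $i_{D_X}^{!}\DR=\bD\,i_{D_X}^{-1}\Sol\simeq 0$ directly from the vanishing in Theorem~\ref{theoremalgRH}), so the argument goes through once you cite the correct criterion.

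Where your route genuinely differs from the paper is for the pull-backs. The paper handles the third and fourth lines \emph{first} and directly: one composes $f^{-1}=i_\dX^{-1}\circ(\of^{\an})^{-1}$ with the analytic identity $(\of^{\an})^{-1}\DR^\Lambda_{\oY^\an}\simeq\DR^\Lambda_{\oX^\an}\circ(\of^{\an})^{\star}$, then observes that $\of^{\star}\int_{i_Y}\cN$ and $\int_{i_X}f^{\star}\cN$ agree after $i_\dX^{-1}\DR^\Lambda_{\oX^\an}(\cdot)^\an$ because their difference is supported on $D_X$. You instead derive the pull-back identities formally from the push-forward ones via the adjunctions $(f^\star,\int_f)$, $(\int_{f!},f^\dagger)$ and $(f^{-1},\bR f_*)$, $(\bR f_!,f^!)$. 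This is clean and avoids any stratum-by-stratum check, but it requires you to verify that the newly defined algebraic $\bR f_*$, $\bR f_!$ really sit in adjunctions with the pull-backs on $D^b(\ModI(\Lambda_\dX))$; the paper sidesteps that verification entirely because its direct argument for pull-backs needs only Proposition~\ref{commpull} and the support observation.
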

\begin{proof}
The third and fourth lines follow from the analytic cases. To prove the first and second ones, let us take a projective compactification $i_X\colon X\hookrightarrow \oX$ and $i_Y\colon Y\hookrightarrow \oY$ and a map $\overline{f}\colon \oX\rightarrow \oY$ extending $f$. We have 
\begin{equation}
\begin{split}
\DR^\Lambda_Y\circ \int_{f!} &\simeq i_\dY^{-1}\circ \DR^\Lambda_{\oY^\an}\circ (\cdot)^\an \circ \int_{i_Y!}\circ \int_{f!}\\
&\simeq  i_\dY^{-1}\circ \DR^\Lambda_{\oY^\an}\circ (\cdot)^\an \circ \int_{\overline{f}}\circ \int_{i_X!}\\
&\simeq i_\dY^{-1}\circ \bR \overline{f}_!\circ \DR^\Lambda_{\oY^\an}\circ (\cdot)^\an \circ \int_{i_X!}\\
&\simeq \bR f_!\circ i_\dX^{-1}\circ  \DR^\Lambda_{\oX^\an}\circ (\cdot)^\an\circ  \int_{i_X!}\\
&\simeq \bR f_!\circ \DR^\Lambda_X.
\end{split}
\end{equation}
By using Lemma \ref{anothersol}, one can prove the first formula in the same way.
\end{proof}

\subsection{Algebraic irregular perverse sheaves}
In the same way as in Definition \ref{irregularperversetstr}, we define $(\pD^{\leq 0}_{ic}(\Lambda_X), \pD^{\geq 0}_{ic}(\Lambda_X))$ on $D^{b}_{ic}(\Lambda_X)$. 
\begin{proposition}
The following holds:
\begin{enumerate}
\item The pair $(\pD^{\leq 0}_{ic}(\Lambda_X), \pD^{\geq 0}_{ic}(\Lambda_X))$ forms a t-structure on $D^{b}_{ic}(\Lambda_X)$.
\item The heart $\Ierv(k_X)$ of the t-structure $(\pD^{\leq 0}_{ic}(\Lambda_X), \pD^{\geq 0}_{ic}(\Lambda_X))$ is equivalent to $\Mod_{\hol}(\cD_X)$ under the Riemann--Hilbert correspondence (Theorem \ref{theoremalgRH}).
\item  The heart $\Ierv(\bk_X)$ is stable under the Verdier dual. The t-exactness in Proposition \ref{texact} also holds in this setting without the properness assumption.
\end{enumerate}
\end{proposition}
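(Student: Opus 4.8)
The plan is to transport the three analytic results---Proposition~\ref{tstr}, Theorem~\ref{main}, and Proposition~\ref{texact}---to the algebraic setting, using the algebraic Riemann--Hilbert correspondence (Theorem~\ref{theoremalgRH}), the commutativities of Proposition~\ref{commDRalg}, and the functor $\frakF$, which is defined for topological spaces with boundary and hence applies to $\dX=(\oX,D_X)$ as in the algebraic definitions. For part~(1) I would repeat the proof of Proposition~\ref{tstr} verbatim, replacing Whitney stratifications by algebraic ones. The only input that needs rechecking is the algebraic analogue of Lemma~\ref{vanishinglemma}: for a stratum $S$ one has $H^j i_\dS^{-1}\cF\simeq 0$ for $j>-\dim S$ when $\cF\in\pD^{\leq 0}_{ic}(\Lambda_X)$, and dually for $i^!_\dS$. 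This follows just as before from the t-exactness of $\frakF$ (Lemma~\ref{exactnessofF}), its commutation with $i_\dS^{-1}$ (Lemma~\ref{Finv}), the conservativity statement Lemma~\ref{vanishingF}, and the standard perverse t-structure on the category of algebraic $\bC$-constructible complexes; the $\Hom$-vanishing and the truncation triangles then follow by the usual argument (\cite{self-dual}, \cite{HTTD}).

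For part~(2), by Lemma~\ref{6.11} it suffices to show that $\DR^\Lambda_X$ (equivalently $\Sol^\Lambda_X$) is t-exact, where $D^b_\hol(\cD_X)$ carries the perverse t-structure with heart $\Mod_\hol(\cD_X)$. Here I would exploit that the support of an object of $D^b_{ic}(\Lambda_\dX)$ is computed on the open part $X=\oX\bs D_X$ through $\frakF$: from the formula $\DR^\Lambda_X\simeq i_\dX^{-1}\circ\DR^\Lambda_{\oX^\an}\circ(\cdot)^\an\circ\int_{i_X}$ (Lemma~\ref{anothersol}) together with $i_X^\dagger\circ\int_{i_X}\simeq\id$ and the commutation of $\DR^\Lambda$ with open restriction (Proposition~\ref{commDRalg}), one gets, for $\cM\in\Mod_\hol(\cD_X)$, that $\supp H^j(\DR^\Lambda_X(\cM))$ agrees with $\supp H^j(\DR^\Lambda_{X^\an}(\cM^\an))$, and similarly for $\bD\cM$ using $\Sol^\Lambda=\bD\circ\DR^\Lambda$ and that $\bD$ preserves $\Mod_\hol$. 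Both support bounds are then precisely the t-exactness established inside the proof of Theorem~\ref{main}, applied to the (possibly non-compact) complex manifold $X^\an$. Hence $\DR^\Lambda_X$ is t-exact and, by Lemma~\ref{6.11}, restricts to the asserted contravariant equivalence $\Mod_\hol(\cD_X)\xrightarrow{\simeq}\Ierv(\bk_X)$. The delicate point, and the main obstacle, is that the reduction to the projective $\oX$ via $\int_{i_X}$ does not land in the heart on $\oX$ (an open direct image is not t-exact), so the argument must genuinely pass back to the open part $X^\an$; an alternative is to re-run the dimensional induction of Theorem~\ref{main}, which is available algebraically using resolution of singularities and Proposition~\ref{commDRalg}.

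For part~(3): the irregular perverse t-structure of Definition~\ref{irregularperversetstr} is symmetric in $\cV\leftrightarrow\bD\cV$, so the contravariant autoequivalence $\bD$ of $D^b_{ic}(\Lambda_X)$ interchanges $\pD^{\leq 0}_{ic}(\Lambda_X)$ and $\pD^{\geq 0}_{ic}(\Lambda_X)$, and in particular preserves $\Ierv(\bk_X)$. For the t-exactness of Proposition~\ref{texact} without the properness hypothesis, the additional ingredient is that algebraically $\bR f_*$ and $\bR f_!$ are defined for every morphism (via compactification) and preserve cohomological irregular constructibility, as already shown. One checks that $\frakF$ still commutes with the algebraically defined $\bR f_!$: writing $\bR f_!=i_\dY^{-1}\circ\bR f_!\circ i_{\dX!}$, this follows from $\frakF\circ i_{\dX!}\simeq i_{X!}\circ\frakF$ (Lemma~\ref{Fpush2}), the commutation of $\frakF$ with proper $\bR f_!$ (Lemma~\ref{Fpush}), and Lemma~\ref{Finv}. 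Then statements (1) and (4) of Proposition~\ref{texact} follow from these $\frakF$-commutations and the corresponding bounds for algebraic constructible sheaves exactly as in the analytic proof, and statements (2) and (3) are their Verdier duals.
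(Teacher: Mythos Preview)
Your proposal is correct and follows essentially the same route as the paper. The paper's own proof is extremely terse: it simply says parts (1), (2), and the first half of (3) are ``the same as in the analytic setting,'' and then for the non-proper push-forward in (3) it establishes exactly the commutation $\frakF\circ\bR f_!\simeq\bR f_!\circ\frakF$ via the factorization $\bR f_!=i_\dY^{-1}\circ\bR\overline{f}_!\circ i_{\dX!}$ and Lemmas~\ref{Fpush}, \ref{Fpush2}, just as you do. Your discussion in part~(2) of why one must pass back to the open part $X^\an$ rather than argue on $\oX$ (since $\int_{i_X}$ is not t-exact) is a useful clarification that the paper leaves implicit in its one-line reduction to the analytic case.
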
 
\begin{proof}
One can prove in the same way as in the analytic setting except for non-proper setting of 3. Let $f$ be a morphism $X\rightarrow Y$ and a compactification $\oX\rightarrow \oY$. Then we have 
\begin{equation}
\frakF\circ \bR f_!:=\frakF\circ \bR\of_!\circ i_{\dX!}\simeq \bR \of_!\circ i_{X!}\circ \frakF\simeq \bR f_!\circ \frakF
\end{equation}
by Lemma \ref{Fpush} and \ref{Fpush2}. This proves the desired statement for $\bR f_!$. The statement for $\bR f_*$ is obtained by taking the Verdier dual.
\end{proof}

\section{Fukaya categorical Riemann--Hilbert correspondence}
In this section, we speculate some constructions which explain the appearance of $\Lambda$-module more naturally by using Fukaya category. We hope more details or proofs will be discussed in future papers.

Let us recall the following theorem: Let $Z$ be a compact real analytic manifold and $D^b_{\bR\hi c}(Z)$ be the bounded derived category of $\bR$-constructible sheaves. 
\begin{theorem}[\cite{NZ, Nad}]
There exists a Fukaya-type $A_{\infty}$-category $\Fuk(T^*Z)$ of $T^*Z$ and an equivalence
\begin{equation}
D^b_{\bR\hi c}(Z)\simeq D\Fuk(T^*Z)
\end{equation}
where the right hand side is the derived category of $\Fuk (T^*Z)$.
\end{theorem}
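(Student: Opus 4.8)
The plan is to reproduce the Nadler--Zaslow--Nadler argument, pinning down a generating family on each side and matching them. \textbf{Standard objects.} First I would fix a triangulation (or a compatible Whitney stratification) of $Z$ and, on the sheaf side, single out the \emph{standard objects} $j_{U!}\bk_U$, where $U$ runs over the open stars of the triangulation and $j_U\colon U\hookrightarrow Z$ is the open inclusion; a routine fact is that $D^b_{\bR\hi c}(Z)$ is split-generated as a triangulated category by these standards (every $\bR$-constructible complex has a finite resolution by direct sums of standards adapted to a common triangulation), and that $\Hom$ between two standards is computed as a relative cohomology of $U\cap V$. On the symplectic side I would attach to each $U$ a \emph{standard Lagrangian brane} $L_U\subset T^*Z$: choose a nonnegative function $f_U$ on $Z$ vanishing off $U$ and blowing up to $+\infty$ along $\partial U$, let $L_U$ be a Morse--Smale perturbation of the graph of $df_U$ over $U$, conicalized near $\partial U$ so that its ideal boundary is the positive coconormal of $\partial U$; the function $f_U$ endows $L_U$ with a canonical grading and pin structure, hence with a brane structure.

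\textbf{Morphisms and microlocalization.} The heart of the proof is the identification
\[
\Hom_{D\Fuk(T^*Z)}(L_U,L_V)\;\simeq\;\Hom_{D^b_{\bR\hi c}(Z)}(j_{U!}\bk_U,\,j_{V!}\bk_V)
\]
together with compatibility of the $A_\infty$ products. Since $T^*Z$ is a cotangent bundle and all Lagrangians in sight are exact, the Floer differential --- a count of pseudoholomorphic strips with boundary on $L_U\cup L_V$ --- degenerates, via the Fukaya--Oh correspondence between holomorphic polygons and gradient trees, to a Morse-theoretic count for $f_U-f_V$, and this Morse complex is precisely the one computing the relative cohomology above. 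Equivalently I would invoke Nadler--Zaslow's microlocalization functor $\mu\colon D\Fuk(T^*Z)\to D^b_{\bR\hi c}(Z)$, built from the family of standard branes through the front projection and antimicrolocalization, and verify directly that $\mu(L_U)\simeq j_{U!}\bk_U$ and that $\mu$ is fully faithful on the full subcategory spanned by standards.

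\textbf{Generation and conclusion.} The remaining step, and the hard one, is to show that the standard branes $L_U$ split-generate $D\Fuk(T^*Z)$ --- this is Nadler's theorem that microlocal branes are constructible sheaves. The idea is that an arbitrary exact Lagrangian brane (compact, or suitably admissible at infinity) can be moved by a Hamiltonian isotopy, flowing it along the geodesic/Reeb direction and letting it relax, into a configuration whose front projection is a union of the chosen $U$'s, so that $L$ is exhibited as an iterated cone on standards; transversality and compactness of the relevant moduli spaces must be arranged along the way. Granting this together with full faithfulness on standards, a formal argument closes the proof: $\mu$ is an exact functor of idempotent-complete triangulated categories which is fully faithful on a set of generators and surjective onto a set of generators, hence an equivalence. (The statement now also follows from Guillermou's sheaf-quantization construction, or from the Ganatra--Pardon--Shende microlocal sheaf theory, but the route above is the one of the cited references.)

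\textbf{Main obstacle.} The principal difficulty is the Fukaya-side generation statement and its analytic foundations: controlling the perturbation and wrapping at infinity so that the noncompact branes carry well-defined Floer theory, securing transversality and regularity for the disk counts (in Nadler--Zaslow handled by the reduction to Morse trees and careful choices of almost complex structures), and proving that a general brane decomposes into standards. By comparison the sheaf-theoretic input --- existence of triangulations, resolution by standards, and the computation of $\Ext$ of standards as relative cohomology --- is routine.
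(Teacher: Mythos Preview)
The paper does not prove this theorem: it is quoted as a result of Nadler--Zaslow and Nadler, with attribution \cite{NZ, Nad}, and no argument is supplied. There is therefore nothing in the paper to compare your proposal against.

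That said, your sketch is a faithful outline of the strategy in those references: build standard branes $L_U$ over open stars, identify Floer complexes with relative cohomology via the Fukaya--Oh reduction to Morse trees, and invoke Nadler's generation theorem to conclude. Your identification of the main difficulty --- the analytic foundations and the generation statement on the Fukaya side --- is accurate. If anything, one could add that the microlocalization functor in \cite{NZ} is constructed not quite by ``front projection and antimicrolocalization'' but via a carefully organized family of costandard branes and a Yoneda-type embedding, and that the compatibility of higher $A_\infty$ products (not just of hom-spaces) with the Morse-tree model requires a separate argument; but these are refinements rather than gaps.
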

For the definition of this kind of Fukaya category, see the original reference \cite{NZ}. We modify this story to explain irregular Riemann--Hilbert correspondence well. There are three ingredients.

The first ingredient is an observation that Nadler--Zalsow's Fukaya category is not enough sensitive for our purpose in the following sense.

\begin{example}
Let $Z=\bR$ and consider functions $f_1=1/x$ and $f_2=1/x^2$ defined over $x>0$. For the sake of simplicity, we only consider locally around $0$. Let us consider $\Lambda^{f_i}$. Then we have 
\begin{equation}
\begin{split}
\Hom_{\ModI(\Lambda_X)}(\Lambda^{f_1}, \Lambda^{f_2})&=\bk\\
\Hom_{\ModI(\Lambda_X)}(\Lambda^{f_2}, \Lambda^{f_1})&=0.
\end{split}
\end{equation}
Let $L_i$ be the graph of differentials in $T^*Z$ for each $f_i$, which are Lagrangian submanifolds in $T^*Z$. 

We would like to consider Lagrangian intersection Floer theory $CF(L_1, L_2)$ and $CF(L_2, L_1)$ and compare them with hom-spaces in $D^b\ModI(\Lambda_X)$. Assume that they are well-defined. As usual, to deal with intersections at infinity, we consider Reeb perturbation for Lagrangians in the right of $CF(\cdot, \cdot)$. Then one can see that 
\begin{equation}\label{correctintersection}
CF(L_1, L_2)=\bk  \text{ and } CF(L_2, L_1)=0
\end{equation}

In the formalism of Nadler--Zaslow, this does not occur. Since both $L_i$'s have the same asymptotic line, it should give the same object $\bk_{(0, \infty)}$ in $D^b_{\bR\hi c}(Z)$.
\end{example}
The first conjecture is the following.
\begin{conjecture}
There exists a version of Fukaya category $\Fuk_{m}(T^*Z)$ of $T^*Z$ which modifies Nadler--Zaslow's one to realize $(\ref{correctintersection})$.
\end{conjecture}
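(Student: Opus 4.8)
The plan is to construct $\Fuk_m(T^*Z)$ as a Tamarkin-type Fukaya category attached not to $T^*Z$ itself but to the ``extra-variable'' half-space $\{\tau>0\}\subset T^*(Z\times\bR_t)$, where $\tau$ denotes the momentum dual to the new coordinate $t$; this is the symplectic counterpart of the sheaf-theoretic construction of Sections 7--8 and is in the spirit of \cite{Tam, FOOO}. The guiding observation, already visible on the sheaf side, is that passing to $Z\times\bR_t$ records the \emph{value} of a generating function and not merely its differential: the graph of $df$ in $T^*Z$ homogenizes to the conic Lagrangian over the front $\{t=-f(x)\}$, with conormal direction $\tau(dt+df)$, $\tau>0$, and for $f_1\neq f_2$ this is a genuinely different conic Lagrangian --- equivalently, a different Legendrian in the contact boundary $\{\tau=+\infty\}$ --- even when $df_1$ and $df_2$ share the same asymptotic ray in $T^*Z$. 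Accordingly the objects of $\Fuk_m(T^*Z)$ will be conic, tamely bounded Lagrangian branes in $\{\tau>0\}$ of this type, carrying the usual brane data (grading, Pin structure) together with the $\bR$-grading coming from the $t$-translation action; this last datum should upgrade Hom-spaces to $\Lambda$-modules and reconnect, after $\otimes_\Lambda\bk$, with the category $\Fuk_{icnov}(T^*X)$ of the main conjecture.

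Concretely I would proceed as follows. \textbf{(1)} Fix the objects: conic ($\tau>0$) Lagrangians whose front projection to $Z\times\bR_t$ is, over a relatively compact ``sectorial'' open $U\subset Z$, the graph of $-f$, with tame closure at the boundary in the sense of Section 3. \textbf{(2)} Define $CF(\widetilde L_0,\widetilde L_1)$ by Floer theory with a Hamiltonian perturbation that, near the contact boundary, wraps in the \emph{strictly positive} Reeb direction of the $t$-coordinate: replace $\widetilde L_1$ by its image under a small positive translation-in-$t$ isotopy and count strips with boundary punctures asymptotic to the resulting chords. The positivity of the wrapping breaks the symmetry: for $f_1=1/x$ and $f_2=1/x^2$ on $\{x>0\}\subset\bR$ near $0$, a chord is produced from $\widetilde L_1$ to $\widetilde L_2$ because $f_1-f_2$ stays bounded above at the boundary, while none is produced from $\widetilde L_2$ to $\widetilde L_1$ since $f_2-f_1\to+\infty$ there; this gives $CF(L_1,L_2)=\bk$ and $CF(L_2,L_1)=0$, matching $\Hom_{\ModI(\Lambda_X)}(\Lambda^{f_1},\Lambda^{f_2})=\bk$ and $\Hom_{\ModI(\Lambda_X)}(\Lambda^{f_2},\Lambda^{f_1})=0$, which is exactly $(\ref{correctintersection})$. \textbf{(3)} Build the $A_\infty$-operations from the analogous count of positively wrapped holomorphic polygons and verify the relations. \textbf{(4)} Check the model example explicitly in $Z=\bR$ near $0$ by homogenizing $1/x$ and $1/x^2$. \textbf{(5)} Exhibit a forgetful functor $\Fuk_m(T^*Z)\to\Fuk(T^*Z)$ (drop the $t$-decoration, or send $\tau\to+\infty$), so that $\Fuk_m$ is a genuine refinement of the Nadler--Zaslow category and the words ``modifies Nadler--Zaslow's one'' have content.

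The hard part will be the analytic foundation behind steps (2)--(3): the Floer theory is non-compact and every interesting intersection sits at infinity, so one must (i) carve out a class of Lagrangians with sufficiently controlled, tame behaviour at the contact boundary of $T^*(Z\times\bR_t)$ for which a positively wrapped perturbation scheme is well-defined and yields a finite chord complex; (ii) prove an SFT-type compactness for strips and polygons whose boundary punctures limit to $t$-Reeb chords, together with an action estimate bounded below by the positivity of the wrapping --- this is where the \emph{finite} Novikov ring, as opposed to a completed one, should appear, since only finitely many action levels can occur; and (iii) establish invariance of the resulting $A_\infty$-category under the perturbation data and the chosen compactification. A secondary, more bookkeeping-type difficulty is to fix the brane/grading/$\Lambda$-action decorations so that the comparison with $D^b_{ic}(\Lambda_X)$, and with the enhanced-sheaf picture of Sections 7--8, is compatible. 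I expect the compactness-with-energy-bound point (ii) to be the real obstacle; the remaining steps should be a careful adaptation of \cite{NZ, Nad, Tam}.
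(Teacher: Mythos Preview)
The statement you are attempting to prove is a \emph{conjecture} in the paper, not a theorem; the paper gives no proof of it, and explicitly says at the start of Section~11 that ``more details or proofs will be discussed in future papers.'' So there is no proof in the paper to compare your proposal against.

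Your write-up is therefore not a proof but a research outline, and you seem to be aware of this: you identify the analytic compactness issues and the positive-wrapping mechanism as the places where genuine work is needed, and you do not claim to have carried any of it out. As a heuristic program your outline is reasonable and is in the spirit of the surrounding discussion in Section~11 (Tamarkin's extra variable, finite Novikov ring, positive-direction wrapping to break the symmetry between $L_1$ and $L_2$). But nothing in your text constitutes a proof of the conjecture: steps (2)--(3) assert the existence and well-definedness of a Floer theory in a non-compact setting with all interesting intersections at infinity, and you correctly flag that the required compactness and invariance statements are unproved. Until those are established, the conjecture remains open, which is exactly its status in the paper.
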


The second ingredient is $\bR$-graded realization of irregular constructible sheaf. For the below, we replace $Z$ by $X$, which is a complex manifold. Let $\cM$ be a meromorphic connection with poles in $D$. Then the corresponding object under the irregular Riemann--Hilbert correspondence is an irregular local system over $(X, D)$. To simplify our explanation, we assume that the formal types of $\cM$ is not ramified. We have a $C^\infty$ function on $X\bs D$ such that $\Lambda^f$ represents the irregular local system. The graph of the derivative of $f$ gives a Lagrangian $L_\cM$ in $T^*X$. Since the choice of $f$ is only up to bounded modification, we can expect the following. One can also modify the above explanation to adopt to any irregular local system.
\begin{conjecture}
The Lagrangian $L_{\cM}$ defines an object of $\Fuk_m(T^*X)$ which does not depend on the choice of $f$. We denote full subcategory of $\Fuk_m(T^*X)$ consisting of such objects by $\Fuk_{icnov}(T^*X)$.
\end{conjecture}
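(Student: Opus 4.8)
The plan is conditional on the construction of $\Fuk_m(T^*X)$ posited in the previous conjecture, so I would proceed assuming its expected formal properties: objects are (possibly non-compact, non-conic) Lagrangian submanifolds of $T^*X$ equipped with brane data, morphism spaces are Floer complexes computed after a Reeb-type perturbation at infinity which records exponential growth rates through the $\Lambda$-coefficients, and the quasi-isomorphism class of an object is invariant under Hamiltonian isotopies in the admissible class. Given a meromorphic connection $\cM$ with unramified formal type and a $C^\infty$ representative $f$ on $X\bs D$ for which $\Lambda^f$ realizes the associated irregular local system, the graph $L_\cM:=\Gamma_{df}\subset T^*X$ is an exact Lagrangian which is a section of $T^*X\to X$ over $X\bs D$; the first step is the routine local check that $L_\cM$ carries the grading and spin data needed to be an object of $\Fuk_m(T^*X)$, exactly as in Nadler--Zaslow.

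The substantive point is independence of the choice of $f$. By Lemma~\ref{boundeds}, two admissible choices $f_1,f_2$ differ by $g:=f_1-f_2$ which is bounded on each sector of a sectorial covering of a neighbourhood of $D$ (which is all that is used there), with bounded derivatives after passing to a modification by the goodness hypothesis of Definition~\ref{irreloc}. I would then exhibit a Hamiltonian isotopy of $T^*X$ carrying $\Gamma_{df_1}$ to $\Gamma_{df_2}$: the flow of the $\xi$-independent Hamiltonian $H=g$ pulled back from $X$ translates the fibrewise coordinate by $dg$, so its time-one flow sends $\Gamma_{df_1}$ to $\Gamma_{df_2}$, and boundedness of $g$ and $dg$ should place this isotopy in whatever admissible class $\Fuk_m$ is built to tolerate; invariance of the quasi-isomorphism class then finishes the argument. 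A complementary and more robust route avoids the isotopy entirely: one shows that the standard morphisms $\Lambda^{f_1}\to\Lambda^{f_2}$ and $\Lambda^{f_2}\to\Lambda^{f_1}$ of Lemma~\ref{boundeds}, whose composites are $T^{c}$ and hence become the identity in $\ModI(\Lambda_X)$, are realized by continuation elements in $CF(\Gamma_{df_1},\Gamma_{df_2})$ and $CF(\Gamma_{df_2},\Gamma_{df_1})$, so that the two Lagrangians are already isomorphic in $D\Fuk_m(T^*X)$ before reducing coefficients.

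For a general irregular local system one reduces to the unramified case after a modification $p$ and a ramified cover $\Phi_A$, just as in the proof of Lemma~\ref{ickernel}: the downstairs Lagrangian is the image of the upstairs one under the Lagrangian correspondence attached to $p\circ\Phi_A$, and independence of the auxiliary choices downstairs follows from independence upstairs. One then defines $\Fuk_{icnov}(T^*X)$ as the full subcategory on the objects $L_\cM$, $\cM$ running over meromorphic connections (and, more generally, over the irregular local systems built from them). The main obstacle is foundational rather than computational, which is precisely why this remains a conjecture: everything above presupposes a Fukaya category $\Fuk_m(T^*X)$ in which Floer theory is defined for these non-conic Lagrangians, Hamiltonians with controlled behaviour at infinity form a workable admissible class, and the $\Lambda$-grading is built in compatibly with the perturbation scheme --- this is exactly the content of the preceding conjecture and has not been constructed, so any proof of the present statement is contingent on first setting up that category and verifying its invariance properties.
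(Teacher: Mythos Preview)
The paper provides no proof of this statement: it is explicitly presented as a conjecture in Section~11, which the author frames as speculative (``We hope more details or proofs will be discussed in future papers''). There is therefore nothing in the paper to compare your proposal against.

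You clearly recognise this, and your final paragraph correctly identifies the genuine obstruction: the entire argument is contingent on the construction of $\Fuk_m(T^*X)$ from the preceding conjecture, which has not been carried out. Your sketch of the conditional argument is reasonable and in the spirit of the paper's discussion --- in particular, your use of Lemma~\ref{boundeds} to control the ambiguity in $f$ and your observation that the standard morphisms should correspond to continuation elements are exactly the kind of heuristics the author has in mind. One small caution: Lemma~\ref{boundeds} gives that $\Re(f_1-f_2)$ is bounded, not that $d(f_1-f_2)$ is bounded, so the admissibility of the Hamiltonian isotopy you describe would require an additional argument or a more careful choice of representatives; your ``complementary route'' via continuation elements is likely the more robust path, as you suggest.
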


The third ingredient is the fact that the Fukaya category is a priori defined over the Novikov ring, we have to replace $\bk$ with the Novikov ring. In the definition of $\Mod^0(\Lambda_X)$, by not taking $\otimes_\Lambda \bk$, we can define  a category defined over $\Lambda$. By taking a triangulated hull, we denote it by $D^b(\Mod^\bR(\Lambda_X))$.

Building on the above ingredients, we can now state the main conjecture.
\begin{conjecture}
\begin{enumerate}
\item The $\Fuk_{icnov}(T^*X)$ is defined over the finite Novikov ring $\Lambda$ due to the nonexistence of bubbles.
\item There exists a fully faithful embedding $D\Fuk_{icnov}(T^*X)\hookrightarrow D^b(\Mod^\bR(\Lambda_X))$.
\item After reducing coefficients $D\Fuk_{ic}(T^*X):=D\Fuk_{icnov}(T^*X)\otimes_\Lambda \bk$, there exists an equivalence
\begin{equation}
D^b_{ic}(\Lambda_X)\simeq D\Fuk_{ic}(T^*X).
\end{equation}
induced by the embedding in 2. In particular, for $\bk=\bC$, we get a Fukaya-categorical Riemann--Hilbert correspondence
\begin{equation}
D^b_{\hol}(\cD_X)\simeq D\Fuk_{ic}(T^*X).
\end{equation}
\end{enumerate}
\end{conjecture}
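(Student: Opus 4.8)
The conjecture is genuinely a research program, since it presupposes a satisfactory construction of the modified Fukaya category $\Fuk_m(T^*X)$ and of its subcategory $\Fuk_{icnov}(T^*X)$; the plan is to organize a proof into a preliminary step that fixes those constructions, followed by three stages matching the three parts of the statement. For the preliminary step I would define $\Fuk_m(T^*X)$ as a Floer-theoretic $A_\infty$-category whose objects include the graphs $L_\cM\subset T^*X$ of the differentials of the (multivalued, $C^\infty$ away from $D$) functions attached to meromorphic connections, with the rule that morphism spaces are computed after an infinitesimal Reeb perturbation at infinity pushing the target Lagrangian in the positive codirection --- the recipe that, in the motivating example of Section~11, produces $CF(L_1,L_2)=\bk$ and $CF(L_2,L_1)=0$. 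One must verify that the perturbation data can be chosen coherently so that the $A_\infty$-operations $\mu^k$ are well defined and the $A_\infty$-relations hold.

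For part~1 the point is that $T^*X$ is exact and each $L_\cM$ is the graph of a closed one-form, so there are no non-constant holomorphic disks with boundary on a single $L_\cM$ and hence no disk bubbling; the contributions to $\mu^k$ come only from strips and polygons, whose symplectic areas are differences of primitives of the defining functions. After the perturbation these areas lie in $\bR_{\geq 0}$, are bounded below, and are locally finite on each generator, the bounds being governed by $\sup\{0,\Re\phi_i-\Re\phi_j\}$ exactly as in Lemma~\ref{boundeds} and Lemma~\ref{orthogonality}; this makes $\Fuk_{icnov}(T^*X)$ a $\Lambda$-linear $A_\infty$-category. For part~2 I would build a microlocalization functor $\mu\colon D\Fuk_{icnov}(T^*X)\to D^b(\Mod^\bR(\Lambda_X))$ by adapting Nadler--Zaslow, sending $L_\cM$ with defining function $f$ to the $\bR$-graded module $\Lambda^f$ and a general object to the corresponding standard triangle, and reduce full faithfulness --- via open covers, the recollement triangle (Lemma~\ref{icrecollement}), and exact triangles --- to the case of two exponential Lagrangians, where the perturbed Floer complex computes $\bk$ or $0$ according to boundedness of $\Re\phi_1-\Re\phi_2$, which is the $\Lambda$-linear lift of Lemma~\ref{irrconstantff}.

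For part~3, applying $(-)\otimes_\Lambda\bk$ to $\mu$ and composing with $[\cdot]$ lands in $D^b_{ic}(\Lambda_X)$ by the previous stage together with Proposition~\ref{realization}. Essential surjectivity onto $D^b_{ic}(\Lambda_X)$ is where the already-established Riemann--Hilbert equivalence Theorem~\ref{RHmain} enters: it suffices to hit every irregular local system, and by Sabbah--Mochizuki--Kedlaya structure theory together with the Stokes-structure Riemann--Hilbert correspondence of \cite{SabbahStokes} every good irregular local system is $\Sol^\Lambda$ of a meromorphic connection, whose sectorial exponential Lagrangian --- built from the $\phi_j$ and glued after a modification --- is an object of $\Fuk_{icnov}$; the general case then follows by gluing and triangles. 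Finally, for $\bk=\bC$ one composes with $\Sol^\Lambda$ of Theorem~\ref{RHmain} to obtain $D^b_{\hol}(\cD_X)\simeq D\Fuk_{ic}(T^*X)$, which completes part~3.

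I expect the main obstacle to be the preliminary step: producing an honest, associative definition of $\Fuk_m(T^*X)$ with the prescribed asymmetric behavior at infinity, and then the compactness needed for part~1. The Lagrangians $L_\cM$ attached to irregular connections are not exact at infinity --- that is precisely what encodes the irregularity --- so the usual a priori energy estimates do not apply directly; the finite Novikov ring $\Lambda$ is meant as the bookkeeping device that makes the broken-strip counts converge, but verifying that only exponents in $\bR_{\geq 0}$ occur, with the local finiteness required, and that the $A_\infty$-relations hold over $\Lambda$, is the crux. A secondary difficulty is upgrading the Nadler--Zaslow microlocalization, and its full-faithfulness proof, to this modified $\Lambda$-linear setting, since their argument uses the standard behavior at infinity in an essential way.
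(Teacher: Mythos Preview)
The paper does not prove this statement: it is explicitly a \emph{conjecture} in Section~11, and the section opens with ``We hope more details or proofs will be discussed in future papers.'' There is therefore no proof in the paper against which to compare your proposal.

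Your sketch is consistent with, and in fact closely tracks, the paper's own heuristic discussion preceding the conjecture: the three ``ingredients'' (a modified Fukaya category realizing the asymmetric Reeb-perturbed intersection numbers of the motivating example, the Lagrangian $L_{\cM}$ built from a $C^\infty$ function $f$ with $\Lambda^f$ representing the irregular local system, and the passage to Novikov-ring coefficients) are exactly what you take as the organizing skeleton. You also correctly flag the genuine difficulties --- constructing $\Fuk_m(T^*X)$ with the prescribed asymmetric behavior at infinity, and establishing compactness/energy bounds for Lagrangians that are not exact at infinity --- as the crux; these are precisely the points the paper leaves open.
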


\begin{remark}\label{final}
For a fixed formal type of irregular singularity in dimension 1, one can draw an isotopy class of Legendrian knot (e.g \cite{STWZ}). Then the subcategory of Nadler--Zaslow's Fukaya category ending at the Legendrian is equivalent to the derived category of holonomic $\cD$-module of the given formal type. This is a variant of the above conjecture.
\end{remark}

\section*{Appendix: Stackification}
This material is well-known, but we summarize in the form fitted to this paper. We mainly follow the exposition of 
\cite{Moerdijk}. 

Let $(\oX, D_X)$ be a topological space with boundary and $\Open_{\dX}$ be the associated site. Let $\cF$ be a presheaf of categories over $\Open_\dX$ i.e., $\cF$ is a functor $\Open_{\dX}$ (not a 2-functor). For $U\in \Open_\dX$ and $a, b\in \cF(U)$, $\Hom_{\cF}(a,b)\colon V\mapsto \Hom_{\cF(V)}(a|_V, b|_V)$ for $V\subset U$ forms a presheaf of sets.

\begin{definition}
A presheaf of categories is a \emph{prestack} if the presheaves of the form $\Hom(a,b)$ are sheaves.
\end{definition}
\begin{remark}
Usually, a prestack is a condition for fibered categories. We will use a restricted notion in this paper.
\end{remark}

For a given sheaf of categories, we can produce a prestack $\tilde{\cF}$ in the following way: For $U\in \Open_{\dX}$, the set of objects of $\tilde{\cF}(U)$ is the same as $\cF(U)$, For $a, b\in \tilde{\cF}(U)$, $\Hom_{\tilde{\cF}(U)}(a,b)$ is defined by the global section of the sheafification of the presheaf $\Hom(a,b)$. For sheafification over sites, see \cite{Stacksproject}. From the construction, one can see that a section over $U$ of the resulting sheaf is represented by a set of local sections if one takes a covering of $U$ fine enough.

Now we have the sheaf property on the hom-spaces, but not on objects. To remedy this, we will do stackification.
\begin{definition}
A descent data for a cover $\{U_i\}_{i\in I}$ is a set of objects $\{a_i\}_{i \in I} (a_i\in \cF(U_i))$ and a set of morphisms $\theta_{ij}\colon a_j|_{U_i\cap U_j}\rightarrow a_i|_{U_i\cap U_j}$ ($i,j\in I$) such that $\theta_{ii}=1, \theta_{ij}\circ \theta_{jk}=\theta_{ik}$.
\end{definition}
For an object $a\in \cF(\bigcup_{i\in I}U_i)$, we can get a descent data by the restrictions. 
\begin{definition}
A prestack is a \emph{stack} if any descent data comes from the restrictions.
\end{definition}

The existence of stackification (cf. \cite[Theorem 2.1]{Moerdijk}) tells us there exists a stack $\hat{\cF}$ associated to a given prestack. An object of $\hat{\cF}(U)$ for $U\in \Open_\dX$ is represented by a cover $\{U_i\}_{i\in I}$ and a descent data on this cover.

\bibliographystyle{amsalpha}
\bibliography{irregularperversesheavearXiv2.bbl}

\noindent
Tatsuki Kuwagaki

\noindent
Kavli Institute for the Physics and Mathematics of the Universe (WPI), The University of Tokyo Institutes for Advanced Study, The University of Tokyo, Kashiwa, Chiba 277-8583, Japan.

{\em e-mail address}\ : \  tatsuki.kuwagaki@ipmu.jp

\end{document}